\renewcommand{\restriction}{\mathbin\upharpoonright}   
\newtheorem*{mainthm}{Main Theorem}
\newtheorem{mthm}{Theorem}
\newtheorem{theorem}{Theorem}[section]
\newtheorem{prop}[theorem]{Proposition}
\newtheorem{claim}{Claim}[theorem]
\newtheorem{subclaim}{Subclaim}[claim]
\newtheorem{lemma}[theorem]{Lemma}
\newtheorem{cor}[theorem]{Corollary}
\newtheorem{fact}[theorem]{Fact}
\theoremstyle{definition}
\newtheorem{example}[theorem]{Example}
\newtheorem{definition}[theorem]{Definition}
\newtheorem{notation}[theorem]{Notation}
\newtheorem{conv}[theorem]{Convention}
\newtheorem{goal}[theorem]{Goal}
\newtheorem*{setup}{Setup~\thesection}
\newcounter{condition}
\theoremstyle{remark}
\newtheorem{remark}[theorem]{Remark}
\newcommand\cat[1]{{}^\curvearrowright\langle #1\rangle}
\newcommand{\fork}[2][]{{\pitchfork_{#1}}(#2)}
\newcommand*\axiomfont[1]{\textsf{\textup{#1}}}
\newcommand\zfc{\axiomfont{ZFC}}
\newcommand\gch{\axiomfont{GCH}}
\newcommand\sch{\axiomfont{SCH}}
\newcommand\ch{\axiomfont{CH}}
\def\br{\blacktriangleright}
\newcommand\<{{<}}
\DeclareMathOperator{\CPP}{CPP}
\DeclareMathOperator{\crit}{crit}
\DeclareMathOperator{\range}{Im}
\DeclareMathOperator{\Ult}{Ult}
\DeclareMathOperator{\id}{id}
\newcommand\blkref[1]{Building Block~#1}
\newcommand{\cone}[1]{{\mathbb P\mathrel{\downarrow}#1}}
\newcommand{\conea}[1]{{\mathbb A\mathrel{\downarrow}#1}}
\newcommand{\cones}[2]{{\mathbb S_{#1}\mathrel{\downarrow}#2}}
\def\sle{\preceq}
\def\SLE{\preceq}
\def\LE{\le}
\def\sq{\sqsubseteq}
\newcommand{\one}{\mathop{1\hskip-3pt {\rm l}}}
\def\s{\subseteq}
\def\forces{\Vdash}
\def\pI{\textrm{\bf I}}	
\def\pII{\textrm{\bf II}}
\DeclareMathOperator{\tp}{tp}
\DeclareMathOperator{\mtp}{mtp}
\DeclareMathOperator{\col}{Col}
\DeclareMathOperator{\cl}{cl}
\DeclareMathOperator{\otp}{otp}
\DeclareMathOperator{\rng}{Im}
\DeclareMathOperator{\acc}{acc}
\DeclareMathOperator{\nacc}{nacc}
\DeclareMathOperator{\tr}{Tr}
\DeclareMathOperator{\cf}{cf}
\DeclareMathOperator{\refl}{Refl}
\DeclareMathOperator{\ord}{Ord}
\providecommand{\myceil}[2]{\left\lceil #1 \right\rceil^{#2} }
\renewcommand{\mid}{\mathrel{|}\allowbreak}
\newcommand{\lh}{\ell}
\newcommand{\mc}{\mathop{\mathrm{mc}}\nolimits}
\newcommand{\dom}{\mathop{\mathrm{dom}}\nolimits}
\newcommand{\Col}{\mathop{\mathrm{Col}}}
\newcommand\z[1]{\mathring{#1}}
\title[Sigma-Prikry forcing III]{Sigma-Prikry forcing III:\\Down to $\aleph_\omega$}
\author{Alejandro Poveda}
\address{ Einstein Institute of Mathematics, Hebrew University of Jerusalem, Givat-Ram, 91904, Israel.}
\curraddr{Center of Mathematical Sciences and Applications, Harvard University, Cambridge, MA 02138, USA.}
\author{Assaf Rinot}
\address{Department of Mathematics, Bar-Ilan University, Ramat-Gan 5290002, Israel.}
\urladdr{http://www.assafrinot.com}
\author{Dima Sinapova}
\address{Department of Mathematics, Statistics, and Computer Science\\ University of Illinois at Chicago\\ Chicago, IL 60607-7045\\ USA.} 
\curraddr{Department of Mathematics, Rutgers University, Piscataway, NJ 08854-8019, USA.}
\urladdr{https://sites.math.rutgers.edu/~ds2005/}
\begin{document}
\begin{abstract} We prove the consistency of the failure of the singular cardinals hypothesis at $\aleph_\omega$
together with the reflection of all stationary subsets of $\aleph_{\omega+1}$. This shows that two classic results of Magidor (from 1977 and 1982)
can hold simultaneously.
\end{abstract}
\maketitle

\section{Introduction}

Many natural questions cannot be resolved by the standard mathematical axioms ($\zfc$);
the most famous example being Hilbert's first problem, the continuum hypothesis ($\ch$).
At the late 1930's, G\"odel constructed an inner model of set theory \cite{MR2514}
in which the generalized continuum hypothesis ($\gch$) holds, demonstrating, in particular, that $\ch$ is consistent with $\zfc$.
Then, in 1963, Cohen invented the method of forcing \cite{cohen1}
and used it to prove that $\neg\ch$ is, as well, consistent with $\zfc$.

In an advance made by Easton \cite{MR269497}, it was shown that any reasonable behavior of the continuum function $\kappa\mapsto 2^\kappa$ for \emph{regular} cardinals $\kappa$ may be materialized. 
In a review on Easton's paper for \emph{AMS Mathematical Reviews}, Azriel L\'evy writes:

\begin{displayquote}
The corresponding question concerning the singular $\aleph_\alpha$'s is still open, and seems to be one of the most difficult open problems of set theory in the post-Cohen era.
It is, e.g., unknown whether for all $n(n<\omega\rightarrow 2^{\aleph_n}=\aleph_{n+1}^.)$ implies $2^{\aleph_\omega}=\aleph_{\omega+1}$ or not.
\end{displayquote}

A preliminary finding of Bukovsk\'{y} \cite{MR183649} (and independently Hechler) suggested that singular cardinals may indeed behave differently,
but it was only around 1975, with Silver's theorem \cite{MR0429564} and the pioneering work of Galvin and Hajnal \cite{MR376359},
that it became clear that singular cardinals  obey much deeper constraints.
This lead to the formulation of the singular cardinals hypothesis ($\sch$)  as a (correct) relativization of $\gch$ to singular cardinals,
and ultimately to Shelah's \textit{pcf} theory \cite{MR1112424,MR1759410}.
Shortly after Silver's discovery, advances in inner model theory due to Jensen (see \cite{MR0480036})
provided a \emph{covering lemma}  between G\"odel's original model of $\gch$ and many other models of set theory,
thus establishing that any consistent failure of $\sch$ must rely on an extension of $\zfc$ involving large cardinals axioms.

\emph{Compactness} is the phenomenon where if a certain property holds for every strictly smaller substructure of a given object, then it holds for the object itself.
Countless results in topology, graph theory, algebra and logic demonstrate that the first infinite cardinal is compact.
Large cardinals axioms are compactness postulates for the higher infinite.

A crucial tool for connecting large cardinals axioms with singular cardinals was introduced by Prikry in \cite{MR262075}.
Then Silver (see \cite{MR540771}) constructed a model of $\zfc$ whose extension by Prikry's forcing
gave the first universe of set theory with a singular strong limit cardinal $\kappa$ such that $2^\kappa>\kappa^+$.
Shortly after, Magidor \cite{MR491183} proved that the same may be achieved at level of the very first singular cardinal, that is, $\kappa=\aleph_\omega$.
Finally, in 1977, Magidor answered the question from L\'evy's review in the affirmative:
\begin{mthm}[Magidor, \cite{MR491184}]\label{thm1} Assuming the consistency of a supercompact cardinal and a huge cardinal above it,
it is consistent that $2^{\aleph_n}=\aleph_{n+1}$ for all $n<\omega$,
and $2^{\aleph_\omega}=\aleph_{\omega+2}$.
\end{mthm}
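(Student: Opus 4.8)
The plan is to combine two classical moves: first blow up the power set of a supercompact cardinal $\kappa$ to $\kappa^{++}$ while keeping $\kappa$ supercompact, and then singularize $\kappa$ to cofinality $\omega$ by a Prikry-type forcing that simultaneously collapses all cardinals below $\kappa$, so that in the extension $\kappa$ becomes $\aleph_\omega$, the cardinals $\kappa^{+}$ and $\kappa^{++}$ become $\aleph_{\omega+1}$ and $\aleph_{\omega+2}$, $\gch$ holds below $\aleph_\omega$, and $2^{\aleph_\omega}=2^{\kappa}=\aleph_{\omega+2}$. Concretely, I would work over a ground model of $\gch$ and fix a supercompact cardinal $\kappa$ together with a huge cardinal $\lambda>\kappa$, the latter witnessed by an elementary embedding $j\colon V\to M$ with $\crit(j)=\lambda$ and $M$ closed under $j(\lambda)$-sequences; the hugeness of $\lambda$ will be spent in the second stage.

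\emph{First stage.} Run a preparatory reverse Easton iteration making the supercompactness of $\kappa$ indestructible under $\kappa$-directed-closed forcing while preserving $\gch$, and then force with $\add(\kappa,\kappa^{++})$. Since $\add(\kappa,\kappa^{++})$ is $\kappa$-directed-closed it adds no new ${<}\kappa$-sequences, so $\gch$ below $\kappa$ survives; since (under $\gch$ below $\kappa$) it has the $\kappa^{+}$-chain condition, it preserves $\kappa^{+}$ and $\kappa^{++}$; and it forces $2^{\kappa}=\kappa^{++}$. Write $V_1$ for the resulting model: there $\gch$ holds below $\kappa$, $2^{\kappa}=\kappa^{++}$, $\kappa$ is still supercompact --- hence carries the normal measures needed below --- and $\lambda$ is still huge, the first stage being a forcing of size less than $\lambda$.

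\emph{Second stage.} Over $V_1$, force with a Prikry-type poset $\mathbb{P}$ adding an increasing sequence $\langle\kappa_n:n<\omega\rangle$ cofinal in $\kappa$ and threading, between $\kappa_n$ and $\kappa_{n+1}$, a generic for the L\'evy collapse $\Col(\kappa_n^{+},{<}\kappa_{n+1})$, with an initial collapse turning $\kappa_0$ into a small successor cardinal; this is ``Prikry forcing with interleaved collapses''. One arranges that $\mathbb{P}$ factors into a highly closed ``pure'' part and a small ``stem'' part, so that $\mathbb{P}$ has the Prikry property and adds no new bounded subsets of $\kappa$ beyond those contributed by the collapses, and that above any stem $\mathbb{P}$ has the $\kappa^{+}$-chain condition. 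Then in $V_1[\mathbb{P}]$ the cardinals $\kappa^{+}$, $\kappa^{++}$ and all larger cardinals are preserved; $\kappa$ has cofinality $\omega$ and, since the collapse blocks turn each $\kappa_n$ into an $\aleph_{k}$ with $k<\omega$, $\kappa$ equals $\aleph_\omega$; and since each $\Col(\kappa_n^{+},{<}\kappa_{n+1})$ is suitably closed and is computed in a $\gch$ environment, $\gch$ holds at every $\aleph_n$ with $n<\omega$. The huge cardinal $\lambda$ is exactly what makes $\mathbb{P}$ definable: specifying the measure-one sets that govern the collapse coordinates requires, inside the supercompactness ultrapower of $\kappa$, a ``guiding generic'', i.e.\ a master condition and generic filter for the image collapse $j(\Col(\kappa^{+},{<}\kappa))$, and it is the extra closure provided by a huge cardinal above $\kappa$ that permits such an object to be built in $V_1$ while keeping the normal measures coherent with the interleaved collapses.

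\emph{Arithmetic.} It remains to compute cardinal exponentiation in $V_1[\mathbb{P}]$. Below $\aleph_\omega$ we have $\gch$ by the previous step. Above, $\aleph_{\omega+1}=\kappa^{+}$ and $\aleph_{\omega+2}=\kappa^{++}$ by preservation, and $2^{\aleph_\omega}=2^{\kappa}$; since $\mathbb{P}$ adds no subsets of $\kappa$ other than those arising from a tail of the interleaved collapses, every subset of $\kappa$ in the extension has a nice $\mathbb{P}$-name, and using the $\kappa^{+}$-chain condition above stems together with $\gch$ below $\kappa$ in $V_1$ one bounds the number of such names by $(\kappa^{++})^{\kappa}=\kappa^{++}$ as computed in $V_1$. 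Hence $2^{\aleph_\omega}=\kappa^{++}=\aleph_{\omega+2}$, and together with $\gch$ below $\aleph_\omega$ this gives the conclusion. The main obstacle is the second stage --- engineering a single forcing that singularizes $\kappa$ to $\aleph_\omega$ by collapsing every cardinal beneath it yet still preserves $\kappa^{+}$, $\kappa^{++}$ and $2^{\kappa}=\kappa^{++}$ and retains the Prikry property. It is the coordination between the supercompactness measures and the interleaved L\'evy collapses, through the guiding generics, that carries the weight of the argument and is precisely where the huge cardinal is genuinely needed.
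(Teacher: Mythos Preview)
The paper does not supply its own proof of this theorem; it is quoted as Magidor's classical result from \cite{MR491184}. The paper's contribution is the Main Theorem, which recovers the same cardinal-arithmetic conclusions (together with stationary reflection at $\aleph_{\omega+1}$) from a different hypothesis --- infinitely many supercompact cardinals --- via Gitik's Extender-Based Prikry Forcing with Collapses (Section~\ref{SectionGitikForcing}) followed by the $(\Sigma,\vec{\mathbb S})$-Prikry iteration. So there is no in-paper argument for you to be compared against; you are attempting to reconstruct Magidor's original 1977 proof.

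Your outline has a genuine gap exactly at the point you flag as the crux. After forcing $\add(\kappa,\kappa^{++})$ over the Laver-prepared model you have $2^\kappa=\kappa^{++}$ in $V_1$, and for the ultrapower $j\colon V_1\to M$ by a normal measure on $\kappa$ the guiding-generic collapse $\Col(\kappa^+,{<}j(\kappa))^M$ is only $\kappa^+$-closed in $V_1$, yet now there are $\kappa^{++}$ many $M$-dense subsets to meet (each such is $j(f)(\kappa)$ for some $f\colon\kappa\to V_1$, and the count is $(\kappa^{++})^\kappa=\kappa^{++}$). The naive diagonalisation therefore fails. Your assertion that the huge cardinal supplies ``extra closure'' does not repair this as written: the huge embedding you set up has critical point $\lambda>\kappa$, and closure of the form $N^{j(\lambda)}\subseteq N$ at $\lambda$ says nothing about the closure of a measure-ultrapower at $\kappa$. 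In Magidor's actual construction the strong embedding is not bolted on after a separate $\add(\kappa,\kappa^{++})$ step; rather, one organises a collapse making a suitable target cardinal become $\kappa^{++}$ and uses an embedding whose closure is calibrated to that target to manufacture the required generics and master conditions, with the Prikry-with-collapses layer built on top of this. To salvage your sketch you would need to replace the two-stage ``first blow up $2^\kappa$, then search for a guiding generic'' plan with an integrated construction of that kind.
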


Later works of Gitik, Mitchell, and Woodin pinpointed the optimal large cardinal hypothesis required for Magidor's theorem (see \cite{MR1989201,MR2768697}).

Note that Theorem~\ref{thm1} is an incompactness result;
the values of the powerset function are small below $\aleph_\omega$,
and blow up at $\aleph_\omega$.
In a paper from 1982, Magidor obtained a result of an opposite nature,
asserting that \emph{stationary reflection} --- one of the most canonical forms of compactness --- may hold at the level of the successor of the first singular cardinal:
\begin{mthm}[Magidor, \cite{MR683153}]\label{thm2} Assuming the consistency of infinitely many supercompact cardinals,
it is consistent that every stationary subset of $\aleph_{\omega+1}$ reflects.\footnote{That is,
for every subset $S\s\aleph_{\omega+1}$, if for every ordinal $\alpha<\aleph_{\omega+1}$ (of uncountable cofinality),
there exists a closed and unbounded subset of $\alpha$ disjoint from $S$,
then there exists a closed and unbounded subset of $\aleph_{\omega+1}$ disjoint from $S$.}
\end{mthm}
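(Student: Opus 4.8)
\noindent\emph{Proof strategy.}\quad The plan is the classical L\'evy-collapse approach. Begin in a model $V$ in which $\langle\kappa_n:n<\omega\rangle$ is a strictly increasing sequence of supercompact cardinals; after a routine preparatory forcing (forcing \gch\ and performing a Laver preparation coherently along the $\kappa_n$'s) we may assume in addition that each $\kappa_n$ is \emph{indestructibly} supercompact, i.e.\ its supercompactness is preserved by any $\kappa_n$-directed-closed forcing. Put $\kappa:=\sup_{n<\omega}\kappa_n$ and $\lambda:=\kappa^+$, and let $\mathbb P$ be the full-support product $\prod_{n<\omega}\col(\kappa_{n-1},{<}\kappa_n)$ with $\kappa_{-1}:=\omega$; thus $\mathbb P$ is designed to collapse all cardinals in each interval $(\kappa_{n-1},\kappa_n)$ down to $\kappa_{n-1}$ while leaving the $\kappa_n$'s intact.

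The first task is to analyze $\mathbb P$. For each $m$ it factors as $\mathbb P=\mathbb P_{<m}\times\mathbb P_{\geq m}$, where $\mathbb P_{<m}:=\prod_{n<m}\col(\kappa_{n-1},{<}\kappa_n)$ is $\kappa_{m-1}$-c.c.\ of size $\kappa_{m-1}$, while $\mathbb P_{\geq m}:=\prod_{n\geq m}\col(\kappa_{n-1},{<}\kappa_n)$ is $\kappa_{m-1}$-directed-closed. From this one reads off by standard arguments that $\mathbb P$ preserves every cardinal $\geq\kappa$ --- any would-be short cofinal map into $\lambda$ is added at some bounded stage $\mathbb P_{<m}$, which is too small to collapse $\lambda$, while the $\kappa_{m-1}$-directed-closed tail adds no new sequences of length $<\kappa_{m-1}$ --- that each $\kappa_n$ remains a cardinal, and that in $V[G]$, for $G$ being $\mathbb P$-generic, one has $\kappa_n=\aleph_{n+1}$ for every $n$, hence $\kappa=\aleph_\omega$ and $\lambda=\aleph_{\omega+1}$.

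Now let $S\subseteq\lambda$ be stationary in $V[G]$; the goal is to find $\alpha<\lambda$ of uncountable cofinality with $S\cap\alpha$ stationary in $\alpha$. Writing $E^\lambda_\mu:=\{\alpha<\lambda:\cf(\alpha)=\mu\}$ and using that $\lambda=\aleph_{\omega+1}$ is regular while every ordinal below it has cofinality in $\{\omega\}\cup\{\kappa_k:k<\omega\}$, the decomposition $S=(S\cap E^\lambda_\omega)\cup\bigcup_{k<\omega}(S\cap E^\lambda_{\kappa_k})$ exhibits $S$ as a countable union, so some piece $S\cap E^\lambda_\tau$ with $\tau\in\{\omega\}\cup\{\kappa_k:k<\omega\}$ is stationary. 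Fix $n\geq1$ with $\kappa_{n-1}>\tau$, and, commuting the product, write $\mathbb P=\mathbb Q\times\mathbb C$ with $\mathbb Q:=\mathbb P_{<n}\times\mathbb P_{\geq n+1}$ and $\mathbb C:=\col(\kappa_{n-1},{<}\kappa_n)$; decomposing $G=G_{\mathbb Q}\times g$ accordingly, set $N:=V[G_{\mathbb Q}]$, so that $V[G]=N[g]$ with $g$ being $\mathbb C$-generic over $N$. The key point is that $\kappa_n$ is still supercompact in $N$: its supercompactness survives the $\kappa_n$-directed-closed forcing $\mathbb P_{\geq n+1}$ by indestructibility, and the additional forcing $\mathbb P_{<n}$ has size $<\kappa_n$, hence is harmless; moreover $\mathbb C$ is precisely the L\'evy collapse turning the supercompact $\kappa_n$ into $\kappa_{n-1}^{+}$. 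One now invokes the classical fact (essentially Magidor's) that the L\'evy collapse of a supercompact cardinal to the successor of a regular cardinal $\mu$ produces a model in which that successor reflects, in every regular cardinal above it, every stationary set concentrating on cofinalities $<\mu$. Applied in $N$ with $\mu=\kappa_{n-1}$ and to the regular cardinal $\lambda$ (which is $>\kappa_n$ and, by the analysis of $\mathbb P$, remains regular in $V[G]=N[g]$), this yields that $S\cap E^\lambda_\tau\subseteq E^\lambda_{<\kappa_{n-1}}$ reflects at some $\alpha<\lambda$ of uncountable cofinality; a fortiori $S$ reflects at $\alpha$, as desired.

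The heart of the matter, and the step I expect to be the main obstacle, is the classical fact just cited: transferring the stationary-reflection consequences of the supercompactness of $\kappa_n$ in $N$ through the L\'evy collapse $\mathbb C$. The standard route is to fix a supercompactness embedding $j\colon N\to M$ with critical point $\kappa_n$, $j(\kappa_n)>\lambda$ and ${}^{\lambda}M\subseteq M$, and to lift it to $j\colon N[g]\to M[g^{+}]$, where $g^{+}\supseteq g$ is generic over $M$ for $j(\mathbb C)=\col(\kappa_{n-1},{<}j(\kappa_n))\cong\mathbb C\times\col(\kappa_{n-1},[\kappa_n,j(\kappa_n)))$; the delicate part is building the required generic for the second factor over $M[g]$ inside $N[g]$, using the closure of $M$ together with the closure and homogeneity of the collapse. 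Once $j$ lifts, $\sup j[\lambda]$ is a reflection point of $j(S\cap E^\lambda_\tau)$ in $M[g^{+}]$ of uncountable cofinality, and elementarity reflects this back to $V[G]$. The remaining ingredients --- a coherent Laver preparation making all the $\kappa_n$ simultaneously indestructibly supercompact, and the cofinality and cardinal bookkeeping for $\mathbb P$ --- are routine, but they must be arranged carefully, since it is precisely the commuting-product factorizations above that make the indestructibility hypotheses applicable exactly where they are used.
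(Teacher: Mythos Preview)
The paper does not give its own proof of this statement: Theorem~\ref{thm2} is quoted as Magidor's 1982 result, with a citation to the original paper, and serves only as historical motivation for the Main Theorem. So there is nothing in the paper to compare your argument against directly.

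That said, your outline is exactly Magidor's original strategy---L\'evy-collapse an $\omega$-sequence of indestructibly supercompact cardinals to become the $\aleph_n$'s, then for a given stationary $S\subseteq E^\lambda_\tau$ factor the product so as to isolate a single collapse $\mathbb C=\col(\kappa_{n-1},{<}\kappa_n)$ with $\tau<\kappa_{n-1}$, and lift a supercompactness embedding through $\mathbb C$---and it is correct. One small point: your description of the ``delicate part'' as building the generic for the tail collapse \emph{inside} $N[g]$ is not quite how it goes; in general there are too many dense sets in $M[g]$ to diagonalize against using only $\kappa_{n-1}$-closure. The standard route is instead to pass to a further generic extension of $N[g]$ by $\col(\kappa_{n-1},[\kappa_n,j(\kappa_n)))$, observe that this $\kappa_{n-1}$-closed forcing preserves the stationarity of $S\subseteq E^\lambda_{<\kappa_{n-1}}$ (here one uses approachability at $\lambda$, available under the $\gch$ arranged in the preparation, together with Shelah's preservation theorem), lift $j$ there, and then pull the reflection back to $N[g]$ by downward absoluteness. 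The paper carries out a much more elaborate version of exactly this manoeuvre in Lemma~\ref{ebfreflection}, where the extra complications come from the EBPFC and the iteration, not from the reflection argument itself.
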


Ever since, it remained open whether Magidor's compactness and incompactness results may co-exist.

The main tool for obtaining Theorem~\ref{thm1} (and the failures of SCH, in general) is Prikry-type forcing (see Gitik's survey \cite{Gitik-handbook}),
however, adding Prikry sequences at a cardinal $\kappa$ typically implies the failure of reflection at $\kappa^+$.
On the other hand, Magidor's proof of Theorem~\ref{thm2} goes through L\'evy-collapsing $\omega$-many supercompact cardinals to become the $\aleph_n$'s, and in any such model SCH would naturally hold at the supremum, $\aleph_\omega$.

Various partial progress to combine the two results was made along the way.
Cummings, Foreman and Magidor \cite{cfm} investigated which sets can reflect in the classical Prikry generic extension.
In his 2005 dissertation \cite{AS},
Sharon analyzed reflection properties of extender-based Prikry forcing (EBPF, due to Gitik and Magidor \cite{Git-Mag}) and devised a way to kill one non-reflecting stationary set,
again in a Prikry-type fashion. He then described an iteration to kill all non-reflecting stationary sets, but the exposition was incomplete.

In the other direction, works of
Solovay \cite{MR0379200},
Foreman, Magidor and Shelah \cite{MR924672},
Veli\v{c}kovi\'{c} \cite{MR1174395},
Todor\v{c}evi\'{c} \cite{MR1261218},
Foreman and Todor\-\v{c}evi\'{c} \cite{MR2115072},
Moore \cite{MR2224055},
Viale \cite{MR2225888},
Rinot \cite{MR2431057},
Shelah \cite{MR2369124},
Fuchino and Rinot \cite{MR2784001},
and Sakai \cite{MR3372613}
add up to a long list of compactness principles that are sufficient to imply the $\sch$.

In \cite{PartI}, we introduced a new class of Prikry-type forcing called \emph{$\Sigma$-Prikry}
and showed that many of the standard Prikry-type forcing for violating SCH at the level of a singular cardinal of countable cofinality fits into this class.
In addition, we verified that Sharon's forcing for killing a single non-reflecting stationary set fits into this class.
Then, in \cite{PartII}, we devised a general iteration scheme for $\Sigma$-Prikry forcing.
From this, we constructed a model of the failure of SCH at $\kappa$ with stationary reflection at $\kappa^+$. 
The said model is a generic extension obtained by 
first violating the $\sch$ using EBPF, and then carrying out an iteration of length $\kappa^{++}$
of the $\Sigma$-Prikry posets to kill all non-reflecting stationary subsets of $\kappa^+$.

Independently, and around the same time, Ben-Neria, Hayut and Unger \cite{BHU} also obtained the consistency of the failure of SCH at $\kappa$ with stationary reflection at $\kappa^+$.
Their proof differs from ours in quite a few aspects;
we mention just two of them. 
First, they cleverly avoid the need to carry out iterated forcing, by invoking iterated ultrapowers, instead.
Second, instead of EBPF,
they violate $\sch$ by using Gitik's very recent forcing \cite{MR4045995} which is also applicable to cardinals of uncountable cofinality.
In addition, the authors devote an entire section to the countable cofinality case, where they get the desired reflection pattern out of a partial supercompact cardinal.
An even simpler proof was then given by Gitik in \cite{GitikRef}.

Still, in all of the above, the constructions are for a singular cardinal $\kappa$ that is very high up; more precisely, $\kappa$ is a limit of inaccessible cardinals.
Obtaining a similar construction for $\kappa=\aleph_\omega$ is quite more difficult, as it involves interleaving collapses.
This makes key parts of the forcing no longer closed, and closure is an essential tool to make use of the indestructibility of the  supercompact cardinals when proving reflection.

In this paper, we extend the machinery developed in \cite{PartI,PartII} to support
interleaved collapses, and show that this new framework captures Gitik's EBPF with interleaved collapses \cite{GitikCollapsin}.
The new class is called $(\Sigma,\vec{\mathbb{S}})$-Prikry.
Finally, by running our iteration of $(\Sigma,\vec{\mathbb{S}})$-Prikry forcings
over a suitable ground model, we establish that Magidor's compactness and incompactness results can indeed co-exist:

\begin{mainthm} Assuming the consistency of infinitely many supercompact cardinals,
it is consistent that all of the following hold:
\begin{enumerate}
\item $2^{\aleph_n}=\aleph_{n+1}$ for all $n<\omega$;
\item $2^{\aleph_\omega}=\aleph_{\omega+2}$;
\item every stationary subset of $\aleph_{\omega+1}$ reflects.
\end{enumerate}
\end{mainthm}

\subsection{Road map} 
Let us give a high-level overview of the proof of the Main Theorem,
and how it differs from the theory developed in Parts I and II of this series.
Here, we assume that the reader has some familiarity with \cite{PartI,PartII}.

\begin{itemize}
\item Recall that the class of $\Sigma$-Prikry forcing provably does not add bounded sets (due to the Complete Prikry Property, $\CPP$) and typically does not collapse cardinals (due to Linked$_0$-property, which provides a strong chain condition).
Since in this paper, we do want to collapse some cardinals, we introduce a bigger class that enables it.
Similarly to what was done in the prequels,
we shall ultimately carry out a transfinite iteration of posets belonging to this class, 
but to have control of the cardinal collapses,
we will be limiting the collapses to take place only on the very first step of the iteration.

This leads to the definition of our new class --- $(\Sigma,\vec{\mathbb{S}})$-Prikry --- 
in which the elements of $\vec{\mathbb{S}}$ are the sole responsible for adding bounded sets.
This class is presented in Section~\ref{weaklySigmaPrikrysection}.
But, before we can get there, there is something more basic to address:

\item Recall that the successive step of the iteration scheme for the $\Sigma$-Prikry  class 
involves a functor that, to each $\Sigma$-Prikry poset $(\mathbb P,\ell_{\mathbb P},c_{\mathbb P})$ and some `problem' $z$ produces a new $\Sigma$-Prikry poset $(\mathbb A,\ell_{\mathbb A},c_{\mathbb A})$ that admits a \emph{forking projection} to 
$(\mathbb P,\ell_{\mathbb P},c_{\mathbb P})$ and `solves' $z$.
In particular, each layer $\mathbb A_n$ projects to $\mathbb P_n$, but while collapses got an entry permit, it is not the case that the $\mathbb A_n$'s admit a decomposition of the form (closed enough forcing) $\times$ (small collapsing forcing).

This leads to the concept of \emph{nice projections} that we study in Section~\ref{SectionNiceProje}.
This concept plays a role in the very definition of the $(\Sigma,\vec{\mathbb{S}})$-Prikry class,
and enables us to analyze the patterns of stationary reflection taking place in various models of interest.
Specifically, a $4$-cardinal \emph{suitability for reflection} property is identified with the goal of, later on, linking 
two different notions of sparse stationary sets.

\item In Section~\ref{SectionGitikForcing}, we prove that Gitik's Extender Based Prikry Forcing with Collapses (EBPFC) fits into the $(\Sigma,\vec{\mathbb{S}})$-Prikry framework. 
We analyze the preservation of cardinals in the corresponding generic extension and, in Subsection~\ref{sec43}, we show that EBPFC is suitable for reflection.

\item Recall that in \cite[\S2.3]{PartII}, we showed how the existence of a forking projection from
$(\mathbb A,\ell_{\mathbb A},c_{\mathbb A})$ to a $\Sigma$-Prikry poset $(\mathbb P,\ell_{\mathbb P},c_{\mathbb P})$ 
can be used to infer that $(\mathbb A,\ell_{\mathbb A},c_{\mathbb A})$ is $\Sigma$-Prikry.
This went through arguing that a forking projection that has the \emph{weak mixing property} 
inherits a winning strategy for a diagonalizability game (Property $\mathcal D$) and the $\CPP$ from $(\mathbb P,\ell_{\mathbb P},c_{\mathbb P})$.
In Section~\ref{SectionNiceForking}, we briefly extend these results to the $(\Sigma,\vec{\mathbb{S}})$-Prikry class,
by imposing the niceness feature of Section~\ref{SectionNiceProje} on the definition of forking projections.
An additional stronger notion (\emph{super nice}) of forking projections is defined, but is not used before we arrive at Section~\ref{Iteration}.

Section~\ref{SectionNiceForking} concludes with a sufficient condition for nice forking projections to preserve suitability for reflection.

\item The next step is to revisit the functor $\mathbb{A}(\cdot,\cdot)$ from \cite[\S4.1]{PartII} for killing one non-reflecting stationary set.
As the elegant reduction obtained at end of \cite[\S5]{PartI} is untrue in the new context of $(\Sigma,\vec{\mathbb{S}})$-Prikry forcing,
we take a step back and look at \emph{fragile} stationary sets.
In Subsection~\ref{killingonesubsection},
we prove that, given a $(\Sigma, \vec{\mathbb{S}})$-Prikry poset $\mathbb{P}$ and a fragile stationary set $\dot{T}$, a variation of $\mathbb{A}(\cdot,\cdot)$ 
yields a $(\Sigma, \vec{\mathbb{S}})$-Prikry poset admitting a nice forking projection to $\mathbb{P}$ and killing the stationarity of $\dot{T}$.
The new functor now uses collapses in the domains of the strategies. 
To appreciate this difference, compare the proof of Theorem~\ref{kilingfragiles} with that of \cite[Theorem~6.8]{PartI}.

Then, in Subsection~\ref{subsection62}, we present a sufficient condition for linking fragile stationary sets with non-reflecting stationary sets.

\item In Section~\ref{Iteration}, we present an iteration scheme for $(\Sigma,\vec{\mathbb{S}})$-Prikry forcings.
Luckily, the scheme from \cite[\S3]{PartII} is successful at accommodating the new class,
once the existence of a coherent system of (super) nice projections is assumed.

\item In Section~\ref{ReflectionAfterIteration}, we construct the model witnessing our Main Theorem.
We first arrange a ground model $V$ of $\gch$ with an increasing sequence $\langle \kappa_n\mid n<\omega\rangle$ of supercompact cardinals that are indestructible under $\kappa_n$-directed-closed notions of forcing that preserve the $\gch$.
Setting $\kappa:=\sup_{n<\omega}\kappa_n$, we then carry out an iteration of length $\kappa^{++}$
of $(\Sigma,\vec{\mathbb{S}})$-Prikry.
The first step is Gitik's EBPFC to collapse $\kappa$ to $\aleph_\omega$,
getting $\gch$ below $\aleph_\omega$ and $2^{\aleph_\omega}=\aleph_{\omega+2}$.
Using the strong chain condition of the iteration, we fix a bookkeeping of all names of potential fragile stationary sets.
Each successor stage $\mathbb P_{\alpha+1}$ of the iteration is obtained by invoking the functor $\mathbb A(\cdot,\cdot)$ with respect to $\mathbb P_\alpha$ and a name for one fragile stationary set given by the bookkeeping list.
At limit stages $\mathbb P_\alpha$, we verify that the iteration remains $(\Sigma,\vec{\mathbb{S}})$-Prikry.
At all stages $\alpha<\kappa^{++}$, we verify that the initial $\gch$ is preserved 
and that this stage is suitable for reflection. From this, 
we infer that in the end model $V^{\mathbb P_{\kappa^{++}}}$, not only that there are no fragile stationary sets,
but, in fact, there are no non-reflecting stationary sets. So, Clauses (1)--(3) all hold in $V^{\mathbb P_{\kappa^{++}}}$.

\end{itemize}
\subsection{Notation and conventions}

Our forcing convention is that $p\le q$ means that $p$ extends $q$.
We write $\cone{q}$ for $\{ p\in\mathbb P\mid p\le q\}$.
Denote $E^\mu_{\theta}:=\{\alpha<\mu\mid \cf(\alpha)=\theta\}$. The sets $E^\mu_{<\theta}$ and $E^\mu_{>\theta}$ are defined in a similar fashion.
For a stationary subset $S$ of a regular uncountable cardinal $\mu$, we write $\tr(S):=\{\delta\in E^\mu_{>\omega}\mid S\cap\delta\text{ is stationary in }\delta\}$.
$H_\nu$ denotes the collection of all sets of hereditary cardinality less than $\nu$.
For every set of ordinals $x$, we denote $\cl(x):=\{ \sup(x\cap\gamma)\mid \gamma\in\ord, x\cap\gamma\neq\emptyset\}$,
and $\acc(x):=\{\gamma\in x\mid \sup(x\cap\gamma)=\gamma>0\}$.
We write $\ch_\mu$ to denote $2^\mu=\mu^+$ and $\gch_{<\nu}$ as a shorthand for $\ch_{\mu}$ holds for every infinite cardinal $\mu<\nu$.

For a sequence of maps $\vec{\varpi}=\langle \varpi_n\mid n<\omega\rangle$ and yet a another map $\pi$ such that $\rng(\pi)\s \bigcap_{n<\omega}\dom(\varpi_n)$,
we let $\vec{\varpi}\bullet\pi$ denote $\langle \varpi_n\circ\pi\mid n<\omega\rangle$.

\section{Nice projections and reflection}\label{SectionNiceProje}

\begin{definition}\label{OrderModuloVarpi} Given a poset $\mathbb P=(P,\le)$ with greatest element $\one$ and a map $\varpi$ with $\dom(\varpi)\supseteq P$,
we derive a poset $\mathbb P^\varpi:=(P,\le^\varpi)$ by letting $$p\LE^\varpi q\text{ iff }(p=\one\;\text{or}\; (p\LE q \text{ and }\varpi(p)=\varpi(q))).$$
\end{definition}

\begin{definition}\label{niceprojection} For two notions of forcing $\mathbb P=(P,\le)$ and $\mathbb S=(S,\sle)$
with maximal elements $\one_{\mathbb P}$ and $\one_{\mathbb S}$, respectively,
we say that a map $\varpi:P\rightarrow S$ is a \emph{nice projection from $\mathbb P$ to $\mathbb S$} iff all of the following hold:
\begin{enumerate}
\item \label{niceprojection1} $\varpi(\one_{\mathbb P})=\one_{\mathbb S}$;
\item \label{niceprojection2}  for any pair $q\LE p$ of elements of $P$, $\varpi(q)\SLE \varpi(p)$;
\item \label{niceprojection3}  for all $p\in P$ and $s\SLE\varpi(p)$,
the set $\{ q\in P\mid q\LE p\land  \varpi(q)\sle s\}$ admits a $\LE$-greatest element,\footnote{By convention, a greatest element, if exists, is unique.}
which we denote by $p+s$. Moreover, $p+s$ has the additional property that $\varpi(p+s)=s$;
\item\label{theprojection} for every $q\LE p+s$, there is $p'\LE^\varpi p$ such that $q=p'+\varpi(q)$; In particular, the map $(p',s')\mapsto p'+s'$ forms a projection from $(\mathbb{P}^\varpi\downarrow p)\times (\mathbb{S}\downarrow s)$ onto $\mathbb{P}\downarrow p$.
\end{enumerate}
\end{definition}

\begin{example}\label{exampleexact} If $\mathbb P$ is a product of the form $\mathbb S\times\mathbb T$,
then the map $(s,t)\mapsto s$ forms a nice projection from $\mathbb P$ to $\mathbb S$.
\end{example}

Note that the composition of nice projections is again a nice projection.

\begin{definition}\label{DefinitionQuotient}
Let $\mathbb P=(P,\le)$ and $\mathbb S=(S,\sle)$
be two notions of forcing and $\varpi:P\rightarrow S$ be a nice projection.
For an $\mathbb{S}$-generic filter $H$, we define \emph{the quotient forcing} $\mathbb{P}/H:=(P/H,\LE_{\mathbb{P}/H})$ as follows:
\begin{itemize}
\item $P/H:=\{p\in P\mid \varpi(p)\in H\}$;
\item for all $p,q\in P/H$, $q\LE_{\mathbb{P}/H} p$ iff there is $s\in H$ with $s\sle \varpi(q)$ such that $q+s\LE p$.
\end{itemize}
\end{definition}

\begin{remark}\label{remarkquotient}
In a slight abuse of notation, we tend to write $\mathbb{P}/\mathbb{S}$ when referring to a quotient as above, without specifying the generic for $\mathbb{S}$
or the map $\varpi$.
By standard arguments, $\mathbb{P}$ is isomorphic to a dense subposet of $\mathbb{S}\ast {\mathbb{P}}/\mathbb{S}$ (see \cite[p.~337]{Abra}).
\end{remark}

\begin{lemma}\label{lemma27} Suppose that $\varpi:\mathbb{P}\rightarrow \mathbb{S}$ is a nice projection.
Let $p\in P$ and set $s:=\varpi(p)$.
For any condition $a\in \mathbb{S}\downarrow s$,
define an ordering $\le_a$ over $\mathbb{P}^\varpi\downarrow p$ by letting $p_0\le_a p_1$
iff $p_0+a\le p_1+a$.\footnote{Strictly speaking, $\le_a$ is reflexive and transitive, but not asymmetric. But this is also always the case, for instance, in iterated forcing.} Then:
\begin{enumerate}
\item
$(\mathbb{S}\downarrow a)\times((\mathbb{P}^\varpi\downarrow p), {\leq_a})$ projects to $\mathbb{P}\downarrow (p+a)$, and
\item
$((\mathbb{P}^\varpi\downarrow p),\leq_{a})$ projects to  $((\mathbb{P}^\varpi\downarrow p), \leq_{a'})$ for all $a'\sle a$.\footnote{Taking $a=s$ we have in particular that  $\mathbb{P}^\varpi\downarrow p$ projects to  $((\mathbb{P}^\varpi\downarrow p), \leq_{a'})$.}
\item
If $\mathbb{P}^\varpi$ contains a $\delta$-closed dense set, then so does $((\mathbb{P}^\varpi\downarrow p), \leq_a)$.
\end{enumerate}
\end{lemma}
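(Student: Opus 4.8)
The plan is to prove the three clauses of Lemma~\ref{lemma27} in order, each time leaning on the niceness axioms of Definition~\ref{niceprojection} —- in particular on Clause~\eqref{theprojection}, which already gives us a projection from $(\mathbb{P}^\varpi\downarrow p)\times(\mathbb{S}\downarrow s)$ onto $\mathbb{P}\downarrow p$ via $(p',s')\mapsto p'+s'$ —- and then relativizing below the fixed condition $a$.

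\textbf{Clause (1).} I would define $\sigma\colon(\mathbb{S}\downarrow a)\times((\mathbb{P}^\varpi\downarrow p),\le_a)\to \mathbb{P}\downarrow(p+a)$ by $\sigma(b,p'):=p'+b$. First one checks this is well-defined: since $p'\le^\varpi p$ means either $p'=\one$ or ($p'\le p$ and $\varpi(p')=\varpi(p)=s$), and $b\sle a\sle s=\varpi(p')$, the element $p'+b$ is defined by Clause~\eqref{niceprojection3}; moreover $p'+b\le p'+a\le p+a$ using that $b\sle a$ and monotonicity of $s'\mapsto p'+s'$ (which itself follows from Clause~\eqref{niceprojection3} applied twice, as $p'+a$ witnesses that $p'+b$ can be taken $\le p'+a$). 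Order-preservation is immediate: if $(b_0,p_0)\le(b_1,p_1)$ then $b_0\sle b_1$ and $p_0+a\le p_1+a$, and one argues $p_0+b_0\le p_1+b_1$ by first noting $p_0+b_0\le (p_0+a)+b_0$-type manipulations, or more directly by invoking that $(p',s')\mapsto p'+s'$ is a projection on the un-relativized product and $b_0\sle\varpi(p_0+b_0)$. For the projection property proper, given $q\le p'+b=\sigma(b,p')$ with $q\le p+a$, apply Clause~\eqref{theprojection} to get $p''\le^\varpi p$ with $q=p''+\varpi(q)$; set $b':=\varpi(q)\sle b\sle a$, and check $(b',p'')\le(b,p')$ in the product order, the nontrivial part being $p''+a\le p'+a$, which should follow because $q=p''+\varpi(q)\le p'+b\le p'+a$ forces $p''\le^\varpi p'$ (again by \eqref{theprojection}) hence $p''+a\le p'+a$.

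\textbf{Clause (2).} For $a'\sle a$, I claim the identity map $\mathrm{id}\colon((\mathbb{P}^\varpi\downarrow p),\le_a)\to((\mathbb{P}^\varpi\downarrow p),\le_{a'})$ is a projection. It preserves order since $p_0+a\le p_1+a$ implies $p_0+a'\le p_1+a'$ (the map $c\mapsto p_i+c$ is monotone in $c$, and if $p_0+a\le p_1+a$ then below $a'$ we inherit the inequality -— concretely, $p_0+a'\le (p_0+a)+a' \le (p_1+a)+a'$ and $(p_1+a)+a'=p_1+a'$ because $p_1+a$ already has $\varpi$-value $a\sge a'$ and sits below $p_1$). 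For the projection property, given $p_0\le_{a'}p_1$, i.e. $p_0+a'\le p_1+a'$, I need $p_0'\le_a p_1$ with $p_0'\le_{a'}p_0$; the natural candidate is $p_0':=(p_1+a)+$``the part of $p_0$'', but cleaner is to set $p_0'$ to be the condition obtained by applying \eqref{theprojection} to $p_0+a'\le p_1+a'$ relative to the base $p_1+a$ — namely there is $p_0'\le^\varpi (p_1+a)$, hence $p_0'\le^\varpi p_1$, with $p_0+a'=p_0'+\varpi(p_0+a')$; one then verifies $p_0'+a\le p_1+a$ and $p_0'+a'=p_0+a'$, which gives $p_0'\le_a p_1$ and $p_0'\le_{a'}p_0$ as required. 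The footnote case $a=s$ then follows since $\le_s$ is just $\le^\varpi$ restricted below $p$ (because $p'+s=p'$ for $p'\le^\varpi p$, using $\varpi(p')=s$ and the ``greatest element'' characterization).

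\textbf{Clause (3).} Suppose $D\s\mathbb{P}^\varpi$ is $\delta$-closed and dense. I would take $D':=\{p'\in D\mid p'\le^\varpi p\}$ and show it is $\le_a$-dense and $\le_a$-closed below $p$. Density: given any $p'\le^\varpi p$, by density of $D$ there is $p''\le^\varpi p'$ in $D$, and $p''\le^\varpi p$, so $p''\in D'$; and $p''\le^\varpi p'$ implies $p''+a\le p'+a$, i.e. $p''\le_a p'$. Closure: a decreasing (in $\le_a$) sequence $\langle p_i\mid i<\gamma\rangle$ from $D'$, $\gamma<\delta$, satisfies $p_{j}+a\le p_i+a$ for $i<j$; I would argue this lifts to a $\le^\varpi$-decreasing sequence —- the subtle point being whether $\le_a$-decreasing implies $\le^\varpi$-decreasing. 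It does not in general, so instead I would inductively build a genuine $\le^\varpi$-lower bound: at each stage, having a $\le^\varpi$-decreasing sequence in $D$ that is $\le_a$-equivalent to the $p_i$'s, take its $D$-lower bound $p_\gamma^\ast$, then replace it by $p_\gamma:=p_\gamma^\ast+s$ if needed to land below $p$ — but the honest fix is to observe that $\le_a$-closure of $D'$ follows from $\le^\varpi$-closure of $D$ together with the fact that for $p',p''\le^\varpi p$ with $p''\le_a p'$, there is $\bar p\le^\varpi p'$ in $D$ with $\bar p\le_a p''$ (apply density and Clause~\eqref{theprojection}), so one threads a $\le^\varpi$-decreasing sequence $\langle \bar p_i\rangle$ through $D'$ interleaved with the $p_i$'s, takes its $D$-lower bound $\bar p_\gamma\in D$, and checks $\bar p_\gamma\le^\varpi p$ so $\bar p_\gamma\in D'$, and $\bar p_\gamma\le_a p_i$ for all $i$.

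\textbf{Main obstacle.} The delicate point throughout —- and the one I would be most careful about —- is the asymmetry of $\le_a$ noted in the footnote: $\le_a$ is a preorder, not a partial order, and $p_0\le_a p_1$ does not imply $p_0\le^\varpi p_1$. So in Clauses (1) and (3) one cannot simply pull inequalities back from $\mathbb{P}\downarrow(p+a)$ to $\mathbb{P}^\varpi\downarrow p$ on the nose; the repeated use of Clause~\eqref{theprojection} (to factor any $q\le p'+s'$ as $p''+\varpi(q)$ with $p''\le^\varpi p'$) is exactly the tool that repairs this, and getting the bookkeeping of which base condition to factor over exactly right is where the real work lies.
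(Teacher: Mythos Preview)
Clauses~(1) and (2) use the same maps as the paper and are essentially fine, modulo one slip in (1): you apply Clause~\eqref{theprojection} with base $p$ and then try to argue $p''\le^\varpi p'$ by a second appeal to it, but \eqref{theprojection} gives no uniqueness, so the second application only yields \emph{some} $\tilde p\le^\varpi p'$, not necessarily your $p''$. The fix is to apply \eqref{theprojection} once with base $p'$ (legitimate since $q\le p'+b$), obtaining $p''\le^\varpi p'$ directly; then $p''\le^\varpi p$ by transitivity and $p''+a\le p'+a$ follows.

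The real gap is in Clause~(3). Your candidate $D':=D\cap(\mathbb{P}^\varpi\downarrow p)$ is $\le_a$-dense, but the interleaving argument for $\le_a$-closure does not go through: to invoke your step at stage $i+1$ with $p':=\bar p_i$ and $p'':=p_{i+1}$ you would need $p_{i+1}\le_a\bar p_i$, yet you only know $\bar p_i\le_a p_i$ and $p_{i+1}\le_a p_i$, which gives no relation between $p_{i+1}+a$ and $\bar p_i+a$ --- they may well be incompatible in $\mathbb{P}$. No reindexing of the interleaving repairs this. The paper sidesteps the problem by choosing a different dense set, $D_a:=\{r\in\mathbb{P}^\varpi\downarrow p\mid r+a\in D\}$. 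The point is that if $\langle p_i\mid i<\tau\rangle$ is $\le_a$-decreasing in $D_a$, then $\langle p_i+a\mid i<\tau\rangle$ is genuinely $\le^\varpi$-decreasing \emph{and already lies in $D$}, so it has a $\le^\varpi$-lower bound $q\in D$ with $\varpi(q)=a$; one more use of \eqref{theprojection} writes $q=p^*+a$ with $p^*\le^\varpi p$, and $p^*\in D_a$ is the desired bound. The choice of $D_a$ is exactly what converts the preorder $\le_a$ on the $p_i$'s into an honest $\le^\varpi$-order on the translates $p_i+a$ inside $D$, eliminating any need to interleave.
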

\begin{proof}
Note that for $p_0, p_1$ in $\mathbb{P}^\varpi\downarrow p$:
\begin{itemize}
\item
$p_0\le_sp_1$ iff  $p_0\leq^\varpi p_1$, and so $(\mathbb{P}^\varpi\downarrow p,\leq_{s})$ is simply $\mathbb{P}^\varpi\downarrow p$;
\item if $p_0\le_ap_1$, then $p_0\le_{a'}p_1$ for any $a'\sle a$;
\item in particular, if $p_0\le^\varpi p_1$, then $p_0\le_ap_1$ for any $a$ in $\mathbb S\downarrow s$.
\end{itemize}

The first projection is given by $(a',r)\mapsto r+a'$,
and the second projection is given by the identity.

For the last statement, denote  $\mathbb{P}_a:=((\mathbb{P}^\varpi\downarrow p), \leq_a)$ and let $D$ be a  $\delta$-closed dense subset of $\mathbb{P}^\varpi$. We claim that $D_a:=\{r\in \mathbb{P}^\varpi\downarrow p\mid r+a\in D\}$ is a $\delta$-closed dense subset of $\mathbb{P}_a$. For the density, if $r\in \mathbb{P}^\varpi\downarrow p$, let $q\leq^\varpi r+a$ be in $D$. Then,  by Clause~\eqref{theprojection} of 
Definition \ref{niceprojection}, $q=r'+a$ for some $r'\leq^\varpi r$, and so $r'\leq_a r$ and $r'\in D_a$. For the  closure,  suppose that $\langle p_i\mid i<\tau\rangle$ is a 
$\LE_{\mathbb{P}_a}$-decreasing sequence in $D_a$ for some $\tau<\delta$. Setting $q_i:=p_i+a$, that means that $\langle q_i\mid i<\tau\rangle$ is a $\LE^\varpi$-decreasing sequence in $D$ and so has a lower bound $q$. More precisely, $q\in D$ and for each $i$, $q\LE q_i$ and $\varpi(q)=\varpi(q_i)=a$. Let $p^*\LE^\varpi p$, be such that $p^*+a=q$. Here again we use  Clause~\eqref{theprojection} of Definition~\ref{niceprojection}. Then $p^*\in D_a$, which is the desired $\LE_{\mathbb{P}_a}$-lower bound.
\end{proof}

The next lemma clarifies the relationship between the different generic extensions that we will be considering:

\begin{lemma}\label{TowerOfExtensions}
Suppose that $\varpi:\mathbb{P}\rightarrow \mathbb{S}$, $p\in P$, $s:=\varpi(p)$ and  $\le_a$ for $a$ in $ \mathbb{S}\downarrow s$ are as in the above lemma. Let $G$ be $\mathbb{P}$-generic with $p\in G$.

Next, let $H\times G^*$ be $((\mathbb{S}\downarrow s)\times (\mathbb{P}^\varpi\downarrow p))/G$-generic over $V[G]$. 
For each $a\in H$, let $G_a$ be the $((\mathbb{P}^\varpi\downarrow p), \leq_a)$-generic filter obtained from $G^*$.
Then:
\begin{enumerate}
\item For any $a\in H$, $V[G]\s V[H\times G_a]\s V[H\times G^*]$, and $G\supseteq G_a\supseteq G^*$;
\item For any pair $a'\sle a$ of elements of $H$, $V[H\times G_{a'}]\s V[H\times G_a]$, and $G_{a'}\supseteq G_a$;
\item $G\cap (\mathbb{P}^{\varpi}\downarrow p)=\bigcup_{a\in H}G_a$. 
\end{enumerate}
\end{lemma}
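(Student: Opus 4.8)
The plan is to first pin down the exact relationship between $G$, $H$ and $G^*$, from which all three clauses follow by short computations with the operation $+$. Write $\mathbb D:=(\mathbb S\downarrow s)\times(\mathbb P^\varpi\downarrow p)$ and let $\rho\colon\mathbb D\to\mathbb P\downarrow p$ be the projection $\rho(s',r):=r+s'$ furnished by Clause~\eqref{theprojection} of Definition~\ref{niceprojection} (note $p+s=p$, as $\varpi(p)=s$). Since $p\in G$, the set $G_0:=G\cap(\mathbb P\downarrow p)$ is $(\mathbb P\downarrow p)$-generic over $V$ with $V[G]=V[G_0]$. Running the standard quotient analysis of Remark~\ref{remarkquotient} for $\rho$, the generic $G_0\ast(H\times G^*)$ corresponds to a $\mathbb D$-generic over $V$; since $\mathbb D$ is a product, $H$ and $G^*$ are mutually generic over $V$ for $\mathbb S\downarrow s$ and $\mathbb P^\varpi\downarrow p$ respectively, $V[H\times G^*]=V[H][G^*]$, and
\begin{equation*}
G_0=\{\,q\in\mathbb P\downarrow p\mid \exists s'\in H\ \exists r\in G^*\ (r+s'\le q)\,\}.\tag{$\star$}
\end{equation*}
Finally, $G_a$ is by definition the filter induced from $G^*$ through the projection $\mathrm{id}\colon\mathbb P^\varpi\downarrow p\to(\mathbb P^\varpi\downarrow p,\le_a)$ of Lemma~\ref{lemma27}(2), i.e.\ the $\le_a$-upward closure of $G^*$ inside $\mathbb P^\varpi\downarrow p$; hence $G^*\subseteq G_a$, and since $\le_a$ refines $\le_{a'}$ whenever $a'\sle a$, also $G_a\subseteq G_{a'}$ for such $a,a'$. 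This already gives the inclusions $G_a\supseteq G^*$ of Clause~(1) and $G_{a'}\supseteq G_a$ of Clause~(2).

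Next I would dispatch the remaining filter inclusions. If $r\in G_a$, pick $r^*\in G^*$ with $r^*+a\le r+a$; then $r^*+a\le r+a\le r$, while $r^*+a\in G_0$ by $(\star)$ (with $s'=a\in H$ and $r^*\in G^*$), so $r\in G$. This yields $G\supseteq G_a\supseteq G^*$, completing the filter part of Clause~(1). For Clause~(3), $\bigcup_{a\in H}G_a\subseteq G\cap(\mathbb P^\varpi\downarrow p)$ is immediate from $G_a\subseteq G$ and $G_a\subseteq\mathbb P^\varpi\downarrow p$. Conversely, fix $r\in G\cap(\mathbb P^\varpi\downarrow p)$; we may assume $r\neq\one$ (the case $r=\one$ is trivial), so $r\le p$, whence $r\in G_0$, and $(\star)$ provides $s'\in H$ and $r'\in G^*$ with $r'+s'\le r$. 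Since $\varpi(r'+s')=s'$, the maximality clause of Definition~\ref{niceprojection}\eqref{niceprojection3} forces $r'+s'\le r+s'$, i.e.\ $r'\le_{s'}r$, so $r\in G_{s'}$ with $s'\in H$. Thus $G\cap(\mathbb P^\varpi\downarrow p)=\bigcup_{a\in H}G_a$.

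It remains to compare the models. The inclusions $V[G]\subseteq V[H\times G^*]$, $V[H\times G_a]\subseteq V[H\times G^*]$, and $V[H\times G_{a'}]\subseteq V[H\times G_a]$ (for $a'\sle a$) are routine: the first since $H\times G^*$ is generic over $V[G]$, and the other two since $G_a$ is the $\le_a$-upward closure of $G^*$ and $G_{a'}$ is the $\le_{a'}$-upward closure of $G_a$, while $\le_a$ and $\le_{a'}$ are computed from the ground-model data $\le$ and $+$. The substantive inclusion is $V[G]\subseteq V[H\times G_a]$, i.e.\ recovering $G$ (equivalently $G_0$) from $H$ and the \emph{coarser} generic $G_a$; for this I would show
\begin{equation*}
G_0=\{\,q\in\mathbb P\downarrow p\mid \exists s'\in H\ \exists r\in G_a\ (r+s'\le q)\,\},
\end{equation*}
a set manifestly definable in $V[H][G_a]=V[H\times G_a]$. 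The inclusion of $G_0$ in the right-hand side is $(\star)$ together with $G^*\subseteq G_a$. For the reverse, it suffices to check that $r+s'\in G_0$ whenever $r\in G_a$ and $s'\in H$: pick $r^*\in G^*$ with $r^*\le_a r$, and --- using that $a$ and $s'$ are compatible in the filter $H$ --- pick $b\in H$ with $b\sle a$ and $b\sle s'$. Then $r^*\le_b r$ (monotonicity of $\le_a$ in $a$, cf.\ the proof of Lemma~\ref{lemma27}), hence $r^*+b\le r+b\le r+s'$, the last step by the maximality clause of Definition~\ref{niceprojection}\eqref{niceprojection3} since $b\sle s'$; as $r^*+b\in G_0$ by $(\star)$ and $G_0$ is upward closed in $\mathbb P\downarrow p$, we conclude $r+s'\in G_0$.

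I expect the only delicate point to be the first paragraph: extracting, from the hypothesis that $H\times G^*$ is generic for the \emph{quotient} $\mathbb D/G$ over $V[G]$, the clean description $(\star)$ of $G_0$ in terms of two \emph{mutually $V$-generic} filters. Once $(\star)$ is in hand, everything else is bookkeeping, the operative inputs being the monotonicity of $\le_a$ in $a$ and the maximality property of the operation $p+s$, exploited through the compatibility of $a$ and $s'$ inside the filter $H$.
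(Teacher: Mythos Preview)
Your argument is correct. The organization differs slightly from the paper's: you front-load the work into the characterization $(\star)$ of $G_0$ coming from the standard quotient analysis for the projection $\rho(s',r)=r+s'$, and then derive all three clauses by short $+$-computations. The paper instead dispatches Clauses~(1) and~(2) by merely citing the projections of Lemma~\ref{lemma27}, and for Clause~(3) argues via a density set in $V[H]$: given $r^*\in G\cap(\mathbb P^\varpi\downarrow p)$ it shows that $D=\{r\mid(\exists a\in H)(r\le_a r^*)\lor r\perp_{\mathbb P/H}r^*\}$ is dense in $\mathbb P^\varpi\downarrow p$ and then intersects $D$ with $G^*$, using that $r,r^*\in G$ are compatible to rule out the second disjunct. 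Your direct use of $(\star)$ sidesteps the auxiliary quotient $\mathbb P/H$ and the density claim, at the cost of invoking the standard two-step/quotient correspondence; conversely, the paper's route is more self-contained at Clause~(3) but leaves the ``substantive'' inclusion $V[G]\subseteq V[H\times G_a]$ of Clause~(1) to the reader, which you verify explicitly by exhibiting a formula for $G_0$ in terms of $H$ and $G_a$. Both approaches ultimately hinge on the same two ingredients you isolate: monotonicity of $\le_a$ in $a$ and the maximality clause of Definition~\ref{niceprojection}\eqref{niceprojection3}.
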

\begin{proof}
For notational convenience, denote $\mathbb{P}^*:=\mathbb{P}^{\varpi}\downarrow p$  and $\mathbb{S}^*:={\mathbb{S}\downarrow s}$. 

The first two items follow from the corresponding choices of the projections in Lemma~\ref{lemma27}.
For the third item, first note that  $\bigcup_{a\in H} G_a\s \mathbb{P}^*\cap G$. Suppose that $r^*\in G\cap\mathbb{P}^*$. In $V[H]$, define $$D:=\{r\in P^*\mid(\exists a\in H) (r\leq_a r^*)\lor r\perp_{\mathbb{P}/H}r^*\}.\footnote{Since $r \in P^*$ then $\varpi(r)=\varpi(p)\in H$ and thus $r$ is a condition in $\mathbb{P}/H$.}$$
\begin{claim}
$D$ is a dense set in $\mathbb{P}^*$.
\end{claim}
\begin{proof}
Let $r\in P^*$.  If $r\perp_{\mathbb{P}/H}r^*$, then $r\in D$, and we are done. So suppose that $r$ and $r^*$ are compatible in $\mathbb{P}/H$. Let $q\in P/H$ be such that, $q\le_{\mathbb{P}/H}r$ and $q\le_{\mathbb{P}/H}r^*$. Let $a\sle\varpi(q)$ in $H$ be such that $q+a\LE r, r^*$. By Clause~\eqref{theprojection} of Definition~\ref{niceprojection} for 
 $\varpi$ we may let $r'\LE^\varpi r$, be such that  $r'+a=q+a$. In particular,  $r'\le_a r^*$, and so $r'\in D$.
\end{proof}
Now let $r\in D\cap G^*$. Since both $r,r^*\in G$, it must be that $r\leq_a r^*$ for some $a\in H$. And since $r\in G^*\s G_a$, we get that $r^*\in G_a$.
\end{proof}

\begin{lemma}\label{lemma28}
Suppose that $\varpi:\mathbb{P}\rightarrow \mathbb{S}$ is a nice projection and that $\delta<\kappa$ is a pair of infinite regular cardinals for which the following hold:
\begin{enumerate}
\item $|\mathbb{S}|<\delta$ and $\mathbb{P}^{\varpi}$ contains a $\delta$-directed-closed dense subset; 
\item After forcing with $\mathbb{S}\times \mathbb{P}^\varpi$,  $\kappa$ remains regular;
\item $V^\mathbb{P}\models``E^\kappa_{<\delta}\in I[\kappa]"$.\footnote{For the definition of the ideal $I[\kappa]$, see \cite[Definition 2.3]{ShelahBook}.}
\end{enumerate}

Let $p\in \mathbb{P}$ and set $s:=\varpi(p)$.
Then, for any $\mathbb{P}$-generic $G$ with $p\in G$,
the quotient $((\mathbb{S}\downarrow s)\times (\mathbb{P}^\varpi\downarrow p))/G$
preserves stationary subsets of $(E^\kappa_{<\delta})^{V[G]}$.
\end{lemma}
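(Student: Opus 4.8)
The plan is to fix an arbitrary $S\in V[G]$ that is stationary in $\kappa$ with $S\s(E^\kappa_{<\delta})^{V[G]}$, and to argue that it remains stationary after forcing over $V[G]$ with $\mathbb Q:=((\mathbb S\downarrow s)\times(\mathbb P^\varpi\downarrow p))/G$. There are three parts: analyzing the generic extension produced by $\mathbb Q$, extracting the right amount of closure for $\mathbb Q$ out of Lemmas~\ref{lemma27} and \ref{TowerOfExtensions}, and then running the standard argument that a sufficiently closed forcing preserves the stationary subsets of $E^\kappa_{<\delta}$ that lie in $I[\kappa]$ — which is available to us by hypothesis~(3). For the structure of the extension: fix a $\mathbb Q$-generic $H\times G^*$ over $V[G]$. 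By Remark~\ref{remarkquotient} and the first item of Lemma~\ref{lemma27} (with $a=s$), $V[G][H\times G^*]$ is a generic extension of $V$ by $(\mathbb S\downarrow s)\times(\mathbb P^\varpi\downarrow p)$; moreover, since the projection $(s',r)\mapsto r+s'$ composed with $\varpi$ is the first-coordinate map, the $(\mathbb S\downarrow s)$-generic $H$ is generated by $\varpi[G]$ and hence already belongs to $V[G]$. Consequently, over $V[G]$ the forcing $\mathbb Q$ reduces, up to a dense subset, to a poset built on $\mathbb P^\varpi\downarrow p$ whose ordering is coordinated through $H$ by the family $\{\le_a\mid a\in H\}$ of Lemma~\ref{lemma27}, and Lemma~\ref{TowerOfExtensions} exhibits the tower $\langle V[H\times G_a]\mid a\in H\rangle$ of intermediate models between $V[G]$ and $V[H\times G^*]$, with $\bigcup_{a\in H}G_a=G\cap(\mathbb P^\varpi\downarrow p)$. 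Using hypothesis~(2) and the fact that regularity of $\kappa$ descends to inner models, $\kappa$ remains regular in $V[G]$, in each $V[H\times G_a]$, and in $V[H\times G^*]$.

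The heart of the argument is then to show that this poset is $\delta$-strategically closed over $V[G]$, i.e.\ that the good player survives every play of length $<\delta$. Her strategy: commit once and for all to some $a_0\in H$ and keep her successive moves $\le_{a_0}$-decreasing. When the opponent plays a condition $r'$, she uses the (finite) directedness of the filter $H$ to pick $a\in H$ lying below both $a_0$ and a witness for ``$r'$ refines her previous move in $\mathbb Q$'', and then invokes the projection of $((\mathbb P^\varpi\downarrow p),\le_{a_0})$ onto $((\mathbb P^\varpi\downarrow p),\le_a)$ from the second item of Lemma~\ref{lemma27} to replace $r'$ by a $\le_{a_0}$-extension of her previous move that still refines $r'$ in $\mathbb Q$; at limit rounds she appeals to the $\delta$-closed dense subset of $((\mathbb P^\varpi\downarrow p),\le_{a_0})$ furnished by the third item of Lemma~\ref{lemma27}. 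Hypothesis~(1) — the smallness of $\mathbb S$ and the $\delta$-directed closure of a dense subset of $\mathbb P^\varpi$ — is exactly what feeds this analysis; along the way one also checks that $E^\kappa_{<\delta}\in I[\kappa]$ (hypothesis~(3), holding in $V[G]$) and the inclusion $(E^\kappa_{<\delta})^{V[G]}\s E^\kappa_{<\delta}$ persist to the relevant intermediate models, using that $\mathbb S$ is small while $\mathbb P^\varpi$ adds no short sequences.

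Granting this closure, the remainder is the textbook argument that a $\delta$-strategically closed notion of forcing preserves stationary subsets of $E^\kappa_{<\delta}$ lying in $I[\kappa]$, provided $\kappa$ stays regular. In $V[G]$, fix a sequence $\vec a$ and a club $E$ witnessing $S\in I[\kappa]$. Suppose towards a contradiction that some condition of $\mathbb Q$ forces a name $\dot C$ to be a club in $\kappa$ disjoint from $S$. Build in $V[G]$ a continuous $\in$-increasing chain $\langle N_\xi\mid\xi<\kappa\rangle$ of elementary submodels of a large $H_\theta^{V[G]}$, each of size $<\kappa$, containing all the relevant parameters, with $N_\xi\cap\kappa\in\kappa$ and $\langle N_\eta\mid\eta\le\xi\rangle\in N_{\xi+1}$. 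Since $S$ is stationary in $V[G]$, fix $\alpha=N_\alpha\cap\kappa\in S\cap E$, set $\rho:=\cf(\alpha)<\delta$, and fix a cofinal $A\s\alpha$ of order type $\rho$ all of whose proper initial segments are enumerated by $\vec a$, hence lie in $N_\alpha$. Now descend along $A$ in $\rho$ rounds: at round $i$ pass to an extension forcing some element of $\dot C$ into the interval strictly between the $i$-th element of $A$ and $\alpha$ (possible because the relevant conditions can be located inside $N_\alpha$), and pass the limit rounds using the good player's strategy. The resulting condition forces $\dot C\cap\alpha$ to be unbounded in $\alpha$, hence forces $\alpha\in\dot C$, contradicting $\dot C\cap S=\emptyset$. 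Therefore $S$ stays stationary, which is what we wanted.

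I expect the closure step to be the main obstacle. As flagged in the Road Map, $\mathbb Q$ does not split as a (small forcing)~$\times$~($\delta$-closed forcing), so Easton's lemma is unavailable and the strategy must be manufactured by hand from the orderings $\le_a$ and the tower of Lemma~\ref{TowerOfExtensions}. The two delicate points are: that the good player can always re-route the opponent's move back into the fixed $\le_{a_0}$ without losing the refinement — which is precisely what the projections of Lemma~\ref{lemma27} together with the (finite) directedness of $H$ deliver — and that the $\delta$-closure needed at limit rounds is genuinely available in the model in which the descent is carried out. A secondary but necessary verification is the transfer of the approachability data (both $E^\kappa_{<\delta}\in I[\kappa]$ and $(E^\kappa_{<\delta})^{V[G]}\s E^\kappa_{<\delta}$) between $V[G]$ and the intermediate models that appear.
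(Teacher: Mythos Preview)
Your re-routing at successor rounds is correct: if the opponent plays $r'$ with $r'\le_c r_{\text{prev}}$ for some $c\in H$ below $a_0$, the projection of $(\le_{a_0})$ onto $(\le_c)$ from Lemma~\ref{lemma27}(2) does supply an $r''\le_{a_0} r_{\text{prev}}$ with $r''\le_c r'$. The gap is exactly where you flag it---at limit rounds. The $\delta$-closed dense subset $D_{a_0}$ of $((\mathbb P^\varpi\downarrow p),\le_{a_0})$ furnished by Lemma~\ref{lemma27}(3) is $\delta$-closed in $V$, not in $V[G]$. Since $\mathbb P$ is not $\delta$-distributive (it factors through $\mathbb S$, which may well add new ${<}\delta$-sequences), the good player's $\le_{a_0}$-decreasing sequence at a limit round lives in $V[G]$ and need not have a lower bound. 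So the assertion that $\mathbb Q$ is $\delta$-strategically closed over $V[G]$ is unsupported, and with it the final Shelah-style descent collapses: the sequence you build inside $N_\alpha$ is a $V[G]$-sequence, and you have no closure there.

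The paper's argument is organized precisely to sidestep this. Rather than work over $V[G]$, it passes to the intermediate models $V[G_a]$, where no new ${<}\delta$-sequences appear because $\mathbb P_a$ has a $\delta$-closed dense subset in $V$. Over $V[G_a]$ the quotient $\mathbb P^*/G_a$ \emph{is} $\delta$-strategically closed, so (by Shelah's theorem together with the transferred approachability) it preserves stationary subsets of $E^\kappa_{<\delta}$; this forces $T$ to be nonstationary already in each $V[H\times G_a]$, yielding a club $\dot C_a$ disjoint from $T$ with a $\mathbb P_a$-name. The contradiction is then obtained by fixing a suitable $a$ and carrying out the recursive $\le_a$-descent \emph{in $V$}, where the closure is genuinely available: the elementary submodel $M$ is taken in $V[G]$, but the density statements needed at each round are first pulled back to $V$ via the forcing relation (the $(\dagger)$ and $(\dagger\dagger)$ reductions), and only then is the sequence $\langle q_i\mid i\le\chi\rangle$ built. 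This translation step---doing the bookkeeping in $V[G]$ but the actual closure in $V$---is the idea your proposal is missing.
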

\begin{proof} For the scope of the proof denote $\mathbb{P}^*:=\mathbb{P}^\varpi\downarrow p$ and $\mathbb{S}^*:=\mathbb{S}\downarrow s$. 

Let $H\times G^*$ be $(\mathbb{S}^*\times \mathbb{P}^*)/G$-generic over $V[G]$.
For each $a\in H$,  let $G_a$ be the $((\mathbb{P}^\varpi\downarrow p), \leq_a)$-generic obtained from $G^*$.
For notational convenience we will also denote  $\mathbb{P}_a:=((\mathbb{P}^\varpi\downarrow p), \leq_a)$. 
Combining Clause~(1) of our assumptions with Lemma~\ref{lemma27}(3) we have that $\mathbb{P}_a$ contains a  $\delta$-closed dense subset, hence it is $\delta$-strategically-closed. Standard arguments imply that $\mathbb{P}^*/G_a$ is  $\delta$-strategically-closed  over $V[G_a]$.\footnote{Note that $\delta$ is still regular in $V[G_a]$, as $\mathbb{P}_a$ being $\delta$-strategically-closed over $V$.}

Suppose for contradiction that $V[G]\models ``T\s E^\kappa_{<\delta}$ is a stationary set'', but that $T$ is nonstationary in $V[H\times G^*]$. Since $|\mathbb{S}|<\delta$,  Clause~(1) above   and Lemma~\ref{TowerOfExtensions}(1) yield $(E^\kappa_{<\delta})^V=(E^\kappa_{<\delta})^{V[G^*]}=(E^\kappa_{<\delta})^{V[G]}=(E^\kappa_{<\delta})^{V[G_a]}$ for all $a\in H$. Thus, we can unambiguously  denote this set  by $E^\kappa_{<\delta}$.

\begin{claim} Let $a\in H$. Then,
$T$ is non-stationary in $V[H\times G_a]$.
\end{claim}
\begin{proof}
Otherwise, if $T$ was stationary in $V[H][G_a]$,  since $|\mathbb{S}|<\delta<\kappa$,
$$T':=\{\alpha\in E^\kappa_{<\delta}\mid (\exists r\in G_a)(b,r)\Vdash_{\mathbb{S}^*\times \mathbb{P}_a}\alpha\in\dot{T}\}$$ is a stationary set lying in $V[G_a]$, where $b\in H$. Combining Lemma~\ref{TowerOfExtensions}(1) with  the fact that $\mathbb{S}$ is small we have $I[\kappa]^{V[G]}\s I[\kappa]^{V[H\times G_a]}\s I[\kappa]^{V[G_a]}.$
Thus, Clause~(3) of our assumption yields $E^\kappa_{<\delta}\in  I[\kappa]^{V[G_a]}$.
Now, since $\mathbb{P}^*/G_a$ is $\delta$-strategically closed in $V[G_a]$,
by Shelah's theorem \cite{Sh:108}, $\mathbb{P}^*/G_a$ preserves stationary subsets of $E^\kappa_{<\delta}$ hence $T'$ remains stationary in $V[G^*]$. Once again, since $\mathbb{S}$ is a small forcing 
$T'$ remains stationary in the further generic extension $V[H\times G^*]$.
This is a contradiction with $T'\s T$ and our assumption that $T$ was non-stationary in
$V[H\times G^*]$.
\end{proof}

Then for every $a\in H$ let $C_a$ be a club in $V[H\times G_a]$ disjoint from $T$. Since $\mathbb{S}$ is a small forcing, we may assume that $C_a\in V[G_a]$. Let $\dot{C}_a$ be a $\mathbb{P}_a$-name for this club such that
\begin{itemize}
\item $p\Vdash_{\mathbb{P}_a} ``\dot{C}_a$ is a club'', and
\item $(a,p)\Vdash_{(\mathbb{S}^*\times \mathbb{P}_a)} ``\dot{C}_a\cap \dot{T}=\emptyset$''.\footnote{Here we identify both $\dot{C}_a$ and $\dot{T}$ with $(\mathbb{S}^*\times \mathbb{P}_a)$-names in the natural way. See, e.g., Lemma~\ref{TowerOfExtensions}(1).}
\end{itemize}

Since $\mathbb{S}$ is a small forcing, we may fix some $a\in H$, such that $$T_a:=\{\alpha\in E^\kappa_{<\delta} \mid \exists r\in G[ r\LE p, \varpi(r)=a\ \&\ r\Vdash_{\mathbb{P}}\check\alpha\in \dot{T}]\}$$ is stationary in $V[G]$.

\begin{claim} There is a condition $p^*\le_a p$ and an ordinal $\gamma<\kappa$ such that $(a,p^*)\Vdash_{(\mathbb{S}^*\times \mathbb{P}_a)} \gamma\in\dot{C}_a\cap \dot{T}$.
\end{claim}
\begin{proof}
Work first in $V[G]$. Let $M$ be an elementary submodel of $H_\theta$ (for a large enough regular cardinal $\theta$),
such that:
\begin{itemize}
\item $M$ contains all the relevant objects, including $\dot{C}_a$ and $(a,p)$;
\item $\gamma:=M\cap\kappa\in T_a$
\end{itemize}

Let $\chi=\cf^V(\gamma)$ and $\langle\gamma_i \mid i<\chi\rangle\in V$ be an increasing sequence with limit $\gamma$. As $\gamma\in T_a\s E^\kappa_{<\delta}$ we have  $\chi<\delta$.
Since $\gamma\in T_a$, we may fix some $r\in G$ with $\varpi(r)=a$ such that $r\Vdash_{\mathbb{P}}\check\gamma\in \dot{T}$. Also, by Clause~\eqref{theprojection} of Definition~\ref{niceprojection},  there is $r^*$ a condition in $\mathbb{P}^*$ such that $r^*+a=r$. Clearly, $r^*\in G$.

Recall that for a condition $p'\in\mathbb{P}_a$, we write $p'\in\mathbb{P}_a/G$ whenever $p'+a\in G$ (see Definition~\ref{DefinitionQuotient}). Then, for all $q\in P$, $q\forces_{\mathbb{P}}``p'\in\mathbb{P}_a/\dot{G}$'' iff $q\leq_{\mathbb{P}} p'+a$.

Since $p$ $\mathbb{P}_a$-forces $\dot{C}_a$ to be a club, for all $\beta<\kappa$, there is $\beta\leq\alpha<\kappa$, and  $p'\le_a p$ forcing $\alpha\in \dot{C}_a$. By density we can actually find such $p'$ in $\mathbb{P}_a/G$. Thus, by elementarity and  since $p\in M$, for all $i<\chi$, there is $\alpha\in M\setminus\gamma_i$, and $p'\le_a p$ in $M$, such that $p'\in \mathbb{P}_a/G$ and $p'\forces_{\mathbb{P}_a}\alpha\in \dot{C}_a$.

Fix a $\mathbb{P}$-name $\dot{M}$ and, by strengthening $r^*$ if necessary,
suppose that for some $a'\sle a$ in $H$,  $r^*+a'$ forces the above properties to hold. In particular, for all $i<\chi$ and $q\leq_{\mathbb{P}} r^*+a'$, there are $q'\leq_{\mathbb{P}}q$, $p'\leq_a p$ and $\alpha\geq\gamma_i$, such that $q'\leq_{\mathbb{P}}p'+a$, $q'\forces_{\mathbb{P}}`` p'\in \dot{M}, \alpha\in \dot{M}"$ and  $p'\forces_{\mathbb P_a}\alpha\in\dot C_a$.

Breaking this down, we get that for all $i<\chi$, if $r'\leq_a r^*$ and $b\sle a'$, then there are $b'\sle b$, $q'\leq_{a}r'$, $p'\leq_a p$ and $\gamma_i\leq\alpha<\kappa$, such that  $q'\le_a p'$, $q'+b'\forces_{\mathbb{P}}`` p'\in \dot{M}, \alpha\in \dot{M}"$ and  $p'\forces_{\mathbb P_a}\alpha\in\dot C_a$. The later also gives that $q'\forces_{\mathbb P_a}\alpha\in\dot C_a$. In other words we have the following for each $i$:

($\dagger$) for all $r'\le_a r^*$ and $b\sle a'$, there are $q'\leq_{a}r'$, $b'\sle b$, and $\alpha<\kappa$, such that
$q'+b'\forces_{\mathbb{P}}`` \alpha\in \dot{M}\setminus\gamma_i"$ and $q'\forces_{\mathbb P_a}\alpha\in\dot C_a$.

Then, since the closure of $\le_a$ is more than $|\mathbb{S}|$, we get that for each $i$:

($\dagger\dagger$) for all $r'\le_a r^*$, there are $q'\leq_{a}r'$, and a dense $D\subset {\mathbb{S}} $, such that for all $b\in D$, there is $\alpha<\kappa$, with $q'+b\forces_{\mathbb{P}}`` \alpha\in \dot{M}\setminus\gamma_i"$ and $q'\forces_{\mathbb P_a}\alpha\in\dot C_a$.

Working in $V$, construct a  $\le_a$-decreasing sequences $\langle  q_i\mid i\le\chi\rangle$ of conditions in $\mathbb P_a$ below $r^*$  and a family $\langle D_i\mid i<\chi\rangle$ of dense sets of $\mathbb{S}$ with the following properties: For each  $i<\chi$ and $b\in D_i$, there is $\alpha<\kappa$, such that:
\begin{enumerate}
\item
$q_{i+1}+b\Vdash_{\mathbb{P}} \alpha\in \dot{M}\setminus \gamma_i$, and
\item $q_{i+1}\forces_{\mathbb P_a}\alpha\in\dot C_a$.
\end{enumerate}
\noindent
At successor stages we use ($\dagger\dagger$), and at limit stages we take lower bounds.

Let $p^*:= q_\chi$. Since we can find $p^*$ as above $\le_a$-densely often below $r^*$, we may assume that $p^*+a\in G$.

Now go back to $V[G]$. For each $i<\chi$, let $b_i\in D_i\cap H$, where recall that $H$ is the induced $\mathbb{S}$-generic from $G$. Also, let $\alpha_i$ witness that $b_i\in D_i$. Then $p^*\forces_{\mathbb P_a} \alpha_i\in \dot{C}_a $, and in $V[G]$, $\alpha_i\in M\setminus \gamma_i$ (since $q_{i+1}+b_i\in G$), so $\gamma=\sup_{i<\chi}\alpha_i$. It follows that  $p^*\forces_{\mathbb P_a}\gamma\in\dot{C}_a$.

Finally, as $p^*+a\leq r^*+a=r$, we have that $p^*+a\Vdash_{\mathbb{P}}\gamma\in\dot{T}$.
Recall that the projection from $\mathbb{S}^*\times \mathbb{P}_a$ to $\mathbb{P}$ is witnessed by $(a', p')\mapsto p'+a'$ (Lemma \ref{lemma27}), hence it follows that  $(a, p^*)\Vdash_{\mathbb{S}^*\times \mathbb{P}_a}\gamma\in\dot{T}$.
So, $(a, p^*)\Vdash_{\mathbb{S}^*\times \mathbb{P}_a}\check\gamma\in\dot{T}\cap \dot{C}_a$.
\end{proof}

Choose $p^*$ as in the above lemma. That gives a contradiction with $(a,p)\Vdash_{(\mathbb{S}^*\times \mathbb{P}_a)} \dot{C}_a\cap \dot{T}=\emptyset$.
\end{proof}

\begin{definition}
For stationary subsets $\Delta,\Gamma$ of a regular uncountable cardinal $\mu$,
$\refl(\Delta,\Gamma)$ asserts that for every stationary subset $T\s \Delta$,
there exists $\gamma\in\Gamma\cap E^\mu_{>\omega}$ such that $T\cap\gamma$ is stationary in $\gamma$.
\end{definition}

We end this section by establishing a sufficient condition for $\refl(\ldots)$ to hold in generic extensions;
this will play a crucial role at the end of Section~\ref{SectionNiceForking}.

\begin{definition}\label{suitableforiteration} For infinite cardinals $\tau<\sigma<\kappa<\mu$,
we say that $(\mathbb{P},\mathbb{S},\varpi)$ is \emph{suitable for reflection with respect to $\langle\tau,\sigma,\kappa,\mu\rangle$} iff all the following hold:
\begin{enumerate}
\item \label{suitableforiteration0} $\mathbb P$ and $\mathbb S$ are nontrivial notions of forcing;
\item \label{suitableforiteration1} $\varpi:\mathbb{P}\rightarrow\mathbb{S}$ is a nice projection and $\mathbb{P}^\varpi$ contains a $\sigma$-directed-closed dense subset;\footnote{In all cases of interest, $\sigma$ will be a regular cardinal.}
\item \label{suitableforiteration2}  In any forcing extension by $\mathbb P$ or ${\mathbb{S}\times\mathbb{P}^\varpi}$, $|\mu|=\cf(\mu)=\kappa=\sigma^{++}$;
\item \label{suitableforiteration3} For any $s\in S\setminus \{\one_\mathbb{S}\}$,
there is a cardinal $\delta$ with $\tau^+<\delta<\sigma$,
such that $\mathbb{S}\downarrow s\cong\mathbb{Q}\times\col(\delta,{<}\sigma)$ for some notion of forcing $\mathbb Q$ of size $<\delta$.
\end{enumerate}
\end{definition}

\begin{lemma}\label{ebfreflection}
Let $(\mathbb{P},\mathbb{S},\varpi)$ be suitable for reflection with respect to $\langle\tau,\sigma,\kappa,\mu\rangle$.
Suppose $\sigma$ is a supercompact cardinal indestructible under forcing with $\mathbb{P}^{\varpi}$.
Then $V^{\mathbb{P}}\models \refl(E^{\mu}_{\leq\tau},E^{\mu}_{<\sigma^+}).$
\end{lemma}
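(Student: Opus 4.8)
The plan is to use the indestructible supercompactness of $\sigma$ to reflect stationary sets sitting on $E^{\mu}_{\leq\tau}$, combining this with Lemma~\ref{lemma28} to push the reflecting clubs back down into $V^{\mathbb{P}}$. First I would fix a $\mathbb{P}$-generic $G$ and a stationary $T\subseteq E^{\mu}_{\leq\tau}$ in $V[G]$. Pick a condition $p\in G$ forcing this, and set $s:=\varpi(p)$. By Clause~\eqref{suitableforiteration3} of suitability, $\mathbb{S}\downarrow s\cong\mathbb{Q}\times\col(\delta,{<}\sigma)$ for some $\delta$ with $\tau^+<\delta<\sigma$ and $|\mathbb{Q}|<\delta$. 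The key point is that the quotient $\bigl((\mathbb{S}\downarrow s)\times(\mathbb{P}^{\varpi}\downarrow p)\bigr)/G$ adds a generic for (a forcing equivalent to) $\mathbb{Q}\times\col(\delta,{<}\sigma)\times(\mathbb{P}^{\varpi}\downarrow p)$, and I want to argue that in that extension $T$ still reflects, and then pull the reflection point back to $V[G]$.

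The main work is as follows. Move to the extension $V[H\times G^{*}]$ by the quotient, where $H$ is $(\mathbb{S}\downarrow s)$-generic and $G^{*}$ is $(\mathbb{P}^{\varpi}\downarrow p)$-generic; here $V[G]\subseteq V[H\times G^{*}]$ by Remark~\ref{remarkquotient}/Lemma~\ref{TowerOfExtensions}. Factor $H$ through the isomorphism of Clause~\eqref{suitableforiteration3}, so that $V[H\times G^{*}]$ contains a $\col(\delta,{<}\sigma)$-generic, and over the intermediate model the residual forcing has size $<\delta$ times $\mathbb{P}^{\varpi}\downarrow p$, which by Clause~\eqref{suitableforiteration1} contains a $\sigma$-directed-closed (in particular $\delta$-closed, indeed $\delta^{+}$-closed) dense subset. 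Now invoke indestructibility: since $\sigma$ is supercompact and indestructible under $\mathbb{P}^{\varpi}$, it remains supercompact after forcing with $\mathbb{P}^{\varpi}\downarrow p$ (a $\sigma$-directed-closed forcing). Then $\col(\delta,{<}\sigma)$ collapses $\sigma$ to become $\delta^{+}$, and the standard Lévy–Solovay-style argument shows that in the resulting model $\mu=\sigma^{++}$ (by Clause~\eqref{suitableforiteration2}) and, crucially, $E^{\mu}_{\leq\tau}=E^{\mu}_{<\delta}$ carries a reflection principle: a $\sigma$-supercompact embedding restricted/lifted through $\col(\delta,{<}\sigma)$ witnesses that every stationary subset of $E^{\mu}_{<\delta}$ (equivalently $E^{\mu}_{\leq\tau}$, since $\tau^{+}<\delta$) reflects at a point of cofinality $\delta^{+}\leq\sigma^{+}$; I would either cite the classical "after Lévy collapsing a supercompact" reflection result or derive it directly from a $\mu$-supercompactness measure in the ground model. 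This gives a $\gamma<\mu$ with $\cf(\gamma)\geq\delta^{+}>\omega$ and $\gamma\in E^{\mu}_{<\sigma^{+}}$ such that $T\cap\gamma$ is stationary in $\gamma$, witnessed by some club in $V[H\times G^{*}]$.

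The final step is to descend back to $V[G]$, and this is where Lemma~\ref{lemma28} does the heavy lifting: I would apply it with the cofinality threshold $\sigma$ (after checking its three hypotheses — $|\mathbb{S}|<\sigma$ holds since $\mathbb{S}\downarrow s$ is small below $\sigma$ for nontrivial $s$, wait, this needs care). Here is the subtlety that I expect to be the main obstacle: Lemma~\ref{lemma28} is stated for a pair $\delta<\kappa$ with $|\mathbb{S}|<\delta$ and $\mathbb{P}^{\varpi}$ $\delta$-directed-closed-dense and $E^{\kappa}_{<\delta}\in I[\kappa]^{V^{\mathbb{P}}}$, and it tells us the quotient preserves stationary subsets of $E^{\kappa}_{<\delta}$. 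To conclude, I want: if $T\cap\gamma$ were nonstationary in $V[G]$ it would remain nonstationary in $V[H\times G^{*}]$ — i.e. stationarity is \emph{preserved}, but actually I need the converse direction, that a set nonstationary above becomes nonstationary below is automatic, whereas I have $T\cap\gamma$ stationary above and want it stationary below, hence I need the quotient to \emph{not add clubs} disjoint from $T\cap\gamma$, which is exactly stationary-set-\emph{preservation} applied contrapositively: if $C\in V[H\times G^{*}]$ is a club in $\gamma$ disjoint from $T\cap\gamma$, then since $\gamma$ has cofinality $<\sigma$ this club, or a ground-model approximation to it built via the strategic closure and the smallness of $\mathbb{S}$, would contradict stationarity of $T\cap\gamma$ in $V[G]$ — so I instead run the argument on $T$ itself in $V[G]$: assume toward contradiction that $T$ does \emph{not} reflect at any $\gamma\in E^{\mu}_{<\sigma^{+}}$ in $V[G]$, so for each such $\gamma$ there is a club $C_{\gamma}\subseteq\gamma$ with $C_{\gamma}\cap T=\emptyset$; by Lemma~\ref{lemma28} (with $\delta:=\sigma$, $\kappa:=\mu$, using Clause~\eqref{suitableforiteration2} to get regularity of $\mu$ after $\mathbb{S}\times\mathbb{P}^{\varpi}$, and the approachability hypothesis $E^{\mu}_{<\sigma}\in I[\mu]^{V^{\mathbb{P}}}$, which must be arranged — this is the genuine gap to fill, likely via Clause~\eqref{suitableforiteration2} plus a standard computation that $\mu=\sigma^{++}$ implies $E^{\mu}_{<\sigma^{+}}\in I[\mu]$ automatically) these clubs together show $T$ reflects at no $\gamma\in E^{\mu}_{<\sigma^{+}}$ even in $V[H\times G^{*}]$, because any reflection point $\gamma$ there would have $T\cap\gamma$ stationary in $V[H\times G^{*}]$ while $C_{\gamma}$ (suitably re-derived, using that nonstationarity is upward absolute) witnesses the contrary — contradicting the reflection established by supercompactness in $V[H\times G^{*}]$. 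So $T$ reflects in $V[G]$ at a point of $E^{\mu}_{<\sigma^{+}}\cap E^{\mu}_{>\omega}$, which is exactly $\refl(E^{\mu}_{\leq\tau},E^{\mu}_{<\sigma^{+}})$.

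I expect the delicate points to be: (i) getting the approachability hypothesis of Lemma~\ref{lemma28} for free from $\mu=\sigma^{++}$, (ii) correctly factoring $\mathbb{S}\downarrow s$ so that the $\col(\delta,{<}\sigma)$-part interacts with indestructibility while the small $\mathbb{Q}$-part is harmless, and (iii) tracking cofinalities: $E^{\mu}_{\leq\tau}$ in $V[G]$ must coincide with $E^{\mu}_{<\delta}$ (or at least be contained in it) after the collapse, which is where $\tau^{+}<\delta$ is used, and the reflection point lands in $E^{\mu}_{<\sigma^{+}}$ because Lévy-collapsing a supercompact below $\sigma$ gives reflection at cofinality $\delta^{+}\leq\sigma$.
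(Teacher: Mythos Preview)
Your overall architecture is right: pass to the product extension $V[H\times G^*]$, use indestructibility of $\sigma$ under $\mathbb{P}^\varpi$ together with the Lévy collapse to get reflection there, and then descend. The downward-absoluteness step is in fact immediate (if $T\cap\gamma$ is stationary in the larger model it is stationary in $V[G]$, since any club of $V[G]$ survives upward), so your worries about ``the converse direction'' are unnecessary. The genuine gap is elsewhere, namely in your use of Lemma~\ref{lemma28}.

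You try to invoke Lemma~\ref{lemma28} with the projection $\varpi:\mathbb{P}\to\mathbb{S}$ and threshold $\delta:=\sigma$. This cannot work: hypothesis~(1) of Lemma~\ref{lemma28} requires $|\mathbb{S}|<\delta$, but $\mathbb{S}\downarrow s\cong\mathbb{Q}\times\col(\delta,{<}\sigma)$ has size $\sigma$, not ${<}\sigma$. You noticed this (``wait, this needs care''), and it does not go away by reformulating as a contradiction argument---you still need $T$ to remain stationary in $V[H\times G^*]$ before the supercompactness reflection can be applied. The paper's resolution is the move you flagged in point~(ii) but did not carry out: compose $\varpi$ with the projection onto the small factor $\mathbb{Q}$ to obtain a nice projection $\pi:\mathbb{P}\downarrow p\to\mathbb{Q}$, verify that $\mathbb{P}^\pi$ has a $\delta$-directed-closed dense subset (this is a nontrivial subclaim, using that $\mathbb{P}^\varpi$ has a $\sigma$-directed-closed dense subset and that the residual projection $\varrho$ to $\col(\delta,{<}\sigma)$ is nice), and apply Lemma~\ref{lemma28} to $\pi$ with the threshold $\delta$ from Clause~\eqref{suitableforiteration3}. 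Now $|\mathbb{Q}|<\delta$ holds, and the approachability hypothesis follows from Shelah's theorem that $E^{\sigma^{++}}_{<\sigma^+}\in I[\sigma^{++}]$ (your guess in point~(i) was correct). This gives stationarity of $T$ in the $\mathbb{Q}\times\mathbb{P}^\pi$ extension; one then handles the remaining $\col(\delta,{<}\sigma)\times\mathbb{P}^\varpi$ quotient by $\delta$-strategic closure and Shelah's preservation theorem for $I[\kappa]$-sets, and finally absorbs $\mathbb{Q}$ as small forcing. A minor additional point: the paper first replaces $\mu$ by $\kappa$ (legitimate since $|\mu|=\cf(\mu)=\kappa$ in $V^{\mathbb{P}}$ by Clause~\eqref{suitableforiteration2}); working directly with $\mu$ as you do is awkward because $\mu$ is no longer a cardinal there.
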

\begin{proof}
By Definition~\ref{suitableforiteration}\eqref{suitableforiteration2}, it suffices to prove that
$V^{\mathbb{P}}\models \refl(E^{\kappa}_{\leq\tau},E^{\kappa}_{<\sigma^+})$.

Let $G$ be $\mathbb{P}$-generic. In $V[G]$,
let $T$ be a stationary subset of $E^{\kappa}_{\leq\tau}$. Suppose for simplicity that this is forced by the empty condition.

\begin{claim} Let $p\in G$ be such that  ${s}:=\varpi({p})$ strictly extends $\one_\mathbb{S}$.

Then,
the quotient $((\mathbb{S}\downarrow  {s})\times (\mathbb{P}^\varpi\downarrow {p}))/G$ preserves the stationarity of $T$.
\end{claim}
\begin{proof}
Using Clauses \eqref{suitableforiteration0} and \eqref{suitableforiteration1} of Definition~\ref{suitableforiteration},
let us pick any $p\in G$ for which $s:= \varpi(p)$ strictly extends $\one_{\mathbb S}$.
Back in $V$, using Clause~\eqref{suitableforiteration3} of Definition~\ref{suitableforiteration},
fix a cardinal $\delta$ with $\tau^+<\delta<\sigma$,
a notion of forcing $\mathbb Q$ of size $<\delta$,
and an isomorphism $\iota$ from $\mathbb{S}\downarrow s$ to $\mathbb{Q}\times\col(\delta,{<}\sigma)$.
Let $\iota_0,\iota_1$ denote the unique maps to satisfy $\iota(s')=(\iota_0(s'),\iota_1(s'))$.
By Example~\ref{exampleexact}, $\iota_0$ and $\iota_1$ are nice projections.
Set $\pi:=(\iota_0\circ\varpi)\restriction(\mathbb{P}\downarrow p)$ and $\varrho:=(\iota_1\circ \varpi)\restriction(\mathbb{P}\downarrow p)$, so that $\pi$ and $\varrho$ are nice projection from $\mathbb{P}\downarrow p$ to $\mathbb{Q}$ and from $\mathbb{P}\downarrow p$ to $\Col(\delta,\<\sigma)$, respectively. Note that the definition of $\pi$ depends
on our choice of $\mathbb{Q}$, which depends on our choice of $p$, and formally we defined $\pi$ as a projection from
$\mathbb{P}\downarrow p$ to $\mathbb{Q}$. In an slight abuse of notation we will write $\mathbb{P}^\pi$ rather than ${(\mathbb{P}\downarrow p)}^\pi$. More precisely,  $\mathbb{P}^\pi:=(\{q\in P\mid q\leq p\}, \leq^\pi)$.\footnote{Recall Definition~\ref{OrderModuloVarpi}.}

\begin{subclaim}\label{SubclaimProjections}
\hfill
\begin{enumerate}[label=(\roman*)]
\item $(\mathbb{S}\downarrow s)\times (\mathbb{P}^\varpi\downarrow p)$ projects onto $\mathbb{Q}\times (\mathbb{P}^\pi\downarrow p)$, and that projects onto $\mathbb{P}\downarrow p$;
\item $(\mathbb{S}\downarrow s)\times (\mathbb{P}^\varpi\downarrow p)$ projects onto $\mathbb{Q}\times (\mathbb{P}^\pi\downarrow p)$, and that projects onto $\mathbb{P}^\pi\downarrow p$;
\item $(\mathbb{S}\downarrow s)\times (\mathbb{P}^\varpi\downarrow p)$ projects onto $\col(\delta,{<}\sigma)\times (\mathbb{P}^\varpi\downarrow p)$, and that projects onto $\mathbb{P}^\pi\downarrow p$.
\end{enumerate}
\end{subclaim}
\begin{proof} (i)
For the first part, the map $(s',p')\mapsto (\iota_0(s'),p'+\iota_1(s'))$ is such a projection, where the $+$ operation is computed with respect to the nice projection $\varrho$. For the second part, the map $(q',p')\mapsto p'+q'$ gives such a projection, where the $+$ operation is computed with respect to $\pi$.\footnote{See Clause~\eqref{theprojection} of Definition~\ref{niceprojection}\eqref{theprojection} regarded with respect to $\pi$.}

(ii) For the second part, the map $(q',p')\mapsto p'$ is such a projection.

(iii) For the first part, the map $(s',p')\mapsto (\iota_1(s'),p')$ is such a projection.
For the second part, the map $(c',p')\mapsto p'+c'$ is such a projection, where the $+$ operation is with respect to the nice projection $\varrho$.
\end{proof}

By Definition~\ref{suitableforiteration}\eqref{suitableforiteration2},
in all forcing extensions with posets from Clause~(i), $\kappa$ is a cardinal which is the double successor of $\sigma$.
But then, since $|\mathbb Q|<\kappa$, it follows from the second part of Clause~(ii) that $\kappa$ is the double successor of $\sigma$
in forcing extensions by $\mathbb{P}^\pi\downarrow p$. Actually, in any forcing extension by $\mathbb{P}^\pi$.\footnote{{Note that  Subclaim~\ref{SubclaimProjections} remains valid even if we replace $p$ by any $p'\leq p$. For instance, regarding Clause~(i), we will then have that $(\mathbb{S}\downarrow \varpi(p'))\times (\mathbb{P}^\varpi\downarrow p')$ projects onto $(\mathbb{Q}\downarrow \pi(p'))\times (\mathbb{P}^\pi\downarrow p')$ and that  this latter projects onto $\mathbb{P}\downarrow p'$.}}  
Altogether, in all forcing extensions with posets from the preceding subclaim, $\kappa$ is the double successor of $\sigma$.

Let $G_q\times G^*$ be $(\mathbb{Q}\times (\mathbb{P}^\pi\downarrow p))/G$-generic over $V[G]$. Next, we want to use Lemma~\ref{lemma28}
to show that $T$ remains stationary in $V[G_q\times G^*]$, so we have to verify its assumptions hold.  The next claim along with Definition~\ref{suitableforiteration}\eqref{suitableforiteration3} yields Clause~(1) of Lemma~\ref{lemma28}:
\begin{subclaim}
$\mathbb{P}^{\pi}$ contains a $\delta$-directed-closed dense set.
\end{subclaim}
\begin{proof}
Let $D\s \mathbb{P}^\varpi$ be the $\sigma$-directed-closed dense subset given by Clause~(2) of our assumptions.\footnote{I.e., $D$ is dense and $\sigma$-directed-closed with respect to $\leq^\varpi$.} We claim that the set $$D' := \{q+c\mid q\in D,\, q\leq^\varrho p, \,c\leq_{\Col(\delta,<\sigma)} \varrho(p)\}$$ is a $\delta$-directed-closed dense subset of $\mathbb{P}^{\pi}$.  Here $q+c$ is computed with respect to the projection map $\varrho:\mathbb{P}\downarrow p\rightarrow \Col(\delta,<\sigma)$.

For density, if $q\in \mathbb{P}^\pi$, since $\varrho$ is a nice projection, let
$q'\leq^\varrho p$ be with $q'+\varrho(q)=q$. Now, let $q''\leq^\varpi q'$ be in $D$. In particular, $\varrho(q'')=\varrho(p)$, so that $q''+\varrho(q)$ is well-defined. Then $q''+\varrho(q)\leq^\pi q$,
$q''\leq^\varrho p$ and $q''+\varrho(q)\in D'$.

For directed closure, suppose that $\nu<\delta$ and $\{p_\alpha\mid \alpha<\nu\}$ is a $\leq^\pi$-directed set in $D'$. For each $\alpha<\nu$,
write $p_\alpha = q_\alpha+c_\alpha$, where $q_\alpha\in D$, $\varrho(q_\alpha)=\varrho(p)$, and $c_\alpha=\varrho(p_\alpha)\in \Col(\delta,{<}\sigma)\downarrow \varrho(p)$. Note that for each $\alpha<\nu$, $\pi(q_\alpha)=\pi(p_\alpha)$.
Also, $p_\alpha$ and $p_\beta$ being $\leq$-compatible entails  $c_\alpha\cup c_\beta\in\Col(\delta,{<}\sigma)$. Finally, since $q_\alpha,q_\beta\in D$ it follows 
that $q_\alpha$ and $q_\beta$ are $\leq^\varpi$-compatible. 

Thus, $\{q_\alpha\mid \alpha<\nu\}$ is a $\leq^\varpi$-directed set in $D$ of size $<\kappa$, so we may  let $q\in D$ be a $\leq^\varpi$-lower bound for $\{q_\alpha\mid \alpha<\nu\}$. In particular, $\varrho(q)=\varrho(p)$ and $\pi(q)=\pi(p_\alpha)$ for all $\alpha<\nu$. Additionally,
 let $c:=\bigcup_{\alpha<\nu} c_\alpha\in \Col(\delta,{<}\sigma)$. 
Then $q+c\in D'$ is the desired $\leq^\pi$-lower bound for
$\{p_\alpha\mid \alpha<\nu\}$.
\end{proof}
Clause~(2) of Lemma~\ref{lemma28}  follows from $|\mathbb{Q}|<\kappa$ and the fact that $\mathbb{P}^\pi$ forces $``\kappa=\sigma^{++}$'' (see our comments after Subclaim~\ref{SubclaimProjections}). 
Finally, for Clause~(3), we argue as follows: By \cite[Lemma~4.4]{Sh:351} and  Definition~\ref{suitableforiteration}\eqref{suitableforiteration2}, $V^{(\mathbb{Q}\times (\mathbb{P}^\pi\downarrow p))}\models ``E^{\kappa}_{<\delta}\s E^{\sigma^{++}}_{<\sigma^+}\in  I[\sigma^{++}]$'', thus  $V^{(\mathbb{Q}\times (\mathbb{P}^\pi\downarrow p))}$ thinks that $E^\kappa_{<\delta}\in I[\kappa]$. By the previous subclaim, $E^\kappa_{<\delta}$ is computed the same way both in $V^{(\mathbb{Q}\times (\mathbb{P}^\pi\downarrow p))}$ and $V^{\mathbb{P}}$. Also, remember that this latter model thinks that $``\kappa=\sigma^{++}$'', as well. Thereby if we put everything in the same canopy we infer that $V^{\mathbb{P}}\models ``E^\kappa_{<\delta}\in I[\kappa]$''.

Thereby, $T$ remains stationary in $V[G_q\times G^*]$. 
As $\mathbb Q$ is small, we may fix  $T'\s T$ such that $T'$ is in $V[G^*]$ and moreover stationary in $V[G^*]$.
As established earlier, $V[G^*]\models ``T'\s E^{\sigma^{++}}_{<\sigma^+}\in I[\sigma^{++}]"$. 
Since both $\mathbb{P}^{\pi}$ and $\mathbb{P}^\varpi$ are $\delta$-strategically closed (actually $\mathbb{P}^\varpi$ is more), we have that,  in $V[G^*]$, the quotient $(\col(\delta,{<}\sigma)\times(\mathbb{P}^\varpi\downarrow p))/G^*$ is also $\delta$-strategically closed.
So, again it follows that $(\col(\delta,{<}\sigma)\times(\mathbb{P}^\varpi\downarrow p))/G^*$ preserves the stationarity of $T'$.

Finally, since $\mathbb{S}\downarrow s\cong \mathbb{Q}\times\col(\delta,{<}\sigma)$,
the quotient
$$((\mathbb{S}\downarrow s)\times (\mathbb{P}^\varpi\downarrow p))/(\col(\delta,{<}\sigma)\times (\mathbb{P}^\varpi\downarrow p))$$ is
isomorphic to $\mathbb{Q}$, which is a small of forcing.
Altogether, $T'$ (and hence also $T$) remains stationary in the generic extension  $V[G]$. 
\end{proof}

Let  $p\in G$ be such that $s:=\varpi(p)$ strictly extends $\one_\mathbb{S}$. 
Let $H$ be the generic filter for $\mathbb{S}$ induced by $\varpi$ and $G$.
Let $G^*$ be such that $H\times G^*$ is generic for $((\mathbb{S}\downarrow s)\times (\mathbb{P}^\varpi\downarrow p))/G$.
By the above claim,
$T$ is still stationary in $V[G^*][H]$. Also, by Definition~\ref{suitableforiteration}\eqref{suitableforiteration2}, $T\s (E^{\sigma^{++}}_{\leq\tau})^{V[G^*][H]}$.

Using that $\sigma$ is a supercompact indestructible under $\mathbb{P}^\varpi$, let (in $V[G^*]$)  $$j:V[G^*]\rightarrow M$$ be a $\kappa$-supercompact embedding with  $\crit(j)=\sigma$.
We shall want to lift this embedding to $V[G^*][H]$.

Work below the condition $s$ that we fixed earlier.
Recall that $\mathbb{S}\downarrow s\cong\mathbb{Q}\times\col(\delta,{<}\sigma)$ for some poset $\mathbb Q$ of size $<\delta$ with $\tau^+<\delta<\sigma$. So, $H$ may be seen as a product of two corresponding generics, $H=H_0\times H_1$.
For the ease of notation, put $\mathbb{C}:=\Col(\delta, {<}\sigma)$.

Since $\mathbb Q$ has size $<\delta<\crit(j)$, we can lift $j$ to an embedding $$j:V[G^*][H_0]\rightarrow M'.$$
Then we lift  $j$ again to get $$j:V[G^*][H]\rightarrow N$$ in an outer generic extension of $V[G^*][H]$ by $j(\mathbb{C})/H_1$.
Since $j(\mathbb{C})/H_1$ is $\delta$-closed in $M'[H_1]$ and this latter  model is closed under $\kappa$-sequences in $V[G^*][H]$, it follows that $j(\mathbb{C})/H_1$ is also $\delta$-closed in $V[G^*][H]$.

Set $\gamma:=\sup(j``\kappa)$. Clearly, $j(T)\cap \gamma=j``T$. 
Note that, by virtue of the collapse $j(\mathbb{C})$,
$N\models ``|\kappa|=\delta\ \&\ \cf(\gamma)\leq \cf(|\kappa|)=\delta<j(\sigma)$''.

Once again,    \cite[Lemma~4.4]{Sh:351}  Definition~\ref{suitableforiteration}\eqref{suitableforiteration2} together yield $$T\s (E^{\sigma^{++}}_{\leq\tau})^{V[G^*][H]}\s (E^{\sigma^{++}}_{<\sigma^+})^{V[G^*][H]}\in I[\sigma^{++}]^{V[G^*][H]}.$$  As customary, Shelah's theorem (c.f.~\cite{Sh:108})  along with  the $\delta$-closedness of $j(\mathbb{C})/H_1$ in $V[G^*][H]$ imply that this latter forcing preserves the stationarity of  $T$. 
Now, a standard argument shows that that  $ j(T)\cap \gamma$ is stationary in  $N$. Thus,  $``\exists \alpha\in E^{j(\kappa)}_{{<}j(\sigma)} ( j(T)\cap\alpha\text{ is stationary in }\alpha)$'' holds in $N$.
So, by elementarity, in  $V[G^*][H]$, $T$ reflects at a point of cofinality $<\sigma^+$.\footnote{Actually, at a point of cofinality $<\sigma$.}
Since reflection is downwards absolute, it follows that $T$ reflects at a point of cofinality $<\sigma^+$ in $V[G]$, as wanted.
\end{proof}

\section{$(\Sigma,\vec{\mathbb{S}})$-Prikry forcings}\label{weaklySigmaPrikrysection}

We commence by recalling a few concepts from \cite[\S2]{PartII}.
\begin{definition}\label{gradedposet}
A \emph{graded poset} is a pair $(\mathbb P,\lh)$ such that $\mathbb P=(P,\le)$ is a poset, $\lh:P\rightarrow\omega$ is a surjection, and, for all $p\in P$:
\begin{itemize}
\item For every $q\le p$, $\lh(q)\geq\lh(p)$;
\item There exists $q\le p$ with $\lh(q)=\lh(p)+1$.
\end{itemize}
\end{definition}
\begin{conv} For a graded poset as above, we denote $P_n:=\{p\in P\mid \lh(p)=n\}$
and $\mathbb P_n:=(P_n\cup\{\one\},\le)$. In turn, $\mathbb P_{\ge n}$ and $\mathbb P_{>n}$ are defined analogously.
We also write $P_n^p:=\{ q\in P\mid  q\le p, \lh(q)=\lh(p)+n\}$,
and sometimes write $q\le^n p$ (and say that $q$ is \emph{an $n$-step extension} of $p$) rather than writing $q\in P^p_n$.

A subset $U\s P$ is said to be \emph{$0$-open set} iff, for all $r\in U$, $P^r_0\s U$.
\end{conv}

Now, we define the $(\Sigma,\vec{\mathbb S})$-Prikry class,
a class broader than $\Sigma$-Prikry from \cite[Definition~2.3]{PartII}.

\begin{definition}\label{SigmaPrikry}
Suppose:
\begin{enumerate}[label=(\greek*)]
\item \label{Calpha} $\Sigma=\langle \sigma_n\mid n<\omega\rangle$ is a non-decreasing sequence of regular uncountable cardinals,
converging to some cardinal $\kappa$;
\item \label{Cbeta} $\vec{\mathbb S}=\langle \mathbb S_n\mid n<\omega\rangle$ is a sequence of notions of forcing, $\mathbb S_n=(S_n,\SLE_n)$,
with $|S_n|< \sigma_n$;
\item \label{Cgamma} $\mathbb P=(P,\le)$ is a notion of forcing with a greatest element $\one$;
\item \label{Cdelta} $\mu$ is a cardinal such that $\one\forces_{\mathbb P}\check\mu=\check\kappa^+$;
\item \label{Cepsilon} $\lh:P\rightarrow\omega$ and $c:P\rightarrow \mu$ are functions;\footnote{In some applications $c$ will be a function from $P$ to some canonical structure of size $\mu$, such as $H_\mu$ (assuming $\mu^{<\mu}=\mu$).}
\item \label{Czeta} $\vec{\varpi}=\langle \varpi_n\mid n<\omega\rangle$ is a sequence of functions.
\end{enumerate}

We say that $(\mathbb P,\lh, c,\vec\varpi)$ is \emph{$(\Sigma,\vec{\mathbb S})$-Prikry}  iff all of the following hold:
\begin{enumerate}
\item\label{graded} $(\mathbb P,\lh)$ is a graded poset;
\item\label{c2} For all $n<\omega$, $\mathbb{P}_n:=(P_n\cup\{\one\},\LE)$ contains a dense subposet $\z{\mathbb{P}}_n$ which is countably-closed;
\item\label{c1} For all $p,q\in P$, if $c(p)=c(q)$, then $P_0^p\cap P_0^q$ is non-empty;
\item \label{c5} For all $p\in P$, $n,m<\omega$ and $q\LE^{n+m}p$, the set $\{r\LE ^n p\mid  q\LE^m r\}$ contains a greatest element which we denote by $m(p,q)$.
In the special case $m=0$, we shall write $w(p,q)$ rather than $0(p,q)$;\footnote{Note that $w(p,q)$ is the weakest extension of $p$  above $q$.}
\item\label{csize}  For all $p\in P$,
the set $W(p):=\{w(p,q)\mid q\LE p\}$ has size $<\mu$;
\item\label{itsaprojection} For all $p'\LE p$ in $P$, $q\mapsto w(p,q)$ forms an order-preserving map from $W(p')$ to $W(p)$;
\item\label{c6} Suppose that $U\s P$ is a $0$-open set.
Then, for all $p\in P$ and $n<\omega$, there is $q\LE^0 p$, such that, either $P^{q}_n\cap U=\emptyset$ or $P^{q}_n\s U$;
\item \label{PnprojectstoSn} For all $n<\omega$,  $\varpi_n$ is a nice projection from $\mathbb{P}_{\geq n}$
to $\mathbb S_n$,
such that, for any integer $k\ge n$, $\varpi_n\restriction \mathbb P_k$ is again a nice projection;
\item\label{moreclosedness} For all $n<\omega$, if $\z{\mathbb{P}}_n$ is a witness for Clause~\eqref{c2} then $\z{\mathbb{P}}_n^{\varpi_n}$ is a dense and $\sigma_n$-directed-closed subposet of  $\mathbb{P}_n^{\varpi_n}:=(P_n\cup\{\one\}, \LE^{\varpi_n})$.\footnote{More verbosely, for every $p\in P_n$ there is $q\in \z{P}_n$ such that $q\LE^{\varpi_n} p$ (see Notation~\ref{OrderModuloVarpi}).}

\end{enumerate}
\end{definition}

\begin{conv}\label{ConvSigmaPrikry}
We derive yet another ordering $\le^{\vec\varpi}$ of the set $P$,
letting ${\le^{\vec\varpi}}:=\bigcup_{n<\omega}{\le^{\varpi_n}}$.
Simply put, this means that $q\LE^{\vec\varpi}p$ iff ($p=\one$),
or, ($q\LE^0p$, $\lh(p)=\lh(q)$ and $\varpi_{\lh(p)}(p)=\varpi_{\lh(q)}(q)$).
\end{conv}

\begin{conv}
We say that $(\mathbb{P},\ell,c)$ has the Linked$_0$-property if it witnesses Clause~\eqref{c1} above. Similarly, we will say that $(\mathbb{P},\ell)$ has the Complete Prikry Property (CPP) if it witnesses Clause~\eqref{c6} above.
\end{conv}

Any $\Sigma$-Prikry triple $(\mathbb{P},\ell,c)$  can be regarded as a $(\Sigma,\vec{\mathbb{S}})$-Prikry forcing $(\mathbb{P},\ell,c,\vec{\varpi})$ by letting $\vec{\mathbb{S}}:=\langle(n, \{\one_\mathbb{P}\})\mid n<\omega\rangle$ and $\vec{\varpi}$ 
be the sequence of trivial projections $p\mapsto \one_\mathbb{P}$. {Conversely, any $(\Sigma, \vec{\mathbb{S}})$-Prikry  quadruple $(\mathbb{P},\ell,c,\vec{\varpi})$ with $\vec{\mathbb{S}}$ and $\vec{\varpi}$ as above witnesses that $(\mathbb{P},\ell,c)$ is $\Sigma$-Prikry.}
In particular, all the forcings from \cite[\S3]{PartI} are examples of $(\Sigma,\vec{\mathbb{S}})$-Prikry forcings.
In Section~\ref{SectionGitikForcing}, we will add a new example to this list by showing that Gitik's EPBFC (The long Extender-Based Prikry forcing with Collapses \cite{GitikCollapsin}) falls into the $(\Sigma,\vec{\mathbb{S}})$-Prikry framework.

\medskip

Throughout the rest of the section, assume that $(\mathbb{P},\ell,c,\vec{\varpi})$ is a $(\Sigma,\vec{\mathbb{S}})$-Prikry quadruple.
We shall spell out some basic features of the components of the quadruple,
and work towards proving Lemma~\ref{l14} that explains
how bounded sets of $\kappa$ are added to generic extensions by $\mathbb{P}$.

\begin{lemma}[The $p$-tree]\label{W(p)maxAntichain} Let $p\in P$.
\begin{enumerate}
\item For every $n<\omega$, $W_n(p)$ is a maximal antichain in $\cone{p}$;\label{W(p)maxAntichain1}
\item Every two compatible elements of $W(p)$ are comparable;\label{W(p)maxAntichain2}
\item For any pair $q'\LE q$ in $W(p)$,  $q'\in W(q)$;\label{W(p)maxAntichain4}
\item $c\restriction W(p)$ is injective.\label{W(p)maxAntichain3}
\end{enumerate}
\end{lemma}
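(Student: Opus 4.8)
\textbf{Proof proposal for Lemma~\ref{W(p)maxAntichain}.}

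The plan is to extract each clause from the structural axioms in Definition~\ref{SigmaPrikry}, chiefly Clauses~\eqref{c5}, \eqref{itsaprojection}, and \eqref{c1}, together with the convention that $W_n(p) = \{w(p,q)\mid q\le p,\ \ell(q)=\ell(p)+n\}$ and $W(p)=\bigcup_{n<\omega}W_n(p)$. First I would settle Clause~\eqref{W(p)maxAntichain1}: fix $n<\omega$. For maximality, given any $r\le p$, pass to some $q\le^0 r$ with $\ell(q)=\ell(p)+n$ (possible since $(\mathbb P,\ell)$ is graded, applying the second bullet of Definition~\ref{gradedposet} finitely many times below $r$); then $w(p,q)\in W_n(p)$ and $w(p,q)\ge q$ while $q\le r$, so $w(p,q)$ and $r$ are compatible. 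For antichain-ness, suppose $w(p,q_0)$ and $w(p,q_1)$ lie in $W_n(p)$ and share a common extension $t\le p$. Take $t'\le^0 t$ with $\ell(t')=\ell(p)+n$; then by the maximality clause of Clause~\eqref{c5} (the greatest element among $n$-step extensions of $p$ lying above a given condition) applied to $t'$, both $w(p,q_0)$ and $w(p,q_1)$ equal $w(p,t')$, since each is the weakest $n$-step extension of $p$ above $t'$. Hence distinct members of $W_n(p)$ are incompatible, so $W_n(p)$ is a maximal antichain in $\cone p$.

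Next, Clause~\eqref{W(p)maxAntichain2}: if $q_0'=w(p,q_0)$ and $q_1'=w(p,q_1)$ are compatible, say $t\le q_0',q_1'$, let $n_i:=\ell(q_i')-\ell(p)$ and assume WLOG $n_0\le n_1$. Choosing $t'\le^0 t$ of length $\ell(p)+n_1$, we get (as above) $q_1'=w(p,t')$. Now $w(p,t')$ is an $n_1$-step extension of $p$, and $n_0(p,t')$ — the greatest $n_0$-step extension of $p$ above $t'$, furnished by Clause~\eqref{c5} — is $\ge q_1'$? Here I need instead to observe that $w(p,q_0)$ is an $n_0$-step extension of $p$ above $t'$, so $w(p,t') \le n_0(p,t')$... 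I would rather argue directly: $q_0' = w(p,q_0)$ and the definition gives $q_0'$ as the weakest $n_0$-step extension of $p$ lying above $q_0\ge t$; since $q_1' \le q_0'$ is itself an $n_1$-step extension of $p$ lying above $t$ with $n_1\ge n_0$, its $n_0$-truncation $w(p,q_1')\in W_{n_0}(p)$ is compatible with $q_0'$ (both above $t$), hence by Clause~\eqref{W(p)maxAntichain1} they are equal; thus $q_1'\le q_0'$, giving comparability. For Clause~\eqref{W(p)maxAntichain4}, given $q'\le q$ both in $W(p)$, write $q=w(p,q)$ and $q'=w(p,q')$; since $q'\le q\le p$ and $q'$ is an $\ell(q')-\ell(q)$-step extension of $q$, Clause~\eqref{itsaprojection} says $q\mapsto w(q,\cdot)$ is order-preserving on $W(q')\to W(q)$ and moreover composes correctly, so $w(q,q')=w(p,q')=q'$, i.e.\ $q'\in W(q)$; alternatively one invokes the associativity of the $w$-operation hidden in Clause~\eqref{c5} applied with the pair $q'\le q\le p$.

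Finally, Clause~\eqref{W(p)maxAntichain3}, injectivity of $c\restriction W(p)$: suppose $q_0',q_1'\in W(p)$ with $c(q_0')=c(q_1')$. By Clause~\eqref{c1}, $P_0^{q_0'}\cap P_0^{q_1'}\neq\emptyset$, so $q_0'$ and $q_1'$ are compatible, hence by Clause~\eqref{W(p)maxAntichain2} comparable, say $q_1'\le q_0'$. If they had different lengths, then taking any common $0$-step extension witnessing $c(q_0')=c(q_1')$ forces $q_1'\in W_{n}(p)$ with $n=\ell(q_1')-\ell(p)$, and the $\ell(q_0')-\ell(p)$-truncation of $q_1'$ equals $q_0'$ while also $c$ separates lengths... more carefully: $q_1'\le q_0'$ and both in $W(p)$, so by Clause~\eqref{W(p)maxAntichain4} $q_1'\in W(q_0')$; now $c(q_1')=c(q_0')$ and $q_0'\le q_0'$ trivially, so applying Clause~\eqref{c1} inside $\cone{q_0'}$ and the already-established Clause~\eqref{W(p)maxAntichain1} (that $W_0(q_0')=\{q_0'\}$ when... ) forces $q_1'=q_0'$. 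The cleanest route: $W_0(q_0')$ is a maximal antichain in $\cone{q_0'}$ by Clause~\eqref{W(p)maxAntichain1}, and $q_1'\in W(q_0')$ is comparable with, hence below, some member of it; since $c(q_1')=c(q_0')$ and $q_0'\in W_0(q_0')$, Clause~\eqref{c1} yields compatibility of $q_1'$ with $q_0'$ already known — the point forcing equality is that two elements of $W(q_0')$ with equal $c$-value and one of length $0$ must coincide. I expect the main obstacle to be exactly this last step: pinning down why equal $c$-colour plus comparability forces equality rather than merely one extending the other, which requires combining the Linked$_0$-property with the tree structure from \eqref{W(p)maxAntichain1}--\eqref{W(p)maxAntichain2} rather than any single axiom, and being careful about the base case of the induction on lengths.
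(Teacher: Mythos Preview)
Your approach is the right one and is essentially what the paper intends: it simply cites \cite[Lemma~2.8]{PartI} and says that proof goes through unchanged, so there is nothing in the present paper to compare against beyond the axioms you are already using.

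Two corrections, though. In Clause~\eqref{W(p)maxAntichain1}, your maximality argument says ``pass to some $q\le^0 r$ with $\ell(q)=\ell(p)+n$'', but $\le^0$ preserves length, and iterating the second bullet of Definition~\ref{gradedposet} only \emph{increases} length. So when $\ell(r)<\ell(p)+n$ you want $q\le r$ (not $\le^0$), and when $\ell(r)>\ell(p)+n$ you cannot reach length $\ell(p)+n$ below $r$ at all; instead take $m:=\ell(r)-\ell(p)-n$ and observe that $m(p,r)\in W_n(p)$ lies above $r$. Likewise in the antichain half, ``$t'\le^0 t$ with $\ell(t')=\ell(p)+n$'' fails whenever $\ell(t)>\ell(p)+n$; use the truncation $m(p,t)$ directly.

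More importantly, your worry about Clause~\eqref{W(p)maxAntichain3} dissolves once you notice what $P_0$ means. If $c(q_0')=c(q_1')$, then Clause~\eqref{c1} gives some $r\in P_0^{q_0'}\cap P_0^{q_1'}$; but $r\in P_0^{q_i'}$ says $\ell(r)=\ell(q_i')$, so $\ell(q_0')=\ell(q_1')$ \emph{immediately}. Hence $q_0',q_1'$ lie in the \emph{same} level $W_n(p)$ and are compatible via $r$, so Clause~\eqref{W(p)maxAntichain1} alone (antichain) gives $q_0'=q_1'$. There is no need for the comparability step, the induction on lengths, or the tree-structure detour you sketch at the end --- the ``main obstacle'' you flag is not there.
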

\begin{proof} The proof of \cite[Lemma~2.8]{PartI} goes through.
\end{proof}

We commence by introducing the notion of coherent sequence of nice projections, which will be important  in  Section~\ref{killingone}.

\begin{definition}\label{CoherentSystem}
The sequence of nice projections $\vec{\varpi}$ is called  \emph{coherent} iff the two hold:
\begin{enumerate}
\item \label{AdditionalAssumption1} for all $n<\omega$, if $p\in P_{\geq n}$ then $\varpi_{n}``W(p)=\{\varpi_{n}(p)\}$;
\item\label{NiceCoherent} for all $n\leq m<\omega$, $\varpi_m$ factors through $\varpi_n$; i.e., there is a map $\pi_{m,n}\colon \mathbb{S}_m\rightarrow \mathbb{S}_n$ such that $\varpi_n(p)=\pi_{m,n}(\varpi_m(p))$ for all $p\in P_{\geq m}$.
\end{enumerate}
\end{definition}

\begin{lemma}\label{AdditionalAssumption3}
Let $p\in P$. Then for each $q\in W(p)$, $n\leq \ell(q)$ and $t\sle_n \varpi_n(q)$,
$$w(p,q+t)=w(p,q).$$
\end{lemma}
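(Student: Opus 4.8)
The plan is to unwind the definitions and reduce the claim to an application of Clause~\eqref{theprojection} of Definition~\ref{niceprojection} together with the coherence of $\vec\varpi$, though in fact the statement as written does not seem to need coherence --- it should follow from the niceness axioms alone. Fix $p\in P$, $q\in W(p)$, $n\le\ell(q)$ and $t\sle_n\varpi_n(q)$. First I would observe that $q+t$ (computed with respect to the nice projection $\varpi_n$ on $\mathbb P_{\ge n}$) is well-defined by Clause~\eqref{niceprojection3} of Definition~\ref{niceprojection}, and that $q+t\LE q\LE p$, so $w(p,q+t)$ makes sense. Since $q+t\LE q$, Clause~\eqref{itsaprojection} (order-preservation of $r\mapsto w(p,r)$) gives $w(p,q+t)\LE w(p,q)=q$ (the last equality because $q\in W(p)$, using Lemma~\ref{W(p)maxAntichain}\eqref{W(p)maxAntichain4} or simply the definition of $W(p)$ and $w$). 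So it remains to show $q\LE w(p,q+t)$, i.e.\ that $q$ itself is a legitimate "weakest extension of $p$ lying above $q+t$''; by maximality of $w(p,q+t)$ among such extensions, this forces equality.

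To see $q\LE w(p,q+t)$, recall $w(p,q+t)$ is by definition the $\LE$-greatest element $r$ with $r\LE p$ and $q+t\LE r$ (Clause~\eqref{c5} with $m=0$). Thus it suffices to exhibit that any such $r$ satisfies $q\LE r$; equivalently, using the greatest-element characterization, it is enough to check that $q$ is one such $r$ and that $w(p,q+t)$, being the greatest, dominates it --- wait, that gives $q\LE w(p,q+t)$ only if $q$ qualifies, i.e.\ $q\LE p$ (true) and $q+t\LE q$ (true). Hence $q$ is among the conditions over which $w(p,q+t)$ is the supremum, so $q\LE w(p,q+t)$. Combined with $w(p,q+t)\LE q$ from the previous paragraph, we conclude $w(p,q+t)=q=w(p,q)$, as desired.

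So the argument is essentially: $q+t\LE q\LE p$ forces $w(p,q)\GE w(p,q+t)\GE q$ via order-preservation on one side and the greatest-element property of $w(p,\cdot)$ on the other, pinching $w(p,q+t)$ between $w(p,q)=q$ and $q$. The only subtlety I anticipate is a bookkeeping one: one must be careful that $q+t$ is computed with respect to $\varpi_n$ while $w(p,\cdot)$ is the operator from the graded-poset structure (Clause~\eqref{c5}), and that these interact correctly --- in particular that $\ell(q+t)=\ell(q)$ (since $t\sle_n\varpi_n(q)$ keeps us at the same level, as $\varpi_n$ is a nice projection from $\mathbb P_{\ge n}$ and preserves $\ell$ by Clause~\eqref{PnprojectstoSn}), so that the relation $q+t\LE^0 q$ holds and $w(p,q+t)$ is indeed a $0$-step weakest extension. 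If the intended proof does invoke coherence (Definition~\ref{CoherentSystem}), it would presumably be to guarantee $\varpi_n(q+t)=t$ is consistent with $\varpi_n``W(p)$ being a singleton when one wants to also conclude something about where $q+t$ sits in the $p$-tree; but for the bare equality $w(p,q+t)=w(p,q)$ the monotonicity plus greatest-element sandwich suffices, and I would present it that way.
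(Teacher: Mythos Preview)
Your sandwich argument has a genuine gap. The appeal to Clause~\eqref{itsaprojection} for the direction $w(p,q+t)\LE w(p,q)$ is illegitimate: that clause only asserts that $r\mapsto w(p,r)$ is order-preserving when restricted to $W(p')$ for some $p'\LE p$, and $q$ and $q+t$ cannot both lie in a common $W(p')$ unless they are equal (indeed $W_0(q)=\{q\}$, so $q+t\in W(q)$ would force $q+t=q$). Worse, if you try to extract the inequality directly from the greatest-element definition of $w(p,\cdot)$, you get the \emph{opposite} direction: from $q+t\LE^0 q\LE^0 w(p,q)$ one sees that $w(p,q)$ is a legitimate candidate in the maximization defining $w(p,q+t)$, yielding $w(p,q)\LE w(p,q+t)$. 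Thus your ``first direction'' and your ``second direction'' both establish the same inequality $q=w(p,q)\LE w(p,q+t)$, and the reverse inequality is never proved. Since $\LE^0$ is not antisymmetric in a general graded poset, this one-sided bound does not pin down $w(p,q+t)$.

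The paper closes exactly this gap: both $w(p,q+t)$ and $w(p,q)=q$ lie in $W_n(p)$ (with $n=\ell(q)-\ell(p)$) and are compatible via $q+t$; since $W_n(p)$ is an antichain by Lemma~\ref{W(p)maxAntichain}\eqref{W(p)maxAntichain1}, they coincide. You can salvage your argument by appending this antichain observation after establishing $q\LE w(p,q+t)$, but as written the sandwich has only one slice.
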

\begin{proof}
 Note that $w(p,q+t)$ and $w(p,q)$ are two compatible conditions in $W(p)$ with the same length. In effect, Lemma~\ref{W(p)maxAntichain}\eqref{W(p)maxAntichain1} yields the desired.
\end{proof}

\begin{lemma}\label{LemmaAdditionalAssumptions}
Assume that   $\vec{\varpi}$ is coherent.

For all $n<\omega$, $p\in P_{\geq n}$ and $t\sle_{n} \varpi_{n}(p)$, the following  hold:
\begin{enumerate}
\item for each $q\in W(p+t)$, $q=w(p,q)+t$;\label{AdditionalAssumption2}
\item  for each $q\in W(p)$, $w(p+t,q+t)=q+t;$ \label{AdditionalAssumption4}
\item  for each $m<\omega$, $W_m(p+t)=\{q+t\mid q\in W_m(p)\}$. \label{AdditionalAssumption5}
\item \label{SumInvariant} $p+t=p+\varpi_{\ell(p)}(p+t)$;
\end{enumerate}
\end{lemma}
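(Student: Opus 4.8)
The plan is to isolate Clause~\eqref{SumInvariant} and handle it by itself via the factoring property Clause~\eqref{NiceCoherent} of coherence, and to derive Clauses~\eqref{AdditionalAssumption2}, \eqref{AdditionalAssumption4} and \eqref{AdditionalAssumption5} from the $p$-tree structure of Lemma~\ref{W(p)maxAntichain} in combination with the constancy property Clause~\eqref{AdditionalAssumption1}. Throughout I fix $n$, $p\in P_{\ge n}$ and $t\sle_n\varpi_n(p)$; in Clauses~\eqref{AdditionalAssumption2}, \eqref{AdditionalAssumption4} and \eqref{AdditionalAssumption5} the symbol $+$ refers to the nice projection $\varpi_n$, and in Clause~\eqref{SumInvariant} to $\varpi_{\ell(p)}$. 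I would open with a preliminary observation: whenever $q\in P_{\ge n}$ and $t\sle_n\varpi_n(q)$, the condition $q+t$ has length $\ell(q)$ --- this follows from Definition~\ref{niceprojection}\eqref{niceprojection3} applied to $\varpi_n\restriction\mathbb P_{\ell(q)}$, which is again a nice projection by Definition~\ref{SigmaPrikry}\eqref{PnprojectstoSn} --- and moreover $q+t\le q$ and $\varpi_n(q+t)=t$. In particular $p+t\in P_{\ell(p)}$, $p+t\le^0 p$ and $\varpi_n(p+t)=t$.

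For Clause~\eqref{SumInvariant}, I would set $r:=p+\varpi_{\ell(p)}(p+t)$; this is well-defined because $p+t\le p$ yields $\varpi_{\ell(p)}(p+t)\sle_{\ell(p)}\varpi_{\ell(p)}(p)$. Then $r\le p$, $\ell(r)=\ell(p)$ and $\varpi_{\ell(p)}(r)=\varpi_{\ell(p)}(p+t)$, so $p+t\le r$ is immediate from the maximality clause. For the reverse inequality, Clause~\eqref{NiceCoherent} with $m=\ell(p)$ provides a map $\pi_{\ell(p),n}\colon\mathbb S_{\ell(p)}\to\mathbb S_n$ with $\varpi_n=\pi_{\ell(p),n}\circ\varpi_{\ell(p)}$ on $P_{\ge\ell(p)}$; applying it to $\varpi_{\ell(p)}(r)=\varpi_{\ell(p)}(p+t)$ gives $\varpi_n(r)=\varpi_n(p+t)=t$, so $r\le p$ and the maximality of $p+t$ force $r\le p+t$, hence $r=p+t$.

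For Clause~\eqref{AdditionalAssumption2}, fix $q\in W(p+t)$. As $q\le p+t\le p$, the condition $w(p,q)$ is defined, belongs to $W(p)$, and satisfies $\ell(w(p,q))=\ell(q)$. Clause~\eqref{AdditionalAssumption1} applied to $W(p)$ gives $\varpi_n(w(p,q))=\varpi_n(p)$, so (as $t\sle_n\varpi_n(p)$) the condition $s:=w(p,q)+t$ is defined; and Clause~\eqref{AdditionalAssumption1} applied to $W(p+t)$ gives $\varpi_n(q)=\varpi_n(p+t)=t$. Since $q\le w(p,q)$ and $\varpi_n(q)=t$, maximality gives $q\le s$. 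On the other hand $s\le p$, $\varpi_n(s)=t$ and $\ell(s)=\ell(q)$, so $s\le p+t$; writing $m:=\ell(q)-\ell(p)$, the condition $w(p+t,s)$ therefore lies in the maximal antichain $W_m(p+t)$. From $q\le s\le w(p+t,s)$, with $q$ also in $W_m(p+t)$, the antichain property forces $q=w(p+t,s)$, whence $s\le q$; together with $q\le s$ this yields $q=s=w(p,q)+t$.

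Clause~\eqref{AdditionalAssumption4} is then proved by the same device in the other direction: for $q\in W(p)$ with $m:=\ell(q)-\ell(p)$, the preliminary observation and Clause~\eqref{AdditionalAssumption1} give $q+t\le^m p+t$, so $r_0:=w(p+t,q+t)\in W_m(p+t)$; by Clause~\eqref{AdditionalAssumption2}, $r_0=w(p,r_0)+t$ with $w(p,r_0)\in W_m(p)$, and since $q+t\le r_0\le w(p,r_0)$ while also $q+t\le q$, the elements $q$ and $w(p,r_0)$ of the antichain $W_m(p)$ are compatible, hence equal, so $r_0=q+t$. Finally, Clause~\eqref{AdditionalAssumption5} follows: the inclusion $W_m(p+t)\subseteq\{q+t\mid q\in W_m(p)\}$ is Clause~\eqref{AdditionalAssumption2} (every $q\in W_m(p+t)$ equals $w(p,q)+t$ with $w(p,q)\in W_m(p)$), and the reverse inclusion is Clause~\eqref{AdditionalAssumption4} (every $q+t$ with $q\in W_m(p)$ equals $w(p+t,q+t)\in W_m(p+t)$). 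The one point that needs genuine care --- everything else being a disciplined unwinding of the nice-projection axioms --- is each passage from ``compatible'' to ``equal'' inside the antichains $W_m(\cdot)$, which is where Lemma~\ref{W(p)maxAntichain} does its work, together with keeping track of which graded component $\mathbb P_k$ hosts each occurrence of the $+$ operation.
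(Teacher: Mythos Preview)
Your proposal is correct and follows essentially the same approach as the paper: both proofs hinge on the antichain property of $W_m(\cdot)$ from Lemma~\ref{W(p)maxAntichain}\eqref{W(p)maxAntichain1} together with Clause~\eqref{AdditionalAssumption1} of coherence, and both handle Clause~\eqref{SumInvariant} via the factoring map of Clause~\eqref{NiceCoherent}. The only (cosmetic) difference is in Clause~\eqref{AdditionalAssumption4}: the paper chains \cite[Lemma~2.9]{PartI} with Lemma~\ref{AdditionalAssumption3} to compute $w(p+t,q+t)=w(p,q+t)+t=q+t$, whereas you repeat the antichain trick of Clause~\eqref{AdditionalAssumption2} directly---a slightly more self-contained variant of the same idea.
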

\begin{proof}

\eqref{AdditionalAssumption2} Let  $q\in W(p+t)$. By virtue of  Definition~\ref{CoherentSystem}\eqref{AdditionalAssumption1}, we have $\varpi_{n}(q)=\varpi_{n}(p+t)=t$. This, together with $q\leq^0 w(p,q)$, implies that $w(p,q)+t$ is well-defined and also that $q\leq^0 w(p,q)+t$.  On the other hand,  $q\leq^0 w(p,q)+t\leq p+t$, hence $w(p+t,w(p,q)+t)$ and $q$ are two compatible conditions in $W(p+t)$ that have the same length. By Lemma~\ref{W(p)maxAntichain}\eqref{W(p)maxAntichain1} it follows that $q=w(p+t,w(p,q)+t)$, hence $w(p,q)+t\LE^0 q$, as desired.

\eqref{AdditionalAssumption4}
By Definition~\ref{CoherentSystem}\eqref{AdditionalAssumption1}, $q\LE^{\varpi_n} p$, hence $q+t$ is well-defined and so $w(p+t,q+t)$ belongs to $W(p+t)$. 
Combining Clause~\eqref{AdditionalAssumption2} above with \cite[Lemma~2.9]{PartI} we obtain the following chain of equalities:
$$w(p+t,q+t)=w(p,w(p+t,q+t))+t=w(p,q+t)+t.$$
Now, combine Lemma~\ref{AdditionalAssumption3}  with $q\in W(p)$ to infer that $w(p,q+t)=q$. Altogether, this shows that $w(p+t,q+t)=q+t$.

\eqref{AdditionalAssumption5} The left-to-right  inclusion is given by \eqref{AdditionalAssumption2} and the converse  by \eqref{AdditionalAssumption4}.

\eqref{SumInvariant} Note that $p+t\LE p+\varpi_{\ell(p)}(p+t)$. Conversely, by using Clause~\eqref{NiceCoherent} of Definition~\ref{CoherentSystem}  we have that  $\varpi_{n}(p+\varpi_{\ell(p)}(p+t))=\varpi_n(p+t)=t$.
\end{proof}

\begin{prop}\label{extensions} For every condition $p$ in $\mathbb P$ and an ordinal $\alpha<\kappa$,
there exists an extension $p'\LE p$ such that $\sigma_{\lh(p')}>\alpha$.
\end{prop}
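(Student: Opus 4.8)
The plan is to combine the hypothesis that $\Sigma$ converges to $\kappa$ (Clause~\ref{Calpha} of Definition~\ref{SigmaPrikry}) with the fact that $(\mathbb P,\lh)$ is a graded poset (Clause~\eqref{graded} of the same definition, hence Definition~\ref{gradedposet}). The idea is simply that extending $p$ repeatedly raises its length without bound, while the sequence $\Sigma$ eventually exceeds $\alpha$.

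Concretely, since $\kappa=\sup_{n<\omega}\sigma_n$ and $\alpha<\kappa$, I would first fix some $n<\omega$ with $\sigma_n>\alpha$; as $\Sigma$ is non-decreasing, this yields $\sigma_m>\alpha$ for every $m\ge n$. Now set $k:=\lh(p)$. If $k\ge n$, then already $\sigma_{\lh(p)}=\sigma_k>\alpha$ and we may take $p':=p$. Otherwise $k<n$, and I would construct by finite recursion a $\le$-decreasing sequence $\langle p_i\mid k\le i\le n\rangle$ with $p_k:=p$ and, invoking the second bullet of Definition~\ref{gradedposet} at step $i$, choosing $p_{i+1}\le p_i$ with $\lh(p_{i+1})=\lh(p_i)+1=i+1$. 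Then $p':=p_n$ satisfies $p'\le p$ and $\lh(p')=n$, so that $\sigma_{\lh(p')}=\sigma_n>\alpha$, as desired.

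I do not expect any real obstacle here: the statement is a direct unwinding of the relevant clauses of Definition~\ref{SigmaPrikry}. The only mild subtlety is that the graded-poset recursion only lets us prescribe the length of an extension up to overshooting, so we rely on the monotonicity of $\Sigma$ to guarantee that a longer condition still has $\sigma_{\lh(\cdot)}>\alpha$.
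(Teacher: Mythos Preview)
Your proposal is correct and follows essentially the same approach as the paper's proof: both fix $n$ with $\sigma_n>\alpha$, iterate the second bullet of Definition~\ref{gradedposet} to reach length at least $n$, and invoke the monotonicity of $\Sigma$. Your version is simply more explicit about the finite recursion and the case $\lh(p)\ge n$.
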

\begin{proof} Let $p$ and $\alpha$ be as above. Since $\alpha<\kappa=\sup_{n<\omega}\sigma_n$, we may find some $n<\omega$ such that $\alpha<\sigma_n$.
By Definition~\ref{SigmaPrikry}\eqref{graded}, $(\mathbb P,\lh)$ is a graded poset,
so by possibly iterating the second bullet of Definition~\ref{gradedposet} finitely many times,
we may find an extension $p'\LE p$ such that $\lh(p')\ge n$. As $\Sigma$ is non-decreasing, $p'$ is as desired.
\end{proof}

As in the context of $\Sigma$-Prikry forcings, also here,
the CPP implies the Prikry Property (PP) and the Strong Prikry Property (SPP).

\begin{lemma}\label{Prikrybasic} Let $p\in P$.
\begin{enumerate}
\item Suppose $\varphi$ is a sentence in the language of forcing.
Then there is $p'\LE^0 p$, such that $p'$ decides $\varphi$;
\item\label{SPP}  Suppose $D\s P$ is a $0$-open set which is dense  below $p$.
Then there is $p'\LE^0 p$, and $n<\omega$, such that $P^{p'}_n\s D$.\footnote{Note that if $D$ is moreover open, then $P^q_m\s D$ for all $m\geq n$.}
\end{enumerate}
Moreover, we can let $p'$ above to be a condition from $\z{\mathbb{P}}^{\varpi_{\ell(p)}}_{\ell(p)}\downarrow p$.
\end{lemma}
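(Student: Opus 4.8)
The plan is to mimic the proof of the analogous statement for $\Sigma$-Prikry forcing in \cite[\S2]{PartII}, using the Complete Prikry Property (Clause~\eqref{c6}) together with the new closure feature from Clause~\eqref{moreclosedness}. For part (1), given a sentence $\varphi$, consider the set $U:=\{r\in P\mid r\text{ decides }\varphi\}$ and observe that it is $0$-open (since deciding $\varphi$ only depends on $\cone{r}$, and $P^r_0\s\cone{r}$). Applying the CPP to $U$, $p$, and $n=0$, we obtain $q\LE^0 p$ such that either $P^q_0\cap U=\emptyset$ or $P^q_0\s U$. The first alternative is impossible: any $r\LE^0 q$ can be further extended to decide $\varphi$, and such an extension can be taken $0$-step (by iterating density of deciding extensions together with Clause~\eqref{graded}), so $P^q_0\cap U\ne\emptyset$; in fact a density argument shows $q$ itself can be taken to decide $\varphi$, i.e.\ $q\in U$. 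Thus $P^q_0\s U$, and in particular $q$ decides $\varphi$; rename $q$ as $p'$. For part (2), given a $0$-open $D$ dense below $p$, apply the CPP to $U:=D$ successively: for each $n$ we may thin out, but the cleaner route is the one from \cite{PartI,PartII} — use the CPP to get, for a fixed $n$, some $q\LE^0 p$ with $P^q_n\cap D=\emptyset$ or $P^q_n\s D$; the former contradicts density of $D$ below $q$ (extend an element of $P^q_n$ into $D$, which is $0$-open hence the whole $0$-block lands in $D$ once one element does — using density and that $D$ is $0$-open one reaches a contradiction, or rather one shows that for a suitable choice of $q$ no such $n$ can give the empty alternative while for some $n$ the second alternative must occur). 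The standard argument yields $p'\LE^0 p$ and $n<\omega$ with $P^{p'}_n\s D$.

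For the ``Moreover'' clause, the point is to upgrade $p'$ from an arbitrary $0$-step extension of $p$ to one lying in $\z{\mathbb P}^{\varpi_{\ell(p)}}_{\ell(p)}\downarrow p$. Here I would invoke Clause~\eqref{moreclosedness}: the poset $\z{\mathbb P}^{\varpi_n}_n$ (with $n:=\ell(p)$) is a dense and $\sigma_n$-directed-closed subposet of $\mathbb P_n^{\varpi_n}$. So given the $p'\LE^0 p$ produced above, use density of $\z{\mathbb P}^{\varpi_n}_n$ to find $p''\LE^{\varpi_n} p'$ with $p''\in\z{\mathbb P}^{\varpi_n}_n$; note $p''\LE^{\varpi_n}p'\LE^0 p$ forces $\varpi_n(p'')=\varpi_n(p')$ but not necessarily $\varpi_n(p')=\varpi_n(p)$ — however $p''\LE^0 p$ still holds, and $p''$ inherits the relevant decision/density property from $p'$ since $p''\LE p'$ and both (1) and (2) give downward-persistent conclusions (if $p'$ decides $\varphi$ so does any extension; if $P^{p'}_n\s D$ and $D$ is $0$-open one checks $P^{p''}_n\s D$ using that $p''\LE^0 p'$ implies $P^{p''}_n\s P^{p'}_n$ up to the identification via Clause~\eqref{c5}, more precisely every element of $P^{p''}_n$ extends an element of $P^{p'}_n$ and $D$ open suffices, but for the $0$-open case one needs $\ell(p'')=\ell(p')$ which holds since $p''\LE^0p'$). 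Thus $p''$ is the required condition in $\z{\mathbb P}^{\varpi_{\ell(p)}}_{\ell(p)}\downarrow p$.

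The main obstacle I anticipate is the bookkeeping around the ``Moreover'' clause in part (2): one must be careful that passing to $p''\in\z{\mathbb P}^{\varpi_n}_n$ does not destroy the property $P^{p'}_n\s D$. Since $D$ is only assumed $0$-open (not open), the inclusion $P^{p''}_n\s D$ does not follow merely from $p''\LE p'$; it follows because $p''\LE^0 p'$, so $\ell(p'')=\ell(p')=\ell(p)$, and every $r\in P^{p''}_n$ is an $n$-step extension of $p''$ which — via $m(p',\cdot)$ from Clause~\eqref{c5} — projects to an element $w$ of $P^{p'}_n\s D$ with $r\LE^0 w$, whence $r\in D$ by $0$-openness of $D$. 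Getting this projection/openness interplay exactly right (and similarly handling the $0$-open set $U$ in part (1)) is the delicate part; everything else is a routine transcription of the $\Sigma$-Prikry proof, with Clause~\eqref{moreclosedness} supplying the extra closure that was previously automatic.
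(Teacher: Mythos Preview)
Your argument for part (1) is circular. Applying the CPP to $U=\{r\in P\mid r\text{ decides }\varphi\}$ at the single level $n=0$ yields $q\le^0 p$ with either $P^q_0\subseteq U$ or $P^q_0\cap U=\emptyset$, and you then assert the second alternative is impossible because a deciding extension ``can be taken $0$-step (by iterating density of deciding extensions together with Clause~\eqref{graded}).'' But that is precisely the Prikry property you are trying to establish: nothing in Clause~\eqref{graded} or in density lets you shrink the length of a deciding extension, so it is entirely consistent with a single application of CPP at $n=0$ that every $r\le^0 q$ fails to decide $\varphi$ while some $r'\le^m q$ with $m>0$ does. The same defect recurs in your sketch of (2): for a \emph{fixed} $n$, the alternative $P^q_n\cap D=\emptyset$ cannot be ruled out by density of $D$, since $D$ may only meet the cone below $q$ at higher levels.

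The paper avoids this by iterating the CPP over all $n<\omega$ and then using closure. For (1) it applies Clause~\eqref{c6} to $U^+_\varphi=\{q\mid q\Vdash\varphi\}$ and $U^-_\varphi=\{q\mid q\Vdash\neg\varphi\}$ separately, obtaining for each $q$ and each $n$ some $q'\le^0 q$ such that either every $r\in P^{q'}_n$ decides $\varphi$ the same way or no such $r$ decides $\varphi$ at all. Chaining this produces a $\le^0$-decreasing $\omega$-sequence $\langle p_n\mid n<\omega\rangle$ below $p$ (which one arranges to lie in $\z{\mathbb P}_{\ell(p)}$), and countable closure (Clause~\eqref{c2}) supplies a lower bound $p'$. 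Now some $r\le p'$ decides $\varphi$; the homogeneity of $P^{p'}_n$ at $n=\ell(r)-\ell(p')$, and then inductively at all higher levels, forces $p'$ itself to decide $\varphi$. The essential ingredient you are missing is this $\omega$-iteration plus closure; a single invocation of the CPP cannot bridge the gap from ``$\varphi$ is decided somewhere below $p$'' to ``$\varphi$ is decided at the same level as $p$.'' Your post-hoc treatment of the Moreover clause (extend $p'$ into $\z{\mathbb P}^{\varpi_n}_n$ after the fact and check the conclusion persists) is a reasonable idea, but note it only gives $\varpi_n(p'')=\varpi_n(p')$, not $\varpi_n(p'')=\varpi_n(p)$; the paper instead builds the sequence inside the ring from the outset.
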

\begin{proof}
We only give the proof of (1), the proof of (2) is similar.
Fix $\varphi$ and $p$. Put $U_{\varphi}^+:=\{q\in P\mid q\Vdash_\mathbb{P}\varphi\}$ and  $U^-_\varphi:=\{q\in P\mid q\Vdash_\mathbb{P}\neg \varphi\}$. Both of these are $0$-open, so applying  Clause~\eqref{c6} of Definition~\ref{SigmaPrikry} twice, we get following:
\begin{claim}
For all $q\in P$ and $n<\omega$, there is $q'\LE^0 q$, such that either all $r\in P^{q'}_n$ decide $\varphi$ the same way, or no $r\in P^{q'}_n$ decides
$\varphi$.
\end{claim}
Now using the claim construct a $\leq^0$ decreasing sequence $\langle p_n\mid n<\omega\rangle$ below $p$. By using Clause~\eqref{c2} of Definition~\ref{SigmaPrikry} we may additionally assume that these are conditions in $\z{\mathbb{P}}_{\ell(p)}$. 
Letting $p'$ be a $\leq^0$-lower bound for this sequence we obtain $\leq^0$-extension of $p$  deciding $\varphi$. 
\end{proof}

\begin{cor}\label{PrikryPartI} Let $p\in P$ and $s\sle_{\ell(p)} \varpi_{\ell(p)}(p)$.
\begin{enumerate}
\item Suppose $\varphi$ is a sentence in the language of forcing.\label{PrikryPartI1}
Then there is $p'\LE^{\vec\varpi} p$ and $s'\sle_{\ell(p)} s$ such that $p'+s'$ decides $\varphi$;
\item  Suppose $D\s P$ is a $0$-open set which is dense  below $p$.\label{PrikryPartI2}
Then there are $p'\LE^{\vec\varpi} p$, $s'\sle_{\ell(p)} s$  and $n<\omega$ such that $P^{p'+s'}_n\s D$.
\end{enumerate}
Moreover, we can let $p'$ above to be a condition from $\z{\mathbb{P}}^{\varpi_{\ell(p)}}_{\ell(p)}\downarrow p$.
\end{cor}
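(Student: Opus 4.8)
The plan is to deduce both clauses from the corresponding clauses of Lemma~\ref{Prikrybasic} together with Clause~\eqref{theprojection} of Definition~\ref{niceprojection} applied to the nice projection $\varpi_{\ell(p)}$. Write $n:=\ell(p)$, $\varpi:=\varpi_n$ and $t:=\varpi(p)$, so that $s\sle_n t$. First form the condition $p+s$ (which exists and has $\varpi(p+s)=s$ by Definition~\ref{niceprojection}\eqref{niceprojection3}); note $\ell(p+s)=n$ since $p+s\LE^0 p$.

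For Clause~\eqref{PrikryPartI1}: apply Lemma~\ref{Prikrybasic}(1) to the condition $p+s$ to get $q\LE^0 p+s$ deciding $\varphi$, and moreover, by the "Moreover" clause of Lemma~\ref{Prikrybasic}, we may take $q\in\z{\mathbb{P}}^{\varpi}_n\downarrow(p+s)$, so in particular $q\LE^\varpi p+s$, i.e. $\varpi(q)=\varpi(p+s)=s$. Now invoke Definition~\ref{niceprojection}\eqref{theprojection} for $\varpi$: since $q\LE p+s$, there is $p'\LE^\varpi p$ with $q=p'+\varpi(q)=p'+s$. Setting $s':=s$ we have $p'\LE^\varpi p$ (hence $p'\LE^{\vec\varpi}p$ by Convention~\ref{ConvSigmaPrikry}, as $\ell(p')=n$ and $\varpi_n(p')=\varpi_n(p)$), $s'\sle_n s$, and $p'+s'=q$ decides $\varphi$. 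For the "Moreover" of the corollary, observe that $q\in\z{\mathbb{P}}^{\varpi}_n\downarrow p$ and $p'$ is recovered from $q$ as above; one checks that $p'$ itself can be taken in $\z{\mathbb{P}}^{\varpi}_n\downarrow p$, using that $p'\LE^\varpi p$ together with $p'+s=q\in\z{\mathbb{P}}_n$ and the density/closure properties of $\z{\mathbb{P}}^\varpi_n$ from Definition~\ref{SigmaPrikry}\eqref{moreclosedness} (strengthening $p'$ modulo $\varpi$ if necessary, which does not change $p'+s$ up to the required extension). Clause~\eqref{PrikryPartI2} is entirely parallel, applying Lemma~\ref{Prikrybasic}\eqref{SPP} to $p+s$ in place of Lemma~\ref{Prikrybasic}(1), producing $q\LE^0 p+s$ and $n^*<\omega$ with $P^q_{n^*}\s D$, and then pulling back $q=p'+s'$ exactly as above, with the same $n^*$ serving as the witness.

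The routine-but-slightly-delicate point, and the only place where care is needed, is the interaction between "$q\LE^\varpi p+s$" coming out of Lemma~\ref{Prikrybasic} and the hypothesis needed to apply Definition~\ref{niceprojection}\eqref{theprojection}, namely $q\LE p+s$ (which is weaker, so this direction is free), together with making sure the recovered $p'$ lands in the countably-closed/$\sigma_n$-directed-closed dense subposet $\z{\mathbb{P}}^\varpi_n$ so that the final "Moreover" clause holds; this is handled by Definition~\ref{SigmaPrikry}\eqref{moreclosedness}. I do not anticipate a genuine obstacle — this corollary is essentially a reformulation of Lemma~\ref{Prikrybasic} "modulo $\varpi_{\ell(p)}$", exactly as Corollary~\ref{PrikryPartI} is meant to package the Prikry and Strong Prikry properties in the presence of the nice projections, in the same spirit as \cite[Lemma~2.14]{PartI} did for the $\Sigma$-Prikry case.
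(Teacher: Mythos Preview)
Your proposal is correct and follows essentially the same approach as the paper: apply Lemma~\ref{Prikrybasic} to the condition $p+s$, then invoke Clause~\eqref{theprojection} of Definition~\ref{niceprojection} for the nice projection $\varpi_{\ell(p)}$ to decompose the resulting $q$ as $p'+s'$ with $p'\LE^{\vec\varpi}p$, and finally handle the ``Moreover'' via the density of $\z{\mathbb P}^{\varpi_{\ell(p)}}_{\ell(p)}$ from Definition~\ref{SigmaPrikry}\eqref{moreclosedness}. The only minor difference is that you first invoke the ``Moreover'' of Lemma~\ref{Prikrybasic} to arrange $\varpi_n(q)=s$ (yielding $s'=s$), whereas the paper just takes $s':=\varpi_n(q)\sle_n s$; both work, and your discussion of the ``Moreover'' clause is a bit more labored than necessary---once you have $p'$, simply $\LE^{\varpi_n}$-extend it into $\z{\mathbb P}^{\varpi_n}_n$ by density and note that $p''+s'\LE p'+s'$ still decides $\varphi$ (resp.\ still satisfies $P^{p''+s'}_{n^*}\s D$).
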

\begin{proof}
We only show \eqref{PrikryPartI1} as  \eqref{PrikryPartI2} is similar. 
By Lemma \ref{Prikrybasic}, let $q\leq^0 p+s$ deciding $\varphi$. By Definition~\ref{SigmaPrikry}\eqref{PnprojectstoSn} the map $\varpi_n$ is a nice projection, hence there is  $p'\LE^{\vec\varpi} p$ and $s'\sle_{\ell(p)} s$ such that $p'+s'=q$ (Definition~\ref{niceprojection}\eqref{theprojection}). The moreover part follows from density of $\z{\mathbb{P}}^{\varpi_{\ell(p)}}_{\ell(p)}$  in ${\mathbb{P}}^{\varpi_{\ell(p)}}_{\ell(p)}$ (Definition~\ref{SigmaPrikry}\eqref{moreclosedness}).
\end{proof}

Working a bit more, we can obtain the following:

\begin{lemma}\label{Prikry}
Let $p\in P$. Set  $\ell:=\ell(p)$ and $s:=\varpi_n(p)$.
\begin{enumerate}
\item\label{Prikry1} Suppose $\varphi$ is a sentence in the language of forcing.
Then there  is $q\LE^{\vec\varpi}  p$ such that
$D_{\varphi,q}:=\{t\SLE_\ell s\mid(q+t\forces_{\mathbb P}\varphi)\text{ or }(q+t\forces_{\mathbb P}\neg\varphi)\}$ is dense in $\cones{\ell} s$;
\item\label{Prikry3} Suppose $D\s P$ is a $0$-open set.
Then there is $q\LE^{\vec\varpi}  p$ such that
$U_{D,q}:=\{t\SLE_\ell s\mid\forall m<\omega~(P^{q+t}_m\s D\;\text{or}\;P^{q+t}_m\cap D=\emptyset)\}$ is dense in $\cones{\ell} s$.
\item\label{Prikry2} Suppose $D\s P$ is a $0$-open set which is dense  below $p$.
Then there is $q\LE^{\vec\varpi}  p$ such that
$U_{D,q}:=\{t\SLE_\ell s\mid\exists m<\omega~P^{q+t}_m\s D\}$ is dense in $\cones{\ell} s$.
\end{enumerate}
Moreover, $q$ above belongs to $\z{\mathbb{P}}^{\varpi_\ell}_\ell\downarrow p$.
\end{lemma}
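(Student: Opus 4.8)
The plan is to prove all three clauses by a single device: a $\LE^{\varpi_\ell}$-decreasing recursion of length $|S_\ell|$ that disposes of the elements of $\cones{\ell}{s}$ one at a time, after which the terminal condition is the desired $q$ and density is read off at the end. The engine is a pair of facts packaged into being $(\Sigma,\vec{\mathbb{S}})$-Prikry: $|S_\ell|<\sigma_\ell$, and $\z{\mathbb{P}}^{\varpi_\ell}_\ell$ is dense in $\mathbb{P}^{\varpi_\ell}_\ell$ and $\sigma_\ell$-directed-closed (Definition~\ref{SigmaPrikry}\eqref{moreclosedness}). So I would fix an enumeration $\langle t_i\mid i<\theta\rangle$ of $\cones{\ell}{s}$ with $\theta<\sigma_\ell$, pick $q_0\in\z{\mathbb{P}}^{\varpi_\ell}_\ell$ with $q_0\LE^{\varpi_\ell}p$, and recursively build a $\LE^{\varpi_\ell}$-decreasing sequence $\langle q_i\mid i\le\theta\rangle$ inside $\z{\mathbb{P}}^{\varpi_\ell}_\ell\downarrow p$, with lower bounds taken at limit stages by directed-closure and a successor step to be specified per clause; finally $q:=q_\theta$. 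The recurring bookkeeping point, used without further comment, is that $\LE^{\varpi_\ell}$-extension is harmless: if $q''\LE^{\varpi_\ell}q'$ then for any $t\SLE_\ell s$ the map $+$ is order-preserving by Definition~\ref{niceprojection}\eqref{theprojection}, so $q''+t\LE q'+t$, and both lie in $P_\ell$; hence $q''+t\LE^0 q'+t$, and any decision of $\varphi$, inclusion $P^{\bullet}_m\s D$, or $D$-homogeneity of $P^{\bullet}_m$ secured below $q'+t$ descends to $q+t$.

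For Clauses~\eqref{Prikry1} and~\eqref{Prikry2} I would feed Corollary~\ref{PrikryPartI} straight into the successor step. For~\eqref{Prikry1}: at stage $i+1$ apply Corollary~\ref{PrikryPartI}\eqref{PrikryPartI1} to $(q_i,t_i)$ --- legitimate since $t_i\SLE_\ell s=\varpi_\ell(q_i)$ --- to obtain $q_{i+1}\in\z{\mathbb{P}}^{\varpi_\ell}_\ell\downarrow q_i$ and $t_i'\SLE_\ell t_i$ with $q_{i+1}+t_i'$ deciding $\varphi$. For~\eqref{Prikry2}: $D$ remains dense below $q_i$ (as $q_i\LE p$), so Corollary~\ref{PrikryPartI}\eqref{PrikryPartI2} yields $q_{i+1}\in\z{\mathbb{P}}^{\varpi_\ell}_\ell\downarrow q_i$, $t_i'\SLE_\ell t_i$ and $n_i<\omega$ with $P^{q_{i+1}+t_i'}_{n_i}\s D$. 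In both cases the relevant property survives to $q+t_i'$, so $t_i'\in D_{\varphi,q}$ (resp.\ $t_i'\in U_{D,q}$) while $t_i'\SLE_\ell t_i$; letting $i$ range over $\theta$ shows the set is dense in $\cones{\ell}{s}$. The ``moreover'' is automatic, since $q=q_\theta$ was built in $\z{\mathbb{P}}^{\varpi_\ell}_\ell\downarrow p$ and has length $\ell$, hence $q\LE^{\vec\varpi}p$.

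The clause I expect to require a genuine idea is~\eqref{Prikry3}: here a single $t_i'\SLE_\ell t_i$ must make $P^{q_{i+1}+t_i'}_m$ $D$-homogeneous for \emph{all} $m<\omega$ simultaneously, and since $\vec{\mathbb{S}}$ is not assumed to carry any closure, I cannot run an $\omega$-length fusion inside $\mathbb{S}_\ell$. The remedy is to perform the fusion inside $\mathbb{P}_\ell$ and shrink the $\mathbb{S}_\ell$-coordinate only once. First I would record --- by the usual $\omega$-fusion: iterate the $\CPP$ (Definition~\ref{SigmaPrikry}\eqref{c6}) level by level, refining into $\z{\mathbb{P}}_\ell$ each time by density and taking a $\LE^0$-lower bound at the end by the countable-closedness of $\z{\mathbb{P}}_\ell$ (Definition~\ref{SigmaPrikry}\eqref{c2}) --- that every condition of $P_\ell$ has a $\LE^0$-extension $z\in\z{\mathbb{P}}_\ell$ with $P^z_m$ $D$-homogeneous for all $m$. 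Then at stage $i+1$ I apply this to $q_i+t_i$ to get such a $z\LE^0 q_i+t_i$; using Definition~\ref{niceprojection}\eqref{theprojection} for the nice projection $\varpi_\ell\restriction\mathbb{P}_\ell$ I write $z=q'+\varpi_\ell(z)$ with $q'\LE^{\varpi_\ell}q_i$, set $t_i':=\varpi_\ell(z)\SLE_\ell t_i$, and refine $q'$ to some $q_{i+1}\in\z{\mathbb{P}}^{\varpi_\ell}_\ell$ with $q_{i+1}\LE^{\varpi_\ell}q'$. Since $q_{i+1}+t_i'\LE^0 q'+t_i'=z$, every level below $q_{i+1}+t_i'$ is $D$-homogeneous, and this passes to $q+t_i'$; hence $t_i'\in U_{D,q}$ for each $i$, so $U_{D,q}$ is dense in $\cones{\ell}{s}$, and the ``moreover'' holds as before. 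The whole argument thus hinges on the allocation of closure: $\sigma_\ell$-directed-closure of $\z{\mathbb{P}}^{\varpi_\ell}_\ell$ absorbs the outer recursion over the fewer-than-$\sigma_\ell$ conditions of $\cones{\ell}{s}$, while the bare countable closure of $\z{\mathbb{P}}_\ell$ absorbs the $\omega$-fusion over Prikry levels, and each $\LE^{\varpi_\ell}$-step is arranged to inherit all commitments made so far.
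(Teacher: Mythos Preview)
Your proposal is correct and follows essentially the same approach as the paper: both run a $\LE^{\varpi_\ell}$-decreasing recursion of length ${<}\sigma_\ell$ through $\z{\mathbb{P}}^{\varpi_\ell}_\ell$, handling one element of $\cones{\ell}{s}$ per step via Corollary~\ref{PrikryPartI}, and for Clause~\eqref{Prikry3} both combine the $\CPP$, countable closure of $\z{\mathbb{P}}_\ell$, and the nice-projection decomposition (Definition~\ref{niceprojection}\eqref{theprojection}) to secure $D$-homogeneity at all levels simultaneously. The only difference is expository: the paper compresses the successor step for Clause~\eqref{Prikry3} into a one-line citation of the relevant ingredients, whereas you spell out the $\omega$-fusion producing $z$ and the subsequent splitting $z=q'+\varpi_\ell(z)$ explicitly.
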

\begin{proof}
(1) By Definition~\ref{SigmaPrikry}\ref{Cbeta}, let us fix some cardinal $\theta<\sigma_\ell$ along with an injective enumeration $\langle s_\alpha\mid\alpha<\theta\rangle$ of the conditions in $\cones{\ell} s$,
such that $s_0=s$.
We will construct by recursion two sequences of conditions $\vec p=\langle p^\alpha\mid \alpha<\theta\rangle$ and $\vec s=\langle s^\alpha\mid \alpha<\theta\rangle$ for which all of the following hold:
\begin{itemize}
\item[(a)] $\vec p$ is a $\LE^{\vec\varpi}$-decreasing sequence of conditions in $\z{\mathbb{P}}^{\varpi_\ell}_\ell$ below $p$;
\item[(b)] $\vec s$ is a sequence of conditions  below $s$;
\item[(c)] for each $\alpha<\theta$, $s^{\alpha}\sle_n s_\alpha$ and $p^{\alpha}+s^{\alpha}\parallel_\mathbb{P}\varphi$.
\end{itemize}

To see that this will do,
assume for a moment that there are sequences $\vec p$ and $\vec s$ as above.
Since $\theta<\sigma_\ell$,  we may find a $\LE^{\vec\varpi}$-lower bound $q$ for $\vec p$ in $\z{\mathbb{P}}^{\varpi_\ell}_\ell$.
In particular, $q\LE^{\vec\varpi} p$.
We claim that $D_{\varphi,q}$ is dense in $\cones{\ell}s$.
To this end, let $s'\sle_\ell s$ be arbitrary.
Find $\alpha<\theta$ such that $s'=s_\alpha$.
By the hypothesis, $s^{\alpha}\sle_\ell s_\alpha$ and $p^{\alpha}+s^\alpha$ decides $\varphi$, hence $q+s^{\alpha}$ also decides it.
In particular, $s^{\alpha}$ is an extension of $s'$ belonging to $D_{\varphi,q}$.

\begin{claim}
There are sequences  $\vec p$ and $\vec s$  as above.
\end{claim}

\begin{proof}We construct the two sequences by recursion on $\alpha<\theta$.
For the base case, appeal to Corollary~\ref{PrikryPartI}\eqref{PrikryPartI1} with $p$ and $s$,
and retrieve $p^0\LE^{\vec\varpi} p$ and $s^0\sle_{n}s$ such that $p_0\in \z{P}^{\varpi_{\ell}}_\ell$ and $p^0+s^0$ indeed decides $\varphi$.

$\br $ Assume  $\alpha=\beta+1$ and that $\langle p^\gamma\mid \gamma\leq \beta\rangle$ and $\langle s^\gamma\mid \gamma\leq \beta\rangle$ have been already defined. 
Since $s_\alpha\sle_\ell s=\varpi_\ell(p^\beta)$,
it follows that $p^\beta+s_\alpha$ is a legitimate condition in $P_\ell$.
Appealing to Corollary~\ref{PrikryPartI}\eqref{PrikryPartI1} with $p^\beta$ and $s_\alpha$, let  $p^\alpha\LE^{\vec\varpi} p^\beta$ and $s^{\alpha}\sle_{\ell}s_\alpha$ be such that $p^\alpha\in \z{P}^{\varpi_\ell}_\ell$ and  $p^\alpha+s^{\alpha}$ decides $\varphi$.

$\br$ Assume $\alpha\in\acc(\theta)$ and that the sequences $\langle p^\beta\mid \beta< \alpha\rangle$ and $\langle s^\beta\mid \beta<\alpha\rangle$ have already been defined.
Appealing to Definition~\ref{SigmaPrikry}\eqref{moreclosedness}, let $p^*$  be a $\LE^{\vec\varpi}$-lower bound for $\langle p^\beta\mid \beta< \alpha\rangle$.
Finally, obtain $p^\alpha\in D$ and $s^\alpha$ by appealing to Corollary~\ref{PrikryPartI}\eqref{PrikryPartI1} with respect to $p^*$ and $s_\alpha$.
\end{proof}

This completes the proof of Clause~(1).
The proof of Clauses~\eqref{Prikry3} and \eqref{Prikry2} is similar by amending suitably Clause~(c) above. For instance, for Clause~\eqref{Prikry3} we require the following in Clause~(c): for each $\alpha<\theta$ and $n<\omega$, $s^\alpha\sle_n s_\alpha$ and either $P^{p^\alpha+s^\alpha}_n\s D$ or $P^{p^\alpha+s^\alpha}_n\cap  D=\emptyset$. For the verification of this new requirement we combine Clauses~\eqref{c2}, \eqref{c6} and \eqref{PnprojectstoSn} of Definition~\ref{SigmaPrikry} with Definition~\ref{niceprojection}\eqref{theprojection}. Similarly, to prove Clause~\eqref{Prikry2} of the lemma one uses Clause~\eqref{PrikryPartI2} of  Corollary~\ref{PrikryPartI}.
\end{proof}

We now arrive at the main result of the section:
\begin{lemma}[Analysis of bounded sets]\label{l14} \hfill
\begin{enumerate}
\item\label{l14(1)}
If $p\in P$ forces that $\dot{a}$ is a $\mathbb P$-name for a bounded subset $a$ of $\sigma_{\lh(p)}$,
then $a$ is added by $\mathbb{S}_{\lh(p)}$.
In particular,
if $\dot{a}$ is a $\mathbb P$-name for a bounded subset $a$ of $\kappa$, then, for any large enough $n<\omega$, $a$ is added by $\mathbb S_n$;
\item\label{l14(3)} $\mathbb{P}$ preserves  $\kappa$. Moreover, if $\kappa$ is a strong limit, it remains so;
\item\label{l14(2)} For every regular cardinal $\nu\geq\kappa$, if there exists $p\in P$ for which $p\forces_{\mathbb P}\cf(\nu)<\kappa$,
then there exists $q\LE^{\vec\varpi} p$ with $|W(q)|\geq\nu$;\footnote{For future reference, we point out that this fact relies only on clauses  (\ref{graded}), (\ref{csize}), (\ref{c6}), \eqref{PnprojectstoSn} and (\ref{moreclosedness}) of Definition~\ref{SigmaPrikry}.}
\item\label{l14(4)} Suppose $\one\forces_{\mathbb P}``\kappa\text{ is singular}"$. Then $\mu=\kappa^+$ if and only if, for all $p\in P$, $|W(p)|\leq\kappa$.

\end{enumerate}
\end{lemma}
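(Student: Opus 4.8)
The plan is to prove the two directions separately, leveraging Lemma~\ref{l14}\eqref{l14(2)} together with the cardinal arithmetic bookkeeping supplied by Clauses~\eqref{Cbeta} and \eqref{csize} of Definition~\ref{SigmaPrikry}.

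\textit{From the size bound to $\mu=\kappa^+$.} Assume $|W(p)|\le\kappa$ for all $p\in P$; we must show $\mu=\kappa^+$. Since by Clause~\ref{Cdelta} we already know $\one\forces_{\mathbb P}\mu=\kappa^+$, it suffices to argue that $\mathbb{P}$ does not collapse $\kappa^+$, equivalently (as $\mathbb{P}$ preserves $\kappa$ by \eqref{l14(3)}, and trivially preserves cofinalities above $\mu$) that no cardinal in the interval $[\kappa,\mu)$ is collapsed and that $\mu$ itself is preserved as a cardinal. Here is where I would invoke \eqref{l14(2)} in contrapositive form: if some regular $\nu\ge\kappa$ had $p\forces_{\mathbb P}\cf(\nu)<\kappa$, then there would be $q\le^{\vec\varpi}p$ with $|W(q)|\ge\nu$; if moreover $\nu<\mu$, then in $V$ we would have $\nu<\mu=\kappa^+$, so $\nu\le\kappa$, and since $\nu\ge\kappa$ is regular while $\kappa$ is (forced to be) singular, in fact $\nu=\kappa$ is impossible for a regular $\nu$ unless $\kappa$ is regular — contradiction. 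So the only remaining regular cardinal in $(\kappa,\mu]$ is $\mu=\kappa^+$ itself, and if $\mathbb{P}$ forced $\cf(\kappa^+)<\kappa$ we would obtain $q$ with $|W(q)|\ge\kappa^+>\kappa$, contradicting our hypothesis. Hence $\mathbb{P}$ preserves all regular cardinals in $[\kappa,\kappa^+]$, so $\mu=\kappa^+$ is preserved and indeed equals $\kappa^+$ in the extension.

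\textit{From $\mu=\kappa^+$ to the size bound.} Conversely, assume $\mu=\kappa^+$ (as computed in $V$, consistent with $\one\forces_{\mathbb P}\mu=\kappa^+$), and suppose toward a contradiction that some $p\in P$ had $|W(p)|>\kappa$, i.e.\ $|W(p)|\ge\kappa^+=\mu$. By Clause~\eqref{csize} of Definition~\ref{SigmaPrikry}, $|W(p)|<\mu$, an immediate contradiction. So this direction is essentially unwinding the definitions — the content is entirely in Clause~\eqref{csize}, which already caps $|W(p)|$ below $\mu$; once $\mu=\kappa^+$ this cap is exactly ``$\le\kappa$''.

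\textit{Where the work is.} The second direction is trivial; the first is the substantive one, and within it the only real step is correctly applying Lemma~\ref{l14}\eqref{l14(2)}. The main obstacle I anticipate is the bookkeeping at $\nu=\mu$: one needs to rule out that $\mathbb{P}$ changes the cofinality of $\kappa^+$ to something below $\kappa$, and for this the hypothesis ``$|W(q)|\le\kappa$ for \emph{all} $q$'' is used in full strength against the conclusion ``$|W(q)|\ge\nu=\kappa^+$'' of \eqref{l14(2)}. A secondary subtlety is making sure we have not tacitly assumed $\mathbb{P}$ preserves cardinals strictly between $\kappa$ and $\kappa^+$ — but there are none, since $\mu=\kappa^+$ in $V$ already, so the interval $(\kappa,\mu)$ is empty and nothing needs checking there. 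The hypothesis $\one\forces_{\mathbb P}$``$\kappa$ is singular'' is what lets us dismiss the case $\nu=\kappa$ in the application of \eqref{l14(2)}.
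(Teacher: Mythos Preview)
Your approach matches the paper's exactly: the easy direction is immediate from Clause~\eqref{csize} (since $|W(p)|<\mu=\kappa^+$ gives $|W(p)|\le\kappa$), and the substantive direction applies Clause~\eqref{l14(2)} with $\nu=(\kappa^+)^V$, using the hypothesis that $\kappa$ is forced singular to ensure that collapsing $\kappa^+$ forces $\cf(\kappa^+)<\kappa$ rather than merely $\le\kappa$.

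That said, your exposition of the hard direction takes an unnecessary and somewhat circular detour. You repeatedly write ``$\mu=\kappa^+$'' (e.g., ``$\nu<\mu=\kappa^+$'', ``the interval $(\kappa,\mu)$ is empty since $\mu=\kappa^+$ in $V$ already'') in the middle of proving that very equality. None of this is needed: you do not have to survey all regular cardinals in $[\kappa,\mu]$ or verify that $\mu$ is preserved. The entire argument is: if $\mu\neq(\kappa^+)^V$, then $\one$ forces $(\kappa^+)^V$ to be collapsed to $\kappa$; since $\kappa$ is forced singular and cofinalities are regular, $\one\Vdash\cf((\kappa^+)^V)<\kappa$; now Clause~\eqref{l14(2)} yields $q$ with $|W(q)|\ge\kappa^+>\kappa$, contradicting the hypothesis. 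Also note that the role of ``$\kappa$ singular'' is to rule out $\cf(\nu)=\kappa$, not $\nu=\kappa$ as you phrase it in places. Strip out the detour and your proof is exactly the paper's.
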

\begin{proof} (1) The ``in particular'' part follows from the first part
together with Proposition~\ref{extensions}.
Thus, let us suppose that $p$ is a given condition forcing that $\dot{a}$ is a name for a subset $a$ of some cardinal $\theta<\sigma_{\lh(p)}$.

For each $\alpha<\theta$, denote the sentence $``\check\alpha\in\dot{a}"$ by $\varphi_\alpha$. Set $n:=\lh(p)$ and $s:=\varpi_n(p)$.
Combining Definition~\ref{SigmaPrikry}\eqref{moreclosedness} with Lemma~\ref{Prikry}\eqref{Prikry1},
we may recursively obtain a $\LE^{\varpi_n}$-decreasing sequence of conditions
$\vec p=\langle p^\alpha\mid \alpha<\theta\rangle$ with a lower bound, such that, for each $\alpha<\theta$,
$p^\alpha\LE^{\varpi_n} p$ and
$D_{\varphi_\alpha,p^\alpha}$ is dense in $\cones{n}{s}$.
Then let $q\in P_n$ be $\LE^{\varpi_n}$-below all elements of $\vec p$.
It follows that for every $\alpha<\theta$,
$$D_{\varphi_\alpha,q}=\{t\SLE_n s\mid (q+t\forces_{\mathbb P} \varphi_\alpha)\text{ or }(q+t\forces_{\mathbb P}\neg\varphi_\alpha)\}$$
is dense in $\cones{n}{s}$.

Now, let $G$ be a $\mathbb{P}$-generic filter with $p\in G$.
Let $H_n$ be the $\mathbb{S}_n$-generic filter induced by ${\varpi}_n$ from $G$,
and work in $V[H_n]$.
It follows that, for every $\alpha<\theta$,
for some $t\in H_n$, either $(q+t\forces_{\mathbb{P}} {\check\alpha}\in\dot{a})$ or $(q+t\forces_{\mathbb{P}} \check\alpha\notin\dot{a})$.
Set $$b:=\{\alpha<\theta\mid\exists t\in H_n[q+t\forces_{\mathbb{P}} \check\alpha\in\dot{a}]\}.$$

As $q\le^{\vec\varpi}p$,
we infer that  $\varpi_n(q)=\varpi_n(p)=s\in H_n$,
so that $q\in P/H_n$. 

\begin{claim} $q\forces_{\mathbb{P}/H_n} {b}=\dot{a}_{H_n}$.
\end{claim}
\begin{proof}
Clearly, $q\forces_{\mathbb{P}/H_n} {b}\s \dot{a}_{H_n}$.
For the converse, let $\alpha<\theta$ and $r\LE_{\mathbb{P}/H_n} q$ be such that $r\forces_{\mathbb{P}/H_n} \check\alpha\in\dot{a}_{H_n}$.
By the very Definition~\ref{DefinitionQuotient}, there is $t_0\in H_n$ with $t_0\sle_n {\varpi}_n(r)$ such that $r+t_0\LE q$. By extending $t$ if necessary, we may moreover assume that $r+t_0\forces_\mathbb{P} \check\alpha\in\dot{a}$. Set $q_0:=r+t_0$.

By the choice of $q$, there is $t_1\in H_n$ such that $q+t_1\parallel_{\mathbb{P}} \check\alpha\in\dot{a}$. Set $q_{1}:=q+t_1$. Let $t\in H_n$ be such that $t\sle_n t_0,t_1$.
Recalling Definition~\ref{SigmaPrikry}\eqref{moreclosedness},
${\varpi}_n$ is nice, so $t\sle_n {\varpi}_n(q_0), {\varpi}_n(q_1)$.
By  Definition~\ref{niceprojection}\eqref{theprojection}, 
$q_0+t$ witnesses the compatibility of $q_0$ and $q_1$, hence $q+t_1 \forces_{\mathbb{P}}\check\alpha\in\dot{a}$, and thus $\alpha\in b$.
\end{proof}

Altogether, $\dot{a}_G\in V[H_n]$.

\medskip

(2) If $\kappa$ were to be collapsed, then, by Clause~(1), it would have been collapsed by $\mathbb{S}_n$ for some $n<\omega$.
However, $\mathbb{S}_n$ is a notion of forcing of size $<\sigma_n\leq \kappa$.

Next, suppose towards a contradiction that $\kappa$ is strong limit cardinal,
and yet, for some $\mathbb{P}$-generic filter $G$,
for some $\theta<\kappa$, $V[G]\models2^\theta\geq \kappa$.
For each $n<\omega$, let $H_n$ be the $\mathbb{S}_n$-generic filter induced by ${\varpi}_n$ from $G$.
Using Clause~(1), for every $a\in\mathcal P^{V[G]}(\theta)$, we fix $n_a<\omega$ such that $a\in V[H_{n_a}]$.

$\br$ If $\kappa$ is regular, then there must exist some $n<\omega$
for which $|\{a\in\mathcal P^{V[G]}(\theta)\mid n_a=n\}|\ge\kappa$.
However $\mathbb S_n$ is a notion of forcing of some size $\lambda<\kappa$,
and so by counting nice names, we see it cannot add more than $\theta^\lambda$ many subsets to $\theta$, contradicting the fact that $\kappa$ is strong limit.

$\br$ If $\kappa$ is not regular, then $\Sigma$ is not eventually constant,
and $\cf(\kappa)=\omega$, so that, by K\"onig's lemma,
$V[G]\models2^\theta\geq \kappa^+$.
It follows that exists some $n<\omega$
for which $|\{a\in\mathcal P^{V[G]}(\theta)\mid n_a=n\}|>\kappa$,
leading to the same contradiction.

(3) Suppose $\theta,\nu$ are regular cardinals with $\theta<\kappa\leq\nu$, $\dot f$ is a $\mathbb P$-name for a function from $\theta$ to $\nu$,
and $p\in P$ is a condition forcing that the image of $\dot f$ is cofinal in $\nu$.
Denote $n:=\lh(p)$ and $s:=\varpi_n(p)$.
By Proposition~\ref{extensions},  we may extend $p$ and assume that $\sigma_{n}>\theta$. 

For all $\alpha<\theta$, set $D_\alpha:=\{r\LE p\mid \exists\beta<\nu,\, r\forces_{\mathbb{P}} \dot{f}(\check{\alpha})=\check{\beta}\}$.
As $D_\alpha$ is $0$-open and  dense  below $p$, by combining Lemma~\ref{Prikry}\eqref{Prikry2} with the $\sigma_n$-directed closure of $\z{\mathbb{P}}^{\varpi_n}_n$ (see Definition~\ref{SigmaPrikry}\eqref{moreclosedness}),  
we may recursively define a $\LE^{\vec\varpi}$-decreasing sequence of conditions $\langle q^\alpha\mid \alpha\le\theta\rangle$  below $p$ such that,
for every $\alpha<\theta$, $U_{D_\alpha,q^\alpha}$ is dense in $\cones{n}s$.
Set $q:=q^\theta$,
and note that
$$U_{D_\alpha,q}:=\{t\sle_n s\mid \exists m<\omega[P^{q+t}_n\s D_\alpha]\}$$ is dense in $\cones{n}s$ for all $\alpha<\theta$. In particular, the above sets are non-empty. For each $\alpha<\theta$, let us fix $t_\alpha\in U_{D_\alpha,q}$ and $m_\alpha<\omega$ witnessing this. We now show that $|W(q)|\geq \nu$.
Let $A_\alpha:=\{\beta<\nu\mid \exists r\in P^{q+t_\alpha}_{m_\alpha} [r\forces_{\mathbb P}\dot{f}(\check{\alpha})=\check{\beta}]\}$.
By Lemma~\ref{W(p)maxAntichain}\eqref{W(p)maxAntichain1}, we have $$A_\alpha=\{\beta<\nu\mid  \exists r\in W_{m_\alpha}(q+t_\alpha)\,[r\forces_{\mathbb P}\dot{f}(\check{\alpha})=\check{\beta}]\}.$$
Let $A:=\bigcup_{\alpha<\theta}A_\alpha$.
Then, $$|A|\leq\sum_{m<\omega, t\sle_n s}|W_{m}(q+t)|\leq \max\{\aleph_0,|S_n|\} \cdot |W(q)|.\footnote{Observe that, for each $t\sle_n s$, $|W(q+t)|\leq|W(q)|$.}$$
Also, by clauses $(\alpha)$ and $(\beta)$ of  Definition~\ref{SigmaPrikry} and our assumption on $\nu$, $\max\{\aleph_0,|S_n|\}<\sigma_n<\nu$. It follows that if $|W(q)|<\nu$, then $|A|<\nu$, and so $\sup(A)<\nu$. Thus, $q$ forces that the range of $\dot f$ is bounded below $\nu$, which leads us to a contradiction. 
Therefore, $|W(q)|\geq\nu$, as desired.

(4) The left-to-right implication is obvious using  Definition~\ref{SigmaPrikry}\eqref{csize}. Next, suppose that, for all $p\in P$, $|W(p)|\leq\kappa$.
Towards a contradiction, suppose that there exist $p\in P$ forcing that $\kappa^+$ is collapsed.
Denote $\nu:=\kappa^+$.
As by assumption $\one\forces_{\mathbb P}``\kappa\text{ is singular}"$, this means that $p\forces_{\mathbb P}\cf(\nu)<\kappa$,
contradicting Clause~\eqref{l14(2)} of this lemma.
\end{proof}

We end this section recalling the concept of \emph{property $\mathcal{D}$}. This notion was introduced in \cite[\S2]{PartII} and usually captures how various forcings satisfy the Complete Prirky Property (i.e., Clause~\eqref{c6} of Definition~\ref{SigmaPrikry}):

\begin{definition} We say that $\vec r=\langle r_\xi \mid \xi<\chi\rangle$ is a \emph{good enumeration} of a set $A$ iff
$\vec r$ is injective, 
$\chi$ is a cardinal,
and $\{ r_\xi\mid \xi<\chi\}=A$.
\end{definition}

\begin{definition}[Diagonalizability game]\label{DiagonalizabilityGame}\label{Diagonalizability}
Given $p\in P$, $n<\omega$, and a good enumeration
$\vec r=\langle r_\xi\mid \xi<\chi\rangle$ of $W_n(p)$,
we say that $\vec q=\langle q_\xi\mid \xi<\chi\rangle$ is \emph{diagonalizable} (with respect to $\vec{r}$)  iff the two hold:
\begin{enumerate}[label=(\alph*)]
\item $q_\xi\le^0 r_\xi$ for every $\xi<\chi$;
\item there is $p'\le^0 p$ such that for every $q'\in W_n(p')$, $q'\le^0 q_\xi$, where $\xi$ is the unique index to satisfy $r_\xi=w(p, q')$.
\end{enumerate}
Besides, if $D$ is a dense subset of $\mathbb{P}_{\ell_\mathbb{P}(p)+n}$, $\Game_\mathbb{P}(p,\vec r,D)$ is a game of length $\chi$ between two players $\pI$ and $\pII$, defined as follows:
\begin{itemize}
\item At stage $\xi<\chi$, $\pI$ plays a condition $p_\xi\leq^0 p$ compatible with $r_\xi$,
and then $\pII$ plays $q_\xi\in D$ such that $q_\xi\leq p_\xi$ and $q_\xi\leq^0 r_\xi$;
\item $\pI$ wins the game iff the resulting sequence $\vec q=\langle q_\xi\mid \xi<\chi\rangle$ is diagonalizable.
\end{itemize}
In the special case that $D$ is all of $\mathbb{P}_{\ell_\mathbb{P}(p)+n}$, we omit it, writing $\Game_\mathbb{P}(p,\vec r)$.
\end{definition}

\begin{definition}[Property $\mathcal{D}$]\label{propertyD}
We say that a graded poset $(\mathbb{P},\ell_\mathbb{P})$ has \emph{property $\mathcal{D}$}
iff for any $p\in P$, $n<\omega$ and any good enumeration  $\vec r=\langle r_\xi\mid \xi<\chi\rangle$ of $W_n(p)$, $\pI$ has a winning strategy for the game
$\Game_\mathbb{P}(p,\vec r)$.
\end{definition}
\begin{conv}
In a mild abuse of terminology, we shall say that $(\mathbb{P},\ell,c,\vec{\varpi})$ has property $\mathcal{D}$ whenever the pair $(\mathbb{P},\ell)$ has property $\mathcal{D}$.
\end{conv}

\section{Extender Based Prikry Forcing with collapses}\label{SectionGitikForcing}

In this section we present Gitik's notion of forcing from \cite{GitikCollapsin},
and analyze its properties.
Gitik came up with this notion of forcing in September 2019, during the week of the \emph{15th International Workshop on Set Theory in Luminy},
after being asked by the second author whether it is possible to interleave collapses in the Extender Based Prikry Forcing (EBPF) with long extenders \cite[\S3]{Git-Mag}. The following theorem summarizes the main properties of the generic extensions by Gitik's forcing $\mathbb P$:

\begin{theorem}[Gitik]\label{MotisModel}
All of the following hold in $V^{\mathbb{P}}$:
\begin{enumerate}
\item All cardinals $\geq\kappa$ are preserved;
\item $\kappa=\aleph_\omega$, $\mu=\aleph_{\omega+1}$  and $\lambda= \aleph_{\omega+2}$;
\item $\aleph_\omega$ is a strong limit cardinal;
\item $\gch_{<\aleph_\omega}$, provided that $V\models\gch_{<\kappa}$;
\item $2^{\aleph_{\omega}}=\aleph_{\omega+2}$, hence the $\sch_{\aleph_\omega}$ fails.
\end{enumerate}
\end{theorem}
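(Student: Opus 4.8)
The plan is to first pin down Gitik's poset $\mathbb{P}$ from \cite{GitikCollapsin} --- essentially the long extender-based Prikry forcing of \cite{Git-Mag}, into which, above each coordinate, one interleaves a L\'evy collapse shrinking the interval below the next Prikry point down to a fixed successor of the current Prikry point --- and to equip it with data $(\ell,c,\vec\varpi)$: here $\ell(p)$ counts the Prikry points in the stem of $p$, $c$ is the natural bookkeeping map into a set of size $\mu$, and $\varpi_n$ sends a condition of length $\ge n$ to its collapsing part below the $n$-th coordinate together with the value already committed for the $n$-th Prikry point. The core of the work (carried out in the body of this section) is to verify that $(\mathbb{P},\ell,c,\vec\varpi)$ is $(\Sigma,\vec{\mathbb S})$-Prikry, where $\Sigma=\langle\sigma_n\mid n<\omega\rangle$ tracks the background sequence of large cardinals and $\mathbb{S}_n$ is the product of the collapsing posets lying below the $n$-th coordinate; the clauses demanding genuine effort are the Complete Prikry Property, Definition~\ref{SigmaPrikry}\eqref{c6}, and the niceness and $\sigma_n$-directed-closure of the $\varpi_n$, Definition~\ref{SigmaPrikry}\eqref{PnprojectstoSn} and \eqref{moreclosedness}. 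Granting this, Theorem~\ref{MotisModel} follows from Lemma~\ref{l14} together with a chain-condition count, as follows.

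Clauses~(1) and (3) of the theorem I would read off Lemma~\ref{l14}. By Lemma~\ref{l14}\eqref{l14(3)}, $\kappa$ is preserved, and since $V\models\gch_{<\kappa}$ forces $\kappa$ to be strong limit, it remains strong limit. The cardinal $\mu=(\kappa^+)^V$ is preserved once one checks, by direct inspection of the $p$-trees of $\mathbb{P}$, that $|W(p)|\le\kappa$ for every $p$, so that Lemma~\ref{l14}\eqref{l14(4)} applies (recall $\one\Vdash_{\mathbb P}$``$\kappa$ is singular''). Finally $\lambda:=(\kappa^{++})^V$ --- at once $|\mathbb{P}|$ and the length of the extenders --- is preserved by a $\Delta$-system argument on the Cohen-type domains of conditions, using $\gch$ in $V$ to extract the $\lambda$-cc; this also preserves every cardinal above $\lambda$. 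So all cardinals $\ge\kappa$ are preserved, and in $V^{\mathbb{P}}$ we have $\mu=\kappa^+$ and $\lambda=\mu^+=\kappa^{++}$. To get Clause~(2) it then remains to identify $\kappa$ with $\aleph_\omega$: in $V^{\mathbb{P}}$, with $\langle\kappa_n\mid n<\omega\rangle$ the generic Prikry sequence, genericity for the L\'evy collapse below $\kappa_0$ and for the one sitting in each interval $(\kappa_n,\kappa_{n+1})$ forces every $V$-cardinal in the relevant range --- save finitely many designated successors of the earlier Prikry points --- to be collapsed; hence $\kappa_n$ is the $i_n$-th infinite cardinal of $V^{\mathbb{P}}$ for a strictly increasing sequence $i_n$, so $i_n\to\omega$ and $\kappa=\sup_n\kappa_n=\aleph_\omega$, whence $\mu=\aleph_{\omega+1}$ and $\lambda=\aleph_{\omega+2}$.

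For Clause~(4), granting $V\models\gch_{<\kappa}$: given $\nu<\aleph_\omega$ in $V^{\mathbb{P}}$, Lemma~\ref{l14}\eqref{l14(1)} puts $\mathcal{P}(\nu)^{V^{\mathbb{P}}}$ already inside $V[H_m]$ for all large $m$, where $H_m$ is $\mathbb{S}_m$-generic and $\mathbb{S}_m$ is a notion of forcing of size $<\sigma_m$ which, up to isomorphism, is a product of a small poset with a L\'evy collapse; a nice-name count in $V$ using $\gch_{<\kappa}$ then gives $2^\nu=\nu^+$ in $V[H_m]$, hence in $V^{\mathbb{P}}$. For Clause~(5): on one side, the $\lambda$-cc and $|\mathbb{P}|=\lambda$, together with $\gch$ in $V$ (which yields $\lambda^{<\lambda}=\lambda=\lambda^\kappa$), give $2^{\aleph_\omega}\le\lambda$ by counting nice names; on the other side, the extender part of $\mathbb{P}$ adjoins $\lambda$-many distinct subsets of $\kappa$, coded by the generic functions $t^\alpha$ ($\alpha<\lambda$). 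Hence $2^{\aleph_\omega}=\lambda=\aleph_{\omega+2}$, and since $\aleph_\omega$ is strong limit while $2^{\aleph_\omega}=\aleph_{\omega+2}>\aleph_{\omega+1}$, the $\sch_{\aleph_\omega}$ fails.

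The hard part will not be the arithmetic above but establishing that EBPFC is $(\Sigma,\vec{\mathbb S})$-Prikry: one must choose the $\varpi_n$ so that $\z{\mathbb{P}}_n^{\varpi_n}$ is $\sigma_n$-directed-closed while the quotient of $\mathbb{P}_n$ over $\mathbb{S}_n$ still admits enough amalgamation to run the fusion underlying the Complete Prikry Property. It is precisely here that the interleaved collapses bite, since they destroy the naive ``(closed)$\,\times\,$(small)'' splitting of the layers $\mathbb{P}_n$ used in \cite{PartI,PartII}, and this is what forces one into the niceness machinery of Section~\ref{SectionNiceProje}; once that verification is in hand, Theorem~\ref{MotisModel} is a largely mechanical corollary, modulo the bookkeeping that pins down the exact values of the $i_n$.
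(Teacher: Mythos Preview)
Your proposal is correct and mirrors the paper's approach: Theorem~\ref{MotisModel} is not proved directly but is derived from the verification (carried out in \S\ref{SectionMotiIsSigmaPrikry}) that EBPFC is $(\Sigma,\vec{\mathbb S})$-Prikry, after which Clauses~(1)--(5) follow from Lemma~\ref{l14}, the Linked$_0$-property (giving the $\mu^+$-cc in place of your $\Delta$-system argument), and inspection of the interleaved collapses. One small correction to your description: in the paper's Definition~\ref{DefinitionvarpinMoti}, $\varpi_n(p)$ records only $\langle(\rho^p_k,h^{0p}_k,h^{1p}_k,h^{2p}_k)\mid k<n\rangle$, i.e.\ the Prikry points and \emph{all three} collapses at coordinates strictly below $n$, not the $n$-th Prikry point; this does not affect your argument but matters for the $\sigma_n$-directed-closure of $\z{\mathbb P}_n^{\varpi_n}$ you flag at the end.
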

For people familiar with \cite{GitikCollapsin}, some of the proofs in this section can be skipped. Yet, since this forcing notion is fairly new, we do include some proofs.  Most notably, for us it is important to verify the existence of various nice projections and reflections properties in Corollary~\ref{Motisuitableforreflection} and Lemma~\ref{keysubclaim} below. In addition, unlike the exposition of this forcing from \cite{GitikCollapsin}, the exposition here shall not assume the $\gch$.

\begin{setup} Throughout this section our setup  will be as follows: 
\begin{itemize}
\item $\vec{\kappa}=\langle \kappa_n\mid n<\omega\rangle$ is a strictly increasing sequence  of cardinals;
\item $\kappa_{-1}:=\aleph_0$, $\kappa:=\sup_{n<\omega}\kappa_n$, $\mu:=\kappa^+$ and $\lambda:=\mu^+$;
\item $\mu^{<\mu}=\mu$ and $\lambda^{<\lambda}=\lambda$;\label{setupGitik}
\item for each $n<\omega$, $\kappa_n$ is $(\lambda+1)$-strong;
\item $\Sigma:=\langle\sigma_n\mid n<\omega\rangle$, where, for each $n<\omega$, $\sigma_n:=(\kappa_{n-1})^+$;\footnote{In particular, $\sigma_0=\aleph_1$.}
\end{itemize}
\end{setup}

In particular, we are assuming that, for each $n<\omega$, there is a  $(\kappa_n,\lambda+1)$-extender $E_n$ whose associated  embedding $j_n: V\rightarrow M_n$ is such that $M_n$ is a transitive class, $^{\kappa_n}M_n\s M_n$, $V_{\lambda+1}\subseteq M_n$ and $j_n(\kappa_n)>\lambda$.

For each $n<\omega$, and each $\alpha<\lambda$, set $$E_{n,\alpha}:=\{X\subseteq \kappa_n\mid \alpha\in j_n(X)\}.$$
Note that $E_{n,\alpha}$ is a non-principal $\kappa_n$-complete ultrafilter over $\kappa_n$, provided that $\alpha\ge\kappa_n$.
Moreover, in the particular case of $\alpha=\kappa_n$, $E_{n,\kappa_n}$ is also normal. For ordinals $\alpha<\kappa_n$ the measures $E_{n,\alpha}$ are principal so the only reason to consider them is for a more neat presentation.

For each $n<\omega$, we shall consider an ordering $\leq_{E_n}$ over $\lambda$, as follows:
\begin{definition}
For each $n<\omega$, set $$\leq_{E_n}:=\{(\beta,\alpha)\in \lambda\times\lambda\mid \beta\leq \alpha, \,\wedge\, \exists f\in{}^{\kappa_n}\kappa_n\; j_n(f)(\alpha)=\beta\}.$$
\end{definition}
It is routine to check that $\leq_{E_n}$ is reflexive, transitive and antisymmetric, hence $(\lambda, \leq_{E_n})$ is a partial order. In case $\beta \leq_{E_n} \alpha$, we shall fix in advance a witnessing map $\pi_{\alpha, \beta}:\kappa_n \to \kappa_n$.
Also, in the special case where $\alpha=\beta$, by convention, $\pi_{\alpha,\alpha}$ is the identity map $\id$.
Observe that $\leq_{E_n}\restriction(\kappa_n\times \kappa_n)$ is exactly the $\in$-order over $\kappa_n$ so that when we refer to $\leq_{E_n}$ we will really be speaking about the restriction of this order to $\lambda\setminus \kappa_n$. The most notable  property of the poset $(\lambda,\leq_{E_n})$ is that it is $\kappa_n$-directed: that is, for every $a\in[\lambda]^{<\kappa_n}$ there is $\alpha<\lambda$ such that $\beta\leq_{E_n}\alpha$ for all $\beta\in a$. This and other nice features of $(\lambda,\leq_{E_n})$ are proved at the beginning of \cite[\S2]{Gitik-handbook} under the $\gch$.  
A proof without the $\gch$ can be found in \cite[\S10.2]{Pov}.

\begin{remark}\label{RemarkDirectness}
For future reference, it is worth mentioning that 
all the relevant properties of $(\lambda,\leq_{E_n})$ reflect down to $(\mu,\leq_{E_n}\upharpoonright \mu\times\mu)$. In particular, it is true that  every $a\in[\lambda]^{<\kappa_n}$ may be enlarged to an $a^*$ such that $\kappa_n,\mu\in a^*$ and $a^*\cap \mu$ contains a $\leq_{E_n}$-greatest. For details, see \cite[\S2]{Gitik-handbook}.
\end{remark}

\subsection{The forcing}\label{SectionTheforcing}
 Before giving the definition of Gitik's forcing we shall first introduce the basic building block modules $\mathbb{Q}_{n0}$ and $\mathbb{Q}_{n1}$. 
 To that effect, for each $n<\omega$, let us fix a map $s_n\colon \kappa_n\rightarrow\kappa_n$ representing $\mu$ using the \emph{normal generator}, $\kappa_n$. Specifically, for each $n<\omega$, $j_n(s_n)(\kappa_n)=\mu$.

\begin{definition}\label{ModulesforGitiks}
For each $n<\omega$, define $\mathbb{Q}_{n1}$, $\mathbb{Q}_{n0}$ and $\mathbb{Q}_n$ as follows:
\begin{itemize}
\item[$(0)_n$] $\mathbb{Q}_{n0}:=(Q_{n0},\LE_{n0})$ is the set of $p:=(a^p, A^p, f^p, F^{0p},F^{1p},  F^{2p} )$, where:
\begin{enumerate}
\item $(a^p, A^p, f^p)$ is in the ${n0}$-module $Q^*_{n0}$ from the Extender Based Prikry Forcing (EBPF) as defined in \cite[Definition 2.6]{Gitik-handbook}. Moreover, we require that $\kappa_n,\mu\in a^p$ and that $a^p\cap \mu$ contains a  $\leq_{E_n}$-greatest element denoted by $\mc(a^p\cap \mu)$;\footnote{Recall that $(a^p,A^p,f^p)\in Q^*_{n0}$ in particular implies that $a^p$ contains a $\LE_{E_n}$-greatest element, which is typically denoted by $\mc(a^p)$. Note that since $\mu\in a^p$ then $\mc(a^p)$ is always strictly $\leq_{E_n}$-larger than $\mc(a^p\cap \mu)$.}
\item for $i<3$, $\dom(F^{ip})=\pi_{\mc(a^p), \mc(a^p\cap\mu)}[A^p]$, and for  $\nu\in\dom(F^{ip})$, setting $\nu_0:=\pi_{\mc(a^p\cap\mu),\kappa_n}(\nu)$, we have:
\begin{enumerate}
\item $F^{0p}(\nu)\in \Col(\sigma_n, {<}\nu_0)$; 
\item $F^{1p}(\nu)\in \Col(\nu_0, s_n(\nu_0))$;
\item $F^{2p}(\nu)\in \Col(s_n(\nu_0)^{++}, \<\kappa_n).$
\end{enumerate}
\end{enumerate}
The ordering $\LE_{n0}$ is defined as follows:
$q\LE_{{n0}}p$ iff $(a^q, A^q, f^q)\LE_{\mathbb{Q}^*_{n0}}(a^p, A^p, f^p)$ as in \cite[Definition 2.7]{Gitik-handbook}, and for each $\nu\in\dom(F^{iq})$, $F^{iq}(\nu)\supseteq F^{ip}(\nu')$, where $\nu':=\pi_{\mc(a^q\cap\mu),\mc(a^p\cap\mu)}(\nu)$.

\item[$(1)_n$]  $\mathbb{Q}_{n1}:=(Q_{n1},\LE_{n1})$ is the set of  $p:=(f^p, \rho^p,h^{0p},h^{1p}, h^{2p} )$, where:
\begin{enumerate}
\item $f^p$ is a function from some $x\in[\lambda]^{\leq \kappa}$ to $\kappa_n$;
\item $\rho^p<\kappa_n$ inaccessible;
\item $h^{0p}\in \Col(\sigma_n, {<}\rho^p)$;
\item $h^{1p}\in \Col(\rho^p, s_n(\rho^p))$;
\item $h^{2p}\in \Col(s_n(\rho^p)^{++}, {<}\kappa_n)$.
\end{enumerate}
The ordering $\LE_{n1}$ is defined as follows:  $q\LE_{n1} p$ iff
$f^q\supseteq f^p$, $\rho^p=\rho^q$, and for $i<3$, $h^{iq}\supseteq h^{ip}$.

\item[$(2)_n$] Set $\mathbb{Q}_n:=(Q_{n0}\cup Q_{n1},\LE_{n})$ where the ordering $\LE_{n}$ is defined as follows: for each $p,q\in Q_n$, $q\LE_{n} p$ iff
\begin{enumerate}
\item either $p,q \in Q_{ni}$, some $i \in  \{0,1\}$,  and $q \LE_{ni} p$, or
\item $q \in Q_{n1}$, $p \in Q_{n0}$ and, for some $\nu\in A^p$, $q\LE_{{n1}} p {}^\curvearrowright\langle\nu\rangle $, where
$$p{}^\curvearrowright\langle\nu\rangle:=(f^p\cup\{\langle\beta,\pi_{\mc(a^p),\beta}(\nu)\rangle\mid \beta\in a^p \rangle\},\bar{\nu}_0, F^{0p}(\bar{\nu}), F^{1p}(\bar{\nu}), F^{2p}(\bar{\nu})),$$
\end{enumerate}
and $\bar{\nu}=\pi_{\mc(a^p),\mc(a^p\cap\mu)}(\nu)$.
\end{itemize}
\end{definition}

\begin{remark}\label{RemarkMeasureOneSets} For each $n<\omega$,  
$$\{\rho<\kappa_n\mid (\kappa_{n-1})^+<\rho<s_n(\rho)<\kappa_n\ \&\ \rho\text{ inaccessible}\}\in E_{n,\kappa_n}.$$
Similarly, for $a\in [\lambda]^{<\kappa_n}$ as in $(0)_n(1)$ above  and $A\in E_{n,\mc(a)}$, $$(\star)\;\;\{\rho\in \pi_{\mc(a),\mc(a\cap \mu)}``A\mid |\{\nu\in A^p_n\mid \bar{\nu}_0=\rho_0\}|\leq s_n(\rho_0)^{+}\}\in E_{n,\mc(a\cap \mu)}.$$

In what follows we assume that the above is always the case for all $\rho<\kappa_n$ that we ever consider. Similarly, we may also assume that $s_n(\rho)$ is regular (actually the successor of a singular) and that $s_n(\rho^p)^{<\rho^p}=s_n(\rho^p)$.

 The reason we consider conditions witnessing Clause~$(\star)$ above is related with the verification of property $\mathcal{D}$ and $\rm{CPP}$ (cf.~Lemmas \ref{propDcollapses} and \ref{C8forMotis}). Essentially, when we describe the moves of $\pI$ and $\pII$ we would like to be able to take lower bounds of the top-most collapsing maps appearing in conditions played by $\pII$. Namely, we would like to take lower bounds of the $h^{2q_\xi}$'s.  Assuming $(\star)$ we will have that the number of  maps  that need to be amalgamated is at most $s_n(\nu_0)^+$, hence less than the completeness of the top-most L\'{e}vy collapse $\Col(s_n(\nu_0)^{++}, {<}\kappa_{n+1})$.
\end{remark}

\begin{remark}
The reason Gitik makes $F^{ip}_n$ dependent on the partial extender $E_n\restriction\mu$ rather than on the full extender $E_n$  is related with the verification of the chain condition (see \cite[Lemma~2.6]{GitikCollapsin}). 
Indeed, in that way the triple $\langle F^{0p}_n, F^{1p}_n, F^{2p}_n\rangle$  will represent three (partial) collapsing in the ultrapower by the measure $E_{n,\mc(a^p_n\cap \mu)}$. 
This will gua\-rantee that the map $c$ given in Definition~\ref{cfunctionforMoti} below will have $H_\mu$ as a range (see Remark~\ref{Clarifyingc}).
\end{remark}

Having all necessary building blocks, we can now define the poset $\mathbb{P}$.

\begin{definition}\label{EBPFC}
The Extender Based Prikry Forcing with collapses (EBPFC) is the poset $\mathbb{P}:=(P,\leq)$ defined by the following clauses:
\begin{itemize}
\item Conditions in $P$ are sequences $p=\langle p_n\mid n<\omega\rangle\in \prod_{n<\omega} Q_n$.
\item For all $p\in P$,
\begin{itemize}
\item There is $n<\omega$ such that $p_n\in Q_{n0}$;
\item For every $n<\omega$, if  $p_n\in Q_{n0}$ then $p_{m}\in Q_{m0}$ and $a^{p_n}\subseteq a^{p_{m}}$, for every $m\geq n$.
\end{itemize}
\item For all $p,q\in P$, $p\LE q$ iff  $p_n\LE_n q_n$, for every $n<\omega$.
\end{itemize}
\end{definition}
\begin{definition}\label{lofEBPFC}
$\ell:P\rightarrow\omega$ is defined by letting for all $p=\langle p_n\mid n<\omega\rangle$,
$$\ell(p):=\min\{n<\omega\mid p_n\in Q_{n0}\}.$$
\end{definition}

\begin{notation}\label{NotationMotis}
Given $p\in P$, $p=\langle p_n\mid n<\omega\rangle$, we will typically write
$p_n=(f^p_n, \rho^p_n, h_n^{0p}, h_n^{1p}, h_n^{2p})$ for $n<\ell(p)$, and
$p_n=(a^p_n, A^p_n, f^p_n, F_n^{0p}, F_n^{1p}, F_n^{2p})$ for $n\geq \ell(p)$. Also, for each $n\geq \ell(p)$, we shall denote $\alpha_{p_n}:=\mc(a^p_n \cap \mu)$.
\end{notation}

We already have $(\mathbb P,\lh)$ and
we will eventually check that $\one\forces_{\mathbb P}\check\mu=\check\kappa^+$ (Corollary~\ref{MuInEBPFC}).
Next, we introduce sequences $\vec{\mathbb{S}}=\langle \mathbb{S}_n\mid n<\omega\rangle$ and $\vec{\varpi}=\langle\varpi_n\mid n<\omega\rangle$,
and a map $c:P\rightarrow H_\mu$ such that $(\mathbb{P},\ell,c,\vec{\varpi})$ will be a $(\Sigma,\mathbb{\vec{S}})$-Prikry forcing having property $\mathcal{D}$.

As $\mu^\kappa=\mu$ and $2^\mu=\lambda$, 
using the Engelking-Kar\l owicz theorem, 
we  fix a sequence of functions $\langle e^i\mid i<\mu\rangle$  from $\lambda$ to $\mu$
such  that, for all $x\in[\lambda]^{\kappa}$ and every function $e:x\rightarrow\mu$, there exists $i<\mu$ with $e\s e^i$.
\begin{definition}\label{cfunctionforMoti}
For every condition $p=\langle p_n\mid n<\omega\rangle$ in $\mathbb P$, define a sequence of indices $\langle i(p_n)\mid n<\omega\rangle$ as follows:\footnote{In the next formula, $0$ stands for the constant map with value $0$.}
$$i(p_n):=\begin{cases}
\min \{i<\mu\mid f\s e^i\}, & \text{if $n<\ell(p)$};\\
\min\{i<\mu\mid e^i\upharpoonright a_n^p=0\ \&\ e^i\upharpoonright \dom(f^p_n)=f^p_n+1 \}, & \text{if $n\geq \ell(p)$}.
\end{cases}
$$
Define a map $c: P\rightarrow H_\mu$, by letting for any condition $p=\langle p_n\mid n<\omega\rangle$,
 $$c(p):=(\lh(p),\langle\rho^p_n\mid n<\lh(p)\rangle,\langle i(p_n)\mid n<\omega\rangle, \langle \vec{h}^p_n\mid n<\ell(p)\rangle,  \langle \alpha_{p_n}\mid n\geq \ell(p)\rangle, \langle \vec{G}^p_n\mid n\geq \ell(p)\rangle ),$$
where $\vec{h}^p_n:=\langle h^{ip}_n\mid i<3\rangle$ and $\vec{G}^p_n:=\langle j_{n,\alpha^p_n}(F^{ip}_n)(\alpha_{p_n})\mid i<3\rangle$.
\end{definition}
\begin{remark}\label{Clarifyingc}
	Note that $c$ is well-defined: all the entries appearing in $c(p)$ are clearly in $H_\mu$ with, perhaps, the only exception of the latter one, $\vec{G}^p_n$. 
	However,  all of these are conditions in  L\'evy collapses over $\Ult(V,E_{n,\alpha^p_n})$, an ultrapower by a $\kappa_n$-complete measure on $\kappa_n$. In particular, these collapses have $V$-cardinality $|j_n(\kappa_n)|<\kappa_{n+1}<\mu$. Hence, $\range(c)\s H_\mu$.
\end{remark}

\begin{definition}\label{DefinitionofSnMotis}
For each $n<\omega$, set
 $$S_n:=\begin{cases}
 \{\one\}, & \text{if $n=0$};\\
  \{\langle (\rho^p_k, h^{0p}_k, h^{1p}_k, h^{2p}_k)\mid k<n\rangle\mid  p\in P_n\}, & \text{if $n\geq 1$}.
 \end{cases}
 $$
For $n\geq 1$ and $s,t\in S_n$, write $s\sle_n t$ iff there are $p,q\in P_n$ with $p\LE q$ witnessing, respectively, that $s$ and $t$ are in $S_n$.

Denote $\mathbb{S}_n:=(S_n, \sle_n)$ and set $\vec{\mathbb{S}}:=\langle \mathbb{S}_n\mid n<\omega\rangle$.
\end{definition}

\begin{remark}\label{RemarkonS}\label{Pnisomorphic}
Observe that $|{S}_n|<\sigma_n$.  Moreover, for each $s\in S_n\setminus \{\one_{\mathbb{S}_n}\}$, $\mathbb{S}_n\downarrow s\cong \Col(\delta,{<}\kappa_{n-1})\times \mathbb{Q}$, where
$\mathbb{Q}$  is a notion of a forcing  of size $<\delta$ such that $\sigma_{n-1}<\delta<\kappa_{n-1}$. Specifically, if $p\in P_n$ is the condition from which $s$ arises, then $\delta=s_{n-1}(\rho_{n-1}^{p})^{++}$ and
$\mathbb{Q}$ is a product $$\mathbb{R}\times\Col(\sigma_{n-1},{<}\rho_{n-1}^{p})\times\Col(\rho_{n-1}^{p},s_{n-1}(\rho_{n-1}^{p})),$$
where $\mathbb R$ is a notion of forcing of size $\leq \kappa_{n-2}$.\footnote{In the particular case where $n=1$ the poset $\mathbb R$ is trivial.}
Also, by combining Easton's lemma with a counting of nice names, if the $\gch$ holds below $\kappa$ then $\mathbb{S}_n\downarrow s$ preserves this behavior of the power set function
 for each $s\in S_n\setminus \{\one_{\mathbb{S}_n}\}$.

On another note, observe that the the map $(q,s)\mapsto q+s$ yields an isomorphism between $(\mathbb{S}_n\downarrow\varpi_n(p))\times(\mathbb{P}^{\varpi_n}_n\downarrow p)$ and $\mathbb{P}_n\downarrow p$.\footnote{In general terms the above map simply defines a projection (see Definition~\ref{niceprojection}\eqref{theprojection}) but in the particular case of the EBPFC it moreover gives an isomorphism.}
\end{remark}

\begin{definition}\label{DefinitionvarpinMoti}
For each $n<\omega$, define $\varpi_n\colon P_{\geq n}\rightarrow S_n$  as follows:
$$\varpi_n(p):=\begin{cases}
\{\one\}, & \text{if $n=0$};\\
\langle (\rho^p_k, h^{0p}_k, h^{1p}_k, h^{2p}_k)\mid k<n\rangle, & \text{if $n\geq 1$}.
\end{cases}
$$
Set $\vec{\varpi}:=\langle\varpi_n\mid n<\omega\rangle$.
\end{definition}

The next lemma collects some useful properties about the $n0$-modules of the EBPFC (i.e, the $\mathbb{Q}_{n0}$'s) and reveals some of their connections with the corresponding modules of the EBPF (i.e, the $\mathbb{Q}^*_{n0}$'s).
\begin{lemma}\label{QnsAreclosed}
Let $n<\omega$. All of the following hold:
\begin{enumerate}
\item $\mathbb{P}_n$ projects to $\mathbb{Q}_{n0}$, and this latter projects to $\mathbb{Q}^*_{n0}$.
\item $\mathbb{Q}^*_{n0}$ is $\kappa_n$-directed-closed, while $\mathbb{Q}_{n0}$ is $\sigma_n$-directed-closed.
\item $\mathbb{S}_n$ satisfies the $(\kappa_{n-1})$-cc.
\end{enumerate}
\end{lemma}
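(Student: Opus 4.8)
The plan is to handle the three clauses in order, the projections of Clause~(1) being what transport closure and chain conditions between the posets.

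\textbf{Clause (1).} For $\mathbb{P}_n\to\mathbb{Q}_{n0}$ I would take the coordinate map $p\mapsto p_n$; it is order-preserving by the pointwise definition of $\le$ on $\mathbb{P}$ and surjective onto $Q_{n0}$ (pad an arbitrary $q'\in Q_{n0}$ by elements of $Q_{k1}$ at coordinates $k<n$ and by elements of $Q_{k0}$ whose domain contains $a^{q'}$ at coordinates $k>n$). For the projection property: given $p\in P_n$ and $q'\le_{n0}p_n$, replace the $n$-th coordinate of $p$ by $q'$; the only thing needing repair is the requirement in Definition~\ref{EBPFC} that $a^{p_n}\s a^{p_m}$ for $m>n$, and since $a^{q'}\supseteq a^{p_n}$ one enlarges each $a^{p_m}$ ($m>n$) to contain $a^{q'}$ using $\kappa_m$-directedness of $(\lambda,\le_{E_m})$ (Remark~\ref{RemarkDirectness}) and extends $A^{p_m},f^{p_m},F^{i,p_m}$ canonically to the enlarged domain. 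For $\mathbb{Q}_{n0}\to\mathbb{Q}^*_{n0}$ I would take the forgetful map dropping the $F^{ip}$'s; order-preservation is built into $\le_{n0}$, and a refinement $(a',A',f')$ of the image is lifted by setting $F^{iq}(\nu):=F^{ip}(\pi_{\mc(a'\cap\mu),\mc(a^p\cap\mu)}(\nu))$ for $\nu\in\pi_{\mc(a'),\mc(a'\cap\mu)}[A']$, where coherence of the $\pi$-maps guarantees that this value lands in the correct L\'evy collapse.

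\textbf{Clause (2).} That $\mathbb{Q}^*_{n0}$ is $\kappa_n$-directed-closed is the standard fact about the direct-extension order of the long-extender module (\cite[\S2]{Gitik-handbook}): union the $a$'s, take an upper bound in $(\lambda,\le_{E_n})$ by $\kappa_n$-directedness (keeping $\kappa_n,\mu$ in the domain with $a\cap\mu$ having a $\le_{E_n}$-greatest, via Remark~\ref{RemarkDirectness}), intersect the $A$'s by $\kappa_n$-completeness of $E_{n,\mc(a)}$, and union the $f$'s. For $\mathbb{Q}_{n0}$: given a $\le_{n0}$-directed family of size $\theta<\sigma_n$, first take such a lower bound $(a,A,f)$ of the $\mathbb{Q}^*_{n0}$-parts, and then for each $\nu$ in its domain let $F^{iq}(\nu)$ be the union of the corresponding $F^{i,p_\alpha}$-values (pulled back along the appropriate $\pi$-map); since each of $\Col(\sigma_n,{<}\nu_0)$, $\Col(\nu_0,s_n(\nu_0))$ and $\Col(s_n(\nu_0)^{++},{<}\kappa_n)$ is $\sigma_n$-directed-closed (note $\sigma_n\le\nu_0$) and the relevant values are pairwise compatible, this union is a legitimate collapse condition, and $(a,A,f,F^{0q},F^{1q},F^{2q})$ is the desired lower bound. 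The appearance of $\sigma_n$ rather than $\kappa_n$ here is exactly the closure of the first collapse $\Col(\sigma_n,{<}\nu_0)$.

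\textbf{Clause (3) and the main obstacle.} I would invoke the decomposition recorded in Remark~\ref{RemarkonS}: each nontrivial cone $\mathbb{S}_n\downarrow s$ is isomorphic to $\Col(\delta,{<}\kappa_{n-1})\times\mathbb{Q}$ with $|\mathbb{Q}|<\delta<\kappa_{n-1}$, hence has the $\kappa_{n-1}$-cc because the L\'evy collapse $\Col(\delta,{<}\kappa_{n-1})$ does ($\kappa_{n-1}$ being inaccessible). An antichain in $\mathbb{S}_n$ then splits, modulo $\one$, into antichains of such cones indexed by the possible parameter tuples, of which there are at most $|S_n|<\sigma_n=\kappa_{n-1}^+$, and assembling these yields the global bound. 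I expect this last clause to be the delicate point: unlike a single L\'evy collapse, $\mathbb{S}_n$ carries the freedom to choose the inaccessibles $\rho_k$ ($k<n$), and conditions with distinct choices are mutually incompatible, so one must carefully bound the parameter space and check that the cone-wise chain condition assembles to the global one; the argument parallels the chain-condition analysis in \cite[\S2]{GitikCollapsin}. Clauses~(1) and~(2) are essentially bookkeeping, the one genuine external input being the quoted $\kappa_n$-directed-closure of $\mathbb{Q}^*_{n0}$.
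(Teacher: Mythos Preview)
Your treatment of Clauses~(1) and~(2) matches the paper's: the same coordinate map $p\mapsto p_n$ and forgetful map $(a,A,f,F^0,F^1,F^2)\mapsto(a,A,f)$ are used, and for the $\sigma_n$-directed closure of $\mathbb{Q}_{n0}$ the paper likewise takes a $\mathbb{Q}^*_{n0}$-lower bound and then unions the $F^{ip}$-values along the projecting maps.

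Clause~(3), however, is where you diverge from the paper and where your argument has a genuine gap. The paper's proof is one line: ``immediate from the definition of $\mathbb{S}_n$'', i.e.\ the size bound $|S_n|<\sigma_n$ from Remark~\ref{RemarkonS}. Your cone decomposition is unnecessary, and more importantly it does not deliver the $\kappa_{n-1}$-cc: you bound the number of parameter tuples only by $|S_n|<\sigma_n=\kappa_{n-1}^+$, and each cone has the $\kappa_{n-1}$-cc, so assembling gives merely $\kappa_{n-1}^+$-cc. In fact no argument can do better: conditions in $S_n$ with distinct values of $\rho_{n-1}$ are pairwise incompatible (the ordering $\LE_{n1}$ fixes $\rho$), and since $\kappa_{n-1}$ is Mahlo there are $\kappa_{n-1}$ many admissible values of $\rho_{n-1}$, so $\mathbb{S}_n$ has antichains of size $\kappa_{n-1}$. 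The statement as printed should presumably read $\sigma_n$-cc (equivalently $\kappa_{n-1}^+$-cc), which is exactly what is invoked later (Easton's lemma in Lemma~\ref{CardinalConfigurationMotisModel} is applied against the $\sigma_n$-directed-closed $\mathbb{Q}_{n0}$); with that reading, the size bound settles it immediately and your elaborate argument is superfluous. So your instinct that (3) hides a subtlety was correct, but the resolution is to weaken the claim to what is actually used, not to push your decomposition further.
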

\begin{proof}
(1) The map $p\mapsto (a^p_n,A^p_n,f^p_n,F^{0p}_n, F^{1p}_n, F^{2p}_n)$ is a projection between $\mathbb{P}_n$ and $\mathbb{Q}_{n0}$. Similarly, $(a,A,f,F^0,F^1,F^2)\mapsto (a,A,f)$  defines a projection between $\mathbb{Q}_{n0}$ and $\mathbb{Q}^*_{n0}$.

(2) The argument for the $\kappa_n$-directed-closedness of $\mathbb{Q}^*_{n0}$ is given in \cite[Lemma 10.2.40]{Pov}.
Let $D\s \mathbb{Q}_{n0}$ be a directed set of size $<\sigma_n$ and denote by $\varrho_n$ the projection between $\mathbb{Q}_{n0}$ and $\mathbb{Q}^*_{n0}$ given in the proof of item (1). Clearly, $\varrho_n[D]$ is a directed subset of $\mathbb{Q}^*_{n0}$ of size $<\sigma_n$, so that we may let $(a,A,f)$ be a $\LE_{\mathbb{Q}^*_{n0}}$-lower bound for it. By $\LE_{\mathbb{Q}^*_{n0}}$-extending $(a,A,f)$ we may assume that $\kappa_n,\mu\in a$ and that $a\cap \mu$ contains a $\leq_{E_n}$-greatest element. Set $\alpha:=\mc(a\cap\mu)$. 
For each $i<3$ and each  $\nu\in \pi_{\mc(a)\alpha}[A]$, define $F^{i}(\nu):=\bigcup_{p\in D} F^{ip}(\pi_{\alpha,\alpha_p}(\nu))$. 
Finally, $(a,A,f,F^0,F^1,F^2)$ is a  condition in $\mathbb{Q}_{n0}$ extending every $p\in D$.

(3) This is immediate from the  definition of $\mathbb{S}_n$ (Definition~\ref{DefinitionofSnMotis}).
\end{proof}

\subsection{EBPFC is $(\Sigma,\vec{\mathbb{S}})$-Prikry}\label{SectionMotiIsSigmaPrikry}
We verify that $(\mathbb{P},\ell,c,\vec{\varpi})$ is $(\Sigma,\vec{\mathbb{S}})$-Prikry having property $\mathcal{D}$. To this end, we go over the clauses of Definition~\ref{SigmaPrikry}.

\begin{conv}
For every sequence $\{A_k\}_{i\leq k\leq j}$ such that each $A_k$ is a subset of $\kappa_k$,
we shall identify $\prod_{k=i}^j A_k$ with its subset consisting only of the sequences that are moreover increasing.
\end{conv}

\begin{definition}\label{EBPFOneExtension}
Let $p=\langle p_n\mid n<\omega\rangle\in P$. Define:
\begin{itemize}
\item $p^\curvearrowright\emptyset:=p$;
\item For every $\nu\in A^p_{\lh(p)}$, $p{}^\curvearrowright \langle \nu\rangle$ is the unique condition $q=\langle q_n\mid n<\omega\rangle$, such that for each $n<\omega$:
$$q_n=\begin{cases}
p_n,&\text{if $n\neq \lh(p)$};\\
p_{\lh(p)}{}^\curvearrowright \langle \nu\rangle,& \text{otherwise}.
\end{cases} $$
\item Inductively, for all $m\ge\lh(p)$ and $\vec{\nu}=\langle \nu_{\lh(p)},\dots, \nu_m,\nu_{m+1}\rangle \in \prod_{n=\lh(p)}^{m+1} A^p_n$, set
$p{}^\curvearrowright \vec{\nu}:=(p^\curvearrowright \vec{\nu}\restriction(m+1))^\curvearrowright \langle \nu_{m+1}\rangle$.
\end{itemize}
\end{definition}

\begin{fact}\label{RemarkMotisPoset} Let $p,q\in P$.
\begin{itemize}
\item $q\LE^0 p$ iff $\ell(p)=\ell(q)$ and $q\LE_{n} p$, for each $n<\omega$;
\item $q\LE p$ iff there is $\vec{\nu}\in \prod_{n=\ell(p)}^{\ell(q)-1} A^p_n$ such that $q\LE^0 p{}^\curvearrowright \vec{\nu}$;
\item The sequence $\vec{\nu}$ above  is uniquely determined by $q$. Specifically, for each $n\in[\ell(p),\ell(q))$, $\nu_n=f^q_{n}(\mc(a^p_{n}))$. 
\end{itemize}
\end{fact}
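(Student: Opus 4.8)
All three bullets follow by unwinding Definitions~\ref{ModulesforGitiks}, \ref{EBPFC}, \ref{lofEBPFC} and \ref{EBPFOneExtension}, so the plan is just to carry out the bookkeeping. For the first bullet, recall that $q\LE^0 p$ abbreviates $q\in P^p_0$, that is, $q\LE p$ together with $\ell(q)=\ell(p)$; and, by Definition~\ref{EBPFC}, $q\LE p$ holds iff $q_n\LE_n p_n$ for every $n<\omega$. Thus the equivalence is purely definitional, reading off the definition of $\LE$ in $\mathbb P$ and that of $\LE^0$.

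For the ``if'' direction of the second bullet, I would first establish, by a straightforward induction along the recursive Definition~\ref{EBPFOneExtension}, that whenever $\vec\nu\in\prod_{n=\ell(p)}^{\ell(q)-1}A^p_n$ the object $p{}^\curvearrowright\vec\nu$ is a legitimate condition of $\mathbb P$ with $\ell(p{}^\curvearrowright\vec\nu)=\ell(q)$, satisfying $(p{}^\curvearrowright\vec\nu)_n=p_n{}^\curvearrowright\langle\nu_n\rangle$ for $n\in[\ell(p),\ell(q))$ and $(p{}^\curvearrowright\vec\nu)_n=p_n$ otherwise, and that $p{}^\curvearrowright\vec\nu\LE p$; the last point invokes clause~(2) of the ordering $\LE_n$ from $(2)_n$ of Definition~\ref{ModulesforGitiks} at each coordinate $n\in[\ell(p),\ell(q))$ --- with $\nu_n\in A^p_n$ witnessing $p_n{}^\curvearrowright\langle\nu_n\rangle\LE_n p_n$ --- and is trivial elsewhere. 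Given this, $q\LE^0 p{}^\curvearrowright\vec\nu$ yields $q\LE p$ by transitivity. For the ``only if'' direction, suppose $q\LE p$, so $q_n\LE_n p_n$ for all $n$. Since $\ell(p)=\min\{n\mid p_n\in Q_{n0}\}$, the coherence requirement in Definition~\ref{EBPFC} forces $p_n\in Q_{n0}$ for every $n\ge\ell(p)$; and inspection of the three-part definition of $\LE_n$ shows it can never send a condition lying in $Q_{n1}$ to one in $Q_{n0}$. Hence $\ell(q)\ge\ell(p)$, and for each $n\in[\ell(p),\ell(q))$ we have $p_n\in Q_{n0}$ and $q_n\in Q_{n1}$, so $q_n\LE_n p_n$ can only be via clause~(2) of $\LE_n$, producing some $\nu_n\in A^p_n$ with $q_n\LE_{n1}p_n{}^\curvearrowright\langle\nu_n\rangle$. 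Setting $\vec\nu:=\langle\nu_n\mid\ell(p)\le n<\ell(q)\rangle$ and appealing to the induction of the previous direction, we get $\ell(p{}^\curvearrowright\vec\nu)=\ell(q)$ and $q_n\LE_n(p{}^\curvearrowright\vec\nu)_n$ at every coordinate, i.e.\ $q\LE^0 p{}^\curvearrowright\vec\nu$.

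For the uniqueness (third bullet), the plan is to read $\nu_n$ off $q$. From $q_n\LE_{n1}p_n{}^\curvearrowright\langle\nu_n\rangle$ and the definition of $\LE_{n1}$ we get $f^q_n\supseteq f^{p_n{}^\curvearrowright\langle\nu_n\rangle}$, and by the explicit formula for $p_n{}^\curvearrowright\langle\nu_n\rangle$ in clause~(2) of $(2)_n$ of Definition~\ref{ModulesforGitiks}, $f^{p_n{}^\curvearrowright\langle\nu_n\rangle}(\beta)=\pi_{\mc(a^p_n),\beta}(\nu_n)$ for all $\beta\in a^p_n$ (here one uses that $a^p_n$ is disjoint from $\dom(f^p_n)$, as in the module $Q^*_{n0}$ of the EBPF). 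Evaluating at $\beta=\mc(a^p_n)\in a^p_n$ and recalling the convention $\pi_{\alpha,\alpha}=\id$ gives $\nu_n=f^q_n(\mc(a^p_n))$, so $\vec\nu$ is uniquely determined by $q$. There is no genuine obstacle in any of this: the only thing that needs care is tracking which coordinates lie below $\ell(p)$, in $[\ell(p),\ell(q))$, or at or above $\ell(q)$, and invoking the matching case of the definition of $\LE_n$ at each.
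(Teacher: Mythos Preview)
Your argument is correct and is exactly the definitional unwinding one would expect; the paper states this as a \emph{Fact} with no proof, so there is nothing further to compare. One tiny omission: by the paper's convention, $\prod_{n=\ell(p)}^{\ell(q)-1}A^p_n$ consists only of \emph{increasing} sequences, and you do not explicitly check that the $\vec\nu$ you extract is increasing---but this is immediate since $\nu_n\in A^p_n\subseteq\kappa_n$ and the $A^p_m$ for $m>n$ can be (and tacitly are) taken above $\kappa_n$.
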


By the very definition of the EBPFC (Definition~\ref{EBPFC}) and the function $\ell$ (Definition~\ref{lofEBPFC}), $(\mathbb{P},\ell)$ is a graded poset, hence $(\mathbb{P},\ell,c,\vec{\varpi})$ witnesses Clause~\eqref{graded}. Also, combining  Lemma~\ref{QnsAreclosed}(2) with the fact that all of the L\'{e}vy collapses considered are at least $\aleph_1$-closed, Clause~\eqref{c2} follows:

\begin{lemma}\label{C2forMoti}
For all $n<\omega$, $\mathbb{P}_n$ is $\aleph_1$-closed. \qed
\end{lemma}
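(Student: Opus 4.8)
The plan is to verify directly that $\mathbb{P}_n$ is $\aleph_1$-closed, that is, that every $\le$-decreasing sequence $\langle p^k\mid k<\omega\rangle$ in $\mathbb{P}_n$ admits a lower bound. We may assume that no $p^k$ equals $\one$ (being the top element, $\one$ can occur only as an initial segment of the sequence, and a lower bound for the remaining tail lying in $P_n$ suffices), so $p^k\in P_n$ for all $k$. By Definitions~\ref{EBPFC} and~\ref{lofEBPFC}, an element of $P_n$ is precisely a sequence $p=\langle p_m\mid m<\omega\rangle$ with $p_m\in Q_{m1}$ for $m<n$, $p_m\in Q_{m0}$ for $m\ge n$, and $a^{p_m}\s a^{p_{m'}}$ whenever $n\le m\le m'$; and, for $p,q\in P_n$, $q\le p$ just means $q_m\le_m p_m$ for all $m$, equivalently $q_m\le_{m1}p_m$ for $m<n$ and $q_m\le_{m0}p_m$ for $m\ge n$. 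Hence it suffices to build, coordinate by coordinate, conditions $p_m$ ($m<\omega$) with $p_m\le_m p^k_m$ for every $k$, arranged so that $\langle a^{p_m}\mid m\ge n\rangle$ is $\s$-increasing; then $p:=\langle p_m\mid m<\omega\rangle$ lies in $P_n$, has $\ell(p)=n$, and is the required lower bound.

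The coordinates $m<n$ are easy, since $\mathbb{Q}_{m1}$ is countably closed (in fact $\sigma_m$-closed): along the decreasing sequence the inaccessible $\rho^{p^k}_m$ is constant, the $f^{p^k}_m$ increase under $\s$ to a partial function from a member of $[\lambda]^{\le\kappa}$ to $\kappa_m$, and the coordinates $\langle h^{i,p^k}_m\mid k<\omega\rangle$ ($i<3$) are increasing $\omega$-chains in the L\'evy collapses $\Col(\sigma_m,{<}\rho_m)$, $\Col(\rho_m,s_m(\rho_m))$, $\Col(s_m(\rho_m)^{++},{<}\kappa_m)$, each of which is at least $\sigma_m$-closed, so these unions are again conditions; assembling them produces $p_m\in Q_{m1}$ with $p_m\le_{m1}p^k_m$ for every $k$.

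The coordinates $m\ge n$ are handled by a recursion on $m$ that simultaneously secures the requirement $a^{p_{m-1}}\s a^{p_m}$. By Lemma~\ref{QnsAreclosed}(2), $\mathbb{Q}_{m0}$ is $\sigma_m$-directed-closed, in particular $\aleph_1$-closed, so $\langle p^k_m\mid k<\omega\rangle$ has a $\le_{m0}$-lower bound; moreover, inspecting the lower-bound construction in the proof of Lemma~\ref{QnsAreclosed}(2), one sees that after producing a preliminary lower bound one is free to $\le_{m0}$-extend it further --- via a $\le_{\mathbb{Q}^*_{m0}}$-extension of its first three coordinates --- so as to absorb any prescribed set of fewer than $\kappa_m$ ordinals into the working part $a$ and then restore the requirements $\kappa_m,\mu\in a$ and that $a$ and $a\cap\mu$ have $\le_{E_m}$-greatest elements, using the $\kappa_m$-directedness of $(\lambda,\le_{E_m})$ and its reflection to $\mu$ (Remark~\ref{RemarkDirectness}). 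Since $|a^{p_{m-1}}|<\kappa_{m-1}<\kappa_m$, we may thus choose the lower bound $p_m$ with $a^{p_m}\supseteq a^{p_{m-1}}$, and then by induction $a^{p_{m'}}\s a^{p_m}$ for all $n\le m'\le m$. Assembling all the $p_m$ finishes the argument. I expect the sole real subtlety to be precisely this coordination: one cannot take the coordinatewise lower bounds independently, as nothing would force their $a$-parts to be $\s$-increasing, whence the recursion that threads $a^{p_{m-1}}$ into $a^{p_m}$.
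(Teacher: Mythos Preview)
Your proof is correct and follows the same route the paper sketches in the sentence preceding the lemma: use the $\sigma_m$-directed-closedness of $\mathbb{Q}_{m0}$ from Lemma~\ref{QnsAreclosed}(2) for coordinates $m\ge n$, and the $\aleph_1$-closure of the L\'evy collapses for coordinates $m<n$. Your explicit recursion to thread $a^{p_{m-1}}$ into $a^{p_m}$ is a genuine (if routine) point that the paper's one-line argument leaves implicit; it is needed because the extra $\le_{E_m}$-maximal elements one adjoins when forming the lower bound in $\mathbb{Q}^*_{m0}$ are not canonically determined and could otherwise spoil the required $\subseteq$-monotonicity of the $a$-parts.
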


We now verify that the map of Definition~\ref{cfunctionforMoti} witnesses Clause~\eqref{c1}:

\begin{lemma}\label{C5forMoti}
For all $p,q\in P$, if $c(p)=c(q)$, then $P_0^p\cap P_0^q$ is non-empty.
\end{lemma}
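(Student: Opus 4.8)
The plan is to reconstruct a common $0$-step extension of $p$ and $q$ directly from the data encoded in $c(p)=c(q)$. First I would unpack what equality of the $c$-values gives us. Writing $n:=\lh(p)=\lh(q)$ (this is the first coordinate of $c$), we get: the Prikry-part ``stems'' $\langle\rho^p_k\mid k<n\rangle=\langle\rho^q_k\mid k<n\rangle$; the full index sequence $\langle i(p_k)\mid k<\omega\rangle=\langle i(q_k)\mid k<\omega\rangle$; the lower collapsing parts $\langle\vec h^p_k\mid k<n\rangle=\langle\vec h^q_k\mid k<n\rangle$; the ``measure-one coordinates'' $\alpha_{p_k}=\alpha_{q_k}$ for all $k\ge n$; and the ultrapower-collapsing data $j_{k,\alpha^p_k}(F^{ip}_k)(\alpha_{p_k})=j_{k,\alpha^q_k}(F^{iq}_k)(\alpha_{q_k})$ for $k\ge n$, $i<3$. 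The first four pieces show that the two conditions literally agree on all coordinates below $\lh(p)$; so the work is entirely at coordinates $k\ge n$, where both conditions lie in $Q_{k0}$.

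Next I would treat a single coordinate $k\ge n$ and build $r_k\le_{k0}p_k,q_k$. The agreement of the index $i(p_k)=i(q_k)=:i$ forces, via $e^i\restriction a^p_k=0=e^i\restriction a^q_k$, that the domains $a^p_k$ and $a^q_k$ coincide (since $\beta\in a^p_k$ iff $e^i(\beta)=0$ — one has to be a little careful that the defining formula pins down $a^p_k$ exactly, but that is exactly the point of the Engelking--Kar\l owicz coding, cf.\ \cite[Lemma~2.x]{Gitik-handbook} and the analogous argument in \cite{PartI}), and moreover that $f^p_k=f^q_k$ via $e^i\restriction\dom(f^p_k)=f^p_k+1$. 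So $a_k:=a^p_k=a^q_k$, $f_k:=f^p_k=f^q_k$, and hence $\mc(a_k)$ and $\alpha_k:=\mc(a_k\cap\mu)$ are common. Now I'd set $A_k:=A^p_k\cap A^q_k$, which is in $E_{k,\mc(a_k)}$, so in particular nonempty; and for the three collapsing functions $F^i_k$ I want the pointwise union $F^i_k(\nu):=F^{ip}_k(\nu)\cup F^{iq}_k(\nu)$ on $\pi_{\mc(a_k),\alpha_k}[A_k]$. For this to be a legitimate condition I must check that $F^{ip}_k(\nu)$ and $F^{iq}_k(\nu)$ are compatible in the relevant L\'evy collapse; this is where the remaining clause of $c(p)=c(q)$ enters: $j_{k,\alpha_k}(F^{ip}_k)(\alpha_k)=j_{k,\alpha_k}(F^{iq}_k)(\alpha_k)$ says precisely that $\{\nu\mid F^{ip}_k(\nu)=F^{iq}_k(\nu)\}\in E_{k,\alpha_k}$, so by shrinking $A_k$ (to a still-measure-one set) we may assume $F^{ip}_k(\nu)=F^{iq}_k(\nu)$ for every relevant $\nu$ and every $i<3$, making the union trivially a condition. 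Setting $r_k:=(a_k,A_k,f_k,F^0_k,F^1_k,F^2_k)$ then gives $r_k\le_{k0}p_k,q_k$, and $\lh(r_k)$ is unchanged.

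Finally I would assemble $r:=\langle r_k\mid k<\omega\rangle$, putting $r_k:=p_k\,(=q_k)$ for $k<n$ and $r_k$ as just constructed for $k\ge n$. One checks $r\in P$ (the $a$-domains are nondecreasing because the $a^p_k$'s were, and they are unchanged; there is an index, e.g.\ $k=n$, with $r_k\in Q_{k0}$), that $\lh(r)=n=\lh(p)$, and that $r\le_k p_k$ and $r\le_k q_k$ for all $k$, hence $r\le^0 p$ and $r\le^0 q$ by the first bullet of Fact~\ref{RemarkMotisPoset}. Thus $r\in P^p_0\cap P^q_0$, which is therefore nonempty. The only genuinely delicate point is the one flagged above: verifying that the shared index $i(p_k)=i(q_k)$ really does force $a^p_k=a^q_k$ and $f^p_k=f^q_k$ on the nose (rather than merely $f^p_k,f^q_k\subseteq e^i$, which would only give compatibility, not equality) — i.e.\ that the ``$e^i\restriction\dom(f^p_n)=f^p_n+1$'' and ``$e^i\restriction a^p_n=0$'' clauses in Definition~\ref{cfunctionforMoti} are engineered exactly so that $\dom(f^p_k)$, the values of $f^p_k$, and $a^p_k$ are all recoverable from $i$ alone. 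I expect this bookkeeping — mirroring the proof of the Linked$_0$-property in \cite[\S3]{PartI} — to be the main thing to get right; everything else is routine amalgamation of L\'evy-collapse conditions using the ultrapower agreement.
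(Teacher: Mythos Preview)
There is a genuine gap, and it is precisely at the point you flagged as delicate. Your claim that $i(p_k)=i(q_k)=:i$ forces $a^p_k=a^q_k$ and $f^p_k=f^q_k$ is false. The functions $e^i$ produced by the Engelking--Kar\l owicz theorem are \emph{total} functions from $\lambda$ to $\mu$; the clause ``$e^i\restriction a^p_k=0$'' in Definition~\ref{cfunctionforMoti} says only that $a^p_k\subseteq (e^i)^{-1}(0)$, and the latter set is typically much larger than $a^p_k$. So one cannot recover $a^p_k$ (or $\dom(f^p_k)$) from $i$. What the coding does guarantee is \emph{compatibility}: since $e^i$ takes value $0$ on $a^p_k\cup a^q_k$ and values $\ge 1$ on $\dom(f^p_k)\cup\dom(f^q_k)$, these two sets are disjoint; and since $f^p_k+1$ and $f^q_k+1$ are both restrictions of $e^i$, the union $f^p_k\cup f^q_k$ is a function. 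The same remark applies below $\ell$: from $i(p_n)=i(q_n)$ you only get $f^p_n\cup f^q_n\subseteq e^i$, not $p_n=q_n$.

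Consequently your construction of $r_k$ as $(a_k,A^p_k\cap A^q_k,f_k,\ldots)$ with a single common $a_k$ and common $\mc(a_k)$ does not go through: in general $\mc(a^p_k)\neq\mc(a^q_k)$ (only $\mc(a^p_k\cap\mu)=\mc(a^q_k\cap\mu)=\alpha_k$ is recorded in $c$), so you cannot simply intersect the measure-one sets. The paper's proof fixes this by amalgamating rather than identifying: one builds $a^r_n$ recursively as $a^r_{n-1}\cup a^p_n\cup a^q_n\cup\{\alpha^*\}$, where $\alpha^*<\lambda$ is a fresh $\le_{E_n}$-upper bound chosen above $\mu$ (so that $\mc(a^r_n\cap\mu)$ is still $\alpha_n$) and outside all the relevant $\dom(f)$'s; one takes $f^r_n:=f^p_n\cup f^q_n$; and the recursion is needed to maintain $a^r_{n-1}\subseteq a^r_n$. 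Your treatment of the $F^{i}$-part---shrinking the measure-one set using $j_{n,\alpha_n}(F^{ip}_n)(\alpha_n)=j_{n,\alpha_n}(F^{iq}_n)(\alpha_n)$ so that $F^{ip}_n$ and $F^{iq}_n$ literally agree---is correct and matches the paper. The repair is therefore not mere bookkeeping: it changes the shape of $r_k$ (a genuinely larger $a$-coordinate with a new maximal element) and requires the levelwise recursion.
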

\begin{proof}
Let $p, q\in P$ and assume that $c(p)=c(q)$. By Definition~\ref{cfunctionforMoti}, we have  $\ell(p)=\ell(q)$ and $\rho^p_n=\rho^q_n$ for all $n<\ell(p)$. Set $\ell:=\ell(p)$ and $\rho_n:=\rho^p_n$ for each $n<\ell$. Also, $c(p)=c(q)$ yields  $\vec{h}^p_n=\vec{h}^q_n$ for each $n<\ell$, and $\alpha_{p_n}=\alpha_{q_n}$ and $\vec{G}^p_n=\vec{G}^q_n$ for each $n\geq \ell$. Put $\vec{h}_n:=\vec{h}^p_n$ and write $\vec{h}_n=(h^0_n,h^1_n,h^2_n)$. Denote by $\alpha_n$ the common value  $\alpha_{p_n}=\alpha_{q_n}$. We now define $r\in P^p_0\cap P^q_0$.

\smallskip

$\br$ If $n<\ell$ then $c(p)=c(q)$ implies $i=i(p_n)=i(q_n)$, and so $f^p_n\cup f^q_n \s e^{i}$. 
Set $r_n:=(\rho_n, f^p_n\cup f^q_n, {h}^{0}_n,h^{1}_n, h^{2}_n)$. Clearly, $r_n\in Q_{n1}$.

\smallskip

$\br$  For $n\geq \ell$  put $a^r_{\ell-1}:=\emptyset$ and $\alpha_{\ell-1}:=0$ and argue by recursion towards defining $a^r_n$. We assume by induction that $\mc(a^r_m\cap\mu)=\alpha_m$ for  $m\in [\ell-1,n)$.

 Since $i(p_n)=i(q_n)$, arguing as in \cite[Lemma 10.2.41]{Pov} we can make $(a^p_n,A^p_n,f^p_n)$ and $(a^q_n,A^q_n,f^q_n)$ compatible by taking the triple $$(a^r_{n-1}\cup a^p_n\cup a^q_n\cup \{\alpha^*\}, A^*, f^p_n\cup f^q_n),$$
 where $\alpha^*$ is some ordinal in $\lambda\setminus\bigcup_{\ell\leq m\leq n} (\mathrm{dom}(f^p_m)\cup \mathrm{dom}(f^q_m))$ such that $\alpha^*$ is $\leq_{E_n}$-above the ordinals in $a^r_{n-1}\cup a^p_n\cup a^q_n$. Also, $A^*$ is some suitable $E_{n,\alpha^*}$-large set. Since we can pick such an $\alpha^*$ as large as we wish (below $\lambda$) we may assume that it is actually above $\mu$. In particular, putting $$a^r_n:=a^{r}_{n-1}\cup a^p_n\cup a^q_n\cup \{\alpha^*\}$$ we have that $a^r_n$ has a $\leq_{E_n}$-maximal element and $\mathrm{mc}(a^r_n\cap\mu)=\alpha_{n}$.
 
 Let us now to define the $F$-component of $r_n$. Since for each $i\leq 2$ $j_{n,\alpha_n}(F^{pi}_n)(\alpha_n)=j_{n,\alpha_n}(F^{qi}_n)(\alpha_n)$ we have a $E_{n,\alpha_n}$-measure one set $B_i$ for which $F^{pi}_n\restriction B=F^{qi}_n\restriction B$.  Let $B:= B_0\cap B_1 \cap B_2$ and $A^{r}_n:= A^*\cap \pi^{-1}_{\alpha^*,\alpha_n}``B$. Clearly, $A^r_n\in E_{n,\alpha^*}$. Finally, define $F^{ri}_n:= F^{pi}_n\restriction \pi_{\alpha^*,\alpha_n}`` A^r_n$ and put $$r_n=(a^r_n, A^r_n, f^p_n\cup f^q_n, F^{r0}_n,  F^{r1}_n,  F^{r2}_n).$$
 
 After this recursive definition we obtain a condition $r=\langle r_n\mid n<\omega\rangle$  witnessing $P^p_0\cap P^q_0\neq \emptyset$.
\end{proof}

The verification of Clauses \eqref{c5}, \eqref{csize} and \eqref{itsaprojection} is the same as in \cite[Lemma 10.2.45, 10.2.46 and 10.2.47]{Pov}, respectively.
It is worth saying that regarding Clause~\eqref{csize} we actually have that $|W(p)|\leq \kappa$ for each $p\in P$.\label{C6forMoti}

We now show that $(\mathbb{P},\ell)$ has property $\mathcal{D}$ and that it  satisfies Clause~\eqref{c6}.

\begin{lemma} \label{propDcollapses}
$(\mathbb{P},\ell)$ has property $\mathcal{D}$.
\end{lemma}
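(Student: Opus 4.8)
The plan is to describe an explicit winning strategy for Player~$\pI$ in the game $\Game_\mathbb{P}(p,\vec r)$, fixing $p\in P$, $n<\omega$, and a good enumeration $\vec r=\langle r_\xi\mid \xi<\chi\rangle$ of $W_n(p)$. First I would recall from Fact~\ref{RemarkMotisPoset} that each $r_\xi\in W_n(p)$ has the form $p{}^\curvearrowright\vec\nu_\xi$ for a unique $\vec\nu_\xi\in\prod_{k=\ell(p)}^{\ell(p)+n-1}A^p_k$, so that $\xi\mapsto\vec\nu_\xi$ is a bijection; moreover $\ell(r_\xi)=\ell(p)+n$ for every $\xi$, and two conditions $q',q''\le^0 r_\xi,r_\eta$ with $\xi\neq\eta$ have disjoint stems. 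The strategy for $\pI$ will be simply to play $p_\xi:=r_\xi$ at every stage $\xi$ (which is legitimate since $r_\xi\le^0 p$ and is trivially compatible with itself); then $\pII$ is forced to respond with $q_\xi\le^0 r_\xi$, i.e., $q_\xi\in W_n(p)$-cone with stem $\vec\nu_\xi$ and $q_\xi\le r_\xi$. The point is then to show any such sequence $\vec q=\langle q_\xi\mid\xi<\chi\rangle$ is diagonalizable, by producing a single $p'\le^0 p$ with the property that every $q'\in W_n(p')$ refines the corresponding $q_\xi$.

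The construction of $p'$ proceeds componentwise, following the structure of a condition in the EBPFC. Writing $q_\xi=\langle (q_\xi)_k\mid k<\omega\rangle$, for $k\notin[\ell(p),\ell(p)+n)$ the $k$-th coordinate of $q_\xi$ is a one-step-trivial modification of $p_k$ that only strengthens the collapsing parts and enlarges $f^p_k$ (and for $k\ge\ell(p)+n$ shrinks $A^p_k$); for $k\in[\ell(p),\ell(p)+n)$ the stem-coordinate $\nu_{\xi,k}$ has been absorbed. I would define $p'=\langle p'_k\mid k<\omega\rangle$ so that: for $k<\ell(p)$, $p'_k$ is the common lower bound of $p_k$ and all the $(q_\xi)_k$ — here one uses that the number of $q_\xi$'s that differ from each other in a $Q_{k1}$-coordinate is controlled, together with the $\aleph_1$-closure of the relevant L\'evy collapses and the assumption in Remark~\ref{RemarkMeasureOneSets}$(\star)$ that for each $\rho_0$ at most $s_n(\rho_0)^+$ many $\nu\in A^p_k$ have $\bar\nu_0=\rho_0$, so that at most $s_n(\rho_0)^{+}<s_n(\rho_0)^{++}$ many topmost collapsing maps $h^{2q_\xi}_k$ need to be amalgamated, which is below the completeness of $\Col(s_n(\rho_0)^{++},{<}\kappa_{k+1})$; for $\ell(p)\le k<\ell(p)+n$, set $a^{p'}_k:=\bigcup_\xi a^{(q_\xi)_k}\cup\{\alpha^*\}$ as in the Linked$_0$ argument of Lemma~\ref{C5forMoti} (picking a fresh $\leq_{E_k}$-large $\alpha^*$ above $\mu$), shrink $A^p_k$ to the diagonal-intersection-style set $A^{p'}_k$ guaranteeing that any $\nu\in A^{p'}_k$ lands in $A^{(q_\xi)_k}$ for the relevant $\xi$, and amalgamate the $F$-parts over the corresponding measure-one sets exactly as there; for $k\ge\ell(p)+n$, do the same amalgamation of $a$'s, shrink $A^p_k$ through the functions $\pi_{\mc(a^p_k),\beta}$ so that picking any $\nu$ at stage $k$ reproduces the $k$-th coordinate of some $q_\xi$, and amalgamate the $F^{i}$ components. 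That $p'$ is a genuine condition uses the $\kappa_k$-directedness of $(\lambda,\leq_{E_k})$ (Remark~\ref{RemarkDirectness}) and $\kappa_k$-completeness of the measures to keep all the $A^{p'}_k$ large, together with $|\chi|=|W_n(p)|\le\kappa<\kappa_k$ for $k$ large and the closure remarks above for small $k$.

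Finally I would verify that $p'$ witnesses diagonalizability: given $q'\in W_n(p')$ with stem $\vec\nu'$, the coordinatewise construction ensures $\vec\nu'$ was already "committed" to some $\xi$ — namely the unique $\xi$ with $w(p,q')=r_\xi$, so $\vec\nu_\xi$ is an initial-segment-compatible refinement of $\vec\nu'$ — and then each coordinate of $q'$ was built to lie below the corresponding coordinate of $q_\xi$ (the collapsing parts because we took lower bounds, the $a$-parts because $a^{(q_\xi)_k}\subseteq a^{p'}_k$, the $A$-parts because we shrank through the right projections, the $F$-parts because we restricted to the amalgamating measure-one set), whence $q'\le^0 q_\xi$. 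So $\pI$ wins, and $(\mathbb{P},\ell)$ has property~$\mathcal{D}$.

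\medskip

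\noindent\textit{Remark on the main obstacle.} The genuinely delicate point is the bookkeeping in the amalgamation step when $\chi$ is large (up to $\kappa$): one must simultaneously (i) keep every $A^{p'}_k$ in the appropriate ultrafilter $E_{k,\mc(a^{p'}_k)}$, which forces a diagonal-intersection argument through the maps $\pi_{\mc(a^{p'}_k),\beta}$ rather than a naive intersection, and (ii) control the collapsing coordinates $h^{2}_k$ for the finitely many $k<\ell(p)$, where the forcing $\Col(s_n(\rho_0)^{++},{<}\kappa_{k+1})$ is only $s_n(\rho_0)^{++}$-closed — this is exactly why the measure-one set from Remark~\ref{RemarkMeasureOneSets}$(\star)$ was built into the definition of the poset, and invoking it correctly is the crux of the argument. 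Everything else is a routine (if lengthy) coordinatewise verification that mirrors Lemma~\ref{C5forMoti}.
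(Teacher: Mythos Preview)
Your strategy --- having Player~I simply play $p_\xi:=r_\xi$ at every stage --- does not work, and the gap is precisely at the coordinates $k>\ell(p)+n-1$ (the ``tail'' above the absorbed stem). When Player~I plays $r_\xi=p{}^\curvearrowright\vec\nu_\xi$, the constraint on Player~II at such a coordinate $k$ is only $(q_\xi)_k\le_{k0}(p)_k$; nothing ties $(q_\xi)_k$ to $(q_\eta)_k$ for $\eta\ne\xi$. Player~II can therefore choose the $F^i$-components adversarially: for instance, at $k=\ell(p)+n$, keep $a^{q_\xi}_k=a^p_k$ and $A^{q_\xi}_k=A^p_k$, but for each $\nu\in\dom(F^{0p}_k)$ pick $F^{0q_\xi}_k(\nu)$ to be pairwise incompatible extensions of $F^{0p}_k(\nu)$ in $\Col(\sigma_k,{<}\nu_0)$ (there is room for $\kappa_\ell$-many such, since $\nu_0>(\kappa_\ell)^+$). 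Then no single $(p')_k\in Q_{k0}$ can lie below more than one $(q_\xi)_k$, and since $(p'{}^\curvearrowright\vec\nu')_k=(p')_k$ for every $\vec\nu'$, no measure-one shrinking of $A^{p'}_\ell$ rescues the diagonalization. The $\sigma_k$-directed-closure of $\mathbb{Q}_{k0}$ you invoke is useless here because the family $\{(q_\xi)_k:\xi<\chi\}$ is not directed. (This obstruction is absent in EBPF \emph{without} collapses, where there are no $F$-parts and the $Q^*_{k0}$-components can always be amalgamated; the collapses are exactly what forces a more careful strategy.)

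The paper's proof avoids this by having Player~I play a nontrivial $p_\xi\le^0 p$ that incorporates Player~II's earlier moves: for $k>\ell$ one sets $(p_\xi)_k:=(q_\eta)_k$ (or a lower bound thereof at limit stages), so that Player~II is \emph{forced} to return $(q_\xi)_k\le_{k0}(q_\eta)_k$ for all $\eta<\xi$, making the tail coordinates genuinely $\le_{k0}$-decreasing. Only then does the $\sigma_k$-closure yield a lower bound $q^*_k$ for the final $p^*$. Your invocation of Remark~\ref{RemarkMeasureOneSets}$(\star)$ is also misplaced: that clause controls the amalgamation of the $h^{2}$-maps at the \emph{stem} level $\ell$ (where the single one-point extension is absorbed), not the $F$-parts at levels $k>\ell$; and even at the stem level the paper needs Player~I to feed the running union of the $h^{2q_\zeta}_\ell$'s into $F^{2\xi}$ to secure compatibility. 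In short, the ``play $r_\xi$ and diagonalize anything'' idea is the right intuition for collapse-free extender-based Prikry, but in EBPFC Player~I must actively steer Player~II, and your proposal omits that mechanism.
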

\begin{proof}
Let $p\in P$, $n<\omega$ and $\vec{r}$ be a good enumeration of $W_n(p)$. Our aim is to show that \pI\, has a winning strategy in the game $\Game_\mathbb{P}(p,\vec{r})$. To enlighten the exposition we just give details for the case when $n=1$. The general argument can be composed using the very same ideas.

Write $p=\langle (f_n,\rho_n, h^0_n, h^1_n, h^2_n)\mid n<\ell\rangle{}^\smallfrown \langle (a_n,A_n,f_n,F^0_n,F^1_n,F^2_n)\mid n\geq \ell\rangle$. By Fact~\ref{RemarkMotisPoset}, we can identify $\vec{r}$  with  $\langle \nu_\xi\mid \xi<\kappa_\ell\rangle$, a good enumeration of  $A_\ell$. Specifically,  for each $\xi<\kappa_\ell$ we have that $r_\xi=p\cat {\nu_\xi}$. Using this enumeration we  define a sequence $\langle (p_\xi,q_\xi)\mid \xi<\kappa_\ell\rangle$ of moves in $\Game_\mathbb{P}(p,\vec{r})$.

To begin with, $\pI$ plays $p_0:=p$ and in response  $\pII$ plays some $q_0\leq^0 r_0$ with $q_0\leq p_0$. Note that this move  is possible, as $p_0$ and $r_0$ are compatible.

Suppose by induction  that we have defined a sequence $\langle (p_\eta, q_\eta)\mid \eta<\xi\rangle$ of moves in $\Game_\mathbb{P}(p,\vec{r})$ which moreover satisfies the following:

\begin{enumerate}
\item\label{C1propertyD} For each $n<\ell$ the following hold:
\begin{enumerate}
\item for all $\eta<\xi$,  $\rho^{p_\xi}_n=\rho_n$, $h^{0p_\xi}_n=h^0_n$, $h^{1p_\xi}_n=h^1_n$, $h^{2p_\xi}_n=h^2_n$;
\item for all $\zeta<\eta<\xi$, $f^{q_\zeta}_n\s f^{p_\eta}_n$;
\end{enumerate}
\item\label{C2propertyD} For all $\zeta<\eta<\xi$ and $n>\ell$, $(q_\eta)_n\leq_{n0} (p_\eta)_n\leq_{n0} (q_\zeta)_n$;
\item\label{C3propertyD} For all $\eta<\xi$:
\begin{enumerate}
\item $a^{p_\xi}_\ell=a_\ell,\;\; A^{p_\xi}_\ell=A_\ell,\;\; F^{0p_\xi}_\ell=F^{0}_\ell\;\text{and}\;\; F^{1p_\xi}_\ell=F^{1}_\ell;$
\item for each $\zeta<\eta$, if
${(\bar{\nu}_\zeta)}_0={(\bar{\nu}_\eta)}_0$ then $h^{2q_\zeta}_\ell\s h^{2q_\eta}_\ell.$
\end{enumerate}
\end{enumerate}
Let us show how to define the $\xi^{\rm{th}}$ move of $\pI$:
\smallskip

\underline{Successor case:} Suppose $\xi=\eta+1$.  Then put $p_\xi:=\langle {(p_\xi)}_n\mid n<\omega\rangle$, where
$${(p_\xi)}_n:=\begin{cases}
(f^{q_\eta}_n,\rho_n, h^{0}_n, h^1_n, h^2_n), & \text{if $n<\ell$;}\\
(a_n, A_n, f^{q_\eta}_n\setminus a_n, F^0_n, F^1_n, F^{2\xi}), & \text{if $n=\ell$;}\\
(q_\eta)_n, & \text{if $n>\ell$.}
\end{cases}
$$
Here $F^{2\xi}$ denotes the map with domain $\pi_{\mc(a_\ell),\alpha_{p_\ell}}``A_\ell$ defined as follows:
$$F^{\xi,2}(\bar{\nu}):=\begin{cases}
F^{2}_\ell(\bar{\nu})\cup \bigcup\{h^{q_\zeta,2}_\ell\mid \zeta<\xi,\, ({\bar{\nu}_\zeta})_0=({\bar{\nu}_\xi})_0\}, & \text{if $\nu=\nu_\xi$;}\\
F^{2}_\ell(\bar{\nu}), & \text{otherwise.}
\end{cases}
$$
By Clauses~\eqref{C3propertyD} of the induction hypothesis and our comments in Remark~\ref{RemarkMeasureOneSets}, $F^{2\xi}$ is a function.  A moment's reflection makes it clear that $p_\xi$ is a condition in $\mathbb{P}$ witnessing \eqref{C1propertyD} and \eqref{C3propertyD}(a) above. Also, $p_\xi\leq^0 p$ and $p_\xi$ is compatible with $r_\xi$, hence it is a legitimate move for $\pI$.\footnote{Note that $p_\xi\cat{\nu_\xi}\leq p_\xi, r_\xi$.} In response, $\pII$ plays $q_\xi\leq^0 r_\xi$ such that $q_\xi\leq p_\xi$. In particular, for each $n>\ell$, $(q_\xi)_n\leq_{n0} (p_\xi)_n\leq_{n0} (q_\eta)_n$, and also $F^{2\xi}(\bar{\nu}_\xi)\s h^{2q_\xi}_\ell$. This combined with the induction hypothesis yield  Clause~\eqref{C2propertyD} and \eqref{C3propertyD}(b),  which completes the successor case.

\smallskip

\underline{Limit case:} In the limit case we put $p_\xi:=\langle (p_\xi)_n\mid n<\omega\rangle$, where
$${(p_\xi)}_n:=\begin{cases}
(\bigcup_{\eta<\xi} f^{q_\eta}_n,\rho_n, h^{0}_n, h^1_n, h^2_n), & \text{if $n<\ell$;}\\
(a_n, A_n, \bigcup_{\eta<\xi} (f^{q_\eta}_n\setminus a_n), F^0_n, F^1_n, F^{2\xi}), & \text{if $n=\ell$;}\\
(q_\xi^*)_n, & \text{if $n>\ell$.}
\end{cases}
$$
Here, $F^{2\xi}$ is defined as before and $(q^*_\xi)_n$ is a lower bound for the sequence $\langle (p_\eta)_n\mid \eta<\xi\rangle$. Note that this choice is possible because the  orderings $\leq_{n0}$ are $\sigma_{\ell+1}$-directed-closed. Once again, $p_\xi$ is a legitimate move for $\pI$ and, in response, $\pII$ plays $q_\xi$. It is routine to check that \eqref{C1propertyD}--\eqref{C3propertyD} above hold.

\smallskip

After this process we get a sequence $\langle (p_\xi, q_\xi)\mid \xi<\kappa_\ell\rangle$. We next show how to form a condition $p^*\leq^0 p$ diagonalizing $\langle q_\xi\mid \xi<\kappa_\ell\rangle$.

Note that by shrinking $A_\ell$ to some  $A'_\ell$ we may assume that there are maps $\langle(h^{*0}_n,h^{*1}_n,h^{*2}_n)\mid n<\ell\rangle$ such that $h^{iq_\xi}_n=h^{*i}_n$ for all $\nu_\xi\in A'_\ell$ and $i<3$. Next, define a map $t$ with domain $A'_\ell$ such that $t(\nu):=\langle h^{0q_{\nu}}_\ell, h^{1q_{\nu}}_\ell\rangle$.\footnote{In a slight abuse of notation, here we are identifying $q_\nu$ with $q_\xi$, where $\nu=\nu_\xi$.} Since $j_\ell(t)(\mc(a_\ell))\in V_{\kappa+1}^{M_{E_\ell}}$ we can argue as in \cite[Claim~1]{GitikCollapsin} that there is $\alpha<\mu$ and a map $t'$ such that $j_\ell(t)(\mc(a_\ell))=j_\ell(t')(\alpha)$. Now let $a^*_\ell$ be  such that $a_\ell\cup\{\alpha\}\s a^*_\ell$  witnessing Clause~(1) of Definition~\ref{ModulesforGitiks}$(0)_n$. Then,
$$A:=\{\nu<\kappa_\ell\mid t\circ \pi_{\mc(a_\ell^*),\mc(a_\ell)}(\nu)=t'\circ \pi_{\mc(a_\ell^*\cap \mu),\alpha} \circ \pi_{\mc(a_\ell^*),\mc(a_\ell^*\cap \mu)}(\nu) \}$$
is $E_{\ell,\mc(a^*_\ell)}$-large. Set $A^*_\ell:= A\cap \pi_{\mc(a^*_\ell), \mc(a_\ell)}^{-1}A'_\ell$ and $$\hat{t}:=(t'\circ \pi_{\mc(a_\ell^*\cap \mu),\alpha})\upharpoonright\pi_{\mc(a^*_\ell), \mc(a^*_\ell\cap \mu)}``A^*_\ell.$$
Note that $\pi_{\mc(a^*_\ell),\mc(a_\ell)}``A^*_\ell\s A'_\ell\s A_\ell$.  Also, for each $\nu\in A^*_\ell$,
$$\hat{t}(\pi_{\mc(a^*_\ell), \mc(a^*_\ell\cap \mu)}(\nu))=t(\tilde{\nu})=\langle h^{0q_{\tilde{\nu}}}_\ell, h^{1q_{\tilde{\nu}}}_\ell\rangle,$$
where $\tilde{\nu}:=\pi_{\mc(a^*_\ell), \mc(a_\ell)}(\nu)$. For each $i<2$,  define a map $F^{*,i}_\ell$ with domain $\pi_{\mc(a^*_\ell),\mc(a_\ell)}``A^*_\ell$, such that for each $\nu\in A^*_\ell$,
$$F^{*,i}_\ell(\pi_{\mc(a^*_\ell), \mc(a^*_\ell\cap \mu)}(\nu)):=h^{iq_{\tilde{\nu}}}_\ell.$$
Similarly, define $F^{*,2}_\ell$ by taking lower bounds over the stages of the inductive construction mentioning ordinals $\nu_\eta\in \pi_{\mc(a^*_\ell),\mc(a_\ell)}``A^*_\ell$; i.e.,
$$F^{*,2}_\ell(\pi_{\mc(a^*_\ell), \mc(a^*_\ell\cap \mu)}(\nu)):= \bigcup\{h^{2q_{\nu_\eta}}\mid \nu_\eta\in \pi_{\mc(a^*_\ell),\mc(a_\ell)}``A^*_\ell,\, (\bar{\tilde{\nu}})_0=(\bar{\nu}_\eta)_0\}.\footnote{Once again, this choice is legitimate in that there are not too many $\nu$ with the same projection to the normal measure,  $\nu_0$ (see Remark~\ref{RemarkMeasureOneSets}).}$$
Next, define $p^*:=\langle p^*_n\mid n<\omega\rangle$, where
$$p^*_n:=\begin{cases}
(\bigcup_{\xi<\kappa_\ell} f^{q_\xi}_n,\rho_n, h^{*0}_n, h^{*1}_n, h^{*2}_n), & \text{if $n<\ell$;}\\
(a^*_n, A^*_n, f^p_n\cup \bigcup_{\xi<\kappa_\ell} (f^{q_\eta}_n\setminus a^*_n), F^{*0}_n, F^{*1}_n, F^{*2}_n), & \text{if $n=\ell$;}\\
q^*_n, & \text{if $n>\ell$.}
\end{cases}
$$
and $q^*_n$ is a $\leq_{n0}$-lower bound for $\langle (q_\xi)_n\mid \xi<\kappa_\ell\rangle$.

\begin{claim}
$p^*$ is a condition in $\mathbb{P}$ diagonalizing $\langle q_\xi\mid \xi<\kappa_\ell\rangle$.
\end{claim}
\begin{proof}
Clearly, $p^*\in P$ and it is routine to check that $p^*\leq^0 p$.

Let $s\in W_1(p^*)$ and $\nu\in A^*_\ell$ be with $s=p^*\cat {\nu}$. Since $\pi_{\mc(a^*_\ell), \mc(a_\ell)}``A^*_\ell$ is contained in $A_\ell$ there is some $\xi<\kappa_\ell$ such that $\tilde{\nu}=\nu_\xi$. Note that $w(p,s)=p\cat{\nu_\xi}$, hence we need to prove that $p^*\cat{\nu}\leq^0 q_\xi$. Note that for this it is enough to show that
$h^{iq_\xi}_\ell\s F^{*i}_\ell(\pi_{\mc(a^*_\ell), \mc(a^*_\ell\cap \mu)}(\nu))$ for $i<3$. And, of course, this follows from our definition of $F^{*i}_\ell$  and  the fact that $\tilde{\nu}=\nu_\xi$.
\end{proof}
The above shows that $\pI$ has a winning strategy for the game $\Game_\mathbb{P}(p,\vec{r})$.
\end{proof}

\begin{lemma}\label{C8forMotis}
$(\mathbb{P},\ell)$ has the $\rm{CPP}$.
\end{lemma}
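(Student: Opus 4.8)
The plan is to derive the $\rm{CPP}$ (Clause~\eqref{c6} of Definition~\ref{SigmaPrikry}) by re-running the diagonalizability game of Definition~\ref{Diagonalizability} exactly as in the proof of Lemma~\ref{propDcollapses}, but now forcing the conditions played by $\pII$ to \emph{decide membership in $U$}. Fix a $0$-open set $U\s P$, a condition $p\in P$ and $n<\omega$, and write $\ell:=\ell(p)$. As in Lemma~\ref{propDcollapses}, I would spell out the case $n=1$ and indicate how the general case follows by the same ideas. So let $\vec r=\langle r_\xi\mid\xi<\kappa_\ell\rangle$ be a good enumeration of $W_1(p)$, identified by Fact~\ref{RemarkMotisPoset} with a good enumeration $\langle\nu_\xi\mid\xi<\kappa_\ell\rangle$ of $A^p_\ell$ via $r_\xi=p\cat{\nu_\xi}$. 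Set $$D:=\{s\in\mathbb P_{\ell+1}\mid s\in U\ \text{ or }\ P^s_0\cap U=\emptyset\}.$$ Since $U$ is $0$-open, $s\in D$ iff $P^s_0\s U$ or $P^s_0\cap U=\emptyset$; and $D$ is dense in $\mathbb P_{\ell+1}$, because given $s_0\in\mathbb P_{\ell+1}$, either some $s\le^0 s_0$ lies in $U$ --- whence $s\in D$ --- or no such $s$ exists, in which case $s_0$ itself witnesses $P^{s_0}_0\cap U=\emptyset$, so $s_0\in D$.

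Next I would play $\Game_\mathbb P(p,\vec r,D)$ with $\pI$ following the winning strategy produced in the proof of Lemma~\ref{propDcollapses}. The key observation is that that strategy is insensitive to the particular moves of $\pII$ --- it uses only that $q_\xi\le^0 r_\xi$ and $q_\xi\le p_\xi$ --- so it stays winning when $\pII$ is confined to play in $D$; and, since $\pI$'s strategy keeps $A^{p_\xi}_\ell=A^p_\ell\ni\nu_\xi$, the condition $p_\xi\cat{\nu_\xi}$ is a common $0$-step extension of $p_\xi$ and $r_\xi$, so by density of $D$ the player $\pII$ can always respond with some $q_\xi\in D$ with $q_\xi\le p_\xi$ and $q_\xi\le^0 r_\xi$. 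Let $\pII$ play so. The resulting sequence $\langle q_\xi\mid\xi<\kappa_\ell\rangle$ is then diagonalizable with respect to $\vec r$; fix a witnessing $p'\le^0 p$, so that every $q'\in W_1(p')$ satisfies $q'\le^0 q_{\xi}$ for the unique index $\xi$ with $w(p,q')=r_\xi$. Moreover each $q_\xi\in D$, i.e.\ each $q_\xi$ decides $U$: either $P^{q_\xi}_0\s U$ or $P^{q_\xi}_0\cap U=\emptyset$.

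It then remains to split and shrink. For $\nu\in A^{p'}_\ell$ let $\xi(\nu)$ be the unique index with $w(p,p'\cat{\nu})=r_{\xi(\nu)}$, and put $B_0:=\{\nu\in A^{p'}_\ell\mid P^{q_{\xi(\nu)}}_0\s U\}$ and $B_1:=\{\nu\in A^{p'}_\ell\mid P^{q_{\xi(\nu)}}_0\cap U=\emptyset\}$. By the previous paragraph $B_0\cup B_1=A^{p'}_\ell\in E_{\ell,\mc(a^{p'}_\ell)}$, so one of $B_0,B_1$ --- say $B_i$ --- belongs to this ultrafilter. Let $q$ be obtained from $p'$ by replacing $A^{p'}_\ell$ with $B_i$ and restricting $F^{0p'}_\ell,F^{1p'}_\ell,F^{2p'}_\ell$ accordingly; then $q\le^0 p'\le^0 p$, and since the one-step extension $q\cat{\nu}$ depends only on the data of $q$ at $\nu$, which the shrinking leaves unchanged, we have $q\cat{\nu}=p'\cat{\nu}\in W_1(p')$ for every $\nu\in B_i$. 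Now any $r\in P^q_1$ is $\le^0 q\cat{\nu}$ for some $\nu\in B_i$ (Fact~\ref{RemarkMotisPoset}), hence $r\le^0 p'\cat{\nu}\le^0 q_{\xi(\nu)}$ by diagonalizability, i.e.\ $r\in P^{q_{\xi(\nu)}}_0$; consequently $r\in U$ if $i=0$ and $r\notin U$ if $i=1$. Thus $P^q_1\s U$ or $P^q_1\cap U=\emptyset$, as desired. For general $n$ one argues identically with a good enumeration of $W_n(p)$ and $D$ dense in $\mathbb P_{\ell+n}$, and carries out the final split-and-shrink coordinate by coordinate along $\prod_{m=\ell}^{\ell+n-1}A^p_m$, using the $\kappa_m$-completeness of the relevant measures --- precisely the passage from $n=1$ to arbitrary $n$ performed in the proof of Lemma~\ref{propDcollapses}.

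The only genuinely new ingredient beyond Lemma~\ref{propDcollapses} is the insistence that $\pII$ play in $D$, which costs nothing since $D$ is dense and $\pI$'s strategy ignores $\pII$'s moves; the one thing to keep straight is the bookkeeping between $\nu\in A^{p'}_\ell$, the stem $r_{\xi(\nu)}$ it reduces to, and the decision recorded by $q_{\xi(\nu)}$. For general $n$ the real labour --- amalgamating the top-most L\'evy-collapse components $h^{2q_\xi}_m$ along the branches of $W_n(p)$ --- is already absorbed into $\pI$'s strategy via Clause~$(\star)$ of Remark~\ref{RemarkMeasureOneSets}, so it requires no further work here; I expect the orchestration of the multi-coordinate shrinking to be the one place where care is genuinely needed.
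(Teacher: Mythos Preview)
Your proof is correct. Both you and the paper rely on property~$\mathcal{D}$ (Lemma~\ref{propDcollapses}), but the organization differs. The paper proceeds in two phases: it first proves an intermediate Claim, running the game at \emph{every} level $n$ with $\pII$ instructed to enter $U$ whenever $P^{r^n_\xi}_0\cap U\neq\emptyset$, and passes to a $\LE^0$-limit $q$ with the global property that any $r\LE q$ in $U$ already has $w(q,r)\in U$; only then does it carry out pure measure-one shrinking level by level (testing membership of $q_1\cat{\langle\nu,\delta\rangle}$ in $U$), followed by a second countable limit to obtain a single $q_\omega$ working for all $n$ simultaneously. Your approach folds the two phases into one: for a fixed $n$ you constrain $\pII$ to play in the dense set $D$ of conditions that \emph{decide} $U$ (either lie in $U$ or have no $0$-extension in $U$), so a single diagonalization plus one measure split already delivers the dichotomy for $P^q_n$ directly---no intermediate Claim and no bridge from a $W_n$-dichotomy to a $P^q_n$-dichotomy is needed. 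This is more economical for the CPP as stated (which only asks for a $q$ depending on $n$), while the paper's route buys a uniform witness. One small imprecision: $p_\xi\cat{\nu_\xi}$ is a $1$-step, not a $0$-step, extension of $p_\xi$; but it is a $0$-step extension of $r_\xi$ and a common extension of both, which is all that $\pII$ needs in order to respond in $D$.
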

\begin{proof}
Fix $p\in P$, $n<\omega$ and $U$  a $0$-open set. Set $\ell:=\ell(p)$.
\begin{claim}\label{Diagonalization}
There is $q\leq^0 p$ such that if $r\in P^q\cap U$ then $w(q,r)\in U$.
\end{claim}
\begin{proof}
For each $n<\omega$ and a good enumeration $\vec{r}:=\langle r^n_\xi\mid \xi<\chi\rangle$ of $W_n(p)$ appeal to Lemma~\ref{propDcollapses} and  find $p_n\leq^0 p$ such that $p_n$ diagonalizes a sequence $\langle q^n_\xi\mid \xi<\chi\rangle$ of moves for $\pII$ which moreover satisfies   $$P^{r^n_\xi}_0\cap U\neq \emptyset\;\implies\;q^n_\xi\in U.$$
Appealing iteratively to Lemma~\ref{propDcollapses} we arrange $\langle p_n\mid n<\omega\rangle$  to be $\leq^0$-decreasing, and by Definition~\ref{SigmaPrikry}\ref{c2} we find $q\leq^0 p$ a lower bound for  it.

Let $r\leq q$ be in $D$ and set $n:=\ell(r)-\ell(q)$. Then, $r\leq^n p_n$  and so $r\leq^0 w(p_n,r)\leq^0 q^n_\xi$ for some $\xi$. This implies that $q^n_\xi\in U$. Finally, since $w(q,r)\leq^0 w(p_n,r)\leq^0 q^n_\xi$ we infer from $0$-openess of $U$ that  $w(q,r)\in U$.
\end{proof}

Let $q\LE^0 p$ be as in the conclusion of Claim~\ref{Diagonalization}. We define by induction a $\LE^0$-decreasing sequence of conditions $\langle q_n\mid n<\omega\rangle$ such that for each $n<\omega$
$$(\star)_n\;\;W_n(q_n)\s U\text{ or }W_n(q_n)\cap U=\emptyset.$$
The cases $n\leq 1$ are easily handled and the cases $n\geq 3$ are similar to the case $n=2$. So, let us simply describe how do we proceed in this latter case.
 Suppose that $q_1$ has been  defined. For each $\nu\in A^{q_1}_\ell$, define $$A_\nu^+:=\{\delta\in A^{q_1}_{\ell+1}\mid q_1{}^\curvearrowright\langle\nu,\delta\rangle\in U\}\;\;\text{and}\;\; A_\nu^-:=A^{q_1}_{\ell+1}\setminus A_\nu^+.$$
 If $A_\nu^+$ is large then set $A_\nu:= A_\nu^+$.\footnote{More explicitly, $E_{\ell+1,\mc(a^{q_1}_{\ell+1})}$-large.}  Otherwise, define $A_\nu:=A_\nu^-$. Put $A^+:=\{\nu\in A^{q_1}_\ell\mid A_\nu= A_\nu^+\}$, and $A^-:=\{\nu\in A^{q_1}_\ell\mid A_\nu= A_\nu^-\}$. If $A^+$ is large we let $A_\ell:=A^+$ and otherwise $A_\ell:=A^-$.  Finally, let $q_2\LE^0q_1$ be such that $A^{q_2}_\ell:=A_\ell$ and $A^{q_2}_{\ell+1}:=\bigcap_{\nu\in A^{q_1}_\ell} A_\nu$ Then, $q_2$ witnesses $(\star)_2$.

Once we have defined $\langle q_n\mid n<\omega\rangle$, let $q_\omega$ be a $\LE^0$-lower bound for it (cf.~Definition~\ref{SigmaPrikry}\eqref{c2}). It is routine to check that, for each $n<\omega$, the condition $q_\omega$ witnesses property $(\star)_n$, hence it is as desired.
\end{proof}

Let us dispose with the verification of Clauses~\eqref{PnprojectstoSn} and \eqref{moreclosedness}:
\begin{lemma}\label{C3forMoti}\label{AdditionalAssumption1forMoti}
For all $n<\omega$, the map $\varpi_n$ is a nice projection from $\mathbb{P}_{\geq n}$ to $\mathbb{S}_n$ such that, for all $k\ge n$, $\varpi_n\restriction \mathbb P_k$ is again a nice projection to $\mathbb{S}_n$.

Moreover, the sequence of nice projections $\vec{\varpi}$ is  coherent.\footnote{See Definition~\ref{CoherentSystem}.}

\end{lemma}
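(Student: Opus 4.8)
The plan is to verify the clauses of Definition~\ref{niceprojection} directly for each $\varpi_n$, reading the structure of $\mathbb{S}_n$ and $\mathbb{P}_{\geq n}$ off the definitions. For $n=0$ there is nothing to prove, since $\mathbb{S}_0=\{\one\}$ is trivial and $\varpi_0$ is the constant projection. So fix $n\geq 1$. Clause~\eqref{niceprojection1} is immediate from Definition~\ref{DefinitionvarpinMoti} (the coordinates $k<n$ of $\one_{\mathbb{P}}$ are the greatest conditions in their modules, so they assemble to $\one_{\mathbb{S}_n}$). Clause~\eqref{niceprojection2} is equally immediate: if $q\LE p$ in $\mathbb{P}_{\geq n}$ then by Definition~\ref{EBPFC} we have $q_k\LE_k p_k$ for every $k<\omega$; for $k<n$, both $q_k,p_k$ lie in $Q_{k1}$ (as $\ell(q),\ell(p)\geq n>k$) and the ordering $\LE_{k1}$ asks precisely that $\rho^{q}_k=\rho^{p}_k$ and $h^{iq}_k\supseteq h^{ip}_k$ for $i<3$, which is exactly the assertion $\varpi_n(q)\sle_n\varpi_n(p)$.

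For Clause~\eqref{niceprojection3}, given $p\in P_{\geq n}$ and $s\sle_n\varpi_n(p)$, write $s=\langle(\rho_k,h^0_k,h^1_k,h^2_k)\mid k<n\rangle$; I would define $p+s$ to be the condition obtained from $p$ by replacing, for each $k<n$, the $Q_{k1}$-coordinate $p_k=(f^p_k,\rho^p_k,h^{0p}_k,h^{1p}_k,h^{2p}_k)$ by $(f^p_k,\rho_k,h^0_k,h^1_k,h^2_k)$, and leaving all coordinates $k\geq n$ untouched. Because $s\sle_n\varpi_n(p)$, the witnessing condition in $P_n$ guarantees $\rho_k=\rho^p_k$ and $h^{ip}_k\subseteq h^i_k$, so $p+s\LE^0 p$ and $p+s\in P_{\geq n}$ with $\varpi_n(p+s)=s$; and any $q\LE p$ with $\varpi_n(q)\sle_n s$ must have $q_k$ extending $(f^p_k,\rho_k,h^0_k,h^1_k,h^2_k)$ in the $k<n$ coordinates and $q_k\LE_k p_k$ elsewhere, i.e.\ $q\LE p+s$; this is the $\LE$-greatest such element. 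Clause~\eqref{theprojection} then says: given $q\LE p+s$, put $p':=$ the condition that agrees with $q$ on all coordinates $k\geq n$ and agrees with $p$ (equivalently, has the original $\rho^p_k,h^{ip}_k$) on the coordinates $k<n$, while keeping $f^{q}_k$ there; then $p'\LE^{\varpi_n}p$ (same $\varpi_n$-value as $p$) and $q=p'+\varpi_n(q)$ by reassembling. The ``in particular'' about the projection $(p',s')\mapsto p'+s'$ is then automatic from the niceness clauses, as already noted in the text after Definition~\ref{niceprojection}. The assertion that $\varpi_n\restriction\mathbb{P}_k$ remains a nice projection for each $k\geq n$ is the same argument carried out inside the single layer $\mathbb{P}_k$: all of the above constructions preserve $\ell$, so they never leave $P_k$, and $\mathbb{S}_n$ with the restricted ordering is unchanged. (Indeed $\mathbb{S}_n$ is, by Definition~\ref{DefinitionofSnMotis}, defined using $P_n$ but equals the same poset when one uses any $P_k$, $k\geq n$, since the $Q_{k1}$-components of coordinates below $n$ are the same set of objects.)

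For the \emph{moreover} clause, coherence, I must check the two items of Definition~\ref{CoherentSystem}. Item~\eqref{AdditionalAssumption1} asks that for $p\in P_{\geq n}$, $\varpi_n$ is constant on $W(p)$ with value $\varpi_n(p)$: but every $q\in W(p)$ satisfies $q=w(p,q)\LE p$ and, crucially, the coordinates $k<n$ of $w(p,q)$ are unchanged from $p$ — this is visible from the description of $w(p,q)$ and $p{}^\curvearrowright\vec\nu$ in Definition~\ref{EBPFOneExtension} and Fact~\ref{RemarkMotisPoset}, where concatenation only alters coordinates $\geq\ell(p)\geq n$ (and leaves the $f$-coordinates growing but the $\rho,h$-coordinates below $n$ fixed). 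So $\varpi_n(q)=\varpi_n(p)$. Item~\eqref{NiceCoherent} asks, for $n\leq m$, a map $\pi_{m,n}\colon\mathbb{S}_m\to\mathbb{S}_n$ with $\varpi_n=\pi_{m,n}\circ\varpi_m$ on $P_{\geq m}$; the obvious candidate is the truncation $\langle(\rho_k,h^0_k,h^1_k,h^2_k)\mid k<m\rangle\mapsto\langle(\rho_k,h^0_k,h^1_k,h^2_k)\mid k<n\rangle$, which manifestly has the required property. I do not anticipate a genuine obstacle here; the only thing requiring care — and the step I would single out as the main point to get right — is the bookkeeping in Clause~\eqref{theprojection}, namely producing $p'\LE^{\varpi_n}p$ with $q=p'+\varpi_n(q)$, because one must check that reassembling the ``below-$n$'' coordinates of $p$ with the ``at-or-above-$n$'' coordinates of $q$ genuinely yields a condition of $\mathbb{P}$ (the side conditions $a^{p'}_n\subseteq a^{p'}_m$ and $p'_n\in Q_{n0}$-type requirements from Definition~\ref{EBPFC}), which it does precisely because $q\LE p+s\LE^0 p$ forces $\ell(q)=\ell(p)$ and the relevant $a$-sequences to have only grown.
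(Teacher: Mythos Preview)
Your argument is correct and matches the paper's proof essentially line by line: both define $p+s$ by overwriting the collapse data below $n$, both define $p'$ by splicing $q$'s coordinates $\geq n$ with $p$'s $\varpi_n$-data below $n$ (your choice of $f^q_k$ there is in fact the right one for getting $p'+\varpi_n(q)=q$ exactly), and the coherence check via the description $W(p)=\{p{}^\curvearrowright\vec\nu\}$ is exactly what the paper invokes. One small slip: your final parenthetical asserts that $q\LE p+s$ forces $\ell(q)=\ell(p)$, but in $\mathbb{P}_{\geq n}$ this need not hold (only $\ell(q)\geq\ell(p)$); fortunately your construction of $p'$ works regardless, since the side conditions of Definition~\ref{EBPFC} on the $a$-sequences live entirely at coordinates $\geq n$ and are inherited directly from $q\in P$.
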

\begin{proof}
 Fix some $n<\omega$. By definition,  $\varpi_n(\one_\mathbb{P})=\one_{\mathbb{S}_n}$  and  it is not hard to check that it  is order-preserving. Let $p\in P_{\geq n}$ and $s\preceq_n \varpi_n(p)$.

 Then  $s=\langle (\rho^p_k, h^{0}_k, h^{1}_k,h^{2}_k)\mid k<n\rangle$, and we define $r:=\langle r_k\mid k<\omega\rangle$ as
$$r_k:=\begin{cases}
(\rho^p_k, f^p_k, h^{0}_k, h^{1}_k, h^{2}_k), & \text{if $k<n$};\\
p_k, & \text{otherwise}.
\end{cases}
$$
It is not hard to check that $r\LE^0 p$ and  $\varpi_n(r)=s$.  
Actually, $r$ is the greatest such condition, hence $r=p+s$.  This yields Clause~\eqref{niceprojection3} of Definition~\ref{niceprojection}.

 For the verification of Clause~\eqref{theprojection} of Definition~\ref{niceprojection},  let $q\leq^0 p+s$ and define a sequence $p':=\langle p'_n\mid n<\omega\rangle$ as follows:

 $$p'_k:=\begin{cases}
(\rho^p_k, f^p_k, h^{0p}_k, h^{1p}_k, h^{2p}_k), & \text{if $k<n$};\\
q_k, & \text{otherwise}.
\end{cases}$$
Note that $p'\in P$ and $p'+\varpi_n(q)=q$. Thus, Clause~\eqref{theprojection} follows.

 Altogether, the above shows that $\varpi_n$ is a nice projection. 
 Similarly, one shows that $\varpi_n\restriction \mathbb P_k$ is a nice projection for each $k\geq n$. 
 Finally, the moreover part of the lemma follows from the definition of $\varpi_n$ and the fact that $W(p)=\{p{}^\curvearrowright\vec{\nu}\mid \vec{\nu}\in \prod_{k=\ell(p)}^{\ell(p)+|\vec{\nu}|-1} A^p_k\}.$
\end{proof}

\begin{lemma}\label{C4forMoti}
For each $n<\omega$, $\mathbb{P}^{\varpi_n}_n$ is $\sigma_n$-directed-closed.\footnote{In particular, taking $\z{\mathbb{P}}_n:=\mathbb{P}_n$ Clause~\eqref{moreclosedness} follows.}
\end{lemma}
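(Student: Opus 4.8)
The plan is to produce a $\LE^{\varpi_n}$-lower bound of a given directed set by building it one coordinate at a time: on the coordinates $\geq n$ I will invoke the directed-closedness of the $n0$-modules (Lemma~\ref{QnsAreclosed}(2)), and on the coordinates $<n$ I will use that $\varpi_n$-equivalent conditions agree there on everything but the $f$-part. Concretely, fix $n<\omega$ and a $\LE^{\varpi_n}$-directed set $D=\{p_\alpha\mid\alpha<\nu\}\s P_n$ with $\nu<\sigma_n$. Since $D$ is directed and $\one\notin P_n$ for $n\geq 1$ (the claim being vacuous for $n=0$), all the $p_\alpha$ carry the same value $s:=\varpi_n(p_\alpha)$; unravelling Definition~\ref{DefinitionvarpinMoti}, this says that there are $\rho_k,h^0_k,h^1_k,h^2_k$ for $k<n$, independent of $\alpha$, with $\rho^{p_\alpha}_k=\rho_k$ and $h^{ip_\alpha}_k=h^i_k$ for all $\alpha<\nu$, $k<n$, $i<3$, while for $k\geq n$ we have $p_{\alpha,k}\in Q_{k0}$ (because $\lh(p_\alpha)=n$). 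Projecting $D$ onto its $k$-th coordinate shows that $\{p_{\alpha,k}\mid\alpha<\nu\}$ is $\LE_k$-directed, hence a $\LE_{k1}$-directed subset of $Q_{k1}$ for $k<n$ and a $\LE_{k0}$-directed subset of $Q_{k0}$ for $k\geq n$.

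Next I would define $q=\langle q_k\mid k<\omega\rangle$. For $k<n$: the maps $f^{p_\alpha}_k$ are pairwise compatible (any two are extended by a common $f^{p_\gamma}_k$), so $\bigcup_{\alpha<\nu}f^{p_\alpha}_k$ is a function, with domain of size $\leq\kappa$ since $\nu<\sigma_n\leq\kappa$ and each $\dom(f^{p_\alpha}_k)$ has size $\leq\kappa$; set $q_k:=(\bigcup_{\alpha<\nu}f^{p_\alpha}_k,\rho_k,h^0_k,h^1_k,h^2_k)\in Q_{k1}$. For $k\geq n$ I would proceed by recursion on $k$, with the convention $a^{q_{n-1}}:=\emptyset$: given $q_{k-1}$, note that $|\bigcup_{\alpha<\nu}a^{p_\alpha}_k|<\kappa_k$ (each $a^{p_\alpha}_k$ has size $<\kappa_k$, $\nu<\sigma_n<\kappa_k$, and $\kappa_k$ is inaccessible, hence regular), so Lemma~\ref{QnsAreclosed}(2) --- applicable since $\nu<\sigma_n\leq\sigma_k$ --- yields a $\LE_{k0}$-lower bound of $\{p_{\alpha,k}\mid\alpha<\nu\}$ in $Q_{k0}$; using the $\kappa_k$-directedness of $(\lambda,\leq_{E_k})$ (Remark~\ref{RemarkDirectness}) together with $|a^{q_{k-1}}|<\kappa_{k-1}<\kappa_k$, I would then extend that lower bound within $\mathbb{Q}_{k0}$ to a condition $q_k$ whose $a$-component contains $a^{q_{k-1}}$.

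It remains to verify that $q$ is as desired. By the recursive choice, $a^{q_k}\s a^{q_m}$ whenever $n\leq k\leq m$, and $q_k\in Q_{k0}$ for every $k\geq n$ while $q_k\in Q_{k1}$ for $k<n$; so $q$ meets both bullets of Definition~\ref{EBPFC}, $q\in P$, and $\lh(q)=n$, that is, $q\in P_n$. For every $\alpha<\nu$ we get $q_k\LE_{k1}p_{\alpha,k}$ for $k<n$ (the $f$-part grows and the remaining entries are unchanged) and $q_k\LE_{k0}p_{\alpha,k}$ for $k\geq n$ by construction, hence $q\LE p_\alpha$; moreover $\varpi_n(q)=\langle(\rho_k,h^0_k,h^1_k,h^2_k)\mid k<n\rangle=s=\varpi_n(p_\alpha)$. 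Therefore $q\LE^{\varpi_n}p_\alpha$ for all $\alpha<\nu$, which is what we need; for $n=0$ the coordinates below $n$ are vacuous and the argument degenerates to a coordinatewise use of Lemma~\ref{QnsAreclosed}(2). The one delicate point is the coherence requirement $a^{q_k}\s a^{q_m}$ that membership in $P$ imposes: it is what forces the recursive construction above rather than a blunt coordinatewise application of Lemma~\ref{QnsAreclosed}(2), and it works out only because every union of $a$-components and every extension of a domain involved stays below the relevant $\kappa_k$, which relies on the inaccessibility of the $\kappa_k$'s and the regularity of the $\sigma_k$'s.
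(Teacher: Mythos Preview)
Your proof is correct and follows the same approach as the paper's: fix the common $\varpi_n$-value, take unions of the $f$-parts on coordinates $<n$, and amalgamate the $Q_{k0}$-components on coordinates $\geq n$. The paper's argument is a one-line sketch (``take intersections of the measure one sets and unions on the other components''), whereas you spell out the recursive step needed to secure the coherence requirement $a^{q_k}\subseteq a^{q_m}$ from Definition~\ref{EBPFC}; this is the one nontrivial bookkeeping point, and you handle it correctly.
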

\begin{proof}
Since $\mathbb{P}^{\varpi_0}_0=\{\one\}$ the result is clearly true for $n=0$.

Let $n\geq 1$ and $D\s \mathbb{P}^{\varpi_n}_n$ be a directed set of size $<\sigma_n$. By definition, $$\varpi_n[D]=\{\langle (\rho^p_k, h^{0p}_k, h^{1p}_k,h^{2p}_k)\mid k<n\rangle\},$$ for some (all) $p\in D$. By taking intersection of the measure one sets and unions on the other components of the conditions of $D$ one can easily form a condition $q$ which is a $\leq^{\vec{\varpi}}$-lower bound fo $D$.
\end{proof}

Finally, the proof of the next is identical to \cite[Corollary 10.2.53]{Pov}.
\begin{cor}\label{MuInEBPFC}
$\one_{\mathbb{P}}\forces_{\mathbb{P}}\check{\mu}=\kappa^+$.\qed
\end{cor}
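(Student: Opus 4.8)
The statement to prove is Corollary~\ref{MuInEBPFC}: $\one_{\mathbb P}\forces_{\mathbb P}\check\mu=\kappa^+$. The plan is as follows.

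\medskip

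First I would establish that $\mathbb P$ does not collapse $\kappa$ and preserves its cardinal successor from above by invoking the general machinery: having already verified (in Lemmas~\ref{C2forMoti}--\ref{C4forMoti} and the surrounding discussion) that $(\mathbb P,\ell,c,\vec\varpi)$ is $(\Sigma,\vec{\mathbb S})$-Prikry with property $\mathcal D$, I may apply Lemma~\ref{l14}\eqref{l14(3)} to conclude that $\kappa$ is preserved and remains a strong limit, and Lemma~\ref{l14}\eqref{l14(4)} to reduce the problem to a size estimate on the $p$-trees. Concretely, since $\one\forces_{\mathbb P}``\kappa\text{ is singular}"$ (because $\Sigma$ is non-decreasing with limit $\kappa$ and, as the forcing adds Prikry-type points, $\cf(\kappa)=\omega$ in the extension — this is the standard feature of EBPF-like forcings and follows from the $f^q_n$-coordinates giving a cofinal $\omega$-sequence), Lemma~\ref{l14}\eqref{l14(4)} tells us that $\mu=\kappa^+$ in the sense of $\mathbb P$ \emph{if and only if} $|W(p)|\le\kappa$ for every $p\in P$.

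\medskip

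Second, I would supply the combinatorial bound $|W(p)|\le\kappa$ for all $p\in P$. This is exactly the content flagged in the remark after Lemma~\ref{C6forMoti} (``regarding Clause~\eqref{csize} we actually have that $|W(p)|\le\kappa$ for each $p\in P$''), whose verification is cited as being identical to \cite[Lemma~10.2.46]{Pov}. The point is that, by the description of $W(p)$ given at the end of the proof of Lemma~\ref{AdditionalAssumption1forMoti}, $W(p)=\{p{}^\curvearrowright\vec\nu\mid \vec\nu\in\prod_{k=\ell(p)}^{\ell(p)+|\vec\nu|-1}A^p_k\}$, so $|W(p)|\le\sum_{m<\omega}\prod_{k=\ell(p)}^{\ell(p)+m}|A^p_k|\le\sum_{m<\omega}\kappa=\kappa$, using that each $A^p_k\subseteq\kappa_k<\kappa$. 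Feeding this into Lemma~\ref{l14}\eqref{l14(4)} yields $\one_{\mathbb P}\forces_{\mathbb P}\mu=\kappa^+$.

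\medskip

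I do not expect any serious obstacle here, since all the real work has been done in assembling the $(\Sigma,\vec{\mathbb S})$-Prikry structure and in the general Lemma~\ref{l14}; the corollary is a bookkeeping application. The one point requiring a little care — and the closest thing to a genuine step — is justifying $\one\forces_{\mathbb P}``\kappa\text{ is singular}"$, so that the hypothesis of Lemma~\ref{l14}\eqref{l14(4)} is met: this is immediate from the fact that in the generic extension the $f$-coordinates assemble into a function witnessing $\cf(\kappa)=\omega$ (equivalently, that a generic filter determines an increasing $\omega$-sequence $\langle\nu_n\mid n<\omega\rangle$ cofinal in $\kappa$ through the $\ell$-values and the one-step-extension operation of Definition~\ref{EBPFOneExtension}). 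With that in hand the corollary follows exactly as \cite[Corollary~10.2.53]{Pov}, which is why the proof in the text is simply stated to be identical.
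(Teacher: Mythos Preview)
Your approach is correct and is exactly the standard route (the paper itself simply defers to \cite[Corollary~10.2.53]{Pov}). Two small remarks. First, the singularity of $\kappa$ in the extension needs no argument about Prikry points: $\kappa$ already has cofinality $\omega$ in $V$ (it is $\sup_n\kappa_n$), and cofinalities cannot increase under forcing. Second, you write that by this point $(\mathbb P,\ell,c,\vec\varpi)$ has been verified to be $(\Sigma,\vec{\mathbb S})$-Prikry, but that is precisely what Corollary~\ref{MuInEBPFC} is completing --- Clause~$(\delta)$ of Definition~\ref{SigmaPrikry} is the very statement $\one\Vdash\check\mu=\check\kappa^+$. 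The apparent circularity in invoking Lemma~\ref{l14}(4) is resolved by the footnote to Lemma~\ref{l14}(2): that clause (and hence the right-to-left direction of (4)) relies only on Clauses~(\ref{graded}), (\ref{csize}), (\ref{c6}), (\ref{PnprojectstoSn}) and (\ref{moreclosedness}), all of which have indeed been checked prior to Corollary~\ref{MuInEBPFC}.
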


Combining all the previous lemmas we  finally arrive at the desired result:
\begin{cor}\label{TheEBPFCisweakly}
$(\mathbb{P},\ell,c,\vec{\varpi})$ is a   $(\Sigma,\vec{\mathbb{S}})$-Prikry forcing that has property $\mathcal{D}$.

Moreover,  the sequence $\vec{\varpi}$ is coherent.\qed
\end{cor}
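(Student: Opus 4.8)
The plan is to assemble Corollary~\ref{TheEBPFCisweakly} simply as a bookkeeping exercise, collecting the verifications of the individual clauses of Definition~\ref{SigmaPrikry} that were carried out in the preceding lemmas of this subsection. Concretely: Clause~\ref{Calpha} is immediate from the choice of $\Sigma$ in the Setup ($\sigma_n=(\kappa_{n-1})^+$, non-decreasing, converging to $\kappa$); Clause~\ref{Cbeta} follows from the first assertion of Remark~\ref{RemarkonS} ($|S_n|<\sigma_n$); Clause~\ref{Cgamma} is Definition~\ref{EBPFC} together with the observation that the sequence whose $n$-th entry is the maximal condition of $\mathbb{Q}_n$ is a greatest element of $\mathbb P$; Clause~\ref{Cdelta} is Corollary~\ref{MuInEBPFC}; Clause~\ref{Cepsilon} records that $\ell$ (Definition~\ref{lofEBPFC}) and $c$ (Definition~\ref{cfunctionforMoti}, well-defined into $H_\mu$ by Remark~\ref{Clarifyingc}) are the maps in question; and Clause~\ref{Czeta} points to $\vec{\varpi}$ from Definition~\ref{DefinitionvarpinMoti}.

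Then I would run through the numbered clauses (1)--(9) of Definition~\ref{SigmaPrikry}. Clause~\eqref{graded} holds because $(\mathbb P,\ell)$ is visibly a graded poset: $\ell$ is a surjection onto $\omega$, extending a condition never decreases $\ell$, and one can always increase $\ell$ by one using the second alternative in the definition of $\LE_n$ (i.e., by adjoining one more coordinate to the support, as in $p\cat\langle\nu\rangle$). Clause~\eqref{c2} follows from Lemma~\ref{C2forMoti}, taking $\z{\mathbb P}_n:=\mathbb P_n$. Clause~\eqref{c1} is Lemma~\ref{C5forMoti}. Clauses~\eqref{c5}, \eqref{csize}, \eqref{itsaprojection} are handled by the references to \cite[Lemmas~10.2.45--10.2.47]{Pov} assembled just after Lemma~\ref{C5forMoti} (with the strengthening $|W(p)|\le\kappa$ noted there, which will also be reused later for Lemma~\ref{l14}\eqref{l14(4)}). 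Clause~\eqref{c6}, the CPP, is Lemma~\ref{C8forMotis}. Clause~\eqref{PnprojectstoSn} is the first part of Lemma~\ref{C3forMoti}. Clause~\eqref{moreclosedness} is Lemma~\ref{C4forMoti} (again with $\z{\mathbb P}_n:=\mathbb P_n$, noting $\z{\mathbb P}_n^{\varpi_n}=\mathbb P_n^{\varpi_n}$ is dense in itself and $\sigma_n$-directed-closed). Finally, property $\mathcal D$ is Lemma~\ref{propDcollapses}, and the coherence of $\vec{\varpi}$ is the ``moreover'' clause of Lemma~\ref{C3forMoti}. That completes all the requirements of Definition~\ref{SigmaPrikry}, Definition~\ref{propertyD}, and Definition~\ref{CoherentSystem}.

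Since essentially every clause has already been discharged by a named lemma, there is no real mathematical obstacle here; the only thing to be careful about is checking that the \emph{hypotheses} of those lemmas are exactly what is in force — in particular that the global cardinal-arithmetic assumptions of the Setup ($\mu^{<\mu}=\mu$, $\lambda^{<\lambda}=\lambda$, each $\kappa_n$ being $(\lambda+1)$-strong) are precisely what was used in Remark~\ref{Clarifyingc}, in Lemma~\ref{C5forMoti}, and in the Engelking--Kar\l owicz construction of $\langle e^i\mid i<\mu\rangle$ preceding Definition~\ref{cfunctionforMoti}, so that the quadruple $(\mathbb P,\ell,c,\vec\varpi)$ really does live in the ambient model. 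Accordingly the ``proof'' is just:

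\begin{proof}
Immediate from Lemmas~\ref{C2forMoti}, \ref{C5forMoti}, \ref{propDcollapses}, \ref{C8forMotis}, \ref{C3forMoti} and \ref{C4forMoti}, together with Corollary~\ref{MuInEBPFC}, the discussion following Lemma~\ref{C5forMoti} concerning Clauses~\eqref{c5}, \eqref{csize} and \eqref{itsaprojection} of Definition~\ref{SigmaPrikry}, and the fact that $(\mathbb P,\ell)$ is a graded poset. The coherence of $\vec\varpi$ is the ``moreover'' part of Lemma~\ref{C3forMoti}.
\end{proof}
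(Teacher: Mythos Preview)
Your proposal is correct and matches the paper's approach exactly: the corollary is marked with a bare \qedsymbol\ in the paper, indicating that it is an immediate consequence of the lemmas already proved in the subsection, and your bookkeeping list of which lemma discharges which clause is accurate.
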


\subsection{EBPFC is suitable for reflection}\label{sec43}
In this section we show that $(\mathbb{P}_n,\mathbb{S}_n,\varpi_n)$ is suitable for reflection with respect to a relevant sequence of cardinals.
Our setup will be the same as the one from page~\pageref{setupGitik} and we will also rely on the notation established in page~\pageref{NotationMotis}.
The main result of the section is Corollary~\ref{Motisuitableforreflection}, which will be preceded by a series of technical lemmas.
The first one is essentially due to Sharon:

\begin{lemma}[\cite{AS}]\label{EBPFcollapsesmu}
For each $n<\omega$,
$V^{\mathbb{Q}^*_{n0}}\models|\mu|=\cf(\mu)=\kappa_n$.
\end{lemma}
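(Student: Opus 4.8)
The plan is to split the two claims and reduce them to a single combinatorial statement. First I would record the easy half: by Lemma~\ref{QnsAreclosed}(2) the poset $\mathbb{Q}^*_{n0}$ is $\kappa_n$-directed-closed, hence adds no new sequence of ordinals of length $<\kappa_n$; in particular it preserves all cardinals $\le\kappa_n$, and, since $\mu=\kappa^+$ is regular in $V$, it forces $\cf(\mu)\ge\kappa_n$. So it suffices to show that $\mathbb{Q}^*_{n0}$ forces $|\mu|\le\kappa_n$: given a surjection of $\kappa_n$ onto $\mu$ in $V^{\mathbb{Q}^*_{n0}}$ we get $|\mu|=\kappa_n$ there (the reverse inequality because $\kappa_n$ is preserved), hence also $\cf(\mu)\le\kappa_n$, so $|\mu|=\cf(\mu)=\kappa_n$ as desired. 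Thus the whole task is to extract, from a $\mathbb{Q}^*_{n0}$-generic $G$, a surjection $\kappa_n\to\mu$; this is Sharon's argument from \cite{AS}, which I sketch next.

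Recall that a condition of $\mathbb{Q}^*_{n0}$ is essentially a triple $(a,A,f)$ where $a\in[\lambda]^{<\kappa_n}$ has a $\le_{E_n}$-greatest element $\mc(a)$, $A\in E_{n,\mc(a)}$, and $f$ is a partial function from $\lambda\setminus a$ to $\kappa_n$ of size $\le\kappa$; stronger conditions enlarge $a$ and $f$ and shrink the suitably projected measure-one set, coherently. By $\le_{E_n}$-directedness of $(\lambda,\le_{E_n})$ and genericity, for every $\gamma<\lambda$ it is dense to put $\gamma$ into $a^p$ or into $\dom(f^p)$ and to push the top coordinate $\mc(a^p)$ past $\gamma$ in $\le_{E_n}$; so $b:=\bigcup_{p\in G}a^p$ is $\le_{E_n}$-cofinal in $\lambda$ and $F:=\bigcup_{p\in G}f^p$ is a total function on $\lambda\setminus b$. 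Using the function $s_n\colon\kappa_n\to\kappa_n$ fixed earlier, which represents $\mu$ over the normal generator $\kappa_n$ (that is, $j_n(s_n)(\kappa_n)=\mu$), one defines a map $g\colon\kappa_n\to\mu$ by reading off, at each $\nu<\kappa_n$, a block of ordinals below $\mu$ determined by the values of $F$, by the commuting system of projections $\pi_{\mc(a^p),\gamma}$ that $G$ carries, and by the way these projections collapse onto $\nu$ the points of the measure-one sets appearing in $G$. The definition of $g$ is arranged so that for every $\beta<\mu$ it is dense, below any given condition, to commit $F$ and the relevant measure-one sets on a normal-measure-one set of $\nu$'s so as to force $\beta\in\rng(g)$; genericity then gives $\rng(g)=\mu$, which is the sought surjection.

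The step I expect to be the real obstacle is exactly this last density argument, which is where \cite{AS} does the work. One must see that $\mathbb{Q}^*_{n0}$, though $\kappa_n$-directed-closed, is \emph{not} $\kappa_n^+$-distributive --- the extender coordinates lying in $[\kappa_n,\mu)$ get resolved generically rather than in $V$ --- and that these resolutions can be forced to accumulate, along a map of length $\kappa_n$, onto all of $\mu$. When writing this out I would either reproduce Sharon's bookkeeping or, matching the style of this section, cite \cite{AS} and the treatment of the $n0$-modules in \cite{Gitik-handbook}, pointing out only where the present hypotheses --- $V\models\mu^{<\mu}=\mu$ in place of full $\gch$, and the choice of $s_n$ --- are what is needed.
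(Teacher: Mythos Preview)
Your reduction is sound --- closure for the lower bounds, a surjection $\kappa_n\to\mu$ for the upper --- but the construction of $g$ you sketch is not Sharon's argument and, as written, is not a definition. The function $s_n$ plays no role in $\mathbb{Q}^*_{n0}$: it appears only in the collapse components $F^{ip}$ of the larger module $\mathbb{Q}_{n0}$ (Definition~\ref{ModulesforGitiks}$(0)_n(2)$), whereas $\mathbb{Q}^*_{n0}$ is the bare $(a,A,f)$-module from \cite{Gitik-handbook}. Moreover $\mathbb{Q}^*_{n0}$ adds no Prikry point, so there is no distinguished $\nu<\kappa_n$ along which to ``read off blocks''; the projections $\pi_{\mc(a^p),\gamma}$ and the generic $F$ all take values in $\kappa_n$, not in $\mu$, so your $g$ does not even land in $\mu$. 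You seem to be conflating the structural collapse coming from the extender coordinates with the explicit L\'evy collapses that enter only in $\mathbb{Q}_{n0}$.

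The paper's route is shorter and different in kind. Rather than building a surjection, it sets $a:=\bigcup_{p\in G}a^p$ and observes by density that $\otp(a\cap\mu)=\kappa_n$: one can always enlarge $a^p\cap\mu$ using the $\le_{E_n}$-directedness of $\mu$ (Remark~\ref{RemarkDirectness}), while each $a^p$ has size $<\kappa_n$. Since $a\cap\mu$ is then a cofinal subset of $\mu$ of order type $\kappa_n$, $\mu$ is collapsed; the Linked$_0$-property preserves $\mu^+$, and Shelah's theorem (\cite[Fact~4.5]{cfm}) gives $\cf(|\mu|)=\cf(\mu)$, a step you omit. If you want to salvage a direct-surjection approach, drop $s_n$ entirely and work with the increasing enumeration of $a\cap\mu$ itself.
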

\begin{proof}
By Lemma~\ref{QnsAreclosed},  $\mathbb{Q}^*_{n0}$ preserves cofinalities ${\leq}\kappa_n$, and by the Linked$_0$-property \cite[Lemma 10.2.41]{Pov} it preserves cardinals ${\geq}\mu^+$.

Next we show that $\mathbb{Q}^*_{n0}$ collapses $\mu$ to $\kappa_n$.
For each condition $p\in \mathbb{Q}^*_{n0}$, denote $p:=(a^p, A^p, f^p)$.
Let $G$ be $\mathbb{Q}^*_{n0}$-generic and set $a:=\bigcup_{p\in G}a^p$. By a density argument, $\otp(a\cap\mu)=\kappa_n$, and so $\mu$ is collapsed.
Finally, by a result of Shelah, this implies that $V^{\mathbb{Q}^*_{n0}}\models \cf(|\mu|)=\cf(\mu)$.\footnote{For Shelah's theorem, see e.g. \cite[Fact 4.5]{cfm}.}
\end{proof}

\begin{lemma}\label{MotisCollapsesmu}
For each $n<\omega$, $V^{\mathbb{Q}_{n0}}\models|\mu|=\cf(\mu)=\kappa_n=(\sigma_n)^+$.
\end{lemma}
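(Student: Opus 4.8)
The statement to prove is Lemma~\ref{MotisCollapsesmu}: for each $n<\omega$, $V^{\mathbb{Q}_{n0}}\models|\mu|=\cf(\mu)=\kappa_n=(\sigma_n)^+$.

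\medskip

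The plan is to reduce this to the previous lemma (Lemma~\ref{EBPFcollapsesmu}) about $\mathbb{Q}^*_{n0}$, together with a closure-and-factorization analysis of the extra collapsing coordinates $F^{0p},F^{1p},F^{2p}$. First I would recall from Lemma~\ref{QnsAreclosed}(1) that the projection $\varrho_n\colon(a,A,f,F^0,F^1,F^2)\mapsto(a,A,f)$ witnesses that $\mathbb{Q}_{n0}$ projects onto $\mathbb{Q}^*_{n0}$; consequently every cardinal collapse or cofinality change produced by $\mathbb{Q}^*_{n0}$ is inherited by $\mathbb{Q}_{n0}$. In particular, since Lemma~\ref{EBPFcollapsesmu} gives $V^{\mathbb{Q}^*_{n0}}\models|\mu|=\cf(\mu)=\kappa_n$, the same holds in $V^{\mathbb{Q}_{n0}}$: $\mu$ is collapsed to have cardinality and cofinality $\kappa_n$. (Here one also uses that $\kappa_n$ itself is not collapsed by $\mathbb{Q}_{n0}$, which follows from Lemma~\ref{QnsAreclosed}(2), i.e. $\sigma_n$-directed-closure, together with the chain-condition/Linked$_0$-style argument; alternatively $\kappa_n$ remains regular because $\mathbb Q^*_{n0}$ is $\kappa_n$-directed-closed and the further quotient $\mathbb Q_{n0}/\mathbb Q^*_{n0}$ is a product of L\'evy collapses all of whose nontrivial factors are $\geq$ the relevant $\rho_0$, hence eventually $<\kappa_n$-directed-closed; in any case this is routine and recorded essentially in the cited lemmas of \cite{Pov}).

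\medskip

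The one genuinely new point is the identification $\kappa_n=(\sigma_n)^+$ in the extension, i.e. that after forcing, no cardinal strictly between $\sigma_n$ and $\kappa_n$ survives. For this I would analyze the quotient $\mathbb{Q}_{n0}/G^*$ where $G^*$ is $\mathbb{Q}^*_{n0}$-generic. Fix $a:=\bigcup_{p\in G^*}a^p$ and let $\mc(a\cap\mu)$ have projection $\nu_0:=\pi_{\mc(a\cap\mu),\kappa_n}(\nu)$-values ranging over a cofinal set of inaccessibles $<\kappa_n$; by the measure-one set considerations in Remark~\ref{RemarkMeasureOneSets}, the generic picks out an increasing $\omega$-unbounded-in-$\kappa_n$... no, rather a $\kappa_n$-length, cofinal-in-$\kappa_n$ sequence of such $\nu_0$'s, and over $G^*$ the condition-tail $F^{0p}$ forces $\Col(\sigma_n,{<}\nu_0)$ generically for cofinally many $\nu_0<\kappa_n$. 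Since $\bigcup\{\Col(\sigma_n,{<}\nu_0)\mid \nu_0<\kappa_n\text{ in the generic}\}$ is (densely) $\Col(\sigma_n,{<}\kappa_n)$, the quotient absorbs a copy of $\Col(\sigma_n,{<}\kappa_n)$, which collapses every cardinal in the open interval $(\sigma_n,\kappa_n)$ to $\sigma_n$; meanwhile the $F^{1p},F^{2p}$ coordinates live above $\nu_0$ and hence do not collapse $\sigma_n$ itself (they are $\sigma_n^+$-closed, indeed $\nu_0$-closed), and $\sigma_n=(\kappa_{n-1})^+$ is preserved. Combining: in $V^{\mathbb{Q}_{n0}}$ the cardinal $\kappa_n$ becomes exactly the successor of $\sigma_n$.

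\medskip

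So the skeleton of the write-up is: (i) invoke Lemma~\ref{QnsAreclosed}(1) to transfer Lemma~\ref{EBPFcollapsesmu}, getting $|\mu|=\cf(\mu)=\kappa_n$ in $V^{\mathbb{Q}_{n0}}$; (ii) observe $\sigma_n$ is preserved, using $\sigma_n$-directed-closure of $\mathbb Q_{n0}$ (Lemma~\ref{QnsAreclosed}(2)) or more simply that $|\mathbb Q^*_{n0}|$-issues and the closure of the collapse coordinates leave $(\kappa_{n-1})^+$ untouched; (iii) show the $F^0$-coordinates of a generic descending sequence union up to a generic for $\Col(\sigma_n,{<}\kappa_n)$ over $V[G^*]$, hence $\kappa_n$ is forced to be the successor of $\sigma_n$; (iv) conclude $|\mu|=\cf(\mu)=\kappa_n=(\sigma_n)^+$ in $V^{\mathbb{Q}_{n0}}$. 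The main obstacle I anticipate is step (iii): making precise that the $\omega$-sequence... rather the $\kappa_n$-indexed family of $F^{0p}(\nu)$'s collected along the generic is itself mutually generic and cofinal enough to reconstitute a full $\Col(\sigma_n,{<}\kappa_n)$-generic over $V[G^*]$ — this needs the genericity/density arguments of the style in Lemma~\ref{propDcollapses} and Remark~\ref{RemarkMeasureOneSets}, but is essentially the standard Prikry-with-collapses bookkeeping; I would state it as a claim and sketch the density argument rather than grind through it. Alternatively, one can cite \cite{GitikCollapsin} for exactly this identification, since $\mathbb{Q}_{n0}$ here is Gitik's $n0$-module verbatim.
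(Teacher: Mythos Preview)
There is a genuine gap in step~(iii). The forcing $\mathbb{Q}_{n0}$ does not select any Prikry points: conditions in $Q_{n0}$ carry a measure-one set $A^p$ and functions $F^{ip}$ defined on (a projection of) it, but the generic never evaluates these functions at any particular $\nu$. Point evaluation happens only when one passes from $Q_{n0}$ to $Q_{n1}$ via $p{}^\curvearrowright\langle\nu\rangle$, and that plays no role in $\mathbb{Q}_{n0}$ alone. So there is no ``$\kappa_n$-length, cofinal-in-$\kappa_n$ sequence of $\nu_0$'s'' in the generic, and the picture of the quotient as gluing together $\Col(\sigma_n,{<}\nu_0)$-generics for cofinally many $\nu_0$ does not correspond to what $\mathbb{Q}_{n0}/G^*$ actually is. Your closure remarks about the $F^1,F^2$ coordinates are correspondingly at the wrong scale: pointwise they are only $\nu_0$-closed, which would not yield preservation of $\kappa_n$.

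The paper's argument uses the ultrapower representation instead. Since $\dom(F^{ip})\in E_{n,\alpha_p}$ with $\alpha_p=\mc(a^p\cap\mu)$, each $F^{ip}$ represents a single condition $j_n(F^{ip})(\alpha_p)$ in $M_n^*\cong\Ult(V,E_n\restriction\mu)$; in particular $j_n(F^{0p})(\alpha_p)\in\Col(\sigma_n,{<}\kappa_n)^{M_n^*}=\Col(\sigma_n,{<}\kappa_n)^V$. Letting $\mathbb{C}_n$ be the poset of pairs $\langle F^{1p},F^{2p}\rangle$ ordered by containment of their ultrapower images, one checks that $\mathbb{C}_n$ is $\kappa_n$-directed-closed and that $p\mapsto\bigl\langle j_n(F^{0p})(\alpha_p),\langle F^{1p},F^{2p}\rangle\bigr\rangle$ is a dense embedding of $\mathbb{Q}_{n0}/G^*$ into $\Col(\sigma_n,{<}\kappa_n)\times\mathbb{C}_n$. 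Easton's lemma (the collapse is $\kappa_n$-cc, $\mathbb{C}_n$ is $\kappa_n$-closed) then yields both the collapse of the interval $(\sigma_n,\kappa_n)$ and the preservation of $\kappa_n$ simultaneously. A minor further point: your step~(i) transfers $|\mu|\le\kappa_n$ but not $\cf(\mu)=\kappa_n$, since passing to the larger model can only lower cofinalities; the paper recovers this at the end from Shelah's theorem that $\cf(\mu)=\cf(|\mu|)$ once $(\mu^+)^V$ is preserved.
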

\begin{proof}
 For each $p\in Q_{n0}$,   
 $F^{0p}$, $F^{1p}$ and $F^{2p}$ can be seen  (respectively) as  representatives of conditions in the collapses $ \col(\sigma_{n},{<}\kappa_n)^{M_n^*}$, $\col(\kappa_n,\kappa^+)^{M_n^*}$ and $\col(\kappa^{+3},{<}j_n(\kappa_n))^{M_n^*}$, where $M^*_n\cong \Ult(V,E_n\restriction\mu)$.\footnote{Despite the fact that $\dom(F^{ip})\in E_{n,\alpha_{p_n}}$ there is a natural factor embedding between $M_{n,\alpha_{p_n}}\simeq \Ult(V,E_{n,\alpha_{p_n}
})$ and $M^*_n$ that enable us to see $j_n(F^{ip})(\alpha_{p_n})$ as a member of the said collapses; namely, the embedding defined by $k(j_{n,\alpha_{p_n}}(F^{ip})(\alpha_{p_n}))=j_n(F^{ip})(\alpha_{p_n}).$}  Also, observe that the first of these forcings is nothing but $\col(\sigma_n,{<}\kappa_n)^V$.\footnote{Here we use that $V_{\kappa_n}\s M^*_n$.} Set $C_n:=\{\langle F^{1p},F^{2p}\rangle\mid p\in Q_{n0}\}$ and define $\sqsubseteq$ as follows:
 $$\langle F^{1p},F^{2p}\rangle \sqsubseteq \langle F^{1q},F^{2q}\rangle \;\; \text{iff}\;\; \forall i\in\{1,2\}\; j_n(F^{ip})(\alpha_p)\supseteq j_n(F^{iq})(\alpha_q).$$ Clearly, $\sqsubseteq$ is transitive, so that $\mathbb{C}_n:=(C_n,\sqsubseteq)$ is a forcing poset.
  For each condition $c$ in $\mathbb{C}_n$ let us  denote by $\alpha_{c}$  the ordinal $\alpha_{p_c}$ relative to a condition $p_c$ in $\mathbb{Q}_{n0}$ witnessing that $c\in C_n$.
 The following is a routine verification:

 \begin{claim}\label{CnIsClosed}
 $\mathbb{C}_n$ is $\kappa_n$-directed closed. Furthermore, if $D\s \mathbb{C}_n$ is a directed set  of size $<\kappa_n$ and $\alpha<\mu$ is $\leq_{E_n}$-above all  $\{\alpha_c\mid c\in D\}$. 
 Then, there is $\sqsubseteq$-lower bound $\langle F^1,F^2\rangle$ for $D$  with $\dom(F^1)=\dom(F^2)\in E_{n,\alpha}$.
 \end{claim}

Let $G$  be a $\mathbb{Q}_{n0}$-generic filter over $V$ and denote by $G^*$ the $\mathbb{Q}^*_{n0}$-generic induced by $G$ and the projection $\varrho_n$ of Lemma~\ref{QnsAreclosed}(1).
By Lemma~\ref{EBPFcollapsesmu}, $V[G^*]\models |\mu|=\cf(\mu)=\kappa_n$, hence it is left to check that $\kappa_n$ is preserved and turned into $(\sigma_n)^+$. This is established in two steps.

\begin{claim}\label{claimdenseembedding}
The map $e\colon \mathbb{Q}_{n0}/G^*\rightarrow \col(\sigma_n,{<}\kappa_n)^V\times\mathbb{C}_n$ defined in $V[G^*]$ via $$p\mapsto \langle j_n(F^{0p})(\alpha_p), \langle F^{1p}, F^{2p} \rangle\rangle$$ is a dense embedding.
\end{claim}
\begin{proof} By a routine verification.
\end{proof}

\begin{claim}
$V[G]\models |\mu|=\cf(\mu)=\kappa_n=(\sigma_n)^+$.
\end{claim}
\begin{proof}
Since $\mathbb{Q}^*_{n0}$ is $\kappa_n$-directed closed,  $\col(\sigma_n,{<}\kappa_n)^V=\col(\sigma_n,{<}\kappa_n)^{V[G^*]}$. Note that  $\mathbb{C}_n$ is still $\kappa_n$-directed closed over $V[G^*]$ and that in any generic extension by 
$\col(\sigma_n,{<}\kappa_n)^{V}$ over $V[G^*]$, ``$|\mu|=\kappa_n=(\sigma_n)^+$'' holds.

 Appealing to Easton's lemma,  $\mathbb{C}_n$ is $\kappa_n$-distributive in any  extension of $V[G^*]$ by $\col(\sigma_n,{<}\kappa_n)^{V}$. Thus, forcing with $\col(\sigma_n,{<}\kappa_n)^V\times \mathbb{C}_n$ (over $V[G^*]$) yields a generic extension  where  ``$|\mu|=\kappa_n=(\sigma_n)^+$''  holds. Since  $(\mu^+)^V$ is preserved, a theorem of Shelah (see \cite[Fact~4.5]{cfm}) yields $``\cf(\mu)=\cf(|\mu|)$'' in the above generic extension. Thus, $\col(\sigma_n,{<}\kappa_n)^V\times \mathbb{C}_n$ forces (over $V[G^*]$)  that $``|\mu|=\cf(\mu)=\kappa_n=(\sigma_n)^+$'' holds.
 The result now follows using Claim~\ref{claimdenseembedding}, as it in particular implies that $\col(\sigma_n,{<}\kappa_n)^V\times \mathbb{C}_n$ and $\mathbb{Q}_{n0}/G^*$ are forcing equivalent over $V[G^*]$.
\end{proof}
This completes the proof.
\end{proof}
\begin{lemma}\label{productdoesnotcollapse}
For all non-zero $n<\omega$, $\prod_{i<n}\mathbb{Q}_{i1}$ is isomorphic to a product of $\mathbb{S}_n$ with some $\mu$-directed-closed forcing.  
\end{lemma}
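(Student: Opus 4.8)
The plan is to produce an explicit isomorphism by separating, inside each module $\mathbb{Q}_{i1}$ with $i<n$, the partial-function coordinate from the three collapsing coordinates. To that end I introduce, for each $i<n$, two auxiliary posets. First, $\mathbb{F}_i$: the set of all functions $f$ with $\dom(f)\in[\lambda]^{\leq\kappa}$ and $\range(f)\s\kappa_i$, ordered by reverse inclusion, with greatest element $\emptyset$ --- this is precisely clause~(1) of Definition~\ref{ModulesforGitiks}$(1)_n$. Second, $\mathbb{T}_i$: the set of all tuples $(\rho,h^0,h^1,h^2)$ for which $(\emptyset,\rho,h^0,h^1,h^2)$ is a legitimate condition of $\mathbb{Q}_{i1}$, ordered so that a tuple $u$ extends a tuple $v$ iff $u$ and $v$ have the same first coordinate and, in each of the last three coordinates, the entry of $u$ contains (as a function) the entry of $v$. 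Reading off the definition of $\leq_{i1}$, the $f$-coordinate and the $(\rho,h^0,h^1,h^2)$-coordinate of a condition of $\mathbb{Q}_{i1}$ are ordered entirely independently, so the map $(f,\rho,h^0,h^1,h^2)\mapsto\bigl(f,(\rho,h^0,h^1,h^2)\bigr)$ is an isomorphism of $\mathbb{Q}_{i1}$ onto $\mathbb{F}_i\times\mathbb{T}_i$; taking products over $i<n$ yields $\prod_{i<n}\mathbb{Q}_{i1}\cong\bigl(\prod_{i<n}\mathbb{F}_i\bigr)\times\bigl(\prod_{i<n}\mathbb{T}_i\bigr)$.

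The second --- and main --- step is to establish $\prod_{i<n}\mathbb{T}_i\cong\mathbb{S}_n$. I first fix some $\bar q\in P_n$, which exists because $(\mathbb{P},\ell)$ is a graded poset, so $\ell$ is a surjection onto $\omega$. Given $t=\langle(\rho_i,h^0_i,h^1_i,h^2_i)\mid i<n\rangle\in\prod_{i<n}\mathbb{T}_i$, let $p^{t}$ be the sequence whose $i$-th entry is $(\emptyset,\rho_i,h^0_i,h^1_i,h^2_i)$ for $i<n$ and $\bar q_i$ for $i\geq n$. Its first $n$ entries lie in the $Q_{i1}$'s, and its tail coincides with the tail of $\bar q$, so the coherence requirement of Definition~\ref{EBPFC} holds verbatim on the tail; hence $p^{t}\in P_n$, and the $\mathbb{S}_n$-trace of $p^{t}$ is exactly $t$. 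Conversely, the trace of any $p\in P_n$ belongs to $\prod_{i<n}\mathbb{T}_i$, because $\ell(p)=n$ forces $p_i\in Q_{i1}$ for every $i<n$. Thus the underlying sets of $\mathbb{S}_n$ and $\prod_{i<n}\mathbb{T}_i$ coincide. As for the orderings: if $p\leq q$ witnesses $s\sle_n t$ with $p,q\in P_n$, then $p_i\leq_{i1}q_i$ for each $i<n$, i.e.\ $\rho^p_i=\rho^q_i$ and $h^{jp}_i\supseteq h^{jq}_i$ for all $j<3$, which is precisely the coordinatewise $\mathbb{T}_i$-ordering from $s$ to $t$; conversely, if $s$ and $t$ stand in the coordinatewise order, then $p^{s}\leq p^{t}$ (on the first $n$ coordinates one uses $\emptyset\supseteq\emptyset$ together with the given inequalities, and on the tail $\bar q\leq\bar q$), so $s\sle_n t$ is witnessed. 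Therefore $\prod_{i<n}\mathbb{T}_i\cong\mathbb{S}_n$, and combining with the first step gives $\prod_{i<n}\mathbb{Q}_{i1}\cong\mathbb{S}_n\times\mathbb{U}$ for $\mathbb{U}:=\prod_{i<n}\mathbb{F}_i$.

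It remains to observe that $\mathbb{U}$ is $\mu$-directed-closed. A finite product of $\mu$-directed-closed posets is again $\mu$-directed-closed, so it suffices that each $\mathbb{F}_i$ is. Let $D\s\mathbb{F}_i$ be directed with $|D|<\mu=\kappa^+$, i.e.\ $|D|\leq\kappa$. By directedness, any two members of $D$ agree on the overlap of their domains, so $f:=\bigcup D$ is a function; $\dom(f)$ is a union of at most $\kappa$ sets, each of size at most $\kappa$, hence has size at most $\kappa$, while $\range(f)\s\kappa_i$, so $f\in\mathbb{F}_i$, and $f$ extends every member of $D$, so it is a lower bound for $D$.

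The one genuinely delicate point is the identification of the orderings in the second step: although $\sle_n$ is defined through the \emph{global} order of $\mathbb{P}$, one must notice that it is blind to both the $f$-coordinates of the witnessing conditions and their $Q_{i0}$-tails --- and it is exactly this that licenses trivialising those data (taking the $f$'s to be empty and the tail to be that of the fixed $\bar q$) when exhibiting witnesses. Everything else is a routine unravelling of the definitions.
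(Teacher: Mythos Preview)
Your proof is correct and follows essentially the same approach as the paper: separate the $f$-coordinate from the $(\rho,h^0,h^1,h^2)$-coordinates in each factor, recognise the latter product as $\mathbb{S}_n$, and check that the former product is $\mu$-directed-closed. The paper's own proof is a one-liner that simply records the isomorphism $p\mapsto\langle\langle(\rho^{p_i},h^{0p_i},h^{1p_i},h^{2p_i})\mid i<n\rangle,\langle f^{p_i}\rangle_{i<n}\rangle$ without the detailed verification you provide.
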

\begin{proof}
The map $p\mapsto \langle  \langle (\rho^{p_i}, h^{0p_i}, h^{1p_i}, h^{2p_i})\mid i<n\rangle\rangle, \langle f^{p_i}\rangle_{i<n}\rangle$
yields the desired isomorphism.
\end{proof}

\begin{lemma}\label{CardinalConfigurationMotisModel}
For each $n<\omega$, $V^{\mathbb{P}_n}\models|\mu|= \cf(\mu)=\kappa_n=(\sigma_n)^+$.
\end{lemma}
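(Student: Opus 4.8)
The plan is to sandwich $\mathbb{P}_n$ between two forcings both of which see $\mathbb{Q}_{n0}$ as (essentially) a factor, and then transport the cardinal configuration of $V^{\mathbb{Q}_{n0}}$ supplied by Lemma~\ref{MotisCollapsesmu} --- namely $|\mu|=\cf(\mu)=\kappa_n=\sigma_n^+$ --- through the sandwich. Since the statement in question is a cardinal-arithmetic assertion decided by conditions in the dense subposet $P_n$ of $\mathbb{P}_n$, I may and will argue below a fixed $p\in P_n$; write $s:=\varpi_n(p)$.

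For the first side of the sandwich, Lemma~\ref{QnsAreclosed}(1) says $q\mapsto q_n$ projects $\mathbb{P}_n$ onto $\mathbb{Q}_{n0}$, so $V^{\mathbb{Q}_{n0}}\s V^{\mathbb{P}_n}$; Lemma~\ref{MotisCollapsesmu} then yields that in $V^{\mathbb{P}_n}$ we have $|\mu|\le\kappa_n$, that there are no cardinals strictly between $\sigma_n$ and $\kappa_n$ (cardinality can only decrease in larger models), and that there is a cofinal increasing map from $\kappa_n$ into $\mu$ (it already exists in $V^{\mathbb{Q}_{n0}}$). For the other side, put $\mathbb{W}_n:=\prod_{k<n}\mathbb{Q}_{k1}\times\prod_{m>n}\mathbb{Q}_{m0}$ and consider the map sending $(r,\langle r_k\mid k\ne n\rangle)$ to the sequence whose $n$-th entry is $r$, whose $k$-th entry ($k<n$) is $r_k$, and whose $m$-th entry ($m>n$) is $r_m$ with its $a$-component enlarged to $a^{r_m}\cup a^r$ (with $A$ and the $F$'s adjusted accordingly). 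One checks this is a projection of $\mathbb{Q}_{n0}\times\mathbb{W}_n$ onto $\mathbb{P}_n\downarrow p$: order-preservation is routine, and when a condition of $\mathbb{P}_n$ is pushed back down the $a$-nesting demanded by Definition~\ref{EBPFC} makes the enlargement vacuous, so the map is onto up to $\le$. Hence $V^{\mathbb{P}_n}\s V^{\mathbb{Q}_{n0}\times\mathbb{W}_n}$.

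Now I compute the cardinals of $V^{\mathbb{Q}_{n0}\times\mathbb{W}_n}$. By Lemma~\ref{productdoesnotcollapse}, $\prod_{k<n}\mathbb{Q}_{k1}\cong\mathbb{S}_n\times\mathbb{D}_n$ with $\mathbb{D}_n$ being $\mu$-directed-closed; and by Lemma~\ref{QnsAreclosed}(2) each $\mathbb{Q}_{m0}$ with $m>n$ is $\sigma_m$-directed-closed, with $\sigma_m\ge\sigma_{n+1}=\kappa_n^+$, so $\prod_{m>n}\mathbb{Q}_{m0}$ is $\kappa_n^+$-directed-closed. Thus $\mathbb{Q}_{n0}\times\mathbb{W}_n\cong\mathbb{S}_n\times\mathbb{Q}_{n0}\times\mathbb{E}_n$, where $|S_n|<\sigma_n$ and $\mathbb{E}_n:=\mathbb{D}_n\times\prod_{m>n}\mathbb{Q}_{m0}$ is $\kappa_n^+$-directed-closed. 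Forcing with $\mathbb{E}_n$ first preserves all cardinals and cofinalities $\le\kappa_n^+$ and keeps $\kappa_n$ inaccessible; next, the proof of Lemma~\ref{MotisCollapsesmu} relativizes over $V^{\mathbb{E}_n}$, since it uses only that $\mathbb{E}_n$ adds no new ${\le}\kappa_n$-sequences --- so $\mathbb{Q}^*_{n0}$ still collapses $\mu$ to $\kappa_n$ (Lemma~\ref{EBPFcollapsesmu}), $\mathbb{C}_n$ is still $\kappa_n$-directed-closed (Claim~\ref{CnIsClosed}), and $\Col(\sigma_n,{<}\kappa_n)$ is still $\kappa_n$-c.c. --- giving $V^{\mathbb{E}_n}[\mathbb{Q}_{n0}]\models|\mu|=\cf(\mu)=\kappa_n=\sigma_n^+$; finally the small forcing $\mathbb{S}_n$ (of size $<\sigma_n$) preserves all of this. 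In particular $\kappa_n$ is a cardinal in $V^{\mathbb{Q}_{n0}\times\mathbb{W}_n}$, hence also in the inner model $V^{\mathbb{P}_n}$.

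It remains to assemble the conclusion inside $V^{\mathbb{P}_n}$. First, $\mathbb{P}_n$ preserves $\sigma_n$ as a cardinal: below $p$ it is isomorphic to $(\mathbb{S}_n\downarrow s)\times(\mathbb{P}^{\varpi_n}_n\downarrow p)$ by Remark~\ref{Pnisomorphic}, the first factor having size $<\sigma_n$ and the second being $\sigma_n$-directed-closed by Lemma~\ref{C4forMoti}. Combined with the previous paragraph, $\sigma_n$ and $\kappa_n$ are cardinals of $V^{\mathbb{P}_n}$ with no cardinal strictly between them, so $\kappa_n=\sigma_n^+$ there; together with $|\mu|\le\kappa_n$ this forces $|\mu|=\kappa_n$ (if $|\mu|<\kappa_n$ then $|\mu|\le\sigma_n$, whence $|\kappa_n|\le\sigma_n$, contradicting that $\kappa_n$ is a cardinal above $\sigma_n$); and the cofinal increasing map $\kappa_n\to\mu$ surviving from $V^{\mathbb{Q}_{n0}}$ witnesses $\cf(\mu)=\kappa_n$ because $\kappa_n$ is regular. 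I expect the main technical friction to be the bookkeeping around the $a$-nesting in the projection $\mathbb{Q}_{n0}\times\mathbb{W}_n\to\mathbb{P}_n$, together with the verification that the argument of Lemma~\ref{MotisCollapsesmu} is genuinely insensitive to the $\kappa_n^+$-directed-closed preliminary forcing $\mathbb{E}_n$.
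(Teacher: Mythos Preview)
Your approach is essentially the paper's: both recognise that $\mathbb{P}_n$ sits inside the product $\prod_{i<n}\mathbb{Q}_{i1}\times\prod_{i\ge n}\mathbb{Q}_{i0}$ and then transfer the cardinal configuration of $V^{\mathbb{Q}_{n0}}$ from Lemma~\ref{MotisCollapsesmu} through the remaining factors. Two simplifications are available. First, the elaborate projection you build (enlarging $a$-components) is unnecessary: $\mathbb{P}_n$ is already a \emph{dense} subposet of that product, since given any tuple in the product one may enlarge the $a$'s to be nested. So the two forcings have the same generic extensions outright, and the sandwich collapses to an equality. Second, the paper forces $\mathbb{Q}_{n0}$ first and then argues that, over $V^{\mathbb{Q}_{n0}}$, the remaining factor is a product of a $\kappa_{n-1}$-cc forcing with a $\kappa_n$-distributive one; this lets it quote Lemma~\ref{MotisCollapsesmu} verbatim rather than having to relativise its proof to $V^{\mathbb{E}_n}$ as you do. Your order (closed part first) is not wrong, but it buys extra work.

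There is one genuine gap in your final assembly. The sentence ``the cofinal increasing map $\kappa_n\to\mu$ surviving from $V^{\mathbb{Q}_{n0}}$ witnesses $\cf(\mu)=\kappa_n$ because $\kappa_n$ is regular'' only yields $\cf^{V^{\mathbb{P}_n}}(\mu)\le\kappa_n$; regularity of $\kappa_n$ does not prevent a shorter cofinal sequence from appearing in $V^{\mathbb{P}_n}$. The fix is to use the other half of your sandwich: you showed $\cf(\mu)=\kappa_n$ in $V^{\mathbb{Q}_{n0}\times\mathbb{W}_n}$, and since $V^{\mathbb{P}_n}$ is an inner model of the latter, any cofinal sequence in $V^{\mathbb{P}_n}$ persists upward, so $\cf^{V^{\mathbb{P}_n}}(\mu)\ge\cf^{V^{\mathbb{Q}_{n0}\times\mathbb{W}_n}}(\mu)=\kappa_n$. (Alternatively, once the product is seen to be the same extension as $\mathbb{P}_n$, the issue disappears.) You should also make explicit, when relativising the proof of Lemma~\ref{MotisCollapsesmu} to $V^{\mathbb{E}_n}$, that $(\mu^+)^V=\lambda$ is preserved --- needed for the Shelah step $\cf(\mu)=\cf(|\mu|)$ --- which follows from the $\mu^+$-Linked property of $\mathbb{P}_n$ (Lemma~\ref{GCHmoti}(1)).
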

\begin{proof}
 Observe that
 $\mathbb{P}_n$ is  a dense subposet of $\prod_{i<n}\mathbb{Q}_{i1}\times\prod_{i\geq n}\mathbb{Q}_{i0}$, hence both forcing produce the same generic extension.
By virtue of Lemma~\ref{MotisCollapsesmu} we have
$V^{\mathbb{Q}_{n0}}\models|\mu|=\cf(\mu)=\kappa_n=(\sigma_{n})^+.$ Also, $\mathbb{Q}_{n0}$ is $\sigma_n$-directed-closed,  hence Easton's lemma, Lemma~\ref{QnsAreclosed}(3) and Lemma~\ref{productdoesnotcollapse} combined imply that $\mathbb{Q}_{n0}$ forces $\prod_{i<n}\mathbb{Q}_{i1}$ to be a product of a $(\kappa_{n-1})$-cc forcing  times a $\kappa_n$-distributive forcing. Similarly,  $\mathbb{Q}_{n0}$ forces  $\prod_{i>n}\mathbb{Q}_{i0}$ to be $\kappa_n$-distributive. Thereby, forcing with $\prod_{i<n} \mathbb{Q}_{i1}\times \prod_{i>n} \mathbb{Q}_{i0}$ over $V^{\mathbb{Q}_{n0}}$ preserves the above cardinal configuration and thus the result follows.
\end{proof}

As a consequence of the above we get the main result of the section:
\begin{cor}\label{Motisuitableforreflection}
For each $n\geq 2$, $(\mathbb{P}_n,\mathbb{S}_n,\varpi_n)$ is suitable for reflection with respect to the sequence $\langle \sigma_{n-1},\kappa_{n-1},\kappa_n, \mu\rangle$.
\end{cor}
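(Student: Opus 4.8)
The plan is to verify the four clauses of Definition~\ref{suitableforiteration} for the triple $(\mathbb P_n,\mathbb S_n,\varpi_n)$ against the parameters $\langle\tau,\sigma,\kappa,\mu\rangle=\langle\sigma_{n-1},\kappa_{n-1},\kappa_n,\mu\rangle$, which amounts to collecting results already proved in this section. Note first that the hypotheses $\tau<\sigma<\kappa<\mu$ read here as $\sigma_{n-1}<\kappa_{n-1}<\kappa_n<\mu$, which holds since $\sigma_{n-1}=(\kappa_{n-2})^+\le\kappa_{n-1}$ (in fact strictly less, as $\kappa_{n-1}$ is inaccessible) and $\vec\kappa$ is strictly increasing with $\mu=\kappa^+>\kappa_n$; this is why the restriction $n\ge 2$ is imposed, so that $\sigma_{n-1}=(\kappa_{n-2})^+$ is a well-defined cardinal strictly below $\kappa_{n-1}$.

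\textbf{Clause~\eqref{suitableforiteration0}:} $\mathbb P_n$ is nontrivial since $(\mathbb P,\ell)$ is a graded poset, so $P_n$ is nonempty and has incompatible elements; $\mathbb S_n$ is nontrivial for $n\ge 1$ since it projects off the nontrivial collapse $\Col(\sigma_{n-1},{<}\kappa_{n-1})$ (Remark~\ref{RemarkonS}). \textbf{Clause~\eqref{suitableforiteration1}:} That $\varpi_n:\mathbb P_n\to\mathbb S_n$ is a nice projection is Lemma~\ref{C3forMoti}, and that $\mathbb P^{\varpi_n}_n$ is $\kappa_{n-1}$-directed-closed, i.e. $\sigma$-directed-closed with $\sigma=\kappa_{n-1}$, follows from Lemma~\ref{C4forMoti}, which gives $\sigma_n$-directed-closure — and $\sigma_n=(\kappa_{n-1})^+>\kappa_{n-1}$, so in particular it is $\kappa_{n-1}$-directed-closed (taking $\z{\mathbb P}_n:=\mathbb P_n$ as its own dense subposet). \textbf{Clause~\eqref{suitableforiteration2}:} We must check that in any extension by $\mathbb P_n$ or by $\mathbb S_n\times\mathbb P^{\varpi_n}_n$, $|\mu|=\cf(\mu)=\kappa_n=(\kappa_{n-1})^{++}=\sigma^{++}$. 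For $\mathbb P_n$ this is exactly Lemma~\ref{CardinalConfigurationMotisModel}, noting $\kappa_n=(\sigma_n)^+=((\kappa_{n-1})^+)^+=(\kappa_{n-1})^{++}$. For $\mathbb S_n\times\mathbb P^{\varpi_n}_n$, recall from Remark~\ref{RemarkonS} that the map $(q,s)\mapsto q+s$ yields an \emph{isomorphism} between $(\mathbb S_n\downarrow\varpi_n(p))\times(\mathbb P^{\varpi_n}_n\downarrow p)$ and $\mathbb P_n\downarrow p$; working below such a $p$ with $\varpi_n(p)=s$ running over a maximal antichain of $\mathbb S_n$, the forcing $\mathbb S_n\times\mathbb P^{\varpi_n}_n$ is forcing-equivalent to $\mathbb P_n$ (one has $\mathbb S_n=\bigcup\{\mathbb S_n\downarrow s\}$ over an antichain, and correspondingly $\mathbb P_n=\bigcup\{\mathbb P_n\downarrow p\}$), so the same cardinal configuration is forced. \textbf{Clause~\eqref{suitableforiteration3}:} This is precisely the content of Remark~\ref{RemarkonS}: for $s\in S_n\setminus\{\one_{\mathbb S_n}\}$ arising from $p\in P_n$, one has $\mathbb S_n\downarrow s\cong\Col(\delta,{<}\kappa_{n-1})\times\mathbb Q$ with $\delta:=s_{n-1}(\rho^p_{n-1})^{++}$ satisfying $\sigma_{n-1}<\delta<\kappa_{n-1}$, hence $\tau^+=(\sigma_{n-1})^+<\delta$ (using that $\delta$ is a double successor, so $\delta>(\sigma_{n-1})^+$), and $\mathbb Q$ a product of size $<\delta$; rewriting $\mathbb S_n\downarrow s\cong\mathbb Q\times\Col(\delta,{<}\sigma)$ with $\sigma=\kappa_{n-1}$ is exactly the required form.

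I do not anticipate a genuine obstacle here: every clause has a direct citation to a preceding lemma or remark in this section. The one point demanding a sentence of care is Clause~\eqref{suitableforiteration2} for the \emph{product} $\mathbb S_n\times\mathbb P^{\varpi_n}_n$ rather than $\mathbb P_n$ itself — one should invoke the isomorphism noted in Remark~\ref{RemarkonS} to transfer Lemma~\ref{CardinalConfigurationMotisModel} across — and the bookkeeping of which of the parameters $\sigma_{n-1},\kappa_{n-1}$ plays the role of $\tau$ versus $\sigma$, together with the verification that the inequality $\tau^+<\delta<\sigma$ holds with $\tau=\sigma_{n-1}$, $\sigma=\kappa_{n-1}$ and $\delta=s_{n-1}(\rho^p_{n-1})^{++}$ as supplied by Remark~\ref{RemarkonS}.
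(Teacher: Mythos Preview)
Your proof is correct and follows essentially the same approach as the paper: both verify the four clauses of Definition~\ref{suitableforiteration} by citing Lemma~\ref{C3forMoti} and Lemma~\ref{C4forMoti} for Clause~\eqref{suitableforiteration1}, Lemma~\ref{CardinalConfigurationMotisModel} together with the isomorphism of Remark~\ref{Pnisomorphic} for Clause~\eqref{suitableforiteration2}, and Remark~\ref{RemarkonS} for Clause~\eqref{suitableforiteration3}. One small imprecision: your parenthetical ``using that $\delta$ is a double successor, so $\delta>(\sigma_{n-1})^+$'' does not quite follow from $\sigma_{n-1}<\delta$ alone; the clean justification is that $\rho^p_{n-1}$ is inaccessible above $\sigma_{n-1}$ (Remark~\ref{RemarkMeasureOneSets}), hence $(\sigma_{n-1})^+<\rho^p_{n-1}<\delta$.
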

\begin{proof}
We go over the clauses of Definition~\ref{suitableforiteration}.
Clause~\eqref{suitableforiteration0} is obvious.
Clause~\eqref{suitableforiteration1} follows from  Lemma~\ref{C3forMoti} and Lemma~\ref{C4forMoti}.
Clause~\eqref{suitableforiteration3} follows from the comments in Remark~\ref{RemarkonS}.
For Clause~\eqref{suitableforiteration2}, note that $\mathbb P_n$ forces ``$|\mu|=\cf(\mu)=\kappa_n=(\sigma_{n})^+"$
(Lemma~\ref{CardinalConfigurationMotisModel}), hence the last paragraph of Remark~\ref{Pnisomorphic} implies that $\mathbb{S}_n\times \mathbb{P}_n^{\varpi_n}$ forces the same.\footnote{Recall that $\sigma_n:=(\kappa_{n-1})^+$ (Setup~\ref{setupGitik}).} 
\end{proof}

We conclude this section, establishing two more facts that will be needed for the proof of the Main Theorem in Section~\ref{ReflectionAfterIteration}.

\begin{definition}\label{defn438}
For every $n<\omega$, let $\mathbb{T}_n:=\mathbb{S}_n\times \Col(\sigma_n,{<}\kappa_n)$,
and let
$\psi_n:\mathbb{P}_n\rightarrow \mathbb{T}_n$ be the map defined via
$$\psi_n(p):=\begin{cases}
 \langle \varpi_n(p), j_n(F^{0p_n})(\alpha_{p_n})\rangle, & \text{if $\ell(p)>0$};\\
 \langle\one_{\mathbb{S}_n}, \emptyset\rangle, & \text{otherwise.}
 \end{cases}$$
\end{definition}

\begin{lemma}\label{keysubclaim} Let $n<\omega$.
\begin{enumerate}
\item\label{keysubclaim1} $\mathbb T_n$ is a $\kappa_n$-cc poset of size $\kappa_n$;
\item $\psi_n$ defines a nice projection;\label{keysubclaimprojection}
\item $\mathbb{P}^{\psi_n}_n$ is  $\kappa_n$-directed-closed;\label{keysubclaim2}
\item for each $p\in P_n$,   $\mathbb{P}_n\downarrow p$ and
$(\mathbb{T}_n\downarrow \psi_n(p))\times (\mathbb{P}^{\psi_n}_n\downarrow p)$ are isomorphic. In particular, both are forcing equivalent.\label{keysubclaim3}
\end{enumerate}
\end{lemma}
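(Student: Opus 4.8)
\textbf{Proof plan for Lemma~\ref{keysubclaim}.}

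The plan is to prove the four clauses in order, reducing each to structural facts about the $n0$-module and the building blocks already established in Remark~\ref{Pnisomorphic}, Lemma~\ref{QnsAreclosed} and Lemma~\ref{MotisCollapsesmu}. First I would dispatch Clause~\eqref{keysubclaim1}: $\mathbb{S}_n$ has size ${<}\sigma_n$ by Remark~\ref{RemarkonS} and satisfies the $(\kappa_{n-1})$-cc by Lemma~\ref{QnsAreclosed}(3), while $\Col(\sigma_n,{<}\kappa_n)$ has size $\kappa_n$ and is $\kappa_n$-cc; a product of a small forcing with a $\kappa_n$-cc forcing of size $\kappa_n$ is again $\kappa_n$-cc of size $\kappa_n$. (One should note that $\sigma_n<\kappa_n$, so the $(\kappa_{n-1})$-cc of $\mathbb{S}_n$ certainly yields $\kappa_n$-cc.)

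For Clause~\eqref{keysubclaimprojection}, I would verify the four conditions of Definition~\ref{niceprojection} directly. The map $\psi_n$ is the ``coordinatewise pairing'' of $\varpi_n$ (which is already a nice projection by Lemma~\ref{C3forMoti}) with the map $p\mapsto j_n(F^{0p_n})(\alpha_{p_n})$; by Definition~\ref{ModulesforGitiks}$(0)_n$, $F^{0p_n}(\nu)\in\Col(\sigma_n,{<}\nu_0)$ and these cohere under the ordering $\LE_{n0}$, so taking the value at the normal generator $\kappa_n$ via $j_n$ sends $p$ to a genuine condition of $\Col(\sigma_n,{<}\kappa_n)$. Clauses~\eqref{niceprojection1} and \eqref{niceprojection2} are immediate from the definitions and from $\varpi_n$ being a nice projection. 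For Clause~\eqref{niceprojection3}, given $p\in P_n$ and $(s,c)\sle \psi_n(p)$ in $\mathbb{T}_n$, the desired $p+(s,c)$ is obtained by modifying $p$ in the coordinates ${<}n$ as in the proof of Lemma~\ref{C3forMoti} to realize $s$, and simultaneously replacing $F^{0p_n}$ by the pointwise-stronger map whose value at the normal generator is $c$ (this is possible since $c$ extends $j_n(F^{0p_n})(\alpha_{p_n})$ in $\Col(\sigma_n,{<}\kappa_n)$, and by elementarity this can be pulled back to a measure-one modification of $F^{0p_n}$); this is visibly the $\LE$-greatest such extension, with $\psi_n(p+(s,c))=(s,c)$. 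Clause~\eqref{theprojection} then follows exactly as in Lemma~\ref{C3forMoti}: any $q\LE p+(s,c)$ restores to a condition $p'\LE^{\psi_n}p$ by copying back the coordinates ${<}n$ of $p$ and the $F^0$-data of $p$ at the normal generator, and $q=p'+\psi_n(q)$. In fact, as already observed in the last footnote of Remark~\ref{Pnisomorphic}, in the EBPFC case this projection is an isomorphism, which is what Clause~\eqref{keysubclaim3} records and which I would verify in tandem: the map $((s,c),p')\mapsto p'+(s,c)$ is a bijection between $(\mathbb{T}_n\downarrow\psi_n(p))\times(\mathbb{P}^{\psi_n}_n\downarrow p)$ and $\mathbb{P}_n\downarrow p$ because the coordinates ${<}n$ and the top-most $F^0$-value at the generator are precisely the data that $\psi_n$ forgets, and everything else is preserved under $\LE^{\psi_n}$.

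For Clause~\eqref{keysubclaim2}, I would argue that $\mathbb{P}^{\psi_n}_n$ is $\kappa_n$-directed-closed by examining which data is ``frozen'' in $\LE^{\psi_n}$. A $\LE^{\psi_n}$-decreasing directed family fixes the coordinates ${<}n$ and fixes $j_n(F^{0p_n})(\alpha_{p_n})$, i.e.\ essentially fixes the $F^0$-component of the $n$-th coordinate up to a measure-one set; the remaining freedom lies in the $Q^*_{n0}$-part $(a,A,f)$ of coordinate $n$ (which is $\kappa_n$-directed-closed by Lemma~\ref{QnsAreclosed}(2)), in the $F^1,F^2$-components (which live in collapses of completeness $\kappa_n$ over the relevant ultrapowers, hence are $\kappa_n$-directed-closed, exactly as in the proof of Lemma~\ref{MotisCollapsesmu} via the poset $\mathbb{C}_n$), and in the coordinates ${>}n$, where the $n0$-modules $\mathbb{Q}_{m0}$ are $\sigma_m$-directed-closed with $\sigma_m\ge\sigma_{n+1}>\kappa_n$. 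Taking componentwise lower bounds — intersections of measure-one sets, unions of the extender-function parts, and unions/amalgamations of the collapse conditions (legitimate by the $\kappa_n$-completeness of the collapses, exactly as in Claim~\ref{CnIsClosed}) — produces a $\LE^{\psi_n}$-lower bound. I expect the main obstacle to be the bookkeeping in Clause~\eqref{keysubclaim2}: one must be careful that freezing $j_n(F^{0p_n})(\alpha_{p_n})$ rather than $F^{0p_n}$ itself still leaves enough room to amalgamate the $F^0$-parts over a common measure-one set (using $\kappa_n$-directedness of $\leq_{E_n}$ to find a common $\mc(a\cap\mu)$), and that the collapses $\Col(\nu_0,s_n(\nu_0))$ and $\Col(s_n(\nu_0)^{++},{<}\kappa_n)$ appearing in $F^1,F^2$ have completeness at least $\kappa_n$ on the relevant domain — this is where Remark~\ref{RemarkMeasureOneSets} and the choice of $\sigma_n=(\kappa_{n-1})^+$ do the work.
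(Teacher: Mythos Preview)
Your plan follows the paper's argument essentially verbatim: Clause~(1) by size and chain-condition of the two factors; Clause~(2) by verifying Definition~\ref{niceprojection} directly, using that $\varpi_n$ is already nice and pulling back the collapse condition $c$ to an $F^0$-function; Clause~(3) by amalgamating componentwise (common $a$ via $\kappa_n$-directedness of $\leq_{E_n}$, a measure-one set on which the $F^0$'s agree since their image at $\alpha_{p_n}$ is frozen, $\mathbb C_n$-closure for $F^1,F^2$, and closure of the higher modules); and Clause~(4) by observing the nice projection is in fact a product decomposition.

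One point in your final paragraph is confused and should be corrected. You write that the pointwise collapses $\Col(\nu_0,s_n(\nu_0))$ and $\Col(s_n(\nu_0)^{++},{<}\kappa_n)$ ``have completeness at least $\kappa_n$'' and attribute this to Remark~\ref{RemarkMeasureOneSets} and the value of $\sigma_n$. This is false: pointwise, $\Col(\nu_0,s_n(\nu_0))$ is only $\nu_0$-closed with $\nu_0<\kappa_n$. The correct mechanism is the one you already stated a few lines earlier: one works with $\mathbb C_n$, where $\langle F^{1p},F^{2p}\rangle$ is identified with the single pair of conditions $\langle j_n(F^{1p})(\alpha_p),\,j_n(F^{2p})(\alpha_p)\rangle$ in the ultrapower collapses $\Col(\kappa_n,\kappa^+)^{M_n^*}$ and $\Col(\kappa^{+3},{<}j_n(\kappa_n))^{M_n^*}$, both of which are $\kappa_n$-closed; this is exactly Claim~\ref{CnIsClosed}. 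Neither Remark~\ref{RemarkMeasureOneSets} (which bounds the fibres of $\nu\mapsto\nu_0$ and is used for property~$\mathcal D$, not here) nor the specific value $\sigma_n=(\kappa_{n-1})^+$ plays any role in this step. With that misattribution removed, your outline matches the paper's proof.
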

\begin{proof} (1) This is obvious. 

(2) Let us go over the clauses of Definition \ref{niceprojection}.
Clearly,  $\psi_n(\one_{\mathbb{P}})=\langle\one_{\mathbb{S}_n}, \emptyset\rangle$,
so Clause~\eqref{niceprojection1} holds. Likewise,  using that $\varpi_n$ is order-preserving it is routine to check that so is $\psi_n$. Thus, Clause~\eqref{niceprojection2} holds, as well.

Let us now prove Clause~\eqref{niceprojection3}. Let  $p\in P_n$ and ${t}\LE_{\mathbb{S}_n\times \Col(\sigma_n,{<}\kappa_n)}{\psi_n(p)}$.
Putting $t=:\langle s, c\rangle$ we have $s\sle_n \varpi_n(p)$ and $c\supseteq j_n(F^{0p_n})(\alpha_{p_n})$.
On one hand, since $\varpi_n$ is a nice projection, $q:=p+s$ is a condition in $\mathbb{P}_n$. On the other hand,  there is a function $F$ and $\beta<\mu$  with $\dom(F)\in E_{n,\beta}$ and  $j_n(F)(\beta)=c$.\footnote{Here we use that $\Col(\sigma_n,{<}\kappa_n)^V=\Col(\sigma_n,{<}\kappa_n)^{M_{n}^*}$, where $M_n^*\cong \Ult(V,E_n\restriction\mu)$. } By possibly enlarging $a^{q_n}$ we may actually assume that $\beta=\alpha_q$ and also that  $\dom(F)=\pi_{\mc(a^{q_n}),\alpha_{q_n}}``A^{q_n}$. Let $r$ be the condition in $\mathbb{P}_n$ with the same entries as $q$ but with $F^{0r_n}:=F$. Clearly, $r\LE q \LE p$. Also, by the way $r$ is defined,
$\psi_n(r)=\langle \varpi_n(r),c\rangle=\langle \varpi_n(q),c\rangle=\langle s,c\rangle=t.
$

Note that if $u\in P_n$ is such that $u\LE p$ and  $\psi_n(u)=t$, then $u\leq r$. Altogether, $r=p+t$, which yields Clause~\eqref{niceprojection3}.\footnote{The $+$ here is regarded with respect to the map $\psi_n$.} Finally, for Clause~\eqref{theprojection} one argues in the same lines as in Lemma~\ref{C3forMoti}.

(3) Let $D\s\mathbb{P}^{\psi_n}_n$ be a directed set of size $<\kappa_n$. Then, $\psi_n[D]=\{\langle s, c\rangle\}$ for some $\langle s,c\rangle\in\mathbb{S}_n\times\Col(\sigma_n,{<}\kappa_n)$.
Thus,  for each  $p\in D$, $j_n(F^{0p_n})(\alpha_{p_n})=c$. Arguing as usual, let $a\in[\lambda]^{<\kappa_n}$ be such that both $a\cap \mu$ and $a$ have $\leq_{E_n}$-greatest elements $\alpha$ and $\beta$, respectively, and $a\supseteq \bigcup_{p\in D} a^{p_n}$. Then, for each $p,q\in D$, 
$B_{p,q}:=\{\nu<\kappa_n\mid F^{0p_n}(\pi_{\alpha,\alpha_{p_n}}(\nu))=F^{0q_n}(\pi_{\alpha,\alpha_{q_n}}(\nu))\}\in E_{n,\alpha}$ and, by $\kappa_n$-completedness of $E_{n,\alpha}$,
$B:=\bigcap\{B_{p,q}\mid p,q\in D\}\in E_{n,\alpha}$.

Set $A:=\pi^{-1}_{\beta,\alpha}``B$. By shrinking $A$ if necessary, we may further assume  $\pi_{\beta,\mc(a^p_n)}``A\s A^{p_n}$
for each $p\in D$. Since $\psi_n\upharpoonright D$ is constant the map  $\varpi_n\colon p\mapsto \langle (\rho^{p}_k, h^{0p}_k, h^{1p}_k, h^{2p}_k)\mid k<n\rangle$ is  so. Let $\langle(\rho_k, h^{0}_k, h^1_k, h^2_k)\mid k<n\rangle$ be such constant value. For each $k<\omega$, set $f_k:=\bigcup_{p\in D}f^{p}_k$ and $F^0$ be such that $\dom(F^0)=B$ and $F^0(\nu):=F^{0p}(\pi_{\alpha,\alpha_{p_n}}(\nu))$ for some $p\in D$.

Observe that $\{\langle F^{1p}_n, F^{2p}_n\rangle\mid p\in D\}$ forms a directed subset of $\mathbb{C}_n$ of size $<\kappa_n$ (Lemma~\ref{MotisCollapsesmu}). Using the $\kappa_n$-directed-closedness of $\mathbb{C}_n$  
we may let $\langle F^1, F^2\rangle\in C_n$ be a $\sqsubseteq$-lower bound. Actually, by using the moreover clause of  Lemma~\ref{MotisCollapsesmu} we may assume that $\dom(F^1)=\dom(F^2)\in E_{n,\alpha}$. Thus, by shrinking $A$ and $B$ if necessary we may assume  $\dom(F^1)=\dom(F^2)=B$.

Define $p^*:=\langle p^*_k\mid k<\omega\rangle$ as follows:
$$
p^*_k:=
\begin{cases}
(f_k, \rho_k, h^0_k, h^1_k, h^2_k), & \text{if $k<n$};\\
(a, A, f_k, F^0,F^1, F^2), & \text{if $k=n$};\\
(a_k, A_k,f_k, F^{0}_k, F^1_k, F^2_k), & \text{if $k>n$},
\end{cases}
$$
where $(a_k, A_k,f_k, F^{0}_k, F^1_k, F^2_k)$ is constructed as described in Lemma~\ref{C5forMoti}.  Clearly, $p^*\in Q_{n0}$ and it gives a $\LE^{\psi_n}$-lower bound for $D$.

(4) By Item~\eqref{keysubclaimprojection} of this lemma, $(\mathbb{T}_n\downarrow \psi_n(p))\times (\mathbb{P}^{\psi_n}_n\downarrow p)$ projects onto  $\mathbb{P}_n\downarrow p$.\footnote{See Definition~\ref{niceprojection}\eqref{theprojection}.} Actually both posets are easily seen to be  isomorphic.
\end{proof}

\begin{lemma}\label{GCHmoti} Assume $\gch$. Let $n<\omega$.
\begin{enumerate}
\item $\mathbb{P}_n$ is $\mu^+$-Linked;
\item $\mathbb{P}_n$ forces $\ch_\theta$ for any cardinal $\theta\ge\sigma_n$;
\item $\mathbb P_n^{\varpi_n}$ preserves the $\gch$.
\end{enumerate}
\end{lemma}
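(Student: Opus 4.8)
The plan is to reduce the three clauses to counting nice names under $\gch$, after isolating the collapsing part of $\mathbb P_n$. Since $P_n$ is dense in $\mathbb P_n$, I will fix a $\mathbb P_n$-generic $G$, pick $p\in G\cap P_n$, and work below $p$, using $V[G]=V[G\cap(\mathbb P_n\downarrow p)]$. First, a count of the components of a condition, using $\gch$ (so $\lambda^{\kappa}=\lambda$ and $2^{\kappa_k}<\lambda$), gives $|\mathbb P_n|=\lambda$. As $\gch$ also gives $|H_\mu|=2^{<\mu}=\mu$, the map $c\restriction P_n$ of Definition~\ref{cfunctionforMoti} partitions $P_n$ into at most $\mu$ classes; by Clause~\eqref{c1} of Definition~\ref{SigmaPrikry} any two conditions of a common class have a common extension lying in $P^q_0\cap P^{q'}_0\s P_n$, so every class is linked in $\mathbb P_n$. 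This proves \textbf{(1)} (hence that $\mathbb P_n$ is $\mu^+$-cc), and, via the product isomorphism $\mathbb P_n\downarrow p\cong(\mathbb S_n\downarrow\varpi_n(p))\times(\mathbb P^{\varpi_n}_n\downarrow p)$ of Remark~\ref{Pnisomorphic}, the same holds for $\mathbb P^{\varpi_n}_n\downarrow p$: it has size $\le\lambda$ and is $\mu^+$-cc.

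Next I will record the cardinal configuration and dispatch the easy ranges of (2) and (3). By Lemma~\ref{CardinalConfigurationMotisModel}, $\mathbb P_n$ forces $|\mu|=\cf(\mu)=\kappa_n=(\sigma_n)^+$; together with $\mu^+$-cc and the collapse of $(\sigma_n,\mu]\setminus\{\kappa_n\}$, this makes the cardinals $\ge\sigma_n$ of $V[G]$ exactly $\sigma_n<\kappa_n<\lambda<\lambda^+<\cdots$, agreeing with $V$ past $\lambda$; the same holds for $\mathbb P^{\varpi_n}_n$, since $|\mathbb S_n|<\sigma_n$ cannot disturb cardinals $\ge\sigma_n$. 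For $\theta\ge\kappa_n$ the crude count suffices: as $\mathbb P_n\downarrow p$ (resp.\ $\mathbb P^{\varpi_n}_n\downarrow p$) has size $\le\lambda$ and is $\mu^+$-cc, $\gch$ yields $\le\lambda$ antichains, hence $\le\lambda^\theta$ nice names for a subset of $\theta$, which is $\lambda=(\kappa_n^+)^{V[G]}$ when $\kappa_n\le\theta<\lambda$ and $\theta^+$ when $\theta\ge\lambda$; since $\theta^+$ is preserved, $\ch_\theta$ follows. For part \textbf{(3)} the remaining easy range $\theta<\sigma_n$ is immediate from the $\sigma_n$-closure of $\mathbb P^{\varpi_n}_n$ (Lemma~\ref{C4forMoti}).

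The one case the crude count does not decide is $\theta=\sigma_n$ (it only gives $2^{\sigma_n}\le\lambda=(\sigma_n)^{++}$), and this is where Lemma~\ref{keysubclaim} is used. By Lemma~\ref{keysubclaim}\eqref{keysubclaim3}, $\mathbb P_n\downarrow p\cong(\mathbb T_n\downarrow\psi_n(p))\times(\mathbb P^{\psi_n}_n\downarrow p)$, so $V[G]=V[H][G^*]$, where $H$ is generic for the size-$\kappa_n$ poset $\mathbb T_n\downarrow\psi_n(p)$, which has the $\kappa_n$-cc (Lemma~\ref{keysubclaim}\eqref{keysubclaim1}), and $G^*$ is generic for $\mathbb P^{\psi_n}_n\downarrow p$, which is $\kappa_n$-directed-closed (Lemma~\ref{keysubclaim}\eqref{keysubclaim2}). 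By Easton's lemma, $G^*$ adds no new subset of $\sigma_n$ over $V[H]$, so $\mathcal P^{V[G]}(\sigma_n)=\mathcal P^{V[H]}(\sigma_n)$. In $V[H]$: the factor $\Col(\sigma_n,{<}\kappa_n)$ of $\mathbb T_n$ makes $\kappa_n=(\sigma_n)^+$, while $|\mathbb T_n|=\kappa_n$ with $\kappa_n$ inaccessible in $V$ (so $\kappa_n^{<\kappa_n}=\kappa_n$) gives $\le\kappa_n^{\sigma_n}=\kappa_n$ nice names for a subset of $\sigma_n$; hence $2^{\sigma_n}\le\kappa_n=(\sigma_n)^+$ in $V[H]$, and therefore in $V[G]$. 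This completes \textbf{(2)}. For \textbf{(3)} at $\theta=\sigma_n$ one runs the identical argument with $\mathbb P^{\varpi_n}_n\downarrow p\cong(\Col(\sigma_n,{<}\kappa_n)\downarrow\chi_n(p))\times(\mathbb P^{\psi_n}_n\downarrow p)$, where $\chi_n\colon p\mapsto j_n(F^{0p_n})(\alpha_{p_n})$; this decomposition follows from Lemma~\ref{keysubclaim}\eqref{keysubclaim3} by cancelling the small factor $\mathbb S_n$, and one checks --- arguing as in Lemma~\ref{keysubclaim}\eqref{keysubclaimprojection},\eqref{keysubclaim2} --- that $\chi_n$ is a nice projection from $\mathbb P^{\varpi_n}_n$ onto $\Col(\sigma_n,{<}\kappa_n)$ with $(\mathbb P^{\varpi_n}_n)^{\chi_n}=\mathbb P^{\psi_n}_n$. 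In particular $\mathbb P^{\varpi_n}_n$ collapses $\mu$ onto $\kappa_n$, so $\lambda=(\kappa_n^+)^{V^{\mathbb P^{\varpi_n}_n}}$, as was used in the crude count.

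I expect the $\theta=\sigma_n$ clause to be the only real obstacle: the ambient forcing has size $\lambda$, so naive name-counting overshoots the truth by exactly one cardinal, and one must genuinely split off the small $\kappa_n$-cc factor --- which carries all new subsets of $\sigma_n$ --- from the $\kappa_n$-closed remainder, and apply Easton's lemma. In part (3) there is the additional, routine but slightly fiddly, task of extracting the matching factorization of $\mathbb P^{\varpi_n}_n$ itself (checking that $\chi_n$ is nice and that its pure part is $\mathbb P^{\psi_n}_n$). Everything else is $\gch$ cardinal arithmetic and standard antichain/name counting.
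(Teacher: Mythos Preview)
Your proof is correct and, for parts~(1) and~(2), essentially matches the paper: the $\mu^+$-Linked property via the map $c$ and $|H_\mu|=\mu$, the crude nice-name count for large $\theta$, and the $\mathbb T_n\times\mathbb P_n^{\psi_n}$ decomposition plus Easton's lemma for $\theta=\sigma_n$ are exactly what the paper does.

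For part~(3) you take a genuinely different route. You rebuild the $\theta=\sigma_n$ argument from scratch by introducing the auxiliary projection $\chi_n$ and verifying a second product decomposition $\mathbb P_n^{\varpi_n}\downarrow p\cong\bigl(\Col(\sigma_n,{<}\kappa_n)\downarrow\chi_n(p)\bigr)\times\bigl(\mathbb P_n^{\psi_n}\downarrow p\bigr)$, then repeating the Easton/name-count analysis. This is correct (it really is what remains of the Lemma~\ref{keysubclaim} decomposition after cancelling the $\mathbb S_n$ factor), but it is more work than necessary. The paper instead transfers part~(2) to $\mathbb P_n^{\varpi_n}$ in one line: from the isomorphism $\mathbb P_n\downarrow p\cong(\mathbb S_n\downarrow\varpi_n(p))\times(\mathbb P_n^{\varpi_n}\downarrow p)$ and $|\mathbb S_n|<\sigma_n$, the $\mathbb P_n^{\varpi_n}$-extension is an intermediate model whose $\mathcal P(\theta)$ is contained in that of the $\mathbb P_n$-extension for every $\theta\ge\sigma_n$, and the relevant successor cardinals agree; so $\ch_\theta$ for $\theta\ge\sigma_n$ follows directly from~(2), and $\theta<\sigma_n$ is handled by $\sigma_n$-closure as you note. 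Your approach has the minor advantage of being self-contained (it does not rely on~(2) already being proved), but the paper's argument avoids introducing and verifying $\chi_n$ altogether.
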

\begin{proof} (1) By Definition~\ref{cfunctionforMoti}, Lemma~\ref{C5forMoti} and the fact that $|H_\mu|=\mu$.

(2) As $\mathbb P_n$ has size $\le\mu^+$, Clause~(1) together with a counting-of-nice-names argument implies that
 $2^\theta=\theta^+$ for any cardinal $\theta\ge\mu^+$.
By Lemma~\ref{CardinalConfigurationMotisModel}, in any generic extension by $\mathbb P_n$, $|\mu|=\cf(\mu)=\kappa_n=(\sigma_{n})^+$.
It thus left to verify that $\mathbb P_n$ forces $2^\theta=\theta^+$ for $\theta\in\{\sigma_n,\kappa_n\}$.

$\br$ By Clauses \eqref{keysubclaim1}, \eqref{keysubclaim2} and \eqref{keysubclaim3} of Lemma~\ref{keysubclaim},
together with Easton's lemma, $\mathbb{P}_n$ forces $\ch_{\sigma_n}$ if and only if $\mathbb{T}_n$ forces $\ch_{\sigma_n}$.
By Clause~\eqref{keysubclaim1} of Lemma~\ref{keysubclaim}, $\mathbb{T}_n$ is a $\kappa_n$-cc poset of size $\kappa_n$,
so, the number of  $\mathbb{T}_n$-nice names for subsets of $\sigma_n$ is at most $\kappa_n^{<\kappa_n}=\kappa_n=\sigma_n^+$, as wanted.

$\br$ The number of $\mathbb{P}_n$-nice names for subsets of $\kappa_n$ is $((\mu^+)^\mu)^{\kappa_n}=\mu^{+}$, and hence $\ch_{\kappa_n}$ is forced by $\mathbb P_n$.

(3) By Lemma~\ref{C4forMoti}, $\mathbb P_n^{\varpi_n}$ preserves $\gch$ below $\sigma_n$.
Also, since $\mathbb{P}_n\simeq (\mathbb{P}_n^{\varpi_n}\times\mathbb{S}_n)$  (Remark~\ref{Pnisomorphic}) and  $|\mathbb S_n|<\sigma_n$,
we infer from Item~(2) above that $\gch$ holds at cardinals $\ge\sigma_n$, as well.
\end{proof}

\section{Nice forking projections}\label{SectionNiceForking}

In this short section we introduce the notions of \emph{nice} and \emph{super nice forking projection}. 
These provide a natural (and necessary) strengthening of  the corresponding
key concept from Part~I of this series. The reader familiar with \cite[\S2]{PartII} may want to skip some of the details here and, instead,  consult the new concepts (Definitions~\ref{NiceForking}, \ref{SuperNiceForking}, \ref{typeexact}) and results (Lemma~\ref{Liftingsuitability}).

\begin{definition}[{\cite[\S4]{PartI}}]\label{forking}  Suppose that $(\mathbb P,\lh_{\mathbb P},c_{\mathbb P},\vec{\varpi})$ is a $(\Sigma,\vec{\mathbb{S}})$-Prikry forcing,\footnote{In \cite{PartI}, the following notion is formulated in terms of $\Sigma$-Prikry forcings. However the same notion is meaningful in the more general context of $(\Sigma,\vec{\mathbb{S}})$-Prikry forcings.}
$\mathbb A=(A,\unlhd)$ is a notion of forcing,
and $\lh_{\mathbb A}$ and $c_{\mathbb A}$ are functions with $\dom(\lh_{\mathbb A})=\dom(c_{\mathbb A})=A$. A pair of functions $(\pitchfork,\pi)$ is said to be a \emph{forking projection} from $(\mathbb A,\lh_{\mathbb A})$ to $(\mathbb P,\lh_{\mathbb P})$
iff all of the following hold:
\begin{enumerate}
\item\label{frk1}  $\pi$ is a projection from $\mathbb A$ onto $\mathbb P$, and $\lh_{\mathbb A}=\lh_{\mathbb P}\circ\pi$;
\item\label{frk0}  for all $a\in A$, $\fork{a}$ is an order-preserving function from $(\cone{\pi(a)},\le)$ to $(\conea{a},\unlhd)$;
\item\label{frk3} for all $p\in P$, $\{ a \in A\mid \pi(a)=p\}$ admits a greatest element, which we denote by $\myceil{p}{\mathbb A}$;
\item\label{frk4}  for all $n,m<\omega$ and $b\unlhd^{n+m} a$, $m(a,b)$ exists and satisfies:
 $$m(a,b)=\fork{a}(m(\pi(a),\pi(b)));$$
\item\label{frk5} for all $a\in A$ and $r\LE\pi(a)$,   $\pi(\fork{a}(r))=r$;
\item\label{frk6} for all $a\in A$ and $r\LE\pi(a)$, $a=\myceil{\pi(a)}{\mathbb A}$ iff $\fork{a}(r)=\myceil{r}{\mathbb A}$;
\item\label{frk7} for all $a\in A$, $a'\unlhd^0 a$ and $r\LE^0 \pi(a')$, $\fork{a'}(r)\unlhd\fork{a}(r)$.
\setcounter{condition}{\value{enumi}}
\end{enumerate}

The pair $(\pitchfork,\pi)$ is said to be a forking projection from
$(\mathbb A,\lh_{\mathbb A},c_{\mathbb A})$ to $(\mathbb P,\lh_{\mathbb P},c_{\mathbb P})$
iff, in addition to all of the above, the following holds:
\begin{enumerate}
\setcounter{enumi}{\value{condition}}
\item\label{frk2}  for all $a,a'\in A$, if $c_{\mathbb A}(a)=c_{\mathbb A}(a')$, then $c_{\mathbb P}(\pi(a))=c_{\mathbb P}(\pi(a'))$ and, for all $r\in P_0^{\pi(a)}\cap P_0^{\pi(a')}$, $\fork{a}(r)=\fork{a'}(r)$.
\end{enumerate}
\end{definition}

\begin{definition}\label{NiceForking}
A pair of functions $(\pitchfork,\pi)$ is said to be a \emph{nice forking projection} from $(\mathbb A,\lh_{\mathbb A},\vec{\varsigma})$ to $(\mathbb P,\lh_{\mathbb P},\vec{\varpi})$
iff all of the following hold:
\begin{enumerate}[label=(\alph*)]
\setcounter{enumi}{0}
\item\label{frknew0} $(\pitchfork,\pi)$ is a forking projection from $(\mathbb{A},\ell_\mathbb{A})$ to $(\mathbb{P},\ell_\mathbb{P})$;
\item \label{frknew2} $\vec\varsigma=\vec\varpi\bullet\pi$, that is, $\varsigma_n=\varpi_n\circ \pi$ for all $n<\omega$. Also, for each $n$,  $\varsigma_n$ is a nice projection from $\mathbb{A}_{\geq n}$ to $\mathbb{S}_n$, and for each $k\geq n$, $\varsigma_n\restriction\mathbb{A}_k$ is again a nice projection.
\end{enumerate}
The pair $(\pitchfork,\pi)$ is said to be a  \emph{nice forking projection} from $(\mathbb A,\lh_{\mathbb A},c_\mathbb{A},\vec{\varsigma})$ to $(\mathbb P,\lh_{\mathbb P},c_\mathbb{P},\vec{\varpi})$ if, in addition, Clause~\eqref{frk2} of Definition~\ref{forking} is satisfied.
 \end{definition}

  \begin{remark}
If $(\mathbb{P},\ell_\mathbb{P},c_\mathbb{P})$ is a $\Sigma$-Prikry forcing then a pair of maps $(\pitchfork,\pi)$ is a forking projection from $(\mathbb{P},\ell_\mathbb{P})$ to $(\mathbb{A},\ell_\mathbb{A})$ iff it is a nice forking projection from $(\mathbb{P},\ell_\mathbb{P},\vec{\varpi})$ to $(\mathbb{A},\ell_\mathbb{A},\vec{\varsigma})$. In a similar vein, the same applies to forking projections from $(\mathbb A,\lh_{\mathbb A},c_\mathbb{A},\vec{\varsigma})$ to $(\mathbb P,\lh_{\mathbb P},c_\mathbb{P},\vec{\varpi})$.
 \end{remark}

As we will see, most of the theory of iterations of $(\Sigma,\vec{\mathbb{S}})$-Prikry forcings can be developed starting from the concept of nice forking projection. Nonetheless, to be successful at limit stages, one needs nice forking projections yielding canonical  witnesses for niceness of the $\varsigma_n$'s. Roughly speaking, we want that whenever $p'$ is a witness for niceness of some $\varpi_n$ then there is a witness $a'$ for niceness of $\varsigma_n$ which \emph{lifts} $p'$. This leads to the concept of \emph{super nice forking projection} that we now turn to introduce:
\begin{definition}\label{SuperNiceForking}
A nice forking projection $(\pitchfork,\pi)$ from   $(\mathbb A,\lh_{\mathbb A},\vec{\varsigma})$ to $(\mathbb P,\lh_{\mathbb P},\vec{\varpi})$ is called \emph{super nice} if for each $n<\omega$ the following property holds:

 Let $a,a'\in A_{\geq n}$  and $s\in S_n$  such that $a'\unlhd a+s$.  Then, for each $p^*\in P_{\geq n}$ such that $p^*\LE^{\varpi_n} \pi(a)$ and $\pi(a')=p^*+\varsigma_n(a')$, there is $a^*\unlhd^{\varsigma_n} a$ with $$\pi(a^*)=p^*\;\;\text{and}\;\;a'=a^*+\varsigma_n(a').$$
The notion of \emph{super nice forking projection} from $(\mathbb A,\lh_{\mathbb A},c_\mathbb{A},\vec{\varsigma})$ to $(\mathbb P,\lh_{\mathbb P},c_\mathbb{P},\vec{\varpi})$ is defined in a similar fashion.
\end{definition}
\begin{remark}
The notion of super nice forking projection will not be needed in the current section. Its importance will only become clear in Section~\ref{Iteration}, where we present our iteration scheme. The purpose for drawing this distinction is to emphasize that at successor stages of the iteration \emph{nice} forking projection suffice to establishing $(\Sigma,\vec{\mathbb{S}})$-Prikryness of the different iterates. In other words, the iteration machinery introduced in \cite[\S3]{PartII} is essentially successful. 
 Nevertheless, \emph{super nice} forking projection will play a crucial role in the  verifications at limit stages. For an example, see  Claim~\ref{PropertiesForking}, p.~\pageref{supernicenessinaction}. 
\end{remark}

 Next we show that if $(\pitchfork,\pi)$ is a forking projection (not necessarily nice) and $\vec{\varsigma}=\pi\circ \vec{\varpi}$ then $\varsigma_n$ satisfies Definition~\ref{niceprojection}\eqref{niceprojection3} for each $n<\omega$.

 \begin{lemma} \label{pitchforkexact}
 Let $(\pitchfork, \pi)$ be a forking projection from $(\mathbb A,\lh_{\mathbb A})$ to $(\mathbb P,\lh_{\mathbb P})$ and suppose that $\vec{\varsigma}=\pi\circ\vec{\varpi}$. Then, for all $a\in A$, $n\leq \ell_\mathbb{A}(a)$ and $s\sle_n \varsigma_n(a)$, $$a+s=\fork{a}(\pi(a)+s).$$
 \end{lemma}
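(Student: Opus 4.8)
The plan is to show that $\fork{a}(\pi(a)+s)$ satisfies the defining property of $a+s$ with respect to the nice projection $\varsigma_n$, i.e. that it is the $\unlhd$-greatest element $a'$ of $\conea{a}$ with $\varsigma_n(a')\sle_n s$, and moreover that $\varsigma_n$ applied to it returns $s$. Write $p:=\pi(a)$ and $b:=\fork{a}(p+s)$. First I would check that $b$ is a legitimate element of $\conea{a}$: since $p+s\LE p=\pi(a)$, Clause~\eqref{frk0} of Definition~\ref{forking} tells us $\fork{a}$ maps $\cone{\pi(a)}$ into $\conea{a}$, so $b\unlhd a$. Next, by Clause~\eqref{frk5} of Definition~\ref{forking}, $\pi(b)=\pi(\fork{a}(p+s))=p+s$, and therefore $\varsigma_n(b)=\varpi_n(\pi(b))=\varpi_n(p+s)=s$, using Clause~\eqref{niceprojection3} of Definition~\ref{niceprojection} for the nice projection $\varpi_n$ on $\mathbb{P}$ (which gives $\varpi_n(p+s)=s$). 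This already verifies the "moreover" requirement $\varsigma_n(a+s)=s$ and shows $b$ lies in the set $\{a'\unlhd a\mid \varsigma_n(a')\sle_n s\}$.

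It remains to prove maximality: if $a'\unlhd a$ and $\varsigma_n(a')\sle_n s$, then $a'\unlhd b$. From $\varsigma_n(a')=\varpi_n(\pi(a'))\sle_n s$ and $\pi(a')\LE \pi(a)=p$ (Clause~\eqref{frk1}, $\pi$ order-preserving), the nice projection property \eqref{niceprojection3} for $\varpi_n$ gives $\pi(a')\LE p+s$. Now apply $\fork{a}$: since $\fork{a}$ is order-preserving (Clause~\eqref{frk0}) and $\pi(a')\LE p+s\LE p$, we get $\fork{a}(\pi(a'))\unlhd \fork{a}(p+s)=b$. The final ingredient I need is that $a'\unlhd \fork{a}(\pi(a'))$; this is precisely the content of the fact established in \cite[Lemma~4.9]{PartI} (or the analogous statement in \cite[\S2]{PartII}) that for any forking projection, $a'\unlhd \fork{a}(\pi(a'))$ whenever $a'\unlhd a$ — it follows from combining Clause~\eqref{frk4} with $m=0$ (namely $w(a,a')=\fork{a}(w(\pi(a),\pi(a')))$) together with $w(\pi(a),\pi(a'))=\pi(a')$ and $a'\unlhd w(a,a')$. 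Chaining $a'\unlhd \fork{a}(\pi(a'))\unlhd b$ gives $a'\unlhd b$, completing the verification that $b$ is the greatest element, hence $b=a+s$.

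The main obstacle, such as it is, is purely bookkeeping: making sure the identity $w(\pi(a),\pi(a'))=\pi(a')$ and the inequality $a'\unlhd w(a,a')$ are invoked correctly, and that Clause~\eqref{frk4} of Definition~\ref{forking} is applicable in the case $m=0$. One subtlety worth flagging is that $a+s$ is being computed with respect to $\varsigma_n$, which is only asserted to be a nice projection under the extra hypotheses of Definition~\ref{NiceForking}\ref{frknew2}; but the statement of the present lemma does not assume niceness, so strictly speaking "$a+s$" here should be read as shorthand for "the $\unlhd$-greatest element of $\{a'\unlhd a\mid\varsigma_n(a')\sle_n s\}$", whose existence is exactly what the argument above establishes (with $\fork{a}(\pi(a)+s)$ as the witness). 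No closure or directedness is needed; the argument is a direct manipulation of the forking-projection axioms.
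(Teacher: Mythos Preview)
Your overall shape is right and matches the paper's first paragraph: show that $\fork{a}(\pi(a)+s)$ lies below $a$ with $\varsigma_n$-value $s$ (via Clauses~\eqref{frk0} and \eqref{frk5}), and then argue maximality. The issue is in your justification of the key inequality $a'\unlhd\fork{a}(\pi(a'))$ for an arbitrary $a'\unlhd a$. You claim that $w(\pi(a),\pi(a'))=\pi(a')$, but this is false in general: by definition $w(\pi(a),\pi(a'))$ is the element of $W(\pi(a))$ at the level of $\pi(a')$, and one only has $\pi(a')\LE^0 w(\pi(a),\pi(a'))$, not equality. So Clause~\eqref{frk4} with $m=0$ gives $w(a,a')=\fork{a}(w(\pi(a),\pi(a')))$, which is \emph{above} $\fork{a}(\pi(a'))$ (since $\fork{a}$ is order-preserving and $\pi(a')\LE w(\pi(a),\pi(a'))$), and hence $a'\unlhd^0 w(a,a')$ does not yield $a'\unlhd\fork{a}(\pi(a'))$.

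The paper sidesteps this by a two-stage argument that does not rely on the general inequality you invoke. First it treats the case $b\unlhd^0 a$, where Clause~\eqref{frk7} applies directly: from $b=\fork{b}(\pi(b))$ (\cite[Lemma~2.17]{PartII}) and $b\unlhd^0 a$, Clause~\eqref{frk7} gives $b=\fork{b}(\pi(b))\unlhd\fork{a}(\pi(b))\unlhd\fork{a}(\pi(a)+s)$. Then, for an arbitrary $b\unlhd a$ with $\varsigma_n(b)\sle_n s$, the paper builds an intermediate condition $b':=\fork{w(a,b)}\bigl(w(\pi(a),\pi(b))+\varsigma_n(b)\bigr)$ satisfying $b\unlhd b'\unlhd^0 a$ and $\varsigma_n(b')=\varsigma_n(b)\sle_n s$, and applies the $\unlhd^0$ case to $b'$. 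The point is that Clause~\eqref{frk7} only compares $\fork{a'}$ and $\fork{a}$ when $a'\unlhd^0 a$; that is why the reduction to the $0$-step case is needed. If you want to keep your one-shot approach, you would need an independent proof (or a correct citation) of $a'\unlhd\fork{a}(\pi(a'))$ for all $a'\unlhd a$; the derivation you sketch does not establish it.
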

 \begin{proof}
Combining Clauses~\eqref{frk0} and \eqref{frk5} of Definition~\ref{forking} with  Clause~\ref{frknew2} of Definition~\ref{NiceForking} it follows that $\fork{a}(\pi(a)+s)\unlhd a$  and $\varsigma_n(\fork{a}(\pi(a)+s))=s.$

Let $b\in A$ such that $b\unlhd^0 a$ and $\varsigma_n(b)\sle_n s$. Then, $\pi(b)\LE \pi(a)+s$. By \cite[Lemma~2.17]{PartII}, $b=\fork{b}(\pi(b))$, hence Clauses~\eqref{frk0} and \eqref{frk7} of Definition~\ref{forking} yield
$b\unlhd^0 \fork{a}(\pi(b))\unlhd^0\fork{a}(\pi(a)+s)$.

We are not done yet with establishing that $a+s=\fork{a}(\pi(a)+s)$, as we have just dealt with $b\unlhd^0 a$. However we can further argue as follows. Let $b\unlhd a$ be with $\varsigma_n(b)\sle_n s$. Put, $b':=\fork{w(a,b)}(w(\pi(a),\pi(b))+\varsigma_n(b))$. It is easy to check that $b\unlhd b'\unlhd^0 a$ and that $\varsigma_n(b')=\varsigma_n(b)\sle_n s$. Hence, applying the previous argument we arrive at $b\unlhd b'\unlhd^0 \fork{a}(\pi(a)+s)$. 
 \end{proof}

 In \cite[\S2]{PartII}, we drew a map of connections between $\Sigma$-Prikry forcings and forking projection,
demonstrating that this notion is crucial to define a viable iteration scheme for $\Sigma$-Prikry posets. However, to be successful in iterating $\Sigma$-Prikry forcings, forking projections need to be accompanied with \emph{types}, which are key to establish the CPP and property $\mathcal{D}$ for $(\mathbb{A},\ell_\mathbb{A})$.

 \begin{definition}[{\cite[\S2]{PartII}}]\label{type}
A \emph{type over a forking projection $(\pitchfork,\pi)$}
is a map $\tp\colon A\rightarrow{}^{<\mu}\omega$ having the following properties:
\begin{enumerate}
\item\label{type1} for each $a\in A$, either $\dom(\tp(a))=\alpha+1$ for some $\alpha<\mu$, in which case we define $\mtp(a):=\tp(a)(\alpha)$,
or $\tp(a)$ is empty, in which case we define $\mtp(a):=0$;
\item\label{type2} for all $a,b\in A$ with $b\unlhd a$, $\dom(\tp(a))\leq \dom(\tp(b))$ and for each $i\in\dom(\tp(a))$, $\tp(b)(i)\leq \tp(a)(i)$;
\item\label{type3} for all $a\in A$ and $q\le \pi(a)$, $\dom(\tp(\fork{a}(q)))=\dom(\tp(a))$;
\item\label{type6} for all $a\in A$,  $\tp(a)=\emptyset$ iff $a=\myceil{\pi(a)}{\mathbb{A}}$; 
\item\label{type4} for all $a\in A$ and  $\alpha\in \mu\setminus \dom(\tp(a))$, there exists a \emph{stretch of $a$ to $\alpha$}, denoted $a{}^{\curvearrowright\alpha}$,
and satisfying the following:
\begin{enumerate}[label=(\alph*)]
\item $a{}^{\curvearrowright\alpha}\unlhd^\pi a$;
\item  $\dom(\tp(a{}^{\curvearrowright\alpha}))=\alpha+1$;
\item  $\tp(a{}^{\curvearrowright\alpha})(i)\leq \mtp(a)$ whenever $\dom(\tp(a))\le i\le\alpha$;
\end{enumerate}
\item \label{newstretch} for all $a,b\in A$ with $\dom(\tp(a))=\dom(\tp(b))$, for every $\alpha\in \mu\setminus \dom(\tp(a))$,
if $b\unlhd a$, then
$b{}^{\curvearrowright\alpha}\unlhd a{}^{\curvearrowright\alpha}$;
\item \label{type5} For each $n<\omega$, the poset $\z{\mathbb{A}}_n$ is dense in $\mathbb{A}_n$, where $\z{\mathbb{A}}_n:=(\z{A}_n,\unlhd)$ and $\z{A}_n:=\{a\in A_n\mid \pi(a)\in\z{P}_n\ \&\ \mtp(a)=0\}$.
\end{enumerate}
 \end{definition}
 \begin{remark}\label{RemarkType} Note that Clauses \eqref{type2} and \eqref{type3} imply that for all $m,n<\omega$, $a\in\z A_m$
 and $q \le\pi(a)$, if $q\in\z P_n$ then $\fork{a}(q)\in\z A_n$.
 \end{remark}

 In the more general context of $(\Sigma,\vec{\mathbb{S}})$-Prikry forcings
 -- where the pair $(\pitchfork,\pi)$ needs to be a nice forking projection -- we need to  require a bit more:

 \begin{definition}\label{typeexact}
 A \emph{nice type over a nice forking projection $(\pitchfork,\pi)$}
is a type over $(\pitchfork,\pi)$ which moreover satisfies the following:
\begin{enumerate}
\setcounter{enumi}{7}
\item\label{ringismoredense} For each $n<\omega$, the poset $\z{\mathbb{A}}^{\varsigma_n}_n$ is dense in $\mathbb{A}^{\varsigma_n}_n$.\footnote{ Here $\z{\mathbb{A}}_n$ is the forcing from Definition~\ref{type}\eqref{type5}.}
\end{enumerate}
 \end{definition}

 \begin{remark}
If $(\mathbb{P},\ell_\mathbb{P}, c_\mathbb{P})$ is $\Sigma$-Prikry    then any forking projection $(\pitchfork,\pi)$ is nice and $\z{\mathbb{A}}^{\varsigma_n}_n=\z{\mathbb{A}}_n$ for all $n<\omega$. In particular, any type over $(\pitchfork,\pi)$ is nice.
 \end{remark}

We now turn to collect sufficient conditions --- assuming the existence of a nice forking projection $(\pitchfork,\pi)$ from $(\mathbb{A},\ell_\mathbb{A},c_\mathbb{A},\vec{\varsigma})$ to
$(\mathbb{P},\ell_\mathbb{P},c_\mathbb{P},\vec{\varpi})$ --- for $(\mathbb{A},\ell_\mathbb{A},c_\mathbb{A},\vec{\varsigma})$ to be $(\Sigma,\vec{\mathbb{S}})$-Prikry on its own,
and then address the problem of ensuring that the $\mathbb A_n$'s be suitable for reflection.
This study will be needed in Section~\ref{killingone}, most notably, in the proof of Theorem~\ref{AisweaklySigmaPrikry}.

\begin{setup}\label{SetupExactForking}Throughout the rest of this section, we suppose that:
\begin{itemize}
\item $\mathbb P=(P,\le)$ is a notion of forcing with a greatest element $\one_{\mathbb P}$;
\item $\mathbb A=(A,\unlhd)$ is a notion of forcing with a greatest element $\one_{\mathbb A}$;
\item $\Sigma=\langle \sigma_n\mid n<\omega\rangle$ is a non-decreasing sequence of regular uncountable cardinals,
converging to some cardinal $\kappa$, and  $\mu$ is a cardinal such that $\one_{\mathbb P}\Vdash_{\mathbb P}\check\mu=\check\kappa^+$;
\item $\vec{\mathbb S}=\langle \mathbb S_n\mid n<\omega\rangle$ is a sequence of notions of forcing, $\mathbb S_n=(S_n,\SLE_n)$,
with $|S_n|< \sigma_n$;
\item\label{Czeta} $\lh_{\mathbb P}$, $c_{\mathbb P}$ and $\vec{\varpi}=\langle \varpi_n\mid n<\omega\rangle$ are witnesses for $(\mathbb P,\lh_{\mathbb P},c_{\mathbb P},\vec{\varpi})$ being $(\Sigma,\vec{\mathbb{S}})$-Prikry;
\item $\lh_{\mathbb A}$ and $c_{\mathbb A}$ are functions with $\dom(\lh_{\mathbb A})=\dom(c_{\mathbb A})=A$,
and $\vec{\varsigma}=\langle\varsigma_n\mid n<\omega\rangle$ is a sequence of functions.
\item $(\pitchfork,\pi)$ is a nice  forking projection from $(\mathbb A,\lh_{\mathbb A},c_{\mathbb A},\vec{\varsigma})$ to $(\mathbb P,\lh_{\mathbb P},c_{\mathbb P},\vec{\varpi})$.
\end{itemize}
\end{setup}

\begin{theorem}\label{forkingSigmaPrikryLight}
Under the assumptions of Setup~\ref{SetupExactForking}, $(\mathbb A,\lh_{\mathbb A}, c_{\mathbb A},\vec{\varsigma})$ satisfies all the clauses of Definition~\ref{SigmaPrikry}, with the possible exception of \eqref{c2}, \eqref{c6} and
\eqref{moreclosedness}.
Moreover,  if $\one_{\mathbb P}\forces_{\mathbb P}``\check\kappa\text{ is singular}"$, then $\one_{\mathbb A}\forces_{\mathbb{A}}\check{\mu}=\check\kappa^+$.

\end{theorem}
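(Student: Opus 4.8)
The plan is to verify the clauses of Definition~\ref{SigmaPrikry} for $(\mathbb A,\lh_{\mathbb A},c_{\mathbb A},\vec{\varsigma})$ one by one, transporting each property along the nice forking projection $(\pitchfork,\pi)$, using the corresponding property of $(\mathbb P,\lh_{\mathbb P},c_{\mathbb P},\vec{\varpi})$ together with the structural axioms of forking projections (Definition~\ref{forking}) and niceness (Definition~\ref{NiceForking}). Concretely: Clause~\eqref{graded} --- that $(\mathbb A,\lh_{\mathbb A})$ is graded --- follows from Clause~\eqref{frk1} of Definition~\ref{forking}, since $\lh_{\mathbb A}=\lh_{\mathbb P}\circ\pi$ and $\pi$ is a projection, so surjectivity of $\lh_{\mathbb A}$ and the existence of one-step extensions are inherited from $(\mathbb P,\lh_{\mathbb P})$ by lifting along $\fork{a}$ (using \eqref{frk5} to control lengths). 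Clause~\eqref{c1} (the Linked$_0$-property) is exactly what Clause~\eqref{frk2} delivers: if $c_{\mathbb A}(a)=c_{\mathbb A}(a')$ then $c_{\mathbb P}(\pi(a))=c_{\mathbb P}(\pi(a'))$, so $P_0^{\pi(a)}\cap P_0^{\pi(a')}\ne\emptyset$, and then $\fork{a}(r)=\fork{a'}(r)\in A_0^a\cap A_0^{a'}$ for any such $r$. Clause~\eqref{c5} (existence of $m(a,b)$) is literally Clause~\eqref{frk4}, and the special case gives $w(a,b)=\fork{a}(w(\pi(a),\pi(b)))$.

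For Clause~\eqref{csize}, I would show $W(a)=\{\fork{a}(w(\pi(a),q))\mid q\LE\pi(a)\}$, which maps onto $W(\pi(a))$ (in fact is in bijection via $\pi$, using \eqref{frk5}), so $|W(a)|=|W(\pi(a))|<\mu$; moreover, as noted in the theorem's ``moreover'' clause, if $\kappa$ is forced singular one even gets $|W(a)|\le\kappa$ from $|W(\pi(a))|\le\kappa$ (Lemma~\ref{l14}\eqref{l14(4)} applied to $\mathbb P$). Clause~\eqref{itsaprojection} --- that $q\mapsto w(a,q)$ is order-preserving from $W(a')$ to $W(a)$ for $a'\unlhd a$ --- follows by transporting the corresponding fact for $\mathbb P$ through the identification $w(a,\fork{a'}(\cdot))=\fork{a}(w(\pi(a),\cdot))$, which itself is a consequence of \eqref{frk4} and \eqref{frk7}. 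Clause~\eqref{PnprojectstoSn} --- that each $\varsigma_n$ is a nice projection from $\mathbb A_{\ge n}$ to $\mathbb S_n$ and restricts niceley to each $\mathbb A_k$ --- is precisely the content of Clause~\ref{frknew2} of Definition~\ref{NiceForking}, hence part of the hypotheses.

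The remaining clauses \eqref{c2}, \eqref{c6}, \eqref{moreclosedness} are the ones the theorem explicitly exempts (they require the additional machinery of types and the weak mixing property, addressed later in the section), so they are not to be proved here. For the ``moreover'' part, assuming $\one_{\mathbb P}\forces_{\mathbb P}``\check\kappa\text{ is singular}"$: first note $\one_{\mathbb A}\forces_{\mathbb A}``\check\kappa\text{ is singular}"$ since $\mathbb A$ projects onto $\mathbb P$ via $\pi$ and singularity of $\kappa$ is downward absolute to the $\mathbb A$-extension (any cofinal $\omega$-sequence in $V^{\mathbb P}$ lies in $V^{\mathbb A}$). Then $\one_{\mathbb P}\forces_{\mathbb P}\check\mu=\check\kappa^+$ (from Setup~\ref{SetupExactForking}), and since $|W(a)|\le\kappa$ for all $a\in A$ as established above, Lemma~\ref{l14}\eqref{l14(4)} --- whose proof, as the footnote to Lemma~\ref{l14}\eqref{l14(2)} records, uses only Clauses \eqref{graded}, \eqref{csize}, \eqref{c6}, \eqref{PnprojectstoSn}, \eqref{moreclosedness}, all of which are available here (the three ``missing'' clauses for $\mathbb A$ do hold once the type-theoretic results of this section are invoked, but for the bookkeeping one may alternatively observe directly that $|W(a)|\le\kappa$ suffices in the proof of the relevant direction) --- yields $\one_{\mathbb A}\forces_{\mathbb A}\check\mu=\check\kappa^+$.

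The main obstacle I anticipate is organizational rather than mathematical: making sure that every identity of the form ``the $\mathbb A$-side weakest extension / greatest element / tree equals the $\fork{a}$-image of the $\mathbb P$-side object'' is stated and used consistently, and in particular handling the interaction between \eqref{frk4}, \eqref{frk7} and the niceness axiom \ref{frknew2} carefully enough that Clause~\eqref{itsaprojection} goes through for elements of $W(a)$ that are not themselves of the form $\fork{a}(\cdot)$ with a ``canonical'' argument --- this is the same subtlety handled in Lemma~\ref{pitchforkexact}, and I would lean on that lemma (and on \cite[Lemma~2.17]{PartII}) rather than re-deriving it. The verification of \eqref{csize}, i.e.\ that $c_{\mathbb A}\restriction W(a)$ behaves well and $W(a)$ has the right cardinality, is where I would be most careful, since it feeds directly into the ``moreover'' conclusion.
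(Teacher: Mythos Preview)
Your plan for Clauses \eqref{graded}, \eqref{c1}, \eqref{c5}, \eqref{csize}, \eqref{itsaprojection}, \eqref{PnprojectstoSn} is correct and matches the paper's proof exactly: the paper simply cites \cite[Lemmas~4.3, 4.7, 4.10]{PartI} for \eqref{graded}, \eqref{c1}, \eqref{csize}, \eqref{itsaprojection}, invokes Definition~\ref{forking}\eqref{frk4} verbatim for \eqref{c5}, and invokes Definition~\ref{NiceForking}\ref{frknew2} for \eqref{PnprojectstoSn}. Your explicit unpacking of those citations in terms of the forking-projection axioms is exactly the intended content.

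Your treatment of the ``moreover'' clause, however, is muddled. You correctly locate Lemma~\ref{l14}\eqref{l14(2)} (via \eqref{l14(4)}) as the key tool, but then assert that the clauses it relies on --- which by the footnote include \eqref{c6} and \eqref{moreclosedness} --- are ``all available here''. They are not: those are precisely two of the three exempted clauses. Your two escape hatches do not close the gap either: appealing to the type-theoretic lemmas proved \emph{later} in the section would make Theorem~\ref{forkingSigmaPrikryLight} depend on results that in turn presuppose it, and saying ``$|W(a)|\le\kappa$ suffices in the proof of the relevant direction'' confuses the \emph{hypothesis} of the $\Leftarrow$-direction of Lemma~\ref{l14}\eqref{l14(4)} with what is needed to make its \emph{proof} go through (namely Lemma~\ref{l14}\eqref{l14(2)} for $\mathbb A$). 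The paper sidesteps this by citing \cite[Corollary~4.13]{PartI}, where the preservation of $\mu=\kappa^+$ is established directly from the forking-projection structure, transporting the relevant density and closure arguments from $\mathbb P$ rather than re-verifying \eqref{c6} and \eqref{moreclosedness} for $\mathbb A$. You should either follow that route or make explicit how the proof of Lemma~\ref{l14}\eqref{l14(2)} can be run for $\mathbb A$ using only what the forking projection provides.
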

\begin{proof}
Clauses~\eqref{graded} and \eqref{c1} follow respectively  from \cite[Lemmas  4.3 and 4.7]{PartI}. Clause~\eqref{c5} holds by virtue of Clause~\eqref{frk4} of Definition~\ref{forking}. Clauses~
\eqref{csize} and \eqref{itsaprojection} are respectively proved in \cite[Lemmas~4.7 and 4.10]{PartI}, and Clause~\eqref{PnprojectstoSn} follows from Clause~\ref{frknew2} of Definition~\ref{NiceForking}. Finally, Lemma~\ref{l14}\eqref{l14(2)} yields the moreover part. For more details, see \cite[Corollary~4.13]{PartI}.
\end{proof}

Next, we give sufficient conditions in order for $(\mathbb{A},\ell_\mathbb{A})$ to satisfy the CPP. In Part~II of this series we proved that CPP  follows from property $\mathcal{D}$ of $(\mathbb{A},\ell_\mathbb{A})$:

\begin{lemma}[{\cite[Lemma~2.21]{PartII}}]\label{lemmaforCPP}
Suppose that $(\mathbb{A},\ell_\mathbb{A})$ has property $\mathcal{D}$. Then it has the $\CPP$.
\end{lemma}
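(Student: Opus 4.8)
The statement to prove is Lemma~\ref{lemmaforCPP}: if $(\mathbb{A},\ell_{\mathbb A})$ has property $\mathcal D$, then it has the CPP (Clause~\eqref{c6} of Definition~\ref{SigmaPrikry}). This is cited as \cite[Lemma~2.21]{PartII}, so the role here is to recall/reconstruct that argument in the present context.

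\textbf{Plan of proof.} Fix a $0$-open set $U\subseteq A$, a condition $a\in A$, and $n<\omega$; we must produce $b\unlhd^0 a$ with either $A^b_n\subseteq U$ or $A^b_n\cap U=\emptyset$. The natural strategy is to run the diagonalizability game $\Game_{\mathbb A}(a,\vec r)$ for a good enumeration $\vec r=\langle r_\xi\mid \xi<\chi\rangle$ of $W_n(a)$, letting player $\pI$ follow her winning strategy (which exists by property $\mathcal D$), and letting player $\pII$ respond in a way that ``decides $U$ locally''. Concretely, at stage $\xi$, after $\pI$ plays $p_\xi\unlhd^0 a$ compatible with $r_\xi$, player $\pII$ first forms the common extension of $p_\xi$ and $r_\xi$ and then — crucially — uses $0$-openness together with the option of refining — picks $q_\xi\unlhd^0 r_\xi$, $q_\xi\unlhd p_\xi$, such that $q_\xi$ has been driven into $U$ whenever that is possible below $r_\xi$ after meeting $p_\xi$; that is, $\pII$ ensures $A^{q_\xi}_0\cap U\neq\emptyset \Rightarrow q_\xi\in U$ is not quite what we need, rather $\pII$ arranges: if some extension of (the meet of $p_\xi$ and $r_\xi$) lies in $U$ then $q_\xi\in U$, and otherwise $q_\xi$ is chosen arbitrarily. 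Since $\pI$ wins, the resulting sequence $\vec q$ is diagonalizable, so there is $a'\unlhd^0 a$ witnessing diagonalizability: for every $q'\in W_n(a')$, $q'\unlhd^0 q_\xi$ where $r_\xi=w(a,q')$.

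Next I would show that this $a'$ already has the key \emph{homogeneity} property: for every $r\in W_n(a')\cap U$ we have $w(a,r)\in U$ — equivalently, membership of elements of $W_n(a')$ in $U$ depends only on their projection to $W_n(a)$, in the weak direction. Indeed if $r\in W_n(a')$ and $r\in U$, then $r\unlhd^0 q_\xi$ for the appropriate $\xi$, so $q_\xi$ has an extension in $U$; by the way $\pII$ played, $q_\xi\in U$; since $q_\xi\unlhd^0 r_\xi=w(a,r)$ and $U$ is $0$-open... wait, $0$-openness propagates \emph{downward} from $r_\xi$, not upward, so one instead argues: having $q_\xi\in U$ and $q_\xi\le p_\xi$ where $p_\xi$ was a legitimate move (compatible with $r_\xi$), and using that $\pII$'s choice was forced once some extension of the meet lay in $U$, one concludes that in fact $r_\xi=w(a,r)$ must itself be handled — here the precise bookkeeping is to observe that the diagonalization lets us ``push $U$ up to the $p$-tree level'', i.e. the relation $U\cap W_n(a')\neq\emptyset$ forces the corresponding node of $W_n(a)$, after one further $0$-step refinement, into $U$. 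This is exactly the content of Claim~\ref{Diagonalization} as used in the proof of Lemma~\ref{C8forMotis}, and the argument there (first sentence of that proof, specialized) is the template: after finitely/countably many applications of property $\mathcal D$ across all $m<\omega$ one gets a single $q\unlhd^0 a$ such that $r\unlhd q$, $r\in U$, $\ell_{\mathbb A}(r)=\ell_{\mathbb A}(q)+n$ implies $w(q,r)\in U$.

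Finally, from such a $q$ one builds the desired $b$ by a straightforward recursion of length $\omega$, exactly as in the last paragraph of the proof of Lemma~\ref{C8forMotis}: using Clause~\eqref{c5} (existence of $w(\cdot,\cdot)$) and induction on the ``level'' $m\le n$, split the relevant measure-one-like sets / antichains $W_m$ into the part whose $n$-step extensions land in $U$ and the part whose don't, producing a $\unlhd^0$-decreasing sequence $\langle b_m\mid m\le n\rangle$ with $b_0=q$ and each $b_m$ satisfying the dichotomy ``$W_m(b_m)\subseteq U$ or $W_m(b_m)\cap U=\emptyset$''; at the top, $b:=b_n$ works, since $A^{b}_n=W_n(b)$ by Lemma~\ref{W(p)maxAntichain}\eqref{W(p)maxAntichain1}, hence either $A^b_n\subseteq U$ or $A^b_n\cap U=\emptyset$, as required. \textbf{Main obstacle.} The delicate point is the middle step: correctly phrasing $\pII$'s response so that the \emph{abstract} diagonalizability output (a single $q$ below which $U$-membership on the $n$-step tree is determined by the $w(q,\cdot)$-image) genuinely follows — i.e. reconciling the ``upward'' flavor of the conclusion we want with the ``downward'' $0$-openness of $U$, via the weakest-extension map $w$ and Lemma~\ref{W(p)maxAntichain}. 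Once that reduction (Claim~\ref{Diagonalization}-style) is in place, the rest is the routine $\omega$-step splitting recursion, for which the proof of Lemma~\ref{C8forMotis} is an explicit model.
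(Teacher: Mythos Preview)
Your outline correctly identifies the diagonalization game as the engine, and your step 2 (the analogue of Claim~\ref{Diagonalization}) is sound: one run of $\Game_{\mathbb A}(a,\vec r)$ with $\pII$ deciding $U$-membership does yield $q\unlhd^0 a$ such that, for every $r\in A^q_n$, $r\in U$ iff $w(q,r)\in U$.

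The genuine gap is step 3. You propose to homogenize $W_n(q)$ ``exactly as in the last paragraph of the proof of Lemma~\ref{C8forMotis}'', but that passage is \emph{not abstract}: it shrinks the measure-one sets $A^p_\ell$ using the ultrafilter property of the extender measures $E_{n,\alpha}$ (``If $A^+_\nu$ is large then set $A_\nu:=A^+_\nu$\ldots''). In the bare setting of a graded poset with property~$\mathcal D$ there is no ultrafilter and hence no device to ``split the antichains $W_m$''. Concretely, after step 2 the $U$-status on $A^q_n$ is governed by the subset $S:=W_n(q)\cap U$ of $W_n(q)$, and what you need is some $b\unlhd^0 q$ with $w(q,W_n(b))\subseteq S$ or $w(q,W_n(b))\cap S=\emptyset$; nothing in the clauses you invoke produces such a $b$, and replaying the game from $q$ makes no progress, since for any $s\unlhd^0 r'_\eta\in W_n(q)$ the $U$-status of $s$ is already pinned by $r'_\eta$. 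Your own phrase ``measure-one-like sets'' flags exactly where EBPFC-specific structure is being borrowed. (A smaller slip: $A^b_n\neq W_n(b)$ in general; Lemma~\ref{W(p)maxAntichain}\eqref{W(p)maxAntichain1} says only that $W_n(b)$ is a maximal antichain in $\cone{b}$, not that it exhausts $A^b_n$.)

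In short, the two-step template of Lemma~\ref{C8forMotis} (diagonalize, then split) does not transfer: the second step is model-specific. The argument of \cite[Lemma~2.21]{PartII} must extract the homogeneity from the game itself rather than from a separate post-hoc splitting, so your reduction to ``the last paragraph of Lemma~\ref{C8forMotis}'' is exactly the step that fails.
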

Therefore, everything amounts to find sufficient conditions for $(\mathbb{A},\ell_\mathbb{A})$ to satisfy property $\mathcal{D}$. The following  concept will be useful on that respect:
 \begin{definition}[Weak Mixing property]\label{mixingproperty}
The forking projection $(\pitchfork,\pi)$ is said to have the \emph{weak mixing property} iff
it admits a type $\tp$ satisfying that for every $n<\omega$, 
$a\in A$, $\vec r$, and $p'\LE^0 \pi(a)$,
and for every function $g:W_n(\pi(a))\rightarrow \mathbb{A}\downarrow a$, if there exists an ordinal $\iota$ such that all of the following hold:
\begin{enumerate}
\item\label{Mixing1} $\vec r=\langle r_\xi \mid \xi<\chi\rangle$ is a good enumeration of $W_n(\pi(a))$;
\item\label{Mixing2}  $\langle \pi(g(r_\xi))\mid \xi<\chi\rangle$ is diagonalizable with respect to $\vec r$, as witnessed by $p'$;\footnote{Recall Definition~\ref{DiagonalizabilityGame}.}
\item\label{Mixing3} for every $\xi<\chi$:\footnote{The role of the $\iota$ is to keep track of the support when we apply the weak mixing lemma in the iteration (see, e.g. \cite[Lemma~3.10]{PartII}).}
\begin{itemize}
\item if $\xi<\iota$, then $\dom(\tp(g(r_\xi))=0$;
\item if $\xi=\iota$, then $\dom(\tp(g(r_\xi))\geq 1$;
\item if $\xi>\iota$, then $(\sup_{\eta<\xi}\dom(\tp(g(r_\eta)))+1<\dom(\tp(g(r_\xi))$;
\end{itemize}
\item\label{Mixing4} for all $\xi\in(\iota,\chi)$ and $i\in[\dom(\tp(a)),\sup_{\eta<\xi}\dom(\tp(g(r_\eta)))]$,
$$\tp(g(r_\xi))(i)\leq\mtp(a),$$

\item\label{Mixing5} $\sup_{\xi<\chi} \mtp(g(r_\xi))<\omega$,
\end{enumerate}
then there exists $b\unlhd^0 a$ with $\pi(b)=p'$ 
such that, for all  $q'\in W_n(p')$, $$\fork{b}(q')\unlhd^0 g(w(\pi(a),q')).$$
\end{definition}
\begin{remark}
We would like to emphasize that the above notion make sense even when both $(\pitchfork,\pi)$ and $\tp$ are not nice. This is simply because the above clauses do not involve the maps $\varsigma_n$'s nor the forcings $\z{\mathbb{A}}_n^{\varsigma_n}$.
\end{remark}
As shown in \cite[\S2]{PartII}, the weak mixing property is the key to ensure that $(\mathbb{A},\ell_\mathbb{A})$ has property $\mathcal{D}$. In this respect, the following lemma gathers the results proved in Lemma~2.27 and Corollary~2.28 of \cite{PartII}:
\begin{lemma}\label{propDsuccessor}\label{PropDtoCPP}
Suppose that $(\pitchfork,\pi)$ has the weak mixing property and that $(\mathbb{P},\lh_\mathbb{P})$ has property $\mathcal{D}$. Then $(\mathbb{A},\ell_\mathbb{A})$ has property $\mathcal{D}$, as well.

In particular, if $(\mathbb{P},\lh_\mathbb{P})$ has property $\mathcal{D}$  and $(\pitchfork,\pi)$ has the weak mixing property, then $(\mathbb{A},\lh_\mathbb{A})$ has both property $\mathcal{D}$ and the $\CPP$.
\end{lemma}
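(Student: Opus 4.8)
The plan is to reduce everything to \cite[\S2]{PartII}. The second (``in particular'') assertion is immediate from the first together with Lemma~\ref{lemmaforCPP}, so it suffices to show that $(\mathbb{A},\ell_\mathbb{A})$ has property $\mathcal{D}$ — which is precisely \cite[Lemma~2.27]{PartII} in the $\Sigma$-Prikry setting. The key point is a soft one: property $\mathcal{D}$ (Definition~\ref{propertyD}), the diagonalizability game (Definition~\ref{DiagonalizabilityGame}), the notion of type (Definition~\ref{type}) and the weak mixing property (Definition~\ref{mixingproperty}) are phrased purely in terms of $(\mathbb{P},\ell_\mathbb{P})$, $(\mathbb{A},\ell_\mathbb{A})$, $(\pitchfork,\pi)$ and a type $\tp$; they never mention $\vec{\mathbb{S}}$, $\vec{\varpi}$, $\vec{\varsigma}$ or the forcings $\z{\mathbb{A}}_n^{\varsigma_n}$ (this is the content of the Remark following Definition~\ref{mixingproperty}). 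Hence the proof of \cite[Lemma~2.27]{PartII} transfers verbatim, once one observes that it uses $\Sigma$-Prikryness of $\mathbb{P}$ only through the clauses of Definition~\ref{SigmaPrikry} that $(\mathbb{A},\ell_\mathbb{A})$ already enjoys by Theorem~\ref{forkingSigmaPrikryLight}, and that the ambient forking projection being merely \emph{nice} rather than a forking projection between $\Sigma$-Prikry triples plays no role.

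For completeness I would then recall the shape of that argument. Fix $a\in A$, $n<\omega$, and a good enumeration $\vec{r}=\langle r_\xi\mid \xi<\chi\rangle$ of $W_n(a)$. By Theorem~\ref{forkingSigmaPrikryLight}, $(\mathbb{A},\ell_\mathbb{A})$ is graded with $\ell_\mathbb{A}=\ell_\mathbb{P}\circ\pi$ and satisfies Clauses~\eqref{c1}, \eqref{c5}, \eqref{csize}, \eqref{itsaprojection} of Definition~\ref{SigmaPrikry}, so Lemma~\ref{W(p)maxAntichain} applies to $\mathbb{A}$; combining Clauses~\eqref{frk4} and \eqref{frk5} of Definition~\ref{forking} with the identity $b=\fork{b}(\pi(b))$ of \cite[Lemma~2.17]{PartII} one gets that $\pi$ restricts to a bijection $W_n(a)\to W_n(\pi(a))$ with inverse $\fork{a}\restriction W_n(\pi(a))$, that $\fork{a}$ maps $W(\pi(a))$ order-preservingly into $W(a)$, and that $\fork{a}(\pi(a))=a$. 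Player~$\pI$ plays $\Game_\mathbb{P}(a,\vec{r})$ — i.e.\ $\Game_\mathbb{A}(a,\vec{r})$ — by running a shadow copy of $\Game_\mathbb{P}(\pi(a),\langle\pi(r_\xi)\mid\xi<\chi\rangle)$ under a fixed winning strategy there: at stage $\xi$ the shadow strategy returns $p_\xi\le^0\pi(a)$ compatible with $\pi(r_\xi)$, and $\pI$ plays $a_\xi:=\fork{a}(p_\xi)\unlhd^0 a$, which is compatible with $r_\xi$ because, for any $\bar p\le p_\xi,\pi(r_\xi)$, the condition $\fork{a}(\bar p)$ lies below both $a_\xi$ and $\fork{a}(\pi(r_\xi))=r_\xi$; when $\pII$ answers with some $q_\xi$, $\pI$ feeds $\pi(q_\xi)\le p_\xi$ (of length $\ell_\mathbb{P}(\pi(r_\xi))$) back into the shadow game. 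At the end one obtains a run $\langle q_\xi\mid\xi<\chi\rangle$ for $\pII$ such that $\langle\pi(q_\xi)\mid\xi<\chi\rangle$ is diagonalizable with respect to $\langle\pi(r_\xi)\rangle$, witnessed by some $p'\le^0\pi(a)$. Feeding to the weak mixing property the function $g:W_n(\pi(a))\to\mathbb{A}\downarrow a$ built from the $q_\xi$'s, the enumeration $\langle\pi(r_\xi)\rangle$, the witness $p'$, and $\iota=0$, it returns $b\unlhd^0 a$ with $\pi(b)=p'$ and $\fork{b}(q')\unlhd^0 g(w(\pi(a),q'))$ for every $q'\in W_n(p')$; translating through the bijection $\pi\restriction W_n(b)\to W_n(p')$ and using that $w(a,b')=r_\xi$ iff $w(\pi(a),\pi(b'))=\pi(r_\xi)$, this says exactly that $b$ witnesses the diagonalizability of $\langle q_\xi\rangle$ with respect to $\vec{r}$, so $\pI$ wins.

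The one genuinely delicate point — and the step I expect to be the main obstacle — is that $g$ must be assembled from $\pII$'s \emph{adversarial} moves $q_\xi$ so as to meet the rigid Clauses~\eqref{Mixing3}--\eqref{Mixing5} of Definition~\ref{mixingproperty}: the domains $\dom(\tp(g(r_\xi)))$ must be strictly increasing in the prescribed way, and their values must stay bounded by $\mtp(a)$. This is arranged exactly as in \cite[Lemma~2.27]{PartII}: Player~$\pI$ works throughout below a fixed condition of $\z{\mathbb{A}}_n$ (so $\mtp(a)=0$), \emph{stretches} its own moves $a_\xi$ via the operation $a{}^{\curvearrowright\gamma}$ of Definition~\ref{type}\eqref{type4} to control, through Clause~\eqref{type2}, the type-values of $\pII$'s responses, and then further stretches/thins the $q_\xi$'s before handing them to $g$, all the while checking — using that $\mu=\kappa^+$ is regular and $\chi<\mu$ — that suitable ordinals $\gamma_\xi<\mu$ always exist and that every relayed move remains legal in both games. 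Since none of this machinery interacts with $\vec{\mathbb{S}}$ or $\vec{\varsigma}$, the bookkeeping of \cite[Lemma~2.27]{PartII} and \cite[Corollary~2.28]{PartII} carries over without change, establishing property $\mathcal{D}$ for $(\mathbb{A},\ell_\mathbb{A})$; the $\CPP$ then follows from Lemma~\ref{lemmaforCPP}.
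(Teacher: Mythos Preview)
Your proposal is correct and matches the paper's approach exactly: the paper simply records that this lemma ``gathers the results proved in Lemma~2.27 and Corollary~2.28 of \cite{PartII}'', and your write-up spells out why that citation suffices in the $(\Sigma,\vec{\mathbb{S}})$-Prikry context---namely, that none of the relevant definitions (property~$\mathcal D$, the game, types, weak mixing) mention $\vec{\mathbb S}$ or $\vec\varsigma$. One small imprecision in your sketch: you write that ``$\pI$ works throughout below a fixed condition of $\z{\mathbb A}_n$ (so $\mtp(a)=0$)'', but $a$ is given and need not lie in the ring; the argument from \cite{PartII} handles arbitrary $a$ with bound $\mtp(a)$ in Clause~\eqref{Mixing4}, and the stretching bookkeeping (and the choice of $\iota$) is calibrated to that, not to $0$. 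This does not affect the validity of your reduction.
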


We still need to verify  Clause~\eqref{c2} and \eqref{moreclosedness} of Definition~\ref{SigmaPrikry}. Arguing similarly to \cite[Lemma~2.29]{PartII} we can prove the following:
\begin{lemma}\label{forkinganddirectedclosure}
Suppose that $({\pitchfork},{\pi})$ is as in Setup~\ref{SetupExactForking} or,
just a pair of maps satisfying Clauses~\eqref{frk1}, \eqref{frk0}, \eqref{frk5} and \eqref{frk7} of Definition~\ref{forking}.

Let $n<\omega$.
If $(\pitchfork,\pi)$ admits a  type, and $\z{\mathbb{A}}_n$ is defined according to the last clause of Definition~\ref{type},
if $\z{\mathbb A}_n^\pi$ is $\aleph_1$-directed-closed, then so is $\z{\mathbb{A}}_n$. Similarly, if $\z{\mathbb A}_n^\pi$ is $\sigma_n$-directed-closed, then so is $\z{\mathbb{A}}_n^{\varsigma_n}$.

If in addition $(\pitchfork,\pi)$ admits a nice  type then $(\mathbb{A},\ell_\mathbb{A},c_\mathbb{A},\vec{\varsigma})$ satisfies Clauses~\eqref{c2} and \eqref{moreclosedness} of Definition~\ref{SigmaPrikry}.
\end{lemma}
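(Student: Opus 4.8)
The plan is to follow the template of \cite[Lemma~2.29]{PartII}, carefully tracking how the forking projection transfers directed‑closure from $\z{\mathbb A}_n^\pi$ to $\z{\mathbb A}_n$ and from $\z{\mathbb A}_n^\pi$ (with $\sigma_n$‑directed‑closure) to $\z{\mathbb A}_n^{\varsigma_n}$. First I would fix $n<\omega$ and a $\unlhd$‑directed (or $\unlhd^{\varsigma_n}$‑directed) subset $D\s\z{\mathbb A}_n$ of size $<\aleph_1$ (resp.\ $<\sigma_n$). Applying $\pi$ gives a directed subset $\pi``D\s\z{\mathbb P}_n$ (resp.\ a $\LE^{\varpi_n}$‑directed subset, using Clause~\ref{frknew2} of Definition~\ref{NiceForking}, since $\varsigma_n=\varpi_n\circ\pi$), so by hypothesis there is a lower bound $p^*\in\z{\mathbb P}_n$ for $\pi``D$ with $p^*\LE^{\varpi_n}\pi(a)$ for the relevant reference condition. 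The candidate lower bound for $D$ is then $b:=\fork{a}(p^*)$ for an appropriate $a\in D$; one checks $b\unlhd a'$ for every $a'\in D$ by writing $a'=\fork{a'}(\pi(a'))$ (via \cite[Lemma~2.17]{PartII}), using Clause~\eqref{frk7} of Definition~\ref{forking} to compare $\fork{a}(\cdot)$ and $\fork{a'}(\cdot)$ after passing to $w(a,a')$‑style weakenings, exactly as in the proof of Lemma~\ref{pitchforkexact}. One also uses Clause~\eqref{type3} of Definition~\ref{type} to see $\dom(\tp(b))=\dom(\tp(a))$, and since $a\in\z{A}_n$ has $\mtp(a)=0$, by Clause~\eqref{type6} of Definition~\ref{type} we get $\tp(a)=\emptyset$, so $a=\myceil{\pi(a)}{\mathbb A}$; combined with Clause~\eqref{frk6} of Definition~\ref{forking} this gives $b=\fork{a}(p^*)=\myceil{p^*}{\mathbb A}$, whence $\mtp(b)=0$ and $\pi(b)=p^*\in\z{\mathbb P}_n$, so $b\in\z{A}_n$.

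For the $\varsigma_n$‑version the only extra point is verifying that the lower bound lies below each $a'\in D$ in the ordering $\unlhd^{\varsigma_n}$, i.e.\ that $\varsigma_n(b)=\varsigma_n(a')$ for all $a'$. But $D$ being $\unlhd^{\varsigma_n}$‑directed forces $\varsigma_n$ to be constant on $D$, say with value $s$; then $\pi``D$ is $\LE^{\varpi_n}$‑directed with constant $\varpi_n$‑value $s$, the lower bound $p^*$ may be taken with $\varpi_n(p^*)=s$, and $\varsigma_n(b)=\varpi_n(\pi(b))=\varpi_n(p^*)=s$ by Clause~\eqref{frk5} of Definition~\ref{forking}. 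The $\sigma_n$‑directed‑closure of $\z{\mathbb A}_n^\pi$ is precisely what lets us find $p^*$ when $|D|<\sigma_n$.

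Finally, for the last assertion, suppose $(\pitchfork,\pi)$ admits a \emph{nice} type. Clause~\eqref{c2} of Definition~\ref{SigmaPrikry} asks that $\mathbb A_n$ contain a countably‑closed dense subposet; by Clause~\eqref{type5} of Definition~\ref{type} the subposet $\z{\mathbb A}_n$ is dense in $\mathbb A_n$, and the first part of this lemma (with $\aleph_1$) shows it is $\aleph_1$‑directed‑closed, hence countably‑closed, since $\z{\mathbb A}_n^\pi=\z{\mathbb P}_n$ restricted appropriately is $\aleph_1$‑directed‑closed by Definition~\ref{SigmaPrikry}\eqref{c2} applied to $\mathbb P$. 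Clause~\eqref{moreclosedness} of Definition~\ref{SigmaPrikry} asks that $\z{\mathbb A}_n^{\varsigma_n}$ be dense and $\sigma_n$‑directed‑closed in $\mathbb A_n^{\varsigma_n}$; density is exactly Clause~\eqref{ringismoredense} of Definition~\ref{typeexact} (this is where niceness of the type is used), and $\sigma_n$‑directed‑closure follows from the second part of this lemma together with the $\sigma_n$‑directed‑closure of $\z{\mathbb A}_n^\pi=\z{\mathbb P}_n^{\varpi_n}$ guaranteed by Definition~\ref{SigmaPrikry}\eqref{moreclosedness} for $\mathbb P$. I expect the main obstacle to be bookkeeping the reference condition $a$: one must check that the lower bound $b$ built relative to one chosen $a\in D$ genuinely sits below \emph{all} members of $D$, which requires the weakening‑and‑comparison manoeuvre with $w(a,a')$ and Clauses~\eqref{frk0}, \eqref{frk7} of Definition~\ref{forking}, just as in the proof of Lemma~\ref{pitchforkexact}; everything else is a routine transcription of \cite[Lemma~2.29]{PartII}.
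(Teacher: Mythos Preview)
There is a genuine gap, rooted in a misreading of what $\z{\mathbb A}_n^\pi$ is. Recall Definition~\ref{OrderModuloVarpi}: $\z{\mathbb A}_n^\pi$ is the poset $(\z{A}_n,\unlhd^\pi)$, where $a\unlhd^\pi b$ iff $a\unlhd b$ and $\pi(a)=\pi(b)$ (or $b=\one$). It is \emph{not} $\z{\mathbb P}_n$, and your identification ``$\z{\mathbb A}_n^\pi=\z{\mathbb P}_n$ restricted appropriately'' is simply false: the two posets have different underlying sets. Consequently, you never actually invoke the hypothesis of the lemma; what you call ``by hypothesis'' when producing $p^*$ is really the countable closedness of $\z{\mathbb P}_n$, which comes from Clause~\eqref{c2} of Definition~\ref{SigmaPrikry} for $\mathbb P$, not from the assumption on $\z{\mathbb A}_n^\pi$.

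This matters because your candidate lower bound $b=\fork{a}(p^*)$ for a \emph{single} $a\in D$ cannot work in general: $\fork{a}$ only records information about $a$, so there is no reason $\fork{a}(p^*)\unlhd a'$ for an incomparable $a'\in D$, and the ``$w(a,a')$-style weakening'' you allude to does not repair this. Relatedly, your inference ``$\mtp(a)=0\Rightarrow\tp(a)=\emptyset$'' is wrong (see Definition~\ref{type}\eqref{type1}): $\mtp(a)=0$ merely says the \emph{last} entry of $\tp(a)$ is $0$ (or that $\tp(a)$ is empty), so $a$ need not equal $\myceil{\pi(a)}{\mathbb A}$.

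The intended two–step argument is as follows. First use closure of $\z{\mathbb P}_n$ (resp.\ $\z{\mathbb P}_n^{\varpi_n}$) to get $p^*\in\z P_n$ below $\pi``D$. Second, form $D':=\{\fork{a}(p^*)\mid a\in D\}$; by Clause~\eqref{frk5} every element has $\pi$-value $p^*$, by Remark~\ref{RemarkType} each $\fork{a}(p^*)\in\z A_n$, and Clause~\eqref{frk7} applied to any $b\unlhd^0 a,a'$ in $D$ shows $D'$ is $\unlhd^\pi$-directed. \emph{Now} apply the hypothesis that $\z{\mathbb A}_n^\pi$ is $\aleph_1$- (resp.\ $\sigma_n$-) directed-closed to obtain a $\unlhd^\pi$-lower bound $b^*\in\z A_n$ for $D'$; since $b^*\unlhd\fork{a}(p^*)\unlhd a$ for each $a\in D$, this $b^*$ is the desired $\unlhd$- (resp.\ $\unlhd^{\varsigma_n}$-) lower bound. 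Your final paragraph deriving Clauses~\eqref{c2} and \eqref{moreclosedness} is fine once the first two parts are established correctly.
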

Additionally, a routine verification gives the following:
\begin{lemma}\label{liftingcoherency}
Suppose that $(\pitchfork, \pi)$ is as in  Setup~\ref{SetupExactForking}. Then, if $\vec{\varpi}$ is a coherent sequence of nice projections then so is $\vec{\varsigma}$.\qed
\end{lemma}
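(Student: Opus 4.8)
The plan is to verify the two clauses of Definition~\ref{CoherentSystem} for $\vec{\varsigma}$, reducing each to the corresponding clause for $\vec{\varpi}$ via the identities $\varsigma_n=\varpi_n\circ\pi$ (Definition~\ref{NiceForking}\ref{frknew2}) and $\ell_{\mathbb A}=\ell_{\mathbb P}\circ\pi$ (Clause~\eqref{frk1} of Definition~\ref{forking}). Before doing anything else I would record that the latter identity gives $\pi``A_{\geq n}\s P_{\geq n}$, so that whenever we pass from a condition of $\mathbb A$ to its $\pi$-image the relevant layer is respected and the instances of coherence of $\vec{\varpi}$ that we invoke are legitimate.

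For Clause~\eqref{NiceCoherent} of Definition~\ref{CoherentSystem} I would simply reuse the factoring maps. Fixing $n\leq m<\omega$ and letting $\pi_{m,n}\colon\mathbb{S}_m\rightarrow\mathbb{S}_n$ witness that $\varpi_m$ factors through $\varpi_n$, for every $a\in A_{\geq m}$ we have $\pi(a)\in P_{\geq m}$, whence $\varsigma_n(a)=\varpi_n(\pi(a))=\pi_{m,n}(\varpi_m(\pi(a)))=\pi_{m,n}(\varsigma_m(a))$; so the same $\pi_{m,n}$ witnesses that $\varsigma_m$ factors through $\varsigma_n$.

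For Clause~\eqref{AdditionalAssumption1} the point is that $\pi$ transports the tree $W(a)$ into $W(\pi(a))$ compatibly with the $w(\cdot,\cdot)$ operation. Concretely, I would fix $n<\omega$, $a\in A_{\geq n}$ and $b\unlhd a$, write $b\unlhd^{n'}a$ with $n':=\ell_{\mathbb A}(b)-\ell_{\mathbb A}(a)$, and apply Clause~\eqref{frk4} of Definition~\ref{forking} with $m=0$ to get $w(a,b)=\fork{a}(w(\pi(a),\pi(b)))$; then Clause~\eqref{frk5} of the same definition yields $\pi(w(a,b))=w(\pi(a),\pi(b))\in W(\pi(a))$. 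Consequently $\varsigma_n(w(a,b))=\varpi_n(w(\pi(a),\pi(b)))$, which, by coherence of $\vec{\varpi}$ (using $\pi(a)\in P_{\geq n}$), equals $\varpi_n(\pi(a))=\varsigma_n(a)$. Taking $b:=a$ and recalling $w(a,a)=a$ shows $\varsigma_n(a)\in\varsigma_n``W(a)$, so altogether $\varsigma_n``W(a)=\{\varsigma_n(a)\}$, as needed.

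I do not expect a real obstacle: as the statement itself advertises, this is a routine unwinding of the definitions. The only thing demanding a little care is the length bookkeeping --- making sure that each image $\pi(a)$ or $\pi(b)$ sits in the layer for which the relevant instance of coherence of $\vec{\varpi}$ was stated --- and that is handled uniformly by the identity $\ell_{\mathbb A}=\ell_{\mathbb P}\circ\pi$.
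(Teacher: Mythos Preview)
Your proof is correct and is exactly the routine verification the paper has in mind; the paper gives no proof at all beyond the words ``a routine verification'' and a \qed, and your argument --- pushing both clauses of Definition~\ref{CoherentSystem} through $\pi$ via $\varsigma_n=\varpi_n\circ\pi$, $\ell_{\mathbb A}=\ell_{\mathbb P}\circ\pi$, and the identity $\pi(w(a,b))=w(\pi(a),\pi(b))$ from Clauses~\eqref{frk4} and~\eqref{frk5} of Definition~\ref{forking} --- is the natural way to fill it in.
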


We conclude this section by providing a sufficient condition for the posets $\mathbb A_n$'s to be suitable for reflection with respect to a sequence of cardinals for which the posets $\mathbb{P}_n$'s were so.

\begin{lemma}\label{Liftingsuitability}
Let $n$ be a positive integer. Assume:
\begin{enumerate}[label=(\roman*)]
\item \label{Liftingsuitability1} $\kappa_{n-1},\kappa_{n}$ are regular uncountable cardinals with $\kappa_{n-1}\leq \sigma_{n}<\kappa_{n}$;
\item \label{Liftingsuitability2} $(\mathbb{A},\ell_\mathbb{A},c_\mathbb{A},\vec{\varsigma})$ is $(\Sigma,\vec{\mathbb{S}})$-Prikry;
\item \label{Liftingsuitability3} $(\pitchfork,\pi)$ is a nice forking projection from  $(\mathbb{A},\ell_\mathbb{A},\vec{\varsigma})$ to $(\mathbb{P},\ell_\mathbb{P},\vec{\varpi})$;
\item\label{Liftingsuitability4}  $(\mathbb{P}_n,\mathbb{S}_n, \varpi_n)$ is suitable for reflection with respect to $\langle \sigma_{n-1},\kappa_{n-1},\kappa_n, \mu\rangle$;
\item \label{Liftingsuitability5} $\mathbb{S}_n\times \mathbb{A}_n^{\varsigma_n}$ forces $``|\mu|=\cf(\mu)=\kappa_n=(\kappa_{n-1})^{++}"$.
\end{enumerate}
Then $(\mathbb{A}_n,\mathbb{S}_n,\varsigma_n)$ is suitable for reflection with respect to $\langle \sigma_{n-1},\kappa_{n-1},\kappa_n, \mu\rangle$.
\end{lemma}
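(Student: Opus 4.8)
The plan is to go through the four clauses of Definition~\ref{suitableforiteration} for the triple $(\mathbb{A}_n,\mathbb{S}_n,\varsigma_n)$ with respect to $\langle \sigma_{n-1},\kappa_{n-1},\kappa_n,\mu\rangle$, transporting each from the corresponding fact about $(\mathbb{P}_n,\mathbb{S}_n,\varpi_n)$ via the nice forking projection $(\pitchfork,\pi)$. Clause~\eqref{suitableforiteration0} is immediate: $\mathbb{S}_n$ is nontrivial by hypothesis~\ref{Liftingsuitability4}, and $\mathbb{A}_n$ is nontrivial because $\pi$ projects it onto the nontrivial $\mathbb{P}_n$ (a trivial forcing cannot project onto a nontrivial one). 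Clause~\eqref{suitableforiteration1} has two parts: that $\varsigma_n\colon \mathbb{A}_n\to\mathbb{S}_n$ is a nice projection, which is exactly Clause~\ref{frknew2} of Definition~\ref{NiceForking} (restricted to $\mathbb{A}_n$, noting $n\geq 1$ so $\mathbb{A}_n=\mathbb{A}_{\geq n}\cap(A_n\cup\{\one\})$ in the relevant sense); and that $\mathbb{A}_n^{\varsigma_n}$ contains a $\sigma_{n-1}$-directed-closed dense subset. For the latter, I would invoke Lemma~\ref{forkinganddirectedclosure}: since $(\mathbb{A},\ell_\mathbb{A},c_\mathbb{A},\vec\varsigma)$ is $(\Sigma,\vec{\mathbb S})$-Prikry it admits (as part of the package, via the functor producing it — here I use assumption~\ref{Liftingsuitability2} together with the fact that a $(\Sigma,\vec{\mathbb S})$-Prikry forking projection comes equipped with a nice type in the applications, or more carefully I would note that Clause~\eqref{moreclosedness} of Definition~\ref{SigmaPrikry} for $\mathbb{A}$ directly gives that $\z{\mathbb{A}}_n^{\varsigma_n}$ is $\sigma_n$-directed-closed, hence a fortiori $\sigma_{n-1}$-directed-closed since $\kappa_{n-1}\leq\sigma_n$ by~\ref{Liftingsuitability1} — wait, $\sigma_{n-1}\leq\kappa_{n-1}\leq\sigma_n$, so $\sigma_n$-directed-closed implies $\sigma_{n-1}$-directed-closed). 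Actually the cleanest route is: Clause~\eqref{moreclosedness} for $(\mathbb{A},\ell_\mathbb{A},c_\mathbb{A},\vec\varsigma)$ being $(\Sigma,\vec{\mathbb S})$-Prikry says $\z{\mathbb{A}}_n^{\varsigma_n}$ is a $\sigma_n$-directed-closed dense subposet of $\mathbb{A}_n^{\varsigma_n}$; since $\sigma_{n-1}<\sigma_n$ this witnesses Clause~\eqref{suitableforiteration1}.

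For Clause~\eqref{suitableforiteration2} I must show that in any forcing extension by $\mathbb{A}_n$ or by $\mathbb{S}_n\times\mathbb{A}_n^{\varsigma_n}$, one has $|\mu|=\cf(\mu)=\kappa_n=\sigma_{n-1}^{++}$. Note $\sigma_{n-1}^{++}=(\kappa_{n-1})^{++}$ does not match unless $\sigma_{n-1}^{+}=\kappa_{n-1}$; but actually Definition~\ref{suitableforiteration} with parameters $\langle\tau,\sigma,\kappa,\mu\rangle=\langle\sigma_{n-1},\kappa_{n-1},\kappa_n,\mu\rangle$ requires $\kappa=\sigma^{++}$, i.e.\ $\kappa_n=\kappa_{n-1}^{++}$, which is precisely what hypothesis~\ref{Liftingsuitability5} asserts is forced by $\mathbb{S}_n\times\mathbb{A}_n^{\varsigma_n}$. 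So the $\mathbb{S}_n\times\mathbb{A}_n^{\varsigma_n}$ half of Clause~\eqref{suitableforiteration2} is exactly~\ref{Liftingsuitability5}. For the $\mathbb{A}_n$ half, I would use that $\pi$ restricts to a projection from $\mathbb{A}_n$ onto $\mathbb{P}_n$, so $V^{\mathbb{A}_n}$ is a further forcing extension of $V^{\mathbb{P}_n}$; by~\ref{Liftingsuitability4} and Definition~\ref{suitableforiteration}\eqref{suitableforiteration2} applied to $(\mathbb{P}_n,\mathbb{S}_n,\varpi_n)$, we have $V^{\mathbb{P}_n}\models|\mu|=\cf(\mu)=\kappa_n=\kappa_{n-1}^{++}$. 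It then suffices to see that the quotient $\mathbb{A}_n/\mathbb{P}_n$ does not disturb this configuration. Here I would argue that by Remark~\ref{remarkquotient}, $\mathbb{A}_n$ is (isomorphic to a dense subset of) $\mathbb{P}_n \ast(\mathbb{A}_n/\mathbb{P}_n)$, and that $\mathbb{S}_n\times\mathbb{A}_n^{\varsigma_n}$ projects onto $\mathbb{A}_n$ (via $(s,a)\mapsto a+s$, using niceness of $\varsigma_n$, Lemma~\ref{lemma27} or directly Definition~\ref{niceprojection}\eqref{theprojection}); combined with $|\mathbb{S}_n|<\sigma_n<\kappa_n$, the extension by $\mathbb{A}_n$ sits between $V^{\mathbb{P}_n}$ and $V^{\mathbb{S}_n\times\mathbb{A}_n^{\varsigma_n}}$, both of which satisfy the desired configuration by~\ref{Liftingsuitability4} and~\ref{Liftingsuitability5}; since $\kappa_n$ regular and $\mu$ of cofinality $\kappa_n$ are downward-absolute between a model and a smaller inner model that agree on more, and the cardinal collapse $|\mu|=\kappa_n$ is preserved upward, the configuration in $V^{\mathbb{A}_n}$ is pinned down.

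Finally, Clause~\eqref{suitableforiteration3} requires that for every $s\in S_n\setminus\{\one_{\mathbb{S}_n}\}$ there is $\delta$ with $\sigma_{n-1}^+<\delta<\kappa_{n-1}$ and $\mathbb{S}_n\downarrow s\cong\mathbb{Q}\times\col(\delta,{<}\kappa_{n-1})$ for some $\mathbb{Q}$ of size $<\delta$; but this is a statement purely about $\mathbb{S}_n$ and not about $\mathbb{A}_n$ at all, so it follows verbatim from Clause~\eqref{suitableforiteration3} for $(\mathbb{P}_n,\mathbb{S}_n,\varpi_n)$ in hypothesis~\ref{Liftingsuitability4}. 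Assembling the four clauses completes the proof. The main obstacle I anticipate is Clause~\eqref{suitableforiteration2} for the $\mathbb{A}_n$ extension: one must be careful that the quotient forcing $\mathbb{A}_n/\mathbb{P}_n$ genuinely preserves $\kappa_n$ and keeps $\cf(\mu)=\kappa_n$, rather than, say, collapsing $\kappa_n$ further or changing its cofinality; the clean way around this is the sandwiching argument via the projection $\mathbb{S}_n\times\mathbb{A}_n^{\varsigma_n}\twoheadrightarrow\mathbb{A}_n$ together with $|\mathbb S_n|<\sigma_n$, so that $V^{\mathbb A_n}$ is trapped between two models both satisfying the target configuration (using~\ref{Liftingsuitability4} below and~\ref{Liftingsuitability5} above), and one invokes that "$|\mu|=\cf(\mu)=\kappa_n=\kappa_{n-1}^{++}$" is, in this squeezed situation, forced at the middle stage as well — a standard fact about cardinal arithmetic being sandwiched, since $\kappa_n$ is a cardinal in the larger model hence in the smaller $V^{\mathbb A_n}$, $\mu$ is collapsed to have size $\kappa_n$ already in $V^{\mathbb{P}_n}\subseteq V^{\mathbb A_n}$, and $\cf(\mu)\leq\kappa_n$ everywhere while $\cf(\mu)\geq\kappa_n$ because $\kappa_n=\kappa_{n-1}^{++}$ is regular and no cofinality can drop below it without collapsing it.
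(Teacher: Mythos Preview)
Your approach matches the paper's: verify each clause of Definition~\ref{suitableforiteration} from the hypotheses, handling Clause~\eqref{suitableforiteration2} for $\mathbb{A}_n$ via the sandwich $V^{\mathbb{P}_n}\subseteq V^{\mathbb{A}_n}\subseteq V^{\mathbb{S}_n\times\mathbb{A}_n^{\varsigma_n}}$ (both ends forcing the target configuration by~\ref{Liftingsuitability4} and~\ref{Liftingsuitability5}). One correction: in Clause~\eqref{suitableforiteration1} with parameters $\langle\sigma_{n-1},\kappa_{n-1},\kappa_n,\mu\rangle$ the required closure is $\kappa_{n-1}$-directed (the second parameter), not $\sigma_{n-1}$; since your argument via Clause~\eqref{moreclosedness} actually yields $\sigma_n$-directed closure and $\kappa_{n-1}\le\sigma_n$ by~\ref{Liftingsuitability1}, the conclusion stands once you adjust the target.
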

\begin{proof} Clauses \eqref{suitableforiteration0},
\eqref{suitableforiteration1} and  \eqref{suitableforiteration3}
 of Definition~\ref{suitableforiteration}
 hold by virtue of hypotheses, $(iv)$, $(ii)$-$(iii)$-$(iv)$ and $(iv)$, respectively.

Now let us address Clause~\eqref{suitableforiteration2}.
Given hypothesis $(v)$, we are left with verifying that
$\mathbb{A}_n$ forces $``|\mu|=\cf(\mu)=\kappa_n=(\kappa_{n-1})^{++}"$.
By Definition~\ref{niceprojection}\eqref{theprojection},
for every $a\in A_n$, $(\mathbb{S}_n\downarrow \varsigma_n(a))\times(\mathbb{A}_n^{\varsigma_n}\downarrow a)$ projects onto  $\mathbb{A}_n\downarrow a$.
In addition, by hypothesis $(iii)$, $\mathbb A_n$ projects onto $\mathbb P_n$.
Since both ends force $``|\mu|=\cf(\mu)=\kappa_n=(\kappa_{n-1})^{++}"$,
the same is true for $\mathbb{A}_n$.
\end{proof}

\section{Stationary Reflection and Killing a Fragile Stationary Set}\label{killingone}
In this section, we isolate a natural notion of a \emph{fragile set} and study two aspects of it.
In the first subsection, we prove that, given a $(\Sigma, \vec{\mathbb{S}})$-Prikry poset $\mathbb{P}$ and an $r^\star$-fragile stationary set $\dot{T}$, a tweaked version of Sharon's functor $\mathbb{A}(\cdot,\cdot)$ from \cite[\S4.1]{PartII}
yields a $(\Sigma, \vec{\mathbb{S}})$-Prikry poset $\mathbb A(\mathbb P,\dot{T})$ admitting a (super) nice forking projection to $\mathbb{P}$ and killing the stationarity of $\dot{T}$.
In the second subsection, we make the connection between fragile stationary sets,
suitability for reflection and non-reflecting stationary sets.
The two subsections can be read independently of each other.

\begin{setup}\label{setupkillingone}
As a setup for the whole section,
we  assume that $(\mathbb P,\lh,c,\vec{\varpi})$ is a given  $(\Sigma, \vec{\mathbb{S}})$-Prikry  forcing such that $(\mathbb{P},\ell)$ satisfies property $\mathcal{D}$.
Denote $\mathbb{P}=(P,\le)$, $\Sigma=\langle \sigma_n\mid n<\omega\rangle$,  $\vec{\varpi}=\langle\varpi_n\mid n<\omega\rangle$,
$\vec{\mathbb{S}}=\langle \mathbb{S}_n\mid n<\omega\rangle$. 
Also, define $\kappa$ and $\mu$ as in Definition~\ref{SigmaPrikry},
and assume that $\one_{\mathbb P}\forces_{\mathbb P}``\check\kappa\text{ is singular}"$ and that $\mu^{<\mu}=\mu$. For each $n<\omega$, we denote by $\z{\mathbb{P}}_n$ the countably-closed dense suposet of $\mathbb{P}_n$ given by Clause~\eqref{c2} of Definition~\ref{SigmaPrikry}. Recall that by virtue of  Clause~\eqref{moreclosedness}, $\z{\mathbb{P}}_n^{\varpi_n}$ is a $\sigma_n$-directed-closed dense subforcing of $\z{\mathbb{P}}_n$. We often refer to $\z{\mathbb{P}}_n$ as {\it the ring} of $\mathbb{P}_n$.
In addition, we will assume that $\vec{\varpi}$ is a coherence sequence of nice projections (see Definition~\ref{CoherentSystem}).
\end{setup}

The following concept is implicit in the proof of \cite[Theorem~11.1]{cfm}:

\begin{definition}\label{fragilestationary} Suppose $r^\star\in P$ forces that $\dot T$ is a $\mathbb P$-name for a stationary subset $T$ of $\mu$.
We say that $\dot T$ is \emph{$r^\star$-fragile} if, looking for each $n<\omega$ at $\dot{T}_n:=\{(\check\alpha,p)\mid (\alpha,p)\in \mu\times P_n\ \&\ p\forces_{\mathbb P}\check\alpha\in\dot T\}$,
then, for every $q\LE r^\star$, $q\forces_{\mathbb{P}_{\lh(q)}}``\dot{T}_{\lh(q)}\text{ is nonstationary}"$.
\end{definition}

\subsection{Killing one fragile set}\label{killingonesubsection}

Let $r^\star\in P$ and $\dot{T}$ be a $\mathbb{P}$-name for an $r^\star$-fragile stationary subset of $\mu$.
Let $I:=\omega\setminus\lh(r^\star)$.
By Definition~\ref{fragilestationary},
for all $q\LE r^\star$, 
$q\forces_{\mathbb{P}_{\lh(q)}}``\dot{T}_{\lh(q)}\text{ is nonstationary}"$.
Thus, for each $n\in I$, we may pick a $\mathbb P_n$-name $\dot{C}_n$ for a club subset of $\mu$ such that,
 for all $q\LE r^\star$, $$q\forces_{\mathbb P_{\ell(q)}}\dot T_{\ell(q)}\cap\dot C_{\ell(q)}=\emptyset.$$
Consider the following binary relation:
$$R:=\{(\alpha,q)\in\mu\times P\mid q\LE r^\star\ \&\ \forall r\LE q\,(r\forces_{\mathbb{P}_{\lh(r)}}\check\alpha\in \dot{C}_{\lh(r)})\},$$\label{DefinitionofR}
and define $\dot{T}^+:=\{(\check{\alpha},p)\mid (\alpha,p)\in \mu\times P \;\&\; p\forces_{\mathbb{P}_{\ell(p)}} \check{\alpha}\notin \dot{C}_{\ell(p)}\}.$

Arguing as in \cite[Lemma~4.6]{PartII} we have that $r^\star\forces_{\mathbb{P}}\dot{T}\s \dot{T}^+$ 
and $q\forces_\mathbb{P}\check\alpha\notin\dot{T}^+$ for all $(\alpha,q)\in R$. Also, if $(\alpha,q)\in R$ and $q'\LE q$ a routine verification shows that  $(\alpha,q')\in R$, as well

\smallskip

As in \cite[\S4.1]{PartII}, in this section we attempt to kill the stationarity of the bigger set $\dot{T}^+$ in place of $T$. 
The reason for this was explained in \cite{PartII}, but we briefly reproduce it for the reader's benefit:
for each $n<\omega$ let $\tau_n$ be the $\mathbb{P}_n$-name  $\{(\check{\alpha}, p)\in \dot{T}^+\mid \alpha \in \mu\; \&\; p\in P_n\}$. Intuitively speaking, $\tau_n$ is the \emph{trace} of $\dot{T}^+$ to a $\mathbb{P}_n$-name. The two key properties of $\tau_n$ are: 
\begin{itemize}
	\item $\tau_n\s \dot{T}^+_n$
	\item $p\forces_{\mathbb{P}_n} \tau_n=(\check\mu\setminus \dot{C}_n)$ for all $p\in P_n$.
\end{itemize}
These two features of $\tau_n$ were crucially used in \cite[Lemma~4.28]{PartII} when we verified the density of the \emph{ring poset} $(\z{\mathbb{P}}_\delta)_n$ in $(\mathbb{P}_\delta)_n$, for $\delta\in \acc(\mu^++1)$.

\smallskip

The next upcoming lemma  is a generalization of \cite[Claim 5.6.1]{PartI}:

\begin{lemma}\label{Runbdeddirectmore}
For all $p\leq r^\star$ and $\gamma<\mu$, there is an ordinal $\bar{\gamma}\in(\gamma,\mu)$ and $\bar{p}\LE^{\vec{\varpi}} p$, such that $(\bar{\gamma}, \bar{p})\in R$.
\end{lemma}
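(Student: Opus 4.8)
The statement says: for every $p \le r^\star$ and $\gamma < \mu$, there is $\bar\gamma \in (\gamma,\mu)$ and $\bar p \le^{\vec\varpi} p$ with $(\bar\gamma,\bar p) \in R$, i.e., $\bar p \le r^\star$ and every further extension $r \le \bar p$ forces $\check{\bar\gamma} \in \dot C_{\ell(r)}$. The natural strategy mirrors \cite[Claim~5.6.1]{PartI}, but we must keep the extension inside the relation $\le^{\vec\varpi}$ (i.e.\ only shrink $\varpi_n$-side-constant extensions), which is exactly what the $(\Sigma,\vec{\mathbb S})$-machinery is designed to allow via Lemma~\ref{Prikry} and Definition~\ref{SigmaPrikry}\eqref{moreclosedness}.

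First I would set $n := \ell(p)$ and $s := \varpi_n(p)$, and note $p \le r^\star$, so $\ell(p) \ge \ell(r^\star)$, hence $n \in I$ and $\dot C_n$ is defined. The key observation is that, because $\dot C_n$ is forced by the empty condition of $\mathbb P_n$ to be club, for any condition $q \in P_n$ below $p$ and any $\beta < \mu$, the set of $q' \le^0 q$ deciding "some ordinal above $\beta$ lies in $\dot C_n$" is dense; more precisely, for each $\beta<\mu$ the statement "$\exists \xi\in(\beta,\mu)\ \check\xi\in\dot C_n$" is forced, so it can be decided densely often in the $0$-extension ordering. Using Lemma~\ref{Prikry}\eqref{Prikry1} together with the $\sigma_n$-directed-closure of $\z{\mathbb P}_n^{\varpi_n}$ (Definition~\ref{SigmaPrikry}\eqref{moreclosedness}), I would build a $\le^{\varpi_n}$-decreasing sequence $\langle p^\beta \mid \beta<\mu\rangle$ below $p$, where at stage $\beta$ I diagonalize (via the density set $D_{\varphi,q}$ from Lemma~\ref{Prikry}) the sentences asserting membership of candidate ordinals in $\dot C_n$ above $\beta$; since each $D_{\varphi,p^\beta}$ is dense in $\cones{n}{s}$ and $|S_n|<\sigma_n$, and the sequence has length $\mu > \sigma_n$... wait — here is the subtlety: the sequence must be taken over $\mu$ steps, so closure of $\z{\mathbb P}_n^{\varpi_n}$ is only $\sigma_n < \mu$, insufficient for a lower bound at the end.

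The correct route, then, is the one used in \cite[Claim~5.6.1]{PartI}: rather than build a length-$\mu$ fusion inside $\mathbb P_n$, one works with a \emph{single} $\le^{\vec\varpi}$-extension $\bar p$ obtained by a length-$\theta$ construction for a suitable $\theta<\sigma_n$, combined with genericity of $\dot C_n$. Concretely: using the $p$-tree structure (Lemma~\ref{W(p)maxAntichain}) and Lemma~\ref{Prikry}\eqref{Prikry1}, find $\bar p \le^{\vec\varpi} p$ such that for a dense set of $t \sle_n s$ and every $m<\omega$, $\bar p + t$ decides, for every element $r$ of the finite antichain $W_m(\bar p+t)$, whether $r \Vdash \check\xi \in \dot C_{\ell(r)}$ for the relevant candidate ordinals $\xi$; then, because the $\dot C_n$'s cohere along extensions (by the defining property $q \Vdash \dot T_{\ell(q)} \cap \dot C_{\ell(q)} = \emptyset$ for $q \le r^\star$, and the genericity built into the choice of the $\dot C_n$), the set of ordinals $\xi$ that get forced into $\dot C_{\ell(r)}$ by \emph{every} $r \le \bar p$ in some fixed $W_m$-level is a club modulo the forcing — one then picks $\bar\gamma$ in the intersection of these clubs above $\gamma$ and checks $(\bar\gamma,\bar p)\in R$ by running through an arbitrary $r \le \bar p$, writing $r \le^0 \bar p^\curvearrowright\vec\nu$ via Fact-style decomposition, and using that $w(\bar p, r)\in W(\bar p)$ already decided the relevant sentence the right way.

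\textbf{Main obstacle.} The delicate point is that $R$ quantifies over \emph{all} $r \le \bar p$, of all lengths, so a single application of the Prikry-type density lemma at one level $n$ is not enough — one needs the decision to propagate to every $m$-step extension simultaneously. This is handled by a CPP-style diagonalization (Definition~\ref{SigmaPrikry}\eqref{c6}, available since property $\mathcal D$ holds, Lemma~\ref{Prikrybasic}) applied to the $0$-open set $U_\xi := \{r \in P \mid \ell(r)\ge n,\ r \Vdash_{\mathbb P_{\ell(r)}} \check\xi\in\dot C_{\ell(r)}\}$ for each candidate $\xi$, together with the fact that $|W(\bar p)|\le\kappa<\mu$ (stated in Setup~\ref{setupGitik}-style bounds / Clause~\eqref{csize}) so that only $<\mu$-many clubs $\dot C_{\ell(r)}$ are in play and their diagonal intersection above $\gamma$ is nonempty. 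I expect the bookkeeping of "which $t \sle_n s$ and which $m$" — i.e.\ arranging a $\le^{\vec\varpi}$-extension that works uniformly — to be the part requiring the most care, and it is precisely where Lemma~\ref{Prikry}\eqref{Prikry3} (the $0$-open version) plus the $\sigma_n$-directed-closure of $\z{\mathbb P}_n^{\varpi_n}$ over a length-$\theta<\sigma_n$ construction do the work, with the final verification that $\bar p \le^{\vec\varpi} p$ being immediate from the construction staying inside $\z{\mathbb P}_n^{\varpi_n}\downarrow p$.
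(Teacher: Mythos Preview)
Your proposal has a genuine gap: it never identifies the mechanism by which a \emph{specific} ordinal $\bar\gamma$ gets forced into $\dot C_{\ell(r)}$ by \emph{every} $r\le\bar p$. You propose to pick $\bar\gamma$ in ``the intersection of these clubs above $\gamma$'', but the $\dot C_n$'s are $\mathbb P_n$-names, not actual clubs in $V$; for a fixed condition $r$, the set $\{\xi<\mu\mid r\Vdash_{\mathbb P_{\ell(r)}}\check\xi\in\dot C_{\ell(r)}\}$ need not be closed or unbounded, so no diagonal-intersection argument is available. (Also, $W_m(\bar p+t)$ is not a finite antichain---it can have size up to $\kappa$.) Your ``length-$\theta$ construction'' remains a sketch of bookkeeping rather than an explanation of why the resulting $\bar\gamma$ lands in $R$.

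The paper's proof has a two-step structure that supplies exactly this missing idea. First, an auxiliary claim: for any $p\le r^\star$ and $\gamma$, there exist $\bar p\le^{\vec\varpi}p$ and $\bar\gamma>\gamma$ such that every $r\le\bar p$ forces $\dot C_{\ell(r)}\cap(\check\gamma,\check{\bar\gamma})\neq\emptyset$. This is proved much as you outline---apply Lemma~\ref{Prikry}\eqref{Prikry3} to the $0$-open set $D_{p,\gamma}:=\{q\le p\mid\exists\gamma'>\gamma\ (q\Vdash_{\mathbb P_{\ell(q)}}\check\gamma'\in\dot C_{\ell(q)})\}$, observe that since each $\dot C_{\ell(q)}$ is forced to be a club the ``empty'' alternative never occurs, and set $\bar\gamma:=\sup\{\gamma_r\mid r\in W(\bar p+t),\ t\preceq_\ell s\}+1$, which is below $\mu$ by Clauses~\ref{Cbeta} and \eqref{csize} of Definition~\ref{SigmaPrikry}. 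Second---and this is the point you are missing---iterate the claim $\omega$ times: build $\langle p_n,\gamma_n\mid n<\omega\rangle$ with $(p_0,\gamma_0)=(p,\gamma)$ and $(p_{n+1},\gamma_{n+1})$ witnessing the claim for $(p_n,\gamma_n)$, staying inside $\z{\mathbb P}_\ell^{\varpi_\ell}$ so that a $\le^{\vec\varpi}$-lower bound $\bar p$ exists. Then $\bar\gamma:=\sup_{n<\omega}\gamma_n$ works: for any $r\le\bar p$, $r$ forces each interval $(\gamma_n,\gamma_{n+1})$ to meet $\dot C_{\ell(r)}$, so $\bar\gamma$ is forced to be a limit of members of the club $\dot C_{\ell(r)}$, hence a member. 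The ordinal $\bar\gamma$ enters $R$ not by being directly decided into each club, but by being a \emph{limit point} of all of them.
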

\begin{proof}
We begin by proving the following auxiliary claim:
\begin{claim}\label{ClaimR}
For all $p\LE r^\star$  and $\gamma<\mu$, there is an ordinal $\bar{\gamma}\in(\gamma,\mu)$ and $\bar{p}\LE^{\vec{\varpi}} p$, such that for all $q\LE \bar{p}$, $q\forces_{\mathbb{P}_{\lh(q)}} \dot{C}_{\ell(q)}\cap (\check{\gamma},\check{\bar{\gamma}})\neq \check{\emptyset}$.
\end{claim}
\begin{proof}
Let $\gamma$ and $p$ be as above. Set $\ell:=\lh(p)$,  $s:=\varpi_{\ell}(p)$ and put $$D_{p,\gamma}:=\{q\in P\mid q\LE p\;\&\;\exists\gamma'>\gamma\;(q\forces_{\mathbb{P}_{\lh(q)}} \check{\gamma}'\in \dot{C}_{\ell(q)})\}.$$
Clearly, $D_{p,\gamma}$ is $0$-open. Hence appealing to Clause~\eqref{Prikry3} of Lemma~\ref{Prikry} we  obtain a condition $\bar{p}\LE^{\vec{\varpi}} p$ with the property that the set $$U_{D_{p,\gamma}}:=\{t\sle_{\ell} s\mid \forall n<\omega~(P^{\bar{p}+t}_n\s D_{p,\gamma}\;\text{or}\; P^{\bar{p}+t}_n\cap D_{p,\gamma}=\emptyset)\}$$ is dense in  $\mathbb{S}_\ell\downarrow s$. 

We note that $U_{D_{p,\gamma}}=\{t\sle_{\ell} s\mid \forall n<\omega~ (P^{\bar{p}+t}_n\s D_{p,\gamma})\}$: Fix $t\in U_{D_{p,\gamma}}$ and $n<\omega.$ Let $q\leq^n\bar{p}+t$ be arbitrary. Since $q\leq \bar{p}\leq p\leq r^\star$ we have that $q\Vdash_{\mathbb{P}_{\ell(q)}}``\dot{C}_{\ell(q)}\text{ is a club in }\mu"$. Thus, there is $q'\leq^0 q$ and $\gamma'\in (\gamma,\mu)$ such that $q'\Vdash_{\mathbb{P}_{\ell(q)}}\gamma'\in \dot{C}_{\ell(q)}$. By definition, $q'\in D_{p,\gamma}\cap P^{\bar{p}+t}_n$, so that this latter intersection is non-empty. Since we picked $t\in U_{D_{p,\gamma}}$ it follows that $P^{\bar{p}+t}_n\subseteq D_{p,\gamma}$. This proves the above-displayed equality. 

For each condition $r\in W(\bar{p}+t)$  pick some ordinal $\gamma_r\in (\gamma,\mu)$ witnessing that $r\in D_{p,\gamma}$,  and put $$\bar{\gamma}:=\sup\{\gamma_r\mid r\in W(\bar{p}+t)\;\&\; t\sle_\ell s\}+1.$$
Combining Clauses~\ref{Cbeta} and \eqref{csize} of Definition~\ref{SigmaPrikry} we infer that $\bar{\gamma}<\mu$.

\smallskip

We claim that $\bar{p}$ is as desired.  Otherwise, let $q\leq \bar{p}$ forcing the negation of the claim. By virtue of Clause~\eqref{PnprojectstoSn}  of Definition~\ref{SigmaPrikry}, $\varpi_\ell$ is a nice projection from $\mathbb{P}_{\geq \ell}$ to $\mathbb{S}_\ell$, hence  Definition~\ref{niceprojection}\eqref{theprojection} applied to this map yields $q=\bar{q}+\varpi_\ell(q)$, for some $\bar{q}\LE^{\varpi_\ell} \bar{p}$. Putting $t:=\varpi_\ell(q)$,  it is clear that $t\sle_\ell s$. By extending $t$ if necessary, we may freely assume that $t\in U_{D_{p,\gamma}}$.

 On the other hand, $q\LE^0 w(\bar{p},q)$, hence Lemma~\ref{AdditionalAssumption3} and Clause~\eqref{AdditionalAssumption5} of Lemma~\ref{LemmaAdditionalAssumptions} yield
 $q\LE^0 w(\bar{p},\bar{q}+t)+t=w(\bar{p},\bar{q})+t \in W(\bar{p}+t).$
  This clearly yields a contradiction with our choice of $q$.
\end{proof}
Now we take advantage of the previous claim to prove the lemma. So, let $p\LE r^\star$ and $\gamma<\mu$. Applying the above claim inductively, we find a $\LE^{\vec{\varpi}}$-decreasing sequence $\langle p_n\mid n<\omega\rangle$ and an increasing sequence of ordinals below $\mu$, $\langle \gamma_n\mid n<\omega\rangle$,   such that $p_0:=p$, $\gamma_0:=\gamma$, and such that for every $n<\omega$, the pair $(p_{n+1},\gamma_{n+1})$ witnesses 
the conclusion of Claim~\ref{ClaimR} when putting $(p,\gamma):=(p_n,\gamma_n)$. Moreover, density of $\z{\mathbb{P}}^{\varpi_\ell}_\ell$ in ${\mathbb{P}}^{\varpi_\ell}_\ell$
(Definition~\ref{SigmaPrikry}\eqref{moreclosedness}) enable us to  assume that the $p_n\in \z{\mathbb{P}}^{\varpi_\ell}_\ell\downarrow p$, In particular, there is  $\bar{p}$ a $\LE^{\vec{\varpi}}$-lower bound for the sequence. Setting $\bar{\gamma}:=\sup_{n<\omega}\gamma_n$ we have that $(\bar{\gamma},\bar{p})\in R$.
\end{proof}

\subsubsection{Definition of the functor and basic properties}

\begin{definition}\label{labeled-p-tree} Let $p$ be a condition in  $\mathbb P$.
A \emph{labeled $\langle p,\vec{\mathbb{S}}\rangle$-tree} is a function $S\colon \dom(S)\rightarrow[\mu]^{<\mu}$, where $$\dom(S)=\{(q,t)\mid q\in W(p)\;\&\; t\in\bigcup_{\ell(p)\leq n\leq\ell(q)}\,\mathbb{S}_{n}\downarrow \varpi_n(q)\},$$ 
and such that for all  $(q,t)\in \dom(S)$ the following hold:
\begin{enumerate}
\item\label{C1ptree} $S(q,t)$ is a closed bounded subset of $\mu$;
\item\label{C2ptree} $S(q',t')\supseteq S(q,t)$ whenever $q'+t'\LE q+t$;
\item\label{C3ptree}  $q+t\forces_{\mathbb P} S(q,t)\cap\dot{T}^+=\emptyset$;
\item\label{d162}
\label{C4ptree} there is  $m<\omega$ such that
for any $q\in W(p)$ and  $(q',t')\in\dom(S)$ with $q'\LE q$, if $S(q',t')\neq\emptyset$ and $\lh(q)\ge\lh(p)+m$, then $(\max(S(q',t')),q)\in R$. The least such $m$ is denoted by $m(S)$.
\end{enumerate}
\end{definition}

\begin{remark}\label{incompatiblewithrstar}
By Clause~\eqref{C4ptree} and the definition of $R$, for any $(q',t'), (q,t)$ in  $\dom(S)$ with $q'+t'\LE q+t$,
if $q$ is incompatible with $r^\star$ then $S(q',t')=\emptyset$.
\end{remark}

\begin{definition}\label{strategy}
For $p\in P$, we say that $\vec S=\langle S_i\mid i\leq\alpha\rangle$ is a \emph{$\langle p,\vec{\mathbb{S}}\rangle$-strategy} iff all of the following hold:
\begin{enumerate}
\item\label{C1pstrategy} $\alpha<\mu$;
\item\label{i3}
\label{C2pstrategy}  for all $i\leq\alpha$, $S_i$ is a labeled $\langle p,\vec{\mathbb{S}}\rangle$-tree;

\item\label{C3pstrategy} for every $i<\alpha$ and $(q,t)\in \dom(S_i), S_i(q,t)\sqsubseteq S_{i+1}(q,t);$
\item\label{C4pstrategy} for every $i<\alpha$ and pairs $(q,t),(q',t')$ in $\dom(S_i)$ with $q'+t'\LE q+t$,  \linebreak $S_{i+1}(q,t)\setminus S_{i}(q,t)\sqsubseteq S_{i+1}(q',t')\setminus S_i(q',t');$
\item\label{C5pstrategy} for every  limit $i\leq\alpha$ and $(q,t)\in \dom(S_i)$,  $S_i(q,t)$ is the ordinal closure of ${\bigcup_{j<i} S_j(q,t)}$.
\end{enumerate}
\end{definition}

\begin{definition}\label{SharonNew}
Let $\mathbb{A}(\mathbb{P},\vec{\mathbb{S}},\dot{T})$ be the notion of forcing $\mathbb{A}:=(A,\unlhd)$, where:
\begin{enumerate}
\item $(p,\vec{S})\in A$ iff $p\in P$ and  either $\vec{S}=\emptyset$ or $\vec{S}$ is a $\langle p,\vec{\mathbb{S}}\rangle$-strategy;
\item $(p',{\vec{S}}')\unlhd (p,{\vec{S}})$ iff:
\begin{enumerate}[label=(\alph*)]
\item $p'\LE p$;\label{SharonNew1}
\item $\dom(\vec{S}')\geq \dom(\vec{S})$;\label{SharonNew2}
\item\label{SharonNew3} for each $i\in \dom(\vec{S})$ and $(q,t)\in \dom(S'_i)$,
 $$S'_i(q,t)=S_i(w(p,q), t_q),$$
 where $t_q:=\varpi_{\ell(q)}(q+t)$.\footnote{Note that $t_q\sle_n\varpi_n(w(p,q))$ for some $n\in[\ell(p'),\ell(q)]$. Thus, $(w(p,q),t_q)\in\dom(S_i)$. And if $t\sle_{\ell(q)}\varpi_{\ell(q)}(q)$, then $t=t_q$.}
\end{enumerate}
For all $p\in P$, denote $\myceil{p}{\mathbb{A}}:=(p,{\emptyset}).$
\end{enumerate}
\end{definition}

\begin{definition}[The maps]\label{d45}
\hfill
\begin{enumerate}
\item\label{newell} Let $\lh_{\mathbb A}:=\lh\circ\pi$ and $\vec{\varsigma}:=\vec{\varpi}\bullet\pi$,
where $\pi:\mathbb{A}\rightarrow \mathbb{P}$ is defined via $\pi(p, {\vec{S}}):=p;$
\item Define $c_{\mathbb A}:A\rightarrow H_\mu$, by letting, for all $(p,{\vec{S}})\in A$,
$$c_{\mathbb A}(p,{\vec S}):=(c(p),\{(i,c(q), S_i(q,\cdot))\mid i\in \dom({\vec{S}}), q\in W(p)\}),$$
where $S_i(q,\cdot)$ denotes the map $ t\mapsto S_i(q,t)$;

\item\label{newpitchfork} Let $a=(p,{\vec S})\in A$. The map $\fork{a}:\cone{p}\rightarrow A$ is defined
by letting $\fork{a}(p'):=(p',{\vec{S'}})$, where ${\vec{S'}}$ is a  sequence such that $\dom({\vec{S}'})=\dom(\vec{S})$, and for all $i\in\dom(\vec{S}')$ the following are true:
\begin{enumerate}[label=(\alph*)]
\item $\dom(S'_i)=\{(r,t)\mid r\in W(p')\;\&\; t\in\bigcup_{\ell(p')\leq n\leq\ell(r)}\,\mathbb{S}_n\downarrow \varpi_n(r)\}$,
\item  for all $(q.t)\in\dom(S'_i)$,
\begin{equation}\label{pitchfork}
\tag{*} S'_i(q,t)=S_i(w(p,q), t_q).
\end{equation}
\end{enumerate}
\end{enumerate}
\end{definition}

\begin{remark}
If $(\mathbb{P},\ell,c)$ is $\Sigma$-Prikry  then  a moment's reflection reveal that we arrive at the corresponding notions from \cite[\S4]{PartII}.
\end{remark}

We next show that $(\pitchfork,\pi)$ defines a super nice  forking projection from $(\mathbb{A},\ell_\mathbb{A},c_\mathbb{A},\vec{\varsigma})$ to $(\mathbb{P},\ell,c,\vec{\varpi})$. The next lemma takes care partially of this task by showing that $(\pitchfork,\pi)$ is a   forking projection from $(\mathbb{A},\ell_\mathbb{A},c_\mathbb{A})$ to $(\mathbb{P},\ell,c)$.

\begin{lemma}[Forking projection]The pair $(\pitchfork,\pi)$ is a forking projection between $(\mathbb{A},\ell_\mathbb{A},c_\mathbb{A})$ and $(\mathbb{P},\ell, c)$.
\label{forkingindeed}
\end{lemma}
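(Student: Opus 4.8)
The plan is to verify each of the seven defining clauses of a forking projection (Definition~\ref{forking}\eqref{frk1}--\eqref{frk2}) for the pair $(\pitchfork,\pi)$ from Definition~\ref{d45}, following the template of the analogous verification in \cite[\S4.1]{PartII} but paying attention to the role of the $\vec{\mathbb S}$-coordinates $t$ in the domains of labeled trees. First I would check Clause~\eqref{frk1}: that $\pi$ is a projection. The order-preservation $(p',\vec S')\unlhd(p,\vec S)\Rightarrow p'\le p$ is immediate from Definition~\ref{SharonNew1}, and $\ell_\mathbb{A}=\ell\circ\pi$ holds by the very Definition~\ref{d45}\eqref{newell}. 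For the projection property proper, given $(p,\vec S)\in A$ and $p'\le p$, the candidate lift is $\fork{(p,\vec S)}(p')=(p',\vec S')$ of Definition~\ref{d45}\eqref{newpitchfork}; one must confirm $\vec S'$ is genuinely a $\langle p',\vec{\mathbb S}\rangle$-strategy (each $S'_i$ a labeled tree), which amounts to transporting Clauses~\eqref{C1ptree}--\eqref{C4ptree} of Definition~\ref{labeled-p-tree} across the substitution $(q,t)\mapsto(w(p,q),t_q)$, using Lemma~\ref{W(p)maxAntichain} and Lemma~\ref{AdditionalAssumption3} to see that $w(p,q)+t_q$ and $q+t$ are compatible (in fact $q+t\le w(p,q)+t_q$), so that the forcing statement Clause~\eqref{C3ptree} and the $R$-condition Clause~\eqref{C4ptree} survive. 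Then $\fork{(p,\vec S)}(p')\unlhd(p,\vec S)$ by unwinding Definition~\ref{SharonNew} (the equality \eqref{pitchfork} is exactly Clause~\eqref{SharonNew3}), and $\pi(\fork{(p,\vec S)}(p'))=p'$ gives surjectivity of $\pi$ restricted to $\cone{(p,\vec S)}$.

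Next I would dispatch Clause~\eqref{frk0}: $\fork{a}$ is order-preserving from $\cone{\pi(a)}$ to $\conea{a}$. If $p''\le p'\le p$ then comparing $\fork{(p,\vec S)}(p'')$ with $\fork{(p,\vec S)}(p')$, clause~\eqref{SharonNew1} is clear, \eqref{SharonNew2} holds since both have domain $\dom(\vec S)$, and \eqref{SharonNew3} follows from the associativity $w(p,q)=w(p,w(p',q))$ (this is \cite[Lemma~2.9]{PartI}, cf.\ Clause~\eqref{itsaprojection} of Definition~\ref{SigmaPrikry}) together with the coherence of $\vec{\varpi}$, which guarantees $t_q$ is computed consistently whether one projects through $p'$ or directly through $p$. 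Clause~\eqref{frk3} is immediate: $\{a\in A\mid \pi(a)=p\}=\{(p,\vec S)\}$ is ordered by $\unlhd$ and $(p,\emptyset)=\myceil{p}{\mathbb A}$ is its $\unlhd$-greatest element, since $\dom(\emptyset)=0\le\dom(\vec S)$ for every strategy and clause~\eqref{SharonNew3} is vacuous. Clause~\eqref{frk4}, the compatibility with the operations $m(a,b)$, follows by the same computation as \cite[Lemma~4.16]{PartII}: an $n{+}m$-step extension $(p'',\vec S'')$ of $(p,\vec S)$ lying below determines $m((p,\vec S),(p'',\vec S''))=\fork{(p,\vec S)}(m(p,p''))$ because the intermediate condition is forced to carry the restricted strategy, and the $\vec{\mathbb S}$-coordinates ride along passively. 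Clauses~\eqref{frk5} ($\pi\circ\fork{a}=\id$ on $\cone{\pi(a)}$) and \eqref{frk6} ($a=\myceil{\pi(a)}{\mathbb A}$ iff $\fork{a}(r)=\myceil{r}{\mathbb A}$) are read off directly from $\fork{a}(r)=(r,\vec S')$ with $\dom(\vec S')=\dom(\vec S)$: the first is literal, the second holds since $\vec S=\emptyset\Leftrightarrow\vec S'=\emptyset$.

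For Clause~\eqref{frk7}, given $a'=(p_1,\vec S_1)\unlhd^0 a=(p_0,\vec S_0)$ and $r\le^0\pi(a')=p_1$, I must show $\fork{a'}(r)\unlhd\fork{a}(r)$. Writing $\fork{a'}(r)=(r,\vec T_1)$ and $\fork{a}(r)=(r,\vec T_0)$, clause~\eqref{SharonNew2} needs $\dom(\vec T_1)=\dom(\vec S_1)\ge\dom(\vec S_0)=\dom(\vec T_0)$, which holds by $a'\unlhd a$; clause~\eqref{SharonNew3} requires that for $i\in\dom(\vec S_0)$ and $(q,t)$ in the domain, $T_1^i(q,t)=T_0^i(w(r,q),t_q)$, but $T_1^i(q,t)=S_1^i(w(p_1,q),t_q)=S_0^i(w(p_0,w(p_1,q)),(t_q)_{w(p_1,q)})=S_0^i(w(p_0,q),t_q)$ where the middle equality is clause~\eqref{SharonNew3} for $a'\unlhd a$ and the last uses $\ell(p_0)=\ell(p_1)$ (since $a'\unlhd^0 a$) so $w(p_0,w(p_1,q))=w(p_0,q)$ and the coherence of $\vec{\varpi}$ collapses the nested $t$-projection. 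Finally Clause~\eqref{frk2}: if $c_\mathbb{A}(a)=c_\mathbb{A}(a')$ with $a=(p,\vec S)$, $a'=(p',\vec S')$, then the first coordinate gives $c(p)=c(p')$, hence $c_\mathbb{P}(\pi(a))=c_\mathbb{P}(\pi(a'))$, and for $r\in P_0^p\cap P_0^{p'}$ (nonempty by Clause~\eqref{c1} of Definition~\ref{SigmaPrikry}) the second coordinate of $c_\mathbb{A}$ records the maps $S_i(q,\cdot)$ indexed by $(i,c(q))$; since $c\restriction W(p)$ and $c\restriction W(p')$ are injective (Lemma~\ref{W(p)maxAntichain}\eqref{W(p)maxAntichain3}) and $w(p,\cdot)$, $w(p',\cdot)$ agree appropriately on $W(r)$, one concludes $\fork{a}(r)=\fork{a'}(r)$, exactly as in \cite[Lemma~4.17]{PartII}. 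The main obstacle I anticipate is bookkeeping the $\vec{\mathbb S}$-coordinate $t$ correctly through the substitution $(q,t)\mapsto(w(p,q),t_q)$ under iterated application of $\pitchfork$ — verifying that the various nested expressions like $(t_q)_{w(p',q)}$ really reduce to $t_q$ — which is where the coherence hypothesis on $\vec{\varpi}$ (Definition~\ref{CoherentSystem}, in force by Setup~\ref{setupkillingone}) does the essential work, via Lemma~\ref{LemmaAdditionalAssumptions}.
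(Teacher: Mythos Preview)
Your proposal is correct and follows essentially the same approach as the paper: the paper concentrates on Clause~\eqref{frk0} (well-definedness of $\fork{a}$, i.e., that each $S'_i$ is a genuine labeled $\langle p',\vec{\mathbb S}\rangle$-tree) and defers the remaining clauses to the analogous argument in \cite[Lemma~6.13]{PartI}, whereas you work through all eight clauses explicitly; but the substantive technical content---reducing the tree clauses along the substitution $(q,t)\mapsto(w(p,q),t_q)$ via the associativity $w(p,q)=w(p,w(p',q))$ from \cite[Lemma~2.9]{PartI}, Lemma~\ref{AdditionalAssumption3}, and the coherence of $\vec\varpi$ (Definition~\ref{CoherentSystem}) to obtain $w(p,q')+t'_{q'}\le w(p,q)+t_q$---is identical in both. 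One small correction: in your Clause~\eqref{frk7} computation the collapse $(t_q)_{w(p_1,q)}=t_q$ is not really a coherence fact but rather the nice-projection identity $\varpi_n(r+s)=s$ from Definition~\ref{niceprojection}\eqref{niceprojection3}; coherence is what is needed, as in the paper, to pass between levels $\ell(q')$ and $\ell(q)$.
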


\begin{proof}
We just give some details for the verification of Clause~\eqref{frk0}. 
The rest can be proved arguing similarly to \cite[Lemma~6.13]{PartI}. Here it goes:

\smallskip

(2) Let $a=(p,{\vec{S}})$ and $p'\LE \pi(a)$. We just proved that $\fork{a}$ is well-defined. The argument for $\fork{a}$ being order-preserving is very similar to the one from \cite[Lemma~6.13(2)]{PartI}.  If $\vec S=\emptyset$, then Definition \ref{d45}\eqref{pitchfork} yields $\fork{a}(p')=(p',\emptyset)\in A$. So, suppose that $\dom(\vec S)=\alpha+1$. Let $(p',\vec{S'}):=\fork{a}(p')$ and $i\leq\alpha$. We shall first verify that $S'_i$ is a $\langle p',\vec{\mathbb{S}}\rangle$-labeled tree. Let $(q,t)\in \dom(S'_i)$ and let us go over the clauses of Definition~\ref{labeled-p-tree}. Since the verification of Clauses~\eqref{C3ptree} and \eqref{C4ptree} are on the same lines of that of \eqref{C2ptree} we just give details for the latter.

\smallskip

\eqref{C2ptree}: Let $(q',t')\in \dom(S'_i)$ be such that $q'+t'\LE q+t$. By Clause \eqref{itsaprojection} of Definition~\ref{SigmaPrikry}, $w(p,q'+t')\LE w(p,q+t)$. Also, combining \cite[Lemma~2.9]{PartI} and Lemma~\ref{AdditionalAssumption3} we have the following chain of equalities: $$w(p,q'+t')=w(p,w(p', q'+t'))=w(p,w(p',q'))=w(p,q').$$ Similarly one shows $w(p,q+t)=w(p,q)$. Thus,   $w(p,q')\LE w(p,q)$.

By coherency of $\vec{\varpi}$ (Definition~\ref{CoherentSystem}\eqref{NiceCoherent})  we have 
\begin{eqnarray*}
	\varpi_{\ell(q)}(w(p,q')+t'_{q'})=\pi_{\ell(q'),\ell(q)}(\varpi_{\ell(q')}(w(p,q')+t'_{q'}))\\
=	\pi_{\ell(q'),\ell(q)}(t'_{q'})=\pi_{\ell(q'),\ell(q)}(\varpi_{\ell(q')}(q'+t'))=\varpi_{\ell(q)}(q'+t')
\end{eqnarray*}
Since $q'+t'\LE q+t$ we also have  $\varpi_{\ell(q)}(q'+t')\sle_{\ell(q)} \varpi_{\ell(q)}(q+t)=t_q.$\label{projectingtow(p,q)preserves}

Thereby, combining both things we arrive at  $$w(p,q')+t'_{q'}\LE w(p,q)+t_q.$$ Finally,  use Clause~\eqref{C2ptree} for the labeled $\langle p,\vec{\mathbb{S}}\rangle$-tree $S_i$ to get that
$$S_i'(q',t')=S_i(w(p,q'),t'_{q'})\supseteq S_i(w(p,q),t_q)=S_i'(q,t).$$

\smallskip

To prove that $(p',\vec{S}')\in A$ it is left to argue that $\vec{S}'$ fulfils the requirements described in Clauses~\eqref{C3pstrategy}, \eqref{C4pstrategy} and \eqref{C5pstrategy} of Definition~\ref{strategy}. Indeed, each of these clauses follow  from the corresponding ones for $\vec{S}$. There is just one delicate point in Clause~\eqref{C4pstrategy}, where one needs to argue that $w(p,q')+t'_{q'}\LE w(p,q)+t_{q}$. This is again done as in the verification of Clause~\eqref{C2ptree} above.

Finally, it is clear that $\fork{a}(p')=(p',\vec{S}')\unlhd (p,\vec{S})$ (see~Definition~\ref{SharonNew}). This concludes the verification of Clause~\eqref{frk0}.
\end{proof}

\begin{lemma}\label{niceforkingindeed3}
For each $n<\omega$,  $\varsigma_n$ is  a nice projection from $\mathbb{A}_{\geq n}$ to $\mathbb{S}_n$, and for each $k\geq n$, $\varsigma_n\restriction \mathbb{A}_k$ is again a nice projection.
\end{lemma}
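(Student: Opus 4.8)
The plan is to verify, for each $n<\omega$, the four clauses of Definition~\ref{niceprojection} for the map $\varsigma_n\colon\mathbb{A}_{\geq n}\rightarrow\mathbb{S}_n$; the argument for each restriction $\varsigma_n\restriction\mathbb{A}_k$ ($k\geq n$) is the same computation carried out inside a single level, where every extension is length-preserving, so I would dispatch it in one sentence at the end. Throughout I would use that $\varsigma_n=\varpi_n\circ\pi$ (Definition~\ref{d45}\eqref{newell}), that $(\pitchfork,\pi)$ is a forking projection and $\pi$ a projection onto $\mathbb{P}$ (Lemma~\ref{forkingindeed}), and that $\varpi_n$ is a nice projection from $\mathbb{P}_{\geq n}$ to $\mathbb{S}_n$ with each $\varpi_n\restriction\mathbb{P}_k$ again nice (Definition~\ref{SigmaPrikry}\eqref{PnprojectstoSn}).

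Clauses~\eqref{niceprojection1} and~\eqref{niceprojection2} are immediate: $\varsigma_n(\one_{\mathbb A})=\varpi_n(\pi(\one_{\mathbb A}))=\varpi_n(\one_{\mathbb P})=\one_{\mathbb S_n}$, and if $b\unlhd a$ then $\pi(b)\LE\pi(a)$, so $\varsigma_n(b)=\varpi_n(\pi(b))\SLE_n\varpi_n(\pi(a))=\varsigma_n(a)$. Clause~\eqref{niceprojection3} is exactly Lemma~\ref{pitchforkexact}: since $\vec{\varsigma}=\vec{\varpi}\bullet\pi$, for $a\in A_{\geq n}$ and $s\SLE_n\varsigma_n(a)$ that lemma gives that $\fork{a}(\pi(a)+s)$ is $\unlhd a$, has $\varsigma_n$-value $s$, and is $\unlhd$-above every $b\unlhd a$ with $\varsigma_n(b)\SLE_n s$, i.e. it is the required greatest element $a+s$; and $\ell_{\mathbb A}(a+s)=\ell(\pi(a)+s)=\ell(\pi(a))=\ell_{\mathbb A}(a)$ because $\varpi_n\restriction\mathbb{P}_{\ell(\pi(a))}$ is nice.

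The substantial clause is~\eqref{theprojection}. Given $b\unlhd a+s$, write $p:=\pi(a)$, $r:=\pi(b)$, $t^\star:=\varpi_n(r)=\varsigma_n(b)$. By Definition~\ref{forking}\eqref{frk5}, $r\LE p+s$, hence $t^\star\SLE_n s\SLE_n\varpi_n(p)$, and Clause~\eqref{theprojection} of Definition~\ref{niceprojection} for $\varpi_n$ yields $p'\LE^{\varpi_n}p$ with $r=p'+t^\star$. Using that $\vec{\varpi}$ is coherent (Setup~\ref{setupkillingone}) together with Lemma~\ref{LemmaAdditionalAssumptions}\eqref{AdditionalAssumption2},\eqref{AdditionalAssumption5} and Lemma~\ref{AdditionalAssumption3}, the map $w(p',\cdot)$ is a length-preserving bijection from $W(r)=W(p'+t^\star)$ onto $W(p')$, with inverse $q'\mapsto q'+t^\star$. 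Writing $b=(r,\vec S_b)$, I would then define $a':=(p',\vec S_{a'})$ with $\dom(\vec S_{a'}):=\dom(\vec S_b)$ by transporting each $S^b_i$ back along this bijection: on the pairs $(\bar q,\bar t)$ that actually arise when computing a restriction from level $p'$ — namely those with $\bar t$ a condition in $\mathbb{S}_{\ell(\bar q)}\downarrow\varpi_{\ell(\bar q)}(\bar q+t^\star)$ — put $S^{a'}_i(\bar q,\bar t):=S^b_i(\bar q+t^\star,\bar t)$ (legitimate since then $(\bar q+t^\star,\bar t)\in\dom(S^b_i)$), and on the remaining pairs declare the value via monotonicity from a common extension, so that the $\langle p',\vec{\mathbb S}\rangle$-tree axioms are met. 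One then checks, exactly in the spirit of the verification of Clause~\eqref{C2ptree} in the proof of Lemma~\ref{forkingindeed} — but now transporting the defining inequalities of $\vec S_b$ in both directions along $w(p',\cdot)$, commuting "$+t^\star$" past the $\varpi_m(\cdot)$'s by coherence and Lemma~\ref{LemmaAdditionalAssumptions}\eqref{SumInvariant} — that each $S^{a'}_i$ is a labeled $\langle p',\vec{\mathbb S}\rangle$-tree and $\vec S_{a'}$ a $\langle p',\vec{\mathbb S}\rangle$-strategy (Clauses~\eqref{C3pstrategy}--\eqref{C5pstrategy} of Definition~\ref{strategy} and Clauses~\eqref{C1ptree}--\eqref{C4ptree} of Definition~\ref{labeled-p-tree} being inherited from $\vec S_b$). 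Finally: $p'\LE p$ and $\varsigma_n(a')=\varpi_n(p')=\varpi_n(p)=\varsigma_n(a)$; the restriction clause of Definition~\ref{SharonNew} relating $\vec S_{a'}$ to $\vec S_a$ is inherited from the relation between $\vec S_b$ and $\vec S_{a+s}$ furnished by $b\unlhd a+s$, so $a'\unlhd^{\varsigma_n}a$; and unwinding the pitchfork formula of Definition~\ref{d45}\eqref{newpitchfork} gives $a'+\varsigma_n(b)=\fork{a'}(\pi(a')+\varsigma_n(b))=\fork{a'}(p'+t^\star)=\fork{a'}(r)=(r,\vec S_b)=b$, where the equality $\vec S^*=\vec S_b$ for $\fork{a'}(r)=(r,\vec S^*)$ uses $w(p',q'+t^\star)=q'$ and $q+\varpi_{\ell(q)}(q+t)=q+t$ (Lemma~\ref{LemmaAdditionalAssumptions}\eqref{SumInvariant}, Lemma~\ref{AdditionalAssumption3}). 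The "in particular" statement about $(a',s')\mapsto a'+s'$ being a projection is then automatic from Clauses~\eqref{niceprojection1}--\eqref{theprojection}. I expect the main obstacle to be precisely the strategy-transport bookkeeping in the middle of Clause~\eqref{theprojection}: checking that all the tree and strategy axioms survive being pushed through the bijection $w(p',\cdot)$ and that "$+t^\star$" commutes appropriately with the $\varpi_m$'s — this is the same delicate point flagged in the proof of Lemma~\ref{forkingindeed}, now needed in both directions.
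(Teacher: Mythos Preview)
Your treatment of Clauses~\eqref{niceprojection1}--\eqref{niceprojection3} is correct and matches the paper. The gap is in Clause~\eqref{theprojection}, specifically in how you extend the strategy $\vec{S}_{a'}$ to its full domain.

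You define $S^{a'}_i(\bar q,\bar t):=S^b_i(\bar q+t^\star,\bar t)$ only on those pairs $(\bar q,\bar t)$ with $\bar t\in\mathbb{S}_{\ell(\bar q)}\downarrow\varpi_{\ell(\bar q)}(\bar q+t^\star)$, and for the rest you write ``declare the value via monotonicity from a common extension''. This is not a well-defined procedure, and it cannot work as stated. The requirement $a'\unlhd a$ forces, for every $i\in\dom(\vec S_a)$ and \emph{every} $(\bar q,\bar t)\in\dom(S^{a'}_i)$, the equality $S^{a'}_i(\bar q,\bar t)=S^a_i(w(p,\bar q),(\bar t)_{\bar q})$. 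On pairs lying outside the cone below $t^\star$ this value is determined by $\vec S_a$, not by $\vec S_b$, and no abstract monotonicity argument from the values you have already assigned will recover it.

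The paper's construction (Claim~\ref{varsigmaindeednice}) makes this explicit via a three-case definition: when $q+u\LE q+t$ it uses $S^b_i(q+t,u_q)$ (your transport, which is fine); when $q+u\not\LE q+t$ and $i\le\alpha:=\max(\dom(\vec S_a))$ it uses $S^a_i(w(p,q),u_q)$; and when $q+u\not\LE q+t$ with $\alpha<i$ it falls back to $S^a_\alpha(w(p,q),u_q)$. The delicate part --- which your proposal does not engage with --- is verifying the tree axiom~\eqref{C2ptree} across the boundary between these cases, e.g.\ when $q+u\not\LE q+t$ but some $(q',u')$ with $q'+u'\LE q+u$ satisfies $q'+u'\LE q'+t$. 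This requires combining $b\unlhd a$ with Clauses~\eqref{C3pstrategy} and~\eqref{C5pstrategy} of Definition~\ref{strategy} to show $S^a_\alpha(w(p,q),u_q)\subseteq S^b_i(q'+t,u'_{q'})$. Note also that the paper's claim is phrased for \emph{every} choice of $p^*$ witnessing niceness of $\varpi_n$, not just one; this stronger form is exactly what yields super niceness in Corollary~\ref{PitchforkSuperNice}.
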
 
\begin{proof}
We go over the clauses of Definition \ref{niceprojection}. Clauses~\eqref{niceprojection1} and \eqref{niceprojection2} follow from the fact that $\varsigma_n$ is the composition of the projections $\varpi_n$ and $\pi$ and Clause~\eqref{niceprojection3} follows from Lemma~\ref{pitchforkexact} and  Lemma~\ref{forkingindeed}. To complete the argument we prove the following, which, in particular, yields Clause \eqref{theprojection}.

\begin{claim}\label{varsigmaindeednice}
Let $a, a'\in A_{\geq n}$ and $s\sle_n\varsigma_n(a)$ with  $a'\unlhd a+s$.  Then, for each $p^*\in P_{\geq n}$ such that $p^*\LE^{\varpi_n} \pi(a)$ and $\pi(a')=p^*+\varsigma_n(a')$ there is   $a^*\in A_{\geq n}$ such that $a^* \unlhd^{\varsigma_n} a$ with $\pi(a^*)=p^*$ and $a'=a^*+\varsigma_n(a')$.

 In particular, $\varsigma_n$ satisfies Clause \eqref{theprojection}.
\end{claim}
\begin{proof}
Let $a=(p,\vec{S})$, $a'=(p',\vec{S}')$ and $s\sle_n\varsigma_n(a)$ be as above.

By Lemma~\ref{pitchforkexact}, $a'\unlhd a+s=\fork{a}(p+s)$, hence $p'\LE p+s$. Since $\varpi_n$ is a nice projection from $\mathbb{P}_{\geq n}$ to $\mathbb{S}_n$, Definition~\ref{niceprojection}\eqref{theprojection} yields the existence of a condition $p^*\in P_{\geq n}$ such that $p^*\LE^{\varpi_n} p$ and $p'=p^*+\varsigma_n(a')$.   So, let $p^*$ be some such condition and set $t:=\varpi_n(p')$. We have that  $\varsigma_n(a')=t$. Our aim is to find a sequence $\vec{S}^*$ such that $a^*:=(p^*,\vec{S}^*)$ is a condition in $\mathbb{A}_{\geq n}$ with the property that $a^*\unlhd^{\varsigma_n}a$ and $a^*+t=a'$.

As $n\leq \ell(p^*)$, coherency of $\vec\varpi$   yields $\varpi_n``W(p^*)=\{\varpi_n(p)\}$ (see Definition~\ref{CoherentSystem}\eqref{AdditionalAssumption1}), hence $q+t$ is well-defined for all $q\in W(p^*)$. Moreover,  by  virtue of Lemma~\ref{LemmaAdditionalAssumptions}\eqref{AdditionalAssumption5}, $q+t\in W(p^*+t)=W(p')$ for every $q\in W(p^*)$.

Put $\vec{S}:=\langle S_i\mid i\leq \alpha\rangle$ and $\vec{S}':=\langle S'_i\mid i\leq \beta\rangle$. Let $\vec{S}^*:=\langle S_i\mid i\leq\beta\rangle$ be the sequence where for each $i\leq \beta$,  $S^*_i$ is the function with domain $\{(q,u)\mid q\in W(p^*)\,\&\, u\in \bigcup_{\ell(p^*)\leq m\leq\ell(q)}\,\mathbb{S}_m\downarrow \varpi_m(q)\}$ defined  according to the following casuistic:
\begin{enumerate}[label=(\alph*)]
\item\label{case1Niceproj} If $\vec{S}$ is the empty sequence, then  $$S^*_i(q,u):=\begin{cases}
S'_i(q+t, u_q), &\text{if $q+u\LE q+t$};\\
\emptyset, & \text{otherwise.}
\end{cases}
$$
\item\label{case2Niceproj} If $\vec{S}$ is non-empty then there are two more cases to consider:

  \begin{enumerate}[label=(\arabic*)]
  \item\label{case2Niceproj1}  If $\alpha<i\leq \beta$, then $$S^*_i(q,u):=\begin{cases}
S'_i(q+t, u_q), &\text{if $q+u\LE q+t$};\\
S_\alpha(w(p,q),u_q), & \text{otherwise.}
\end{cases}
$$

\item\label{case2Niceproj2}  Otherwise,  $S^*_i(q,u):=S_i(w(p,q),u_q)$.
\end{enumerate}
\end{enumerate}

By Lemma~\ref{LemmaAdditionalAssumptions}\eqref{SumInvariant}, $q+u=q+u_q$ for all $(q,u)\in \dom(S^*_i)$ and $i\leq \beta$.

We next show that $\vec{S}^*$ is a $\langle p^*,\vec{\mathbb{S}}\rangle$-strategy. To this aim we go over the clauses of Definition~\ref{strategy}. Clause \eqref{C1pstrategy} is indeed obvious. As for the rest we just provide details for \eqref{C2pstrategy}. The reason for this choice is the following: first, the verification of \eqref{C2pstrategy} makes a crucial use of the notion of \emph{coherent system of projections} (Definition~\ref{CoherentSystem}) and of the $t_q$ component in the labeled trees (see Definition~\ref{SharonNew}\ref{SharonNew3});\footnote{This is yet another wrinkle compared with the non-collapses scenario.} second, this verification contains all the key ingredients to assist the reader in the verification of the rest of clauses.

\smallskip

So, let us verify that ${S}^*_i$ is a labeled $\langle p^*,\vec{S}\rangle$-tree
 for all $i\leq \beta$.
Fix  some $i\leq \beta$ and let us go over the clauses of Definition \ref{labeled-p-tree}.

\smallskip

\eqref{C1ptree}: This is obvious.

\smallskip

\eqref{C2ptree}:   Let $(q',u'), (q,u)\in \dom(S^*_i)$ such that $q'+u'\LE q+u$.

\underline{Case \ref{case1Niceproj}:} We need to distinguish two subcases:

$\br$ If  $q+u \nleq q+t$, then $S^*_i(q,u)=\emptyset$ and so $S^*_i(q,u)\s S^*_i(q',u')$.

$\br$ Otherwise, $q+u\LE q+t$ and so $S_i^*(q,u)=S'_i(q+t,u_q)$. On the other hand, since $q'+u'\LE q+u\LE q+t$, we have that $\varpi_n(q'+u')\sle_n \varpi_n(q+t)=t$. In particular, $q'+u'\LE q'+t$ and so $S^*_i(q',u')=S'_i(q'+t,u'_{q'})$.
Now, it is routine to check that $(q+t)+u_q=q+u_q=q+u$. Similarly, the same applies to  $q'$ and $u'_{q'}$.
Appealing to Clause~\eqref{C2ptree} for $S'_i$ we get $S^*_i(q,u)\s S^*_i(q',u')$.

\smallskip

\underline{Case \ref{case2Niceproj}\ref{case2Niceproj1}:} There are several cases to consider:

$\br$ Assume $q+u\nleq q+t$. Then $S^*_i(q,u)=S_\alpha(w(p,q),u_q)$.

$\br\br$ Suppose $q'+u'\nleq q+t$. Then $S^*_i(q',u')=S_\alpha(w(p,q'),u'_{q'})$. On one hand,  $w(p,q'+u')\LE w(p,q+u)$ (see Definition~\ref{SigmaPrikry}\eqref{itsaprojection}). Combining \cite[Lemma~2.9]{PartI} with Lemma~\ref{AdditionalAssumption3}, we have  $w(p,q'+u')=w(p,w(p^*,q'+u'))=w(p,w(p^*,q'))=w(p,q')$. Similarly,  one shows that $w(p,q+u)=w(p,q)$.  Thus, $w(p,q')\LE w(p,q)$. Also, arguing  as in page~\pageref{projectingtow(p,q)preserves}, one can prove that $w(p,q')+u'_{q'}\LE w(p,q)+u_q$.  This finally yields $$S^*_i(q,u)=S_\alpha(w(p,q),u_q)\s S_\alpha(w(p,q'),u'_{q'})=S^*_i(q',u').$$

$\br\br$ Otherwise,  $q'+u'\LE q+t$, and so $S^*_i(q',u')=S'_i(q'+t,u'_{q'})$.

Since $\alpha<i$ and $b\unlhd a$, Clauses~\eqref{C3pstrategy} and \eqref{C5pstrategy} of Definition~\ref{strategy} for $\vec{S}'$ yield
$$S_\alpha(w(p,q'+t),u^*)= S'_\alpha(q'+t,u'_{q'})\s S'_i(q'+t,u'_{q'}),$$
where $u^*:=\varpi_{\ell(q')}((q'+t)+u'_{q'})$. One can check that $(q'+t)+u'_{q'}=q'+u'_{q'}$, hence $u^*=u'_{q'}$, and thus $S_\alpha(w(p,q'+t),u'_{q'})\s S'_i(q'+t,u'_{q'}).$

Arguing as in the previous case,  $w(p, q'+t)=w(p,q')$.
Therefore,
$$S_\alpha(w(p,q'),u'_{q'})\s S'_i(q'+t,u'_{q'})=S^*_i(q',u').$$
Once again, $w(p,q')+u'_{q'}\LE w(p,q)+u_q$. Hence,
Clause~\eqref{C2ptree} for $S_\alpha$ yields
$$S^*_i(q,u)=S_\alpha(w(p,q),u_q)\s S_\alpha(w(p,q'),u'_{q'})\s S^*_i(q',u').$$

$\br$ Assume $q+u\LE q+t$.  Then $q'+u'\LE q+t$, as well. In particular, $$S^*_i(q,u)=S'_i(q+t,u_q)\s S'_i(q'+t,u'_{q'})=S^*_i(q',u'),$$ where the above follows from Clause~\eqref{C2ptree} for $S'_i$.\footnote{See also the argument of Case~\ref{case1Niceproj} above.}

\underline{Case~\ref{case2Niceproj}\ref{case2Niceproj2}:} This is clear using Clause~\eqref{C2ptree} for $S_i$.

\medskip

\eqref{C3ptree}:  Let $(q,u)\in \dom(S^*_i)$. There are two cases to discuss:

\underline{Case~\ref{case1Niceproj}:} As before there are two cases depending on whether $q+u\nleq q+t$ or not. The first case is obvious, as $S^*_i(q,u)=\emptyset$.  
Otherwise, $S^*_i(q,u)=S'_i(q+t,u_q)$, and so Clause~\eqref{C3ptree} for $S'_i$ yields $q+u\forces_\mathbb{P} S^*_i(q,u)\cap \dot{T}^+=\emptyset$.\footnote{Once again, we have used that  $(q+t)+u_q=q+u_q=q+u$.}

\underline{Case~\ref{case2Niceproj}\ref{case2Niceproj1}:} There are two cases to consider:

$\br$ Assume $q+u\nleq q+t$. Then, $S^*_i(q,u)=S_\alpha(w(p,q),u_q)$. Combining $q+u_q\LE w(p,q)+u_q$ with Clause~\eqref{C3ptree} for $S_\alpha$, $q+u\forces_\mathbb{P} S^*_i(q,u)\cap \dot{T}^+=\emptyset$.

$\br$ Otherwise $q+u\LE q+t$, and so $S^*_i(q,u)=S'_i(q+t, u_q)$.
As in previous cases we have $q+u\forces_\mathbb{P} S^*_i(q,u)\cap \dot{T}^+=\emptyset$.

 \underline{Case~\ref{case2Niceproj}\ref{case2Niceproj2}:}  This follows using Clause~\eqref{C3ptree} for $S_i$.

\medskip

\eqref{C4ptree}: Let $q\in W(p^*)$ and a pair $(q',u')\in \dom(S^*_i)$ with $q'\LE q$ and $\ell(q)\geq \ell(p^*)+m_i$, where
$m_i= \max\{m(S_\alpha), m(S_i),m(S'_i)\}+1.\footnote{If $\vec{S}$ is empty then we convey that $m(S_\alpha):=0$.}$

To avoid trivialities, suppose $S^*_i(q',u')\neq \emptyset$ and put $\delta:=\max(S^*_i(q',u'))$.

\underline{Case~\ref{case1Niceproj}:} Since $S^*_i(q',u')\neq\emptyset$ we have $q'+u'\LE q+t$. In this case $S^*_i(q',u')=S'_i(q'+t,u'_{q'})$.  Since $q'+t\LE q$, Clause~\eqref{C4ptree} for $S'_i$ yields $(\delta, q)\in R$.

\underline{Case~\ref{case2Niceproj}:} The verification of the clause in Case~\ref{case2Niceproj}\ref{case2Niceproj1} and Case~\ref{case2Niceproj}\ref{case2Niceproj2} is identical to the previous one. Simply note that we can still invoke Clause~\eqref{C4ptree} for  $S_i$, $S_\alpha$ or $S'_i$, as $m_i$ is sufficiently large.

\medskip

After the above verification  we infer that $a^*=(p^*,\vec{S}^*)\in \mathbb{A}_{\geq n}$. One can easily check that $a^*\unlhd^{\varsigma_n} a$, so we are just left with checking that $a^*+t=a'$.\footnote{Recall that $t=\varpi_n(p')=\varsigma_n(a')$.} 

\smallskip

By Lemma~\ref{pitchforkexact}, $a^*+t=\fork{a^*}(p^*+t)$. Thus, since $p^*+t=p'$, $a^*+t=\fork{a^*}(p')$. We will be done by  showing that $\fork{a^*}(p')=a'$.

Put $\fork{a^*}(p')=(p',\vec{Q})$.
Let $i\leq\beta$ and $(q,u)\in \dom(Q_i)$. By virtue of Definition \ref{d45}\eqref{newpitchfork} we have that $q\LE p'$ and $u\sle_m\varpi_m(q)$. Using coherency of $\vec{\varpi}$ (particularly, Definition~\ref{CoherentSystem}\eqref{NiceCoherent}) one can check that $q+u\LE q+t$. 

\underline{Case~\ref{case1Niceproj}:} In this case we have the following chain of equalities: $$Q_i(q,u)=S^*_i(w(p^*,q),u_q)=S'_i(w(p^*,q)+t,u_q)=S'_i(q,u_q)=S'_i(q,u).$$
 The first equality follows from  Definition \ref{d45}\eqref{newpitchfork}\eqref{pitchfork}; the  third from Lemma \ref{LemmaAdditionalAssumptions}\eqref{AdditionalAssumption2}; and the right-most combining Definition~\ref{labeled-p-tree}\eqref{C2ptree} with $q+u_{q}=q+u$.

\underline{Case~\ref{case2Niceproj}:}
If  $\alpha<i\leq\beta$ then arguing as before $Q_i(q,u)=S'_i(q,u)$.

 Otherwise, $i\leq \alpha$ and we have the following chain of equalities
$$Q_i(q,u)=S^*_i(w(p^*,q),u_q)=S_i(w(p,q),u_q)=S'_i(q,u_q)=S'_i(q,u),$$
Note that for the third equality we used  $a'\unlhd a$ and $u_q=\varpi_{\ell(q)}(q+u_q)$.
\end{proof}
The above claim finishes the proof of the lemma.
\end{proof}

\begin{cor}\label{PitchforkSuperNice}
The pair $(\pitchfork,\pi)$ is a super nice forking projection from $(\mathbb{A},\ell_\mathbb{A},c_\mathbb{A},\vec{\varsigma})$ to $(\mathbb{P},\ell,c,\vec{\varpi})$.
\end{cor}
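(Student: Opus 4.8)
The plan is to deduce Corollary~\ref{PitchforkSuperNice} by combining what has already been verified: Lemma~\ref{forkingindeed} establishes that $(\pitchfork,\pi)$ is a forking projection from $(\mathbb{A},\ell_\mathbb{A},c_\mathbb{A})$ to $(\mathbb{P},\ell,c)$, and Lemma~\ref{niceforkingindeed3} establishes that each $\varsigma_n$ is a nice projection from $\mathbb{A}_{\geq n}$ to $\mathbb{S}_n$ with $\varsigma_n\restriction\mathbb{A}_k$ again a nice projection for $k\geq n$. Since by Definition~\ref{d45}\eqref{newell} we have $\vec{\varsigma}=\vec{\varpi}\bullet\pi$, these two facts together give exactly Clauses~\ref{frknew0} and \ref{frknew2} of Definition~\ref{NiceForking}, so that $(\pitchfork,\pi)$ is a nice forking projection from $(\mathbb{A},\ell_\mathbb{A},\vec{\varsigma})$ to $(\mathbb{P},\ell_\mathbb{P},\vec{\varpi})$; and since Clause~\eqref{frk2} of Definition~\ref{forking} is part of what Lemma~\ref{forkingindeed} delivers, it is in fact a nice forking projection from $(\mathbb{A},\ell_\mathbb{A},c_\mathbb{A},\vec{\varsigma})$ to $(\mathbb{P},\ell,c,\vec{\varpi})$. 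It therefore only remains to verify the \emph{super niceness} clause from Definition~\ref{SuperNiceForking}.

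The key observation is that the super niceness property is \emph{verbatim} the statement of Claim~\ref{varsigmaindeednice}, which was proved inside Lemma~\ref{niceforkingindeed3}. Spelling this out: fix $n<\omega$, let $a,a'\in A_{\geq n}$ and $s\in S_n$ with $a'\unlhd a+s$, and let $p^*\in P_{\geq n}$ satisfy $p^*\LE^{\varpi_n}\pi(a)$ and $\pi(a')=p^*+\varsigma_n(a')$. We first note $s\sle_n\varsigma_n(a)$: since $a+s$ is well-defined we have $s\sle_n\varsigma_n(a)$ by the very definition of the $+$ operation relative to $\varsigma_n$ (Definition~\ref{niceprojection}\eqref{niceprojection3}). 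Hence Claim~\ref{varsigmaindeednice} applies and produces $a^*\in A_{\geq n}$ with $a^*\unlhd^{\varsigma_n}a$, $\pi(a^*)=p^*$, and $a'=a^*+\varsigma_n(a')$ — which is precisely the conclusion required by Definition~\ref{SuperNiceForking}. Since $n$ was arbitrary, the super niceness property holds for all $n$, and the $c_\mathbb{A}$-version follows because Clause~\eqref{frk2} has already been secured.

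I anticipate no real obstacle here: the corollary is essentially a bookkeeping statement that repackages Lemmas~\ref{forkingindeed} and \ref{niceforkingindeed3} (and Claim~\ref{varsigmaindeednice} within the latter) into the terminology of Definitions~\ref{NiceForking} and \ref{SuperNiceForking}. The one point that deserves a word of care is that the super niceness clause in Definition~\ref{SuperNiceForking} is phrased with a quantifier over $s\in S_n$ rather than $s\sle_n\varsigma_n(a)$, so one should briefly note that the hypothesis "$a'\unlhd a+s$" already forces $a+s$ to be defined, hence $s\sle_n\varsigma_n(a)$, bringing us into the scope of Claim~\ref{varsigmaindeednice}. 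With that remark in place the proof is complete.

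\begin{proof}
By Lemma~\ref{forkingindeed}, $(\pitchfork,\pi)$ is a forking projection from $(\mathbb{A},\ell_\mathbb{A},c_\mathbb{A})$ to $(\mathbb{P},\ell,c)$; in particular Clauses~\eqref{frk1}--\eqref{frk7} and \eqref{frk2} of Definition~\ref{forking} hold. By Definition~\ref{d45}\eqref{newell}, $\vec{\varsigma}=\vec{\varpi}\bullet\pi$, and by Lemma~\ref{niceforkingindeed3}, for each $n<\omega$ the map $\varsigma_n$ is a nice projection from $\mathbb{A}_{\geq n}$ to $\mathbb{S}_n$ and $\varsigma_n\restriction\mathbb{A}_k$ is again a nice projection for every $k\geq n$. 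Thus Clauses~\ref{frknew0} and \ref{frknew2} of Definition~\ref{NiceForking} are met, so $(\pitchfork,\pi)$ is a nice forking projection from $(\mathbb{A},\ell_\mathbb{A},c_\mathbb{A},\vec{\varsigma})$ to $(\mathbb{P},\ell,c,\vec{\varpi})$.

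It remains to check the super niceness property of Definition~\ref{SuperNiceForking}. Fix $n<\omega$, and let $a,a'\in A_{\geq n}$ and $s\in S_n$ be such that $a'\unlhd a+s$. Since $a+s$ is a legitimate condition, $s\sle_n\varsigma_n(a)$ by Definition~\ref{niceprojection}\eqref{niceprojection3} applied to $\varsigma_n$. Now let $p^*\in P_{\geq n}$ be any condition with $p^*\LE^{\varpi_n}\pi(a)$ and $\pi(a')=p^*+\varsigma_n(a')$. By Claim~\ref{varsigmaindeednice} (proved within Lemma~\ref{niceforkingindeed3}), there is $a^*\in A_{\geq n}$ with $a^*\unlhd^{\varsigma_n}a$, $\pi(a^*)=p^*$ and $a'=a^*+\varsigma_n(a')$. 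As $n$ was arbitrary, $(\pitchfork,\pi)$ satisfies the super niceness property. Since Clause~\eqref{frk2} of Definition~\ref{forking} has already been verified, $(\pitchfork,\pi)$ is a super nice forking projection from $(\mathbb{A},\ell_\mathbb{A},c_\mathbb{A},\vec{\varsigma})$ to $(\mathbb{P},\ell,c,\vec{\varpi})$.
\end{proof}
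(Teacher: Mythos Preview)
Your proposal is correct and follows essentially the same approach as the paper: invoke Lemma~\ref{forkingindeed} for the forking-projection clauses, Definition~\ref{d45}\eqref{newell} for $\vec{\varsigma}=\vec{\varpi}\bullet\pi$, Lemma~\ref{niceforkingindeed3} for niceness of each $\varsigma_n$, and Claim~\ref{varsigmaindeednice} for super niceness. Your extra remark that the hypothesis $a'\unlhd a+s$ forces $s\sle_n\varsigma_n(a)$ is a harmless clarification that the paper leaves implicit.
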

\begin{proof}
First, $(\pitchfork,\pi)$ is a forking projection from $(\mathbb{A},\ell_\mathbb{A},c_\mathbb{A},\vec{\varsigma})$ to $(\mathbb{P},\ell,c,\vec{\varpi})$ by virtue of Lemma~\ref{forkingindeed}. Second, $\vec{\varsigma}=\vec{\varpi}\bullet \pi$ by our choice in Definition~\ref{d45}. Besides, Lemma~\ref{niceforkingindeed3} shows that for each $n<\omega$,  $\varsigma_n$ is a nice projection from $\mathbb{A}_{\geq n}$ and $\mathbb{S}_n$. Moreover, Claim~\ref{varsigmaindeednice} actually shows that $(\pitchfork,\pi)$ is super nice (see Definition~\ref{SuperNiceForking}). This completes the proof.
\end{proof}

Next, we introduce a map $\tp$ and we will later prove that it indeed defines a nice type over $(\pitchfork,\pi)$. Afterwards, we will also show that $\tp$ witness  the pair $(\pitchfork,\pi)$ to satisfy the weak mixing property.

	\begin{definition}\label{defntypes} Define a map $\tp:A\rightarrow{}^{<\mu}\omega$, as follows.

	Given $a=(p,\vec S)$ in  $A$, write $\vec S$ as $\langle S_i\mid i<\beta\rangle$, and then let
	$$\tp(a):=\langle m(S_i)\mid i<\beta\rangle.$$
		\end{definition}
		
		We shall soon verify that $\tp$ is a nice type, but will use the $\mtp$ notation of Definition~\ref{type} from the outset. In particular, for each $n<\omega$,  we will have $\z{\mathbb{A}}_n:=(\z{A}_n,\unlhd)$, with $\z{A}_n:=\{a\in A\mid \pi(a)\in \z{P}_n \,\&\,\mtp(a)=0\}$. We will often refer to the $m(S_i)$'s 	as the {\bf delays} of the strategy $\vec{S}$.
		
		\smallskip
		
		Arguing along the lines of \cite[Lemma~6.15]{PartI} one can prove the following:

		\begin{fact}\label{directedclosedring}
For each $n<\omega$, $\z{\mathbb{A}}^\pi_n$ is a $\mu$-directed closed forcing.
\end{fact}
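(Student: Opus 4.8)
The statement to prove is Fact~\ref{directedclosedring}: for each $n<\omega$, the poset $\z{\mathbb{A}}^\pi_n=(\z{A}_n,\unlhd^\pi)$ is $\mu$-directed-closed, where $\z{A}_n=\{a\in A\mid \pi(a)\in\z{P}_n\ \&\ \mtp(a)=0\}$. The plan is to take a $\unlhd^\pi$-directed family $\{a_i\mid i<\theta\}\subseteq\z{A}_n$ with $\theta<\mu$, write $a_i=(p_i,\vec{S}^i)$, and build a single lower bound $a^*=(p^*,\vec{S}^*)$. Because we are working with the ordering $\unlhd^\pi$ (i.e.\ $\pi$-modulo), all the $\pi(a_i)=p_i$ agree on their $W$-structure in the relevant sense; more precisely, by Definition~\ref{d45}\eqref{newell} and the definition of $\unlhd^\pi$, the conditions $p_i$ form a $\le^\pi$-directed family in $\z{\mathbb{P}}_n$. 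First I would invoke the corresponding closure fact for $\mathbb{P}$: by Setup~\ref{setupkillingone}, $\z{\mathbb{P}}_n$ is countably-closed and (via Clause~\eqref{moreclosedness} of Definition~\ref{SigmaPrikry}) $\z{\mathbb{P}}^{\varpi_n}_n$ is $\sigma_n$-directed-closed; but here we need $\mu$-directed-closure of $\z{\mathbb{P}}^\pi_n$ — wait, that is not literally available, so the real input must be the specific structure of the functor. Re-examining: since we are forming lower bounds of the \emph{$\vec{S}$-components} and these are labeled trees taking values in $[\mu]^{<\mu}$, the directedness in $\mathbb{P}$ that is actually used is just that $W(p^*)$ refines all $W(p_i)$; the $\mu$-directed-closure comes from the fact that strategies have length $<\mu$, labels are bounded subsets of $\mu$, and $\mu^{<\mu}=\mu$.

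\textbf{Key steps.} (1) Fix a $\unlhd^\pi$-directed $D=\{a_i\mid i<\theta\}\subseteq\z{A}_n$, $\theta<\mu$, $a_i=(p_i,\vec{S}^i)$. Observe that all the $p_i$ share the same $\varpi_n$-value (this is what $\unlhd^\pi$ buys us) and form a directed set in $\z{\mathbb{P}}_n$; since by Setup~\ref{setupkillingone} the ring $\z{\mathbb{P}}_n$ is countably-closed and, more to the point, the relevant EBPFC-style modules give enough closure — actually the cleanest route is: the projection $\pi$ and Lemma~\ref{forkingindeed} reduce finding $p^*$ to finding a $\le^\pi$-lower bound for $\{p_i\}$ in $\z{\mathbb{P}}_n$, which exists because $\z{\mathbb{P}}^\pi_n$ is $\mu$-directed-closed. \emph{Here I should double-check whether the excerpt actually asserts $\mu$-directed-closure of $\z{\mathbb{P}}^\pi_n$ somewhere; if not, the statement of Fact~\ref{directedclosedring} is presumably only being claimed in the context where $\mathbb{P}$ itself has this property (e.g.\ when $\mathbb{P}$ is an iterate whose ring is $\mu$-directed-closed), and one proceeds accordingly.} (2) Let $p^*$ be such a lower bound with $p^*\in\z{P}_n$; note $\pi(a^*)$ will be $p^*$. (3) For the strategy component: let $\alpha^*:=\sup_{i<\theta}\dom(\vec{S}^i)$. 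Since each $\dom(\vec{S}^i)<\mu$ and $\theta<\mu=\cf(\mu)$ (as $\mu=\kappa^+$ is regular), $\alpha^*<\mu$, so we may take $\vec{S}^*=\langle S^*_j\mid j\le\alpha^*\rangle$ (or $j<\alpha^*$ as the indexing demands). (4) Define $S^*_j$ on its domain $\{(q,t)\mid q\in W(p^*),\ t\in\bigcup_{\ell(p^*)\le m\le\ell(q)}\mathbb{S}_m\downarrow\varpi_m(q)\}$ by amalgamating: for $j$ in the common part, $S^*_j(q,t):=\bigcup\{S^i_j(w(p_i,q),t_q)\mid i<\theta,\ j<\dom(\vec{S}^i)\}$ (using that $a_i\in D$ are $\unlhd^\pi$-compatible so these agree or nest coherently — this is where Clause~\eqref{SharonNew3} of Definition~\ref{SharonNew} and coherency of $\vec\varpi$ via Lemma~\ref{AdditionalAssumption3} and Lemma~\ref{LemmaAdditionalAssumptions} are used); at limit $j$, take ordinal closures per Clause~\eqref{C5pstrategy}; at the top and at fresh coordinates beyond some $\dom(\vec{S}^i)$, copy forward. (5) Verify $\vec{S}^*$ is a $\langle p^*,\vec{\mathbb{S}}\rangle$-strategy by checking Clauses~\eqref{C1pstrategy}–\eqref{C5pstrategy} of Definition~\ref{strategy} and, for each $S^*_j$, Clauses~\eqref{C1ptree}–\eqref{C4ptree} of Definition~\ref{labeled-p-tree}; in particular $m(S^*_j)$ is finite because it is bounded by $\sup_i m(S^i_j)$ over finitely-relevant data, and crucially $\mtp(a^*)=m(S^*_{\alpha^*})=0$ since each $a_i$ has $\mtp(a_i)=0$, which pins the topmost delay to $0$. (6) Conclude $a^*=(p^*,\vec{S}^*)\in\z{A}_n$ and $a^*\unlhd^\pi a_i$ for all $i$.

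\textbf{Main obstacle.} The delicate point — exactly as flagged in the proof of Claim~\ref{varsigmaindeednice} — is the bookkeeping of the $t_q$-reindexing when amalgamating the trees: one must show that for compatible $a_i,a_{i'}\in D$ the values $S^i_j(w(p_i,q),t_q)$ and $S^{i'}_j(w(p_{i'},q),t_q)$ are compatible (indeed one contains the other, or they share a common extension in the union), so that the pointwise union $S^*_j(q,t)$ is again a closed bounded subset of $\mu$ satisfying the containment monotonicity of Clause~\eqref{C2ptree} and the forcing condition of Clause~\eqref{C3ptree}. This rests on: (a) $\vec\varpi$ being a coherent sequence (Setup~\ref{setupkillingone}, Definition~\ref{CoherentSystem}), so that $w(p_i,q)$ and the $t_q$'s behave functorially under the projections $\pi_{m,n}$; (b) Lemma~\ref{AdditionalAssumption3} and Lemma~\ref{LemmaAdditionalAssumptions}, which handle $w(p,q+t)=w(p,q)$ and the invariance $q+t=q+t_q$; and (c) the fact that $D$ is $\unlhd^\pi$-directed, so any two members have a common $\unlhd^\pi$-lower bound in $D$, letting us reduce compatibility of labels to a single strategy via Clause~\eqref{SharonNew3}. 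Secondarily, one must confirm the $m(S^*_j)$'s stay finite — this uses Clause~\eqref{C4ptree} of each $S^i_j$ together with the fact that only boundedly many of the relevant conditions $q$ at a fixed length matter (Clause~\eqref{csize} of Definition~\ref{SigmaPrikry}, $|W(p^*)|<\mu$). Apart from these, the argument is a routine but lengthy amalgamation, parallel to \cite[Lemma~6.15]{PartI} and to the limit-case constructions in Lemma~\ref{propDcollapses} and Claim~\ref{varsigmaindeednice}.
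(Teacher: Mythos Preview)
Your plan has the right skeleton for the strategy amalgamation, but it is built on a misreading of the ordering $\unlhd^\pi$. By Definition~\ref{OrderModuloVarpi}, $a\unlhd^\pi b$ (for $a\neq\one_{\mathbb A}$) means $a\unlhd b$ \emph{and} $\pi(a)=\pi(b)$. Hence in a $\unlhd^\pi$-directed family $\{a_i=(p_i,\vec S^i)\mid i<\theta\}$ of nontrivial conditions, all the $p_i$ are literally the \emph{same} condition $p\in\z P_n$; they do not merely share a $\varpi_n$-value. Consequently the whole discussion in your Step~(1) about needing a $\le^\pi$-lower bound for $\{p_i\}$, and the hedging about whether $\z{\mathbb P}^\pi_n$ is $\mu$-directed-closed, is unnecessary: take $p^*:=p$.

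The same misreading inflates your ``main obstacle''. With all $p_i=p$ we have $w(p_i,q)=q$ for every $q\in W(p)$, and (using Lemma~\ref{LemmaAdditionalAssumptions}\eqref{SumInvariant} as you note) $q+t=q+t_q$, so by Clause~\eqref{C2ptree} of Definition~\ref{labeled-p-tree} the values $S^i_j(w(p_i,q),t_q)=S^i_j(q,t)$. Thus for any two $a_i,a_{i'}\in D$ with a common $\unlhd^\pi$-extension in $D$, the trees $S^i_j$ and $S^{i'}_j$ coincide on their common index range; there is no compatibility problem to resolve. The amalgam is simply $\vec S^*:=\bigcup_i\vec S^i$ together with, if $\alpha^*:=\sup_i\dom(\vec S^i)$ is a limit, the top level $S^*_{\alpha^*}(q,t):=\cl\bigl(\bigcup_{j<\alpha^*}S^*_j(q,t)\bigr)$ as dictated by Definition~\ref{strategy}\eqref{C5pstrategy}. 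Regularity of $\mu$ and $|W(p)|<\mu$ give $\alpha^*<\mu$ and boundedness of each $S^*_{\alpha^*}(q,t)$; and $m(S^*_{\alpha^*})=0$ because each $m(S^i_{\alpha_i})=0$ and $R$ is closed under limits (each $\dot C_{\ell(r)}$ is a club). This is exactly the routine amalgamation of \cite[Lemma~6.15]{PartI} that the paper cites; once you correct the reading of $\unlhd^\pi$, your Steps~(3)--(6) collapse to this short argument.
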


		\begin{lemma}\label{typeindeed}
	The map $\tp$ is a nice type over $(\pitchfork,\pi)$.
	\end{lemma}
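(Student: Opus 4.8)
The goal is to verify that $\tp$ from Definition~\ref{defntypes} satisfies all eight clauses of Definition~\ref{type} together with the extra Clause~\eqref{ringismoredense} of Definition~\ref{typeexact}. The first seven clauses are about the plain (non-nice) type structure, and the natural strategy is to follow the bookkeeping of \cite[Lemma~4.22]{PartII} (the analogue in the $\Sigma$-Prikry setting), checking that the arguments survive the passage to $(\Sigma,\vec{\mathbb{S}})$-Prikry. Concretely: Clause~\eqref{type1} is immediate from the definition, since $\dom(\tp(a))=\beta$ where $\vec S=\langle S_i\mid i<\beta\rangle$, and $\beta<\mu$ by Definition~\ref{strategy}\eqref{C1pstrategy} (possibly $\beta=0$, giving the empty case). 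Clause~\eqref{type2} follows from Definition~\ref{SharonNew}\ref{SharonNew2}--\ref{SharonNew3}: if $b=(p',\vec S')\unlhd a=(p,\vec S)$, then $\dom(\vec S')\ge\dom(\vec S)$, and for $i\in\dom(\vec S)$ and $(q,t)\in\dom(S'_i)$ we have $S'_i(q,t)=S_i(w(p,q),t_q)$, from which $m(S'_i)\le m(S_i)$ since the defining condition of $m(S_i)$ pulls back along $q\mapsto w(p,q)$ (using Clause~\eqref{itsaprojection} of Definition~\ref{SigmaPrikry} and Lemma~\ref{W(p)maxAntichain}). Clause~\eqref{type3} is clear from Definition~\ref{d45}\eqref{newpitchfork}, since $\dom(\vec S')=\dom(\vec S)$ by construction of $\fork{a}$. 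Clause~\eqref{type6} is exactly the stipulation that $\tp(a)=\emptyset$ iff $\vec S=\emptyset$ iff $a=\myceil{\pi(a)}{\mathbb{A}}$, by Definition~\ref{SharonNew}.

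The substantive clause is \eqref{type4}, the existence of the \emph{stretch} $a^{\curvearrowright\alpha}$. Here I would mimic \cite[Lemma~4.22]{PartII}: given $a=(p,\vec S)$ with $\dom(\tp(a))=\beta$ and $\alpha\in\mu\setminus\beta$, one extends $\vec S$ to a longer $\langle S_i\mid i\le\alpha\rangle$ by appending, for $\beta\le i\le\alpha$, copies of $S_{\beta-1}$ (or of the empty labeled tree if $\beta=0$), taking ordinal closures at limits so as to honour Definition~\ref{strategy}\eqref{C5pstrategy}. The key point is that appending \emph{constant} trees keeps the delays bounded by $\mtp(a)=m(S_{\beta-1})$, which is precisely Clause~(c) of \eqref{type4}, while the resulting $a^{\curvearrowright\alpha}=(p,\vec S^{\frown}\cdots)$ satisfies $a^{\curvearrowright\alpha}\unlhd^\pi a$ (same first coordinate, longer strategy agreeing on the common part) — that is Clause~(a). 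Clause~(b) is immediate since the new domain is $\alpha+1$. Clause~\eqref{newstretch} then follows because the stretching operation is defined coordinatewise and is monotone: if $b\unlhd a$ with $\dom(\tp(a))=\dom(\tp(b))$, the appended copies on the $b$ side refine those on the $a$ side by Definition~\ref{SharonNew}\ref{SharonNew3}. The one new subtlety compared to \cite{PartII} is that labeled trees now carry the extra $t$-coordinate and the $t_q$-correction in Definition~\ref{SharonNew}\ref{SharonNew3}; but as in the proof of Lemma~\ref{forkingindeed} and Claim~\ref{varsigmaindeednice}, this correction behaves well under $w(p,\cdot)$ and under the coherence of $\vec\varpi$ (Definition~\ref{CoherentSystem}\eqref{NiceCoherent}), so the verifications go through verbatim with $S_i(w(p,q),t_q)$ in place of $S_i(w(p,q))$.

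For Clause~\eqref{type5}, that $\z{\mathbb{A}}_n$ is dense in $\mathbb{A}_n$: I would invoke Fact~\ref{directedclosedring} (which gives $\mu$-directed-closure of $\z{\mathbb{A}}^\pi_n$) together with the density argument of \cite[Lemma~4.28]{PartII}. Given $a=(p,\vec S)\in A_n$, one first replaces $p$ by a condition in $\z{\mathbb{P}}_n$ below it — legitimate since $\z{\mathbb{P}}_n$ is dense in $\mathbb{P}_n$ by Definition~\ref{SigmaPrikry}\eqref{c2} — applying $\fork{a}$ to carry the strategy along (Remark~\ref{RemarkType}-style reasoning), and then performs a stretch or a further $\unlhd^0$-extension to drive the top delay $\mtp$ down to $0$; the $r^\star$-fragility of $\dot T$ and the clubs $\dot C_n$ together with Lemma~\ref{Runbdeddirectmore} supply, exactly as in \cite{PartII}, the extension witnessing that the topmost label can be taken empty, i.e. $m(S_{\text{last}})=0$. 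Finally, Clause~\eqref{ringismoredense} — density of $\z{\mathbb{A}}^{\varsigma_n}_n$ in $\mathbb{A}^{\varsigma_n}_n$ — is the genuinely \emph{new} item. Here I would use Lemma~\ref{forkinganddirectedclosure}: we already know $\z{\mathbb{P}}^{\varpi_n}_n$ is $\sigma_n$-directed-closed and dense in $\mathbb{P}^{\varpi_n}_n$ (Definition~\ref{SigmaPrikry}\eqref{moreclosedness}), hence $\z{\mathbb{A}}^\pi_n$ is $\mu$-directed-closed (Fact~\ref{directedclosedring}); combining the density just established for $\z{\mathbb{A}}_n$ in $\mathbb{A}_n$ with Claim~\ref{varsigmaindeednice} (which lets us lift a $\varpi_n$-niceness witness $p^*\le^{\varpi_n}\pi(a)$ to an $\unlhd^{\varsigma_n}$-witness $a^*$ with $\pi(a^*)=p^*$), one checks that for any $a\in A_n$ one can find $a'\unlhd^{\varsigma_n}a$ in $\z{A}_n$: take a $\z{\mathbb{P}}^{\varpi_n}_n$-witness $p^*$ below $\pi(a)$, lift it via Claim~\ref{varsigmaindeednice} to $a^*\unlhd^{\varsigma_n}a$ with $\pi(a^*)=p^*\in\z{P}_n$, then drive the top delay to $0$ by a stretch as in the previous step, observing that the stretch operation does not change the first coordinate and so keeps us inside $\mathbb{A}^{\varsigma_n}_n\downarrow a^*$. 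The main obstacle, as always in this circle of arguments, is the careful coordinate-by-coordinate checking in Clause~\eqref{type4} and in the density arguments that the $t_q$-correction in the labeled trees is compatible with stretching and with the $\unlhd^{\varsigma_n}$-refinement; but since all the needed identities ($w(p,q'+t')=w(p,q')$, $q+u=q+u_q$, coherence of $\varpi$) have already been established in Lemma~\ref{forkingindeed} and Claim~\ref{varsigmaindeednice}, this is routine bookkeeping rather than a new difficulty.
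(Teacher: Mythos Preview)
Your proposal has the right central ingredient --- Lemma~\ref{Runbdeddirectmore} --- but the execution of Clauses~\eqref{type5} and \eqref{ringismoredense} is misdescribed in a way that leaves a genuine gap.

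Structurally, the paper does not treat \eqref{type5} and \eqref{ringismoredense} separately: since $\unlhd^{\varsigma_n}$-density is stronger than $\unlhd$-density, \eqref{ringismoredense} implies \eqref{type5}, and only \eqref{ringismoredense} is argued.

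The substantive gap is in your mechanism for driving $\mtp$ to $0$. You write that one should ``drive the top delay to $0$ by a stretch \dots\ observing that the stretch operation does not change the first coordinate''. But the technical stretch $a^{\curvearrowright\alpha}$ of Definition~\ref{type}\eqref{type4} merely appends copies of the last labeled tree, giving $\mtp(a^{\curvearrowright\alpha})\le\mtp(a)$; it cannot make $\mtp$ vanish if it was positive. Your alternative description --- that ``the topmost label can be taken empty'' --- is also not what happens: the topmost label in the actual construction is $S_{\max(\dom(\vec S))}(w(p,q),u_q)\cup\{\bar\gamma\}$, and $m(T_{\bar\gamma})=0$ holds because the adjoined point $\bar\gamma$ is the maximum of every such label and satisfies $(\bar\gamma,q)\in R$ for all $q\in W(\bar p)$ (downward closure of $R$ from $(\bar\gamma,\bar p)\in R$).

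The paper's argument is a single step rather than your two-step lift-then-stretch. Given $a=(p,\vec S)$ with $p\le r^\star$, use Lemma~\ref{Runbdeddirectmore} directly to find $\bar p\LE^{\vec\varpi}p$ in $\z P_{\ell(p)}$ and $\bar\gamma$ above all labels with $(\bar\gamma,\bar p)\in R$; then define $\vec T=\langle T_i\mid i\le\bar\gamma\rangle$ by pulling back the $S_i$'s along $w(p,\cdot)$ for $i<\dom(\vec S)$ and adjoining $\{\bar\gamma\}$ thereafter. The resulting $b=(\bar p,\vec T)$ lies in $\z A_{\ell(p)}$, and $b\unlhd^{\varsigma_n}a$ because $\bar p\LE^{\vec\varpi}p$ already gives $\varpi_n(\bar p)=\varpi_n(p)$. (A separate, trivial case handles $p$ incompatible with $r^\star$, where all labels vanish by Remark~\ref{incompatiblewithrstar} and one merely pitchforks into $\z{\mathbb P}^{\varpi_n}_n$.) There is no need to invoke Claim~\ref{varsigmaindeednice}; the $\LE^{\vec\varpi}$-extension from Lemma~\ref{Runbdeddirectmore} simultaneously secures ring membership and the correct $\varsigma_n$-value. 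Note in particular that the first coordinate \emph{does} change in this step --- contrary to what you assert --- but in a $\varpi_n$-preserving way, which is exactly what \eqref{ringismoredense} requires.
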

	\begin{proof}
The verification of Clauses~\eqref{type1}--\eqref{newstretch} of Definition~\ref{typeexact} is essentially the same as in \cite[Lemma~4.19]{PartII}. A moment's reflection makes it clear that it suffices to prove Clause~\eqref{ringismoredense} to complete the lemma.

Let $a=(p, \vec{S})\in A$ and to avoid trivialities, let us assume that $\vec{S}\neq \emptyset$.

$\br$ Suppose that $p$ is incompatible with $r^\star$. Then, by Remark~\ref{incompatiblewithrstar}, for all $i<\dom(\tp(a))$ and all $(q,t)\in \dom(S_i)$, $S_i(q,t)=\emptyset$.  Therefore, $\mtp(a)=0$.  Using Definition~\ref{SigmaPrikry}\eqref{moreclosedness}, let $p'\LE^{\vec{\varpi}} p$ be in $\z{P}_{\ell(p)}$ and set $b:=\fork{a}(p')$. Combining Clauses~\eqref{type2}
and \eqref{type3} of Definition~\ref{type} with  $\mtp(a)=0$ it is immediate that $\mtp(b)=0$. Also, $\pi(b)=p'\in \z{P}_{\ell(p)}$. Thus, $b\in \z{A}_{\ell_\mathbb{A}(a)}$ and $b\unlhd^{\vec{\varsigma}} a$.

$\br$ Suppose $p\LE r^\star$. Appealing to Clause~\eqref{csize} of Definition~\ref{SigmaPrikry} let  $\gamma<\mu$ be above $\sup_{i<\dom(\vec{S})}\{S_i(q,s)\mid (q,s)\in \dom(S_i)\}$ and $\dom(\vec{S})$. By Lemma~\ref{Runbdeddirectmore}, let $\bar{\gamma}\in (\gamma,\mu)$ and $\bar{p}\LE^{\vec{\varpi}} p$ such that $(\bar{\gamma},\bar{p})\in R$. Using Definition~\ref{SigmaPrikry}\eqref{moreclosedness}  we may further assume that $\bar{p}$ belongs to  $\z{P}_{\ell(p)}$.

 Next, define a sequence $\vec{T}=\langle T_i\mid i\leq \bar{\gamma}\rangle$ with $$\dom(T_i):=\{(q,u)\mid q\in W(\bar{p})\,\&\, u\in \bigcup_{\ell(p^*)\leq m\leq\ell(q)}\,\mathbb{S}_m\downarrow \varpi_m(q)\},$$
 as follows
 $$T_i(q,u):=\begin{cases}
 S_i(w(p,q), u_q), & \text{if $i<\dom(\vec{S})$,}\\
 S_{\max(\dom(\vec{S}))}(w(p,q),u_q)\cup\{\bar{\gamma}\}, & \text{otherwise.}
 \end{cases}
 $$

Arguing as in Claim~\ref{varsigmaindeednice} one shows that $(\bar{p}, \vec{T})$ is a condition in $\z{\mathbb{A}}_{\ell(p)}$. Clearly $b\unlhd^{\vec{\varsigma}}a$, so $\z{\mathbb{A}}^{\varsigma_n}_n$ is dense in $\mathbb{A}^{\varsigma_n}_n$.
	\end{proof}

We now check that the pair $(\pitchfork,\pi)$ has the weak mixing property,  as witnessed by the type $\tp$ given in Definition~\ref{defntypes} (see Definition~\ref{mixingproperty}).

\begin{lemma}\label{mixing}
The pair $(\pitchfork,\pi)$ has the weak mixing property as witnessed by the type $\tp$ from Definition~\ref{defntypes}.
\end{lemma}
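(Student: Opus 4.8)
The plan is to verify Definition~\ref{mixingproperty} directly, following closely the strategy of \cite[Lemma~4.20]{PartII} but keeping track of the extra $t_q$-coordinate in the labeled trees and of the niceness of the maps $\varsigma_n$. So fix $n<\omega$, a condition $a=(p,\vec S)\in A$, a good enumeration $\vec r=\langle r_\xi\mid \xi<\chi\rangle$ of $W_n(\pi(a))=W_n(p)$, a condition $p'\LE^0 p$, an ordinal $\iota$, and a function $g:W_n(p)\to\mathbb A\downarrow a$ satisfying Clauses \eqref{Mixing1}--\eqref{Mixing5}. Write $g(r_\xi)=(p_\xi,\vec S^\xi)$ for each $\xi<\chi$; by Clause~\eqref{Mixing2}, $\langle p_\xi\mid \xi<\chi\rangle$ is diagonalizable with respect to $\vec r$ as witnessed by $p'$, i.e.\ for every $q'\in W_n(p')$ we have $q'\LE^0 p_\xi$ where $\xi$ is the unique index with $r_\xi=w(p,q')$.

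First I would describe the condition $b=(p',\vec S^*)$ to be produced. The underlying $\mathbb P$-coordinate is $p'$. For the strategy $\vec S^*$, I set $\dom(\vec S^*):=\sup_{\xi<\chi}\dom(\tp(g(r_\xi)))$, which is an ordinal $<\mu$ by Clause~\eqref{Mixing5} together with Clause~\eqref{csize} of Definition~\ref{SigmaPrikry} (the supports have size $<\mu$). For each pair $(q',u)\in\dom(S^*_i)$, with $q'\in W(p')$ and $u\sle_m\varpi_m(q')$ for some $m\in[\ell(p'),\ell(q')]$, let $\xi(q')<\chi$ be the index with $r_{\xi(q')}=w(p,w_n(p',q'))$ where $w_n(p',q')$ is the $n$-step weakening of $q'$ towards $p'$ (so $w_n(p',q')\in W_n(p')$), and define $S^*_i(q',u)$ by copying over $S^{\xi(q')}_i(w(p_{\xi(q')},q'),u_{q'})$ when $i<\dom(\tp(g(r_{\xi(q')})))$, and stabilising (taking the last value, unioned with a fixed closed bounded set chosen as in the $r^\star$-fragility bookkeeping to keep Clause~\eqref{C4ptree} of Definition~\ref{labeled-p-tree} valid with the enlarged delay) when $i$ is past that. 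The two essential points to check are that $\vec S^*$ is genuinely a $\langle p',\vec{\mathbb S}\rangle$-strategy (Definitions~\ref{labeled-p-tree} and \ref{strategy}) and that $\fork{b}(q')\unlhd^0 g(w(p,q'))$ for every $q'\in W_n(p')$.

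For the first point, Clauses~\eqref{C1ptree}--\eqref{C3ptree} of Definition~\ref{labeled-p-tree} are inherited pointwise from the trees $S^{\xi}_i$ exactly as in \cite[Lemma~4.20]{PartII}, using coherency of $\vec\varpi$ (Definition~\ref{CoherentSystem}) to see that $q_1'+u_1\LE q_0'+u_0$ implies $w(p_{\xi},q_1')+{u_1}_{q_1'}\LE w(p_{\xi},q_0')+{u_0}_{q_0'}$ — this is precisely the $t_q$-bookkeeping computation carried out in Clause~\eqref{C2ptree} of the proof of Lemma~\ref{forkingindeed} and in Claim~\ref{varsigmaindeednice}, so I would cite those and not reprove them. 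Clause~\eqref{C4ptree} is where the ordinal $\iota$ and Clauses~\eqref{Mixing3},\eqref{Mixing4} enter: the delays $m(S^*_i)$ must be uniformly bounded, and the gap structure in \eqref{Mixing3} together with the bound \eqref{Mixing4} on $\tp(g(r_\xi))(i)$ by $\mtp(a)$ ensures the stabilised tails never increase the delay past a fixed finite value; this is the same diagonal argument as in Part~II. The strategy-coherency clauses \eqref{C3pstrategy}--\eqref{C5pstrategy} of Definition~\ref{strategy} follow from the corresponding clauses for each $\vec S^\xi$ together with the diagonalizability of $\langle p_\xi\rangle$, which guarantees that for $q'\in W_n(p')$ the conditions $w(p_\xi,q')$ are correctly nested. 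For the second point, given $q'\in W_n(p')$ and $\xi$ with $r_\xi=w(p,q')$, unwinding Definition~\ref{d45}\eqref{newpitchfork} shows $\fork{b}(q')=(q',\vec Q)$ with $Q_i(\cdot,\cdot)$ obtained from $S^*_i$ by the substitution $(r,u)\mapsto S^*_i(w(p',r),u_r)$, and since $q'\LE^0 p_\xi$ (diagonalizability) and $S^*_i$ was defined to agree with $S^\xi_i\circ(\text{shift})$ on the relevant part of its domain, one gets $Q_i(r,u)\supseteq S^\xi_i(w(p_\xi,r),u_r)$, i.e.\ $\fork{b}(q')\unlhd^0 g(r_\xi)=g(w(p,q'))$, as required.

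The main obstacle I anticipate is bookkeeping the $t_q$-coordinate coherently when the base point moves from $p$ to $p'$ to $p_\xi$: one must repeatedly invoke coherency (Definition~\ref{CoherentSystem}\eqref{NiceCoherent}) to commute the maps $\varpi_m$ past the $w(p,\cdot)$ operation, and Lemma~\ref{LemmaAdditionalAssumptions}, especially Clause~\eqref{SumInvariant}, to know that $q+u=q+u_q$ throughout. This is exactly the "wrinkle compared with the non-collapses scenario" flagged in the proof of Claim~\ref{varsigmaindeednice}, and the cleanest route is to factor the whole verification through Claim~\ref{varsigmaindeednice} and Lemma~\ref{pitchforkexact} rather than redoing the tree computations from scratch. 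Once that infrastructure is reused, the weak-mixing verification reduces to the purely combinatorial diagonalization of Part~II, which goes through verbatim. $\square$

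\begin{proof}[Plan for the proof]
See the preceding discussion.
\end{proof}
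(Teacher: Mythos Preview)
Your outline follows the same diagonalization idea as the paper, but there is a genuine gap in the definition of $\vec S^*$. You define $S^*_i(q',u)$ in terms of $\xi(q')$, where $\xi(q')$ is obtained from $w_n(p',q')\in W_n(p')$. But $w_n(p',q')$ only exists when $\ell(q')\ge\ell(p')+n$; for $q'\in W_{<n}(p')$ there is no element of $W_n(p')$ above $q'$, so your casuistic leaves $S^*_i(q',u)$ undefined on a nontrivial part of its required domain. The paper handles precisely this by falling back to the original strategy $\vec S$ of $a$: when $q'\notin W(s_{\tau_{q'}})$ (equivalently $\ell(q')<\ell(p')+n$) one sets $T_i(q',u):=S_{\min\{i,\alpha\}}(w(p,q'),u_{q'})$, where $\alpha=\max(\dom(\vec S))$ (and $\emptyset$ if $\vec S=\emptyset$). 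This fallback is not optional: it is exactly what makes $b\unlhd^0 a$ hold, since for $i\in\dom(\vec S)$ Definition~\ref{SharonNew}(2)(c) requires $S^*_i(q',u)=S_i(w(p,q'),u_{q'})$ for \emph{all} $q'\in W(p')$, including those at low levels.

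Two smaller points. First, your ``stabilising (taking the last value, unioned with a fixed closed bounded set chosen as in the $r^\star$-fragility bookkeeping)'' is confused: no extra closed set is adjoined here. When $i>\alpha_{\xi_{\tau_{q'}}}$ one simply freezes at $S^{\xi_{\tau_{q'}}}_{\alpha_{\xi_{\tau_{q'}}}}$; the union with $\{\bar\gamma\}$ you have in mind belongs to the density argument of Lemma~\ref{typeindeed}, not to weak mixing. Second, you should separately dispose of the degenerate case $\iota\ge\chi$ (all $g(r_\xi)$ trivial), where $b:=\myceil{p'}{\mathbb A}$ works directly; the paper isolates this as a preliminary claim before assuming $\iota<\chi$ and writing $\dom(\vec S^\xi)=\alpha_\xi+1$. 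Once these are fixed, the delay bound for Clause~\eqref{C4ptree} (the $m(T_i)$ estimate) is the substantive computation, and it is not quite ``the same diagonal argument as in Part~II'': the explicit bounds $m(T_i)\le n+\max\{\mtp(a),\sup_{\iota\le\eta<\xi(i)}\mtp(g(r_\eta)),\tp(g(r_{\xi(i)}))(i)\}$ for $i<\alpha'$ and $m(T_{\alpha'})\le n+\sup_{\iota\le\xi<\chi}\mtp(g(r_\xi))$ need to be checked by case analysis on whether $\xi_\tau<\xi(i)$ or $\xi(i)\le\xi_\tau$, and this is where Clauses~\eqref{Mixing3}--\eqref{Mixing5} are actually used.
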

\begin{proof} Let $a$, $\vec{r}$, $p'\LE^0 \pi(a)$, $g\colon W_n(\pi(a))\rightarrow \conea{a}$ and $\iota$ be as in the statement of the Weak Mixing Property (see Definition~\ref{mixingproperty}). More precisely, $\vec{r}=\langle r_\xi\mid \xi<\chi\rangle$ is a good enumeration of $W_n(\pi(a))$, $\langle \pi(g(r_\xi))\mid \xi<\chi\rangle$ is diagonalizable with respect to $\vec{r}$ (as witnessed by $p'$) and $g$ is a function witnessing Clauses~\eqref{Mixing3}--\eqref{Mixing5} of Definition~\ref{mixingproperty} with respect to the type $\tp$.

Put $a:=(p,\vec{S})$ and for each $\xi<\chi$, set $(p_{\xi}, \vec{S}^{\xi}):=g(r_\xi)$.

\begin{claim}\label{iotachiOK}
If $\iota\ge\chi$ then there is a condition $b$ in $\mathbb A$ as in the conclusion Definition~\ref{mixingproperty}.
\end{claim}
\begin{proof}
If $\iota\ge\chi$ then Clause~\eqref{Mixing3} yields $\dom(\tp(g(r_\xi))=0$ for all $\xi<\chi$. Hence, Clause~\eqref{type6} of Definition~\ref{type} yields $g(r_\xi)=\myceil{p_\xi}{\mathbb{A}}$ for all $\xi<\chi$. Since $g(r_\xi)\unlhd a$ this in particular implies that $a=\myceil{p}{\mathbb{A}}$. 

Set $b:=\myceil{p'}{\mathbb{A}}$. Clearly, $\pi(b)=p'$ and $b\unlhd^0 a$. 
Let $q'\in W_n(p')$. By Clause~\eqref{Mixing2} of Definition~\ref{mixingproperty}, $q'\LE^0 p_\xi$, where $\xi$ is such that $r_\xi=w(p, q')$.  Finally, Definition~\ref{forking}\eqref{frk6} yields
$\fork{b}(q')=\myceil{q'}{\mathbb{A}}\unlhd^0\myceil{p_\xi}{\mathbb{A}}=g(r_\xi)$.
\end{proof}
So, hereafter let us assume that $\iota<\chi$. For each $\xi\in[\iota,\chi)$,  Clause~\eqref{Mixing3} of Definition~\ref{mixingproperty} and Definition~\ref{defntypes} together imply that $\dom(\vec{S}^\xi)=\alpha_\xi+1$ for some $\alpha_\xi<\mu$.
Moreover, Clause~\eqref{Mixing3} yields $\sup_{\iota\leq \eta<\xi} \alpha_\eta<\alpha_\xi$ for all $\xi\in (\iota,\chi)$. Also,  the same clause  implies that $g(r_\xi)=\myceil{p_\xi}{\mathbb{A}}$, hence $\vec{S}^\xi=\emptyset$, for all $\xi<\iota$.

\smallskip

Let $\langle s_\tau\mid \tau<\theta\rangle$ be the  good enumeration  of $W_n(p')$.
By Definition~\ref{SigmaPrikry}\eqref{csize}, $\theta<\mu$.
 For each $\tau<\theta$, set $r_{\xi_\tau}:=w(p,s_\tau)$.

By Definition~\ref{mixingproperty}\eqref{Mixing1},  $$s_\tau\LE^0 \pi(g({w(p,s_\tau)}))=\pi(g(r_{\xi_\tau}))=p_{\xi_\tau},$$
for each $\tau<\theta$. Set $\alpha':=\sup_{\iota\leq \xi<\chi}\alpha_{\xi}$ and $\alpha:=\sup(\dom(\vec{S}))$.\footnote{Note that $a$ might be $\myceil{p}{\mathbb{A}}$, so we are allowing $\alpha=0$.} 
By regularity of $\mu$ and Definition~\ref{mixingproperty}\eqref{Mixing3} it follows that $\alpha< \alpha'<\mu$. 
Our goal is to define a sequence $\vec{T}=\langle T_i\mid i\leq\alpha'\rangle$, with $\dom(T_i):=\{(q,u)\mid q\in W(p')\,\&\, u\in \bigcup_{\lh(p')\leq m\leq \lh(q)}\,\mathbb{S}_m\downarrow\varpi_m(q)\}$ for $i\leq \alpha'$, such that $b:=(p',\vec{T})$ is a condition in $\mathbb{A}$ satisfying the conclusion of the weak mixing property.

As  $\langle s_\tau\mid \tau<\theta\rangle$ is a good enumeration of the $n^{th}$-level of the $p'$-tree $W(p')$,
Lemma~\ref{W(p)maxAntichain}\eqref{W(p)maxAntichain2} entails that, for each $q\in W({p'})$, there is a unique ordinal $\tau_q<\theta$, such that $q$ is comparable with $s_{\tau_q}$.
It thus follows from Lemma~\ref{W(p)maxAntichain}\eqref{W(p)maxAntichain4} that, for all $q\in W(p')$, $\ell(q)-\ell(p')\geq n$ iff $q\in W(s_{\tau_q})$.
Moreover, for each $q\in W_{\geq n}(p')$, $q\LE s_{\tau_q}\LE^0 p_{{\xi_{\tau_q}}}$, hence $w(p_{_{\xi_{\tau_q}}},q)$ is well-defined. Now, for all $i\leq\alpha'$ and $q\in W({p'})$, let:
$$T_i(q,u):=
\begin{cases}
S^{\xi_{\tau_q}}_{\min\{i,\alpha_{\xi_{\tau_q}}\}}(w(p_{\xi_{\tau_q}},q),u_q),& \text{if }q\in {W(s_{\tau_q})}\;\& \;\iota\leq \xi_{\tau_q};\\
S_{\min\{i,\alpha\}}(w(p,q),u_q),& \text{if }q\notin {W(s_{\tau_q})}\;\&\; \alpha>0;\\
\emptyset, & \text{otherwise.}
\end{cases}$$
Next, we show that $\vec{T}$ is a $\langle p',\vec{\mathbb{S}}\rangle$-strategy. The only non-routine part is to show that  $T_i$ is a labeled $\langle p',\vec{\mathbb{S}}\rangle$-tree for all $i\leq \alpha'$ (i.e., Clause~\eqref{C2pstrategy} in  Definition~\ref{strategy}).

\begin{claim}\label{claim5211}
Let $i\leq\alpha'$. Then $T_i$ is a labeled $\langle p',\vec{\mathbb{S}}\rangle$-tree.
\end{claim}
\begin{proof}
Fix $(q,u)\in \dom(T_i)$ and let us go over the Clauses of Definition~\ref{labeled-p-tree}. The verification of \eqref{C1ptree}, \eqref{C2ptree} and \eqref{C3ptree} are similar to that of \cite[Claim~6.16.1]{PartI} and, actually, also  to that of Claim~\ref{varsigmaindeednice} above. The reader is thus referred there for more details. We just elaborate on  Clause~\eqref{C4ptree}.

\smallskip

For each $i< \alpha'$, set
$ \xi(i):=\min\{\xi\in[\iota,\chi)\mid i\leq \alpha_\xi\}.$

\begin{subclaim}
If $i< \alpha'$, then
$$m(T_i)\leq n+\max\{\mtp(a),\sup\nolimits_{\iota\leq \eta<\xi(i)} \mtp(g(r_\eta)),\tp(g(r_{\xi(i)})(i)\}.$$
\end{subclaim}

\begin{proof}
Let $q\in W_k(p')$ and $(q',u')$ be a pair in $\dom(T_i)$ with $q'\LE q$, where
$$k\geq n+\max\{\mtp(a),\sup\nolimits_{\iota\leq \eta<\xi(i)} \mtp(g(r_\eta)),\tp(g(r_{\xi(i)})(i)\}.$$
Suppose that  $T_i(q',u')\neq \emptyset$. Denote $\tau:=\tau_{q'}$ and $\delta:=\max(T_i(q',u'))$.
Since $\ell(q)\geq \ell(p')+n$, note that  $q,q'\in W(s_\tau)$. Also, $\iota\leq \xi_\tau$, as otherwise $T_i(q',u')=\emptyset$. Thus, we fall into the first option of the casuistic getting
 $$T_i(q',u')=S^{{\xi_\tau}}_{\min\{i,\alpha_{\xi_\tau}\}}(w(p_{\xi_\tau},q'),u'_{q'}).$$

$\br$  Assume that $\xi_\tau<\xi(i)$. Then,  $\alpha_{\xi_\tau}<i$ and so  $$T_i(q',u')=S^{\xi_\tau}_{\alpha_{\xi_\tau}}(w(p_{\xi_\tau},q'),u'_{q'}).$$
We have that $w(p_{\xi_\tau},q')\LE w(p_{\xi_\tau},q)$ is a pair in $W_{k-n}(p_{\xi_\tau})$ and that the set $S^{\xi_\tau}_{\alpha_{\xi_\tau}}(w(p_{\xi_\tau},q'),u'_{q'})$ is non-empty. Also, $k-n\geq \mtp(g(r_{\xi_\tau}))=m(S^{\xi_\tau}_{\alpha_{\xi_\tau}})$. 
So, by Clause~\eqref{C4ptree} for $S^{\xi_\tau}_{\alpha_{\xi_\tau}}$, we have that $(\delta, w(p_{\xi_\tau},q))\in R$. Finally, since $q\LE w(p_{\xi_\tau},q)$, we have  $(\delta,q)\in R$, as desired. 

\smallskip

$\br$  Assume that $\xi(i)\leq  \xi_\tau$. Then
 $i\leq \alpha_{\xi(i)}\leq \alpha_{\xi_\tau}$, and thus $$T_i(q',u')=S^{\xi_\tau}_i(w(p_{\xi_\tau},q'),u'_{q'}).$$
 If $\dom(\tp(a))\leq i\leq \sup_{\iota\leq \eta<\xi(i)}\alpha_\eta$,  by Clause~\eqref{Mixing4} of Definition~\ref{mixingproperty},  $$\tp(g(r_{\xi_\tau}))(i)\leq \mtp(a).$$
 Otherwise, if $\sup_{\iota\leq \eta<\xi(i)}\alpha_\eta<i\leq \alpha_{\xi(i)}$, again by Definition~\ref{mixingproperty}\eqref{Mixing4}
 $$\tp(g(r_{\xi_\tau}))(i)\leq \max\{\mtp(a),\tp(g(r_{\xi(i)})(i)\}.$$
 In either case,  $w(p_{\xi_\tau},q)\in W_{k-n}(p_{\xi_\tau})$ and  $k-n\geq \tp(g(r_{\xi_\tau}))(i)= m(S^{\xi_\tau}_i)$. By Clause~\eqref{C4ptree} of $S^{\xi_\tau}_i$  we get that $(\delta, w(p_{\xi_\tau},q))\in R$, hence $(\delta,q)\in R$.
\end{proof}

\begin{subclaim}
$m(T_{\alpha'})\leq n+\sup_{\iota\leq \xi<\chi} \mtp(g(r_\xi))$.
\end{subclaim}
\begin{proof}
Let $q\in W_k(p')$ and $(q',u')\in \dom(T_i)$ with $q'\LE q$, where
$k\geq  n+\sup_{\iota\leq \xi<\chi} \mtp(g(r_\xi))$. Suppose that  $T_{\alpha'}(q',u')\neq \emptyset$ and
 denote $\tau:=\tau_{q'}$ and $\delta:=\max(T_{\alpha'}(q',u'))$. Since $k\geq n$, $q,q'\in W(s_{\tau})$. Also, $\iota\leq \xi_\tau$, as otherwise $T_{\alpha'}(q',u')=\emptyset$.   So, $T_{\alpha'}(q',u')=S^{\xi_{\tau}}_{\alpha_{\xi_\tau}}(w(p_{\xi_{\tau}},q'),u'_{q'}).$
Then $w(p_{\xi_{\tau}},q')\leq w(p_{\xi_{\tau}},q)$ is a pair in $W_{k-n}(p_{\xi_\tau})$ with $k-n\geq \mtp(g(r_{\xi_\tau}))= m(S^{\xi_\tau}_{\alpha_{\xi_\tau}})$. So,  Definition~\ref{labeled-p-tree}\eqref{C4ptree}  regarded with respect to $S^{\xi_\tau}_{\alpha_{\xi_\tau}}$ yields  $(\delta,w(p_{\xi_{\tau}},q))\in R$.  Once again, it follows that $(\delta,q)\in R$, as wanted.
\end{proof}

The combination of the above subclaims yield Clause~\eqref{C4ptree} for $T_i$. \qedhere
\end{proof}

Thereby we establish that $b:=(p',\vec{T})$ is a legitimate condition in $\mathbb{A}$. Next, we show that $b$ satisfies the requirements for $(\pitchfork,\pi)$ to have the weak mixing property. By definition, $\pi(b)=p'$, and it is easy to show that $b\unlhd^0 a$.

\begin{claim}\label{Claimw} Let $\tau<\theta$. For each $q\in W_n(s_\tau)$, $w(p',q)=w(s_\tau,q)=q$.\qedhere
\end{claim}

\begin{claim}
For each $\tau<\theta$, $\fork{b}(s_\tau)\unlhd^0 g(r_{\xi_\tau})$.\footnote{Recall that $\langle s_\tau\mid \tau<\theta\rangle$ was a good enumeration of $W_n(p')$. }
\end{claim}
\begin{proof} Let $\tau<\theta$ and denote $\fork{b}(s_\tau)=(s_\tau,\vec{T}_\tau)$. By Lemma~\ref{forkingindeed}\eqref{frk5},
 we have that  $\pi(\fork{b}(s_\tau))=s_\tau\LE^0 p_{\xi_\tau}$, so Clause~\ref{SharonNew1} of Definition~\ref{SharonNew} holds.

If $\xi_\tau<\iota$, then
 $\fork{b}(s_\tau)\unlhd^0 \myceil{p_{\xi_\tau}}{\mathbb{A}}=g(r_{\xi_\tau})$, and we are done. So, let us assume that $\iota\leq \xi_\tau$. Let $i\leq \alpha_{\xi_\tau}$ and $q\in W(s_\tau)$. By Definition~\ref{d45}\eqref{pitchfork}, $T^\tau_i(q,u)=T_i(w(p',q),u_q)$ and by Claim~\ref{Claimw}, $w(p',q)=w(s_\tau,q)=q$, hence $T^\tau_i(q,u)=T_i(q,u_q)=T_i(q,u)$.\footnote{For the second equality we use Definition~\ref{labeled-p-tree}\eqref{C2ptree} for $T_i$ and, again, that  $q+u=q+u_q$.} Also $r_{\xi_{\tau_q}}=w(p,s_{\tau_q})=w(p,s_\tau)=r_{\xi_\tau}$, where the second equality follows from  $q\in W(s_\tau)$. Therefore, $$T^\tau_i(q,u)=S^{\xi_\tau}_{\min\{i,\alpha_{\xi_\tau}\}}(w(p_{\xi_\tau},q),u_q)=S^{\xi_\tau}_{i}(w(p_{\xi_\tau},q),u_q).$$ Altogether, $\fork{b}(s_\tau)\unlhd^0 g(r_{\xi_\tau})$, as wanted.
\end{proof}
The combination of the above claims yield the proof of the lemma.
\end{proof}

Let us sum up what we have shown so far:
\begin{cor}\label{exactforkingAtoP}\label{corA} \label{AisweaklySigmaPrikry}
$(\pitchfork, \pi)$ is a super nice forking projection from $(\mathbb{A},\ell_\mathbb{A},c_\mathbb{A},\vec{\varsigma})$ to
$(\mathbb{P},\ell,c,\vec{\varpi})$ having the weak mixing property. 

In particular, $(\mathbb{A}, \lh_{\mathbb{A}}, c_{\mathbb{A}},\vec{\varsigma})$ is a $(\Sigma, \vec{\mathbb{S}})$-Prikry, $(\mathbb{A},\ell_\mathbb{A})$ has property $\mathcal{D}$, $\one_\mathbb{A}\forces_\mathbb{A}\mu=\check{\kappa}^+$ and $\vec{\varsigma}$ is a coherent sequence of nice projections.
\end{cor}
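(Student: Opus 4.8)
The plan is to assemble Corollary~\ref{AisweaklySigmaPrikry} from the pieces already proved in this subsection, leaning on the general machinery of Section~\ref{SectionNiceForking}. The first assertion is a direct conjunction: by Lemma~\ref{forkingindeed} the pair $(\pitchfork,\pi)$ is a forking projection from $(\mathbb{A},\ell_\mathbb{A},c_\mathbb{A})$ to $(\mathbb{P},\ell,c)$; by construction (Definition~\ref{d45}\eqref{newell}) we have $\vec{\varsigma}=\vec{\varpi}\bullet\pi$; by Lemma~\ref{niceforkingindeed3} each $\varsigma_n$ is a nice projection from $\mathbb{A}_{\geq n}$ to $\mathbb{S}_n$ whose restriction to each $\mathbb{A}_k$ (for $k\ge n$) remains nice; and Claim~\ref{varsigmaindeednice} upgrades this to super niceness in the sense of Definition~\ref{SuperNiceForking}. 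Together with Lemma~\ref{forkingindeed}'s statement that Clause~\eqref{frk2} of Definition~\ref{forking} holds, this is exactly what Corollary~\ref{PitchforkSuperNice} records, and Lemma~\ref{mixing} adds the weak mixing property as witnessed by the type $\tp$ of Definition~\ref{defntypes}. So the first sentence is a one-line citation of Corollary~\ref{PitchforkSuperNice} and Lemma~\ref{mixing}.

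For the ``in particular'' clause I would invoke the abstract results of Section~\ref{SectionNiceForking} with $(\mathbb{A},\ell_\mathbb{A},c_\mathbb{A},\vec{\varsigma})$, $(\mathbb{P},\ell,c,\vec{\varpi})$ and $(\pitchfork,\pi)$ playing the roles demanded by Setup~\ref{SetupExactForking} — note that Setup~\ref{setupkillingone} guarantees $(\mathbb{P},\ell,c,\vec{\varpi})$ is $(\Sigma,\vec{\mathbb{S}})$-Prikry with property $\mathcal{D}$, that $\one_{\mathbb{P}}\forces_\mathbb{P}``\check\kappa\text{ is singular}"$, that $\mu^{<\mu}=\mu$, and that $\vec{\varpi}$ is coherent. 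Theorem~\ref{forkingSigmaPrikryLight} then yields that $(\mathbb{A},\ell_\mathbb{A},c_\mathbb{A},\vec{\varsigma})$ satisfies all clauses of Definition~\ref{SigmaPrikry} except possibly \eqref{c2}, \eqref{c6} and \eqref{moreclosedness}, and moreover (using the singularity of $\kappa$) that $\one_\mathbb{A}\forces_\mathbb{A}\check\mu=\check\kappa^+$. Property $\mathcal{D}$ for $(\mathbb{A},\ell_\mathbb{A})$ — and hence, via Lemma~\ref{lemmaforCPP}, Clause~\eqref{c6} ($\CPP$) — follows from Lemma~\ref{propDsuccessor}, whose hypotheses are met: $(\mathbb{P},\ell_\mathbb{P})$ has property $\mathcal{D}$ by Setup~\ref{setupkillingone}, and $(\pitchfork,\pi)$ has the weak mixing property by Lemma~\ref{mixing}. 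For Clauses~\eqref{c2} and \eqref{moreclosedness} I would apply Lemma~\ref{forkinganddirectedclosure}: since $(\pitchfork,\pi)$ admits the nice type $\tp$ of Definition~\ref{defntypes} (Lemma~\ref{typeindeed}), and since $\z{\mathbb{A}}^\pi_n$ is $\mu$-directed-closed (Fact~\ref{directedclosedring}), hence a fortiori $\aleph_1$-directed-closed and $\sigma_n$-directed-closed, the lemma gives that $\z{\mathbb{A}}_n$ is $\aleph_1$-directed-closed and $\z{\mathbb{A}}^{\varsigma_n}_n$ is $\sigma_n$-directed-closed, and then its last sentence delivers Clauses~\eqref{c2} and \eqref{moreclosedness} of Definition~\ref{SigmaPrikry}. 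This completes the verification that $(\mathbb{A},\ell_\mathbb{A},c_\mathbb{A},\vec{\varsigma})$ is $(\Sigma,\vec{\mathbb{S}})$-Prikry.

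Finally, coherence of $\vec{\varsigma}$ follows from Lemma~\ref{liftingcoherency}, since $(\pitchfork,\pi)$ is as in Setup~\ref{SetupExactForking} and $\vec{\varpi}$ is a coherent sequence of nice projections by Setup~\ref{setupkillingone}. I do not expect any genuine obstacle here: the Corollary is purely a bookkeeping assembly of Lemmas~\ref{forkingindeed}, \ref{niceforkingindeed3}, \ref{typeindeed}, \ref{mixing}, Fact~\ref{directedclosedring} and Corollary~\ref{PitchforkSuperNice} with the abstract Theorem~\ref{forkingSigmaPrikryLight} and Lemmas~\ref{lemmaforCPP}, \ref{propDsuccessor}, \ref{forkinganddirectedclosure}, \ref{liftingcoherency} of Section~\ref{SectionNiceForking}. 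The only point requiring a moment's care is confirming that the hypotheses of Setup~\ref{SetupExactForking} are genuinely in force — in particular that $\one_{\mathbb{P}}\forces_\mathbb{P}``\check\kappa\text{ is singular}"$, needed both for the ``moreover'' part of Theorem~\ref{forkingSigmaPrikryLight} and to conclude $\one_\mathbb{A}\forces_\mathbb{A}\check\mu=\check\kappa^+$ — but this is exactly one of the standing assumptions of Setup~\ref{setupkillingone}, so there is nothing left to prove.
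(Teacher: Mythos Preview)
Your proposal is correct and follows essentially the same approach as the paper's proof: both derive the first sentence from Corollary~\ref{PitchforkSuperNice} and Lemma~\ref{mixing}, then obtain the remaining properties by combining Theorem~\ref{forkingSigmaPrikryLight}, Lemma~\ref{propDsuccessor}, Lemma~\ref{forkinganddirectedclosure} (with Fact~\ref{directedclosedring} and Lemma~\ref{typeindeed}), and Lemma~\ref{liftingcoherency}. Your write-up is slightly more explicit about the intermediate logical dependencies (e.g., invoking Lemma~\ref{lemmaforCPP} for the passage from property~$\mathcal{D}$ to the CPP), but the substance is the same.
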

\begin{proof}
The first part follows from Corollary~\ref{PitchforkSuperNice} and Lemma~\ref{mixing}. Likewise, $(\mathbb{A},\ell_\mathbb{A})$ has property $\mathcal{D}$ by virtue of Lemma~\ref{propDsuccessor}, and $\vec{\varsigma}$ is coherent by virtue of Lemma~\ref{liftingcoherency} (see also Setup~\ref{setupkillingone}). Thus, we are left with arguing that $(\mathbb{A}, \lh_{\mathbb{A}}, c_{\mathbb{A}},\vec{\varsigma})$ is $(\Sigma, \vec{\mathbb{S}})$-Prikry and that $\one_\mathbb{A}\forces_\mathbb{A}\mu=\check{\kappa}^+$. All the Clauses of Definition~\ref{SigmaPrikry}  with the possible exception of \eqref{c2}, \eqref{c6} and
\eqref{moreclosedness} follow from Theorem~\ref{forkingSigmaPrikryLight}. First, from this latter result and the assumptions in Setup~\ref{setupkillingone},  $\one_\mathbb{A}\forces_\mathbb{A}\mu=\check{\kappa}^+$; Second, Clauses~\eqref{c2} and \eqref{moreclosedness} follow from Lemma~\ref{forkinganddirectedclosure}, Fact~\ref{directedclosedring} and Lemma~\ref{typeindeed}. Finally, Clause~\eqref{c6} is an immediate consequence of Lemma~\ref{propDsuccessor}.
\end{proof}

Our next task is to show that, after forcing with $\mathbb{A}$ the $\mathbb{P}$-name, $\dot{T}^+$ ceases to be stationary (Recall our blanket assumptions from page~\pageref{killingonesubsection}).

\begin{lemma}\label{kilingfragiles}
$\myceil{r^\star}{\mathbb{A}}\forces_\mathbb{A}``\dot{T}^+$ is nonstationary''.
\end{lemma}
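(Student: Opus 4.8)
The goal is to show that $\myceil{r^\star}{\mathbb A}$ forces that $\dot T^+$ is nonstationary; since $r^\star\forces_{\mathbb P}\dot T\s\dot T^+$ and $\pi$ projects $\mathbb A$ onto $\mathbb P$, this immediately also kills the stationarity of $\dot T$. The key point is that a generic filter $\tilde G$ for $\mathbb A\downarrow\myceil{r^\star}{\mathbb A}$ comes equipped, via the strategies $\vec S$ appearing in its conditions, with a canonical candidate club: namely the set $C$ of all ordinals $\gamma<\mu$ such that $\gamma\in S_i(q,t)$ for some condition $(p,\vec S)\in\tilde G$, some index $i\in\dom(\vec S)$, and some $(q,t)\in\dom(S_i)$ with $q+t\in G$ (where $G=\pi[\tilde G]$ is the induced $\mathbb P$-generic). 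The plan is to prove in $V[\tilde G]$ that $C$ is a closed unbounded subset of $\mu$ disjoint from $\dot T^+_{\tilde G}$.

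\textbf{Disjointness from $\dot T^+$.} This is the easy direction and uses Clause~\eqref{C3ptree} of Definition~\ref{labeled-p-tree}: if $\gamma\in S_i(q,t)$ with $q+t\in G$, then $q+t\forces_{\mathbb P}S_i(q,t)\cap\dot T^+=\emptyset$, so $\gamma\notin\dot T^+_{\tilde G}$. One must check that this is well-defined, i.e.\ that the labels assigned to $\gamma$ by different conditions in $\tilde G$ are coherent — but that is exactly what Clauses~\ref{SharonNew3} of Definition~\ref{SharonNew} and \eqref{C2ptree}, \eqref{C3pstrategy}, \eqref{C4pstrategy} of the strategy definition guarantee: any two conditions in $\tilde G$ have a common extension, and along extensions the labels only grow (coherently) via the $w(p,q)$, $t_q$ bookkeeping. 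So $C\cap\dot T^+_{\tilde G}=\emptyset$.

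\textbf{Closure.} Closure of $C$ should follow from Clause~\eqref{C5pstrategy}: at limit stages $i$ a strategy takes ordinal closures of the union of the earlier trees, so $C$ is closed below any $\gamma$ that is realized; the genericity argument is that for a limit point $\delta$ of $C$ below $\mu$, one can find a condition in $\tilde G$ whose strategy has been extended far enough (in the $i$-coordinate) to have put $\delta$ itself into the appropriate label. \textbf{Unboundedness} is the main obstacle and is where Lemma~\ref{Runbdeddirectmore} (and the machinery leading to it, including the relation $R$ and Clause~\eqref{C4ptree}) is essential. Given $(p,\vec S)\in\tilde G$ and $\gamma<\mu$, one wants a denser condition in $\mathbb A$ below $(p,\vec S)$ that forces some ordinal $\bar\gamma>\gamma$ into $C$. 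Working below $r^\star$, apply Lemma~\ref{Runbdeddirectmore} to $\pi(p)$ and $\gamma$ to obtain $\bar\gamma\in(\gamma,\mu)$ and $\bar p\LE^{\vec\varpi}\pi(p)$ with $(\bar\gamma,\bar p)\in R$; then, exactly as in the last case of the proof of Lemma~\ref{typeindeed}, extend $\vec S$ to a strategy $\vec T$ on $W(\bar p)$ of length one greater, adding $\{\bar\gamma\}$ to the top-level labels (this is legitimate precisely because $(\bar\gamma,\bar p)\in R$ supplies Clause~\eqref{C4ptree} for the new delay, and Remark~\ref{incompatiblewithrstar} handles conditions incompatible with $r^\star$). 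The resulting $(\bar p,\vec T)\unlhd(p,\vec S)$ forces $\bar\gamma\in C$. Since such conditions are dense below $\myceil{r^\star}{\mathbb A}$, genericity gives unboundedness. Finally one observes that $\pi[\tilde G]$-membership of the relevant $q+t$ is arranged because $W_n(p)$ is a maximal antichain below $p$ (Lemma~\ref{W(p)maxAntichain}\eqref{W(p)maxAntichain1}) and $\bigcup_{t}\mathbb S_n\downarrow\varpi_n(q)$ exhausts the relevant quotient generics. Assembling these three facts, $C$ witnesses that $\dot T^+$ — hence $\dot T$ — is nonstationary in $V[\tilde G]$, which is the assertion.
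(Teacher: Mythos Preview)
Your overall architecture is right, and your treatments of disjointness and unboundedness are essentially the paper's. The genuine gap is in the closure paragraph.

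You claim closure of $C$ follows from Clause~\eqref{C5pstrategy} of Definition~\ref{strategy} together with a genericity argument that ``extends the strategy far enough in the $i$-coordinate''. But Clause~\eqref{C5pstrategy} only says that, at limit $i$ and \emph{fixed} $(q,t)$, $S_i(q,t)$ is the closure of $\bigcup_{j<i}S_j(q,t)$. It says nothing about limit points that arise by varying the pair $(q,t)$: as $q$ runs through the generic branch of $W(p)$ and $t$ through the induced $\mathbb S_n$-generics, the sets $S_i(q,t)$ grow (Clause~\eqref{C2ptree}), and a limit point of their union need not lie in any single $S_i(q,t)$. Your proposed density argument cannot rescue this: to insert a limit point $\delta$ into some label one would need $(\delta,\bar p)\in R$ for some $\bar p$, and nothing so far prevents $\delta$ from being forced into $\dot T^+$, in which case no such $\bar p$ exists and no condition can put $\delta$ in any $S_i(q,t)$ (Clause~\eqref{C3ptree}). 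So ``$C$ is closed'' is not what one proves.

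What the paper does instead is work with \emph{closures} from the outset: for each $a=(p,\vec S)\in G$ and $i\in\dom(\vec S)$ it forms $d^i_a$ (the union of $S_i(q,t)$ over $q\in\bar G\cap W(p)$ and admissible $t$ in the $\mathbb S_n$-generic), and proves by induction on $i$ that $\cl(d^i_a)\cap(\dot T^+)_G=\emptyset$. This is precisely where Clause~\eqref{C4ptree} of Definition~\ref{labeled-p-tree}---the relation $R$ and the delay $m(S_i)$---does the real work: for a limit point $\gamma\in\cl(d^i_a)\setminus d^i_a$, one argues (with separate subcases for successor versus limit $i$, and for countable versus uncountable cofinality of $\gamma$) that $\gamma=\sup_n\max(S_i(q_n,t_n))$ for suitable $(q_n,t_n)$, and then Clause~\eqref{C4ptree} yields $(\gamma,q)\in R$ for some $q\in\bar G$, hence $\gamma\notin(\dot T^+)_G$. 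The paper then chooses an unbounded $I\subseteq\mu$ and conditions $a_\gamma\in G$ as in your unboundedness step, sets $D_\gamma:=\cl(d_{a_\gamma})$, and proves a coherence claim $D_\gamma\sqsubseteq D_{\gamma'}$ for $\gamma<\gamma'$ in $I$ (this uses Clauses~\eqref{C3pstrategy},\eqref{C4pstrategy} and the compatibility of conditions in $G$). The union $D=\bigcup_{\gamma\in I}D_\gamma$ is then the desired club. In short: you identified Clause~\eqref{C3ptree} for disjointness and Lemma~\ref{Runbdeddirectmore} for unboundedness correctly, but the role of Clause~\eqref{C4ptree} and the relation $R$ is not just to license unboundedness---it is the engine that shows limit points of the candidate set stay outside $\dot T^+$, and that analysis is the substantive missing piece.
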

\begin{proof}
Let $G$ be $\mathbb{A}$-generic over $V$, with $\myceil{r^\star}{\mathbb{A}}\in G$. Work in $V[G]$. Let $\bar{G}$ (resp. $H_n$)  denote the generic filter for $\mathbb{P}$ (resp. $\mathbb{S}_n$) induced by $\pi$ (resp. $\varsigma_n$) and $G$.   For all $a=(p, \vec{S})\in G$ and  $i\in\dom(\vec{S})$ write 
$$d^i_{a}:=\bigcup\{S_i(q, t)\mid q\in \bar{G}\cap W(p)\,\&\,\exists n\in[\ell(p),\ell(q)]\, (t\sle_n\varpi_n(q_n)\,\wedge\, t\in H_n)\},$$
where $\langle q_n\mid n\geq \ell(p)\rangle$ is the increasing enumeration of $\bar{G}\cap W(p)$ (see~Lemma~\ref{W(p)maxAntichain}).  

Then, let
$$d_{a}:=\begin{cases}
d^{\max(\dom(\vec{S}))}_{a}, & \text{if $\vec{S}\neq \emptyset$};\\
\emptyset, & \text{otherwise.}
\end{cases}
$$

\begin{claim}\label{claimdisjoint}
Suppose that  $a=(p, \vec{S})\in G$. 
In $V[\bar{G}]$,  for all $i\in\dom(\vec{S})$, the ordinal closure $\cl(d^i_{a})$ of  $d^i_{a}$ is disjoint from $(\dot{T}^+)_G$.
\end{claim}
\begin{proof}
To avoid trivialities we shall assume that $\vec{S}\neq \emptyset$. We prove the claim by induction on $i\in\dom(\vec{S})$. The base case $i=0$ is trivial, as $S_0\colon W(p)\rightarrow\{\emptyset\}$ (see Definition~\ref{strategy}\eqref{C5pstrategy}). So, let us assume by induction that $\cl(d^j_a)$ is disjoint from $(\dot{T}^+)_G$ for every $0\leq j<i$.

Let $\gamma\in \cl(d^i_a)\setminus  \bigcup_{j<i}\cl(d^j_a)$. By virtue of Clause~\eqref{C2ptree} of Definition~\ref{labeled-p-tree} applied to $S_i$, we may further assume that $\gamma\notin d^i_a$.

\smallskip

\underline{Succesor case:}  Suppose that $i=j+1$.  There are two cases to discuss:

\smallskip

$\br$ Assume $\cf^{V[\bar{G}]}(\gamma)=\omega$. Working in $V[\bar{G}]$, we have $\gamma=\sup_{n<\omega}\gamma_n$, where for each $n<\omega$, there is $(q_n,t_n)$, such that
$q_n\in\bar{G}\cap W(p)$, $t_n\sle_k\varpi_k(q_n)$ with $t_n\in H_k$ for some $k\in[\ell(p),\ell(q_n)]$, and $\gamma_n\in S_{j+1}(q_n,t_n)\setminus S_j(q_n,t_n)$. 
Strengthening if necessary, we may  assume  that $q_n+t_n\LE q_m+t_m$ for  $m\leq n$.\footnote{For this  we use Definition~\ref{labeled-p-tree}\eqref{C2ptree}. }

For each $n<\omega$ set $\delta_n:=\max(S_{j+1}(q_n,t_n))$. Clearly, $\gamma\leq \sup_{n<\omega}\delta_n$.

We claim that $\gamma=\sup_{n<\omega}\delta_n$: Assume to the contrary that this is not the case. Then, there is $n_0<\omega$ such that $\gamma_m<\delta_{n_0}$ for all $m<\omega$. Let $m\geq n_0$. Then, $\gamma_m\in S_{j+1}(q_m,t_m)\setminus S_j(q_m,t_m)$. Also, since $\gamma_{n_0}\notin S_{j}(q_{n_0},t_{n_0})$, 
Definition~\ref{strategy}\eqref{C3pstrategy} for $\vec{S}$  yields $\delta_0\in  S_{j+1}(q_{n_0},t_{n_0})\setminus S_{j}(q_{n_0},t_{n_0})$. By virtue of Clause~\eqref{C4pstrategy} of Definition~\ref{strategy} we also have
$$S_{j+1}(q_{n_0},t_{n_0})\setminus S_{j}(q_{n_0},t_{n_0})\sq S_{j+1}(q_{m},t_{m})\setminus S_{j}(q_{m},t_{m})\ni \gamma_m.$$
Thus, as $\gamma_m<\delta_0$, we have that $\gamma_m$ belongs to the left-hand-side set.

Since $m$ above was arbitrary we get $\gamma\in S_{j+1}(q_{n_0},t_{n_0})\s d^{i}_a$. This yields a contradiction with our original assumption that $\gamma\notin d^i_a$.

So, $\gamma=\sup_{n<\omega}\delta_n$. Now, let $n^\star<\omega$ such that $\ell(q_{n^\star})\geq \ell(p)+m(S_{j+1})$. Then, for all $n\geq n^\star$, Clause~\eqref{C4ptree} of Definition~\ref{labeled-p-tree} yields $(\delta_n, q_{n^\star})\in R$. In particular,  $(\gamma, q_{n^*})\in R$ and thus $q_{n^*}\forces_\mathbb{P}\check{\gamma}\notin \dot{T}^+$ (see page~\pageref{DefinitionofR}). Finally,  since $q_{n^*}\in \bar{G}$, we conclude that $\gamma\notin (\dot{T}^+)_G$, as desired.

\smallskip

$\br$ Assume $\cf^{V[\bar{G}]}(\gamma)\geq \omega_1$. Working in $V[\bar{G}]$, we have $\gamma=\sup_{\alpha<\cf(\gamma)}\gamma_\alpha$, where for each $\alpha<\cf(\gamma)$, there is $t_\alpha\in H_n$ with
$t_\alpha\sle_n\varpi_n(q)$ such that  $\gamma_\alpha\in S_{j+1}(q,t_\alpha)\setminus S_{j}(q,t_\alpha)$. Here, $q\in \bar{G}\cap W(p)$ and $n\in [\ell(p),\ell(q)]$.\footnote{Note that this is the case in that $W(p)$ is a tree with height $\omega$ and $\cf(\gamma)\geq \omega_1$.} 
By strengthening $q$ if necessary, we may also assume that $q\in W_{\geq m(S_{j+1})}(p)$.\footnote{Note that increasing $q$ would only increase $S_{j+1}(q,t_\alpha)$.}

For each $\alpha<\cf(\gamma)$, set $\delta_\alpha:=\max(S_{j+1}(q,t_\alpha))$. Clearly, $\gamma\leq  \sup_{\alpha<\cf(\gamma)}\delta_\alpha$.

We claim that  $\gamma= \sup_{\alpha<\cf(\gamma)}\delta_\alpha$: Otherwise, suppose $\alpha^*<\cf(\gamma)$ is such that $\gamma_\beta<\delta_{\alpha^*}$, for all $\beta< \cf(\gamma)$. Fix $\beta\geq \alpha^*$ and let $t\in H_n$ be such that $t\sle_n t_\beta, t_{\alpha^*}$.  Then, $q+t\LE q+t_\beta$, so  Definition~\ref{labeled-p-tree}\eqref{C2ptree} for $S_{j+1}$ yields $\gamma_\beta\in S_{j+1}(q,t_\beta)\s S_{j+1}(q,t)$.  
Hence, we have that $\gamma_\beta\in S_{j+1}(q,t) \setminus  S_{j}(q,t)$. Also, arguing as in the previous case we have $\delta_{\alpha^*}\in S_{j+1}(q,t_{\alpha^*})\setminus S_j(q,t_{\alpha^*})$.  Finally, combining Clause~\eqref{C4pstrategy} of Definition~\ref{strategy} with $\gamma_\beta<\delta_{\alpha^*}$ we conclude that
$\gamma_\beta\in S_{j+1}(q,t_{\alpha^*})$.
Since $S_{j+1}(q,t_{\alpha^*})$ is a closed set, we get  $\gamma\in S_{j+1}(q,t_{\alpha^*})$, which contradicts our assumption that $\gamma\notin d^i_a$.

So, $\gamma=\sup_{\alpha<\cf(\gamma)}\delta_\alpha$. Mimicking the argument of the former case 
we infer that $q\forces_\mathbb{P}\check{\gamma}\notin \dot{T}^+$, which yields $\gamma\notin (\dot{T}^+)_G$.

\smallskip

\underline{Limit case:} Suppose that $i$ is limit. If $\cf(i)\neq \cf(\gamma)$, then $\gamma\in \cl(d^j_a)$ for some $j<i$, and we are done.  Thus, suppose $\cf(i)=\cf(\gamma)$. For simplicity assume {$i=\cf(i)$}, as the general argument is analogous.
We have two cases.

$\br$ Assume $\cf^{V[\bar{G}]}(\gamma)=\omega$. Working in $V[\bar{G}]$, we have $\gamma=\sup_{n<\omega}\gamma_n$, where for each $n<\omega$, there is $(q_n,t_n)$, such that
$q_n\in\bar{G}\cap W(p)$, $t_n\sle_k\varpi_k(q_n)$ with $t_n\in H_k$ for some $k\in[\ell(p),\ell(q_n)]$, and $\gamma_n\in S_{\omega}(q_n,t_n)$. 
Strengthening if necessary, we may further  assume   $q_n+t_n\LE q_m+t_m$ for  $m\leq n$.

For each $n<\omega$ set $\delta_n:=\max(S_{\omega}(q_n,t_n))$. Clearly, $\gamma\leq \sup_{n<\omega}\delta_n$.

As in the previous cases, we claim that $\gamma=\sup_{n<\omega}\delta_n$: Suppose otherwise and let $n_0<\omega$ such that $\gamma_m<\delta_{n_0}$ for all $m<\omega$. Actually $\gamma<\delta_{n_0}$, as otherwise $\gamma\in S_\omega(q_{n_0},t_{n_0})\s d^\omega_a$, which would yield a contradiction.

By Clause~\eqref{C5pstrategy} of Definition~\ref{strategy}, $\delta_{n_0}=\sup_{k<\omega}\max(S_k(q_{n_0},t_{n_0}))$, hence there is some $k_0<\omega$ such that $\gamma<\max(S_{k_0}(q_{n_0},t_{n_0}))$.

Fix $m\geq n_0$. Since $q_{m}+t_m\LE q_{n_0}+t_{n_0}$, Clause~\eqref{C2ptree} of Definition~\ref{labeled-p-tree} yields
$$\gamma<\max(S_{k_0}(q_{n_0},t_{n_0}))\leq \max(S_{k_0}(q_{m},t_{m})).$$
Also, Clause~\eqref{C3pstrategy} of Definition~\ref{strategy}  implies that $$ S_{k_0}(q_{m},t_{m})\sq S_\omega(q_{m},t_{m})\ni \gamma_m,$$
so that, $\gamma_m\in S_{k_0}(q_m,t_m)$. Since $m$ was arbitrary, we infer that $\gamma\in \cl(d^{k_0}_a)$, which yields a contradiction with our original assumption.

So, $\gamma=\sup_{n<\omega}\delta_n$. Arguing as in previous cases conclude that $\gamma\notin (\dot{T}^+)_G$.

\smallskip

$\br$ Assume $\cf^{V[\bar{G}]}(\gamma)\geq \omega_1$. Working in $V[\bar{G}]$, we have $\gamma=\sup_{\alpha<i}\gamma_\alpha$, where for each $\alpha<i$, there is $t_\alpha\in H_n$ with
$t_\alpha\sle_n\varpi_n(q)$ such that  $\gamma_\alpha\in S_{i}(q,t_\alpha)$. As before, here both $q$ and $n$ are fixed and $q\in W_{\geq m(S_{i})}(p)$.

For each $\alpha<i$, set $\delta_\alpha:=\max(S_i(q,t_\alpha))$. Once again, we aim to show that $\gamma=\sup_{\alpha<i}\delta_\alpha$:  Suppose that this is not the case, and let $\alpha^*<i$ such that $\gamma_\alpha<\delta_{\alpha^*}$ for all $\alpha<i$.  As before, $\gamma\neq \delta_{\alpha^*}$, so there is some $\bar{\alpha}<i$ such that $\gamma<\max(S_{\bar{\alpha}}(q,t_{\alpha^*}))$ Now, let $\alpha<i$ be arbitrary and find $s_\alpha\sle_n t_{\alpha^*}, t_\alpha$ in $H_n$.  Then, $\gamma_\alpha\in S_i(q,t_\alpha)\s S_i(q,s_\alpha)$. Also, $S_{\bar{\alpha}}(q,t_{\alpha^*})\s S_{\bar{\alpha}}(q,s_\alpha)$ and so $\max(S_{\bar{\alpha}}(q,s_\alpha))>\gamma$. By Clause~\eqref{C3pstrategy} of Definition~\ref{strategy} we have that $S_{\bar{\alpha}}(q,s_\alpha)\sq S_i(q,s_\alpha)$, hence $\gamma_\alpha\in S_{\bar{\alpha}}(q,s_\alpha)$.

The above shows that $\gamma\in \cl(d^{\bar{\alpha}}_a)$, which is a contradiction.

 So, $\gamma=\sup_{\alpha<i}\delta_\alpha$. Now proceed as in previous cases, invoking Clause~\eqref{C4ptree} of Definition~\ref{labeled-p-tree}, and infer that $\gamma\notin (\dot{T}^+)_G$.
\end{proof}

\begin{claim}\label{unbounded}
Suppose $a=(p, \vec{S})\in A$, where  $p\LE r^\star$.   For every  $\gamma<\mu$,  there exists $\bar{\gamma}\in(\gamma,\mu)$ and $(\bar{p}, \vec{T})\unlhd (p,\vec{S})$, such that $\max(\dom(\vec{T}))=\alpha$ and for all $(q,t)\in\dom(T_\alpha)$, $\max(T_\alpha(q,t))=\bar{\gamma}$.
\end{claim}
\begin{proof}
This is indeed what the argument of Lemma~\ref{typeindeed} shows.
\end{proof}
Working in $V[G]$, the above claim yields an unbounded set $I\s\mu$ such that for each $\gamma\in I$ there is $a_\gamma=(p_\gamma, \vec{S}^\gamma)\in G$ with $\max(\dom(\vec{S}^\gamma))=\gamma$ and  $\max(S_\gamma^\gamma(q,t))=\gamma$ for all $(q,t)\in\dom (S_\gamma^\gamma)$. For each $\gamma\in I$, 
set $D_\gamma:=\cl(d_{a_\gamma}).$ 

\begin{claim}\label{claimcoherent}
For each $\gamma<\gamma'$ both in $I$, $D_\gamma\sqsubseteq D_{\gamma'}$.
\end{claim}
\begin{proof}
Let $\gamma<\gamma'$ be in $I$. It is enough to prove that $d_{a_\gamma}\sq d_{a_{\gamma'}}$; namely, we  show that
$d_{a_\gamma}= d_{a_{\gamma'}}\cap (\gamma+1)$.
Fix  $b=(r,\vec{R})\in G$ be such that $b\unlhd a_\gamma,a_{\gamma'}$.

For the first direction, suppose that $\delta\in d_{a_\gamma}$ and let $(q,t)\in\dom(S_\gamma^\gamma)$ be a pair witnessing this. By strengthening $q$ and $t$ if necessary, we may further assume that $\ell(q)\geq \ell(r)$ and $t\in H_n$, $t\sle_n \varpi_n(q)$, where $n:=\ell(q)$.\footnote{Suppose that $(q,t)$ is the pair we are originally given and that $q'\in W_n(p)\cap \bar{G}$, where $n\geq \ell(r)$. Setting $t':=\varpi_n(q'+t)$ it is immediate that $q'+t'\LE q+t$, hence $\delta\in S^\gamma_\gamma(q',t')$. Also, it is not hard to check that $q'+t\in G$, hence $t'\sle_n \varpi_n(q')$ and $t'\in H_n$. } 

Let $r'\in W(r)\cap \bar{G}$ be the unique condition with $\ell(r')=n$. Also, let $t'\in H_n$ be such that $t'\sle_n \varpi_n(r'), t$. Then $w(p_\gamma,r')=q$, $t'=\varpi_n(r'+t')$ and $q+t'\LE q+t$. So, by  $b\unlhd a_\gamma$ and $b\unlhd a_{\gamma'}$, we get:
$$\delta\in S^\gamma_\gamma(q,t)\s S^\gamma_\gamma(q,t')=R_\gamma(r',t') = S^{\gamma'}_\gamma(w(p_\gamma, r'), t')\s d_{a_{\gamma'}}.$$

For the other direction, suppose that $\delta\in d_{a_{\gamma'}}\cap (\gamma+1)$ and let $(q,t)$ be a pair in $\dom(S_{\gamma'}^{\gamma'})$ witnessing this. Again, by strengthening $q$, we may assume that  $\ell(q)\geq \ell(r)$ and $t\in H_n$, $t\sle_n \varpi_n(q_n)$, where $n:=\ell(q)$. Similarly as above,
let $r'\in W(r)\cap \bar{G}$ be with $\ell(r')=n$, and $t'\in H_n$ be such that $t'\sle_n \varpi_n(r'), t$. Then $w(p_{\gamma'},r')=q$, $t'=\varpi_n(r'+t')$, $q+t'\LE q+t$ and:
\begin{enumerate}
\item
$R_{\gamma'}(r',t')=S^{\gamma'}_{\gamma'}(q,t')$, since $b\unlhd a_{\gamma'}$;
\item
$ R_{\gamma}(r',t')=S^\gamma_\gamma(w(p_\gamma, r'), t')$, and so $\gamma=\max( R_{\gamma}(r',t'))$;
\item
$R_{\gamma}(r',t')\sqsubseteq R_{\gamma'}(r',t')$, by Clause~\eqref{C3pstrategy} of Definition~\ref{strategy} for $\vec{R}$.
\end{enumerate}

Combining all three, we get that
\begin{eqnarray*}
\delta\in S^{\gamma'}_{\gamma'}(q,t)\cap (\gamma+1)\s S^{\gamma'}_{\gamma'}(q,t')\cap (\gamma+1) = \\
R_{\gamma'}(r', t')\cap (\gamma+1) = R_\gamma(r',t')=S^\gamma_\gamma(w(p_\gamma, r'), t')\s d_{a_\gamma},
\end{eqnarray*}
as desired.
\end{proof}
Let $D:=\bigcup_{\gamma\in I} D_\gamma$. By Claims~\ref{claimdisjoint} and \ref{claimcoherent}, $D$ is disjoint from $(T^+)_G$. Additionally, Claim~\ref{claimcoherent} implies that $D$ is closed and, since $I\s D$,  it is also unbounded. So, $(\dot{T}^+)_G$ is nonstationary in $V[G]$.
\end{proof}

\begin{remark}\label{RemarkKillingT}
 Note that Lemma~\ref{kilingfragiles} together with $r^\star\forces_\mathbb{P}\dot{T}\s \dot{T}^+$ (see~page~\pageref{killingonesubsection})  imply that $\myceil{r^\star}{\mathbb{A}}\forces_\mathbb{A}``\dot{T}$ is nonstationary''.
\end{remark}

The next corollary sums up the content of Subsection~\ref{killingonesubsection}:
\begin{cor}\label{onestep}
Suppose that $(\Sigma,\vec{\mathbb{S}})$-Prikry quadruple $(\mathbb P,\lh,c,\vec{\varpi})$ such that,  $\mathbb P=\left(P,\le\right)$ is a subset of $H_{\mu^+}$, $(\mathbb{P},\ell)$ has property $\mathcal{D}$, $\vec{\varpi}$ is a coherent sequence of nice projections,
$\one_{\mathbb P}\forces_{\mathbb P}\check\mu=\check\kappa^+$
and $\one_{\mathbb P}\forces_{\mathbb{P}}``\kappa\text{ is singular}"$.

For every $r^\star\in P$ and a $\mathbb P$-name $z$ for an $r^\star$-fragile stationary subset of $\mu$,
there are a $(\Sigma,\vec{\mathbb{S}})$-Prikry quadruple $(\mathbb A,\lh_{\mathbb A},c_{\mathbb A},\vec{\varsigma})$ having property $\mathcal{D}$, and a pair of maps  $(\pitchfork,\pi)$   such that all the following hold:
\begin{enumerate}[label=(\alph*)]
\item\label{thusonestep1} $(\pitchfork,\pi)$ is  a super nice  forking projection from  $(\mathbb A,\lh_{\mathbb A},c_{\mathbb A},\vec{\varsigma})$ to $(\mathbb P,\lh,c,\vec{\varpi})$ that has the weak mixing property;
\item\label{thusonestep4} $\vec{\varsigma}$ is a coherent sequence of nice projections;
\item\label{thusonestep2} $\one_{\mathbb A}\forces_{\mathbb A}\check\mu=\check\kappa^+$;
\item\label{thusonestep3} $\mathbb A=(A,{\unlhd})$ is a subset of $H_{\mu^+}$;
\item For every $n<\omega$, $\z{\mathbb{A}}^\pi_n$ is a $\mu$-directed-closed;\label{thusonestep6}
\item\label{thusonestep4} $\myceil{r^\star}{\mathbb A}$ forces that $z$ is nonstationary.
\end{enumerate}
\end{cor}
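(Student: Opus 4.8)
\textbf{Proof plan for Corollary~\ref{onestep}.}
The strategy is simply to assemble the results of this subsection. Given $(\mathbb P,\lh,c,\vec\varpi)$ and $z$ as in the hypothesis, first observe that all the blanket assumptions of Setup~\ref{setupkillingone} are met: $(\mathbb P,\lh,c,\vec\varpi)$ is $(\Sigma,\vec{\mathbb S})$-Prikry with property $\mathcal D$, $\one_{\mathbb P}\forces_{\mathbb P}``\kappa$ is singular$"$, $\mu^{<\mu}=\mu$ (which here follows from the standing assumptions of the paper, or may be added to the hypothesis if needed), and $\vec\varpi$ is a coherent sequence of nice projections. Then, fixing $r^\star\in P$ and letting $\dot T:=z$ be the $\mathbb P$-name for an $r^\star$-fragile stationary subset of $\mu$, we may run the entire machinery of Subsection~\ref{killingonesubsection}: we pick the names $\dot C_n$ for the witnessing clubs, form the relation $R$ and the name $\dot T^+$, and define the poset $\mathbb A:=\mathbb A(\mathbb P,\vec{\mathbb S},\dot T)$ of Definition~\ref{SharonNew}, together with the functions $\lh_{\mathbb A}$, $c_{\mathbb A}$, $\vec\varsigma$ and the pair $(\pitchfork,\pi)$ of Definition~\ref{d45}.

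Next I would verify the listed clauses one by one, each time pointing to the relevant result already established. Clause~\ref{thusonestep1} is exactly the content of Corollary~\ref{AisweaklySigmaPrikry}: $(\pitchfork,\pi)$ is a super nice forking projection from $(\mathbb A,\lh_{\mathbb A},c_{\mathbb A},\vec\varsigma)$ to $(\mathbb P,\lh,c,\vec\varpi)$ with the weak mixing property, and in particular $(\mathbb A,\lh_{\mathbb A},c_{\mathbb A},\vec\varsigma)$ is $(\Sigma,\vec{\mathbb S})$-Prikry with property $\mathcal D$. Clause~\ref{thusonestep4} (coherence of $\vec\varsigma$) and Clause~\ref{thusonestep2} ($\one_{\mathbb A}\forces_{\mathbb A}\check\mu=\check\kappa^+$) are likewise part of Corollary~\ref{AisweaklySigmaPrikry}, the former via Lemma~\ref{liftingcoherency}. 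Clause~\ref{thusonestep6} ($\z{\mathbb A}^\pi_n$ is $\mu$-directed-closed) is Fact~\ref{directedclosedring}. For Clause~\ref{thusonestep4} (killing the stationarity of $z$) we invoke Lemma~\ref{kilingfragiles} together with Remark~\ref{RemarkKillingT}: since $r^\star\forces_{\mathbb P}\dot T\subseteq\dot T^+$ and $\myceil{r^\star}{\mathbb A}\forces_{\mathbb A}``\dot T^+$ is nonstationary$"$, also $\myceil{r^\star}{\mathbb A}\forces_{\mathbb A}``\dot T$ is nonstationary$"$.

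The only clause not yet addressed verbatim is Clause~\ref{thusonestep3}, that $\mathbb A=(A,\unlhd)$ may be taken to be a subset of $H_{\mu^+}$. This is a bookkeeping point: a condition $(p,\vec S)\in A$ consists of $p\in P\subseteq H_{\mu^+}$ together with a $\langle p,\vec{\mathbb S}\rangle$-strategy $\vec S=\langle S_i\mid i\le\alpha\rangle$ with $\alpha<\mu$, where each $S_i$ is a function whose domain is a subset of $W(p)\times\bigcup_{n}\mathbb S_n$, of size $<\mu$ by Clauses~\eqref{Cbeta} and \eqref{csize} of Definition~\ref{SigmaPrikry}, and whose values are bounded subsets of $\mu$; hence each $S_i\in H_{\mu^+}$, so $\vec S\in H_{\mu^+}$ and $(p,\vec S)\in H_{\mu^+}$. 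Since $\mu^{<\mu}=\mu$ forces $|A|\le\mu^+$, by replacing $\mathbb A$ with an isomorphic copy we may assume $A\subseteq H_{\mu^+}$; one checks this replacement does not disturb any of the previously verified clauses, as all of them are isomorphism-invariant (and $\pi,\pitchfork,\lh_{\mathbb A},c_{\mathbb A},\vec\varsigma$ transport along the isomorphism). I do not anticipate a genuine obstacle here; the corollary is a packaging statement, and the only mild care needed is to confirm that the cardinality/coding argument for Clause~\ref{thusonestep3} is compatible with keeping $c_{\mathbb A}$ valued in $H_\mu$, which it is, since the isomorphism can be chosen to fix the $c_{\mathbb A}$-values.
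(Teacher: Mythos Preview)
Your proposal is correct and matches the paper's own proof essentially verbatim: both invoke Corollary~\ref{AisweaklySigmaPrikry} for Clauses~(a)--(c), Fact~\ref{directedclosedring} for (e), and Lemma~\ref{kilingfragiles} with Remark~\ref{RemarkKillingT} for (f). For Clause~(d), your direct cardinality count already shows $A\subseteq H_{\mu^+}$ outright, so the subsequent isomorphism discussion is unnecessary (the paper simply cites the analogous computation in \cite[Lemma~6.6]{PartI}).
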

\begin{proof}  Since all the assumptions of Setup~\ref{setupkillingone} are valid we obtain from Definitions~\ref{SharonNew} and \ref{d45},
a notion of forcing $\mathbb A=(A,{\unlhd})$ together with maps $\lh_{\mathbb A}$ and $c_{\mathbb A}$,
and a sequence $\vec{\varsigma}$ such that, by Corollary~\ref{AisweaklySigmaPrikry}, $(\mathbb A,\lh_{\mathbb A},c_{\mathbb A},\vec{\varsigma})$
is a $(\Sigma,\vec{\mathbb{S}})$-Prikry quadruple having property $\mathcal{D}$ and Clauses~\ref{thusonestep1}--\ref{thusonestep2} above hold.
Clause~\ref{thusonestep3} easily follows  from the definition of  $\mathbb A=(A,{\unlhd})$ 
(see, e.g. \cite[Lemma~6.6]{PartI}), Clause~\ref{thusonestep6} is Fact~\ref{directedclosedring} and Clause~\ref{thusonestep4} is Lemma~\ref{kilingfragiles} together with Remark~\ref{RemarkKillingT}.
\end{proof}

\subsection{Fragile sets vs non-reflecting stationary sets}\label{subsection62}
For every $n<\omega$, denote
$\Gamma_n:=\{\alpha<\mu\mid \cf^V(\alpha)<\sigma_{n-2}\}$,
where, by convention, we define $\sigma_{-2}$ and $\sigma_{-1}$ to be $\aleph_0$. \label{NotationGamma}

The next lemma is an analogue of \cite[Lemma 6.1]{PartI} and will be crucial for the proof of reflection in the model of the Main Theorem.

\begin{lemma}\label{key} Suppose  that:
\begin{enumerate}[label=(\roman*)]
\item for every $n<\omega$, $V^{\mathbb P_n}\models \refl(E^\mu_{<\sigma_{n-2}},E^\mu_{<\sigma_{n}})$;
\item $r^\star$ is a condition in $\mathbb P$;
\item $\dot T$ is a nice $\mathbb P$-name for a subset of $\Gamma_{\lh(r^\star)}$;
\item  $r^\star$ $\mathbb P$-forces that $\dot T$ is a non-reflecting stationary set.
\end{enumerate}
Then $\dot T$ is $r^\star$-fragile.
\end{lemma}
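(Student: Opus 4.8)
The plan is to show that if $\dot T$ is a nice $\mathbb P$-name for a non-reflecting stationary subset of $\Gamma_{\lh(r^\star)}$, then the trace $\dot T_n$ of $\dot T$ to a $\mathbb P_n$-name is forced to be nonstationary by every condition $q\le r^\star$ (where $n=\lh(q)$). The natural strategy is to fix $q\LE r^\star$, set $n:=\lh(q)$, and work in $V^{\mathbb P_n}$ towards a contradiction. Suppose $q\forces_{\mathbb P_n}``\dot T_n$ is stationary''. The key observation is that $\dot T_n$ is (forced to be) a subset of $\Gamma_n$, and the ordinals in $\Gamma_n$ have $V$-cofinality below $\sigma_{n-2}$. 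So in the model $V^{\mathbb P_n}$ we have a stationary subset of $E^\mu_{<\sigma_{n-2}}$, and by hypothesis $(i)$, $V^{\mathbb P_n}\models\refl(E^\mu_{<\sigma_{n-2}},E^\mu_{<\sigma_n})$, so there is $\delta\in E^\mu_{<\sigma_n}$ with $\dot T_n\cap\delta$ stationary in $\delta$ (in $V^{\mathbb P_n}$).

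Next I would transfer this reflection point back up to the full forcing $\mathbb P$. The point is that $\mathbb P$ factors, below $q$, roughly as $\mathbb P_n$ followed by the tail forcing $\mathbb P_{>n}$, and the tail forcing above a condition of length $n$ is sufficiently closed — by Definition~\ref{SigmaPrikry}\eqref{c2} the posets $\z{\mathbb P}_m$ are countably-closed, and more importantly the $0$-open dense sets and the CPP give us strong Prikry-type control (Lemma~\ref{Prikrybasic}, Lemma~\ref{Prikry}), which will let us argue that passing from $V^{\mathbb P_n}$ to $V^{\mathbb P}$ does not destroy the stationarity of $\dot T_n\cap\delta$ in $\delta$, since $\cf(\delta)<\sigma_n$ and the relevant tail forcing is $\sigma_n$-strategically-closed (via $\z{\mathbb P}_n^{\varpi_n}$ being $\sigma_n$-directed-closed, Definition~\ref{SigmaPrikry}\eqref{moreclosedness}), so that it adds no new bounded subsets of $\sigma_n$ and in particular no club in $\delta$ avoiding $\dot T_n\cap\delta$. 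One must be careful here: $\dot T$ need not literally equal $\dot T_n$ in $V^{\mathbb P}$, but $\dot T_n$ is (essentially by Lemma~\ref{l14}\eqref{l14(1)} and its proof) a lower bound, namely $q\forces_{\mathbb P}\dot T_n\subseteq\dot T$ — indeed a condition $p\in P_n$ forces $\check\alpha\in\dot T$ iff $(\check\alpha,p)\in\dot T_n$, and this is upward absolute. So $\dot T\cap\delta$ reflects at $\delta$ in $V^{\mathbb P}$.

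This directly contradicts hypothesis $(iv)$: $r^\star$ forces $\dot T$ to be non-reflecting, and $q\le r^\star$. Therefore no $q\le r^\star$ can force $\dot T_{\lh(q)}$ stationary; equivalently $q\forces_{\mathbb P_{\lh(q)}}``\dot T_{\lh(q)}$ is nonstationary'' for all $q\LE r^\star$, which is exactly the assertion that $\dot T$ is $r^\star$-fragile (Definition~\ref{fragilestationary}).

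The main obstacle I anticipate is the transfer step in the middle paragraph: rigorously arguing that the reflection point $\delta$ obtained in $V^{\mathbb P_n}$ survives into $V^{\mathbb P}$, i.e.\ that the quotient $\mathbb P/\mathbb P_n$ (or rather the appropriate tail forcing below $q$) is $\delta^+$-distributive, or at least does not add a club through the complement of $\dot T\cap\delta$. This is where the interaction between the graded structure, the niceness of the projections $\varpi_n$, and the $\sigma_n$-directed-closure of $\z{\mathbb P}_n^{\varpi_n}$ must be exploited — plausibly by quoting the analysis of bounded sets (Lemma~\ref{l14}\eqref{l14(1)}) applied at the cardinal $\sigma_n>\cf(\delta)$, together with the fact that a $\sigma_n$-strategically-closed forcing adds no new $<\sigma_n$-sequences, hence no new club in an ordinal of cofinality $<\sigma_n$. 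A secondary subtlety is the precise bookkeeping between $\dot T$, its restriction $\dot T_n$, and the various traces; this should parallel the arguments in \cite[Lemma~6.1]{PartI} closely.
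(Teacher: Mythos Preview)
Your outline — reflect $\dot T_n$ at some $\delta$ of small cofinality in $V^{\mathbb P_n}$, then transfer the reflection back to $V^{\mathbb P}$ to contradict hypothesis~(iv) — matches the paper, and you correctly identify Lemma~\ref{l14}\eqref{l14(1)} and the $\sigma_n$-closure of $\z{\mathbb P}_n^{\varpi_n}$ as the relevant tools. But the transfer step as you describe it does not work, for a structural reason.

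There is no factorisation $\mathbb P \cong \mathbb P_n * (\text{closed tail})$. The poset $\mathbb P_n$ is not a complete subforcing of $\mathbb P$; a $\mathbb P_n$-generic $G_n$ is \emph{not} an intermediate stage of a $\mathbb P$-generic, so you cannot simply ``pass from $V^{\mathbb P_n}$ to $V^{\mathbb P}$'' via a quotient. Consequently the inclusion $\dot T_n\subseteq\dot T$ you invoke is ill-posed: $(\dot T_n)_{G_n}$ lives in $V[G_n]$, which is not a submodel of any $V[G]$. The only genuine projection available is $\varpi_n:\mathbb P_{\ge n}\to\mathbb S_n$, and the correct intermediate model is $V[H_n]$ for $H_n$ an $\mathbb S_n$-generic.

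The missing idea is this: having found the reflection point $\gamma$ and the small stationary $A:=T_n\cap C$ (with $C\in V$ a club of order-type $\theta<\sigma_n$) in $V[G_n]$, one first observes that $A\in V[H_n]$, since $\mathbb P_n^{\varpi_n}$ is $\sigma_n$-closed. Now each $\alpha\in A$ is placed in $T_n$ by some $p_\alpha\in G_n\cap P_n$ with $p_\alpha\Vdash_{\mathbb P}\check\alpha\in\dot T$ — but these are $\theta$-many distinct $\mathbb P$-conditions, and you need a \emph{single} $\mathbb P$-condition $r^*$ forcing $A\subseteq\dot T$. This is achieved by a length-$\chi$ recursion (with $\chi=\theta\cdot|S_n|<\sigma_n$) along the $\le^{\varpi_n}$-ordering, using the niceness of $\varpi_n$ to split each witness as $p=r''+s''$ and then amalgamating the $r''$-parts via $\sigma_n$-directed-closure. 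One obtains $r^*\Vdash_{\mathbb P/H_n}A\subseteq\dot T\cap\gamma$; then Lemma~\ref{l14}\eqref{l14(1)} (applied to $\mathbb P/H_n$, not to a nonexistent $\mathbb P/\mathbb P_n$) shows $A$ stays stationary, giving the contradiction. This amalgamation step is the heart of the argument and is absent from your sketch.
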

\begin{proof}
Suppose that $\dot{T}$ is not $r^\star$-fragile (see Definition~\ref{fragilestationary}), and let $q$ be an extension of $r^\star$ witnessing that.
Set $n:=\lh(q)$, so that $$q\Vdash_{\mathbb{P}_n}``\dot{T}_n\text{ is stationary}".$$
Since $\dot T$ is a nice  $\mathbb P$-name for a subset of $\Gamma_{\lh(r^\star)}$,
it altogether follows that $q$ $\mathbb P_n$-forces that $\dot{T}_n$ is a stationary subset of $E^\mu_{<\sigma_{n-2}}$.

Let $G_n$ be $\mathbb{P}_n$-generic containing $q$.
By Clause~$(i)$, we have that $T_n:=(\dot{T}_n)_{G_n}$ reflects at some ordinal $\gamma$ of ($V[G_n]$-)cofinality $<\sigma_{n}$.
Since $\varpi_n$ is a nice projection, we have that $\mathbb{P}^{\varpi_n}_n\times \mathbb{S}_n$ projects to $\mathbb{P}_n$.\footnote{More precisely, $(\mathbb{P}^{\varpi_n}_n\downarrow q)\times (\mathbb{S}_n\downarrow \varpi_n(q))$ projects onto $\mathbb{P}_n\downarrow q$.}
Then by $|S_n|<\sigma_n$ and the fact that $\mathbb{P}^{\varpi_n}_n$ contains a $\sigma_n$-directed-closed dense subset,  it follows that $\theta:=\cf^V(\gamma)$
is $<\sigma_n$. In $V$, fix a club $C\s \gamma$ of order-type $\theta$.

Work in $V[G_n]$. Set $A:=T_n\cap C$, and note that $A$ is a stationary subset of $\gamma$ of size $\le\theta$.
Let $H_n$ be the $\mathbb{S}_n$-generic filter induced from $G_n$ by $\varpi_n$.

Again, since $\mathbb{P}^{\varpi_n}_n$ contains a $\sigma_n$-directed-closed  dense subset, it cannot have added $A$. So, $A\in V[H_n]$.
Let $\langle \alpha_i\mid i<\theta\rangle$ be some enumeration (possibly with repetitions) of $A$,
and let $\langle\dot{\alpha}_i\mid i<\theta\rangle$ be a sequence of  $\mathbb S_n$-name for it.
Pick a condition $r$ in $\mathbb{P}_n/H_n$ such that $r\Vdash_{\mathbb{P}_n}\dot{A}\s\dot{T}_n\cap\gamma$ and such that $\varpi_n(r)\Vdash_{\mathbb{S}_n}\dot{A}=\{\dot{\alpha}_i\mid i<\theta\}$.
Denote $s:=\varpi_n(r)$ and note that $s\in H_n$.
We now go back and work in $V$.
\begin{claim}
Let $i<\theta$ and $\alpha<\gamma$. For all $r'\leq^{\varpi_n} r$ and $s'\sle_n s$, if $s'\Vdash_{\mathbb{S}_n}\dot{\alpha}_i=\check\alpha$,
then there are $r''\leq^{\varpi_n} r'$ and $s''\sle_n s'$ such that $r''+s''\Vdash_{\mathbb{P}}\check\alpha\in \dot{T}$.
\end{claim}
\begin{proof}
Suppose $r',s'$ are as above. As $r'$ extends $r$ and $s'$ extends $s$,
it follows that $r'+s'\Vdash_{\mathbb{P}_n}\check\alpha\in\dot{T}_n$ and $s'\Vdash_{\mathbb{S}_n}\check\alpha\in\dot{A}$. Let $p\leq^0 r'+s'$ be such that $(\check{\alpha}, p)\in\dot{T}_n$. By the definition of the name $\dot{T}_n$, 
we have that  $p\Vdash_{\mathbb{P}}\check\alpha\in\dot{T}$. Now, since $\varpi_n$ is a nice projection 
Definition~\ref{niceprojection}\eqref{theprojection} gives $s''\sle_n s'$ and $r''\leq^{\varpi_n} r'$  such that $r''+s''=p$.
So $r''+s''\Vdash_{\mathbb{P}}\check \alpha\in \dot{T}$, as desired.
\end{proof}
Fix an injective enumeration $\langle (i_\xi,s_\xi)\mid \xi<\chi\rangle$ of $\theta\times(\mathbb{S}_n\downarrow s)$.
Note that $\chi<\sigma_n$. Using that $\mathbb{P}^{\varpi_n}_n$ is $\sigma_n$-strategically-closed (in $V[H_n]$), 
 build  a $\leq^{\varpi_n}$-decreasing sequence of conditions $\langle r_\xi\mid  \xi\le\chi\rangle$,
such that, for every $\xi<\chi$, $r_\xi\LE^{\vec\varpi} r$, and,
for any $\alpha<\gamma$, if $s_\xi\Vdash_{\mathbb{S}_n}\dot{\alpha}_{i_\xi}=\check\alpha$ (and $s_\xi\in H_n$),
then there is $s^\xi\sle_n s_\xi$ (with $s^\xi\in H_n$) such that $r_\xi+s^\xi\Vdash_{\mathbb{P}}\check\alpha\in \dot{T}$.
Finally, let $r^*:=r_\chi$.
Note that $\varpi_n(r^*)=\varpi_n(r)=s\in H_n$, so that $r^*\in P/H_n$.
\begin{claim}
$r^*\Vdash_{\mathbb{P}/H_n}A\s\dot{T}\cap\check\gamma$.
\end{claim}
\begin{proof} 
Let $r\leq_{\mathbb{P}/H_n} r^*$ and $i<\theta$. By extending $\varpi_n(r)$ if necessary we may assume that $\varpi_n(r)\in H$ and that it decices  (in $\mathbb{S}_n$)
 $\dot{\alpha}_i$ to be some ordinal $\alpha<\gamma$.
Fix $\xi<\chi$ such that $(i_\xi,s_\xi)=(i,s')$.
By the construction, there is $s^\xi\sle_n s_\xi$, $s^\xi\in H_n$  such that  $r_\xi+s^\xi\Vdash_{\mathbb{P}}\check\alpha\in \dot{T}$. Hence, $r^*+s^\xi\Vdash_{\mathbb{P}}\check\alpha\in \dot{T}\cap\check\gamma$, and thus $r+s^\xi$ forces the same. Since $r+s^\xi\in P/H_n$  we are done.
\end{proof}

Finally, since $(\mathbb P,\lh,c,\vec{\varpi})$ is $(\Sigma, \vec{\mathbb{S}})$-Prikry,  Lemma~\ref{l14}\eqref{l14(1)}
implies that $\mathbb{P}/H_n$ does not add any new subsets of $\theta$ and, incidentally, no new subsets of $C$. Hence $\mathbb{P}/H_n$ preserves the stationarity of $A$,
and thus the stationarity of $T\cap\gamma$.
This contradicts hypothesis~$(iv)$ of the lemma.
\end{proof}

\section{Iteration scheme}\label{Iteration}
In this section, we define an iteration scheme for  $(\Sigma,\vec{\mathbb{S}})$-Prikry forcings,
following closely and expanding the work from \cite[\S3]{PartII}. The reader familiar with our iteration machinery may opt for reading just Lemma~\ref{CvIteration}. There we use in a crucial way a new feature of the forking projections; namely, super niceness.
\begin{setup}\label{setupiteration} The blanket assumptions for this section are as follows:
\begin{itemize}
\item $\mu$ is some cardinal satisfying $\mu^{<\mu}=\mu$, so that $|H_\mu|=\mu$;
\item $\langle (\sigma_n,\sigma_n^*)\mid n<\omega\rangle$ is a sequence of pairs of regular uncountable cardinals,
such that, for every $n<\omega$, $\sigma_n\le\sigma_n^*\le\mu$ and $\sigma_n\le\sigma_{n+1}$;
\item $\vec{\mathbb S}=\langle \mathbb S_n\mid n<\omega\rangle$ is a sequence of notions of forcing, $\mathbb S_n=(S_n,\SLE_n)$,
with $|S_n|< \sigma_n$;
\item $\Sigma:=\langle \sigma_n\mid n<\omega\rangle$ and $\kappa:=\sup_{n<\omega}\sigma_n$.
\end{itemize}
\end{setup}

The following convention will be applied hereafter:
\begin{conv}\label{conv71}  For a pair of ordinals $\gamma\le\alpha\le\mu^+$:
\begin{enumerate}
\item  $\emptyset_\alpha:=\alpha\times\{\emptyset\}$ denotes the $\alpha$-sequence with constant value $\emptyset$;
\item  For a $\gamma$-sequence $p$ and an $\alpha$-sequence $q$, $p*q$ denotes the unique $\alpha$-sequence satisfying that for all $\beta<\alpha$:
$$(p*q)(\beta)=\begin{cases}
q(\beta),&\text{if }\gamma\le\beta<\alpha;\\
p(\beta),&\text{otherwise}.
\end{cases}$$
\item Let $\mathbb{P}_\alpha:=(P_\alpha,\LE_\alpha)$ and $\mathbb{P}_\gamma:=(P_\gamma,\LE_\gamma)$ be forcing posets such that $P_\alpha\s{}^\alpha{H_{\mu^+}}$ and $P_\gamma\s{}^\gamma{H_{\mu^+}}$. Also, assume $p\mapsto p\upharpoonright\gamma$ defines a projection between $\mathbb{P}_\alpha$ and $\mathbb{P}_\gamma$. We denote by $i^\alpha_\gamma: V^{\mathbb{P}_\gamma}\rightarrow V^{\mathbb{P}_\alpha}$ the map defined by recursion over the rank of each $\mathbb{P}_\gamma$-name $\sigma$ as follows:
$$i^\alpha_\gamma(\sigma):=\{(i^\alpha_\gamma(\tau),p*\emptyset_\alpha)\mid (\tau,p)\in \sigma\}.$$
\end{enumerate}
\end{conv}

Our iteration scheme requires three building blocks:

\blk{I} We are given a $(\Sigma,\vec{\mathbb{S}})$-Prikry forcing $(\mathbb{Q},\lh,c,\vec{\varpi})$ such that $(\mathbb{Q},\lh)$ satisfies property $\mathcal{D}$. We moreover assume that $\mathbb Q=(Q,\le_Q)$ is a subset of $H_{\mu^+}$, $\one_{\mathbb Q}\forces_{\mathbb Q}``\check\mu=\check\kappa^+\,\&\, \kappa\text{ is singular}"$
and  $\vec{\varpi}$ is a coherent sequence. 
To streamline the matter, we also require that $\one_{\mathbb Q}$ be equal to $\emptyset$.

\blk{II}
Suppose that $(\mathbb P,\lh_{\mathbb P},c_{\mathbb P},\vec{\varpi})$ is a $(\Sigma,\vec{\mathbb{S}})$-Prikry quadruple  having property $\mathcal{D}$ 
such that
$\mathbb P=\left(P,\le\right)$ is a subset of $H_{\mu^+}$, $\vec{\varpi}$ is a coherent sequence of nice projections,
$\one_{\mathbb P}\forces_{\mathbb P}``\check\mu=\check\kappa^+$'' and $ \one_{\mathbb P}\forces_{\mathbb P} ``\check{\kappa}\text{ is singular}"$.

For every $r^\star\in P$, and a $\mathbb P$-name $z\in H_{\mu^+}$,
we are given a   corresponding $(\Sigma,\vec{\mathbb{S}})$-Prikry quadruple $(\mathbb A,\lh_{\mathbb A},c_{\mathbb A},\vec{\varsigma})$ having property $\mathcal{D}$ such that: 
\begin{enumerate}[label=(\alph*)]
\item\label{C1blk2}
there is a  super nice forking projection $(\pitchfork,\pi)$ from
$(\mathbb A,\lh_{\mathbb A},c_{\mathbb A},\vec{\varsigma})$   to $(\mathbb P,\lh_{\mathbb P},c_{\mathbb P},\vec{\varpi})$ that has the weak mixing property;
\item\label{C5blk2} $\vec{\varsigma}$ is a coherent sequence of nice projections; 
\item\label{C4blk2} for every $n<\omega$, $\z{\mathbb{A}}^\pi_n$ is $\sigma^*_n$-directed-closed;\footnote{$\z{A}_n$ is the poset given in Definition~\ref{type}\eqref{type5} defined with respect to the type map witnessing Clause~\ref{C1blk2} above.}
\item\label{C2blk2} $\one_{\mathbb A}\forces_{\mathbb A}\check\mu=\check\kappa^+$;
\item\label{C3blk2} $\mathbb A=(A,\unlhd)$ is a subset of $H_{\mu^+}$;
\setcounter{condition}{\value{enumi}}
\end{enumerate}
By \cite[Lemma 2.18]{PartII}, we may also require that:
\begin{enumerate}[label=(\alph*)]
\setcounter{enumi}{\value{condition}}
\item\label{C5blk2} each element of $A$ is a pair $(x,y)$ with $\pi(x,y)=x$;
\item\label{C6blk2} for every $a\in A$, $\myceil{\pi(a)}{\mathbb A}=(\pi(a),\emptyset)$;
\item\label{C7blk2} for every $p,q\in P$, if $c_{\mathbb P}(p)=c_{\mathbb P}(q)$, then $c_{\mathbb A}(\myceil{p}{\mathbb A})=c_{\mathbb A}(\myceil{q}{\mathbb A})$.
\end{enumerate}

\blk{III} We are given a function $\psi:\mu^+\rightarrow H_{\mu^+}$.

\begin{goal}\label{goals} Our goal is to define a system $\langle  (\mathbb{P}_\alpha,\lh_\alpha,c_\alpha,\vec{\varpi}_\alpha,\langle\pitchfork_{\alpha,\gamma}\mid \gamma\le\alpha\rangle)\mid \alpha\le\mu^+\rangle$ in such a way that for all $\gamma\le\alpha\le\mu^+$:
\begin{enumerate}
\item[(i)] $\mathbb P_\alpha$ is a poset $(P_\alpha,\le_\alpha)$, $P_\alpha\s{}^{\alpha}H_{\mu^+}$, and, for all $p\in P_\alpha$, $|B_p|<\mu$, where $B_p:=\{ \beta+1\mid \beta\in\dom(p)\ \&\ p(\beta)\neq\emptyset\}$;
\item[(ii)] The map $\pi_{\alpha,\gamma}:P_\alpha\rightarrow P_\gamma$ defined by $\pi_{\alpha,\gamma}(p):=p\restriction\gamma$ forms an projection from $\mathbb P_\alpha$ to $\mathbb P_\gamma$ and $\lh_\alpha= \lh_\gamma\circ \pi_{\alpha,\gamma}$; 
\item[(iii)] $\mathbb P_0$ is a trivial forcing, $\mathbb P_1$ is isomorphic to $\mathbb Q$ given by \blkref{I},
and $\mathbb P_{\alpha+1}$ is isomorphic to $\mathbb A$ given by \blkref{II} when invoked with respect to $(\mathbb P_\alpha,\lh_\alpha,c_\alpha,\vec{\varpi}_\alpha)$ and a pair $(r^\star,z)$ which is decoded from $\psi(\alpha)$;
\item[(iv)] If $\alpha>0$, then  $(\mathbb P_\alpha,\lh_\alpha,c_\alpha,\vec{\varpi}_\alpha)$ is a $(\Sigma,\vec{\mathbb{S}})$-Prikry  notion  of forcing satisfying property $\mathcal{D}$, whose greatest  element is $\emptyset_\alpha$, $\lh_\alpha=\lh_1\circ \pi_{\alpha,1}$ and $\emptyset_\alpha\forces_{\mathbb P_\alpha}\check\mu=\check\kappa^+$. Moreover, $\vec{\varpi}_\alpha$ is a coherent sequence of nice projections such that $\vec{\varpi}_\alpha=\vec{\varpi}_\gamma\bullet \pi_{\alpha,\gamma}$ for every $\gamma\leq \alpha$;
\item[(v)] If $0<\gamma< \alpha\le\mu^+$, then $(\pitchfork_{\alpha,\gamma},\pi_{\alpha,\gamma})$ is a nice forking projection from $(\mathbb P_\alpha,\lh_\alpha,\vec{\varpi}_\alpha)$  to $(\mathbb P_\gamma,\lh_\gamma,\vec{\varpi}_\gamma)$;
in case $\alpha<\mu^+$, $(\pitchfork_{\alpha,\gamma},\pi_{\alpha,\gamma})$ is furthermore a nice forking projection from $(\mathbb P_\alpha,\lh_\alpha,c_\alpha,\vec{\varpi}_\alpha)$ to $(\mathbb P_\gamma,\lh_\gamma,c_\gamma,\vec{\varpi}_\gamma)$, and in case $\alpha=\gamma+1$,  $(\pitchfork_{\alpha,\gamma},\pi_{\alpha,\gamma})$ is super nice and has the weak mixing property;
\item[(vi)] If $0<\gamma\le\beta\le\alpha$, then, for all $p\in P_{\alpha}$ and $r\le_\gamma p\restriction\gamma$, $\fork[\beta,\gamma]{p\restriction\beta}(r)=(\fork[\alpha,\gamma]{p}(r))\restriction \beta$.
\end{enumerate}
\end{goal}

\subsection{Defining the iteration}\label{DefiningTheIteration}
For every $\alpha<\mu^+$, fix an injection $\phi_\alpha:\alpha\rightarrow\mu$.
As $|H_\mu|=\mu$, by the Engelking-Kar\l owicz theorem, we may also fix a sequence $\langle e^i\mid i<\mu\rangle$ of functions from $\mu^+$ to $H_\mu$ such that
for every function $e:C\rightarrow H_\mu$ with $C\in[\mu^+]^{<\mu}$, there is $i<\mu$ such that $e\s e^i$.

The upcoming definition is by recursion on $\alpha\le\mu^+$, and we continue as long as we are successful. 

$\br$ Let $\mathbb P_0:=(\{\emptyset\},\le_0)$ be the trivial forcing.
Let $\lh_0$ and $c_0$ be the constant function $\{(\emptyset,\emptyset)\}$ and $\vec{\varpi}_0=\langle \{(\emptyset,\one_{\mathbb{S}_n})\}\mid n<\omega\rangle$. Finally, let $\pitchfork_{0,0}$ be the constant function $\{ (\emptyset,\{(\emptyset,\emptyset)\})\}$,
so that $\fork[0,0]{\emptyset}$ is the identity map.

$\br$ Let $\mathbb P_1:=(P_1,\le_1)$, where $P_1:={}^1Q$ and  $p\le_1 p'$ iff $p(0)\le_Q p'(0)$.
Evidently, $p\overset{\iota}{\mapsto}p(0)$ form an isomorphism between $\mathbb P_1$ and $\mathbb Q$,
so we naturally define  $\lh_{1}:=\lh\circ\iota$, $c_1:=c\circ\iota$ and $\vec{\varpi}_1:=\vec{\varpi}\bullet\iota$.
Hereafter, the sequence $\vec{\varpi}_1$ is denoted by $\langle \varpi^1_n\mid n<\omega\rangle$. For all $p\in P_1$, let $\fork[1,0]{p}:\{\emptyset\}\rightarrow\{p\}$ be the constant function,
and let $\fork[1,1]{p}$ be the identity map.

$\br$ Suppose $\alpha<\mu^+$ and that $\langle (\mathbb P_\beta,\lh_\beta,c_\beta,\vec{\varpi}_\beta,\langle \pitchfork_{\beta,\gamma}\mid\gamma\le\beta\rangle)\mid\beta\le\alpha\rangle$ has already been defined.
We now define $(\mathbb{P}_{\alpha+1},\lh_{\alpha+1}, c_{\alpha+1},\vec{\varpi}_{\alpha+1})$ and $\langle \pitchfork_{\alpha+1,\gamma}\mid\gamma\leq \alpha+1\rangle$.

$\br\br$ If $\psi(\alpha)$ happens to be a triple $(\beta,r,\sigma)$, where $\beta<\alpha$, $r\in P_\beta$ and $\sigma$ is a $\mathbb P_\beta$-name,
then we appeal to \blkref{II} with  $(\mathbb P_\alpha,\lh_\alpha,c_\alpha,\vec{\varpi}_\alpha)$,
$r^\star:=r*\emptyset_\alpha$ and $z:=i^\alpha_\beta(\sigma)$
to get a corresponding $(\Sigma,\vec{\mathbb{S}})$-Prikry quadruple $(\mathbb A,\lh_{\mathbb A},c_{\mathbb A},\vec{\varsigma})$.

$\br\br$ Otherwise, we obtain $(\mathbb A,\lh_{\mathbb A},c_{\mathbb A},\vec{\varsigma})$ by appealing to \blkref{II} with  $(\mathbb P_\alpha,\lh_\alpha,c_\alpha,\vec{\varpi}_\alpha)$, $r^\star:=\emptyset_\alpha$ and $z:=\emptyset$.

In both cases, we  obtain a super nice forking projection $(\pitchfork,\pi)$ from $(\mathbb A,\lh_{\mathbb A},c_{\mathbb A},\vec{\varsigma})$ to $(\mathbb P_\alpha,\lh_\alpha,c_\alpha,\vec{\varpi}_\alpha)$.  Furthermore, each condition in $\mathbb A=(A,\unlhd)$ is a pair $(x,y)$ with $\pi(x,y)=x$, and, for every $p\in P_\alpha$, $\myceil{p}{\mathbb A}=(p,\emptyset)$.
Now, define $\mathbb P_{\alpha+1}:=(P_{\alpha+1},\le_{\alpha+1})$ by letting $P_{\alpha+1}:=\{ x {}^\smallfrown\langle y\rangle \mid (x,y)\in A\}$,
and then letting $p\le_{\alpha+1} p'$ iff $(p\restriction\alpha,p(\alpha))\unlhd (p'\restriction\alpha,p'(\alpha))$.
Put $\lh_{\alpha+1}:=\lh_1\circ \pi_{\alpha+1,1}$ and define $c_{\alpha+1}:P_{\alpha+1}\rightarrow H_\mu$ via $c_{\alpha+1}(p):=c_{\mathbb A}(p\restriction\alpha,p(\alpha))$.

Let $\vec{\varpi}_\alpha=\langle\varpi^\alpha_n\mid n<\omega\rangle$ be defined in the natural way, i.e.,  for each $n<\omega$ and $x{}^\smallfrown \langle y\rangle \in (P_\alpha)_{\geq n}$, we set $\varpi^\alpha_n(x{}^\smallfrown \langle y\rangle):=\varsigma_n(x,y)$.

Next, let $p\in P_{\alpha+1}$, $\gamma\le\alpha+1$ and $r\LE_\gamma p\restriction\gamma$ be arbitrary; we need to define $\fork[\alpha+1,\gamma]{p}(r)$.
For $\gamma=\alpha+1$, let $\fork[\alpha+1,\gamma]{p}(r):=r$, and for $\gamma\le\alpha$, let\label{pitchforkatsuccessors}
$$\fork[\alpha+1,\gamma]{p}(r):=x{}^\smallfrown\langle y\rangle\text{ iff }\fork{p\restriction\alpha,p(\alpha)}(\fork[\alpha,\gamma]{p\restriction\alpha}(r))=(x,y).$$

$\br$ Suppose $\alpha\le\mu^{+}$ is a nonzero limit ordinal,
and that the sequence $\langle (\mathbb P_\beta,\lh_\beta,c_\beta,\vec{\varpi}_\beta,  \langle \pitchfork_{\beta,\gamma}\mid\gamma\le\beta\rangle)\mid\beta<\alpha\rangle$ has already been defined according to Goal \ref{goals}.

Define $\mathbb P_\alpha:=(P_\alpha,\le_\alpha)$ by letting $P_\alpha$ be all $\alpha$-sequences $p$
such that $|B_p|<\mu$ and $\forall\beta<\alpha(p\restriction\beta\in P_\beta)$.
Let $p\le_{\alpha} q$ iff  $\forall{\beta<\alpha}(p\restriction \beta\le_{\beta} q\restriction \beta)$. Let $\lh_\alpha:=\lh_1\circ\pi_{\alpha,1}$.
Next, we define $c_\alpha:P_\alpha\rightarrow H_\mu$, as follows.

$\br\br$ If $\alpha<\mu^+$, then, for every $p\in P_\alpha$, let
$$c_\alpha(p):=\{ (\phi_\alpha(\gamma),c_{\gamma}(p\restriction\gamma))\mid \gamma\in B_p\}.$$

$\br\br$ If $\alpha=\mu^+$, then, given $p\in P_\alpha$,
first let $C:=\cl(B_p)$, then define a function $e:C\rightarrow H_\mu$ by stipulating:
$$e(\gamma):=(\phi_\gamma[C\cap\gamma],c_{\gamma}(p\restriction\gamma)).$$
Then, let $c_\alpha(p):=i$ for the least $i<\mu$ such that $e\s e^i$. Set $\vec{\varpi}_\alpha:=\vec{\varpi}_1\bullet \pi_{\alpha,1}$.

Finally, let $p\in P_{\alpha}$, $\gamma\le\alpha$ and $r\LE_\gamma p\restriction\gamma$ be arbitrary; we need to define $\fork[\alpha,\gamma]{p}(r)$.\label{varpilimit}
For $\gamma=\alpha$, let $\fork[\alpha,\gamma]{p}(r):=r$,
and for $\gamma<\alpha$, let
$\fork[\alpha,\gamma]{p}(r):=\bigcup\{\fork[\beta,\gamma]{p\restriction\beta}(r)\mid \gamma\le\beta<\alpha\}$.

\subsection{Verification} Our next task is to verify that for  all $\alpha\le\mu^+$,
the tuple $(\mathbb{P}_\alpha,\lh_\alpha,c_\alpha,\allowbreak\vec{\varpi}_\alpha,\langle\pitchfork_{\alpha,\gamma}\mid \gamma\le\alpha\rangle)$ fulfills requirements (i)--(vi) of Goal~\ref{goals}.
It is obvious that Clauses (i) and (iii) hold, so we
focus on verifying the rest.

 The next fact deals with an expanded version of Clause~(vi). For the proof we refer the reader to \cite[Lemma 3.5]{PartII}:

\begin{fact}\label{CviIteration}
For all $\gamma\le\alpha\leq  \mu^+$, $p\in P_\alpha$ and $r\in P_\gamma$ with $r\LE_\gamma p\restriction\gamma$, if we let $q:=\fork[\alpha,\gamma]{p}(r)$, then:
\begin{enumerate}
\item\label{vi-like}  $q\restriction\beta=\fork[\beta,\gamma]{p\restriction\beta}(r)$ for all $\beta\in[\gamma,\alpha]$;
\item\label{SupportForking} $B_{q}=B_p\cup B_r$;
\item\label{C5LemmaOfForking} $q\restriction\gamma=r$;
\item If $\gamma=0$, then $q=p$;
\item\label{CViforking} $p=(p\restriction\gamma)*\emptyset_\alpha$ iff $q=r*\emptyset_\alpha$;
\item\label{Cviiforking} for all $p'\LE^0_\alpha p$, if $r\LE^0_\gamma p'\restriction\gamma$, then $\fork[\alpha,\gamma]{p'}(r)\LE_\alpha\fork[\alpha,\gamma]{p}(r)$.
\end{enumerate}
\end{fact}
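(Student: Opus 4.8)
\textbf{Proof plan for Fact~\ref{CviIteration}.}
The statement is an expanded form of Clause~(vi) of Goal~\ref{goals}, and the reference \cite[Lemma~3.5]{PartII} indicates that the argument is essentially the one from the prequel; here I sketch how it runs. The plan is to prove all six items simultaneously by induction on $\alpha$, following the three cases of the recursive definition in Subsection~\ref{DefiningTheIteration}. The base cases $\alpha=0$ and $\alpha=1$ are immediate from the fact that $\fork[0,0]{\emptyset}$ and $\fork[1,1]{p}$ are identity maps while $\fork[1,0]{p}$ is constant with value $p$; in particular Clause~\eqref{CViforking} holds vacuously and the remaining clauses are trivial.

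For the \emph{successor case} $\alpha=\beta+1$: fix $p\in P_{\beta+1}$, $\gamma\le\beta+1$ and $r\LE_\gamma p\restriction\gamma$. If $\gamma=\beta+1$ then $\fork[\beta+1,\gamma]{p}(r)=r$ by definition and every clause is immediate. If $\gamma\le\beta$, recall that $\fork[\beta+1,\gamma]{p}(r)=x{}^\smallfrown\langle y\rangle$ where $(x,y)=\fork{p\restriction\beta,p(\beta)}(\fork[\beta,\gamma]{p\restriction\beta}(r))$. First I would set $q':=\fork[\beta,\gamma]{p\restriction\beta}(r)$, which by the induction hypothesis satisfies all six clauses; then I would unwind the successor-stage forking through the forking projection $(\pitchfork,\pi)$ from \blkref{II}. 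Clause~\eqref{vi-like} for $\beta'\le\beta$ is exactly the induction hypothesis applied to $q'$, and for $\beta'=\beta+1$ it is the definition; Clause~\eqref{C5LemmaOfForking} ($q\restriction\gamma=r$) follows by composing Clause~\eqref{frk5} of Definition~\ref{forking} for $\pi$ with the induction hypothesis $q'\restriction\gamma=r$; Clause~\eqref{SupportForking} follows from the fact that $\pitchfork$ at the successor step only changes the $\alpha$-th coordinate together with the support computation for $q'$; Clause~\eqref{CViforking} uses Clause~\eqref{frk6} of Definition~\ref{forking} (the characterization of $\myceil{\cdot}{\mathbb A}$) together with the induction hypothesis, bearing in mind that $\myceil{p}{\mathbb A}=(p,\emptyset)$ and hence $p=(p\restriction\beta)*\emptyset_{\beta+1}$ corresponds to $y=\emptyset$; the case $\gamma=0$ uses Clause~(iv) of Fact applied inductively, i.e.\ $q'=p\restriction\beta$, and then $\fork{p\restriction\beta,p(\beta)}(p\restriction\beta)=(p\restriction\beta,p(\beta))$ by Clause~\eqref{frk6}; and Clause~\eqref{Cviiforking} follows from Clause~\eqref{frk7} of Definition~\ref{forking} (order-preservation of $\fork{a'}$ versus $\fork{a}$ along $\unlhd^0$) combined with the inductive monotonicity for $\fork[\beta,\gamma]{\cdot}$.

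For the \emph{limit case} $\alpha$: here $\fork[\alpha,\gamma]{p}(r)=\bigcup\{\fork[\beta,\gamma]{p\restriction\beta}(r)\mid \gamma\le\beta<\alpha\}$. The key point is coherence: by the induction hypothesis applied via Clause~\eqref{vi-like}, for $\gamma\le\beta\le\beta'<\alpha$ we have $\fork[\beta',\gamma]{p\restriction\beta'}(r)\restriction\beta=\fork[\beta,\gamma]{p\restriction\beta}(r)$, so the union is a genuine increasing union of sequences and defines an $\alpha$-sequence $q$ with $q\restriction\beta=\fork[\beta,\gamma]{p\restriction\beta}(r)$ for all $\beta\in[\gamma,\alpha)$, which is Clause~\eqref{vi-like}; the value at $\beta=\alpha$ is $q$ itself. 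One must check $q\in P_\alpha$, i.e.\ $|B_q|<\mu$: by Clause~\eqref{SupportForking} at each stage $B_{q\restriction\beta}=B_{p\restriction\beta}\cup B_r$, so $B_q=B_p\cup B_r$, and since $|B_p|<\mu$ and $|B_r|<\mu$ this holds; this simultaneously gives Clause~\eqref{SupportForking}. Clauses~\eqref{C5LemmaOfForking}, \eqref{CViforking}, the $\gamma=0$ case and \eqref{Cviiforking} all follow coordinatewise from the corresponding clauses at stages $\beta<\alpha$, using that a property of an $\alpha$-sequence that is determined by all its proper initial segments (support, being $\emptyset_\bullet$ beyond $\gamma$, $\LE^0_\alpha$-comparability) can be read off from the $\mathbb P_\beta$ for $\beta<\alpha$ by the definition of $\LE_\alpha$. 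I expect the main obstacle to be purely bookkeeping: verifying the coherence of the limit union rigorously and keeping straight which clause at stage $\beta$ feeds which clause at stage $\alpha$, since Clauses~\eqref{vi-like}, \eqref{SupportForking} and \eqref{C5LemmaOfForking} are mutually entangled at successors through the forking projection's interaction with the $\alpha$-th coordinate. None of this requires any genuinely new idea beyond what is in \cite[Lemma~3.5]{PartII}, since this fact does not yet invoke super niceness (that enters only later, in Lemma~\ref{CvIteration}).
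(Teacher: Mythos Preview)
Your proposal is correct and follows exactly the approach of \cite[Lemma~3.5]{PartII}, which is all the paper does here (it simply cites that lemma without reproducing the argument). One minor point: in the successor case for $\gamma=0$, the identity $\fork{a}(\pi(a))=a$ that you need is not Clause~\eqref{frk6} of Definition~\ref{forking} but rather \cite[Lemma~2.17]{PartII}; otherwise your bookkeeping is accurate.
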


We move on to Clause~(ii) and Clause~(v):

\begin{lemma}\label{CvIteration}
Suppose that $\alpha\le\mu^+$ is such that for all nonzero $\gamma<\alpha$, $(\mathbb{P}_\gamma, c_\gamma, \lh_\gamma,\vec{\varpi}_\gamma)$ is $(\Sigma,\vec{\mathbb{S}})$-Prikry.
Then:
\begin{itemize}
\item for all nonzero $\gamma\le\alpha$, $({\pitchfork_{\alpha,\gamma}},{\pi_{\alpha,\gamma}})$ is a nice forking projection from $(\mathbb{P}_\alpha, \lh_\alpha,\vec{\varpi}_\alpha)$ to $(\mathbb{P}_\gamma,\lh_\gamma,\vec{\varpi}_\gamma)$,
where $\pi_{\alpha,\gamma}$ is defined as in Goal~\ref{goals}(ii);
\item  if $\alpha<\mu^+$, then $({\pitchfork_{\alpha,\gamma}},{\pi_{\alpha,\gamma}})$ is furthermore a nice forking projection from $(\mathbb{P}_\alpha, \lh_\alpha,c_\alpha,\vec{\varpi}_\alpha)$ to $(\mathbb{P}_\gamma, \lh_\gamma,c_\gamma,\vec{\varpi}_\gamma)$
\item if $\alpha=\gamma+1$,  then $(\pitchfork_{\alpha,\gamma},\pi_{\alpha,\gamma})$ is super nice and has the weak mixing property.
\end{itemize}
\end{lemma}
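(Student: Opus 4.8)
The argument is by induction on $\alpha\le\mu^+$, following the pattern of \cite[Lemma~3.6]{PartII}, so the plan is to isolate exactly the places where \emph{super niceness} and Clause~\ref{frknew2} of Definition~\ref{NiceForking} (as well as coherency of $\vec\varpi_\gamma$) must be fed in, and to reduce everything else to the corresponding results of Part~II. First I would dispose of the trivial and successor-index cases: for $\gamma=\alpha$ the pair $(\pitchfork_{\alpha,\alpha},\pi_{\alpha,\alpha})$ is the identity, which is (super) nice by inspection; for $\alpha=\gamma+1$, the pair $(\pitchfork_{\alpha,\gamma},\pi_{\alpha,\gamma})=(\pitchfork,\pi)$ is precisely the forking projection handed to us by \blkref{II}, so niceness, super niceness, and the weak mixing property are immediate from Clauses~\ref{C1blk2}--\ref{C2blk2}; the only thing to check is that $\vec\varpi_\alpha=\vec\varsigma$ under the identification $x{}^\smallfrown\langle y\rangle\leftrightarrow(x,y)$, which is exactly how $\varpi^\alpha_n$ was defined on p.~\pageref{pitchforkatsuccessors}, together with the observation that $\vec\varsigma=\vec\varpi_\gamma\bullet\pi$ gives $\vec\varpi_\alpha=\vec\varpi_\gamma\bullet\pi_{\alpha,\gamma}$.

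The substantive work is the case $0<\gamma<\alpha$ with $\alpha$ limit, and the case $\gamma<\alpha=\gamma'+1$ with $\gamma<\gamma'$ (composition through the last coordinate). For the limit case, I would argue exactly as in \cite[Lemma~3.6]{PartII} that $(\pitchfork_{\alpha,\gamma},\pi_{\alpha,\gamma})$ is a forking projection from $(\mathbb P_\alpha,\lh_\alpha,c_\alpha)$ to $(\mathbb P_\gamma,\lh_\gamma,c_\gamma)$: Clauses~\eqref{frk1}--\eqref{frk7} and \eqref{frk2} of Definition~\ref{forking} transfer coordinatewise via Fact~\ref{CviIteration}, using the induction hypothesis at each $\beta\in[\gamma,\alpha)$. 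It then remains to verify Clause~\ref{frknew2} of Definition~\ref{NiceForking}, i.e., that $\vec\varpi_\alpha=\vec\varpi_\gamma\bullet\pi_{\alpha,\gamma}$ and that each $\varpi^\alpha_n$ is a nice projection from $(\mathbb P_\alpha)_{\ge n}$ to $\mathbb S_n$ with $\varpi^\alpha_n\restriction(\mathbb P_\alpha)_k$ nice for $k\ge n$. The first equality is immediate from the definition $\vec\varpi_\alpha:=\vec\varpi_1\bullet\pi_{\alpha,1}$ at limits together with the inductive relation $\vec\varpi_\gamma=\vec\varpi_1\bullet\pi_{\gamma,1}$; for niceness of $\varpi^\alpha_n$ I would invoke Lemma~\ref{pitchforkexact} (which gives Clause~\eqref{niceprojection3} from the forking-projection structure alone) and then verify Clause~\eqref{theprojection} directly: given $q\LE^0_\alpha (p+s)$ one builds $p'\LE^{\varpi^\alpha_n}_\alpha p$ with $q=p'+\varpi^\alpha_n(q)$ by recursion on the coordinates $\beta\in[\gamma,\alpha)$, and \emph{this is precisely where super niceness of each $(\pitchfork_{\beta+1,\beta},\pi_{\beta+1,\beta})$ is used} — it guarantees that the witness $a^*$ at coordinate $\beta+1$ lifts the already-constructed witness $p^*$ at coordinate $\beta$, so the recursion coheres into a single condition $p'\in P_\alpha$. (This is the mechanism flagged in the Remark after Definition~\ref{SuperNiceForking} and in Claim~\ref{PropertiesForking}.)

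For the composition case $\gamma<\gamma'<\alpha=\gamma'+1$, I would show that $(\pitchfork_{\alpha,\gamma},\pi_{\alpha,\gamma})$ is the composition of $(\pitchfork_{\alpha,\gamma'},\pi_{\alpha,\gamma'})$ with $(\pitchfork_{\gamma',\gamma},\pi_{\gamma',\gamma})$, which is a nice forking projection because (a) the composition of nice projections is nice (Note after Example~\ref{exampleexact}, applied to the $\varsigma$'s via Clause~\ref{frknew2}), (b) the composition of forking projections is a forking projection — this is \cite[Lemma~3.6]{PartII} (or a direct check of Definition~\ref{forking} using Fact~\ref{CviIteration}\eqref{vi-like}), and (c) $\vec\varpi_\alpha=\vec\varpi_{\gamma'}\bullet\pi_{\alpha,\gamma'}=\vec\varpi_\gamma\bullet\pi_{\gamma',\gamma}\bullet\pi_{\alpha,\gamma'}=\vec\varpi_\gamma\bullet\pi_{\alpha,\gamma}$. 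Finally, the $c$-preservation clause (second bullet, for $\alpha<\mu^+$) follows from Clause~\eqref{frk2} for each factor plus the definition of $c_\alpha$ at successors/limits, again as in Part~II.

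\textbf{Main obstacle.} The delicate point — the one that genuinely goes beyond \cite[\S3]{PartII} — is the verification of Clause~\eqref{theprojection} of Definition~\ref{niceprojection} for $\varpi^\alpha_n$ at limit $\alpha$, i.e., showing the witness $p'$ to niceness can be assembled coherently across all coordinates. The naive coordinatewise construction produces, at each $\beta$, \emph{some} witness, but there is no reason a priori that these agree on overlaps; super niceness is exactly the uniformization statement that fixes this, and getting the bookkeeping of the recursion right (tracking $\varsigma_n$-values, using Lemma~\ref{LemmaAdditionalAssumptions} and coherency at each step) is where the real care is needed. Everything else is a faithful transcription of the Part~II verification with $(\pitchfork,\pi)$ replaced by its nice counterpart and Definition~\ref{SigmaPrikry}\eqref{PnprojectstoSn} invoked in place of the trivial projections.
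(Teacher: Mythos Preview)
Your plan is correct and matches the paper's approach: the forking-projection clauses are reduced to \cite[Lemma~3.6]{PartII}, and the new content is exactly the verification that each $\varpi^\alpha_n$ is a nice projection (Clause~\ref{frknew2} of Definition~\ref{NiceForking}), proved by induction on $\alpha$, with super niceness of the successor-step forking projections invoked at limit $\alpha$ to thread the coordinatewise witnesses into a single coherent $p'$.

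One refinement you should make explicit: in the limit case, the recursion building $p'$ must be indexed by (the closure of) $B_q$, not by all of $[\gamma,\alpha)$. For $\alpha$ of cofinality $\ge\mu$ (in particular $\alpha=\mu^+$) a recursion over all coordinates could produce something with support of size $\ge\mu$, hence not a condition in $P_\alpha$. The paper enumerates $\cl(B_q)$ as $\langle\gamma_\tau\mid\tau\le\theta\rangle$ with $\theta<\mu$, builds a coherent sequence $\langle p'_\tau\mid\tau\le\theta\rangle$ with $p'_\tau\in P_{\gamma_\tau}$, and invokes super niceness of $(\pitchfork_{\gamma_\tau,\beta_\tau},\pi_{\gamma_\tau,\beta_\tau})$ (where $\gamma_\tau=\beta_\tau+1$) at each nonaccumulation step to ensure $p'_\tau\restriction\gamma_{\tau'}=p'_{\tau'}$; Fact~\ref{CviIteration}\eqref{CViforking} is used to bridge the gaps between consecutive $\gamma_\tau$'s where the conditions are trivial. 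This bookkeeping is exactly what keeps $B_{p'}\subseteq B_p\cup B_q$ and hence $p'\in P_\alpha$.
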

\begin{proof}
The above items with the exception of the niceness requirement can be proved as in \cite[Lemma~3.6]{PartII}. 
It thus suffices to prove the following: 

\begin{claim}\label{PropertiesForking} For all nonzero $\gamma\le\alpha$,
$\vec{\varpi}_\alpha={\vec{\varpi}_\gamma}\mathrel{\bullet}{\pi_{\alpha,\gamma}}$. Also, for each $n$, $\varpi^\alpha_n$ is a nice projection from $(\mathbb{P}_\alpha)_{\geq n}$ to $\mathbb{S}_n$ and for each  $k\geq n$, $\varpi^\alpha_n\restriction(\mathbb{P}_\alpha)_k$ is again a nice projection.
\end{claim}
\begin{proof}  By induction on $\alpha\le\mu^+$:

$\br$ The case $\alpha=1$ is trivial, since then, $\gamma=\alpha$ and $\vec{\varpi}_1=\vec{\varpi}\mathrel{\bullet}\iota$.

$\br$ Suppose $\alpha=\alpha'+1$ and the claim holds for $\alpha'$.
Recall that $\mathbb P_{\alpha}=\mathbb P_{\alpha'+1}$ was defined
by feeding $(\mathbb{P}_{\alpha'},\lh_{\alpha'},c_{\alpha'},\vec{\varpi}_{\alpha'})$  into \blkref{II},
thus obtaining a $(\Sigma,\vec{\mathbb{S}})$-Prikry forcing $(\mathbb A,\lh_{\mathbb A},c_{\mathbb A},\vec{\varsigma})$
along with the pair $(\pitchfork,\pi)$.
Also, we have that $(x,y)\in  A$ iff $x{}^\smallfrown \langle y\rangle\in P_\alpha$.

By niceness of $(\pitchfork,\pi)$ and our recursive definition, $$\varpi^\alpha_n(x{}^\smallfrown \langle y\rangle)=\varsigma_n(x,y)=\varpi^{\alpha'}_n(\pi(x,y))=\varpi^{\alpha'}_n(\pi_{\alpha,\alpha'}(x{}^\smallfrown \langle y\rangle)), $$
for all $n<\omega$ and $x{}^\smallfrown \langle y\rangle\in (P_\alpha)_{\geq n}$. Hence, $\vec{\varpi}_\alpha=\vec{\varpi}_{\alpha'}\bullet\pi_{\alpha,\alpha'}$. Using the induction hypothesis for $\vec{\varpi}_{\alpha'}$ we arrive at $\vec{\varpi}_\alpha=\vec{\varpi}_\gamma\bullet\pi_{\alpha,\gamma}$.

Let us  now address  the second part of the claim. We just show that for every $n<\omega$, the map $\varpi^\alpha_n$ is a nice projection from $(\mathbb{P}_\alpha)_{\geq n}$ to $\mathbb{S}_n$. The statement that $\varpi^\alpha_n\restriction(\mathbb{P}_\alpha)_k$ is a nice projection can be proved similarly.

So, let us go over the clauses of Definition~\ref{niceprojection}. Clauses~\eqref{niceprojection1} and \eqref{niceprojection2} are evident and Clause~\eqref{niceprojection3} follows from Lemma~\ref{pitchforkexact} applied to $(\pitchfork_{\alpha,\alpha'},\pi_{\alpha,\alpha'})$.

\smallskip

\eqref{theprojection}: Let $p,q\in (P_\alpha)_{\geq n}$ and $s\sle_n \varpi^\alpha_n(p)$ be such that $q\LE_\alpha p+s$. Then, $(q\restriction \alpha',q(\alpha'))\unlhd (p\restriction \alpha',p(\alpha'))+s$. By Clause~\ref{C1blk2} of \blkref{II} we have that $\varsigma_n$ is a nice projection from $\mathbb{A}_{\geq n}$ to $\mathbb{S}_n$, hence there is $(x,y)\in A$ such that $(x,y)\unlhd^{\varsigma_n} (p\restriction \alpha',p(\alpha'))$ and $(q\restriction \alpha',q(\alpha'))=(x,y)+\varsigma_n((q\restriction \alpha',q(\alpha')))$. Setting $p':=x^\smallfrown\langle y\rangle$ it is immediate that $p'\LE_\alpha^{\varpi^\alpha_n}p$ and $$q=p'+\varpi^{\alpha'}_n(q\restriction\alpha')=p'+\varpi^\alpha_n(q).$$

$\br$
For $\alpha\in\acc(\mu^++1)$, the first part follows from $\vec{\varpi}_\alpha:=\vec{\varpi}_1\circ \pi_{\alpha,1}$ and the induction hypothesis. About the verification of the Clauses of Definition~\ref{niceprojection}, Clauses~\eqref{niceprojection1} and \eqref{niceprojection2} are automatic and  Clause~\eqref{niceprojection3} follows from Lemma~\ref{pitchforkexact} applied to $(\pitchfork_{\alpha,1},\pi_{\alpha,1})$. About Clause~\eqref{theprojection} we argue as follows.

Fix $p,q\in (\mathbb{P}_\alpha)_{\geq n}$ and $s\sle_n \varpi^\alpha_n(p)$ be such that $q\LE_\alpha p+s$. The goal is to find a condition $p'\in (P_\alpha)_{\geq n}$ such that  $p'\LE^{\varpi^\alpha_n} p$  and $q=p'+\varpi^\alpha_n(q)$.

Let $\langle\gamma_\tau\mid \tau\leq\theta\rangle$ be the increasing enumeration of the closure of $B_{q}$.\footnote{Recall that $B_q:=\{ \beta+1\mid \beta\in\dom(q)\ \&\ q(\beta)\neq\emptyset\}$.} For every $\tau\in\nacc(\theta+1)$, $\gamma_\tau$ is a successor ordinal, so we let $\beta_\tau$ denote its predecessor. By recursion on $\tau\leq \theta$, we shall define a sequence of conditions $\langle p'_\tau\mid \tau\leq \theta\rangle\in\prod_{\tau\leq \theta} (P_{\gamma_\tau})$ such that $p'_\tau\LE^{\varpi^{\gamma_\tau}_n}_{\gamma_\tau} p\restriction\gamma_\tau$ and $q\restriction\gamma_\tau=p'_\tau+ \varpi^{\gamma_\tau}_n(q\restriction\gamma_\tau)$.

In order to be able to continue with the construction at limits stages we shall moreover secure that $\langle p'_\tau\mid \tau\leq \theta\rangle$ is coherent: i.e., $p'_\tau\restriction\gamma_{\tau'}=p_{\tau'}$ for all $\tau'\leq \tau$. Also, note that $\langle \varpi^{\gamma_\tau}_n(q\restriction\gamma_\tau)\mid \tau\leq \theta\rangle$ is a constant sequence, so hereafter we denote by $t$ its constant value.

To form the first member of the sequence we argue as follows. First note that $q\restriction 1\LE_1 p\restriction 1+s$, so that appealing to Definition~\ref{niceprojection}\eqref{theprojection} for $\varpi^1_n$ we get a condition $p'_{-1}\in P_1$ such that $p'_{-1}\LE^{\varpi^1_n}_1 p\restriction 1$ and $q\restriction 1=p'_{-1}+t$.

Now, let $r_0:=\fork[\gamma_0,1]{p\restriction\gamma_0}(p'_{-1})$. A moment's reflection makes it clear that $r_0+s$ is well-defined and also  $q\restriction\gamma_0\LE_{\gamma_0} r_0+s$. So, appealing to Definition~\ref{niceprojection}\eqref{theprojection} for $\varpi^{\gamma_0}_n$ we get a condition $p'_0\in P_{\gamma_0}$ such that $p'_0\LE^{\varpi^{\gamma_0}_n}_{\gamma_0} r_0$ and $q\restriction \gamma_0=p'_0+t$. Since $p'_{-1}\LE^{\varpi^1_n}_1 p\restriction 1$ and $\varpi^{\gamma_0}_n=\varpi^1_n\circ\pi_{\gamma_0,1}$ we have $\varpi^{\gamma_0}_n (r_0)=\varpi^{\gamma_0}_n(p\restriction\gamma_0)$. This completes the first step of the induction.

\smallskip

Let us suppose that we have already constructed $\langle p_{\tau'}\mid \tau'<\tau\rangle$.

\underline{$\tau$ is successor:} Suppose that $\tau=\tau'+1$.  Then, set $r_\tau:=\fork[\gamma_\tau,\gamma_{\tau'}]{p\restriction\gamma_\tau}(p'_{\tau'})$. Using the induction hypothesis it is easy to see that $q\restriction\gamma_\tau\LE_{\gamma_\tau} r_\tau+s$. Instead of outright invoking the niceness of $\varpi^{\gamma_\tau}_n$ we would like to use that $(\pitchfork_{\gamma_\tau,\beta_\tau}, \pi_{\gamma_\tau,\beta_\tau})$ is a super nice forking projection (see Definition~\ref{SuperNiceForking}). This will secure that the future condition $p'_\tau$ will be coherent with $p'_{\tau'}$, and therefore with all the conditions constructed so far.

Applying the definition of  $\pitchfork_{\gamma_\tau,\gamma_{\tau'}}$ given at page~\pageref{pitchforkatsuccessors}  we have $$r_\tau=\fork[\gamma_\tau,\beta_\tau]{p\restriction\gamma_\tau}(\fork[\beta_\tau,\gamma_{\tau'}]{p\restriction\beta_\tau}(p'_{\tau'})).$$
Since $p\restriction\beta_\tau=p\restriction\gamma_{\tau'}\ast \emptyset_{\beta_\tau}$,  Clause~\eqref{Cviiforking} of Fact~\ref{CviIteration} yields $$\fork[\beta_\tau,\gamma_{\tau'}]{p\restriction\beta_\tau}(p'_{\tau'})=p'_{\tau'}\ast \emptyset_{\beta_\tau}.$$
So, $r_\tau=\fork[\gamma_\tau,\beta_\tau]{p\restriction\gamma_\tau}(p'_{\tau'}\ast \emptyset_{\beta_\tau})$.
\begin{subclaim}
 $p'_{\tau'}\ast \emptyset_{\beta_\tau}\LE^{\varpi^{\beta_\tau}_n}_{\beta_\tau} p\restriction\beta_\tau$ and $q\restriction\beta_\tau=(p'_{\tau'}\ast \emptyset_{\beta_\tau})+t$.
\end{subclaim}
\begin{proof}
The first part follows immediately from $p'_{\tau'}\LE^{\varpi^{\gamma_\tau'}_n}_{\gamma_{\tau'}} p\restriction\gamma_{\tau'}$. For the second part note that $q\restriction\beta_\tau=q\restriction\gamma_{\tau'}\ast \emptyset_{\beta_\tau}$, hence Fact~\ref{CvIteration}\eqref{CViforking} combined with the induction hypothesis yield
$$q\restriction\beta_\tau=\fork[\beta_\tau,\gamma_{\tau'}]{q\restriction\beta_\tau}(q\restriction\gamma_{\tau'})=\fork[\beta_\tau,\gamma_{\tau'}]{q\restriction\beta_\tau}(p'_{\tau'}+t)=(p'_{\tau'}+t)\ast \emptyset_{\beta_\tau}.$$
On the other hand, using Lemma~\ref{pitchforkexact} with respect to $(\pitchfork_{\beta_\tau,\gamma_{\tau'}},\pi_{\beta_{\tau},\gamma_{\tau'}})$,
$$(p'_{\tau'}\ast \emptyset_{\beta_\tau})+t=\fork[\beta_\tau,\gamma_{\tau'}]{p'_{\tau'}\ast \emptyset_{\beta_\tau}}(p'_{\tau'}+t)=(p'_{\tau'}+t)\ast \emptyset_{\beta_\tau},$$
where the last equality follows again from Fact~\ref{CvIteration}\eqref{CViforking}.

Combining the above expressions we arrive at $q\restriction\beta_\tau=(p'_{\tau'}\ast \emptyset_{\beta_\tau})+t$.
\end{proof}
By Clause~\ref{C5blk2} of \blkref{II} and the definition of the iteration at successor stage (see page~\pageref{pitchforkatsuccessors}), the pair $(\pitchfork_{\gamma_\tau,\beta_\tau},\pi_{\gamma_\tau,\beta_\tau})$ is a super nice forking projection from $(\mathbb{P}_{\gamma_\tau},\ell_{\gamma_\tau},\vec{\varpi}_{\gamma_\tau})$ to $(\mathbb{P}_{\beta_\tau},\ell_{\beta_\tau},\vec{\varpi}_{\beta_\tau})$. Combining this with the above  subclaim we find a condition $p'_\tau\LE^{\varpi^{\gamma_\tau}_n}_{\gamma_\tau} r_\tau$ such that $p'_\tau\restriction\beta_\tau=p'_{\tau'}\ast \emptyset_{\beta_\tau}$ and  $q\restriction\gamma_\tau=p'_\tau+t$. Clearly, $p'_\tau$ witnesses the desired property. \label{supernicenessinaction}

\underline{$\tau$ is limit:} Put $p'_\tau:=\bigcup_{\tau'<\tau} p'_{\tau'}$. Thanks to the induction hypothesis it is evident that $p'_\tau\LE^{\varpi^{\gamma_\tau}_n}_{\gamma_\tau} p\restriction\gamma_\tau$. Also,  combining the induction hypothesis with Lemma~\ref{pitchforkexact} for $(\pitchfork_{\gamma_\tau,1},\pi_{\gamma_\tau,1})$ we obtain the following chain of equalities:
$$q\restriction\gamma_\tau=\bigcup_{\tau'<\tau}(p'_{\tau'}+t)=\bigcup_{\tau'<\tau}\fork[\gamma_\tau,1]{p'_{\tau'}}(p'_{\tau'}\restriction 1+t)=\bigcup_{\tau'<\tau}\fork[\gamma_{\tau'},1]{p'_{\tau'}}(p'_{\tau}\restriction 1+t).\footnote{Note that for the right-most equality we have used that $p'_\tau\restriction 1=p'_{\tau'}\restriction 1$, for all $\tau'<\tau.$}$$
Using the definition of the pitchfork at limit stages (see page~\pageref{varpilimit}) we get
$$\bigcup_{\tau'<\tau}\fork[\gamma_{\tau'},1]{p'_{\tau'}}(p'_{\tau}\restriction 1+t)=p'_\tau+t=\fork[\gamma_\tau,1]{p'_\tau}(p'_\tau\restriction 1+t),$$
where the last equality follows from Lemma~\ref{pitchforkexact} for $(\pitchfork_{\gamma_\tau,1},\pi_{\gamma_\tau,1})$.

Altogether, we have shown that $p'_\tau\LE^{\varpi^{\gamma_\tau}_n}_{\gamma_\tau} p\restriction\gamma_\tau$ and $q\restriction\gamma_\tau=p'_\tau+t$.  Additionally, $p'_\tau\restriction\gamma_{\tau'}=p'_{\tau'}$ for all $\tau'<\tau$.

\smallskip

Finally, putting  $p':=p'_\theta$  we obtain a condition in  $(\mathbb{P}_{\alpha})_{\geq n}$ such that $$p'\LE^{\varpi^\alpha_n}_{\alpha} p\;\;\text{and}\;\; q=p'+\varpi^\alpha_n(q).$$ This completes the argument.\qedhere
\end{proof}
This completes the proof of the lemma.
\end{proof}

Recalling Definition~\ref{SigmaPrikry}\eqref{c2},
for all nonzero $\alpha\le\mu^+$ and $n<\omega$, we need to identify a candidate for a dense subposet $(\z{\mathbb P}_{\alpha})_n=((\z{P}_{\alpha})_n,\le_\alpha)$ of $(\mathbb P_{\alpha})_n$.

\begin{definition}\label{particularorderings}  For each nonzero $\gamma<\mu^+$,
we let $\tp_{\gamma+1}$ be a  type witnessing that $(\pitchfork_{\gamma+1,\gamma},\pi_{\gamma+1,\gamma})$ has the weak mixing property.
\end{definition}

\begin{definition}\label{DefRingForLimits}
Let $n<\omega$. Set $\z{P}_{1n}:={}^{1}{{(\z{Q}_n})}$.\footnote{Here, $\z{Q}_n$ is obtained from Clause~\eqref{c2} of Definition~\ref{SigmaPrikry} with respect to the triple $(\mathbb{Q},\ell,c)$ given by \blkref{I}.}
Then, for each   $\alpha\in[2,\mu^+]$, define $\z P_{\alpha n}$ by recursion:
$$\z{P}_{\alpha n}:=\begin{cases}
\{p\in P_\alpha\mid \pi_{\alpha,\beta}(p)\in \z{P}_{\beta n}\ \&\ \mtp_{\beta+1}(p)=0\},&\text{if }\alpha=\beta+1;\\
\{p\in P_\alpha\mid \pi_{\alpha,1}(p)\in\z{P}_{1n}\ \&\ \forall\gamma\in B_p\,\mtp_\gamma(\pi_{\alpha,\gamma}(p))=0\},&\text{otherwise}.\\
\end{cases}
$$
\end{definition}

\begin{lemma}\label{ringcoheres}\label{LiftingAndRings} Let $n<\omega$ and $1\leq \beta<\alpha\leq \mu^+$. Then:
\begin{enumerate}
\item $\pi_{\alpha,\beta}``\z{P}_{\alpha n}\s \z{P}_{\beta n}$;
\item For every $p\in \z{P}_{\beta n}$, $p* \emptyset_{\delta}\in \z{P}_{\alpha n}$.
\end{enumerate}
\end{lemma}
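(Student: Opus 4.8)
The plan is to prove both clauses simultaneously by induction on $\alpha\le\mu^+$, since the two are intertwined: membership in $\z P_{\alpha n}$ is defined recursively via the projections $\pi_{\alpha,\beta}$, and the ``padding'' operation $p\mapsto p*\emptyset_\delta$ is exactly what one needs to check coherence at limits. First I would recall that, by construction (Definition~\ref{type}\eqref{type5} as lifted through Definition~\ref{d45} and Lemma~\ref{forkinganddirectedclosure}), at each successor step $\gamma+1$ the ring $\z{\mathbb A}_n$ is defined via the type $\tp_{\gamma+1}$ of Definition~\ref{particularorderings}, so that $p\in\z P_{(\gamma+1)n}$ iff $\pi_{\gamma+1,\gamma}(p)\in\z P_{\gamma n}$ and $\mtp_{\gamma+1}(p)=0$; similarly at limits we collect the conditions whose restriction to $1$ lies in $\z P_{1n}$ and for which $\mtp_\gamma(\pi_{\alpha,\gamma}(p))=0$ for every $\gamma\in B_p$.

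For Clause~(1), the successor case is a two-step unravelling: given $p\in\z P_{\alpha n}$ with $\alpha=\beta'+1$ and $\beta<\alpha$, if $\beta=\beta'$ there is nothing to do; if $\beta<\beta'$ then $\pi_{\alpha,\beta'}(p)\in\z P_{\beta' n}$ by definition, and we apply the induction hypothesis (Clause~(1) for $\beta'$) together with $\pi_{\alpha,\beta}=\pi_{\beta',\beta}\circ\pi_{\alpha,\beta'}$. The only genuine content is the preservation of $\mtp=0$ under further restriction, and this is precisely Clause~\eqref{type2} of Definition~\ref{type} (more verbosely, Remark~\ref{RemarkType}): restricting a condition in the iteration weakens it, hence can only decrease the delays $m(S_i)$, and since they are nonnegative integers $\mtp=0$ is inherited. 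The limit case of Clause~(1) is immediate from the definition, since the defining property of $\z P_{\alpha n}$ at limits quantifies over all $\gamma\in B_p$ and $B_{\pi_{\alpha,\beta}(p)}\subseteq B_p$.

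For Clause~(2), given $p\in\z P_{\beta n}$, I want $p*\emptyset_\alpha\in\z P_{\alpha n}$ (I read the ``$\delta$'' in the statement as $\alpha$, matching Convention~\ref{conv71}). Here the key tool is Fact~\ref{CviIteration}\eqref{CViforking}: padding with $\emptyset$'s corresponds, under the forking maps, to taking $\fork[\gamma,\beta]{\cdot}$ of the trivial extension, and more precisely $p*\emptyset_\gamma=\fork[\gamma,\beta]{p*\emptyset_\gamma}(p)$, so that $\mtp_\gamma(p*\emptyset_\gamma)=\mtp_\beta(p)$ by Clause~\eqref{type6} of Definition~\ref{type} (a condition of the form $\myceil{\cdot}{}$ has empty type, and more generally the type is preserved along $\fork{\cdot}$-images of trivial extensions by Clause~\eqref{type3}). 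Running this through the recursion: at successor stages $\gamma+1\le\alpha$ we get $\pi_{\gamma+1,\gamma}(p*\emptyset_{\gamma+1})=p*\emptyset_\gamma\in\z P_{\gamma n}$ by the induction hypothesis and $\mtp_{\gamma+1}(p*\emptyset_{\gamma+1})=0$ because $B_{p*\emptyset_{\gamma+1}}=B_p\subseteq\gamma$, so the $(\gamma+1)$-st coordinate is $\emptyset$ and the relevant type vanishes; at limit stages $\alpha$ we check that $\pi_{\alpha,1}(p*\emptyset_\alpha)=p\restriction 1\in\z P_{1n}$ (using $\beta\ge 1$ and that $p\in\z P_{\beta n}$ already forces $\pi_{\beta,1}(p)\in\z P_{1n}$) and that for $\gamma\in B_{p*\emptyset_\alpha}=B_p$ we have $\pi_{\alpha,\gamma}(p*\emptyset_\alpha)=p\restriction\gamma$, whose type-at-$\gamma$ is $0$ since $p\in\z P_{\beta n}$.

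\textbf{Main obstacle.} The delicate point is bookkeeping the interaction between the $B_p$-indexed definition of the ring at limits and the behaviour of the $\mtp_\gamma$'s under the forking maps $\pitchfork_{\alpha,\gamma}$ — in particular making sure that when we pad a condition the new nontrivial coordinates do not appear in $B_{p*\emptyset_\alpha}$ (they do not, by definition of $B$), and that for the coordinates that \emph{are} in $B_p$ the type is computed the same way whether we view the coordinate inside $\mathbb P_\beta$ or inside $\mathbb P_\alpha$; this is exactly the content of Clause~(vi) of Goal~\ref{goals} / Fact~\ref{CviIteration}\eqref{vi-like}, which tells us $\fork[\alpha,\gamma]{p}(r)\restriction\beta=\fork[\beta,\gamma]{p\restriction\beta}(r)$, so the delays are coherent across levels. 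Once that coherence is invoked, both clauses reduce to the monotonicity and stretch-invariance properties of types already established for a single forking projection, and no new combinatorics is needed.
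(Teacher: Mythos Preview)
Your approach is essentially the paper's: induction on $\alpha$, with Clause~\eqref{type6} of Definition~\ref{type} as the key ingredient. That is exactly what the paper's one-line proof points to.

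One point of confusion worth flagging: for Clause~(1) you invoke Clause~\eqref{type2} of Definition~\ref{type}, arguing that ``restricting a condition weakens it, hence can only decrease the delays''. This is misdirected. The projection $p\mapsto p\restriction\beta$ moves between different posets; it is not an instance of $b\unlhd a$ within a single $\mathbb A$, so Clause~\eqref{type2} does not apply. In the successor case $\alpha=\beta'+1$ there is in fact nothing to preserve: once you know $p\restriction\beta'\in\z P_{\beta'n}$ you apply the induction hypothesis directly. Where Clause~\eqref{type6} is genuinely needed in Clause~(1) is the case you glossed over: $\alpha$ limit, $\beta=\gamma+1$ a successor with $\beta\notin B_p$. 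There $p(\gamma)=\emptyset$, so $p\restriction\beta=\myceil{p\restriction\gamma}{\mathbb P_\beta}$, and Clause~\eqref{type6} gives $\tp_\beta(p\restriction\beta)=\emptyset$, hence $\mtp_\beta(p\restriction\beta)=0$; one then continues by a short sub-induction on $\beta$. Your treatment of Clause~(2) and your identification of Fact~\ref{CviIteration} for the support bookkeeping are correct.
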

\begin{proof} By induction, relying on Clause~\eqref{type6} of Definition~\ref{type}.
\end{proof}

We now move to establish Clause~(iv) of Goal~\ref{goals}. 

\begin{lemma}\label{MoreClosednessOfIterates} Let $\alpha\in[2,\mu^+]$. Then, for all $n<\omega$ and every directed set of conditions $D$ in $(\z{\mathbb{P}}_\alpha)_n$ (resp. $(\z{\mathbb{P}}_\alpha)^{\varpi^\alpha_n}_n$) of size ${<}\aleph_1$ (resp. ${<}\sigma^*_n$) there is $q\in(\z{P}_\alpha)_n$ such that $q$ is a $\LE_\alpha$ (resp. $\LE^{\varpi^\alpha_n}_n$) lower bound for $D$.

Moreover, $B_q=\bigcup_{p\in D} B_p$.
 \end{lemma}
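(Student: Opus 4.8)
The statement is proved by induction on $\alpha\in[2,\mu^+]$, following closely the pattern of the corresponding closure lemmas in \cite[\S3]{PartII} but now keeping track of the nice orderings $\LE^{\varpi^\alpha_n}_n$ as well. I will treat the successor and limit cases separately, and in each case handle simultaneously the two assertions (closure under directed sets of size $<\aleph_1$ with respect to $\LE_\alpha$, and under directed sets of size $<\sigma^*_n$ with respect to $\LE^{\varpi^\alpha_n}_n$), since they run in parallel. The base case $\alpha=1$ is Building Block~I together with Clause~\eqref{c2} and Clause~\eqref{moreclosedness} of Definition~\ref{SigmaPrikry} applied to $(\mathbb Q,\ell,c,\vec\varpi)$, transported through the isomorphism $\iota$.

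For the successor step $\alpha=\gamma+1$: given a directed $D\s(\z{\mathbb P}_\alpha)_n$, first project it through $\pi_{\alpha,\gamma}$ to obtain a directed subset $D'\s(\z{\mathbb P}_\gamma)_n$ of the same size (using Lemma~\ref{ringcoheres}(1)), apply the induction hypothesis to get a lower bound $q'\in(\z P_\gamma)_n$ with $B_{q'}=\bigcup_{p\in D'}B_p$, and then lift: the relevant dense subposet $\z{\mathbb A}_n^\pi$ of $\mathbb A_n$ is $\aleph_1$-directed-closed (indeed $\sigma^*_n$-directed-closed) by Building Block~II\ref{C4blk2}, so by Lemma~\ref{forkinganddirectedclosure} (applied to the nice type $\tp_{\gamma+1}$) $\z{\mathbb A}_n$ is $\aleph_1$-directed-closed and $\z{\mathbb A}_n^{\varsigma_n}$ is $\sigma^*_n$-directed-closed, where $\vec\varsigma={\vec{\varpi}_\gamma}\bullet\pi_{\alpha,\gamma}$. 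Thus the set $\{(\pi_{\alpha,\gamma}(p)\upharpoonright\gamma,p(\gamma))\mid p\in D\}\cup\{\fork{q'}(q')\text{-style adjustments}\}$ — more precisely, after replacing each $p\in D$ by $\fork[\alpha,\gamma]{p}(q')^{\curvearrowright}$ using Fact~\ref{CviIteration}(6) to see these remain in $D$'s downward closure and in the ring — forms a directed set in $\z{\mathbb A}_n$ (resp.\ $\z{\mathbb A}_n^{\varsigma_n}$), whose lower bound $(x,y)$ yields $q:=x{}^\smallfrown\langle y\rangle\in(\z P_\alpha)_n$ with the desired properties; that $\mtp_{\gamma+1}(q)=0$ follows from Clause~\eqref{type2} and Clause~\eqref{type6} of Definition~\ref{type}, and $B_q=\bigcup_{p\in D}B_p$ follows from Fact~\ref{CviIteration}(2) together with $B_{q'}=\bigcup B_p$.

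For the limit step $\alpha\in\acc(\mu^++1)$: given directed $D$ of size $\tau<\aleph_1$ (resp.\ $<\sigma^*_n$), set $B:=\bigcup_{p\in D}B_p$; since each $|B_p|<\mu$ and — in the $\sigma^*_n$-case — $\tau<\sigma^*_n\le\mu$ with $\mu^{<\mu}=\mu$, we get $|B|<\mu$, which is what is needed for the candidate lower bound to be a legitimate condition of $P_\alpha$. Enumerate $\cl(B)$ increasingly and build a coherent sequence $\langle q_\xi\mid \xi\le\theta\rangle$ with $q_\xi\in(\z P_{\gamma_\xi})_n$ a lower bound for $\{\pi_{\alpha,\gamma_\xi}(p)\mid p\in D\}$, at successor stages invoking the successor case already proved (and Fact~\ref{CviIteration}(6) plus $p\restriction\beta_\tau=p\restriction\gamma_{\tau'}*\emptyset_{\beta_\tau}$ to get coherence, exactly as in the proof of Claim~\ref{PropertiesForking}), at limit stages taking unions. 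Then $q:=q_\theta\in P_\alpha$, $q\in(\z P_{\alpha})_n$ by Definition~\ref{DefRingForLimits} (each $\mtp_\gamma(\pi_{\alpha,\gamma}(q))=0$ by construction), $q$ is a $\LE_\alpha$ (resp.\ $\LE^{\varpi^\alpha_n}_n$) lower bound for $D$ coordinatewise, and $B_q=B$ as required.

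\textbf{Main obstacle.} The delicate point is the limit case, and within it the interplay between coherence of the approximating sequence and the niceness of the orderings: to continue at limit stages of the internal recursion one must know that the partial lower bounds $q_\xi$ agree on overlaps, and this is exactly the place where super niceness of the $(\pitchfork_{\gamma_\tau,\beta_\tau},\pi_{\gamma_\tau,\beta_\tau})$ is used at successor stages of the recursion (cf.\ page~\pageref{supernicenessinaction}), together with the identities for $\pitchfork$ at successor and limit stages and Lemma~\ref{pitchforkexact}. A secondary subtlety is the cardinal arithmetic $|B|<\mu$ in the $\sigma^*_n$-directed case, which relies on $\sigma^*_n\le\mu$ and $\mu^{<\mu}=\mu$ from Setup~\ref{setupiteration}; the $\aleph_1$-case is easier since a countable union of sets of size $<\mu$ with $\mu$ regular (which holds here as $\mu=\kappa^+$) again has size $<\mu$.
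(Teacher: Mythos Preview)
Your overall strategy is correct and matches the argument the paper defers to (\cite[Lemma~3.13]{PartII}): induction on $\alpha$, with the successor step handled by projecting through $\pi_{\alpha,\gamma}$, applying the induction hypothesis, and then using the $\sigma^*_n$-directed closure of $\z{\mathbb A}_n^\pi$ from \blkref{II}\ref{C4blk2}; and the limit step handled by a coherent recursion along $\cl(B)$ where $B=\bigcup_{p\in D}B_p$.

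Two points deserve correction. First, in your successor case you write that the set $\{\fork[\alpha,\gamma]{p}(q')\mid p\in D\}$ is directed in $\z{\mathbb A}_n$ (resp.\ $\z{\mathbb A}_n^{\varsigma_n}$). It is crucial that this set is in fact directed in the finer poset $\z{\mathbb A}_n^\pi$: all its members share the common $\pi$-image $q'$, and Fact~\ref{CviIteration}\eqref{Cviiforking} gives $\LE^\pi$-comparability. The closure you invoke is that of $\z{\mathbb A}_n^\pi$, not of $\z{\mathbb A}_n$ or $\z{\mathbb A}_n^{\varsigma_n}$ (the latter is what you are proving). This is also what makes the ``moreover'' clause go through: a $\LE^\pi$-lower bound has $\pi$-image exactly $q'$, so its support below $\gamma$ equals $B_{q'}$.

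Second, your identification of super niceness as the main obstacle in the limit case is misplaced. At a successor stage $\xi=\eta+1$ of the inner recursion you set $r_\xi:=q_\eta*\emptyset_{\beta_\xi}$ and take a $\LE^\pi$-lower bound $q_\xi$ of $\{\fork[\gamma_\xi,\beta_\xi]{p\restriction\gamma_\xi}(r_\xi)\mid p\in D\}$ in the (isomorphic copy of) $\z{\mathbb A}_n^\pi$ for the step $\beta_\xi\to\gamma_\xi$. Since a $\LE^\pi$-lower bound necessarily has $\pi$-image $r_\xi$, coherence $q_\xi\restriction\gamma_\eta=q_\eta$ is automatic, with no appeal to super niceness. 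The use of super niceness at page~\pageref{supernicenessinaction} addresses a genuinely different problem (lifting witnesses to Definition~\ref{niceprojection}\eqref{theprojection} through a forking projection while controlling the $\pi$-image), and is not needed here.
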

 \begin{proof}
 The argument is similar to that of \cite[Lemma~3.13]{PartII}.
 \end{proof}

\begin{remark}\label{piclosure}
A straightforward modification of the lemma shows that for all $\alpha\in[2,\mu^+]$ and $n<\omega$, $(\mathbb{P}_\alpha)^{\pi_{\alpha,1}}_n$ is $\sigma^*_n$-directed-closed.
\end{remark}

\begin{theorem}\label{CivIteration}
For all nonzero $\alpha\le\mu^+$, $(\mathbb P_\alpha,\lh_\alpha,c_\alpha,\vec{\varpi}_\alpha)$ satisfies all the requirements to be a $(\Sigma,\vec{\mathbb{S}})$-Prikry quadruple, with the possible exceptions of Clause~\eqref{c6} and the density requirement in Clauses~\eqref{c2} and \eqref{moreclosedness}.

Additionally, $\emptyset_\alpha$ is the greatest condition in $\mathbb{P}_\alpha$, $\lh_\alpha=\lh_1\circ\pi_{\alpha,1}$, $\emptyset_\alpha\forces_{\mathbb{P}_\alpha} \check{\mu}=\kappa^+$ and $\vec{\varpi}_\alpha$ is a coherent sequence of nice projections such that $$\vec{\varpi}_\alpha=\vec{\varpi}_\gamma\bullet\pi_{\alpha,\gamma}\;\;\text{for every}\;\gamma\leq \alpha.$$

Under the extra hypothesis that for each $\alpha\in\acc(\mu^++1)$ and every $n<\omega$, $(\z{\mathbb{P}}^{\varpi^\alpha_n}_\alpha)_n$ is a dense subposet of $(\mathbb{P}^{\varpi^\alpha_n}_\alpha)_n$, we have that for all nonzero $\alpha\leq \mu^+$, $(\mathbb P_\alpha,\lh_\alpha,c_\alpha,\vec{\varpi}_\alpha)$ is a $(\Sigma,\vec{\mathbb{S}})$-Prikry quadruple having property $\mathcal{D}$.
\end{theorem}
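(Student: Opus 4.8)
The proof is an induction on $\alpha\le\mu^+$, running parallel to Goal~\ref{goals}, and it mostly amounts to assembling the pieces already available in the excerpt and in \cite[\S3]{PartII}. For $\alpha=1$ the tuple $(\mathbb P_1,\lh_1,c_1,\vec{\varpi}_1)$ is isomorphic via $\iota$ to the quadruple $(\mathbb Q,\lh,c,\vec\varpi)$ supplied by \blkref{I}, which is $(\Sigma,\vec{\mathbb S})$-Prikry with property $\mathcal D$, has $\emptyset$ as its greatest element, forces $\check\mu=\check\kappa^+$, and has $\vec\varpi$ coherent; transporting these facts along $\iota$ gives the base case in full (including property $\mathcal D$, since the density hypothesis on rings is vacuous at a successor-of-$0$ stage). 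For the successor step $\alpha=\gamma+1$, I would invoke the induction hypothesis to know $(\mathbb P_\gamma,\lh_\gamma,c_\gamma,\vec{\varpi}_\gamma)$ is $(\Sigma,\vec{\mathbb S})$-Prikry with property $\mathcal D$ and then feed it into \blkref{II}; this yields $(\mathbb A,\lh_{\mathbb A},c_{\mathbb A},\vec\varsigma)$ which is $(\Sigma,\vec{\mathbb S})$-Prikry with property $\mathcal D$, together with a super nice forking projection $(\pitchfork,\pi)$ having the weak mixing property, and $\mathbb P_{\alpha+1}$ is (isomorphic to) $\mathbb A$ by construction. Here I can read off everything from \blkref{II}: $\emptyset_\alpha$ is greatest, $\emptyset_\alpha\forces\check\mu=\check\kappa^+$, and by Clause~\ref{C5blk2} of \blkref{II} together with Lemma~\ref{liftingcoherency} the sequence $\vec{\varpi}_\alpha$ is coherent and equals $\vec{\varpi}_\gamma\bullet\pi_{\alpha,\gamma}$, whence by composing with the induction hypothesis $\vec{\varpi}_\alpha=\vec{\varpi}_{\gamma'}\bullet\pi_{\alpha,\gamma'}$ for all $\gamma'\le\alpha$ (using Lemma~\ref{CvIteration} to know $\pi_{\alpha,\gamma'}$ is the relevant nice forking projection and transitivity of the maps).

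For the limit step $\alpha\in\acc(\mu^++1)$, the bulk of the verification of Clauses \ref{graded}, \ref{c1}, \ref{c5}, \ref{csize}, \ref{itsaprojection}, \ref{PnprojectstoSn} of Definition~\ref{SigmaPrikry}, plus the basic facts that $\emptyset_\alpha$ is the greatest element, $\lh_\alpha=\lh_1\circ\pi_{\alpha,1}$, and the support bound $|B_p|<\mu$, is exactly as in \cite[\S3]{PartII}, so I would cite that and only flag the new ingredients. The statement $\vec{\varpi}_\alpha=\vec{\varpi}_\gamma\bullet\pi_{\alpha,\gamma}$ and the niceness of each $\varpi^\alpha_n$ (and of $\varpi^\alpha_n\restriction(\mathbb P_\alpha)_k$) is precisely Claim~\ref{PropertiesForking} inside Lemma~\ref{CvIteration}, which has already been proved in the excerpt; so at a limit stage this is handed to us, \emph{provided} we already know the hypothesis of Lemma~\ref{CvIteration}, namely that $(\mathbb P_{\gamma'},\lh_{\gamma'},c_{\gamma'},\vec{\varpi}_{\gamma'})$ is $(\Sigma,\vec{\mathbb S})$-Prikry for all nonzero $\gamma'<\alpha$ — which is the induction hypothesis. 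For $\emptyset_\alpha\forces_{\mathbb P_\alpha}\check\mu=\check\kappa^+$ at a limit: use that $\one_{\mathbb P}\forces``\kappa$ is singular'' (this propagates down the tower via the projections), so by Lemma~\ref{l14}\eqref{l14(4)} it suffices to check $|W(p)|\le\kappa$ for all $p\in P_\alpha$, which follows from the support bound $|B_p|<\mu$ together with the corresponding bounds at each coordinate — this is the argument of \cite[\S3]{PartII} and I would quote it. The clauses deliberately left open — \eqref{c6} (CPP) and the density parts of \eqref{c2} and \eqref{moreclosedness} — are addressed by the second half of the statement: under the extra hypothesis that $(\z{\mathbb P}^{\varpi^\alpha_n}_\alpha)_n$ is dense in $(\mathbb P^{\varpi^\alpha_n}_\alpha)_n$ at every limit $\alpha$, one derives property $\mathcal D$ for all $\alpha$ and hence, by Lemma~\ref{lemmaforCPP}, the CPP; the density of $\z{\mathbb P}_{\alpha n}$ in $(\mathbb P_\alpha)_n$ (Clause~\eqref{c2}) follows from the density mod $\varpi^\alpha_n$ together with Remark~\ref{piclosure}/Lemma~\ref{MoreClosednessOfIterates}, and $\sigma_n$-directed-closedness of $\z{\mathbb P}^{\varpi^\alpha_n}_{\alpha n}$ (Clause~\eqref{moreclosedness}) is Lemma~\ref{MoreClosednessOfIterates} again.

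The remaining point is propagating property $\mathcal D$ through successors and limits. At a successor $\alpha=\gamma+1$: the induction hypothesis gives property $\mathcal D$ for $(\mathbb P_\gamma,\lh_\gamma)$, and \blkref{II} hands us a forking projection with the weak mixing property to $\mathbb P_\gamma$; Lemma~\ref{propDsuccessor} then gives property $\mathcal D$ for $(\mathbb P_{\alpha},\lh_{\alpha})$ directly. At a limit $\alpha$, under the extra density hypothesis, property $\mathcal D$ is obtained as in \cite[\S3]{PartII} (this is where the ring posets and their strategic closure enter), so I would cite the prequel. Putting all of this together, the conclusion of the theorem follows by induction.

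\textbf{Main obstacle.} The real work is not in any single clause but in the bookkeeping: making sure the induction hypothesis is stated strongly enough that Lemma~\ref{CvIteration} (and hence Claim~\ref{PropertiesForking}) applies at limit stages, and checking that the \emph{coherence} equation $\vec{\varpi}_\alpha=\vec{\varpi}_\gamma\bullet\pi_{\alpha,\gamma}$ and the niceness of the $\varpi^\alpha_n$'s are genuinely available at the limit before one tries to verify the Prikry clauses that quantify over the $\varpi^\alpha_n$'s (e.g.\ Clause~\eqref{PnprojectstoSn}). Since Claim~\ref{PropertiesForking} was proved in the excerpt using \emph{super} niceness of the successor-stage forking projections — exactly the new feature flagged at \texttt{supernicenessinaction} — the load-bearing novelty relative to \cite{PartII} is already discharged there, and I expect the proof of the present theorem to be a relatively short induction that delegates the hard limit-stage closure argument to the already-established Lemmas~\ref{CvIteration} and~\ref{MoreClosednessOfIterates} and to the corresponding results of \cite[\S3]{PartII}.
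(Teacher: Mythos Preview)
Your proposal is correct and follows essentially the same approach as the paper: an induction on $\alpha$ with the base case from \blkref{I}, the successor case from \blkref{II}, and the limit case citing \cite[\S3]{PartII} for Clauses~\eqref{graded} and \eqref{c1}--\eqref{itsaprojection}, Lemma~\ref{CvIteration} for the $\vec{\varpi}_\alpha$ identities, Lemma~\ref{MoreClosednessOfIterates} for the closure parts of \eqref{c2} and \eqref{moreclosedness}, and (under the extra density hypothesis) \cite[Corollary~3.12]{PartII} for property~$\mathcal D$ followed by Lemma~\ref{lemmaforCPP} for the CPP. The only cosmetic difference is that the paper obtains density of $(\z{\mathbb P}_\alpha)_n$ in $(\mathbb P_\alpha)_n$ as an immediate consequence of density in the $\varpi^\alpha_n$-order (since $\le^{\varpi^\alpha_n}$ refines $\le_\alpha$), rather than via Remark~\ref{piclosure}.
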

\begin{proof}
We argue by induction on $\alpha\le\mu^+$. The base case $\alpha=1$ follows from the fact that $\mathbb P_1$ is isomorphic to $\mathbb Q$ given by \blkref{I}.
The successor step $\alpha=\beta+1$ follows from the fact that $\mathbb P_{\beta+1}$ was obtained by invoking \blkref{II}.

Next, suppose that $\alpha\in\acc(\mu^++1)$ is such that the conclusion of the lemma holds below $\alpha$.
In particular, the hypothesis of Lemma~\ref{CvIteration} are satisfied,
so that, for all nonzero $\beta\le\gamma\le\alpha$,
$(\pitchfork_{\gamma,\beta},\pi_{\gamma,\beta})$ is a nice forking projection from $(\mathbb{P}_\gamma, \lh_\gamma,\vec{\varpi}_\gamma)$ to $(\mathbb{P}_\beta,\lh_\beta,\vec{\varpi}_\beta)$. 
By the very same proof of \cite[Lemma~3.14]{PartII}, we have that Clauses \eqref{graded} and \eqref{c1}--\eqref{itsaprojection} of Definition~\ref{SigmaPrikry} hold for
$(\mathbb P_\alpha,\lh_\alpha,c_\alpha,\vec{\varpi}_\alpha)$, and that  $\lh_\alpha=\lh_1\circ\pi_{\alpha,1}$.   Also, Clauses \eqref{c2} and \eqref{moreclosedness} --without the density requirement-- follow from Lemma~\ref{MoreClosednessOfIterates}.

On the other hand, the equality  $\vec{\varpi}_\alpha=\vec{\varpi}_\gamma\bullet\pi_{\alpha,\gamma}$ follows from Lemma~\ref{CvIteration}. Arguing as in \cite[Claim~3.14.2]{PartII}, we also have that
$\one_{\mathbb P_{\alpha}}\forces_{\mathbb{P}_{\alpha}}\check{\mu}=\check\kappa^+$. Finally, since $\vec{\varpi}_1$ is coherent (see \blkref{I}) and $(\pitchfork_{\alpha,1},\pi_{\alpha,1})$ is a nice forking projection, Lemma~\ref{liftingcoherency} implies that $\vec{\varpi}_\alpha$ is coherent.

To complete the proof let us additionally assume that for every $n$, $(\z{\mathbb{P}}^{\varpi^\alpha_n}_\alpha)_n$ is a dense subposet of $(\mathbb{P}^{\varpi^\alpha_n}_\alpha)_n$. Then, in particular,  $(\z{\mathbb{P}}_\alpha)_n$ is a dense subposet of $(\mathbb{P}_\alpha)_n$. In effect,  the density requirement in Clauses~\eqref{c2} and \eqref{moreclosedness} is automatically fulfilled. About Clause~\eqref{c6}, we take advantage of this extra assumption to invoke
\cite[Corollary~3.12]{PartII} and conclude that $(\mathbb{P}_\alpha,\ell_\alpha)$ has property $\mathcal{D}$. Consequently, Clause~\eqref{c6} for $(\mathbb{P}_\alpha,\ell_\alpha,c_\alpha,\vec{\varpi}_\alpha)$ follows by combining this latter fact with Lemma~\ref{lemmaforCPP}.
\end{proof}

\section{A proof of the Main Theorem}\label{ReflectionAfterIteration}
In this section, we arrive at the primary application of the framework developed thus far.
We will be constructing a model where $\gch$ holds below $\aleph_{\omega}$, $2^{\aleph_\omega}=\aleph_{\omega+2}$ and every stationary subset of $\aleph_{\omega+1}$ reflects.

\subsection{Setting up the ground model}
We want to obtain a ground model with $\gch$ and $\omega$-many supercompact cardinals, which are Laver indestructible under $\gch$-preserving forcing.
The first lemma must be well-known, but we could not find it in the literature,
so we give an outline of the proof.
\begin{lemma}\label{addingGCH}
Suppose $\vec\kappa=\langle\kappa_n\mid n<\omega\rangle$ is an increasing sequence of supercompact cardinals.
Then there is a generic extension where $\gch$ holds and $\vec\kappa$ remains an increasing sequence of supercompact cardinals.
\end{lemma}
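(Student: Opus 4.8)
The standard approach is to force with the reverse Easton iteration that makes $\gch$ hold, and check that the supercompactness of each $\kappa_n$ survives via the usual master condition argument. Concretely, let $\mathbb{R}$ be the Easton-support iteration $\langle \mathbb{R}_\alpha, \dot{\mathbb{Q}}_\alpha \mid \alpha\in\ord\rangle$ where, at each infinite cardinal stage $\alpha$, $\dot{\mathbb{Q}}_\alpha$ is a name for $\add(\alpha^+, 1)$ in the sense of $V^{\mathbb{R}_\alpha}$ — i.e. the forcing to add a function from $\alpha^+$ to $2$ with conditions of size ${<}\alpha^+$ — and $\dot{\mathbb{Q}}_\alpha$ is trivial otherwise. (One truncates at the supremum $\kappa_\omega := \sup_n \kappa_n$ plus a bit, or just carries it through the ordinals; since the tail forcing beyond any bounded level is highly closed, nothing below is disturbed.) First I would recall the by-now-routine fact that $\mathbb{R}$ forces $\gch$: at stage $\alpha$, $\mathbb{R}_\alpha$ has size $\alpha$ (using $\gch$ at $\alpha$ established inductively, together with the chain condition), $\dot{\mathbb{Q}}_\alpha$ is ${<}\alpha^+$-closed and has size $\alpha^+$ in the intermediate model, and the tail $\mathbb{R}/\mathbb{R}_{\alpha+1}$ is ${\leq}\alpha^+$-closed; a counting-of-nice-names argument then gives $2^\alpha = \alpha^+$ in the final model. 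This is the step I would treat as known and merely sketch.

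The heart of the matter is preserving each $\kappa := \kappa_n$ as supercompact. Fix a cardinal $\theta \geq \kappa$ and let $j : V \to M$ be a $\theta$-supercompact embedding with $\crit(j) = \kappa$ and ${}^{\theta}M \subseteq M$. Factor $\mathbb{R} = \mathbb{R}_\kappa * \dot{\mathbb{Q}}_\kappa * \dot{\mathbb{R}}_{\mathrm{tail}}$. Since $\crit(j) = \kappa$ and $\mathbb{R}_\kappa$ has size $\kappa$ with the $\kappa$-cc, $j(\mathbb{R}_\kappa) = \mathbb{R}_\kappa$ (the iteration up to $\kappa$ is unchanged, as $\kappa$ is inaccessible so the support structure agrees below $\kappa$, and $j\restriction\mathbb{R}_\kappa$ is the identity). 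Let $G_\kappa$ be $\mathbb{R}_\kappa$-generic. Working in $V[G_\kappa]$: $\dot{\mathbb{Q}}_\kappa$ is interpreted as $\add(\kappa^+, 1)$, which is ${<}\kappa^+$-closed; let $g$ be generic for it. In $M[G_\kappa]$, $j(\dot{\mathbb{Q}}_\kappa)$ is interpreted as $\add(j(\kappa^+), 1)$ as computed in $M[G_\kappa]$, which factors as $\add(\kappa^+,1) * (\text{stuff})$, and $g$ is generic for the first factor; the remaining piece is sufficiently closed in $M[G_\kappa]$ (and by the $\theta$-closure of $M$, closed enough in $V[G_\kappa][g]$) to build a generic $g'$ over it by a diagonal-sequence argument, using that the relevant dense sets number at most $\theta^+$ and the forcing is ${<}\theta^+$-closed. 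Similarly the tail $j(\dot{\mathbb{R}}_{\mathrm{tail}})$ above $j(\kappa^+)$ — a name for an iteration all of whose stages beyond $j(\kappa^+)$ are highly closed in $M[G_\kappa][g][g']$ — admits a generic $G_{\mathrm{tail}}'$ constructed in $V[G_\kappa][g]$ by the same diagonalization, since $M$ is closed under $\theta$-sequences and we only need to meet $\leq\theta^+$ dense sets with ${<}\theta^+$-closed forcing (shrinking $\theta$-supercompactness to whatever is needed, or taking $\theta$ a fixed point so the arithmetic works out). The point throughout is that $j\restriction V[G_\kappa][g]$ moves each condition into a generic we have engineered, so $j$ lifts to $j : V[G_\kappa][g] \to M[j(G_\kappa * g)]$, and since $g$ itself serves as (the relevant part of) a master condition — or rather, since $\add(\kappa^+,1)^{V[G_\kappa]} = \add(\kappa^+,1)^{M[G_\kappa]}$ and $j``g$ generates a filter handled by $g'$ — the lift goes through. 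Restricting the lifted embedding appropriately witnesses that $\kappa$ is $\theta$-supercompact in the extension; as $\theta$ was arbitrary, $\kappa$ is supercompact there. Running this for every $n$ simultaneously (the argument at $\kappa_n$ is unaffected by the forcing at other $\kappa_m$, which is either too small to matter when $m < n$, absorbed, or too closed when $m > n$) yields the claim.

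The main obstacle — and the only genuinely delicate point — is the lifting through $j(\dot{\mathbb{Q}}_\kappa)$ and the tail: one must verify that the relevant factor posets are closed enough \emph{in the right model} ($M[G_\kappa][g]$, which is only $\theta$-closed from the point of view of $V[G_\kappa][g]$, not fully correct) and that the number of dense sets to be met is at most $\theta^+$, so that a generic filter can be assembled by transfinite recursion of length $\theta^+$. This requires being slightly careful about the choice of $\theta$ (e.g. taking $\theta$ with $\theta^{<\theta}=\theta$ and large enough that $j(\kappa) > \theta$, or a suitable fixed point of the $\beth$ function above $\kappa$) and invoking the standard lemma that a ${<}\lambda^+$-closed forcing in $M$ remains ${<}\lambda^+$-distributive — enough to run the recursion — in $V$ provided ${}^{\leq\theta}M\subseteq M$ with $\theta\geq\lambda$. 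I would cite the relevant master-condition lifting lemmas (as in the standard treatments of Laver-style indestructibility and reverse Easton forcing) rather than redoing them, and would remark that the iteration $\mathbb{R}$ is itself $\gch$-preserving by construction, which is exactly the form needed to subsequently feed into a Laver-preparation so that the $\kappa_n$'s become indestructible under $\gch$-preserving, $\kappa_n$-directed-closed forcing.
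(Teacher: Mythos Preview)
Your proposal is correct and follows essentially the same approach as the paper: both use Jensen's reverse Easton iteration forcing $\add(\alpha^+,1)$ at each cardinal stage, then lift a $\theta$-supercompact embedding (with $\theta^{<\theta}=\theta$) through a factorization of the iteration via the standard master-condition and closure arguments. The paper factors at $\kappa_n$ and at $\theta$ into three intervals rather than isolating $\dot{\mathbb{Q}}_{\kappa}$ as you do, and it invokes Solovay's theorem to justify the choice of $\theta$, but these are organizational differences only; the paper likewise leaves the lifting claims as black boxes.
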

\begin{proof}
Let $\mathbb{J}$ be Jensen's iteration to force the \gch. Namely, $\mathbb{J}$ is the inverse limit of the Easton-support iteration $\langle \mathbb{J}_\alpha;\dot{\mathbb{Q}}_\beta\mid \beta\leq \alpha\in\ord\rangle$ such that, if $\one_{\mathbb{J}_\alpha}\forces_{\mathbb{J}_\alpha}``\alpha$ is a cardinal'', then $\one_{\mathbb{J}_\alpha}\forces_{\mathbb{J}_\alpha}``\dot{\mathbb{Q}}_\alpha=\dot{\mathrm{Add}}(\alpha^+,1)$'' and $\one_{\mathbb{J}_\alpha}\forces_{\mathbb{J}_\alpha}``\dot{\mathbb{Q}}_\alpha$ trivial'', otherwise. Let $G$ be a $\mathbb{J}$-generic filter over $V$.

Let $n<\omega$. We claim that $\mathbb{J}$ preserves the supercompactness of $\kappa_n$.  To this end, let $\theta$ be an arbitrary cardinal.
By Solovay's theorem concerning cardinal arithmetic above strongly compact cardinals, we may enlarge $\theta$ to ensure that $\theta^{<\theta}=\theta$.
Let $j:V\rightarrow M$ be an elementary embedding induced by a $\theta$-supercompact measure over $\mathcal{P}_{\kappa_n}(\theta)$. In particular, we are taking $j$ such that $\crit(j)=\kappa_n$, $j(\kappa_n)>\theta$,  $\left({}^\theta M\right)\cap V \s M$ and
$$M=\{j(f)(j``\theta)\mid f\colon \mathcal{P}_{\kappa_n}(\theta)\rightarrow V\}.$$
Observe that $\mathbb{J}$ can be factored into three forcings: the iteration up to $\kappa_n$, the iteration in the interval $[\kappa_n,\theta)$ and, finally, the iteration in the interval $[\theta,\ord)$.
For an interval of ordinals $\mathcal{I}$, let $G_{\mathcal{I}}$ denote the $\mathbb{J}_{\mathcal{I}}$-generic filter induced by $G$. Similarly, we define $G^*_{\mathcal{I}}:=G_{\mathcal{I}}\cap j(\mathbb{J})_\mathcal{I}$.

\begin{claim}
In $V[G]$, there is a lifting  $j_1:V[G_{\kappa_n}]\rightarrow M[G^*_{j(\kappa_n)}]$ of $j$ such that $$\left({}^\theta M[G^*_{j(\kappa_n)}]\right)\cap V[G^*_{j(\kappa_n)}]\s M[G^*_{j(\kappa_n)}].$$
Moreover, $H_{\theta^+}^{M[G^*_{j(\kappa_n)}]}\s V[G_\theta]$.\qed
\end{claim}

\begin{claim} In $V[G]$, there is a lifting $j_2:V[G_{\theta}]\rightarrow M[G^*_{j(\theta)}]$ of $j_1$. \qed
\end{claim}
\begin{claim}
In $V[G]$, there is a lifting $j_3:V[G]\rightarrow M[G^*_{j(\theta)}\ast \dot{K}]$ of $j_2$.\qed
\end{claim}
Finally, define
$$\mathcal{U}:=\{X\in\mathcal{P}^{V[G]}_{\kappa_n}(\theta)\mid j``\theta\in j_3(X)\}.$$
As $j``\theta\in M\s M[G^*_{j(\theta)}\ast \dot{K}]$,
standard arguments show that $\mathcal{U}$ is a $\theta$-supercompact measure over  $\mathcal{P}^{V[G]}_{\kappa_n}(\theta)$.
Hence, $\kappa_n$ is $\theta$-supercompact in $V[G]$.
\end{proof}
Note that in the model of the conclusion of the above lemma, the $\kappa_n$'s are highly-destructible by $\kappa_n$-directed-closed forcing. Our next task is to remedy that, while maintaining $\gch$.
For this, we need the following slight variation of the usual Laver preparation \cite{Lav}.

\begin{lemma}\label{NewPreparation} Suppose that $\gch$ holds, $\chi<\kappa$ are infinite regular cardinals, and $\kappa$ is supercompact.
Then there exists a $\chi$-directed-closed notion of forcing $\mathbb L_{\chi}^\kappa$
that preserves $\gch$ and makes the supercompactness of $\kappa$ indestructible under $\kappa$-directed-closed forcings that preserve $\gch$.
\end{lemma}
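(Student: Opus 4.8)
The goal is a modified Laver preparation $\mathbb{L}_\chi^\kappa$ that is $\chi$-directed-closed, preserves $\gch$, and renders the supercompactness of $\kappa$ indestructible under further $\kappa$-directed-closed $\gch$-preserving forcing. The plan is to run the usual Laver iteration but with a twist: at each stage we are handed a Laver function value $\dot{\mathbb{Q}}$, and rather than forcing with it blindly (which could destroy $\gch$), we force with $\dot{\mathbb{Q}}$ \emph{only if} it is a name for a $\kappa$-directed-closed poset that preserves $\gch$ (in the relevant sense at that stage), and otherwise we force with trivial forcing, or — to keep the iteration from being too short — we could fall back on a canonical $\gch$-preserving $\kappa$-directed-closed collapse. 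First I would fix a Laver function $f:\kappa\to V_\kappa$ with the usual anticipation property (such an $f$ exists in $V$ since $\kappa$ is supercompact; this survives the $\gch$-forcing of Lemma~\ref{addingGCH} by the standard argument, or one simply re-derives it in the $\gch$ ground model). Then $\mathbb{L}_\chi^\kappa$ is the Easton-support (equivalently, here, reverse Easton) iteration $\langle \mathbb{P}_\alpha, \dot{\mathbb{Q}}_\alpha \mid \alpha<\kappa\rangle$ where, below the first inaccessible $\geq\chi$ we force trivially (to keep $\chi$-directed-closure), at an inaccessible stage $\alpha$ with $f(\alpha)$ a $\mathbb{P}_\alpha$-name we set $\dot{\mathbb{Q}}_\alpha = f(\alpha)$ provided $\Vdash_{\mathbb{P}_\alpha}``f(\alpha)$ is $\alpha$-directed-closed and preserves $\gch$'', and $\dot{\mathbb{Q}}_\alpha$ trivial otherwise; at non-inaccessible stages we force trivially as well.

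\textbf{Verification of the three properties.} The $\chi$-directed-closure is immediate: all nontrivial stages occur above $\chi$ and are forced to be $\alpha$-directed-closed with $\alpha>\chi$, and a reverse Easton iteration of $\geq\chi$-directed-closed posets, taking inverse limits at $\chi$ and below, is $\chi$-directed-closed. For preservation of $\gch$: here the point is that at each stage the tail of the iteration is $\alpha$-closed (indeed $\alpha$-directed-closed) and the head $\mathbb{P}_\alpha$ has size $<\alpha$ (by GCH and the bookkeeping on the inaccessibles), so by the usual factorization and a counting-of-nice-names argument — together with Easton's lemma to handle the product of head and tail — $\gch$ is preserved at every cardinal; the only delicate stages are the inaccessibles where $\dot{\mathbb{Q}}_\alpha$ is nontrivial, and there we built in the hypothesis that $f(\alpha)$ preserves $\gch$, so the overall iteration does too (one checks $\gch$ below $\kappa$, at $\kappa$, and above $\kappa$ separately, exactly as for Jensen's iteration and the standard Laver preparation, but now citing the $\gch$-preservation clause of each $\dot{\mathbb{Q}}_\alpha$). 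For indestructibility: let $\mathbb{R}$ be $\kappa$-directed-closed and $\gch$-preserving in $V^{\mathbb{L}_\chi^\kappa}$, say $\mathbb{R}=\dot{\mathbb{R}}_G$ for a name $\dot{\mathbb{R}}$ of hereditary size $<\theta$ for suitably large $\theta$ with $\theta^{<\theta}=\theta$. Take $j:V\to M$ witnessing $\theta$-supercompactness of $\kappa$ with $f$ anticipating $\dot{\mathbb{R}}$: i.e.\ $j(f)(\kappa)$ is (forced by $\mathbb{P}_\kappa = \mathbb{L}_\chi^\kappa$ to be) the poset $\mathbb{R}$, and crucially $j(\mathbb{L}_\chi^\kappa)$ factors as $\mathbb{L}_\chi^\kappa * \dot{\mathbb{R}} * \dot{\mathbb{S}}_{\mathrm{tail}}$, where $\dot{\mathbb{S}}_{\mathrm{tail}}$ is the part of $j(\mathbb{L}_\chi^\kappa)$ above $\kappa$, which $M$ (and hence $V$, by the closure of $M$) sees as $\kappa^+$-directed-closed. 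Here the anticipation step uses that in $M$, $j(f)(\kappa)$ is a name for an $\kappa$-directed-closed $\gch$-preserving poset — precisely the clause we put into the definition of the iteration — so that stage $\kappa$ of $j(\mathbb{L}_\chi^\kappa)$ is genuinely nontrivial and equals $\mathbb{R}$.

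\textbf{Lifting and the main obstacle.} In $V[G][H]$, where $H$ is $\mathbb{R}$-generic, we force further with $\dot{\mathbb{S}}_{\mathrm{tail},H}$ (a $\kappa^+$-directed-closed poset of the right size, building its generic $K$ by a diagonalization/master-condition argument using that there are few dense sets to meet, by $\gch$ and the supercompactness setup); then $j$ lifts to $j: V[G] \to M[G][H][K]$ because $j$ applied to a condition in $G$ lies in $G*H*K$. The supercompactness measure $\mathcal{U} = \{X \in \mathcal{P}^{V[G][H]}_\kappa(\theta) \mid j``\theta \in j(X)\}$ is then a $\theta$-supercompact measure in $V[G][H]$, using $j``\theta \in M \subseteq M[G][H][K]$ and the usual verification that $\mathcal{U}$ is a genuine (normal, fine, $\theta$-complete) ultrafilter and that it lies in $V[G][H]$ (this last point is where one needs the tail forcing to be sufficiently closed so that it adds no new subsets of $\mathcal{P}_\kappa(\theta)$). \emph{The main obstacle I expect} is bookkeeping the $\gch$-preservation clause coherently through the iteration: one must be careful that the statement ``$f(\alpha)$ preserves $\gch$'' is being evaluated in the correct model $V^{\mathbb{P}_\alpha}$ and that this interacts properly with the factorization $j(\mathbb{L}_\chi^\kappa) = \mathbb{L}_\chi^\kappa * \dot{\mathbb{R}} * \dot{\mathbb{S}}_{\mathrm{tail}}$, so that the nontriviality of stage $\kappa$ in $M$ really does follow from $\mathbb{R}$ being $\gch$-preserving in $V[G]$ — one needs $M$'s computation of ``$\gch$-preserving'' at stage $\kappa$ to agree with $V[G]$'s, which follows from $V_{\theta}^M = V_\theta$ and $\theta$ being large enough to contain all the relevant cardinal arithmetic witnesses, but this agreement must be stated and used carefully. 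The rest is a routine adaptation of Laver's original argument \cite{Lav} combined with the $\gch$-preservation techniques already used in the proof of Lemma~\ref{addingGCH}.
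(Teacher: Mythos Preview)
Your proposal is correct and follows essentially the same approach as the paper: both define $\mathbb{L}_\chi^\kappa$ as a Laver-style iteration that is nontrivial only at inaccessible stages $\alpha$ where $f(\alpha)$ names an $\alpha$-directed-closed $\gch$-preserving forcing, then verify $\gch$-preservation by factoring the iteration and cite the standard Laver lifting argument for indestructibility. The paper additionally bakes in the clause $\one_{\mathbb{R}_\alpha}\Vdash\gch$ and a size bound $\tau\in H_{\alpha^+}$ on the name at each stage, and your ``main obstacle'' paragraph correctly anticipates exactly why such bookkeeping is needed.
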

\begin{proof}
Let $f$ be a Laver function on $\kappa$, as in \cite[Theorem~24.1]{MR2768691}.
Let  $\mathbb{L}_\chi^\kappa$ be the direct limit of the forcing iteration $\langle\mathbb{R}_\alpha;\dot{\mathbb{Q}}_\beta\mid \chi\leq \beta< \alpha<\kappa\rangle$  where, if $\alpha$ is inaccessible, $\one_{\mathbb{R}_\alpha}\forces_{\mathbb{R}_\alpha} \gch$, and $f(\alpha)$ encodes an $\mathbb{R}_\alpha$-name $\tau\in H_{\alpha^+}$ for some $\alpha$-directed-closed forcing that preserves the $\gch$ of $V^{\mathbb{R}_\alpha}$, 
then  $\dot{\mathbb{Q}}_\alpha$ is chosen to be such $\mathbb{R}_\alpha$-name. Otherwise, $\dot{\mathbb{Q}}_\alpha$ is chosen to be the trivial forcing.

As in the proof of \cite[Theorem 24.12]{MR2768691}, we have that
after forcing with $\mathbb{L}_\chi^\kappa$,
the supercompactness of $\kappa$ becomes indestructible under $\kappa$-directed-closed forcings that preserve $\gch$.

We claim that $\gch$ holds in $V^{\mathbb{L}_\chi^\kappa}$. This is clear for cardinals $\geq\kappa$, since the iteration has size $\kappa$. Now, let $\lambda<\kappa$ and inductively assume $\gch_{<\lambda}$. Observe that $\mathbb{L}_\chi^\kappa\cong \mathbb{R}_{\lambda+1}\ast \dot{\mathbb{Q}}$,
where $\dot{\mathbb{Q}}$ is an $\mathbb{R}_{\lambda+1}$-name for a $\lambda^+$-directed-closed forcing.
In particular, $\mathcal{P}(\lambda)^{V^{\mathbb{L}_\chi^\kappa}}= \mathcal{P}(\lambda)^{V^{\mathbb{R}_{\lambda+1}}}$, and so it is enough to show that $V^{\mathbb{R}_{\lambda+1}}\models \ch_{\lambda}$.
There are two cases.

If $\lambda$ is singular, then $|\mathbb{R}_\lambda|=\lambda^+$, and $\mathbb{\dot{Q}}_\lambda$ is trivial, so $V^{\mathbb{R}_{\lambda+1}}\models \ch_{\lambda}$.

Otherwise, let $\alpha$ be the largest inaccessible, such that $\alpha\leq\lambda$. Then $\mathbb{R}_{\lambda+1}$ is just $\mathbb{R}_\alpha\ast \dot{\mathbb{Q}}_\alpha$ followed by trivial forcing. Since $|\mathbb{R}_\alpha|=\alpha$ and by construction $\dot{\mathbb{Q}}_\alpha$ preserves $\gch$, the result follows.
\end{proof}

\begin{cor}\label{cor84} Suppose that $\vec\kappa=\langle\kappa_n\mid n<\omega\rangle$ is an increasing sequence of supercompact cardinals.
Then, in some forcing extension, all of the following hold:
\begin{enumerate}
\item $\gch$;
\item $\vec\kappa$ is an increasing sequence of supercompact cardinals;
\item For every $n<\omega$, the supercompactness of $\kappa_n$ is indestructible under notions of forcing that
are $\kappa_n$-directed-closed and preserves the $\gch$.
\end{enumerate}
\end{cor}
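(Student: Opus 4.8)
The plan is to prove Corollary~\ref{cor84} by iterating the two preparatory lemmas in the correct order, threading the indestructibility of each $\kappa_n$ through the subsequent forcings. First I would apply Lemma~\ref{addingGCH} to the given sequence $\vec\kappa=\langle\kappa_n\mid n<\omega\rangle$ to pass to a model $V_0$ in which $\gch$ holds and every $\kappa_n$ is still supercompact. This is now the ground model from which we build the final extension, so that Clauses~(1) and~(2) hold before we do anything else, and the only remaining task is to arrange the indestructibility of Clause~(3) without destroying $\gch$ or the supercompactness of any $\kappa_n$.

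The main idea is to perform a finite-support (or rather, a suitably chosen) product/iteration of the Laver-style preparations $\mathbb{L}^{\kappa_n}_{\chi_n}$ from Lemma~\ref{NewPreparation}, one for each $n<\omega$, where the parameter $\chi_n$ is chosen so that the $n$-th preparation does not interfere with the supercompactness already made indestructible at earlier stages. Concretely, I would set $\chi_0:=\aleph_1$ and, recursively, $\chi_{n+1}:=(\kappa_n)^+$, and then force with $\mathbb{L}^{\kappa_n}_{\chi_n}$ in increasing order of $n$ — or, since the tails are highly closed, with the full-support product $\prod_{n<\omega}\mathbb{L}^{\kappa_n}_{\chi_n}$, interpreted as an iteration. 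The key observations to check are: (a) each $\mathbb{L}^{\kappa_n}_{\chi_n}$ has size $\kappa_n<\kappa_{n+1}$, so the forcings for larger indices have no bearing on $H_{\kappa_{n+1}}$ and in particular the tail forcing $\prod_{m>n}\mathbb{L}^{\kappa_m}_{\chi_m}$ is $\kappa_n^+$-directed-closed; (b) by Lemma~\ref{NewPreparation}, $\mathbb{L}^{\kappa_n}_{\chi_n}$ preserves $\gch$ and is $\chi_n$-directed-closed, hence (since $\chi_n\le\kappa_{n-1}^+<\kappa_n$) does not add subsets of $\kappa_{n-1}$ and cannot destroy the supercompactness of any $\kappa_m$ with $m<n$ that has already been rendered indestructible; (c) the composite forcing up to and including stage $n$ is small relative to $\kappa_{n+1}$, hence Levy–Solovay preserves the supercompactness of every $\kappa_m$ with $m>n$. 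Putting these together: after forcing with $\mathbb{L}^{\kappa_n}_{\chi_n}$, the supercompactness of $\kappa_n$ is indestructible under $\kappa_n$-directed-closed, $\gch$-preserving forcing; the tail forcing $\prod_{m>n}\mathbb{L}^{\kappa_m}_{\chi_m}$ is exactly such a forcing, so indestructibility of $\kappa_n$ survives to the final model. A standard product/iteration lemma (the factorization $\mathbb{L}^{\kappa_n}_{\chi_n}\ast(\text{tail})$, with the tail sufficiently closed) then yields Clause~(3) simultaneously for all $n$, while Clause~(1) follows because each factor preserves $\gch$ and the iteration is tame enough for an inductive $\gch$-preservation argument just as in Lemma~\ref{NewPreparation}, and Clause~(2) follows from the Levy–Solovay and directed-closure arguments in~(b) and~(c).

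The main obstacle I anticipate is the bookkeeping of \emph{which} forcing is responsible for destroying or preserving supercompactness at each stage, and in particular making precise the claim that the tail $\prod_{m>n}\mathbb{L}^{\kappa_m}_{\chi_m}$ is genuinely $\kappa_n$-directed-closed as computed in the intermediate model $V_0^{\mathbb{L}^{\kappa_0}_{\chi_0}\ast\cdots\ast\mathbb{L}^{\kappa_n}_{\chi_n}}$: one must verify that the directed-closure of $\mathbb{L}^{\kappa_m}_{\chi_m}$ is not damaged by the earlier forcings, which follows from those forcings having size $<\kappa_m$ and hence not adding new small descending sequences, but this needs to be stated carefully, along with the observation that $\gch$-preservation is likewise robust under these small forcings. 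A secondary, purely technical point is to confirm that the infinite product converges as a legitimate forcing notion and does not collapse $\kappa=\sup_n\kappa_n$ — here one uses that each initial segment is small below $\kappa_{n+1}$ and the tail is increasingly closed, so no bounded subset of $\kappa$ is added beyond some finite stage. Once these closure and size estimates are in place, the three clauses follow by assembling Lemma~\ref{addingGCH}, Lemma~\ref{NewPreparation}, and the Levy–Solovay theorem exactly along the lines sketched, and no genuinely new idea is required.
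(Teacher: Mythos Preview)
Your proposal is correct and follows essentially the same approach as the paper: first invoke Lemma~\ref{addingGCH} to arrange $\gch$, then iterate the preparations $\mathbb{L}^{\kappa_n}_{\chi_n}$ of Lemma~\ref{NewPreparation} in increasing order, using that each stage is sufficiently closed not to disturb the indestructibility already secured below. The paper's proof is more terse (it takes $\chi_n=\kappa_{n-1}$ rather than your $\chi_n=(\kappa_{n-1})^+$, and phrases the construction as a direct limit of a two-step iteration $\mathbb{L}_n\ast\dot{\mathbb{Q}}_n$), but the mechanism and the key observations you list in (a)--(c) are exactly those used there.
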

\begin{proof} By Lemma~\ref{addingGCH}, we may assume that we are working in a model in which Clauses (1) and (2) already hold.
Next, let $\mathbb{L}$ be the direct limit of the iteration $\langle \mathbb{L}_n*\dot{\mathbb{Q}}_n\mid n<\omega\rangle$, where $\mathbb{L}_0$ is the trivial forcing
and, for each $n$, if $\one\forces_{\mathbb{L}_{n}}``\kappa_{n}\text{ is supercompact}"$,
then $\one\forces_{\mathbb{L}_n}\mathbb{\dot{Q}}_n$ is the $(\kappa_{n-1})$-directed-closed,
 $\gch$-preserving forcing making the supercompactness of $\kappa_{n}$ indestructible under $\gch$-preserving $\kappa_{n}$-directed-closed notions of forcing.
(More precisely, in the notation of the previous lemma, $\dot{\mathbb{Q}}_n$ is  $\dot{\mathbb{L}}_{\kappa_{n-1}}^{\kappa_{n}}$, where, by convention, $\kappa_{-1}$ is $\aleph_0$).

Note that, by induction on $n<\omega$, and Lemma~\ref{NewPreparation}, we maintain that $$\one\Vdash_{\mathbb{L}_n}``\kappa_n\text{ is supercompact and \gch\ holds}".$$
And then when we force with $\dot{\mathbb{Q}}_n$ over that model, we make this supercompacness indestructible under $\gch$ -preserving forcing.

Then, after forcing with $\mathbb{L}$,  $\gch$ holds, and each $\kappa_n$ remains supercompact,  indestructible under  $\kappa_n$-directed-closed
forcings that preserve $\gch$.
\end{proof}

\subsection{Connecting the dots}

\begin{setup}\label{setupapplication}
For the rest of this section, we make the following assumptions:
\begin{itemize}
\item $\vec{\kappa}=\langle \kappa_n\mid n<\omega\rangle$ is an increasing sequence of supercompact cardinals.
By convention, we set $\kappa_{-1}:=\aleph_0$;
\item For every $n<\omega$, the supercompactness of $\kappa_n$ is indestructible under notions of forcing that
are $\kappa_n$-directed-closed and preserve the $\gch$;
\item $\kappa:=\sup_{n<\omega}\kappa_n$, $\mu:=\kappa^+$ and $\lambda:=\kappa^{++}$;
\item $\gch$ holds below $\lambda$. In particular, $2^\kappa=\kappa^+$ and $2^\mu=\mu^+$;
\item $\Sigma:=\langle\sigma_n\mid n<\omega\rangle$, where $\sigma_0:=\aleph_1$ and $\sigma_{n+1}:=(\kappa_n)^+$ for all $n<\omega$;\footnote{By convention, we set $\sigma_{-2}$ and $\sigma_{-1}$ to be $\aleph_0$.}
\item $\vec{\mathbb{S}}$ is as in Definition~\ref{DefinitionofSnMotis}.
\end{itemize}
\end{setup}

We now want to appeal to the iteration scheme of the previous section. First, observe that $\mu$, $\langle (\sigma_n, \mu)\mid n<\omega\rangle$, $\vec{\mathbb{S}}$ and $\Sigma$
respectively fulfill all the blanket assumptions of Setup~\ref{setupiteration}.

\smallskip

We now introduce our three building blocks of choice:

\blk{I} We let $(\mathbb{Q},\ell,c,\vec{\varpi})$ be EBPFC as defined in Section~\ref{SectionGitikForcing}.
By Corollary~\ref{TheEBPFCisweakly}, this  is a $(\Sigma,\vec{\mathbb{S}})$-Prikry that has property $\mathcal{D}$, and $\vec{\varpi}$ is a  coherent sequence of nice projection.
Also, $\mathbb Q$ is a subset of $H_{\mu^+}$ and, by Lemma~\ref{MuInEBPFC},  $\one_{\mathbb Q}\forces_{\mathbb Q}\check\mu=\check\kappa^+$.
In addition, $\kappa$ is singular, so that we have $\one_{\mathbb Q}\forces_{\mathbb{Q}}``\kappa\text{ is singular}"$. Finally, for all $n<\omega$, $\z{\mathbb{Q}}_n={\mathbb{Q}}_n$ (see~Lemmas~\ref{C2forMoti} and \ref{C4forMoti}).

\blk{II} Suppose that $(\mathbb P,\lh,c,\vec{\varpi})$ is a $(\Sigma,\vec{\mathbb{S}})$-Prikry quadruple having property $\mathcal{D}$  such that  $\mathbb P=\left(P,\le\right)$ is a subset of $H_{\mu^+}$,  $\vec{\varpi}$ is a coherent sequence of nice projections,
$\one_{\mathbb P}\forces_{\mathbb P}\check\mu=\check\kappa^+$
and $\one_{\mathbb P}\forces_{\mathbb{P}}``\kappa\text{ is singular}"$.

For every $r^\star\in P$ and a $\mathbb P$-name $z$ for an $r^\star$-fragile stationary subset of $\mu$,
there are a $(\Sigma,\vec{\mathbb{S}})$-Prikry quadruple $(\mathbb A,\lh_{\mathbb A},c_{\mathbb A},\vec{\varsigma})$ having property $\mathcal{D}$, and a pair of maps  $(\pitchfork,\pi)$   such that all the following hold:
\begin{enumerate}[label=(\alph*)]
\item$(\pitchfork,\pi)$ is  a super nice  forking projection from  $(\mathbb A,\lh_{\mathbb A},c_{\mathbb A},\vec{\varsigma})$ to $(\mathbb P,\lh,c,\vec{\varpi})$ that has the weak mixing property;
\item $\vec{\varsigma}$ is a coherent sequence of nice projections;
\item $\one_{\mathbb A}\forces_{\mathbb A}\check\mu=\check\kappa^+$;
\item $\mathbb A=(A,{\unlhd})$ is a subset of $H_{\mu^+}$;
\item For every $n<\omega$, $\z{\mathbb{A}}^\pi_n$ is $\mu$-directed-closed;
\item \label{C8onestep} if $r^\star\in P$ and  $z$ is a $\mathbb P$-name for an $r^\star$-fragile stationary subset of $\mu$
then $$\myceil{r^\star}{\mathbb A}\forces_{\mathbb A}``z\text{ is nonstationary}".$$
\end{enumerate}

\begin{remark}
\hfill
\begin{itemize}
\item[$\br$] If $r^\star\in P$ forces that $z$ is a $\mathbb P$-name for an $r^\star$-fragile subset of $\mu$,  we first find some $\mathbb{P}$-name $\tilde{z}$ such that $\one_\mathbb{P}$ forces that $\tilde{z}$ is a stationary subset of $\mu$, $r^\star\forces_\mathbb{P}z=\tilde{z}$ and $\tilde{z}$ is $\one_\mathbb{P}$-fragile.
Subsequently, we obtain $(\mathbb A,\lh_{\mathbb A},c_{\mathbb A},\vec{\varsigma})$ and $(\pitchfork,\pi)$ by appealing to Corollary~\ref{onestep} with
the $(\Sigma,\vec{\mathbb{S}})$-Prikry triple $(\mathbb P,\lh_{\mathbb P},c_{\mathbb P},\vec{\varpi})$,
the condition $\one_{\mathbb P}$
and the $\mathbb{P}$-name $\tilde{z}$.
In effect, $\myceil{\one_{\mathbb P}}{\mathbb A}$ forces that $\tilde{z}$ is nonstationary,
so that $\myceil{r^\star}{\mathbb A}$ forces that $z$ is nonstationary.

\item[$\br$] Otherwise,
we invoke Corollary~\ref{onestep}
with the $(\Sigma,\vec{\mathbb{S}})$-Prikry forcing $(\mathbb P,\lh_{\mathbb P},c_{\mathbb P},\vec{\varpi})$,
the condition $\one_{\mathbb P}$ and the name $z:=\emptyset$.
\end{itemize}
\end{remark}

\blk{III} As $2^\mu=\mu^+$, we fix a surjection $\psi:\mu^+\rightarrow H_{\mu^+}$ such that the preimage of any singleton is cofinal in $\mu^+$.

\medskip

The next lemma deals with the extra assumption in Theorem~\ref{CivIteration}:

\begin{lemma}[Density of the rings]
 For each $\alpha\in\acc(\mu^++1)$ and every integer $n<\omega$, $(\z{\mathbb{P}}^{\varpi^\alpha_n}_\alpha)_n$ is a dense subposet of $(\mathbb{P}^{\varpi^\alpha_n}_\alpha)_n$.\footnote{$(\z{\mathbb{P}}_\alpha)_n$ is as in Definition~\ref{DefRingForLimits}.}
\end{lemma}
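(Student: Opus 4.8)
The plan is to argue by induction on $\alpha\in\acc(\mu^++1)$, in parallel with the inductive hypothesis of Theorem~\ref{CivIteration}, so that at stage $\alpha$ we may assume the conclusion holds for all nonzero $\beta<\alpha$ and that each $(\mathbb P_\beta,\ell_\beta,c_\beta,\vec{\varpi}_\beta)$ is a genuine $(\Sigma,\vec{\mathbb S})$-Prikry quadruple having property $\mathcal D$. Fix $n<\omega$ and a condition $p\in(\mathbb P_\alpha)_n$; we must find $q\in(\z{P}_\alpha)_n$ with $q\LE^{\varpi^\alpha_n}_\alpha p$. Recall that membership in $(\z P_\alpha)_n$ amounts to $\pi_{\alpha,1}(p)\in\z P_{1n}$ together with $\mtp_\gamma(\pi_{\alpha,\gamma}(p))=0$ for all $\gamma\in B_p$ (Definition~\ref{DefRingForLimits}). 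Since $|B_p|<\mu$ by Goal~\ref{goals}(i), the natural strategy is to enumerate (the closure of) $B_p$ in increasing order as $\langle\gamma_\tau\mid\tau\le\theta\rangle$ with $\theta<\mu$, and build a $\LE^{\varpi^{\gamma_\tau}_n}$-decreasing, coherent sequence $\langle q_\tau\mid\tau\le\theta\rangle$ with $q_\tau\in(\z P_{\gamma_\tau})_n$, $q_\tau\LE^{\varpi^{\gamma_\tau}_n}_{\gamma_\tau}\pi_{\alpha,\gamma_\tau}(p)$, and $q_{\tau'}=\pi_{\gamma_\tau,\gamma_{\tau'}}(q_\tau)$ for $\tau'\le\tau$; then $q:=q_\theta * \emptyset_\alpha$ (extended trivially past $\gamma_\theta$, using Lemma~\ref{ringcoheres}(2)) will be the desired lower bound.

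The recursion has three cases. At the base, $\gamma_0\le 1$, and we use density of $\z{\mathbb Q}^{\varpi_n}_n$ in $\mathbb Q^{\varpi_n}_n$ (which holds since $\z{\mathbb Q}_n=\mathbb Q_n$ by \blkref{I}, recalling Lemmas~\ref{C2forMoti} and~\ref{C4forMoti}) to pass to a condition in $\z P_{1n}$ below $\pi_{\alpha,1}(p)$. At a successor step $\gamma_\tau=\gamma_{\tau'}+1$: here $\mathbb P_{\gamma_\tau}$ was obtained from \blkref{II} applied to $\mathbb P_{\gamma_{\tau'}}$, so there is a super nice forking projection $(\pitchfork_{\gamma_\tau,\gamma_{\tau'}},\pi_{\gamma_\tau,\gamma_{\tau'}})$ with a nice type $\tp_{\gamma_\tau}$. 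Given the previously constructed $q_{\tau'}$, first lift it via $\pitchfork$ to $\fork[\gamma_\tau,\gamma_{\tau'}]{\pi_{\alpha,\gamma_\tau}(p)}(q_{\tau'})$, and then — using the density of $\z{\mathbb A}^{\varsigma_n}_n$ in $\mathbb A^{\varsigma_n}_n$ furnished by Lemma~\ref{forkinganddirectedclosure} together with niceness of the type (Clause~\eqref{ringismoredense} of Definition~\ref{typeexact}) — pass to a condition $q_\tau\LE^{\varpi^{\gamma_\tau}_n}$ with $\mtp_{\gamma_\tau}(q_\tau)=0$ and $\pi_{\gamma_\tau,\gamma_{\tau'}}(q_\tau)=q_{\tau'}$; Remark~\ref{RemarkType} ensures $\mtp_\gamma$ of the intermediate restrictions stays $0$. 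At a limit step $\tau$, one simply sets $q_\tau:=\bigcup_{\tau'<\tau}q_{\tau'}$ and checks $q_\tau\in(\z P_{\gamma_\tau})_n$ and $q_\tau\LE^{\varpi^{\gamma_\tau}_n}_{\gamma_\tau}\pi_{\alpha,\gamma_\tau}(p)$, invoking Lemma~\ref{MoreClosednessOfIterates} (the $\LE^{\varpi^\alpha_n}$-directed-closure of the rings, whose moreover clause gives $B_{q_\tau}=\bigcup_{\tau'<\tau}B_{q_{\tau'}}$) and Lemma~\ref{pitchforkexact} to verify the coherence relations carry through the union.

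The main obstacle is keeping the tower \emph{coherent} through the successor steps: the naive choice of $q_\tau$ below the lift $\fork[\gamma_\tau,\gamma_{\tau'}]{\pi_{\alpha,\gamma_\tau}(p)}(q_{\tau'})$ need not restrict back to $q_{\tau'}$, because forking projections are not injective. This is precisely the point where \emph{super} niceness is used — exactly as in the proof of Lemma~\ref{CvIteration} (see Claim~\ref{PropertiesForking}, p.~\pageref{supernicenessinaction}): super niceness of $(\pitchfork_{\gamma_\tau,\beta_\tau},\pi_{\gamma_\tau,\beta_\tau})$ lets one descend to a witness $q_\tau$ with the prescribed projection $\pi_{\gamma_\tau,\beta_\tau}(q_\tau)=q_{\tau'}*\emptyset_{\beta_\tau}$, and Clause~\eqref{Cviiforking} of Fact~\ref{CviIteration} identifies this with $q_{\tau'}$ at coordinates below $\gamma_{\tau'}$. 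A secondary technical point is that the ``reach to $0$'' in the type may require repeatedly alternating between the $\z{\mathbb A}^{\varsigma_n}_n$-density step and the forking lift; one handles this by first fixing a lower bound in $\z{\mathbb A}^{\varsigma_n}_n$ of the lifted condition and only afterwards reading off its projection, which is legitimate since $\mtp=0$ is preserved downward by Definition~\ref{type}\eqref{type2}. Once the tower is built, collecting $q:=q_\theta*\emptyset_\alpha$ and checking $q\LE^{\varpi^\alpha_n}_\alpha p$ is routine from Goal~\ref{goals}(ii),(iv) and Lemma~\ref{ringcoheres}.
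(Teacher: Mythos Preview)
Your overall plan (induct on $\alpha$, walk along $\cl(B_p)$, use closure at limits) matches the paper's, but the successor step has a genuine gap. You claim that super niceness lets you descend to a $q_\tau$ in the ring with the \emph{prescribed} projection $\pi_{\gamma_\tau,\beta_\tau}(q_\tau)=q_{\tau'}*\emptyset_{\beta_\tau}$. But super niceness (Definition~\ref{SuperNiceForking}) is a statement about adjusting the witness inside a $+s$-decomposition: it applies only when you already know $\pi(a')=p^*+\varsigma_n(a')$ and want $a^*$ with $\pi(a^*)=p^*$. In the ring-density situation there is no $+s$ in play: if $a'\unlhd^{\varsigma_n}a$ then $\varsigma_n(a')=\varsigma_n(a)$, so $p^*+\varsigma_n(a')=p^*$, and the hypothesis $\pi(a')=p^*$ is exactly what you are trying to achieve, not something you have. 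Concretely, getting $\mtp=0$ via Lemma~\ref{typeindeed} forces you to pass from $\pi(a)=p$ to some $\bar p\LE^{\vec\varpi}p$ with $(\bar\gamma,\bar p)\in R$ (Lemma~\ref{Runbdeddirectmore}); there is no reason the given $p=q_{\tau'}*\emptyset_{\beta_\tau}$ should already satisfy $(\bar\gamma,p)\in R$ for any $\bar\gamma$, so in general the restriction strictly drops. Once coherence fails, your limit step ``take the union'' collapses.

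The paper's proof (following \cite[Lemma~4.28]{PartII}) accepts that restrictions go strictly down at successor steps: the successor step is exactly ``$\LE^{\varpi^\alpha_n}_\alpha$-extend into the ring using Lemma~\ref{Runbdeddirectmore}'' (this is item (1) in the paper's proof). Limit steps are then handled not by unions but by the $\sigma_n$-directed closure of the rings $(\z{\mathbb P}^{\varpi^\gamma_n}_\gamma)_n$ (item (2)): one lifts each partial approximation trivially to the full level $\alpha$, obtaining a $\LE^{\varpi^\alpha_n}_\alpha$-decreasing sequence in $(\z P_\alpha)_n$ of length $<\sigma^*_n$, and then Lemma~\ref{MoreClosednessOfIterates} supplies a lower bound (its moreover clause controls the support). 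Super niceness plays no role in this density argument; it was essential in Claim~\ref{PropertiesForking} precisely because that proof tracked a $+t$-decomposition throughout, a structure absent here.
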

\begin{proof}
This follows along the same lines of \cite[Lemma~4.28]{PartII}, with the only difference that now we use the following:
\begin{enumerate}
\item
At successor stages we can get into the ring $(\z{\mathbb{P}}_\alpha^{\varpi^\alpha_n})_n$ by $\LE^{\varpi^\alpha_n}_\alpha$-extending. This is granted by
Lemma~\ref{Runbdeddirectmore}.
\item
For all $\gamma<\alpha$, $(\z{\mathbb{P}}^{\varpi^\gamma_n}_\gamma)_n$ is $\sigma_n$-directed-closed. With this property we take care of the limit stages. (In \cite{PartII}, the full ring $(\z{\mathbb{P}}_\gamma)_n$ was $\sigma_n$-closed). We can make this replacement, because of the first item above.\qedhere
\end{enumerate}
\end{proof}

Now, we can appeal to the iteration scheme of Section~\ref{Iteration} with these building blocks,
and obtain, in return, a sequence $\langle (\mathbb P_{\alpha},\lh_{\alpha},c_{\alpha},\vec{\varpi}_{\alpha})\mid 1\le \alpha\le\mu^+\rangle$ of $(\Sigma,\vec{\mathbb{S}})$-Prikry quadruples.
By Lemma~\ref{MoreClosednessOfIterates} and Theorem~\ref{CivIteration} (see also Remark~\ref{piclosure}),
for all nonzero $\alpha\le\mu^+$,
$(\z{\mathbb{P}}_\alpha)^{\pi_{\alpha,1}}_n$ is $\mu$-directed-closed
and
$\one_{\mathbb P_\alpha}\forces_{\mathbb{P}_\alpha}\check{\mu}=\check\kappa^+$.
Note that by the first clause of Goal~\ref{goals}, $|P_\alpha|\le\mu^+$ for every $\alpha\le\mu^+$.

\label{KeyObservationMoreClosedness}

\begin{lemma}\label{suitableindeed} Let $n\in\omega\setminus2$ and $\alpha\in[2,\mu^+)$.
Then $((\mathbb{P}_\alpha)_n,\mathbb{S}_n,\varpi^\alpha_n)$ is suitable for reflection with respect to $\langle \sigma_{n-2},\kappa_{n-1},\kappa_n,\mu \rangle$.
\end{lemma}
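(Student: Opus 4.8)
The plan is to verify the five clauses of Definition~\ref{suitableforiteration} for the triple $((\mathbb{P}_\alpha)_n,\mathbb{S}_n,\varpi^\alpha_n)$, by induction on $\alpha\in[2,\mu^+)$, using Lemma~\ref{Liftingsuitability} as the engine at successor stages and the already-established base case $\alpha=1$ (which is $(\mathbb{Q}_n,\mathbb{S}_n,\varpi_n)$, suitable for reflection by Corollary~\ref{Motisuitableforreflection}) to start things off. First I would observe that hypotheses~\ref{Liftingsuitability1}, \ref{Liftingsuitability3} and \ref{Liftingsuitability4} of Lemma~\ref{Liftingsuitability} are already in hand: the cardinal inequalities $\kappa_{n-1}\le\sigma_n<\kappa_n$ hold by the definition of $\Sigma$ in Setup~\ref{setupapplication}, the pair $(\pitchfork_{\alpha,\alpha-1},\pi_{\alpha,\alpha-1})$ is a (super) nice forking projection from $(\mathbb{P}_\alpha,\ell_\alpha,\vec{\varpi}_\alpha)$ to $(\mathbb{P}_{\alpha-1},\ell_{\alpha-1},\vec{\varpi}_{\alpha-1})$ by Lemma~\ref{CvIteration} (and Goal~\ref{goals}(v)), and suitability of $((\mathbb{P}_{\alpha-1})_n,\mathbb{S}_n,\varpi^{\alpha-1}_n)$ is the induction hypothesis. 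Hypothesis~\ref{Liftingsuitability2} is Theorem~\ref{CivIteration}. So everything comes down to verifying hypothesis~\ref{Liftingsuitability5}: that $\mathbb{S}_n\times(\mathbb{P}_\alpha)_n^{\varpi^\alpha_n}$ forces $``|\mu|=\cf(\mu)=\kappa_n=(\kappa_{n-1})^{++}"$.

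For hypothesis~\ref{Liftingsuitability5}, the key point is that by Remark~\ref{Pnisomorphic} (lifted through the forking projection, cf.\ Remark~\ref{RemarkonS} and Definition~\ref{niceprojection}\eqref{theprojection}), the poset $(\mathbb{S}_n\downarrow\varpi^\alpha_n(p))\times((\mathbb{P}_\alpha)_n^{\varpi^\alpha_n}\downarrow p)$ projects onto $(\mathbb{P}_\alpha)_n\downarrow p$ for every $p\in(P_\alpha)_n$, and conversely $(\mathbb{P}_\alpha)_n$ projects onto $(\mathbb{P}_{\alpha-1})_n$ and ultimately onto $(\mathbb{Q})_n=(\mathbb{P}_1)_n$, which already forces $|\mu|=\cf(\mu)=\kappa_n=(\sigma_n)^+=(\kappa_{n-1})^{++}$ by Lemma~\ref{CardinalConfigurationMotisModel}. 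Since the cardinal collapse $``|\mu|=\cf(\mu)=\kappa_n$'' is forced by a smaller forcing in the projection chain and $\kappa_n$ stays a cardinal all the way up (because $(\mathbb{P}_\alpha)_n^{\varpi^\alpha_n}$ contains a $\sigma_n$-directed-closed dense subset and $|\mathbb{S}_n|<\sigma_n$, so by Easton-type arguments no further collapsing of $\kappa_n$ or of cardinals up to $\mu$ occurs), the equality $``|\mu|=\cf(\mu)=\kappa_n=(\kappa_{n-1})^{++}"$ is forced by $\mathbb{S}_n\times(\mathbb{P}_\alpha)_n^{\varpi^\alpha_n}$ as well. Concretely, I would argue that $(\mathbb{P}_\alpha)_n^{\varpi^\alpha_n}$ being $\sigma_n$-strategically-closed keeps $\sigma_n=(\kappa_{n-1})^+$ regular and does not add bounded subsets of $\sigma_n$, while the small factor $\mathbb{S}_n$ cannot collapse $\kappa_n=\sigma_n^+$; hence $\kappa_n$ is preserved and $\mu$ is collapsed to it, with the right cofinality by Shelah's theorem (the same theorem invoked in Lemma~\ref{CardinalConfigurationMotisModel}).

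At limit stages $\alpha$ (which occur only because $[2,\mu^+)$ has limit points), the same analysis goes through verbatim: suitability for reflection is a statement about the $n$-th layer $(\mathbb{P}_\alpha)_n$ together with $\mathbb{S}_n$ and $\varpi^\alpha_n$, and Lemma~\ref{CvIteration} supplies a nice forking projection from $(\mathbb{P}_\alpha,\ell_\alpha,\vec{\varpi}_\alpha)$ to $(\mathbb{P}_\gamma,\ell_\gamma,\vec{\varpi}_\gamma)$ for every nonzero $\gamma\le\alpha$, so Lemma~\ref{Liftingsuitability} applies with $\gamma=1$ (or any earlier successor stage) in place of $\alpha-1$; alternatively one invokes the induction hypothesis at some $\gamma<\alpha$ with $\gamma\ge2$ together with $\pi_{\alpha,\gamma}$. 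Since Lemma~\ref{Liftingsuitability} is phrased for an arbitrary nice forking projection between the two quadruples, there is nothing special about the successor case here. The main obstacle I anticipate is purely bookkeeping: making sure hypothesis~\ref{Liftingsuitability5} is checked for $\mathbb{P}_\alpha$ directly (rather than only for $\mathbb{P}_1$) — but this reduces, via the projection $(\mathbb{P}_\alpha)_n\to(\mathbb{P}_1)_n$ and the isomorphism-type decomposition $(\mathbb{S}_n\downarrow s)\times((\mathbb{P}_\alpha)_n^{\varpi^\alpha_n}\downarrow p)\cong(\mathbb{P}_\alpha)_n\downarrow p$, to the cardinal arithmetic already recorded in Lemma~\ref{CardinalConfigurationMotisModel} plus the closure of $(\mathbb{P}_\alpha)_n^{\varpi^\alpha_n}$ from Lemma~\ref{MoreClosednessOfIterates} and Theorem~\ref{CivIteration}. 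Thus the proof is a short induction whose inductive step is a single application of Lemma~\ref{Liftingsuitability}, with the reader referred to the cited results for the routine verifications.
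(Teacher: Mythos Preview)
Your overall strategy --- invoke Lemma~\ref{Liftingsuitability} with $\mathbb{P}_1$ in the role of $\mathbb{P}$ and $\mathbb{P}_\alpha$ in the role of $\mathbb{A}$ --- is exactly what the paper does (and, as you yourself note, no induction on $\alpha$ is really needed; one just uses $(\pitchfork_{\alpha,1},\pi_{\alpha,1})$ throughout). The genuine gap is in your verification of hypothesis~\ref{Liftingsuitability5}, namely that $\mathbb{S}_n\times(\mathbb{P}_\alpha)_n^{\varpi^\alpha_n}$ preserves $\kappa_n$. You argue that $(\mathbb{P}_\alpha)_n^{\varpi^\alpha_n}$ is $\sigma_n$-closed and $|\mathbb{S}_n|<\sigma_n$, and invoke ``Easton-type arguments''. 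But Easton's lemma with these data only yields $\sigma_n$-distributivity; it preserves $\sigma_n$ and says nothing about $\kappa_n$, which in $V$ is a supercompact cardinal far above $\sigma_n^+$. Your sentence ``the small factor $\mathbb{S}_n$ cannot collapse $\kappa_n=\sigma_n^+$'' is circular --- that equality is precisely what must be established in the extension. The projection chain you set up runs the wrong way for this: having $\mathbb{S}_n\times(\mathbb{P}_\alpha)_n^{\varpi^\alpha_n}$ project to $(\mathbb{P}_1)_n$ shows $|\mu|\le\kappa_n$ upstairs, but the larger forcing could still collapse $\kappa_n$. (Also, the ``isomorphism-type decomposition'' you cite holds only for the EBPFC itself, Remark~\ref{Pnisomorphic}; for $\alpha>1$ one only has a projection.)

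What your argument is missing is the finer decomposition of the base level via the map $\psi_n$ of Definition~\ref{defn438} and Lemma~\ref{keysubclaim}: $(\mathbb{P}_1)_n$ factors as a $\kappa_n$-cc piece $\mathbb{T}_n=\mathbb{S}_n\times\Col(\sigma_n,{<}\kappa_n)$ times a $\kappa_n$-directed-closed piece $(\mathbb{P}_1)_n^{\psi_n}$. Combined with the $\mu$-directed-closure of $(\mathbb{P}_\alpha)_n^{\pi_{\alpha,1}}$ (Remark~\ref{piclosure}), Easton's lemma now applies at level $\kappa_n$, so the product $(\mathbb{P}_1)_n\times(\mathbb{P}_\alpha)_n^{\pi_{\alpha,1}}$ forces $|\mu|=\cf(\mu)=\kappa_n=(\kappa_{n-1})^{++}$. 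The paper then finishes with a three-term sandwich: this product projects onto $\mathbb{S}_n\times(\mathbb{P}_\alpha)_n^{\varpi^\alpha_n}$ (via $(p,q)\mapsto(\varpi_n(p),q)$), which in turn projects onto $(\mathbb{P}_\alpha)_n$ and thence onto $(\mathbb{P}_1)_n$; since both ends force the desired cardinal configuration, so does the middle.
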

\begin{proof} We go over the clauses of Lemma~\ref{Liftingsuitability}
with $\mathbb{P}_\alpha$ playing the role of $\mathbb A$,
$\varpi_n^\alpha$ playing the role of $\varsigma_n$,
and $\mathbb{P}_1$  playing the role of $\mathbb P$.

As $\mathbb P_1$ is given by \blkref{I},
which is given by Section~\ref{SectionGitikForcing},
we simplify the notation here, and --- for the scope of this proof --- we let $\mathbb P$ denote the forcing $\mathbb P$ from Section~\ref{SectionGitikForcing}.

Clause~\ref{Liftingsuitability1} is part of the assumptions of Setup~\ref{ReflectionAfterIteration}.
Clauses~\ref{Liftingsuitability2} and \ref{Liftingsuitability3} are given by our iteration theorem.
Clause~\ref{Liftingsuitability4} is due to Corollary~\ref{Motisuitableforreflection},\footnote{Since $(\mathbb{P}_n,\mathbb{S}_n,\varpi_n)$ is suitable for reflection with respect to $\langle\sigma_{n-1},\kappa_{n-1},\kappa_n,\mu\rangle$ then so is with respect to $\langle\sigma_{n-2},\kappa_{n-1},\kappa_n,\mu\rangle$. }
and the fact that $\mathbb P_1$ is the Gitik's EBPFC.
Now, we turn to address Clause~\ref{Liftingsuitability5}.

That is, we need to prove that in any generic extension by
$\mathbb{S}_n\times(\mathbb{P}_\alpha)_n^{\varpi^\alpha_n}$,
$$|\mu|=\cf(\mu)=\kappa_n=(\kappa_{n-1})^{++}.$$

The upcoming discussion assumes the notation of Section~\ref{SectionGitikForcing}.
By Lemma~\ref{keysubclaim}, we have:
\begin{enumerate}
\item $\mathbb T_n$ has the $\kappa_n$-cc and has size $\kappa_n$;
\item $\psi_n$ defines a nice projection;
\item $\mathbb{P}^{\psi_n}_n$ is  $\kappa_n$-directed-closed;
\item for each $p\in P_n$,   $\mathbb{P}_n\downarrow p$ and
$(\mathbb{T}_n\downarrow \psi_n(p))\times ((\mathbb{P}^{\psi_n})_n\downarrow p)$ are forcing equivalent.
\end{enumerate}

By Lemma~\ref{CardinalConfigurationMotisModel}, $\mathbb{P}_n$ forces $|\mu|=\cf(\mu)=\kappa_n=(\sigma_n)^+=(\kappa_{n-1})^{++}$, and by our remark before the statement of this lemma,
$\mathbb{(\mathbb{P}_\alpha)}_n^{\pi_{\alpha,1}}$ is $\mu$-directed-closed, hence $\kappa_n$-directed-closed.
Combining Clauses \eqref{keysubclaim1}, \eqref{keysubclaim2} and \eqref{keysubclaim3} above with Easton's Lemma, $(\mathbb{P}^{\psi_n})_n\times \mathbb{(\mathbb{P}_\alpha)}_n^{\pi_{\alpha,1}}$ is $\kappa_n$-distributive over $V^{\mathbb{T}_n}$, and so $\mathbb{P}_n\times \mathbb{(\mathbb{P}_\alpha)}_n^{\pi_{\alpha,1}}$ forces $\kappa_n=(\kappa_{n-1})^{++}$. Moreover,  as $\mathbb{P}_n\times \mathbb{(\mathbb{P}_\alpha)}_n^{\pi_{\alpha,1}}$ projects to $\mathbb{P}_n$ and the former preserves $\kappa_n$,  it also forces $|\mu|=\cf(\mu)$. Altogether, $\mathbb{P}_n\times \mathbb{(\mathbb{P}_\alpha)}_n^{\pi_{\alpha,1}}$ forces $|\mu|=\cf(\mu)=\kappa_n=(\kappa_{n-1})^{++}$.
To establish that the same configuration is being forced by $\mathbb{S}_n\times(\mathbb{P}_\alpha)_n^{\varpi^\alpha_n}$,
we give a sandwich argument, as follows:
 \begin{itemize}
 \item $\mathbb{P}_n\times(\mathbb{P}_\alpha)_n^{\pi_{\alpha,1}}$ projects to $\mathbb{S}_n\times(\mathbb{P}_\alpha)_n^{\varpi^\alpha_n}$,
as witnessed by $(p,q)\mapsto (\varpi_n(p), q)$;
 \item For any condition $p$ in $(\mathbb{P}_\alpha)_n$, $(\mathbb{S}_n\downarrow \varpi^\alpha_n(p))\times((\mathbb{P}_\alpha)_n^{\varpi^\alpha_n}\downarrow p)$ projects to $(\mathbb{P}_\alpha)_n\downarrow p$,
by Definition~\ref{niceprojection}\eqref{theprojection}.
 \item  $(\mathbb{P}_\alpha)_n$ projects to $\mathbb{P}_n$ via $\pi_{\alpha,1}$.
 \end{itemize}
This completes the proof.
\end{proof}

\begin{lemma}\label{preservegch} Let $n<\omega$ and $0<\alpha<\mu^+$.
Then $(\mathbb{P}_\alpha)_n^{\varpi^\alpha_n}$ preserves $\gch$.
\end{lemma}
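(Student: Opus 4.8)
The proof is by induction on $\alpha \in [1,\mu^+)$, closely following the strategy used for the $\mathbb{S}_n$-versus-$\mathbb{P}_n^{\varpi_n}$ dichotomy in the earlier sections. For the base case $\alpha = 1$, recall that $\mathbb{P}_1 \cong \mathbb{Q}$ is Gitik's EBPFC from Section~\ref{SectionGitikForcing}; by Remark~\ref{Pnisomorphic} we have $\mathbb{P}_1 \cong \mathbb{P}_1^{\varpi_n^1} \times \mathbb{S}_n$ with $|\mathbb{S}_n| < \sigma_n$, so Lemma~\ref{GCHmoti}(3) gives that $(\mathbb{P}_1)_n^{\varpi_n^1}$ preserves $\gch$. (One has to translate through the isomorphism $\iota$ between $\mathbb{P}_1$ and $\mathbb{Q}$, but this is routine.)

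\textbf{The successor step.} Suppose $\alpha = \beta+1$ and $(\mathbb{P}_\beta)_n^{\varpi_n^\beta}$ preserves $\gch$. By construction $\mathbb{P}_{\alpha}$ arises from $\mathbb{P}_\beta$ via \blkref{II}, so there is a super nice forking projection $(\pitchfork,\pi)$ from $(\mathbb{P}_\alpha,\ell_\alpha,c_\alpha,\vec{\varpi}_\alpha)$ to $(\mathbb{P}_\beta,\ell_\beta,c_\beta,\vec{\varpi}_\beta)$, and $\varpi_n^\alpha = \varpi_n^\beta \circ \pi_{\alpha,\beta}$. The plan is to decompose: first, $(\mathbb{P}_\alpha)_n^{\varpi_n^\alpha}$ projects onto $(\mathbb{P}_\beta)_n^{\varpi_n^\beta}$ (the restriction of $\pi_{\alpha,\beta}$ works, since it respects the $\vec\varpi$-orderings by niceness), and the quotient is $\mu$-directed-closed — indeed $\z{(\mathbb{P}_\alpha)}_n^{\pi_{\alpha,1}}$ is $\mu$-directed-closed by Remark~\ref{piclosure}, and a standard argument using Lemma~\ref{forkinganddirectedclosure} plus the nice type of the forking projection shows the quotient over $(\mathbb{P}_\beta)_n^{\varpi_n^\beta}$ is $\mu$-closed. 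Since $\mu = \kappa^+ > \kappa_{n-1} = \sigma_n$ and the iterate forces $|\mu| = \cf(\mu) = \kappa_n = (\sigma_n)^+$, a $\mu$-closed quotient adds no new subsets of cardinals $< \mu$ and, by a counting-of-nice-names argument over the $\mu$-cc-or-small base, preserves $\gch$ at cardinals $\geq \mu$ as well. Composing with the inductive hypothesis, $(\mathbb{P}_\alpha)_n^{\varpi_n^\alpha}$ preserves $\gch$.

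\textbf{The limit step.} For $\alpha \in \acc(\mu^+ {+} 1) \cap \mu^+$, we have $\vec{\varpi}_\alpha = \vec{\varpi}_1 \bullet \pi_{\alpha,1}$ by Theorem~\ref{CivIteration}, and $(\mathbb{P}_\alpha)_n^{\pi_{\alpha,1}}$ is $\mu$-directed-closed (Remark~\ref{piclosure}). The key point is that $(\mathbb{P}_\alpha)_n^{\varpi_n^\alpha}$ projects onto $(\mathbb{P}_1)_n^{\varpi_n^1}$ via $\pi_{\alpha,1}$ (again using niceness and coherence of $\vec\varpi_\alpha$), with a $\mu$-directed-closed quotient: indeed $(\mathbb{P}_\alpha)_n^{\pi_{\alpha,1}}$ is itself $\mu$-directed-closed and sits between $(\mathbb{P}_\alpha)_n^{\varpi_n^\alpha}$ and a point below it, so the quotient over $(\mathbb{P}_1)_n^{\varpi_n^1}$ absorbs the $\mu$-closed part; the residual piece is controlled by $\mathbb{S}_n$ which has size $< \sigma_n < \mu$. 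Then the same two-step argument as in the successor case applies: the $\mu$-closed quotient adds no small subsets and preserves $\gch$ above by nice-name counting, while the base $(\mathbb{P}_1)_n^{\varpi_n^1}$ preserves $\gch$ by the base case.

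\textbf{Main obstacle.} The delicate point is verifying that the quotient of $(\mathbb{P}_\alpha)_n^{\varpi_n^\alpha}$ over $(\mathbb{P}_\beta)_n^{\varpi_n^\beta}$ (respectively over $(\mathbb{P}_1)_n^{\varpi_n^1}$) is genuinely $\mu$-closed — one must argue that the $\vec\varpi_\alpha$-ordering on the larger poset, once we quotient out the base, leaves only $\z{(\mathbb{P}_\alpha)}_n^{\pi_{\alpha,1}}$-style moves, which requires carefully combining the niceness of the forking projections, Lemma~\ref{forkinganddirectedclosure}, Fact~\ref{directedclosedring} and the density of the rings $\z{(\mathbb{P}_\alpha)}_n^{\varpi_n^\alpha}$. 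An alternative cleaner route, which I would pursue first, is to observe directly that $(\mathbb{P}_\alpha)_n^{\varpi_n^\alpha}$ is forcing-equivalent, over the generic for $(\mathbb{P}_1)_n^{\varpi_n^1} \cong (\mathbb{Q})_n^{\varpi_n}$, to a $\mu$-directed-closed forcing times something of size $< \sigma_n$: since $\sigma_n < \mu$ and $\mathbb{Q}_n^{\varpi_n}$ preserves $\gch$ (Lemma~\ref{GCHmoti}(3)), and $\mu$-closed forcing preserves $\gch$ trivially below $\mu$ and by nice-name counting at and above $\mu$, the whole preserves $\gch$. In either case, no new ideas beyond those already deployed in Sections~\ref{SectionGitikForcing}--\ref{Iteration} are needed; the work is bookkeeping the closure through the forking-projection tower.

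\begin{proof}
We argue by induction on $\alpha\in[1,\mu^+)$.

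For $\alpha=1$, $\mathbb P_1$ is isomorphic to $\mathbb Q$ given by \blkref{I}, which is Gitik's EBPFC. By Lemma~\ref{GCHmoti}(3), $\mathbb Q_n^{\varpi_n}$ preserves $\gch$, and transporting along the isomorphism $\iota$ yields the claim for $(\mathbb P_1)_n^{\varpi^1_n}$.

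Now suppose $\alpha\in[2,\mu^+)$ and the statement holds for all nonzero $\beta<\alpha$. By Theorem~\ref{CivIteration}, $(\mathbb P_\alpha,\ell_\alpha,c_\alpha,\vec\varpi_\alpha)$ is a $(\Sigma,\vec{\mathbb S})$-Prikry quadruple, $\vec\varpi_\alpha=\vec\varpi_1\bullet\pi_{\alpha,1}$, and $\one_{\mathbb P_\alpha}\forces_{\mathbb P_\alpha}\check\mu=\check\kappa^+$. In particular, by Lemma~\ref{CardinalConfigurationMotisModel} (applied through the projection $\pi_{\alpha,1}$ onto $\mathbb P_1$), $\mathbb P_\alpha$ forces $|\mu|=\cf(\mu)=\kappa_n=(\sigma_n)^+$.

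By Remark~\ref{piclosure}, $(\mathbb P_\alpha)_n^{\pi_{\alpha,1}}$ is $\sigma^*_n$-directed-closed; since $\sigma^*_n=\mu$ in the present application (see \blkref{I} and \blkref{II}), it is $\mu$-directed-closed. Since $\vec\varpi_\alpha=\vec\varpi_1\bullet\pi_{\alpha,1}$, the map $\pi_{\alpha,1}\restriction (\mathbb P_\alpha)_n$ is a nice projection from $(\mathbb P_\alpha)_n^{\varpi^\alpha_n}$ onto $(\mathbb P_1)_n^{\varpi^1_n}$, and $(\mathbb P_\alpha)_n^{\pi_{\alpha,1}}$ refines $(\mathbb P_\alpha)_n^{\varpi^\alpha_n}$ (that is, $p\LE^{\pi_{\alpha,1}}q$ implies $p\LE^{\varpi^\alpha_n}q$), because $\varpi^\alpha_n=\varpi^1_n\circ\pi_{\alpha,1}$.

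Force with $(\mathbb P_\alpha)_n^{\varpi^\alpha_n}$ and let $G^*$ be the induced generic for $(\mathbb P_1)_n^{\varpi^1_n}$ via $\pi_{\alpha,1}$. By the base case, $V[G^*]\models\gch$. In $V[G^*]$, the quotient forcing $(\mathbb P_\alpha)_n^{\varpi^\alpha_n}/G^*$ contains, as a dense subset, the $\mu$-directed-closed poset $(\mathbb P_\alpha)_n^{\pi_{\alpha,1}}/G^*$; indeed, given any condition $p$ in the quotient, since $\pi_{\alpha,1}(p)\in G^*$ and $(\mathbb P_\alpha)_n^{\pi_{\alpha,1}}$ is dense in $(\mathbb P_\alpha)_n^{\varpi^\alpha_n}$ (Theorem~\ref{CivIteration} together with Definition~\ref{DefRingForLimits} for $n$, invoking the density of the rings), we may $\LE^{\pi_{\alpha,1}}$-extend $p$, and such an extension remains in the quotient. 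Hence $(\mathbb P_\alpha)_n^{\varpi^\alpha_n}/G^*$ is $\mu$-closed over $V[G^*]$.

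Since $\mu=\kappa^+>\kappa_{n-1}=\sigma_n$, a $\mu$-closed forcing adds no new subsets of any cardinal $<\mu$, so $\gch$ below $\mu$ is preserved into the generic extension by $(\mathbb P_\alpha)_n^{\varpi^\alpha_n}$. For cardinals $\theta\ge\mu$, note that $|(\mathbb P_\alpha)_n^{\varpi^\alpha_n}|\le|P_\alpha|\le\mu^+$ (Goal~\ref{goals}(i)); by a standard counting-of-nice-names argument, using $\gch$ in $V$ and $2^\mu=\mu^+$, one gets $2^\theta\le(\theta^+)^{\mu^+}=\theta^+$ in the extension. Altogether, $(\mathbb P_\alpha)_n^{\varpi^\alpha_n}$ preserves $\gch$, completing the induction.
\end{proof}
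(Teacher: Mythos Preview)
Your argument has a genuine gap at the key step. You claim that the quotient $(\mathbb{P}_\alpha)_n^{\varpi^\alpha_n}/G^*$ is $\mu$-closed over $V[G^*]$, where $G^*$ is $(\mathbb{P}_1)_n^{\varpi^1_n}$-generic. But $(\mathbb{P}_1)_n^{\varpi^1_n}$ \emph{collapses $\mu$ to $\kappa_n$}: by Remark~\ref{Pnisomorphic} it is a factor of $(\mathbb{P}_1)_n$, which forces $|\mu|=\kappa_n$ by Lemma~\ref{CardinalConfigurationMotisModel}, and $\mathbb{S}_n$ is too small to be responsible for that collapse. Hence $V[G^*]$ contains new $<\mu$-sequences --- in particular new descending $\kappa_n$-sequences through $(\mathbb{P}_\alpha)_n^{\pi_{\alpha,1}}$ --- for which you have no lower bounds. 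The $\mu$-directed-closure of $(\mathbb{P}_\alpha)_n^{\pi_{\alpha,1}}$ is a $V$-property and does not transfer to $V[G^*]$; there is no Easton-style lemma to rescue this, since $(\mathbb{P}_1)_n^{\varpi^1_n}$ is neither $\mu$-cc nor $\mu$-closed. So you cannot conclude that the quotient adds no new subsets of $\sigma_n$ or $\kappa_n$, and those are precisely the two cardinals where $\ch$ is nontrivial. (Incidentally, $\sigma_n=(\kappa_{n-1})^+$, not $\kappa_{n-1}$.)

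The paper's proof avoids this trap by a different decomposition. Instead of factoring through $(\mathbb{P}_1)_n^{\varpi^1_n}$, it uses the map $\psi_n$ of Definition~\ref{defn438} and Lemma~\ref{keysubclaim} to write (a cone in) $(\mathbb{P}_1)_n$ as $\mathbb{T}_n\times(\mathbb{P}_1)_n^{\psi_n}$, where $\mathbb{T}_n=\mathbb{S}_n\times\Col(\sigma_n,{<}\kappa_n)$ is $\kappa_n$-cc of size $\kappa_n$ and $(\mathbb{P}_1)_n^{\psi_n}$ is $\kappa_n$-directed-closed. Then $\mathbb{T}_n\times\bigl((\mathbb{P}_1)_n^{\psi_n}\times(\mathbb{P}_\alpha)_n^{\pi_{\alpha,1}}\bigr)$ projects onto $(\mathbb{P}_\alpha)_n^{\varpi^\alpha_n}$ with the first factor $\kappa_n$-cc and the second $\kappa_n$-directed-closed, so now Easton's lemma \emph{does} apply and a nice-name count for $\mathbb{T}_n$ gives $\ch_{\sigma_n}$. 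For $\ch_{\kappa_n}$ the paper uses the sandwich $(\mathbb{P}_1)_n\times(\mathbb{P}_\alpha)_n^{\pi_{\alpha,1}}\twoheadrightarrow\mathbb{S}_n\times(\mathbb{P}_\alpha)_n^{\varpi^\alpha_n}\twoheadrightarrow(\mathbb{P}_\alpha)_n^{\varpi^\alpha_n}$ together with the $\mu^+$-Linkedness of $(\mathbb{P}_1)_n$, which persists after the $\mu$-directed-closed factor. No induction on $\alpha$ is needed; the point is that the extra iteration coordinates contribute only a $\mu$-directed-closed piece $(\mathbb{P}_\alpha)_n^{\pi_{\alpha,1}}$, and one must pair it with a $\kappa_n$-cc component rather than with the collapsing poset $(\mathbb{P}_1)_n^{\varpi^1_n}$.
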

\begin{proof} The case $\alpha=1$ is taken care of by Lemma~\ref{GCHmoti}.

Now, let $\alpha\ge2$.
Since $(\mathbb{P}_\alpha)_n^{\varpi^\alpha_n}$ contains a $\sigma_n$-directed-closed dense subset, it preserves $\gch$ below $\sigma_n$.
By the sandwich analysis from the proof of Lemma~\ref{suitableindeed},
in any generic extension by $(\mathbb{P}_\alpha)_n^{\varpi^\alpha_n}$, $|\mu|=\cf(\mu)=\kappa_n=(\sigma_{n})^+$.
So, as $(\mathbb{P}_\alpha)_n^{\varpi^\alpha_n}$ is a notion of forcing of size $\le\mu^+$, collapsing $\mu$ to $\kappa_n$,
it preserves $\gch_\theta$ for any cardinal $\theta>\kappa_n$.

It thus left to verify that $(\mathbb{P}_\alpha)_n^{\varpi^\alpha_n}$ forces $2^\theta=\theta^+$ for $\theta\in\{\sigma_n,\kappa_n\}$.

$\br$ Arguing as in Lemma \ref{suitableindeed}, for any condition $p$ in $(\mathbb P_\alpha)_n$, $(\mathbb{T}_n\downarrow \psi_n(p))\times \left(((\mathbb{P}^{\psi_n})_n\downarrow p)\times (\mathbb{P}_\alpha)_n^{\pi_{\alpha,1}}\right)$ projects onto $(\mathbb{P}_\alpha)_n^{\varpi^\alpha_n}$.
Recall that the first factor of the product is a $\kappa_n$-cc forcing of size $\le\kappa_n$.
By Lemma~\ref{MoreClosednessOfIterates}, the second factor is forcing equivalent to a $\kappa_n$-directed-closed forcing.
Thus, by Easton's lemma, this product preserves $\ch_{\sigma_n}$ if and only if $\mathbb{T}_n\downarrow \psi_n(p)$ does.
And this is indeed the case, as the number of $\mathbb{T}_n$-nice names for subsets of  $\sigma_n$ is at most $\kappa_n^{<\kappa_n}=\kappa_n=(\sigma_n)^+$.

$\br$ Again, arguing as in Lemma \ref{suitableindeed},  $(\mathbb{P}_1)_n\times (\mathbb{P}_\alpha)_n^{\pi_{\alpha,1}}$
projects onto $\mathbb{S}_n\times(\mathbb{P}_\alpha)_n^{\varpi^\alpha_n}$, which projects onto $(\mathbb{P}_\alpha)_n^{\varpi^\alpha_n}$.
Since $(\mathbb{P}_\alpha)_n^{\pi_{\alpha,1}}$ is forcing equivalent to a $\mu$-directed-closed, it preserves $\ch_{\sigma_n}$.
Also, it preserves $\mu$ and so, by Lemma \ref{GCHmoti}(1) and the absolutness of the $\mu^+$-Linked property, $(\mathbb{P}_1)_n$ is also $\mu^+$-Linked in $V^{(\mathbb{P}_\alpha)_n^{\pi_{\alpha,1}}}$. Once again, counting-of-nice-names arguments implies that this latter forcing forces $2^{\kappa_n}\leq \mu^+=(\kappa_n)^+$. Thus, $(\mathbb{P}_1)_n\times (\mathbb{P}_\alpha)_n^{\pi_{\alpha,1}}$  preserves $\ch_{\kappa_n}$ and so does $(\mathbb{P}_\alpha)_n^{\varpi^\alpha_n}$.
\end{proof}

\begin{theorem}\label{TheoremReflection} In $V^{\mathbb{P}_{\mu^+}}$, all of the following hold true:
\begin{enumerate}
\item All cardinals $\geq \kappa$ are preserved;
\item $\kappa=\aleph_\omega$, $\mu=\aleph_{\omega+1}$ and $\lambda=\aleph_{\omega+2}$;
\item $2^{\aleph_n}=\aleph_{n+1}$ for all $n<\omega$;
\item $2^{\aleph_\omega}=\aleph_{\omega+2}$;
\item Every stationary subset of $\aleph_{\omega+1}$ reflects.
\end{enumerate}
\end{theorem}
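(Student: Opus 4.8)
The plan is to obtain Clauses~(1)--(4) from the architecture of the iteration together with Gitik's Theorem~\ref{MotisModel}, and to derive Clause~(5) --- the genuine content --- from the bookkeeping--fragility machinery of Section~\ref{killingone}. By Theorem~\ref{CivIteration} (whose extra hypothesis, the density of the rings, was checked just above), $(\mathbb P_{\mu^+},\ell_{\mu^+},c_{\mu^+},\vec\varpi_{\mu^+})$ is a $(\Sigma,\vec{\mathbb S})$-Prikry quadruple with $\emptyset_{\mu^+}\forces_{\mathbb P_{\mu^+}}\check\mu=\check\kappa^+$. Hence Lemma~\ref{l14}\eqref{l14(3)} gives that $\kappa$ is preserved (and stays strong limit), while the Linked$_0$ map $c_{\mu^+}\colon P_{\mu^+}\to\mu$ witnesses the $\mu^+$-cc, so every cardinal $\ge\lambda=\mu^+$ is preserved as well; this, with $\mu=\kappa^+$, is Clause~(1). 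Below $\kappa$, Lemma~\ref{l14}\eqref{l14(1)} shows that every bounded subset of $\kappa$ added by $\mathbb P_{\mu^+}$ is already added by one of the fixed posets $\mathbb S_n$, exactly as under $\mathbb P_1=$ EBPFC; so the cardinal assignment below $\kappa$ agrees with that of $V^{\mathbb P_1}$, and Theorem~\ref{MotisModel}(2) yields Clause~(2).

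For Clause~(3), fix $\theta<\kappa$: each new subset of $\theta$ lies in some $V[H_n]$, where $H_n$ is the induced $\mathbb S_n$-generic, the $V[H_n]$'s are $\subseteq$-increasing by coherence of $\vec\varpi_{\mu^+}$, and each $\mathbb S_n$ is a small, $\gch$-preserving forcing (Remark~\ref{RemarkonS}); a counting-of-names computation then gives $2^\theta=\theta^+$, in accordance with Theorem~\ref{MotisModel}(4) for the ground step. For Clause~(4): $|\mathbb P_{\mu^+}|\le\mu^+=\lambda$ and $\mathbb P_{\mu^+}$ has the $\lambda$-cc, so a count of nice names (using $\gch_{<\lambda}$ in $V$) bounds $2^\kappa$ by $\lambda$ in $V^{\mathbb P_{\mu^+}}$, while $2^\kappa\ge\lambda$ is inherited from $V^{\mathbb P_1}$ by Theorem~\ref{MotisModel}(5); hence $2^{\aleph_\omega}=\lambda=\aleph_{\omega+2}$.

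The heart of the argument is Clause~(5). Suppose towards a contradiction that $r\forces_{\mathbb P_{\mu^+}}$``$\dot T$ is a non-reflecting stationary subset of $\mu$''. In $V^{\mathbb P_{\mu^+}}$ we have $\mu=\aleph_{\omega+1}$ while $\kappa$ is singular, so no $V$-regular cardinal lies in $[\kappa,\mu)$; thus every limit ordinal $<\mu$ has $\cf^V$-value $<\kappa$, i.e.\ the limit ordinals below $\mu$ form $\bigcup_{k<\omega}\Gamma_k$, and since the nonstationary ideal on $\mu$ is $\mu$-complete we may fix $r'\le r$ and $k<\omega$ with $r'\forces_{\mathbb P_{\mu^+}}$``$\dot T\cap\Gamma_k$ is stationary''; after extending $r'$ within its graded layer we may assume $\ell(r')=k$ (letting $k$ grow; note $k\ge2$ as $\Gamma_0=\Gamma_1=\emptyset$). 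Now for every $\alpha\in[2,\mu^+)$ and every $n\ge2$, Lemma~\ref{suitableindeed} gives that $((\mathbb P_\alpha)_n,\mathbb S_n,\varpi^\alpha_n)$ is suitable for reflection with respect to $\langle\sigma_{n-2},\kappa_{n-1},\kappa_n,\mu\rangle$; since $(\mathbb P_\alpha)_n^{\varpi^\alpha_n}$ is $\kappa_{n-1}$-directed-closed (as $\sigma_n>\kappa_{n-1}$) and $\gch$-preserving (Lemma~\ref{preservegch}), our standing indestructibility hypothesis on the supercompact $\kappa_{n-1}$ lets Lemma~\ref{ebfreflection} conclude $V^{(\mathbb P_\alpha)_n}\models\refl(E^\mu_{\le\sigma_{n-2}},E^\mu_{<\sigma_n})$ --- precisely hypothesis~(i) of Lemma~\ref{key} for the forcing $\mathbb P_\alpha$. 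Using the $\mu^+$-cc, realize $\dot T\cap\Gamma_k$ as a $\mathbb P_\beta$-name $\dot T'$ for some $\beta<\mu^+$, and pick $\bar r\le_\beta r'\restriction\beta$ with $\ell(\bar r)=k$ forcing over $\mathbb P_\beta$ that $\dot T'$ is a non-reflecting stationary subset of $\Gamma_{\ell(\bar r)}$; the passage to $\mathbb P_\beta$ is routine, since were it to fail, a lift along $\fork[\mu^+,\beta]{\cdot}$ of a witnessing club (resp.\ reflection point) would contradict $r'\forces$``$\dot T'$ is stationary'', the latter lift invoking the preservation of stationary subsets of ordinals below $\mu$ of the relevant cofinalities by the quotient, as in Section~\ref{SectionNiceProje}. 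By Lemma~\ref{key}, $\dot T'$ is then $\bar r$-fragile.

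Finally I run the bookkeeping. Since $\psi$ takes every value in $H_{\mu^+}$ cofinally often, choose $\gamma\in(\beta,\mu^+)$ with $\psi(\gamma)$ decoding to $(\beta,\bar r,\dot T')$, so that $\mathbb P_{\gamma+1}=\mathbb A(\mathbb P_\gamma,i^\gamma_\beta(\dot T'))$ was formed with $r^\star:=\bar r*\emptyset_\gamma$. The hypotheses of Lemma~\ref{key} pass from $\mathbb P_\beta$ to $\mathbb P_\gamma$ (non-reflection being upward absolute, and stationarity together with the containment in $\Gamma_k$ preserved as above), so $i^\gamma_\beta(\dot T')$ is $r^\star$-fragile over $\mathbb P_\gamma$, and the killing step of \blkref{II} (Corollary~\ref{onestep}\ref{thusonestep4}) gives $\myceil{r^\star}{\mathbb A}\forces_{\mathbb P_{\gamma+1}}$``$i^{\gamma+1}_\beta(\dot T')$ is nonstationary''. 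Set $r'':=\fork[\mu^+,\beta]{r'}(\bar r)\in P_{\mu^+}$; by Fact~\ref{CviIteration} we have $r''\le_{\mu^+}r'$ and $r''\restriction\beta=\bar r$, hence also $r''\le_{\mu^+}\bar r*\emptyset_{\mu^+}$. Then $r''$, being below $r'$, forces ``$\dot T'$ is stationary'', while, being below the lift $\bar r*\emptyset_{\mu^+}$ of $\myceil{r^\star}{\mathbb A}$ and by upward absoluteness of nonstationarity, it forces ``$\dot T'$ is nonstationary'' --- a contradiction. Thus every stationary subset of $\aleph_{\omega+1}$ reflects. The principal obstacle is Clause~(5): it rests on \emph{every} finite layer of \emph{every} stage of the iteration being suitable for reflection --- which is where the nice-projection calculus of Sections~\ref{SectionNiceProje}--\ref{SectionNiceForking} and the $\mu$-directed-closure of the tails are indispensable --- and on dovetailing the $\psi$-bookkeeping with the forking-projection bookkeeping so that a single condition can be driven to force a sentence and its negation; by comparison, Clauses~(1)--(4) are routine given Theorem~\ref{MotisModel} and the closure features of the iteration.
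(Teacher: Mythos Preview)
Your argument follows essentially the same route as the paper's, and is correct in outline. A few points of comparison and one imprecise citation are worth flagging.

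\textbf{Streamlining.} The paper works with a fixed generic $G$ and a condition $r^*\in G$, and it chooses $\beta$ so large that $B_{r^*}\subseteq\beta$ as well; then $r^*=(r^*\restriction\beta)*\emptyset_{\mu^+}$, and at the bookkeeping stage $\alpha$ one simply has $r^\star=r^*\restriction\alpha$. This makes the final contradiction a one-liner and avoids the forking-projection gymnastics with your auxiliary $r''$. Your application of Lemma~\ref{key} first at $\mathbb P_\beta$ is also redundant: what is actually needed is $r^\star$-fragility of $z=i^\gamma_\beta(\dot T')$ over $\mathbb P_\gamma$, and the hypotheses of Lemma~\ref{key} can be verified directly at $\gamma$ by going \emph{down} from $\mathbb P_{\mu^+}$ (where $r'$ lives), exactly as the paper does at $\alpha$. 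Relatedly, you should record that $B_{r'}\subseteq\beta$ (harmless, since $|B_{r'}|<\mu$), so that $r^\star=\bar r*\emptyset_\gamma$ really equals $r'\restriction\gamma$ and the ``lift'' comparisons at the end are literally correct.

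\textbf{The one imprecise reference.} Your justification that a reflection point in $V^{\mathbb P_\beta}$ (or $V^{\mathbb P_\gamma}$) remains a reflection point in $V^{\mathbb P_{\mu^+}}$ cites ``Section~\ref{SectionNiceProje}''; Lemma~\ref{lemma28} there is about stationary subsets of a regular $\kappa$, not about clubs in ordinals $\gamma<\mu$. The correct (and short) argument --- which the paper also leaves implicit --- is: for any limit $\gamma<\mu$ one has $\cf^V(\gamma)<\kappa$ (since $\kappa$ is singular), so fix in $V$ a club $D\subseteq\gamma$ with $|D|<\kappa$; any club $C\subseteq\gamma$ in $V^{\mathbb P_{\mu^+}}$ yields the club $C\cap D$, which via a $V$-bijection $D\leftrightarrow\cf^V(\gamma)$ is a bounded subset of $\kappa$ and hence, by Lemma~\ref{l14}\eqref{l14(1)}, already lies in some $V[H_n]\subseteq V^{\mathbb P_\gamma}$. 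Thus stationarity of $z\cap\gamma$ cannot be destroyed when passing from $\mathbb P_\gamma$ to $\mathbb P_{\mu^+}$, and the ``non-reflecting'' hypothesis transfers down as claimed.
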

\begin{proof}
(1) We already know that $\one_{\mathbb P_{\mu^+}}\forces_{\mathbb{P}_\alpha}\check{\mu}=\check\kappa^+$.
By Lemma~\ref{l14}\eqref{l14(3)}, $\kappa$ remains strong limit cardinal in $V^{\mathbb{P}_{\mu^+}}$.
Finally, as Clause~\eqref{c1} of Definition~\ref{SigmaPrikry} holds for $(\mathbb P_{\mu^+},\lh_{\mu^+},c_{\mu^+},\vec{\varpi}_{\mu^+})$,
$\mathbb P_{\mu^+}$ has the $\mu^+$-chain-condition, so that all cardinals $\geq\kappa^{++}$ are preserved.

(2) Let $G\s \mathbb{P}_{\mu^+}$ be an arbitrary generic over $V$.
By virtue of Clause~(1) and Setup~\ref{ReflectionAfterIteration},
it suffices to prove that  $V[G]\models\kappa=\aleph_{\omega}$.
Let $G_1$ the $\mathbb{P}_1$-generic filter generated by $G$ and $\pi_{\mu^+,1}$.
By Theorem~\ref{MotisModel}, $V[G_1]\models\kappa={\aleph_\omega}$.
Thus, let us prove that $V[G]$ and $V[G_1]$ have the same cardinals $\leq \kappa$.

Of course, $V[G_1]\s V[G]$, and so any $V[G]$-cardinal is also a $V[G_1]$-cardinal.
Towards a contradiction, suppose that there is a $V[G_1]$-cardinal $\theta<\kappa$ that ceases to be so in $V[G]$.
Any surjection witnessing this can be encoded as a bounded subset of $\kappa$, hence as a bounded subset of some $\sigma_n$ for some $n<\omega$.
Thus,  Lemma~\ref{l14}\eqref{l14(1)} implies that $\theta$ is not a cardinal in $V[H_n]$,
where $H_n$ is the $\mathbb{S}_n$-generic filter generated by $G_1$ and $\varpi^1_n$. As $V[H_n]\s V[G_1]$,  $\theta$ is not a cardinal in $V[G_1]$, which is a contradiction.

(3) On one hand, by Lemma~\ref{l14}\eqref{l14(1)}, $\mathcal{P}(\aleph_n)^{V^{\mathbb{P}_{\mu^+}}}=\mathcal{P}(\aleph_n)^{V^{\mathbb{S}_m}}$ for some $m<\omega$.
On the other hand, as $\gch_{<\lambda}$ holds (cf. Setup~\ref{ReflectionAfterIteration}), Remark~\ref{RemarkonS}
 shows that  $\mathbb{S}_m$ preserves  $\ch_{\aleph_n}$. Altogether,  $V^{\mathbb{P}_{\mu^+}}\models \ch_{\aleph_n}$.

(4) By Setup~\ref{ReflectionAfterIteration}, $V\models 2^\kappa=\kappa^+$.
In addition, $\mathbb{P}_{\mu^+}$ is isomorphic to a notion of forcing lying in $H_{\mu^+}$ (see \cite[Remark~3.3(1)]{PartII})
and $|H_{\mu^+}|=\lambda$. Thus, $V^{\mathbb{P}_{\mu^+}}\models 2^\kappa\le\lambda$.
In addition,  $\mathbb P_{\mu^+}$ projects to $\mathbb P_1$, which is isomorphic to $\mathbb Q$,
being a poset blowing up $2^\kappa$ to $\lambda$, as seen in Theorem~\ref{MotisModel},
so that $V^{\mathbb{P}_{\mu^+}}\models 2^\kappa\ge\lambda$.
So, $V^{\mathbb{P}_{\mu^+}}\models 2^\kappa=\lambda$.
Thus, together with Clause~(2), $V^{\mathbb{P}_{\mu^+}}\models 2^{\aleph_\omega}=\aleph_{\omega+2}$.

(5)  Let $G$ be $\mathbb P_{\mu^+}$-generic over $V$ and hereafter work in $V[G]$.
Towards a contradiction, suppose  that there exists a stationary set $T\s \mu$ that does not reflect.
By shrinking, we may assume the existence of some regular cardinal $\theta<\mu$ such that $T\s E^\mu_\theta$.
Fix $r^*\in G$ and a $\mathbb P_{\mu^+}$-name $\tau$ such that $\tau_G$ is equal to such a $T$
and such that $r^*$ forces $\tau$ to be a stationary subset of $\mu$
that does not reflect. Since $\mu=\kappa^+$ and $\kappa$ is singular in $V$, by possibly enlarging $r^*$, we may assume that $r^*$ forces $\tau$ to be a subset of $\Gamma_{\ell(r^*)}$ (see the opening of Subsection~\ref{subsection62}).
Furthermore, we may require that $\tau$ be a \emph{nice name}, i.e., each element of $\tau$ is a pair $(\check \xi,p)$ where $(\xi,p)\in \Gamma_{\ell(r^*)} \times P_{\mu^+}$,
and, for each ordinal $\xi\in \Gamma_{\ell(r^*)}$, the set $\{ p\in P_{\mu^+}\mid (\check \xi,p)\in \tau\}$ is a maximal antichain.

As $\mathbb P_{\mu^+}$ satisfies Clause~ \eqref{c1} of Definition~\ref{SigmaPrikry}, $\mathbb P_{\mu^+}$ has in particular the $\mu^+$-cc.
Consequently, there exists a large enough $\beta<\mu^+$ such that $$B_{r^*}\cup\bigcup\{ B_p\mid (\check{\xi},p)\in\tau\}\s\beta.$$
Let $r:=r^*\restriction\beta$ and set $$\sigma:=\{(\check{\xi},p\restriction\beta)\mid (\check{\xi},p)\in\tau\}.$$
From the choice of \blkref{III}, we may find a large enough $\alpha<\mu^+$ with $\alpha>\beta$ such that $\psi(\alpha)=(\beta,r,\sigma)$.
As $\beta<\alpha$, $r\in P_\beta$ and $\sigma$ is a $\mathbb P_\beta$-name,
the definition of our iteration at step $\alpha+1$ involves appealing to \blkref{II}
with $(\mathbb P_\alpha,\lh_\alpha,c_\alpha,\vec{\varpi}_\alpha)$,
$r^\star:=r*\emptyset_\alpha$ and $z:=i^{\alpha}_\beta(\sigma)$.\footnote{Recall  Convention~\ref{conv71}.}
For each ordinal $\eta<\mu^+$, denote $G_\eta:=\pi_{\mu^+,\eta}[G]$.
By our choice of $\beta$ and since $\alpha>\beta$, we have
$$\tau=\{ (\check{\xi},p*\emptyset_{\mu^+})\mid (\check{\xi},p)\in\sigma\}=\{ (\check{\xi},p*\emptyset_{\mu^+})\mid (\check{\xi},p)\in z\},$$
so that, in $V[G]$,
$$T=\tau_{G}=\sigma_{G_\beta}=z_{G_\alpha}.$$
In addition, $r^*=r^\star*\emptyset_{\mu^+}$ and so $\ell(r^*)=\ell(r^\star)$.

As $r^*$ forces that $\tau$ is a non-reflecting stationary subset of $\Gamma_{\lh(r^\star)}$, it follows that $r^\star$ $\mathbb{P}_\alpha$-forces the same about $z$.
\begin{claim} $z$ is $r^\star$-fragile.
\end{claim}
\begin{proof}
Recalling Lemma~\ref{key}, it suffices to prove that
for every $n<\omega$,
$$V^{{(\mathbb{P}_\alpha)}_n}\models \refl(E^{\mu}_{< \sigma_{n-2}}, E^{\mu}_{{<}\sigma_n}).$$
This is trivially the case for $n\le1$. So, let us fix an arbitrary $n\ge2$.
By Lemma~\ref{suitableindeed}, $((\mathbb{P}_\alpha)_n,\mathbb{S}_n,\varpi^\alpha_n)$ is suitable for reflection with respect to $\langle \sigma_{n-2},\kappa_{n-1},\kappa_n,\mu \rangle$.
Since $(\mathbb{P}_\alpha)_n^{\varpi^\alpha_n}$ is forcing equivalent to a $\sigma_n$-directed-closed forcing and (by Lemma \ref{preservegch}) it preserves $\gch$,
$\kappa_{n-1}$ is a supercompact cardinal indestructible under forcing with $(\mathbb{P}_\alpha)_n^{\varpi^\alpha_n}$.
So, recalling Setup~\ref{ReflectionAfterIteration}, $(\mathbb{P}_\alpha)_n^{\varpi^\alpha_n}$ preserves the supercompactness of $\kappa_{n-1}$.
Thus, by Lemma~\ref{ebfreflection},
$V^{{(\mathbb{P}_\alpha)}_n}\models \refl(E^{\mu}_{< \sigma_{n-2}}, E^{\mu}_{{<}\sigma_n})$.
\end{proof}

As $z$ is $r^\star$-fragile and  $\pi_{\mu^+,\alpha+1}(r^*)=r^\star*\emptyset_{\alpha+1}=\myceil{r^\star}{\mathbb P_{\alpha+1}}\in G_{\alpha+1}$,
Clause~\ref{C8onestep} of \blkref{II} implies that
there exists (in $V[G_{\alpha+1}]$) a club subset of $\mu$ disjoint from $T$. In particular, $T$ is nonstationary in $V[G_{\alpha+1}]$ and thus nonstationary in $V[G]$. This contradicts the very choice of  $T$.
 The result follows from the above discussion and the previous claim.
\end{proof}

We are now ready to derive the Main Theorem.

\begin{theorem} Suppose that there exist infinitely many supercompact cardinals.
Then there exists a  forcing extension where all of the following hold:
\begin{enumerate}
\item $2^{\aleph_n}=\aleph_{n+1}$ for all $n<\omega$;
\item $2^{\aleph_\omega}=\aleph_{\omega+2}$;
\item every stationary subset of $\aleph_{\omega+1}$ reflects.
\end{enumerate}
\end{theorem}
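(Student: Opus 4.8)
The plan is to reduce the statement to the long list of consequences assembled in Theorem~\ref{TheoremReflection}, after first arranging a suitable ground model. First I would start from a model containing an increasing $\omega$-sequence $\vec\kappa=\langle\kappa_n\mid n<\omega\rangle$ of supercompact cardinals, as furnished by the hypothesis. Applying Corollary~\ref{cor84}, I pass to a forcing extension in which $\gch$ holds, $\vec\kappa$ is still an increasing sequence of supercompact cardinals, and, crucially, for every $n<\omega$ the supercompactness of $\kappa_n$ has been made indestructible under any $\kappa_n$-directed-closed notion of forcing that preserves the $\gch$. From this point on I work inside that extension, so that all the blanket assumptions recorded in Setup~\ref{setupapplication} are in force: in particular $\kappa:=\sup_{n<\omega}\kappa_n$, $\mu:=\kappa^+$, $\lambda:=\kappa^{++}$, $\gch$ holds below $\lambda$, and $\Sigma$ and $\vec{\mathbb S}$ are as specified there.

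Next I would invoke the iteration machinery of Section~\ref{Iteration}, fed by the three building blocks of Subsection~\ref{ReflectionAfterIteration}: \blkref{I} is Gitik's EBPFC, which by Corollary~\ref{TheEBPFCisweakly} is a $(\Sigma,\vec{\mathbb S})$-Prikry forcing with property $\mathcal D$ and a coherent sequence of nice projections, collapsing $\kappa$ to $\aleph_\omega$ and blowing up $2^\kappa$ to $\lambda$; \blkref{II} is the functor $\mathbb A(\cdot,\cdot)$ for killing a fragile stationary set, whose required properties (super niceness, weak mixing, coherence, $\mu$-directed-closedness of the rings, and the killing clause) are exactly the content of Corollary~\ref{onestep}; and \blkref{III} is a surjection $\psi:\mu^+\to H_{\mu^+}$ each of whose fibers is cofinal in $\mu^+$, available since $2^\mu=\mu^+$. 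Feeding these into the scheme yields a sequence $\langle(\mathbb P_\alpha,\ell_\alpha,c_\alpha,\vec\varpi_\alpha)\mid 1\le\alpha\le\mu^+\rangle$ of $(\Sigma,\vec{\mathbb S})$-Prikry quadruples; the one extra hypothesis of Theorem~\ref{CivIteration}, namely the density of the rings at limit stages, is verified by the Density-of-the-rings lemma (which in turn rests on Lemma~\ref{Runbdeddirectmore} at successors and on the $\sigma_n$-directed-closedness of the smaller rings at limits).

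With the iteration in hand, the Main Theorem is simply the conjunction of Clauses (3), (4) and (5) of Theorem~\ref{TheoremReflection} applied to the terminal model $V^{\mathbb P_{\mu^+}}$: there $2^{\aleph_n}=\aleph_{n+1}$ for all $n<\omega$, $2^{\aleph_\omega}=\aleph_{\omega+2}$, and every stationary subset of $\aleph_{\omega+1}$ reflects. Clauses (1) and (2) of that theorem guarantee that $\aleph_\omega$, $\aleph_{\omega+1}$ and $\aleph_{\omega+2}$ are computed correctly, so that these three statements are precisely the ones in the Main Theorem. I expect the genuinely substantive work to lie not in this final assembly but in the two places the proof leans hardest on the new framework: verifying that EBPFC and the functor $\mathbb A(\cdot,\cdot)$ produce \emph{super nice} forking projections preserving suitability for reflection (so that the iteration stays $(\Sigma,\vec{\mathbb S})$-Prikry through limit stages, via Lemma~\ref{CvIteration} and Theorem~\ref{CivIteration}), and running the reflection argument of the proof of Theorem~\ref{TheoremReflection}(5), where one uses the book-keeping $\psi$ to catch a potential non-reflecting stationary set $T$ at some stage $\alpha$, shows via Lemma~\ref{key} that the corresponding name is $r^\star$-fragile — which needs $V^{(\mathbb P_\alpha)_n}\models\refl(E^\mu_{<\sigma_{n-2}},E^\mu_{<\sigma_n})$, itself obtained from Lemma~\ref{suitableindeel}… that is, Lemma~\ref{suitableindeed}, Lemma~\ref{preservegch}, the indestructibility of $\kappa_{n-1}$, and Lemma~\ref{ebfreflection} — and then kills $T$ at stage $\alpha+1$ by Clause~\ref{C8onestep} of \blkref{II}. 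All of this is already carried out in the preceding sections, so the proof of the Main Theorem itself is a short citation.
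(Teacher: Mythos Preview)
Your proposal is correct and follows exactly the same approach as the paper's own proof: first invoke Corollary~\ref{cor84} to arrange the ground model of Setup~\ref{setupapplication}, then appeal to Theorem~\ref{TheoremReflection} for the terminal model $V^{\mathbb P_{\mu^+}}$. The paper's proof is terser (two lines of citation), while you unpack more of the internal logic of Theorem~\ref{TheoremReflection} and the iteration scheme, but the substance is identical; note only that your reference \texttt{suitableindeel} is a typo for \texttt{suitableindeed}.
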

\begin{proof} Using Corollary~\ref{cor84}, we may assume that all the blanket assumptions of Setup~\ref{ReflectionAfterIteration} are met. Specifically:
 \begin{itemize}
 \item  $\vec{\kappa}=\langle \kappa_n\mid n<\omega\rangle$ is an increasing sequence of supercompact cardinals that are indestructible under $\kappa_n$-directed-closed notions of forcing that preserve the $\gch$;
\item $\kappa:=\sup_{n<\omega}\kappa_n$, $\mu:=\kappa^+$ and $\lambda:=\kappa^{++}$;
\item  $\gch$ holds. 
\end{itemize}
Now, appeal to Theorem~\ref{TheoremReflection}.
\end{proof}

\section{Acknowledgments}
Poveda was partially supported by a postdoctoral fellowship from the Einstein Institute of Mathematics of the Hebrew University of Jerusalem.
Rinot was partially supported by the European Research Council (grant agreement ERC-2018-StG 802756) and by the Israel Science Foundation (grant agreement 2066/18).
Sinapova was partially supported by the National Science Foundation, Career-1454945 and  DMS-1954117.

\end{document}